\tikzset{Rightarrow/.style={double equal sign distance,>={Implies},->},
triple/.style={-,preaction={draw,Rightarrow}},
quadruple/.style={preaction={draw,Rightarrow,shorten >=0pt},shorten >=1pt,-,double,double
distance=0.2pt}}
\definecolor{darkblue}{rgb}{0,0,0.3}
\newtheorem{thm}{Theorem}
\newtheorem{cor}[thm]{Corollary}
\newtheorem{lemma}[thm]{Lemma}
\newtheorem{prop}[thm]{Proposition}
\theoremstyle{definition}
\newtheorem{define}[thm]{Definition}
\newtheorem{notate}[thm]{Notation}
\theoremstyle{remark}
\newtheorem{rem}[thm]{Remark}
\newtheorem{example}[thm]{Example}
\newtheorem{const}[thm]{Construction}
\newtheorem{warning}[thm]{Warning}
\newcommand\nbd\nobreakdash
\newcommand{\ndef}{\emph}
\def\lrar{\longrightarrow}
\newcommand{\ZZ}{\mathbb{Z}}
\newcommand{\bS}{\mathbf{S}}        
\newcommand{\op}{\mathrm{op}}
\newcommand{\Cat}{{\mathcal{C}\mspace{-2.mu}\mathit{at}}}
\newcommand{\nCat}[1]{{#1}\hbox{\protect\nbd-}\kern1pt\Cat}	    
\newcommand{\inn}{\mathrm{inn}}
\newcommand{\out}{\mathrm{out}}
\newcommand{\s}{\mathcal{S}\mspace{-2.mu}\text{et}_{\Delta}}
\newcommand{\sca}{\mathrm{sc}}
\newcommand{\Ss}{\mathcal{S}\mspace{-2.mu}\text{et}_{\Delta}^{\,\mathrm{sc}}}
\newcommand{\Sms}{\mathcal{S}\mspace{-2.mu}\text{et}_{\Delta}^{+, \mathrm{sc}}}
\newcommand{\St}{\mathcal{S}\mspace{-2.mu}\text{t}} 
\newcommand{\Nsc}{\mathrm{N}^{\mathrm{sc}}}			
\newcommand{\ho}{\mathrm{ho}}
\newcommand{\fC}{\mathfrak{C}}						
\def\Beta{\mathfrak{P}}                             
\newcommand{\Catoo}{\Cat_{\infty}}
\newcommand{\BiCat}{\mathrm{BiCat}_{\infty}}
\newcommand{\Fun}{\mathrm{Fun}}
\newcommand\pdfoo{\texorpdfstring{$\infty$}{oo}}
\DeclareMathOperator{\Hom}{Hom}
\DeclareMathOperator{\Map}{Map}
\DeclareMathOperator{\Id}{id}
\DeclareMathOperator{\Ob}{Ob}
\DeclareMathOperator{\co}{co}
\DeclareMathOperator{\coop}{coop}
\DeclareMathOperator{\ad}{ad}
\DeclareMathOperator{\fib}{fib}
\DeclareMathOperator{\cosk}{cosk}
\DeclareMathOperator{\Set}{Set}
\newcommand{\gr}{\mathrm{gr}}               
\newcommand{\mgr}{\otimes}
\newcommand{\hmgr}{\Box_{\gr}}
\newcommand{\opgr}{\mathrm{opgr}}
\newcommand{\LMap}{\Fun^{\opgr}}
\newcommand{\RMap}{\Fun^{\gr}}
\newcommand{\LNat}{\Nat^{\opgr}}
\newcommand{\RNat}{\Nat^{\gr}}
\DeclareMathOperator{\Nat}{Nat}
\DeclareMathOperator{\Str}{St}
\DeclareMathOperator{\Un}{Un}
\DeclareMathOperator{\coc}{coc}
\DeclareMathOperator{\thi}{th}
\def\alp{{\alpha}}
\def\bet{{\beta}}
\def\gam{{\gamma}}
\def\eps{{\varepsilon}}
\def\sig{{\sigma}}
\def\vphi{{\varphi}}
\def\Del{{\Delta}}
\def\Lam{{\Lambda}}
\def\vphi{{\varphi}}
\def\hrar{\hookrightarrow}
\def\x{\stackrel}
\def\ovl{\overline}
\def\wtl{\widetilde}
\newcommand{\tr}[2]{\mathchoice
	{#1\raise -1.8pt\vbox{\hbox{$\kern -.8pt/\mathsmaller{#2} $}}}
	{#1\raise -1.8pt\vbox{\hbox{$\kern -.8pt/#2$}}\kern .8pt}
	{#1\raise -1.8pt\vbox{\hbox{$\scriptstyle\kern -.8pt /#2$}}}
	{#1\raise -1.8pt\vbox{\hbox{$\scriptscriptstyle\kern -.8pt /#2$}}}}
\newcommand{\trbis}[2]{\mathchoice
	{#1\raise -1.8pt\vbox{\hbox{$\kern -.8pt\mathsmaller{/#2} $}}}
	{#1\raise -1.8pt\vbox{\hbox{$\kern -.8pt\mathsmaller{/#2}$}}\kern .8pt}
	{#1\raise -1.8pt\vbox{\hbox{$\scriptstyle\kern -.8pt /#2$}}}
	{#1\raise -1.8pt\vbox{\hbox{$\scriptscriptstyle\kern -.8pt /#2$}}}}
\newcommand{\overslice}[2]{\mathchoice
	{#1\raise -1.8pt\vbox{\hbox{$\kern -.8pt\mathsmaller{#2/} $}}}
	{#1\raise -1.8pt\vbox{\hbox{$\kern -.8pt\mathsmaller{#2/}$}}\kern .8pt}
	{#1\raise -1.8pt\vbox{\hbox{$\scriptstyle\kern -.8pt #2/$}}}
	{#1\raise -1.8pt\vbox{\hbox{$\scriptscriptstyle\kern -.8pt #2/$}}}}
\def\labelstylecode@triangle#1{%
	\pgfkeys@split@path%
	\edef\label@key{/triangle/label/\pgfkeyscurrentname}%
	\edef\style@key{\pgfkeyscurrentkey/.@val}%
	\def\temp@a{#1}%
	\def\temp@b{\pgfkeysnovalue}%
	\ifx\temp@a\temp@b
	\pgfkeysgetvalue{\label@key}\temp@a
	\ifx\temp@a\temp@b\else
	\pgfkeysalso{commutative diagrams/.cd, \style@key}%
	\fi
	\else
	\pgfkeys{\style@key/.code = \pgfkeysalso{#1}}%
	\fi}
\def\arrowstylecode@triangle#1{%
	\edef\style@key{\pgfkeyscurrentkey/.@val}%
	\def\temp@a{#1}%
	\def\temp@b{\pgfkeysnovalue}%
	\ifx\temp@a\temp@b
	\pgfkeysalso{commutative diagrams/.cd, \style@key}%
	\else
	\pgfkeys{\style@key/.code = \pgfkeysalso{#1}}%
	\fi}
\def\tr@abc{%
	\draw [/triangle/arrowstyle/012] (90:0.20) --
	node [/triangle/labelstyle/012] {
		\pgfkeysvalueof{/triangle/label/012}} (270:0.10);
}
\def\tr@#1#2{
	\begin{scope}[shift=#2, commutative diagrams/every diagram]
		
		\node (n{#1}0) at (150:1) {
			\pgfkeysvalueof{/triangle/label/0}};
		\node (n{#1}1) at (270:0.6) {
			\pgfkeysvalueof{/triangle/label/1}};
		\node (n{#1}2) at (30:1) {
			\pgfkeysvalueof{/triangle/label/2}};			
		
		\node (s#1) at (0,0) [circle, inner sep = 0pt,
		fit = (n{#1}0.center)(n{#1}1.center)(n{#1}2.center)] {};
		
		\begin{scope}[commutative diagrams/.cd, every arrow, every label]
			\ifcase #1
			\def\list{0/1, 1/2, 0/2}\or
			\def\list{0/1, 1/2, 0/2}\else
			\def\list{}\fi
			
			\foreach \s / \e in \list {
				\draw [/triangle/arrowstyle/\s\e] (n{#1}\s) --
				node [/triangle/labelstyle/\s\e] {
					\pgfkeysvalueof{/triangle/label/\s\e}} (n{#1}\e);
			}
			
			\ifcase #1
			\tr@abc\or
			\tr@abc
			\else\fi
			
		\end{scope}
	\end{scope}
}
\def\triangle#1{
	\pgfkeys{#1}
	\tr@{0}{(0:0)}
}
\def\labelstylecode@square#1{%
	\pgfkeys@split@path%
	\edef\label@key{/square/label/\pgfkeyscurrentname}%
	\edef\style@key{\pgfkeyscurrentkey/.@val}%
	\def\temp@a{#1}%
	\def\temp@b{\pgfkeysnovalue}%
	\ifx\temp@a\temp@b
	\pgfkeysgetvalue{\label@key}\temp@a
	\ifx\temp@a\temp@b\else
	\pgfkeysalso{commutative diagrams/.cd, \style@key}%
	\fi
	\else
	\pgfkeys{\style@key/.code = \pgfkeysalso{#1}}%
	\fi}
\def\arrowstylecode@square#1{%
	\edef\style@key{\pgfkeyscurrentkey/.@val}%
	\def\temp@a{#1}%
	\def\temp@b{\pgfkeysnovalue}%
	\ifx\temp@a\temp@b
	\pgfkeysalso{commutative diagrams/.cd, \style@key}%
	\else
	\pgfkeys{\style@key/.code = \pgfkeysalso{#1}}%
	\fi}
\def\sq@abc{%
	\draw [/square/arrowstyle/012] (235:0.25) --
	node [/square/labelstyle/012] {
		\pgfkeysvalueof{/square/label/012}} (235:0.6);
}
\def\sq@bcd{%
	\draw [/square/arrowstyle/123] (-54:0.25) --
	node [/square/labelstyle/123] {
		\pgfkeysvalueof{/square/label/123}} (-54:0.6);
}
\def\sq@acd{%
	\draw [/square/arrowstyle/023] (55:0.55) --
	node [/square/labelstyle/023] {
		\pgfkeysvalueof{/square/label/023}} (15:0.45);
}
\def\sq@abd{%
	\draw [/square/arrowstyle/013] (125:0.55) --
	node [/square/labelstyle/013] {
		\pgfkeysvalueof{/square/label/013}} (165:0.45);
}
\def\sq@#1#2{
	\begin{scope}[shift=#2, commutative diagrams/every diagram]
		
		\foreach \i in {0,1,2,3} {
			\tikzmath{\a = 135 + (90 * \i);}
			\node (n{#1}\i) at (\a:1) {
				\pgfkeysvalueof{/square/label/\i}};
		}
		
		\node (s#1) at (0,0) [circle, inner sep = 0pt,
		fit = (n{#1}0.center)(n{#1}1.center)(n{#1}2.center)
		(n{#1}3.center)] {};
		
		\begin{scope}[commutative diagrams/.cd, every arrow, every label]
			\ifcase #1
			\def\list{0/1, 1/2, 2/3, 0/2, 0/3}\or
			\def\list{0/1, 1/2, 2/3, 1/3, 0/3}\else
			\def\list{}\fi
			
			\foreach \s / \e in \list {
				\draw [/square/arrowstyle/\s\e] (n{#1}\s) --
				node [/square/labelstyle/\s\e] {
					\pgfkeysvalueof{/square/label/\s\e}} (n{#1}\e);
			}
			
			\ifcase #1
			\sq@abc\sq@acd\or
			\sq@abd\sq@bcd
			\else\fi
			
		\end{scope}
	\end{scope}
}
\def\square#1{
	\pgfkeys{#1}
	\sq@{0}{(180:2)}\sq@{1}{(0:2)}
	
	\begin{scope}[commutative diagrams/.cd, every arrow, every label]
		\draw[->] [shorten >=10pt, shorten <=10pt, /square/arrowstyle/0123] (s0) --
		node [/square/labelstyle/0123] {%
			\pgfkeysvalueof{/square/label/0123}} (s1);
		
	\end{scope}
}
\def\labelstylecode@pent#1{%
	\pgfkeys@split@path%
	\edef\label@key{/pentagon/label/\pgfkeyscurrentname}%
	\edef\style@key{\pgfkeyscurrentkey/.@val}%
	\def\temp@a{#1}%
	\def\temp@b{\pgfkeysnovalue}%
	\ifx\temp@a\temp@b
	\pgfkeysgetvalue{\label@key}\temp@a
	\ifx\temp@a\temp@b\else
	\pgfkeysalso{commutative diagrams/.cd, \style@key}%
	\fi
	\else
	\pgfkeys{\style@key/.code = \pgfkeysalso{#1}}%
	\fi}
\def\arrowstylecode@pent#1{%
	\edef\style@key{\pgfkeyscurrentkey/.@val}%
	\def\temp@a{#1}%
	\def\temp@b{\pgfkeysnovalue}%
	\ifx\temp@a\temp@b
	\pgfkeysalso{commutative diagrams/.cd, \style@key}%
	\else
	\pgfkeys{\style@key/.code = \pgfkeysalso{#1}}%
	\fi}
\def\pent@abc{%
	\draw [/pentagon/arrowstyle/012] (198:0.45) --
	node [/pentagon/labelstyle/012] {
		\pgfkeysvalueof{/pentagon/label/012}} (198:0.8);
}
\def\pent@bcd{%
	\draw [/pentagon/arrowstyle/123] (126:0.45) --
	node [/pentagon/labelstyle/123] {
		\pgfkeysvalueof{/pentagon/label/123}} (126:0.8);
}
\def\pent@cde{%
	\draw [/pentagon/arrowstyle/234] (54:0.45) --
	node [/pentagon/labelstyle/234] {
		\pgfkeysvalueof{/pentagon/label/234}} (54:0.8);
}
\def\pent@ade{%
	\draw [/pentagon/arrowstyle/034] (-40:0.6) --
	node [/pentagon/labelstyle/034] {
		\pgfkeysvalueof{/pentagon/label/034}} (-5:0.5);
}
\def\pent@abe{
	\draw [/pentagon/arrowstyle/014] (-70:0.55) --
	node [/pentagon/labelstyle/014] {
		\pgfkeysvalueof{/pentagon/label/014}} (-110:0.55);
}
\def\pent@acd{%
	\draw [/pentagon/arrowstyle/023] (55:0.3) --
	node [/pentagon/labelstyle/023] {
		\pgfkeysvalueof{/pentagon/label/023}} (125:0.3);
}
\def\pent@bde{%
	\draw [/pentagon/arrowstyle/134] (-5:0.4) --
	node [/pentagon/labelstyle/134] {
		\pgfkeysvalueof{/pentagon/label/134}} (35:0.5);
}
\def\pent@ace{%
	\draw [/pentagon/arrowstyle/024] (-45:0.45) --
	node [/pentagon/labelstyle/024] {
		\pgfkeysvalueof{/pentagon/label/024}} (-45:0.1);
}
\def\pent@abd{%
	\draw [/pentagon/arrowstyle/013] (-90:0.22) --
	node [/pentagon/labelstyle/013] {
		\pgfkeysvalueof{/pentagon/label/013}} (-150:0.46);
}
\def\pent@bce{%
	\draw [/pentagon/arrowstyle/124] (188:0.4) --
	node [/pentagon/labelstyle/124] {
		\pgfkeysvalueof{/pentagon/label/124}} (150:0.55);
}
\def\pent@#1#2{
	\begin{scope}[shift=#2, commutative diagrams/every diagram]
		
		\foreach \i in {0,1,2,3,4} {
			\tikzmath{\a = 270 - (72 * \i);}
			\node (n{#1}\i) at (\a:1) {
				\pgfkeysvalueof{/pentagon/label/\i}};
		}
		
		\node (p#1) at (0,0) [circle, inner sep = 0pt,
		fit = (n{#1}0.center)(n{#1}1.center)(n{#1}2.center)
		(n{#1}3.center)(n{#1}4.center)] {};
		
		\begin{scope}[commutative diagrams/.cd, every arrow, every label]
			\ifcase #1
			\def\list{0/1, 1/2, 2/3, 3/4, 0/4, 0/2, 0/3}\or
			\def\list{0/1, 1/2, 2/3, 3/4, 0/4, 1/3, 1/4}\or
			\def\list{0/1, 1/2, 2/3, 3/4, 0/4, 0/2, 2/4}\or
			\def\list{0/1, 1/2, 2/3, 3/4, 0/4, 0/3, 1/3}\or
			\def\list{0/1, 1/2, 2/3, 3/4, 0/4, 1/4, 2/4}\else
			\def\list{}\fi
			
			\foreach \s / \e in \list {
				\draw [/pentagon/arrowstyle/\s\e] (n{#1}\s) --
				node [/pentagon/labelstyle/\s\e] {
					\pgfkeysvalueof{/pentagon/label/\s\e}} (n{#1}\e);
			}
			
			\ifcase #1
			\pent@abc\pent@acd\pent@ade\or
			\pent@bcd\pent@bde\pent@abe\or
			\pent@cde\pent@ace\pent@abc\or
			\pent@ade\pent@abd\pent@bcd\or
			\pent@abe\pent@bce\pent@cde
			\else\fi
			
		\end{scope}
	\end{scope}
}
\renewcommand{\plus}[1]{\mathop{\amalg}\limits_{#1}}
\newcommand{\st}{\underline{\mathrm{Strat}}}
\newcommand{\B}{\mathcal{B}}
\newcommand{\C}{\mathcal{C}}
\newcommand{\D}{\mathcal{D}}
\newcommand{\E}{\mathcal{E}}
\newcommand{\F}{\mathcal{F}}
\newcommand{\G}{\mathcal{G}}
\newcommand{\I}{\mathcal{I}}
\newcommand{\J}{\mathcal{J}}
\newcommand{\M}{\mathcal{M}}
\newcommand{\X}{\mathcal{X}}
\newcommand{\Y}{\mathcal{Y}}
\newcommand{\U}{\mathcal{U}}
\newcommand{\Tw}{\mathrm{Tw}}
\newcommand{\Car}{\mathrm{Car}}
\newcommand{\coCar}{\mathrm{coCar}}
\newcommand{\fCs}{\mathfrak{C}^{\mathrm{sc}}}
\newcommand{\Maptr}{\Hom^{\triangleright}}
\newcommand{\Cone}{\mathrm{Cone}}
\def\vphi{\varphi}
\def\lrar{\rightarrow}
\def\llar{\longleftarrow}
\newcommand{\minisimeq}{\scalebox{0.8}{\ensuremath\simeq}}
\newcommand{\car}{\mathrm{car}}
\newcommand{\id}{\Id}
\newcommand{\var}{\mathrm{var}}
\newcommand{\defn}{\emph}
\newcommand{\uni}{\mathrm{uni}}
\setlist[itemize]{leftmargin=*}
\setlist[enumerate]{leftmargin=*}
\renewcommand{\tocsection}[3]{%
\indentlabel{\@ifnotempty{#2}{\parbox[b]{3ex}{\bfseries\ignorespaces#1 #2}}}\bfseries#3} 
\renewcommand{\tocsubsection}[3]{%
\indentlabel{\@ifnotempty{#2}{\hspace{1.6em}\parbox[b]{5ex}{\ignorespaces#1 #2}}}#3}
\title{Fibrations and lax limits of \((\infty,2)\)-categories}
\author{Andrea Gagna}
\address{Institute of Mathematics, Czech Academy of Sciences\\ \v{Z}itn\'a 25 \\115 67   Praha 1\\ Czech Republic}
\email{gagna@math.cas.cz}
\urladdr{https://sites.google.com/view/andreagagna/home}
\author{Yonatan Harpaz}
\address{Institut Galilée\\ Université Paris 13\\ 99 avenue Jean-Baptiste Clément\\ 93430 Villeta-neuse\\ France}
\email{harpaz@math.univ-paris13.fr}
\urladdr{https://www.math.univ-paris13.fr/~harpaz}
\author{Edoardo Lanari}
\address{Institute of Mathematics, Czech Academy of Sciences\\ \v{Z}itn\'a 25 \\115 67   Praha 1\\ Czech Republic}
\email{edoardo.lanari.el@gmail.com}
\urladdr{https://edolana.github.io/}
\subjclass[2020]{18N65, 55U35, 18N50}
\begin{document}
\begin{abstract}
	We study four types of (co)cartesian fibrations of \(\infty\)-bicategories over a given base \(\B\), and prove that they encode the four variance flavors of \(\B\)-indexed diagrams of \(\infty\)-categories. We then use this machinery to set up a general theory of \(2\)-(co)limits for diagrams valued in an \(\infty\)-bicategory, capable of expressing lax, weighted and pseudo limits. When the \(\infty\)-bicategory at hand arises from a model category tensored over marked simplicial sets, we show that this notion of \(2\)-(co)limit can be calculated as a suitable form of a weighted homotopy limit on the model categorical level, thus showing in particular the existence of these 2-(co)limits in a wide range of examples. We finish by discussing a notion of cofinality appropriate to this setting and use it to deduce the unicity of 2-(co)limits, once exist.
 \end{abstract}
\maketitle
\tableofcontents

\section*{Introduction}

A fundamental idea in category theory is that diagrams of (\(\infty\)-)categories, indexed by an (\(\infty\)-)category \(\B\), can be encoded via a suitable form of fibration \(\E \to \B\). 
This idea, going back to Grothendieck for ordinary categories, has become indispensable in the \(\infty\)-categorical realm, where it was developed notably in the extensive works of Lurie. In effect, a fibration \(\E \to \B\) is often the most efficient option, and sometimes the only practical one, for writing down a diagram of \(\infty\)-categories with all coherence data involved.

In the \(\infty\)-categorical context, such fibrations come in two flavors, called \emph{cartesian} and \emph{cocartesian} fibrations. The latter encodes the data of a \(\Catoo\)-valued \emph{functor} on \(\B\), while the former the data of a \(\Catoo\)-valued \emph{presheaf} on \(\B\), that is, a functor \(\B^{\op} \to \Catoo\). The existence of two dual flavors of this type is prevalent in category theory, and reflects the \(\ZZ/2\)-symmetry of \(\Catoo\) given by the involution \(\C \mapsto \C^{\op}\). In particular, this symmetry sends cartesian fibrations to cocartesian ones, and vice versa.

As in ordinary category theory, the study of \(\infty\)-categories often leads to consider \emph{\((\infty,2)\)-categories}, as, for example, the collection of \(\infty\)-categories is itself best understood when organized into one.
There are currently many models for \((\infty,2)\)-categories, 
developed and compared to one another in the works of Lurie~\cite{LurieGoodwillie}, Verity~\cite{VerityWeakComplicialI}, Rezk--Bergner~\cite{BergnerRezkInftynI, BergnerRezkInftynII}, Ara~\cite{AraQCatvsRezk}, Barwick--Schommer-Pries~\cite{BarwickSchommerPriesUnicity}, and more recently in the authors previous work~\cite{GagnaHarpazLanariEquiv}, where Lurie's bicategorical model structure on scaled simplicial sets was compared with the \(2\)-trivial complicial model structure on stratified sets developed in~\cite{OzornovaRovelliNComplicial}, yielding the last remaining equivalence between the various models. In addition, in~\cite{GagnaHarpazLanariEquiv} we reinterpreted the bicategorical model structure in terms of Cisinski--Olschok's theory of localizers, a consequence of which is an identification of the notion of \(\infty\)-bicategories - the fibrant object in this model structure - with scaled simplicial sets satisfying a suitable extension property.

It is natural to ponder the counterpart of the theory of (co)cartesian fibrations in the case where the base \(\B\) is now an \(\infty\)-bicategory. On the diagram side, one may then consider \(\B\)-indexed diagrams in \(\Catoo\), where \(\Catoo\) is also considered as an \(\infty\)-bicategory. As an immediate difference from the \(\infty\)-categorical case, one observes that the theory of \(\infty\)-bicategories admits not just a \(\ZZ/2\)-symmetry, but a \((\ZZ/2)^2\)-symmetry: we have the involution \(\C \mapsto \C^{\op}\) which reverses the direction of all 1-morphisms (without affecting the direction of 2-morphisms), but we also have the involution \(\C \mapsto \C^{\co}\), which reverses the direction of 2-morphisms, without affecting the direction of 1-morphisms. As a result, one has four variance flavors for \(\B\)-indexed diagrams in \(\Catoo\), corresponding respectively to \(\Catoo\)-valued functors from \(\B,\B^{\op},\B^{\co}\) and \(\B^{\coop} = (\B^{\co})^{\op}\). As a result, we expect to have \emph{four} different types of fibrations \(\E \to \B\) this time, encoding \(\B\)-indexed \(\Catoo\)-valued diagrams corresponding to the above four variance flavors.

In the first part of this paper we identify these four types of fibrations as \emph{inner cocartesian}, \emph{outer cartesian}, \emph{outer cocartesian} and \emph{inner cartesian} fibrations, respectively. The first of these four notions is based on that studied in~\cite{LurieGoodwillie}, for which a straightening-unstraightening Quillen equivalence is constructed. The last one can be obtained from the first by applying the functor \((-)^{\op}\) on the level of scaled simplicial sets (where we note that this acts on the fibers by \((-)^{\op}\) as well). Both of these are in particular inner fibrations on the level of simplicial sets, hence their name. On the other hand, since the scaled simplicial set model is not equipped with a convenient point-set model for \((-)^{\co}\), the two flavors designated by the term \emph{outer} require a more substantial modification of the definition on the simplicial level. A working definition was introduced in~\cite{GagnaHarpazLanariEquiv}, and in the present paper we prove that these indeed fulfill the purpose of encoding functors of the form \(\B^{\co} \to \Catoo\) and \(\B^{\op} \to \Catoo\), respectively. In particular, the main result of the first part affirms that if we organize the collection of inner/outer (co)cartesian fibrations over a fixed base \(\B\), with functors over \(\B\) which preserves (co)cartesian edges as morphisms, then the result is equivalent to the \(\infty\)-bicategory of \(\B\)-indexed \(\Catoo\)-valued diagrams, with the appropriate variance flavor:

\begin{thm}[{See Corollary~\ref{c:straightening-inner} and Corollary~\ref{c:S-U-for-outer-fibs}}]
\label{t:S-U-intro}
For an \(\infty\)-bicategory \(\B \in \BiCat\) there are natural equivalences of \(\infty\)-bicategories
\begin{align*}
 \coCar^{\inn}(\B) 	& \simeq \Fun(\B,\Catoo)
  &,&&
  \Car^{\inn}(\B) 	&\simeq \Fun(\B^{\coop},\Catoo), \\
 \coCar^{\out}(\B) 	&\simeq \Fun(\B^{\co},\Catoo)
  &\text{and}&&
 \Car^{\out}(\B) 	&\simeq \Fun(\B^{\op},\Catoo).
\end{align*}
Here, \(\coCar^{\inn}(\B)\) denotes the \(\infty\)-bicategory of inner cocartesian fibrations over \(\B\) and cocartesian edges preserving functors over \(\B\) between them, and similarly for \(\Car^{\inn}(\B), \coCar^{\out}(\B)\) and \(\Car^{\out}(\B)\).
\end{thm}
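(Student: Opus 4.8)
The plan is to treat the four equivalences in two groups, the inner pair and the outer pair, and within each group to reduce the cartesian statement to the cocartesian one by means of the point-set involution \((-)^{\op}\) on scaled simplicial sets. First I would establish the inner cocartesian equivalence \(\coCar^{\inn}(\B)\simeq\Fun(\B,\Catoo)\). The essential input is the scaled straightening--unstraightening Quillen equivalence of~\cite{LurieGoodwillie}, whose fibrant objects on the slice side are exactly the inner cocartesian fibrations over \(\B\); this already produces an equivalence of the underlying \(\infty\)-categories. The point requiring genuine care is upgrading it to an equivalence of \(\infty\)-bicategories, i.e.\ checking that straightening induces equivalences on mapping \(\infty\)-categories and not merely on mapping spaces. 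I would obtain this by arranging the straightening--unstraightening adjunction to be enriched over (marked, equivalently scaled) simplicial sets, so that it automatically preserves the cotensor by the free living \(2\)-morphism; since the mapping \(\infty\)-category of an \(\infty\)-bicategory is recovered from this cotensor together with the underlying \(\infty\)-category, compatibility on objects and \(1\)-cells then forces compatibility on \(2\)-cells.

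Next I would deduce the inner cartesian equivalence by applying \((-)^{\op}\) to the previous one. On the fibration side this involution exchanges inner cocartesian and inner cartesian fibrations over opposite bases, \(\coCar^{\inn}(\C)\cong\Car^{\inn}(\C^{\op})\), and acts on the fibers by \((-)^{\op}\). The subtle point is that the straightening of \(\E^{\op}\) as a cartesian fibration is \emph{not} the straightening of \(\E\) read backwards: its value at \(b\) is the opposite fiber \((\E_b)^{\op}\). Under straightening, opping the fibers corresponds to post-composition with the object-level involution \((-)^{\op}\colon\Catoo\to\Catoo\), which preserves the direction of functors but reverses natural transformations and so realizes an equivalence \(\Catoo^{\co}\simeq\Catoo\); absorbing this \(2\)-morphism reversal into the source changes the domain to a \(\co\), so that a functor \(F\colon\B\to\Catoo\) is sent to a functor \(\B^{\co}\to\Catoo\) with value \(F(b)^{\op}\). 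This exhibits \(\Car^{\inn}(\B^{\op})\simeq\Fun(\B^{\co},\Catoo)\); replacing \(\B\) by \(\B^{\op}\) and using \((\B^{\op})^{\co}=\B^{\coop}\) then gives \(\Car^{\inn}(\B)\simeq\Fun(\B^{\coop},\Catoo)\), completing Corollary~\ref{c:straightening-inner}.

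The genuinely new work, and what I expect to be the main obstacle, is the outer cocartesian equivalence \(\coCar^{\out}(\B)\simeq\Fun(\B^{\co},\Catoo)\). The difficulty is precisely the one flagged in the introduction: there is no point-set model for \((-)^{\co}\) on scaled simplicial sets, so one cannot transport the inner result along a symmetry to reach the \(\co\)-variance. The abstract \(\infty\)-bicategory \(\B^{\co}\in\BiCat\) nevertheless exists, and the inner result applied to it gives \(\coCar^{\inn}(\B^{\co})\simeq\Fun(\B^{\co},\Catoo)\); the task is therefore to produce an equivalence \(\coCar^{\out}(\B)\simeq\coCar^{\inn}(\B^{\co})\) directly from the simplicial definition of outer fibrations. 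I would do this by setting up a straightening--unstraightening adjunction adapted to the outer scaling and marking and proving it is a Quillen equivalence. The key lemmas are that outer cocartesian fibrations are stable under the relevant pushouts and that the unit and counit are weak equivalences on the generating cells; the heart of the matter is the verification that the outer \(2\)-cells straighten to correctly-oriented natural transformations, which is exactly where the \(\co\)-variance is forced and which cannot be imported from the inner case.

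Finally, the outer cartesian equivalence follows from the outer cocartesian one by the same \((-)^{\op}\) argument as in the inner case: \((-)^{\op}\) exchanges outer cocartesian and outer cartesian fibrations over opposite bases and opposes the fibers, so the straightening over \(\B\) becomes a functor \(\B^{\op}\to\Catoo\), yielding \(\Car^{\out}(\B)\simeq\Fun(\B^{\op},\Catoo)\) and completing Corollary~\ref{c:S-U-for-outer-fibs}.
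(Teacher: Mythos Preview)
Your treatment of the inner cocartesian case and the deduction of the cartesian flavors from the cocartesian ones via \((-)^{\op}\) is essentially the paper's argument, including the careful tracking of the \((-)^{\op}\colon\Catoo^{\co}\simeq\Catoo\) step. (One small point: \((-)^{\op}\) is a \emph{twisted} action on \(\BiCat\), so the equivalence it induces is \(\Car^{\inn}(\B)\simeq\coCar^{\inn}(\B^{\op})^{\co}\), not \(\coCar^{\inn}(\B^{\op})\) on the nose; the extra \((-)^{\co}\) is then absorbed by the \(\Catoo^{\co}\simeq\Catoo\) step, so your final answer is correct.)

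The real divergence is in the outer cocartesian case. You propose to build a bespoke outer straightening--unstraightening Quillen equivalence from scratch. This is not what the paper does, and your sketch (``stability under pushouts'', ``unit and counit on generating cells'') is far from a proof: constructing such an adjunction and establishing it as a Quillen equivalence is a major undertaking comparable to Lurie's original construction, and none of the necessary ingredients (the outer cone construction, the outer \(\Beta\)-fibered model structure, the explicit formula for the straightening functor) are supplied. The paper circumvents this entirely by changing models. It defines inner/outer (co)cartesian fibrations directly for \(\s^+\)-enriched categories (Definition~\ref{d:cart_fib_marked_cat}), proves these agree with the scaled-simplicial notions under \(\Nsc\) (Proposition~\ref{p:comparison}), and then exploits the fact that \(\nCat{\s^+}\) \emph{does} admit a point-set model for \((-)^{\co}\), namely \(\C^{\co}(x,y)=\C(x,y)^{\op}\). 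In that model it is a direct verification (Lemma~\ref{l:op-fibration}, Corollary~\ref{c:op-fibration}) that \((-)^{\co}\) exchanges inner and outer (co)cartesian fibrations, yielding \(\coCar^{\out}(\B)\simeq\coCar^{\inn}(\B^{\co})\) at the level of \(\infty\)-bicategories (Corollary~\ref{c:goal}). The outer result then follows from the inner one applied to \(\B^{\co}\), with no new straightening machinery required.

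So your missing idea is: rather than building a second straightening functor for the outer variance, transfer the fibration notions to a model of \((\infty,2)\)-categories in which \((-)^{\co}\) is a point-set operation, and verify compatibility there.
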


In the second part of this paper we use the theory of inner/outer (co)cartesian fibrations in order to setup a well-behaved notion of \(2\)-(co)limits for diagrams taking values in an \(\infty\)-bicategory. Our notion is sufficiently flexible to accommodate both (op)lax and pseudo-(co)limits, as well as a variety of intermediate variants, and can also be used to give a notion of weighted (co)limits. As with ordinary 2-categories, this notion comes in principle in four flavors, spanning lax and oplax, limits and colimits. To enable a systematic treatment we exploit in a crucial manner the identification of the four types of fibrations from the first part of the paper. To keep the notation tractable, and since the notation of lax and oplax is not completely consistent in the literature, we have opted here to call these \emph{inner} and \emph{outer (co)limits}, making each type of (co)limit directly related to the type of fibration that governs it. After giving the definitions and extracting their main properties we proceed to show that our proposed notion of 2-(co)limit is sufficiently flexible to express a suitable notion of a weighted 2-(co)limit, and that furthermore, every type of 2-(co)limit can eventually be viewed as a weighted one for a suitable weight. We then exploit this point of view in order to compare our notion of 2-(co)limits with that of a \emph{weighted homotopy (co)limits} in the case where the ambient \(\infty\)-bicategory comes from a model category tensored over marked simplicial sets. This allows us to exhibit a wide range of examples for \(\infty\)-bicategories in which all small 2-(co)limits exist:

\begin{thm}[{See Corollary~\ref{c:complete}}]\label{t:complete-intro}
Let \(\M\) be an \(\s^+\)-tensored model category such that the projective (resp.~injective) model structure exists on \(\M^{\J}\) for any small \(\s^+\)-enriched category \(\J\) (\eg, \(\M\) is a combinatorial model category). Then
the \(\infty\)-bicategory \(\M_\infty\) admits inner and outer limits (resp.~colimits) indexed by arbitrary small \(\infty\)-bicategories, and these are computed by taking weighted homotopy limits (resp.~colimits) in \(\M\) with respect to a suitable weight.
\end{thm}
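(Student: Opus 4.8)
The plan is to obtain this statement as the ``existence'' packaging of the comparison theorem proved just above, which identifies — for a fixed weight — the inner or outer weighted \(2\)-(co)limit of a diagram valued in \(\M_\infty\) with the corresponding weighted homotopy (co)limit formed inside the model category \(\M\). Combined with the earlier reduction of an arbitrary \(\infty\)-bicategorically indexed \(2\)-(co)limit to a weighted one, the proof of the corollary collapses to a single model-categorical assertion: under the stated hypotheses the relevant weighted homotopy (co)limits exist. So the work is to reduce to the weighted case with strict data, and then to produce the derived weighted (co)limit functor.

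First I would carry out the reduction. Fixing a small \(\infty\)-bicategory \(\I\) and a diagram \(d\colon \I \to \M_\infty\), I would pass to an \(\s^+\)-enriched category \(\J = \fCs(\I)\) modelling \(\I\) and rectify \(d\) to a strictly \(\s^+\)-enriched functor \(F\colon \J \to \M\). The straightening/unstraightening dictionary of the first part then produces, for each of the four variance flavors, a strict weight \(W\) on \(\J\) — lying in \((\s^+)^{\J}\) for limits and in \((\s^+)^{\J^{\op}}\) for colimits — with the inner/outer and the limit/colimit distinctions both recorded in \(W\) and in which of \(\I, \I^{\op}, \I^{\co}, \I^{\coop}\) is used. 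In this way all four cases are uniformly reduced to the existence of a weighted homotopy limit \(\{W,F\}\), resp.\ a weighted homotopy colimit \(W \otimes_\J F\).

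Next I would establish existence. Since \(\s^+\) is a combinatorial monoidal model category, the weight category \((\s^+)^{\J}\) (resp.\ \((\s^+)^{\J^{\op}}\)) admits the projective model structure unconditionally, so \(W\) has a projectively cofibrant replacement \(W^{c}\). For limits, the hypothesis supplies the projective model structure on \(\M^{\J}\); for such a \(W^{c}\) the functor \(\{W^{c},-\}\colon \M^{\J} \to \M\) is right Quillen with respect to it, being assembled from the cotensors \(F(j)^{A}\) — which form a right Quillen bifunctor under the \(\s^+\)-tensoring whenever \(A\) is cofibrant — along a cellular resolution of \(W^{c}\), whence it carries projective (i.e.\ levelwise) fibrations to fibrations. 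Its right derived functor, evaluated on a levelwise fibrant replacement of \(F\), computes the weighted homotopy limit, and hence, by the comparison theorem, the inner/outer \(2\)-limit. Dually, for colimits the injective model structure on \(\M^{\J}\) makes \(W^{c}\otimes_\J(-)\) left Quillen, since it sends levelwise (injective) cofibrations to cofibrations, so its left derived functor computes the weighted homotopy colimit. When \(\M\) is combinatorial both model structures on \(\M^{\J}\) exist for every small \(\s^+\)-enriched \(\J\), and all small inner and outer \(2\)-(co)limits therefore exist.

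The main obstacle is not the model-categorical bookkeeping, which is routine once the Quillen bifunctor structure above is in place, but the rectification feeding the comparison theorem: one must know that the homotopy-coherent diagram \(d\) and, more delicately, the \(\infty\)-bicategorical weight can be replaced by the strict \(\s^+\)-enriched data \(F\) and \(W\) without altering the \(2\)-(co)limit, and that the resulting derived weighted (co)limit is independent of the chosen cofibrant weight and levelwise fibrant (resp.\ cofibrant) diagram. These points are precisely the content of the comparison theorem of which this is a corollary; the role of the present statement is then to isolate the projective/injective existence hypothesis as the exact condition guaranteeing that the derived weighted (co)limit functor is defined, so that the abstractly characterized \(2\)-(co)limits are not merely computable but genuinely exist.
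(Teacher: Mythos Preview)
Your proposal is correct and follows essentially the same approach as the paper: rectify the diagram to a strict \(\s^+\)-enriched functor, produce the weight via scaled straightening, take a projectively cofibrant weight so that the weighted limit functor is right Quillen, and invoke the comparison theorem (Proposition~\ref{p:rectification-limit}) to identify the resulting weighted homotopy limit with the bicategorical inner limit. Two minor discrepancies are worth noting. First, the paper takes \(\J\) to be a \emph{fibrant} replacement of \(\fCs(\I)\) rather than \(\fCs(\I)\) itself, since the comparison theorem is set up under that hypothesis and one also wants \(W\) projectively fibrant (not just cofibrant). Second, rather than handling the four variances by varying \(\I,\I^{\op},\I^{\co},\I^{\coop}\), the paper reduces outer limits and (co)limits to the inner-limit case by replacing \(\M\) with \(\M^{\co}\), \(\M^{\op}\), or \(\M^{\coop}\) (Remarks~\ref{r:outer-limits}--\ref{r:outer-colimits}); this is essentially the same move placed on the other side of the adjunction.
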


Finally, we also obtain an explicit description of inner and outer limits for diagrams valued in \(\Catoo\). In particular, if \(\I\) is an \(\infty\)-bicategory and \(\chi\colon \I \to \Catoo\) is a diagram, then by Theorem~\ref{t:S-U-intro} we may encode \(\chi\) by an inner cocartesian fibration \(\E^{\inn} \to \I\). At the same time, post-composing \(\chi^{\co}\colon \I^{\co} \to \Catoo^{\co}\) with the functor \((-)^{\op}\colon \Catoo^{\co} \to \Catoo\) we obtain a diagram \(\I^{\co}\to \Catoo\), which can then be encoded by an outer cocartesian fibration \(\E^{\out} \to \I\). We then have the following explicit description of the inner and outer (or lax and oplax) limits of \(\chi\), generalizing in particular~\cite[Proposition 7.1]{GepnerHaugsengNikolausLax} and the limits part of~\cite[Theorem 4.4]{berman-lax}:

\begin{thm}[{See Example~\ref{ex:2-limits-cat}}]\label{t:intro-3}
There are natural equivalences
\[ \mathrm{lim}^{\inn}_\I \chi \simeq \Fun_{\I}(\I,\E^{\inn}) \quad\text{and}\quad \mathrm{lim}^{\out}_\I \chi \simeq \Fun_{\I}(\I,\E^{\out})^{\op}\]
identifying the inner limit of \(\chi\) with the \(\infty\)-category of sections of \(\E^{\inn} \to \I\), and the outer limit of \(\chi\) with the opposite of the \(\infty\)-category of sections of \(\E^{\out} \to \I\).
\end{thm}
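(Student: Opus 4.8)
The plan is to identify each limit with a mapping object out of a constant (terminal) diagram, and then to translate that mapping object, via straightening--unstraightening, into sections of the associated fibration; this is the bicategorical analogue of the section description of lax limits established in the cited work of Gepner--Haugseng--Nikolaus. I would begin with the inner (lax) case. By the general theory of inner \(2\)-limits, \(\mathrm{lim}^{\inn}_\I \chi\) corepresents lax cones: for every \(\infty\)-category \(\D\) there is a natural equivalence between \(\Fun(\D, \mathrm{lim}^{\inn}_\I \chi)\) and the \(\infty\)-category of lax cones from \(\mathrm{const}_\D\) to \(\chi\), whose data consist of a leg \(\ell_i \colon \D \to \chi(i)\) for each object \(i\), comparison \(2\)-cells \(\chi(f)\circ \ell_i \Rightarrow \ell_j\) for each \(1\)-morphism \(f\colon i \to j\), and the evident higher coherences. (Equivalently, this is the weighted \(2\)-limit against a resolution of the terminal weight \(\mathrm{const}_{*}\colon \I \to \Catoo\).) It therefore suffices to present these lax cones as sections.

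For the translation I would use the inner straightening equivalence of Theorem~\ref{t:S-U-intro}, under which the constant diagrams \(\mathrm{const}_\D\) correspond to the projection fibrations \(\I \times \D \to \I\) (in particular \(\mathrm{const}_{*}\) to the identity \(\mathrm{id}\colon \I \to \I\)) and \(\chi\) corresponds to \(\E^{\inn}\). Unwinding the inner cocartesian structure, a functor \(\I \times \D \to \E^{\inn}\) over \(\I\) assigns to each object \(i\) an object of the fiber \(\chi(i)\), functorially in \(\D\), and to each \(1\)-morphism \(f\colon i\to j\) an edge lying over \(f\); factoring this edge through the cocartesian pushforward produces exactly the comparison \(2\)-cell \(\chi(f)\circ\ell_i \Rightarrow \ell_j\). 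Carrying out this dictionary coherently over the simplices of \(\I\) and naturally in \(\D\) yields an equivalence between lax cones with summit \(\D\) and the \(\infty\)-category \(\Fun_\I(\I\times\D, \E^{\inn}) \simeq \Fun(\D, \Fun_\I(\I, \E^{\inn}))\) of \(\D\)-parametrized sections. As this is natural in \(\D\), the Yoneda lemma gives \(\mathrm{lim}^{\inn}_\I \chi \simeq \Fun_\I(\I, \E^{\inn})\).

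For the outer case I would argue in parallel, using the outer straightening equivalence \(\coCar^{\out}(\I) \simeq \Fun(\I^{\co}, \Catoo)\) of Theorem~\ref{t:S-U-intro} and the fact that \(\E^{\out}\) encodes the diagram \(i \mapsto \chi(i)^{\op}\). Writing \(M := \mathrm{lim}^{\out}_\I \chi\), which corepresents oplax cones (so that \(\Map(\D, M)\) is the space of oplax cones over \(\chi\) with summit \(\D\)), and repeating the section analysis for \(\E^{\out}\), a \(\D\)-parametrized section now has legs landing in \(\chi(i)^{\op}\); applying \((-)^{\op}\) to the legs exhibits it as an oplax cone over \(\chi\) with summit \(\D^{\op}\). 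To avoid the bookkeeping of opposites at the categorical level I would pass to underlying spaces, obtaining \(\Map(\D, \Fun_\I(\I, \E^{\out})) \simeq \Map(\D^{\op}, M) \simeq \Map(\D, M^{\op})\), where the last equivalence uses that \((-)^{\op}\) is an involution of \(\Catoo\). Yoneda then gives \(\Fun_\I(\I, \E^{\out}) \simeq M^{\op}\), i.e. \(\mathrm{lim}^{\out}_\I \chi \simeq \Fun_\I(\I, \E^{\out})^{\op}\); the residual \((-)^{\op}\) is precisely the fibrewise \((-)^{\op}\) built into the definition of \(\E^{\out}\).

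The hard part will be the lax-versus-pseudo discrepancy underlying the section analysis. Theorem~\ref{t:S-U-intro} is an equivalence of \(\infty\)-bicategories, so on mapping \(\infty\)-categories it matches only the cocartesian-edge-preserving functors over \(\I\) with pseudo-natural transformations, whereas the lax cones I need correspond to \emph{all} functors over \(\I\), i.e. to lax natural transformations; the cocartesian sections alone would compute merely the pseudo-limit. Securing the identification of \(\Fun_\I(\I, \E^{\inn})\) with the lax limit therefore requires a lax refinement of straightening rather than Theorem~\ref{t:S-U-intro} as stated. The cleanest way I see to pin this down is model-categorically: applying Theorem~\ref{t:complete-intro} with \(\M\) a model for \(\Catoo\) (marked simplicial sets, self-enriched over \(\s^+\)), the inner limit is computed as a weighted homotopy limit against a cofibrant resolution of the terminal weight, which is manifestly the derived \(\infty\)-category of \emph{all} sections and not only the cocartesian ones. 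A secondary, purely combinatorial difficulty is tracking the two involutions \((-)^{\co}\) and \((-)^{\op}\) in the outer case so that exactly one residual \((-)^{\op}\) survives, in the right place.
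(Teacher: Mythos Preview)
Your overall strategy—identify the limit via its universal property as a space of lax cones, then translate lax cones into sections via some form of straightening—is the same as the paper's, and you correctly isolate the central obstacle: Theorem~\ref{t:S-U-intro} only matches \emph{pseudo}-natural transformations with cocartesian-edge-preserving functors, whereas you need the lax ones, i.e.\ arbitrary sections. Your proposed fix via Theorem~\ref{t:complete-intro} is workable in principle but roundabout, and as stated it leaves a residual gap: you still need to explain why the relevant weighted homotopy limit in \(\s^+\) is precisely the derived \(\infty\)-category of sections of \(\E^{\inn}\to\I\), which is not quite ``manifest''.

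The paper bypasses this entirely. Rather than varying the summit \(\D\) and invoking Yoneda, it takes \(x=\ast\) in Corollary~\ref{c:limit-represents} (using \(\Hom_{\Catoo}(\ast,-)\simeq\id\)) to obtain directly
\[
\mathrm{lim}^{\inn}_K\chi\;\simeq\;\RNat_K(\ast,\chi)
\qquad\text{and}\qquad
\mathrm{lim}^{\out}_K\chi\;\simeq\;\LNat_K(\ast,\chi),
\]
and then applies Corollary~\ref{c:fiber} (a consequence of Proposition~\ref{p:fiber}) to identify these lax-transformation \(\infty\)-categories with \(\Fun^{\coc}_{\ovl K}(K,\E^{\inn})\) and \(\Fun^{\coc}_{\ovl K}(K,\E^{\out})^{\op}\) respectively. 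The point is that Proposition~\ref{p:fiber} is not a general ``lax straightening'' theorem: it is a concrete simplicial identification of \(\RNat_K(\ast,\chi)\) with the fiber of a slice construction, which in turn is the space of lifts of \(K\) to the \emph{universal} inner cocartesian fibration \((\Catoo)^{\ast/}_{\inn}\to\Catoo\). Base-changing that universal fibration along \(\chi\) gives \(\E^{\inn}\), and this is what produces the section description while automatically handling the lax-versus-pseudo discrepancy. In the fully lax case (\(K=\I^{\flat}\)) the superscript \(\coc\) is vacuous and one recovers \(\Fun_\I(\I,\E^{\inn})\). The outer case and the residual \((-)^{\op}\) fall out of the same mechanism, with the bookkeeping of opposites absorbed into Proposition~\ref{p:fiber} and Example~\ref{ex:universal}.

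In short: your diagnosis is right, but the paper's cure is Proposition~\ref{p:fiber}/Corollary~\ref{c:fiber}, not a model-categorical detour.
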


This paper is organized as follows. In \S\ref{sec:preliminaries} we describe preliminary material, including marked and scaled simplicial sets, and Lurie's scaled straightening-unstraight\-en\-ing equivalence from~\cite{LurieGoodwillie}. In \S\ref{s:fibrations} we develop in some detail the theory of inner and outer (co)cartesian fibrations. In particular, in \S\ref{s:cartesian} we show that inner (resp.~outer) fibrations induce right (resp.~left) fibrations on the level of mapping \(\infty\)-categories, and in \S\ref{sec:car-edges} we give several equivalent characterizations of (co)cartesian edges. In \S\ref{s:slice} we verify that inner cocartesian fibrations
can be identified with the fibrant objects of Lurie's \(\Beta\)-fibered model structure, which is the one featuring in the straightening and unstaightening equivalence, and develop a similar recognition mechanism for outer fibrations in terms of a suitable extension property against anodyne maps. Finally, in \S\ref{s:lift} we show that inner/outer (co)cartesian fibrations admit a convenient lifting property for (op)lax natural transformations.

In \S\ref{sec:correspondence} we prove the main result of the first part of this paper, as described in Theorem~\ref{t:S-U-intro}. The case of inner cocartesian fibrations being essentially a consequence of Lurie's scaled straightening-unstaightening, our strategy involves reducing all four equivalences to that one by showing that the \((\ZZ/2)^2\)-symmetry of the theory of \((\infty,2)\)-categories switches between all four types of fibrations. Unfortunately, the model of scaled simplicial sets does admit a convenient model for \((-)^{\co}\). To circumvent this problem we first show that the notions of inner/outer (co)cartesian fibrations can be defined also in the setting of \emph{marked simplicial categories}, that is, categories enriched in marked simplicial sets. Establishing the equivalence between the scaled and enriched definitions is the main goal of \S\ref{s:enriched}. We then exploit the fact that the model of marked simplicial categories admits point-set models for both \((-)^{\op}\) and \((-)^{\co}\) to reduce the main theorem to the inner cocartesian case.

We dedicate \S\ref{sec:thick-slice} to the study of the \emph{thickened} slice construction, which enables one to construct slice fibrations of all four variance flavors. These play a key role in the definition of \(2\)-(co)limits, and so we take the time to establish all the properties we will need later on. The construction makes use of a variant of the Gray tensor product in the setting of simplicial sets with both marking and scaling, which we define in \S\ref{s:gray} following a similar construction by Verity~\cite{VerityWeakComplicialI} in the setting stratified sets, as well as a construction studied by the authors in~\cite{GagnaHarpazLanariGrayLaxFunctors} for scaled simplicial sets. The simplest type of slice fibrations are obtained by slicing over an object \(x \) in an \(\infty\)-bicategory \(\B\). In \S\ref{s:representable} we identify these slice fibrations as those classified by representable functors via the equivalence of Theorem~\ref{t:S-U-intro}.

In \S\ref{sec:limits} we introduce the notions of 2-(co)limits for diagrams \(f\colon \I \to \C\), together with the auxiliary data consisting of a collection of edges in \(\I\), that is, a \emph{marking}. The marking data roughly encodes along which edges in \(\I\) the limit is to be ``strong'', as apposed to lax. We then prove a characterization of 2-(co)limits in terms of the functors they (co)represent. In \S\ref{s:weighted} we consider a particular case of 2-(co)limits which, following the works of Rovelli~\cite{RovelliWeighted}, Haugseng~\cite{HaugsengCoends} and Gepner--Hauseng--Nikolaus~\cite{GepnerHaugsengNikolausLax}, we call \emph{weighted} (co)limits. We then show that this particular case is in some sense generic: every 2-(co)limit can equivalently be expressed as a weighted (co)limit with respect to a suitable weight. Finally, in \S\ref{s:model-categories} we prove that when \(\C\) arises from a model category \(\M\) tensored over marked simplicial sets, weighted 2-(co)limits, and consequently all 2-(co)limits, can be computed in terms of weighted homotopy (co)limits in \(\M\). We then deduce the main result of the second part, showing that small 2-(co)limits exist for a wide range of ambient \(\infty\)-bicategories, including for example all those associated to combinatorial model categories tensored over marked simplicial sets. Finally, in \S\ref{s:cofinal} we discuss a notion of cofinality appropriate for the \(\infty\)-bicategorical setting and relate it to 2-(co)limits in two different manners, see Theorem~\ref{t:cofinal-characterization} and Proposition~\ref{p:limit-is-cofinal}. We then use this to deduce the unicity of 2-(co)limits, see Corollary~\ref{c:unicity}.

\subsection*{Relation to other work}
Lax and weighted limits in an \((\infty,2)\)-categorical setting were first defined by Gepner--Hauseng--Nikolaus~\cite{GepnerHaugsengNikolausLax} for diagrams indexed by \(\infty\)-categories and taking values in presentable \(\infty\)-categories tensored over \(\Catoo\). The idea of using marking on the indexing \(\infty\)-category to express partial laxness has recently appeared in several independent works, see J. Berman~\cite{berman-lax} for diagrams taking values in \(\infty\)-categories tensored or cotensored over \(\Catoo\), and F. García~\cite{garcia-cofinality} for diagrams taking values in \(\infty\)-bicategories. In all these works the diagram in question is indexed by an \(\infty\)-category, and the definition is eventually made via a weighted limits for a suitable weight. The definition given in the present paper is of a somewhat wider scope, covering diagrams indexed by an arbitrary (marked) \(\infty\)-bicategory and taking values in an arbitrary \(\infty\)-bicategory. In contrast to the previous works, our definition proceeds via a universal cone approach, and not via weighted limits. We do however show that 2-(co)limits in our sense can be computed via suitable weighted (co)limits, and are consequently compatible with all exiting definitions (whenever those apply), see Remark~\ref{r:compare-weighted} and Remark~\ref{r:compare-limit}.

\subsection*{Acknowledgements}
The first and third authors gratefully acknowledge the support of Praemium Academiae of M.~Markl and RVO:67985840.

\numberwithin{thm}{subsection}
\section{Preliminaries}\label{sec:preliminaries}
In this section we establish notation and recall some preliminary definitions and results concerning marked and scaled simplicial sets, and the straightening-unstraight\-en\-ing Quillen equivalence of~\cite{LurieGoodwillie}.

\begin{notate}
We will denote by \(\Delta\) the category of simplices, that is,
the category whose objects are the finite non-empty ordinals \([n] = \{0, 1, 2, \dots, n\}\)
and morphisms are the non-decreasing maps.
We will denote by \(\s\) the category of simplicial sets, that is the category of
presheaves on sets of~\(\Delta\), and will employ the standard notation
\(\Del^n\) for the \(n\)-simplex, \ie the simplicial set representing
the object~\([n]\) of~\(\Delta\).
For any subset \(\emptyset \neq S \subseteq [n]\) we will write \(\Del^S \subseteq \Del^n\)
to denote the \((|S|-1)\)-dimensional face of \(\Del^n\) whose set of vertices is \(S\).
For \(0 \leq i \leq n\) we will denote by \(\Lam^n_i \subseteq \Del^n\)
the \(i\)-th horn in \(\Del^n\), that is, the subsimplicial set of \(\Del^n\) spanned
by all the \((n-1)\)-dimensional faces containing the \(i\)-th vertex. 
For any simplicial set \(X\) and any integer \(p\geq 0\), we will denote by \(\deg_p(X)\)
the set of degenerate~\(p\)-simplices of~\(X\).
\end{notate}

By an \ndef{\(\infty\)\nbd-category} we will always mean a \emph{quasi-category},
\ie a simplicial set \(\C\) which admits extensions for all inclusions 
\(\Lambda^n_i\rightarrow\Delta^n\) 
with \(0 < i < n\)
(also known as \ndef{inner horn inclusions}). Given an \(\infty\)-category \(\C\),
we will denote its homotopy category by \(\ho(\C)\).
This is the ordinary category having as objects the \(0\)-simplices of \(\C\),
and as morphisms \(x \rightarrow y\) the set of equivalence classes of \(1\)-simplices
\(f\colon x \rightarrow y\) of \(\C\) under the equivalence relation generated by identifying
\(f\) and \(f'\) if there is a \(2\)-simplex \(H\) of \(\C\) with
\( H_{|\Del^{\{1,2\}}}=f, \ H_{|\Del^{\{0,2\}}}=f'\) and \(H_{|\Del^{\{0,1\}}}\) degenerate on~\(x\).

\subsection{Marked simplicial sets}

\begin{define}
	A \emph{marked simplicial set} is a pair \((X,E_X)\) where \(X\) is
	simplicial set and \(E_X\) is a subset of the set of \(1\)-simplices of \(X\),
	called \emph{marked \(1\)-simplices} or 
	\ndef{marked edges}, containing the degenerate ones.
	A map of marked simplicial sets \(f\colon (X,E_X)\rightarrow (Y,E_Y)\) is a map of simplicial sets \(f\colon X \rightarrow Y\) satisfying \(f(E_X)\subseteq E_Y\).
\end{define}

	The category of marked simplicial sets will be denoted by \(\s^+\). It is locally presentable and cartesian closed.

\begin{notate}
	Let \(X\) be a simplicial set. We will denote by \(X^{\flat} = (X, \deg_1(X))\)
	the marked simplicial set whose marked edges are the degenerate \(1\)-simplices 
	and by \(X^{\sharp} = (X, X_1)\) the marked simplicial
	set where all the edges of \(X\) are marked.
	The assignments
	\[X \mapsto X^\flat\qquad\text{and}\qquad X \mapsto X^\sharp\]
	are left and right adjoint, respectively, to the forgetful functor
	\(\s^+ \to \s\).
\end{notate}

Marked simplicial sets can be used as a model for the theory of \((\infty,1)\)-categories:

\begin{thm}[\cite{HTT}]\label{thm:marked-categorical}
	There exists a model category structure on the category \(\s^+\)
	of marked simplicial sets in which the cofibrations are the monomorphisms
	and the fibrant objects are the marked simplicial sets \((X, E)\) in which \(X\)
	is an \(\infty\)-category and \(E\) is the set of equivalences of \(X\),
	\ie \(1\)-simplices \(f\colon \Delta^1 \rightarrow X\) which are invertible in \(\ho(X)\). 
\end{thm}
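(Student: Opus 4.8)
The plan is to construct this model structure as a Cisinski--Olschok model structure, in the same spirit as the treatment of the bicategorical model structure in the authors' previous work. As recorded above, the category \(\s^+\) is locally presentable and cartesian closed, and its monomorphisms form a weakly saturated class generated by the set consisting of the boundary inclusions \((\partial\Del^n)^\flat \hookrightarrow (\Del^n)^\flat\) together with the single edge-marking map \((\Del^1)^\flat \hookrightarrow (\Del^1)^\sharp\); in particular every object is cofibrant. This places us squarely in the setting to which the Cisinski--Olschok theory of localizers applies, so that to produce the desired model structure it suffices to choose an exact cylinder together with a set of anodyne generators.

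First I would equip \(\s^+\) with the cylinder \(X \mapsto X \times (\Del^1)^\sharp\), whose interval is \emph{marked} precisely so that elementary homotopies are forced to be invertible, together with its two endpoint inclusions and the projection; checking that this is an exact cylinder is a routine verification using cartesian closedness of \(\s^+\). I would then fix a set \(S\) of generating anodyne maps of three kinds: (i) the inner-horn inclusions \((\Lam^n_i)^\flat \hookrightarrow (\Del^n)^\flat\) for \(0<i<n\); (ii) the ``special outer horn'' inclusions \((\Lam^n_n, \mathcal{E}) \hookrightarrow (\Del^n, \mathcal{E})\) for \(n \geq 1\), where \(\mathcal{E}\) marks the final edge \(\Del^{\{n-1,n\}}\) together with the degeneracies; and (iii) the maps \(K^\flat \hookrightarrow K^\sharp\) as \(K\) ranges over Kan complexes, whose role is to force every equivalence to be marked. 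By the Cisinski--Olschok theorem, this cylinder and the set \(S\) generate a combinatorial model structure on \(\s^+\) whose cofibrations are exactly the monomorphisms and whose fibrant objects are exactly those enjoying the right lifting property against the saturation of \(S\) by the pushout-product with the cylinder boundary. Since every object is cofibrant, such a model structure is the unique one with the prescribed cofibrations and fibrant objects, matching the content of the statement.

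The substance of the proof then lies in identifying these fibrant objects with the pairs \((X,E)\) described above. Given a fibrant \((X,E)\), lifting against the generators of type (i) shows that \(X\) is a quasi-category; lifting against those of type (ii) produces, for each edge of \(E\), the inverse and witnessing \(2\)-simplices exhibiting it as an equivalence in \(\ho(X)\), giving \(E \subseteq \Eq(X)\); and lifting against those of type (iii), applied to the maximal Kan subcomplex of \(X\) whose edges are precisely the equivalences, forces the reverse inclusion \(\Eq(X) \subseteq E\). Conversely, for a quasi-category \(X\) with \(E = \Eq(X)\) one verifies the lifting property against each class of generators by standard quasi-categorical arguments. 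I expect the main obstacle to be exactly this last identification, and in particular the inclusion \(\Eq(X)\subseteq E\), which hinges on knowing that the equivalences of a quasi-category assemble into a Kan complex and that the type (iii) generators genuinely suffice to mark all of them; on the formal side, the other delicate point is checking that the marked cylinder is exact and that the generated anodyne class interacts correctly with products of the form \(-\times(\Del^1)^\sharp\), since the presence of markings makes these pushout-product computations more subtle than their unmarked counterparts.
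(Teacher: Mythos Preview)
The paper does not prove this theorem at all: it is quoted from the literature, and immediately after the statement the authors record that it is the special case \(S=\Delta^0\) of \cite[Proposition~3.1.3.7]{HTT}. So there is no ``paper's own proof'' to compare against; your proposal is an alternative construction from scratch, whereas the paper simply invokes Lurie's result.

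As an independent construction your approach is reasonable in spirit, but there is a genuine gap. Your class (iii), namely \(K^\flat\hookrightarrow K^\sharp\) for all Kan complexes \(K\), is a proper class and not a set; the Cisinski--Olschok machinery requires a small set of generating anodyne maps, so the theorem cannot be invoked as you have stated it. A standard fix is to replace (iii) by a single map such as \(J^\flat\hookrightarrow J^\sharp\), where \(J\) is the nerve of the walking isomorphism; then the inclusion \(\Eq(X)\subseteq E\) still follows since any equivalence in a quasi-category extends to a map out of \(J\). A second point you correctly flag but do not resolve: in Cisinski--Olschok theory the fibrant objects are characterised by the right lifting property against the \emph{saturation} of \(S\) under pushout-products with the interval boundary, not just against \(S\) itself. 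To show that the objects \((X,\Eq(X))\) are fibrant you must either verify directly that they lift against this larger class, or check that the weakly saturated closure of your generators is already stable under pushout-product with \(\partial(\Del^1)^\sharp\hookrightarrow(\Del^1)^\sharp\). This verification is routine but not automatic, and without it the identification of fibrant objects is incomplete.
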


The theorem above is a special case of Proposition 3.1.3.7 in \cite{HTT}, when \(S=\Delta^0\). By~\cite[Proposition~3.1.5.3]{HTT} the forgetful functor \(\s^+ \to \s\) is a right Quillen equivalence, where \(\s\) is endowed with the categorical model structure of Joyal-Lurie. We will refer to the model structure of Theorem~\ref{thm:marked-categorical}
as the \ndef{marked categorical model structure},
and its weak equivalences as \ndef{marked categorical equivalences}.

We will denote by \(\nCat{\s^+}\) the category of categories in enriched in \(\s^+\) with respect to the cartesian product on \(\s^+\). For a \(\s^+\)-enriched category \(\C\) and two objects \(x,y \in \C\) we will denote by \(\C(x,y) \in \s^+\) to associated mapping marked simplicial set. By an arrow \(e\colon x \to y\) in an \(\s^+\)-enriched category \(\C\) we will simply mean a vertex \(e \in \C(x,y)_0\).

We will generally consider \(\nCat{\s^+}\) together with its associated \emph{Dwyer-Kan model structure} (see~\cite[\S A.3.2]{HTT}). In this model structure the weak equivalences are the Dwyer-Kan equivalences, that is, the maps which are essentially surjective on homotopy categories and induce marked categorical equivalences on mapping objects. The fibrant objects are the enriched categories \(\C\) whose mapping objects \(\C(x,y)\) are all fibrant, that is, are all \(\infty\)-categories marked by their equivalences. The model category \(\nCat{\s^+}\) is then a presentation of the theory of \((\infty,2)\)-categories, and is Quillen equivalent to other known models, see \S\ref{sec:scaled} below.

\subsection{Scaled simplicial sets and \pdfoo-bicategories}\label{sec:scaled}

\begin{define}[\cite{LurieGoodwillie}]
	A \emph{scaled simplicial set} is a pair \((X,T_X)\) where \(X\) is simplicial set and \(T_X\) is a subset of the set of 2-simplices of \(X\), called \emph{thin \(2\)-simplices} or \emph{thin triangles}, containing the degenerate ones. A map of scaled simplicial sets \(f\colon (X,T_X)\rightarrow (Y,T_Y)\) is a map of simplicial sets \(f\colon X \rightarrow Y\) satisfying \(f(T_X)\subseteq T_Y\). 
\end{define}

We will denote by \(\Ss\) the category of scaled simplicial sets. It is locally presentable and cartesian closed.

\begin{notate}\label{not:scaled_flat-sharp}
	Let \(X\) be a simplicial set. We will denote by \(X_{\flat} = (X, \deg_2(X))\)
	the scaled simplicial set where the
	thin triangles of \(X\) are the degenerate \(2\)-simplices and by \(X_{\sharp} = (X, X_2)\) the scaled simplicial
	set where all the triangles of \(X\) are thin.
	The assignments
	\[X \mapsto X_\flat\qquad\text{and}\qquad X \mapsto X_\sharp\]
	are left and right adjoint, respectively, to the forgetful functor
	\(\Ss \to \s\).
\end{notate}

\begin{define}
	\label{core defi}
	Given a scaled simplicial set \(X\), we define its \emph{core} to be the simplicial set \(X^{\thi}\) spanned by those \(n\)-simplices of \(X\) whose 2-dimensional faces are thin triangles. The assignment \(X \mapsto X^{\thi}\) is then right adjoint to the functor \((-)_{\sharp}\colon \s \to \Ss\).
\end{define}

\begin{warning}
	In \cite[\href{https://kerodon.net/tag/01XA}{Tag 01XA}]{LurieKerodon}, Lurie uses the term \emph{pith} in place of core, and denotes it by \(\operatorname{Pith}(\operatorname{\mathcal{C}})\).
\end{warning}

\begin{notate}\label{not:scaled_subset}
	We will often speak only of the non-degenerate thin \(2\)-simplices
	when considering a scaled simplicial set. For example, if \(X\) is a simplicial set
	and \(T\) is any set of triangles in \(X\) then we will denote by \((X,T)\)
	the scaled simplicial set whose underlying simplicial set is \(X\) and
	whose thin triangles are \(T\) together with the degenerate triangles.
	If \(L \subseteq K\) is a subsimplicial set then we use \(T|_L := T \cap L_2\)
	to denote the set of triangles in \(L\) whose image in \(K\) is contained in \(T\). 
\end{notate}

\begin{define}
	\label{d:anodyne}
	The set of \emph{generating scaled anodyne maps} \(\bS\) is the set of maps of scaled simplicial sets consisting of:
	\begin{enumerate}
		\item[(i)]\label{item:anodyne-inner} the inner horns inclusions
		\[
		 \bigl(\Lambda^n_i,\{\Delta^{\{i-1,i,i+1\}}\}\bigr)\rightarrow \bigl(\Delta^n,\{\Delta^{\{i-1,i,i+1\}}\}\bigr)
		 \quad , \quad n \geq 2 \quad , \quad 0 < i < n ;
		\]
		\item[(ii)]\label{i:saturation} the map 
		\[
		 (\Delta^4,T)\rightarrow (\Delta^4,T\cup \{\Delta^{\{0,3,4\}}, \ \Delta^{\{0,1,4\}}\}),
		\]
		where we define
		\[
		 T\overset{\text{def}}{=}\{\Delta^{\{0,2,4\}}, \ \Delta^{\{ 1,2,3\}}, \ \Delta^{\{0,1,3\}}, \ \Delta^{\{1,3,4\}}, \ \Delta^{\{0,1,2\}}\};
		\]
		\item[(iii)]\label{item:anodyne_outer} the set of maps
		\[
		\Bigl(\Lambda^n_0\coprod_{\Delta^{\{0,1\}}}\Delta^0,\{\Delta^{\{0,1,n\}}\}\Bigr)\rightarrow \Bigl(\Delta^n\coprod_{\Delta^{\{0,1\}}}\Delta^0,\{\Delta^{\{0,1,n\}}\}\Bigr)
		\quad , \quad n\geq 3.
		\]
	\end{enumerate}
	A general map of scaled simplicial set is said to be \emph{scaled anodyne} if it belongs to the weakly saturated closure of \(\bS\).
\end{define}

\begin{define}\label{d:bicategory}
	An \emph{\(\infty\)-bicategory} is a scaled simplicial set \(\C\) which admits extensions along all maps in \(\bS\). 
\end{define}
\begin{warning}
	The notion of \(\infty\)-bicategory we have just introduced can be proven to be equivalent to that of \((\infty,2)\)-category given in \cite[\href{https://kerodon.net/tag/01W9}{Tag 01W9}]{LurieKerodon}.
\end{warning}
\begin{rem}
If \(\C\) is an \(\infty\)-bicategory then its core \(\C^{\thi}\) is an \(\infty\)-category.
\end{rem}

To avoid confusion we point out that simplicial sets as in Definition~\ref{d:bicategory} are referred to in~\cite{LurieGoodwillie} as \emph{weak \(\infty\)-bicategories}, while the term \(\infty\)-bicatgory was reserved for an a priori stronger notion. However, as we have shown in \cite{GagnaHarpazLanariEquiv}, these two notions in fact coincide. In particular:

\begin{thm}[\cite{LurieGoodwillie},\cite{GagnaHarpazLanariEquiv}]\label{t:bicategorical}
	There exists a model structure on \(\Ss\)
	whose cofibrations are the monomorphisms and whose fibrant objects are the \(\infty\)-bicategories (in the sense of Definition~\ref{d:bicategory}). \end{thm}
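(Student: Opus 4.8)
The plan is to deduce this from the Cisinski--Olschok theory of localizers, which produces, on a presheaf topos equipped with an exact cylinder functor and a small set of generating anodyne maps, a combinatorial model structure whose cofibrations are the monomorphisms and whose fibrant objects are exactly those admitting extensions against the anodyne class generated by the chosen data. First I would record that \(\Ss\) is (equivalent to) a category of set-valued presheaves on a small category—obtained from \(\Delta\) by adjoining an object representing a thin triangle—so that it is a Grothendieck topos; in particular its monomorphisms are the weakly saturated closure of a set of boundary-type inclusions and form a legitimate class of cofibrations, and, as already noted, \(\Ss\) is locally presentable and cartesian closed.

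Next I would equip \(\Ss\) with the cylinder functor \(X \mapsto X \times E\), where \(E\) denotes the nerve of the contractible groupoid on two objects, made into a scaled simplicial set by declaring every triangle thin via \((-)_\sharp\) of Notation~\ref{not:scaled_flat-sharp}. The two object inclusions \(\{0\},\{1\}\hrar E\) furnish the endpoints, the unique map \(E \to \Delta^0\) the contraction, and the Cisinski exactness axioms are readily verified: the endpoints assemble into a monomorphism, and \((-)\times E\) preserves monomorphisms and the relevant pushouts since the cartesian product on \(\Ss\) preserves colimits in each variable. I would then feed into the Cisinski--Olschok machine this cylinder together with the set \(S := \bS\) of generating scaled anodyne maps of Definition~\ref{d:anodyne}. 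The output is a model structure on \(\Ss\) whose cofibrations are precisely the monomorphisms and whose fibrant objects are those \(X\) for which \(X \to \Delta^0\) has the right lifting property against the smallest weakly saturated class \(\Lambda(\bS)\) containing \(\bS\) and closed under the pushout-products (written \(\widehat{\times}\)) of the endpoint inclusion \(\{0\}\hrar E\) with the generating monomorphisms.

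The crux, and the step I expect to be the main obstacle, is the identification of these fibrant objects with the \(\infty\)-bicategories of Definition~\ref{d:bicategory}, i.e. the objects with the extension property against \(\bS\) alone. One inclusion is immediate since \(\bS \subseteq \Lambda(\bS)\); for the converse I would show that \(\Lambda(\bS)\) and \(\bS\) have the same injective objects. This reduces to two combinatorial claims: that the endpoint inclusion \(\{0\}\hrar E\) already lies in the weakly saturated closure of \(\bS\)—here the outer-type generators (iii) of Definition~\ref{d:anodyne}, which collapse the edge \(\Delta^{\{0,1\}}\) and thereby witness invertibility, together with the inner generators (i), are essential—and that the pushout-products \((\{0\}\hrar E)\,\widehat{\times}\,(A \hrar B)\) are themselves \(\bS\)-anodyne, which rests on a stability result asserting that the class of scaled anodyne maps is closed under smashing with monomorphisms. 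This last stability statement, proved in~\cite{LurieGoodwillie} by an explicit filtration of the relevant products and crucially invoking the \(\Delta^4\)-generator (ii) to propagate thinness, is the technical heart of the argument. Granting it, every \(\bS\)-injective object is \(\Lambda(\bS)\)-injective, the two notions of fibrancy coincide, and the Cisinski--Olschok model structure is the one sought; its coincidence with Lurie's original construction—and hence the second attribution—then follows from the fact that a model structure is determined by its cofibrations together with its fibrant objects.
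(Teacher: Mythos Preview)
The paper does not itself prove this theorem; it is cited from \cite{LurieGoodwillie} and \cite{GagnaHarpazLanariEquiv}. Your Cisinski--Olschok strategy is precisely the one carried out in the second reference, so the overall plan is on target.

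There is, however, a genuine gap in your reduction. You claim that \(\{0\}\hrar E\) lies in the weakly saturated closure of \(\bS\), and then invoke the pushout-product stability of scaled anodyne maps to conclude that \((\{0\}\hrar E)\,\widehat{\times}\,(A\hrar B)\) is scaled anodyne. But \(\{0\}\hrar E\) is \emph{not} scaled anodyne: every generator in Definition~\ref{d:anodyne} is bijective on \(0\)-simplices (for type (iii), the collapse of \(\Delta^{\{0,1\}}\) identifies the same pair of vertices in both domain and codomain), and bijectivity on vertices is preserved under pushouts, transfinite composition, and retracts. Hence every map in the weakly saturated closure of \(\bS\) is bijective on \(0\)-simplices, while \(\{0\}\hrar E\) plainly is not.

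What \cite{GagnaHarpazLanariEquiv} actually establishes is the weaker statement that every \(\bS\)-injective object \(X\) has the right lifting property against these pushout-products, and this is not a saturation argument. It is a Joyal-type ``special horn'' argument: one proves directly that in a weak \(\infty\)-bicategory equivalences can be coherently inverted, so that the restriction map \(X^{E}\to X^{\{0\}}\) is a trivial fibration. The pushout-product stability of scaled anodyne maps with monomorphisms then enters only to show that \(X^{E}\) and the relevant fibers are again weak \(\infty\)-bicategories, not to produce \(\{0\}\hrar E\) itself. Your two ``combinatorial claims'' therefore do not capture the needed input; the missing ingredient is the equivalence-extension lemma for weak \(\infty\)-bicategories.
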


We will refer to the model structure of Theorem~\ref{t:bicategorical} as the \emph{bicategorical model structure}. In~\cite{LurieGoodwillie} Lurie constructs a Quillen equivalence
\[
\xymatrixcolsep{1pc}
\vcenter{\hbox{\xymatrix{
			**[l]\Ss \xtwocell[r]{}_{\Nsc}^{\fCs}{'\perp}& **[r] \nCat{\s^+}}}} ,
\]
in which the right functor \(\Nsc\) is also known as the \emph{scaled coherent nerve}. A Quillen equivalence
\begin{equation}
\label{equiv}
\xymatrixcolsep{1pc}
\vcenter{\hbox{\xymatrix{
			**[l]\Ss \xtwocell[r]{}_{U}^{\iota}{'\perp}& **[r] \st_{2}}}} ,
\end{equation}
to Verity's model structure on stratified sets for saturated \(2\)-trivial complicial sets (see~\cite{VerityWeakComplicialI}, \cite{OzornovaRovelliNComplicial}) was also established in~\cite{GagnaHarpazLanariEquiv}.
We consider the bicategorical model structure as a presentation of the theory of \((\infty,2)\)-categories. In addition to the above two comparisons, the bicategorical model structures has been compared in~\cite{LurieGoodwillie} to several other models for \((\infty,2)\)-categories, which, to our knowledge, have been compared to all other known models (see, e.g., \cite{BergnerRezkInftynI}, \cite{BergnerRezkInftynII}, \cite{AraQCatvsRezk}, \cite{BarwickSchommerPriesUnicity}; we refer the reader to~\cite[Figure 1]{GagnaHarpazLanariEquiv} of a diagrammatic depiction of all equivalences known to us).

\begin{define}
We will denote by \(\Catoo\) the scaled coherent nerve of the (fibrant) \(\s^+\)-enriched subcategory \((\s^+)^{\circ} \subseteq \s^+\) spanned by the fibrant marked simplicial sets. We will refer to \(\Catoo\) as the \emph{\(\infty\)-bicategory of \(\infty\)-categories.}
\end{define}

\begin{define}\label{d:equivalence}
	Let \(\C\) be an \(\infty\)-bicategory. We will say that an edge in \(\C\) is \ndef{invertible} if it is invertible when considered in the \(\infty\)-category \(\C^{\thi}\), that is, if its image in the homotopy category of \(\C^{\thi}\) is an isomorphism. We will sometimes refer to invertible edges in \(\C\) as \ndef{equivalences}. We will denote by \(\C^{\minisimeq} \subseteq \C^{\thi}\) the subsimplicial set spanned by the invertible edges. Then \(\C^{\minisimeq}\) is an \(\infty\)-groupoid (that is, a Kan complex), which we call the \emph{core groupoid} of \(\C\). It can be considered as the \(\infty\)-groupoid obtained from \(\C\) by discarding all non-invertible \(1\)-cells and \(2\)-cells. If \(X\) is an arbitrary scaled simplicial set then we will say that an edge in \(X\) is \ndef{invertible} if its image in \(\C\) is invertible for any bicategorical equivalence \(X \to \C\) such that \(\C\) is an \(\infty\)-bicategory. This does not depend on the choice of the \(\infty\)-bicategory replacement \(\C\).
\end{define}

\begin{notate}
	\label{n:mapping}
	Let \(\C\) be an \(\infty\)-bicategory and let \(x,y \in \C\) be two vertices.
	In~\cite[\S 4.2]{LurieGoodwillie}, Lurie gives an explicit
	model for the mapping \(\infty\)\nbd-cat\-egory  from \(x\) to \(y\) in \(\C\) that we now recall.
	Let \(\Hom_{\C}(x,y)\) be the marked simplicial set whose \(n\)-simplices are given by maps \(f\colon\Del^n \times \Del^1 \lrar \C\) such that \(f_{|\Del^n \times \{0\}}\) is constant on \(x\), \(f_{|\Del^n \times \{1\}}\) is constant on \(y\), and the triangle \(f_{|\Del^{\{(i,0),(i,1),(j,1)\}}}\) is thin 
	for every \(0 \leq i\leq j \leq n\). An edge \(f\colon\Del^1 \times \Del^1 \lrar \C\) of \(\Hom_{\C}(x,y)\) is marked exactly when the triangle \(f_{|\Del^{\{(0,0),(1,0),(1,1)\}}}\) is thin. 
	The assumption that \(\C\) is an \(\infty\)-bicategory implies that 
	the marked simplicial set \(\Hom_{\C}(x,y)\) is \ndef{fibrant} in the marked categorical model structure, that is, it is an \(\infty\)-category whose marked edges are exactly the equivalences.	
\end{notate}

\begin{rem}\label{r:dwyer-kan}
	By~\cite[Remark~4.2.1 and Theorem~4.2.2]{LurieGoodwillie}, if \(\D\) is a fibrant \(\s^+\)\nbd-en\-riched category and \(\C\) is an \(\infty\)-bicategory equipped with a bicategorical equivalence \(\vphi\colon\C \simeq \Nsc(\D)\), then the maps 
	\[ \Hom_{\C}(x,y) \longrightarrow \Hom_{\Nsc(\D)}(\vphi(x),\vphi(y)) \llar \D(\vphi(x),\vphi(y)) \]
	are marked categorical equivalences for every pair of vertices \(x, y\) of~\(\C\). It then follows that a map \(\vphi\colon \C \lrar \C'\) of \(\infty\)-bicategories is a bicategorical equivalence if and only if it is essentially surjective (that is, every object in \(\C'\) is equivalent to an object in the image, see Definition~\ref{d:equivalence})
	and the induced map \(\Hom_{\C}(x,y) \lrar \Hom_{\C'}(\vphi(x),\vphi(y))\) is a marked categorical equivalence of (fibrant) marked simplicial sets for every \(x,y \in \C\).
\end{rem}

\begin{rem}\label{r:underlying}
	It follows from Remark~\ref{r:dwyer-kan} that if \(\vphi\colon\C \lrar \C'\) is a bicategorical equivalence of \(\infty\)-bicategories then the induced map \(\vphi^{\thi}\colon\C^{\thi} \lrar (\C')^{\thi}\) is an equivalence of \(\infty\)-categories.
\end{rem}

It is shown in~\cite[Proposition 3.1.8 and Lemma 4.2.6]{LurieGoodwillie} that the cartesian product
\[ \times:\Ss \times \Ss \longrightarrow \Ss \]
is a left Quillen bifunctor with respect to the bicategorical model structure,
\ie \(\Ss\) is a cartesian closed model category. In particular, for every two scaled simplicial sets \(X, Y\) we have a mapping object \(\Fun(X, Y)\) which satisfies (and is determined by) the exponential formula
\[ \Hom_{\Ss}(Z,\Fun(X,Y)) \cong \Hom_{\Ss}(Z \times X,Y) .\]
In addition, when the codomain is an \(\infty\)-bicategory \(\C\) the mapping object \(\Fun(X,\C)\) is an 
\(\infty\)\nbd-bi\-category as well, which we can consider as the \(\infty\)-bicategory of functors from \(X\) to \(\C\). 
In this case we will denote by \(\Fun^{\thi}(X,\C) \subseteq \Fun(X, \C)\) the associated core \(\infty\)-category,
which we consider as the \ndef{\(\infty\)-category of functors} from \(X\) to \(\C\).

\begin{define}
We define \(\BiCat\) to be the scaled coherent nerve of the (large)
\(\s^+\)-enriched category \(\mathrm{BiCat}_{\Del}\) whose objects are the \(\infty\)-bicategories
and whose mapping marked simplicial set, for \(\C,\D \in \mathrm{BiCat}_{\Del}\),
is given by \(\mathrm{BiCat}_{\Del}(\C,\D) := \Fun^{\thi}(\C,\D)^{\natural}\).
Here by \((-)^{\natural}\) we mean that the associated marked simplicial set in which the marked arrows are the equivalences.
We will refer to \(\BiCat\) as the \emph{\(\infty\)-bicategory of \(\infty\)-bicategories.}
\end{define}

Since the scaled coherent nerve functor \(\Nsc\) is a right Quillen equivalence it determines an equivalence 
\begin{equation}\label{e:enriched-cat-model}
(\nCat{\s^+})_{\infty} \xrightarrow{\simeq} \BiCat^{\thi}
\end{equation}
between the \(\infty\)-category associated to the model category \(\nCat{\s^+}\) and the core \(\infty\)-category of \(\BiCat\).
One of the technical advantages of using \(\nCat{\s^+}\) as a model is that the \((\ZZ/2)^2\)-action on the theory of \((\infty,2)\)-categories can be realized by an action of \((\ZZ/2)^2\) on \(\nCat{\s^+}\) via model category isomorphisms. More precisely, the operation \(\C \mapsto \C^{\op}\) which inverts only the direction of 1-morphisms is realized by setting \(\C^{\op}(x,y) = \C(y,x)\), while the operation \(\C \mapsto \C^{\co}\) of inverting only the direction of 2-morphisms is realized by setting \(\C^{\co}(x,y) = \C(x,y)^{\op}\), where the right hand side denotes the operation of taking opposites in marked simplicial sets. We will also denote by \(\C^{\coop} := (\C^{\co})^{\op} = (\C^{\op})^{\co}\) the composition of these operations.

\begin{const}\label{cn:op-action}
The two commuting involutions \((-)^{\op}\) and \((-)^{\co}\) act on \(\nCat{\s^+}\) via equivalences of categories which preserve the Dwyer-Kan model structure. Through the equivalence~\eqref{e:enriched-cat-model} 
these two involutions induce a \((\ZZ/2)^{2}\)-action on the core \(\infty\)-category \(\BiCat^{\thi}\), which we then denote by the same notation. In particular, we have involutions 
\[(-)^{\op}\colon \BiCat^{\thi} \to \BiCat^{\thi} \quad\text{and}\quad (-)^{\co}\colon \BiCat^{\thi} \to \BiCat^{\thi},\] 
the first inverting the direction of 1-morphisms and the second the direction of 2-morphisms. 
\end{const}

\begin{example}\label{ex:op-on-cat}
The \(\op\)-action on the core \(\infty\)-category \(\Catoo^{\thi}\) admits a point-set model via the functor \((-)^{\op}\colon \s^+ \to \s^+\), which is an equivalence of categories which preserves the marked categorical model structure. The functor \((-)^{\op}\) is however not an enriched functor. Instead, it refines to an enriched functor
\[ (-)^{\op}\colon \s^+ \to (\s^+)^{\co} ,\]
and hence induces an equivalence
\[ (-)^{\op}\colon \Catoo \xrightarrow{\simeq} \Catoo^{\co} \]
upon restricting the fibrant objects and taking scaled coherent nerves. 
This can also be phrased by saying that the functor \((-)^{\op}\) endows the \(\infty\)-bicategory \(\Catoo \in \BiCat^{\thi}\) with a fixed  point structure under the action of \((-)^{\co}\colon \BiCat^{\thi} \to \BiCat^{\thi}\), which is also sometimes called a \emph{twisted action}.
\end{example}

\begin{example}\label{ex:co-on-cat}
Every Kan complex \(X\) admits a canonical zig-zag of equivalences \(X \xleftarrow{\simeq} \Tw(X) \xrightarrow{\simeq} X^{\op}\), where \(\Tw(X)\) denotes the \emph{twisted arrow category} of \(X\). In particular, the restriction of \((-)^{\op}\colon \Catoo^{\thi} \to \Catoo^{\thi}\) to the full subcategory spanned by Kan complexes is homotopic to the identity. It then follows that the restriction of the equivalence \((-)^{\co}\colon \BiCat^{\thi} \to \BiCat^{\thi}\) to the full subcategory \(\Catoo^{\thi} \subseteq \BiCat^{\thi}\) (which can be modeled by the full subcategory of \([\nCat{\s^+}]^{\circ}\) spanned by the Kan-complex-enriched categories)
is homotopic to the identity as well.
\end{example}

\begin{rem}\label{r:twisted-op}
The \((\ZZ/2)^2\)-action on \(\BiCat^{\thi}\) does not extend to an action of \(\ZZ/2\) on the \(\infty\)-bicategory \(\BiCat\). Instead, as in Example~\ref{ex:op-on-cat}, it extends to a \emph{twisted} action.
More precisely, note that by construction, the enrichment of \(\BiCat\) in \(\Catoo^{\thi}\) is the one induced by the closed action of \(\Catoo^{\thi}\) on \(\BiCat^{\thi}\) via the inclusion \(\Catoo^{\thi} \subseteq \BiCat^{\thi}\) and the cartesian product in \(\BiCat\) (see~\cite[\S 7]{GeHaEnriched} for the relation between \(\Catoo\)-enrichment and closed actions of \(\Catoo\)). In particular, since the \(\co\)-action on \(\BiCat^{\thi}\) fixes \(\Catoo^{\thi} \subseteq \BiCat^{\thi}\) object-wise (see Example~\ref{ex:co-on-cat}) it follows that it extends to an equivalence of \(\infty\)-bicategories 
\[(-)^{\co}\colon \BiCat \xrightarrow{\simeq} \BiCat.\] 
However, since \((-)^{\op}\) restricts to the usual opposite operation on \(\Catoo\), its action on \(\BiCat\) is contravariant in \(2\)-morphisms, and it hence extends to an equivalence 
\[(-)^{\op}\colon \BiCat \xrightarrow{\simeq} \BiCat^{\co},\]
similarly to Example~\ref{ex:op-on-cat}.
\end{rem}

\subsection{Gray products of scaled simplicial sets}\label{s:scaled-gray}

In this section we recall from~\cite{GagnaHarpazLanariGrayLaxFunctors} the definition of the \defn{Gray product} of two scaled simplicial sets. 
In what follows, when we say that a \(2\)-simplex \(\sig\colon \Del^2 \to X\) \defn{degenerates along} \(\Del^{\{i, i+1\}} \subseteq \Del^2\) (for \(i=0,1\)) we mean that \(\sig\) is degenerate and \(\sig_{|\Del^{\{i,i+1\}}}\) is degenerate. This includes the possibility that \(\sig\) factors through the surjective map \(\Del^2 \to \Del^1\) which collapses \(\Del^{\{i, i+1\}}\) as well as the possibility that \(\sig\) factors through \(\Del^2 \to \Del^0\).

\begin{define}\label{d:gray}
	Let \((X,T_X),(Y,T_Y)\) be two scaled simplicial sets. The \defn{Gray product} \((X,T_X) \otimes (Y,T_Y)\) is the scaled simplicial set whose underlying simplicial set is the cartesian product of \(X \times Y\) and such that a \(2\)-simplex \(\sig\colon \Del^2 \to X \times Y\) is thin if and only if the following conditions hold:
	\begin{enumerate}[leftmargin=*]
		\item
		\(\sig\) belongs to \(T_X \times T_Y\); 
		\item
		either the image of \(\sig\) in \(X\) degenerates along \(\Del^{\{1,2\}}\) or the image of \(\sig\) in \(Y\) degenerates along \(\Del^{\{0,1\}}\).
	\end{enumerate}
\end{define}

\begin{rem}\label{r:associative}
The Gray product of scaled simplicial sets is \emph{associative}~\cite[Proposition 2.2]{GagnaHarpazLanariGrayLaxFunctors}, and in particular can be iterated in a unique manner. Specifically, for scaled simplicial sets \(X_1,...,X_n\), the iterated Gray product \(X_1 \mgr \cdots \mgr X_n\) is given by the scaled simplicial set whose underlying simplicial set is the cartesian product of \(X_1,...,X_n\) and such that a triangle 
\(\sig = (\sig_1,...,\sig_n)\colon\Del^2_{\flat} \to X_1 \otimes \cdots \otimes X_n\) is thin if and only if
the following conditions hold:
\begin{enumerate}[leftmargin=*]
\item
Each \(\sig_i\) is thin in \(X_i\).
\item
There exists an \(j \in \{1,...,n\}\) such that \(\sig_i\) degenerates along \(\Del^{\{1,2\}}\) for \(i < j\) and \(\sig_i\) degenerates along \(\Del^{\{0,1\}}\) for \(i > j\).
\end{enumerate}
The \(0\)-simplex \(\Del^0\) can be considered as a scaled simplicial set in a unique way, and serves as the unit of the Gray product. In particular \(\mgr\) is a \emph{monoidal structure} on \(\Ss\). This monoidal structure is however \emph{not} symmetric. Instead, there is a natural isomorphism
	\[ X \otimes Y \cong (Y^{\op} \otimes X^{\op})^{\op}. \]
\end{rem}

\begin{example}\label{ex:lax-square}
	Consider the Gray product \(X = \Del^1 \otimes \Del^1\). Then \(X\) has exactly two non-degenerate triangles \(\sig_1,\sig_2\colon \Del^2 \to X\), where \(\sig_1\) sends \(\Del^{\{0,1\}}\) to \(\Del^{\{0\}} \times \Del^1\) and \(\Del^{\{1,2\}}\) to \(\Del^1 \times \Del^{\{1\}}\), and \(\sig_2\) sends \(\Del^{\{0,1\}}\) to \(\Del^1 \times \Del^{\{0\}}\) and \(\Del^{\{1,2\}}\) to \(\Del^{\{1\}} \times \Del^1\). By definition we see that \(\sig_2\) is thin in \(X\) but \(\sig_1\) is not. If \(\C\) is an \(\infty\)-bicategory then a map \(p \colon X \to \C\) can be described as a diagram in \(\C\) of the form
	\[
	 \begin{tikzcd}[column sep=3em, row sep=large]
	  x \ar[r, "f_0"] \ar[d, "g_0"'] \ar[rd, "h"{description}, ""{swap, name=diag}] &
	  y \ar[d, "g_1"] \\
	  z \ar[r, "f_1"{swap}] & w
	  \ar[Rightarrow, from=diag, to=2-1]
	  \ar[from=2-1, to=1-2, phantom, "\simeq"{description, pos=0.75}]
	 \end{tikzcd}
	\]
	whose upper right triangle is thin (here \(f_i = p_{|\Del^1 \times \{i\}}\) and \(g_i = p_{|\{i\} \times \Del^1}\)). We thus have an invertible \(2\)-cell \(h \x{\simeq}{\Longrightarrow} g_1 \circ f_0\) and a non-invertible \(2\)-cell \(h \Rightarrow f_1 \circ g_0\). Such data is essentially equivalent to just specifying a single non-invertible \(2\)-cell \(g_1 \circ f_0 \Rightarrow f_1 \circ g_0\). We may hence consider such a square as a \defn{oplax-commutative} square, or a square which commutes up to a prescribed \(2\)-cell.
\end{example}

One of the main results of~\cite{GagnaHarpazLanariGrayLaxFunctors} is the following:
\begin{prop}[{\cite[Theorem 2.16]{GagnaHarpazLanariGrayLaxFunctors}}]\label{p:gray-quillen}
The Gray product 
\[ \mgr \colon \Ss \times \Ss \to \Ss\]
is a left Quillen bifunctor with respect to the bicategorical model structure.
\end{prop}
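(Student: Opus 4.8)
The plan is to verify the Leibniz (pushout–product) form of the left Quillen bifunctor axiom. Since the underlying simplicial set of a Gray product is the cartesian product, the bifunctor \(\mgr\) preserves colimits separately in each variable (colimits in \(\Ss\) are created on underlying simplicial sets, and the thinness rule of Definition~\ref{d:gray} is determined simplex-wise), so for cofibrations \(f\colon A \to B\) and \(g\colon C \to D\) the pushout–product
\[ f \hmgr g \colon (B \mgr C)\coprod_{A \mgr C}(A \mgr D) \lrar B \mgr D \]
is defined, and we must show it is a cofibration which is moreover trivial whenever \(f\) or \(g\) is. As usual the class of pairs \((f,g)\) with \(f \hmgr g\) a (trivial) cofibration is closed under the saturation operations in each variable, so it suffices to let \(f\) and \(g\) range over fixed generating sets of cofibrations and of trivial cofibrations for the bicategorical model structure of Theorem~\ref{t:bicategorical}.

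The cofibration clause is immediate: the underlying map of \(f \hmgr g\) is the ordinary pushout–product of the underlying monomorphisms against the cartesian product of simplicial sets, hence a monomorphism, and cofibrations in \(\Ss\) are precisely the monomorphisms. It thus remains to treat the case where \(f\) is a generating cofibration — a boundary inclusion \((\partial\Del^n)_\flat \to (\Del^n)_\flat\) or the scaling map \((\Del^2)_\flat \to (\Del^2)_\sharp\) — and \(g\) is a generating trivial cofibration, which we take to be either a scaled anodyne generator from \(\bS\) (Definition~\ref{d:anodyne}) or one of the maps forcing the relevant equivalences to become invertible. Here the natural isomorphism \(X \mgr Y \cong (Y^{\op}\mgr X^{\op})^{\op}\) of Remark~\ref{r:associative}, together with the fact that \((-)^{\op}\) is an automorphism of \(\Ss\) preserving the bicategorical model structure, lets us reduce to a single order: it is enough to prove that \(f \hmgr g\) is a trivial cofibration when \(g\) is a generating trivial cofibration, the case of \(g \hmgr f\) following by applying \((-)^{\op}\).

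The combinatorial heart of the argument is to show, for \(g\) ranging over \(\bS\), that \(f \hmgr g\) is itself scaled anodyne (hence a trivial cofibration). For this one filters the non-degenerate simplices of \(B \mgr D\) that do not already lie in the domain pushout, ordering them so that each is attached along an inner horn (type (i)), an instance of the \(\Del^4\)–scaling move (type (ii)), or an outer generator (type (iii)) of Definition~\ref{d:anodyne}, thereby exhibiting \(f \hmgr g\) as a transfinite composite of pushouts of maps in \(\bS\). The essential and delicate input is the explicit thinness rule of Definition~\ref{d:gray}: a triangle of a Gray product is thin precisely when it is a pair of thin triangles and, in addition, its image in one factor degenerates along \(\Del^{\{1,2\}}\) while its image in the other degenerates along \(\Del^{\{0,1\}}\). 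The filtration must be arranged so that, at each stage, the thin–triangle pattern of the newly attached cell matches exactly that of one of the generators in \(\bS\).

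I expect the main obstacle to be precisely this bookkeeping for the higher generators (ii) and (iii) and, above all, for the scaling cofibration \(f = \bigl((\Del^2)_\flat \to (\Del^2)_\sharp\bigr)\): because the Gray scaling rule renders certain product triangles thin or non-thin in a way that disagrees with the cartesian product, one must check that every cell introduced in passing from \(\flat\) to \(\sharp\) is glued along a genuine scaled anodyne move rather than an arbitrary scaling. Once these filtrations are established for all \(g \in \bS\), the remaining (invertibility) generators of the trivial cofibrations are handled separately: their pushout–products with \(f\) are trivial cofibrations by comparison with the cartesian product, which is already known to be a left Quillen bifunctor~\cite{LurieGoodwillie}, using that the Gray and cartesian scalings coincide on the groupoidal cells appearing in these generators. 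This completes the verification of both clauses of the left Quillen bifunctor axiom.
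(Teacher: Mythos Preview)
This proposition is not proved in the present paper; it is stated as a citation to \cite[Theorem~2.16]{GagnaHarpazLanariGrayLaxFunctors} with no accompanying argument. There is therefore nothing in the paper to compare your proposal against. Your outline is nonetheless the standard route to such a result and is consistent with what can be inferred about the cited proof from its use elsewhere here (notably Proposition~\ref{p:push-prod-2}, the marked-scaled analogue, whose proof explicitly reduces to the unmarked case in the cited reference).

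One point to correct: in handling the invertibility generators you write that ``the Gray and cartesian scalings coincide on the groupoidal cells appearing in these generators''. They do not literally coincide; \(X \mgr J_\sharp\) has strictly fewer thin triangles than \(X \times J_\sharp\) in general. The actual step is that the comparison map \(X \mgr J_\sharp \hookrightarrow X \times J_\sharp\) is a bicategorical trivial cofibration because \(J\) is a Kan complex (this is \cite[Corollary~2.17]{GagnaHarpazLanariGrayLaxFunctors}), and one then uses that the cartesian product is already known to be a left Quillen bifunctor. The present paper invokes exactly this argument in the proof of Corollary~\ref{c:marked-gray-quillen}. With that adjustment, your sketch is a faithful outline of the expected proof, though of course the filtration you allude to for the scaled anodyne generators is where all the work sits and is only gestured at here.
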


Proposition~\ref{p:gray-quillen} together with Remark~\ref{r:associative} imply that \(\Ss\) is a \emph{monoidal model category} with respect to the Gray product. In particular, one may associate with \(\otimes\) a right and a left mapping objects, which we shall denote by \(\RMap(X,Y)\) and \(\LMap(X,Y)\) respectively. More explicitly, an \(n\)-simplex of \(\RMap(X,Y)\) is given by a map of scaled simplicial sets
\[ \Del^n_{\flat} \otimes X \longrightarrow Y .\]
A \(2\)-simplex \(\Del^2_{\flat} \otimes X \to Y\) of \(\RMap(X,Y)\) is thin if it factors through \(\Del^2_{\sharp} \otimes X\). Similarly, an \(n\)-simplex of \(\LMap(X,Y)\) is given by a map of scaled simplicial sets
\[ X \otimes \Del^n_{\flat} \longrightarrow Y \]
and the scaling is determined as above. The compatibility of the Gray product of the bicategorical model structure then implies that for a fixed \(X\) the functors \(Y \mapsto \RMap(X,Y)\) and \(Y \mapsto \LMap(X,Y)\) are right Quillen functors. In particular, if \(\C\) is an \(\infty\)-bicategory then \(\RMap(X,\C)\) and \(\LMap(X,\C)\) are \(\infty\)-bicategories as well. The objects of \(\RMap(X,\C)\) correspond to functors \(X \to \C\) and by Example~\ref{ex:lax-square} we may consider morphisms in \(\RMap(X,\C)\) as \defn{lax natural transformations}. If we take \(\LMap(X,\C)\) instead then the objects are again functors \(X \to \C\), but now the edges will correspond to \defn{oplax natural transformations}.

\subsection{Scaled straightening and unstraightening}\label{s:straightening}

In~\cite[\S 3]{LurieGoodwillie} Lurie established a straightening-unstraightening equivalence in the setting of \(\infty\)-bicategories. In this subsection 
we recall the setup of~\cite[\S 3]{LurieGoodwillie} and explain how to obtain from it an equivalence on the level of \(\infty\)-bicategories.

\begin{define}\label{d:fibered}
Let \((S,T_S)\) be a scaled simplicial set. A marked simplicial set \((X,E_X)\) equipped with a map of simplicial sets \(f \colon X \to S\)
is said to be \ndef{\(\Beta_S\)-fibered} if the following conditions hold:
\begin{enumerate}[leftmargin=*]
	\item[(i)]
	The map \(f\) is an inner fibration.
	\item[(ii)]\label{it:locally}
	For every edge \(e\colon\Del^1 \to S\) the map \(e^*f \colon X \times_{S} \Del^1 \to \Del^1\) is a cocartesian fibration
	and the marked edges of \(X\) lying over \(e\) are exactly the \(e^*f\)-cocartesian edges.
	\item[(iii)]
	For every commutative diagram
	\[
	\begin{tikzcd}
		\Del^{\{0,1\}} \ar[r, "e"]\ar[d] & X \ar[d] \\
		\Del^2 \ar[r, "\sigma"] & S
	\end{tikzcd}
	\]
	with \(e \in E_X\) and \(\sig \in T_S\), the edge of \(X \times_S \Del^2\) determined by~\(e\) is \(\sig^*f\)-cocartesian. 
\end{enumerate}
\end{define}
Let \(\trbis{(\s^+)}{S}\) denote the category of marked simplicial sets \((X,E_X)\) equipped with a map of simplicial sets \(f \colon X \to S\). 
In~\cite[\S 3.2]{LurieGoodwillie} Lurie constructs a model structure on \(\trbis{(\s^+)}{S}\) 
whose cofibrations are the monomorphisms and whose fibrant objects are exactly the \(\Beta_S\)-fibered objects. 
Let us refer to this model structure as the \ndef{\(\Beta_S\)-fibered model structure}. Given a weak equivalence of \(\s^+\)-enriched categories \(\phi\colon \fC(S,T_S) \to \C\) he then proceeds to construct a \ndef{straightening-un\-straight\-ening} Quillen equivalence
\[
 \xymatrixcolsep{1pc}
 \vcenter{\hbox{\xymatrix{
			**[l] \trbis{(\s^+)}{S} \xtwocell[r]{}_{\Un^{\sca}_{\phi}}^{\Str^{\sca}_{\phi}}{'\perp}& **[r] (\s^+)^{\C} }
				}
			}
\]
where the right hand side denotes the category of \(\s^+\)-enriched functors \(\C \to \s^+\) equipped with the projective model structure, and the left hand side is equipped with the \(\Beta_S\)-fibered model structure. The straightening functor \(\Str^{\sca}_{\phi}\) is given by the explicit formula  
\[ [\Str^{\sca}_{\phi}(X,E_X)](v) = \Cone_{\phi}(X,E_X)(\ast,v),\]
that is, by the restriction to \(\C\) of the functor \(\Cone_{\phi}(X,E_X) \to \s^+\) represented by the cone point \(\ast\) in the scaled cone of \(X\) over \(\C\), which by is defined by
\[\Cone_{\phi}(X,E_X) := \fCs\Big(\Del^0 \coprod_{\Del^{\{0\}} \times X}( \Del^1\times X,T)\Big) \coprod_{\fCs(\Del^{\{1\}} \times X_{\flat})}\C. \]
Here, the second pushout is along the composed map \(\fCs(X_{\flat}) \to \fCs(S,T_S) \to \C\),
and \(T\) denotes the set of all those triangles
\((\tau,\sig_X)\colon \Del^2 \to \Del^1 \times X\) such that
\(\sig_X\) is degenerate and either \(\tau_{|\Del^{\{1,2\}}}\) 
is degenerate in \(\Del^1\) or \({\sig_X}_{|\Del^{\{0,1\}}}\) belongs to \(E_X\). 
In the case where \(\phi\colon \fCs(S,T_S) \to \fCs(S,T_S)\) is the identity we will generally replace the subscript \(\phi\) in \(\Str^{\sca}_{\phi}\) and \(\Un^{\sca}_{\phi}\) and \(\Cone_{\phi}\) by the subscript \((S,T_S)\).

\begin{rem}\label{r:st-gray-scaled}
When \(X\) has no non-degenerate marked edges
the cone \(\Cone_{\phi}(X^{\flat})\) appearing in the straightening construction above can be described in terms of the Gray product of \S\ref{s:scaled-gray} by
\[ \Cone_{\phi}(X^{\flat}) = \fCs\Big(\Del^0 \coprod_{\Del^{\{0\}} \otimes X_{\flat}}[{}^{\flat}\Del^1 \otimes X_{\flat}]\Big)\coprod_{\fCs(\Del^{\{1\}} \otimes X_{\flat})} \C. \]
This also holds more generally if one uses a Gray product which takes into account marked edges, see Remark~\ref{r:st-gray}. 
\end{rem}

The straightening-unstraightening Quillen equivalence induces an equivalence between the \(\infty\)-categories underlying the two sides of the adjunction \(\Str^{\sca}_{\phi} \dashv \Un^{\sca}_{\phi}\). These two sides are both model categories which are \emph{tensored} over \(\s^+\), that is, they admit a closed action of \(\s^+\) in the form of a left Quillen bifunctor, and in particular both acquire en enrichment in \(\s^+\). In addition, the unstraightening functor is lax-compatible with the action of \(\s^+\), in the sense that one has a natural map
\begin{equation}\label{e:lax-tensored} 
\Un^{\sca}_{\phi}(\F) \times K \to \Un^{\sca}_{\phi}(\F) \times \Un^{\sca}_{\ast}(K) \cong \Un^{\sca}_{\phi}(\F \times K), 
\end{equation}
where \(\Un^{\sca}_{\ast}\) denotes the scaled unstraightening functor with respect to the isomorphism \(\fC(\Del^0)\cong \ast\). This structure promotes \(\Un^{\sca}_{\phi}\) to a \(\s^+\)-enriched functor from \((\s^+)^{\C}\) to \(\trbis{(\s^+)}{S}\). Passing to the full subcategories of fibrant-cofibrant objects (and using the simplifying fact that all objects in \(\trbis{(\s^+)}{S}\) are cofibrant) we then obtain an enriched functor of fibrant \(\s^+\)-enriched categories
\[ [\Un^{\sca}_{\phi}]^{\circ} \colon [(\s^+)^{\C}]^{\circ} \to [\trbis{(\s^+)}{S}]^{\circ} .\]

\begin{lemma}\label{l:unstraightening-simplicial}
The functor \([\Un^{\sca}_{\phi}]^{\circ}\)
is a Dwyer-Kan equivalence.
\end{lemma}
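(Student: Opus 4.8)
The plan is to derive the statement from two facts established earlier: that \(\Str^{\sca}_{\phi} \dashv \Un^{\sca}_{\phi}\) is a Quillen equivalence, and that \(\Un^{\sca}_{\phi}\) carries the \(\s^+\)-enriched structure encoded by~\eqref{e:lax-tensored}. Since \([(\s^+)^{\C}]^{\circ}\) and \([\trbis{(\s^+)}{S}]^{\circ}\) are fibrant \(\s^+\)-enriched categories, it suffices, by the characterization of Dwyer--Kan equivalences, to show that \([\Un^{\sca}_{\phi}]^{\circ}\) is essentially surjective on homotopy categories and induces marked categorical equivalences on mapping objects. Throughout I use the stated fact that every object of \(\trbis{(\s^+)}{S}\) is cofibrant, so that \(\Un^{\sca}_{\phi}\) sends fibrant-cofibrant objects to fibrant-cofibrant objects, and that weak equivalences between fibrant-cofibrant objects are equivalences in the associated enriched categories.

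For essential surjectivity, given a \(\Beta_S\)-fibered object \(X\), let \(\F\) be a fibrant replacement of \(\Str^{\sca}_{\phi}(X)\) (fibrant-cofibrant, since \(\Str^{\sca}_{\phi}\) is left Quillen and \(X\) cofibrant). As the adjunction is a Quillen equivalence, the derived unit \(X \to \Un^{\sca}_{\phi}(\F)\) is a weak equivalence, and hence an equivalence in \([\trbis{(\s^+)}{S}]^{\circ}\), exhibiting \(X\) in the essential image.

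For full faithfulness, fix fibrant-cofibrant \(\F, \G\). Since both \(\Map_{(\s^+)^{\C}}(\F,\G)\) and \(\Map_{\trbis{(\s^+)}{S}}(\Un^{\sca}_{\phi}\F, \Un^{\sca}_{\phi}\G)\) are fibrant marked simplicial sets, and weak equivalences between fibrant objects of \(\s^+\) are detected by the homotopy function complexes \(\mathrm{map}_{\s^+}(K,-)\) out of all \(K \in \s^+\) (co-Yoneda), it suffices to show that the comparison map induces equivalences on these. Using the \(\s^+\)-tensorings, the two complexes are identified with \(\mathrm{map}_{(\s^+)^{\C}}(\F \times K, \G)\) and \(\mathrm{map}_{\trbis{(\s^+)}{S}}(\Un^{\sca}_{\phi}\F \times K, \Un^{\sca}_{\phi}\G)\). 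I would then evaluate the latter via the chain of natural equivalences
\begin{align*}
\mathrm{map}_{\trbis{(\s^+)}{S}}(\Un^{\sca}_{\phi}\F \times K, \Un^{\sca}_{\phi}\G)
&\simeq \mathrm{map}_{(\s^+)^{\C}}\bigl(\Str^{\sca}_{\phi}(\Un^{\sca}_{\phi}\F \times K), \G\bigr) \\
&\simeq \mathrm{map}_{(\s^+)^{\C}}\bigl(\Str^{\sca}_{\phi}(\Un^{\sca}_{\phi}\F) \times K, \G\bigr) \\
&\simeq \mathrm{map}_{(\s^+)^{\C}}(\F \times K, \G),
\end{align*}
where the first equivalence is the derived adjunction (\(\Un^{\sca}_{\phi}\F \times K\) is cofibrant and \(\G\) is fibrant), the second uses that \(\Str^{\sca}_{\phi}\) is homotopically strongly \(\s^+\)-enriched, and the third is induced by the derived counit \(\Str^{\sca}_{\phi}\Un^{\sca}_{\phi}\F \to \F\) (a weak equivalence of cofibrant objects because \(\F\) is fibrant) tensored with \(K\). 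Checking that this composite is inverse to the map induced by \([\Un^{\sca}_{\phi}]^{\circ}\) is a matter of unwinding the enriched-functor structure. Note that tensoring \(\F\) by \(K\) need not preserve fibrancy, which is exactly why this chain is organized to apply the counit at the fibrant object \(\F\) \emph{before} tensoring with \(K\).

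The crux, and the step I expect to be the main obstacle, is the homotopical strength used in the middle equivalence --- equivalently, that the lax structure map \(\Un^{\sca}_{\phi}(\F) \times K \to \Un^{\sca}_{\phi}(\F \times K)\) of~\eqref{e:lax-tensored} is a weak equivalence. By the isomorphism \(\Un^{\sca}_{\phi}(\F) \times \Un^{\sca}_{\ast}(K) \cong \Un^{\sca}_{\phi}(\F \times K)\) and the fact that the tensoring of \(\trbis{(\s^+)}{S}\) over \(\s^+\) is a left Quillen bifunctor, this reduces to showing that the natural map \(K \to \Un^{\sca}_{\ast}(K)\) is a weak equivalence, i.e.\ that scaled unstraightening over \(\Del^0\) is homotopically the identity; this last point follows from the straightening--unstraightening Quillen equivalence over the point \(\Del^0\).
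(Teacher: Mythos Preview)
Your outline is correct and closely parallels the paper's argument, but the step you yourself flag as ``the crux'' contains a genuine gap. The reduction of the weak equivalence of~\eqref{e:lax-tensored} to the assertion that \(K \to \Un^{\sca}_{\ast}(K)\) is a weak equivalence is valid, but your justification of the latter --- ``this follows from the straightening--unstraightening Quillen equivalence over the point'' --- is not. Knowing that \(\Str^{\sca}_{\ast} \dashv \Un^{\sca}_{\ast}\) is a Quillen auto-equivalence of \(\s^+\) does \emph{not} imply that \(\Un^{\sca}_{\ast}\) is homotopically the identity: there exist Quillen auto-equivalences of \(\s^+\) not equivalent to the identity (for instance \((-)^{\op}\)), and the map \(K \to \Un^{\sca}_{\ast}(K)\) is neither the unit nor the counit of that adjunction. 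Verifying that this specific comparison map is a weak equivalence requires an honest argument about the scaled cone construction, which is exactly the content of~\cite[Proposition~3.6.1]{LurieGoodwillie}. The paper simply cites that result (together with~\cite[Corollary~1.4.4(b)]{HoveyModelCategories}) to conclude directly that~\eqref{e:lax-tensored} is a weak equivalence for fibrant \(\F\) and \(K\), without the detour through the point; your reduction does not buy anything, since the special case over \(\Del^0\) is no easier than the general one.

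There is also a secondary organizational difference worth noting. The paper restricts from the outset to \emph{fibrant} \(K \in \s^+\), so that \(\F \times K\) is again projectively fibrant and one can argue directly with sets of homotopy classes \([-,-]\), obtaining the bijection
\[
[K,(\s^+)^{\C}(\F,\G)]_{\s^+} \cong [\F \times K,\G] \xrightarrow{\cong} [\Un^{\sca}_{\phi}(\F \times K),\Un^{\sca}_{\phi}(\G)] \xrightarrow{\cong} [\Un^{\sca}_{\phi}(\F) \times K,\Un^{\sca}_{\phi}(\G)]
\]
in one line. Your route through the derived adjunction and the counit, designed to cope with non-fibrant \(\F \times K\), is correct but unnecessary once one has the fibrancy of \(\F \times K\), and in any case still hinges on the same unproven compatibility of \(\Str^{\sca}_{\phi}\) with the \(\s^+\)-tensoring.
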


Taking scaled nerves we now obtain a form of the unstraightening construction as an equivalence of \(\infty\)-bicategories 
\[\Nsc[(\s^+)^{\C}]^{\circ} \xrightarrow{\simeq} \Nsc[\trbis{(\s^+)}{S}]^{\circ}.\]

\begin{proof}[Proof of Lemma~\ref{l:unstraightening-simplicial}]
To begin, note that this functor is essentially surjective since \(\Un^{\sca}_{\phi}\) is a right Quillen equivalence. To see that it is also homotopically fully-faithful we use the fact that the map~\eqref{e:lax-tensored} is a weak equivalence whenever \(\F\) and \(K\) are fibrant by~\cite[Proposition 3.6.1]{LurieGoodwillie} and~\cite[Corollary~1.4.4(b)]{HoveyModelCategories}. Then, for every fibrant \(\F,\G \in (\s^+)^{\C}\) and a fibrant \(K \in \s^+\) the functor \(\F \times K\) is again fibrant and the induced map of sets
\begin{align*}
[K,(\s^+)^{\C}(\F,\G)]_{\s^+} \cong [\F \times K,\G]_{(\s^+)^{\C}} \\
\xrightarrow{\cong} [\Un^{\sca}_{\phi}(\F \times K),\Un^{\sca}_{\phi}(\G)]_{\trbis{(\s^+)}{S}}\\ \xrightarrow{\cong} [\Un^{\sca}_{\phi}(\F) \times K,\Un^{\sca}_{\phi}(\G)]_{\trbis{(\s^+)}{S}} \\
[K,\trbis{(\s^+)}{S}(\F,\G)]_{\s^+}
\end{align*}
is a bijection, where \([-,-]\) denotes sets of homotopy classes of maps with respect to the relevant model structure. It then follows that \(\Un^{\sca}_{\phi}\) induces a weak equivalence of marked simplicial sets, that is, a marked categorical equivalence
\[ (\s^+)^{\C}(\F,\G) \xrightarrow{\simeq} \trbis{(\s^+)}{S}(\F,\G),\]
for every \(\F,\G \in [(\s^+)^{\C}]^{\circ}\).
\end{proof}

\subsection{Marked-scaled simplicial sets}

In our treatment of fibrations of \(\infty\)-bicategories, it will be useful to work in a setting where we have both a scaling and a marking.

\begin{define}
	A \ndef{marked-scaled simplicial set} is a triple \((X,E_X,T_X)\) where \(X\) is a simplicial set, \(E_X\) is a collection of edges containing all the degenerate edges and \(T_X\) is collection of \(2\)-simplices containing all the degenerate \(2\)-simplices. In particular, if \((X,E_X,T_X)\) is a marked-scaled simplicial set then \((X,E_X)\) is a marked simplicial set and \((X,T_X)\) is a scaled simplicial set. A map of marked-scaled simplicial sets \((X,E_X,T_X) \to (T,E_Y,T_Y)\) is a map of simplcial sets \(X \to Y\) such that \(f(E_X) \subseteq E_Y\) and \(f(T_X) \subseteq T_Y\).  
\end{define}

We will denote by \(\Sms\) the category of marked-scaled simplicial sets. It is locally presentable and cartesian closed.

\begin{define}
	For \(X \in \s^+\) a marked simplicial set we will denote by \(X_\flat = (X,E_X,\deg_2(X))\) 
	the marked-scaled simplicial set which has the same marking as \(X\) and only the degenerate \(2\)-simplices are thin, 
	and by \(X_\sharp = (X,E_X,X_2)\) the marked-scaled simplicial set which has the same marking as \(X\) and all \(2\)-simplices are thin. 
	For \(Y\) is a scaled simplicial set then we will denote by \(Y^{\flat} = (Y,\deg_1(Y),T_Y)\) 
	the marked-scaled simplicial set which has the same scaling as \(Y\) and only the degenerate edges marked 
	and by \(Y^{\sharp} = (Y,Y_1,T_Y)\) the marked-scaled simplicial which has the same scaling as \(Y\) and all edges are marked. 
	Finally, for \(Z\) a simplicial set we will denote by  \(\prescript{\flat}{}{Z} = (Z,\deg_1(Z),\deg_2(Z))\) and \(\prescript{\sharp}{}{Z} = (Z,Z_1,Z_2)\) 
	the corresponding minimal and maximal marked-scaled simplicial sets as indicated. By abuse of notation we will denote \(\prescript{\sharp}{}{\Del^0} \cong \prescript{\flat}{}{\Del^0}\) simply by \(\Del^0\).
\end{define}

\begin{define}\label{d:underlying}
For a marked-scaled simplicial set \(X\) we will denote by \(\ovl{X}\) the underlying \emph{scaled} simplicial set. 
\end{define}

\begin{define}\label{d:marked-bicategory}
By a \emph{marked \(\infty\)-bicategory} we will simply mean a marked-scaled simplicial set whose underlying scaled simplicial set is a weak \(\infty\)-bicategory.
\end{define}

\section{Inner and outer cartesian fibrations}
\label{s:fibrations}

In this section we will study four types of fibrations between \(\infty\)-categories, which we call inner cocartesian, inner cartesian, outer cocartesian and outer cartesian fibrations. The notion of an inner cocartesian fibration \(\E \to \B\) is essentially equivalent to that of a \(\Beta_{\B}\)-fibered object, as described in \S\ref{s:straightening}. We will make the comparison precise in \S\ref{s:slice}, see Proposition~\ref{p:inner-fibred}. Assuming this for the moment, Lurie's straightening-unstraightening equivalence then implies that 
\(\E \to \B\) can be obtained as the unstraightening of an objectwise fibrant functor \(\fCs(\B) \to \s^+\), which we can also encode as a map \(\chi\colon \B \to \Catoo\). Using the compatibility of straightening-unstraightening with base change we may informally describe \(\chi\)  
by the formula \(b \mapsto \E_b := \E \times_{\B}\{b\}\). 

Dually, a map \(f \colon \E \to \B\) is an inner \emph{cartesian} fibration if \(f^{\op}\colon \E^{\op} \to \B^{\op}\) is an inner cocartesian fibration. Then \(f^{\op}\) encodes the data of a diagram \(\B^{\op} \to \Catoo\), given informally by the formula \(b \mapsto \E_b^{\op}\). Noting (see Example~\ref{ex:op-on-cat}) that the functor \((-)^\op\) yields an equivalence of \((\infty,2)\)-categories \((-)^{\op}\colon \Catoo \to \Catoo^{\co}\), we may consider the association \(b \mapsto \E_b\) as a functor \(\B^{\coop} \to \Catoo\).

In the paper~\cite{GagnaHarpazLanariEquiv} we have introduced two more notions of fibrations, which we call \emph{outer} cartesian and cocartesian fibrations, respectively. Our primary goal, to which we will arrive in \S\ref{sec:correspondence}, is to show that the data of an outer cartesian fibration \(f\colon \E \to \B\) encodes a functor \(\B^{\op} \mapsto \Cat\), while that of an outer cocartesian fibration encodes a functor \(\B^{\co} \mapsto \Cat\). In particular, the four types of fibrations mentioned above correspond exactly to the four variance types a \(\Cat\)-valued diagram can have. For this, we will dedicate the present section to studying the properties of these four types of fibrations, establishing the key results about them we will need later on. In particular, in \S\ref{s:cartesian} we will study the maps induced by these fibrations on the level of mapping \(\infty\)-categories, and show that these are always left and right fibrations. In \S\ref{sec:car-edges} we will give several equivalent characterizations of (co)cartesian edges, and deduce in particular that any invertible edge is (co)cartesian. In \S\ref{s:slice} we will use this in order to prove that outer cartesian and cocartesian fibrations can be characterized via an extension property against a certain class of anodyne maps, in manner similar in principal to that which allows one to identify inner cocartesian fibrations with \(\Beta_{\B}\)-fibered objects. Finally, in \S\ref{s:lift} we will prove that these types of fibrations admit a lifting property for (op)lax transformations.

\subsection{Recollections}\label{s:prelim}

In this section we recall from~\cite{GagnaHarpazLanariEquiv} the main definitions we will need, and recall some of their properties which were already established in \loccit

\begin{define}\label{d:weak}
	We will say that a map of scaled simplicial sets \(X \rightarrow Y\) is a \ndef{weak fibration} if it has the right lifting property with respect to the following types of maps:
	\begin{enumerate}
		\item
		All scaled inner horn inclusions of the form 
		\[ (\Lam^n_i,\{\Del^{\{i,i-1,i\}}\}_{|\Lam^n_i}) \subseteq (\Del^n,\{\Del^{\{i,i-1,i\}}\}) \] 
		for \(n \geq 2\) and \(0 < i < n\).
		\item
		The scaled horn inclusions of the form: \[\Bigl(\Lam^n_0 \plus{\Del^{\{0,1\}}}\Del^0,\{\Del^{\{0,1,n\}}\}_{|\Lam^n_0}\Bigr) \subseteq \Bigl(\Del^n\plus{\Del^{\{0,1\}}}\Del^0,\{\Del^{\{0,1,n\}}\}\Bigr)\] for \(n \geq 2\).
		\item
		The scaled horn inclusions of the form: \[\Bigl(\Lam^n_n \plus{\Del^{\{n-1,n\}}}\Del^0,\{\Del^{\{0,n-1,n\}}\}_{|\Lam^n_n}\Bigr) \subseteq \Bigl(\Del^n\plus{\Del^{\{n-1,n\}}}\Del^0,\{\Del^{\{0,n-1,n\}}\}\Bigr)\] for \(n \geq 2\).
	\end{enumerate}
\end{define}

\begin{rem}\label{r:bicategorical-weak}
	The maps of type (1)-(3) in Definition~\ref{d:weak} are trivial cofibrations with respect to the bicategorical model structure: indeed, the first two are scaled anodyne and the third is the opposite of a scaled anodyne map. It follows that every bicategorical fibration is a weak fibration. 
\end{rem}

\begin{rem}\label{r:weak-scaled}
Let \(f\colon X \to Y\) be a weak fibration and suppose in addition that \(f\) detects thin triangles, that is, a triangle in \(X\) is thin if and only if its image in \(Y\) is thin. Then \(f\) has the right lifting property with respect to the generating scaled anodyne maps of Definition~\ref{d:anodyne}. In particular, if \(Y\) is an \(\infty\)-bicategory then \(X\) is an \(\infty\)-bicategory. In addition, in this case for every \(y \in Y\) the fiber \(X_y\) is an \(\infty\)-bicategory in which every triangle is thin, and can hence be considered as an \(\infty\)-category.
\end{rem}

\begin{define}\label{d:cartesian}
	Let \(f\colon X \rightarrow Y\) be a weak fibration. We will say that an edge \(e\colon \Del^1 \rightarrow X\) is \ndef{\(f\)-cartesian} if the dotted lift exists in any diagram of the form
	\[ \xymatrix{
		(\Lam^n_n,\{\Del^{\{0,n-1,n\}}\}_{|\Lam^n_n}) \ar^-{\sig}[r]\ar[d] & (X,T_X) \ar^f[d] \\
		(\Del^n,\{\Del^{\{0,n-1,n\}}\}) \ar@{-->}[ur]\ar[r] & (Y,T_Y) \\
	}\]
	with \(n \geq 2\) and \(\sig_{|\Del^{n-1,n}} = e\). We will say that \(e\) is \(f\)-cocartesian if \(e^{\op}\colon \Del^1 \to X^{\op}\) is \(f^{\op}\)-cartesian.
\end{define}

\begin{define}
Let \(f\colon X \rightarrow Y\) be a weak fibration. 	
We will say that \(f\) is 
\begin{enumerate}
\item an \ndef{inner fibration} if it detects thin triangles and the underlying map of simplicial sets is in inner fibration, that is, satisfies the right lifting property with respect to inner horn inclusions;
\item an \ndef{outer fibration} if it detects thin triangles and  
the underlying map of simplicial sets 
satisfies the right lifting property with respect to the inclusions 
\[\Lam^n_0 \coprod_{\Del^{\{0,1\}}}\Del^0 \subseteq \Del^n\coprod_{\Del^{\{0,1\}}}\Del^0 \quad \text{and} \quad
\Lam^n_n \coprod_{\Del^{\{n-1,n\}}}\Del^0 \subseteq \Del^n\coprod_{\Del^{\{n-1,n\}}}\Del^0\]
for \(n \geq 2\). 
\end{enumerate}
\end{define}

\begin{warning}
	In \cite[\href{https://kerodon.net/tag/01WF}{Tag 01WF}]{LurieKerodon}, Lurie uses the term \emph{interior fibration} to encode what we just defined as outer fibrations. Our choice already appeared in Definition 2.4 of \cite{GagnaHarpazLanariEquiv}, and it is motivated by the intent of highlighting that \emph{special outer horns} admit fillers against such maps.
\end{warning}

\begin{define}\label{d:car-fibration}
	Let \(f\colon X \rightarrow Y\) be a map of scaled simplicial sets.
	We will say that \(f\) is an \ndef{outer} (resp.~\ndef{inner}) \ndef{cartesian fibration} if the following conditions hold:
	\begin{enumerate}[leftmargin=*]
		\item
		The map \(f\) is an outer (resp.~inner) fibration.
		\item
		For every \(x \in X\) and an edge \(e\colon y \rightarrow f(x)\) in \(Y\) there exists a \(f\)-cartesian edge \(\wtl{e}\colon \wtl{y} \to x\) such that \(f(\wtl{e}) = e\). 
	\end{enumerate}
	Dually, we will say that \(f\colon X \rightarrow Y\) is an \ndef{outer cocartesian fibration} if \(f^{\op}\colon X^{\op} \rightarrow Y^{\op}\) is an outer cartesian fibration.
\end{define}

\begin{rem}\label{r:base-change}
	The classes of weak fibrations, inner/outer fibrations and inner/outer (co)cartesian fibrations are all closed under base change.
\end{rem}

It follows from Remark~\ref{r:weak-scaled} that if \(f\colon X \to Y\) is an inner/outer (co)cartesian fibration and \(X\) is an \(\infty\)-bicategory then \(Y\) is an \(\infty\)-bicategory as well. In this case we will say that \(f\) is an inner/outer (co)cartesian fibration of \(\infty\)-bicategories.

\begin{rem}\label{r:iso}
	Let \(f\colon\E \to \B\) be an inner/outer (co)cartesian fibration of \(\infty\)-bicat\-e\-gor\-ies. 
	Then the base change \(f_{|\B^{\thi}} \colon \E \times_{\B} \B^{\thi} \to \B^{\thi}\)
	(see Definition~\ref{core defi}) is a (co)cartesian fibration of \(\infty\)-categories. 
	In particular, \(f_{|\B^{\thi}}\) is a categorical fibration (see~\cite[Proposition~3.3.1.7]{HTT})
	and so an isofibration. We may hence conclude that \(f\) is an isofibration of \(\infty\)-bicategories. 
\end{rem}

\begin{rem}\label{r:cart-equiv}
	In the setting of Remark~\ref{r:iso}, if \(e\colon x \to y\) is a \(f\)-(co)cartesian edge of \(\E\),
	then it also (co)cartesian with respect to the (co)cartesian fibration of \(\infty\)-categories \(\E^{\thi} \to \B^{\thi}\).
	This implies, in particular, that any \(f\)-(co)cartesian edge which lies over an equivalence in \(\B\) is necessarily an equivalence in \(\E\).
\end{rem}

\begin{rem}
\label{cart fib are fib}
	An inner/outer (co)cartesian fibration \(f\colon \E \to \B\) of \(\infty\)-bicategories is a fibration of scaled simplicial sets. This follows the characterization of the bicategorical model structure established in~\cite{GagnaHarpazLanariEquiv} since \(f\) lifts against scaled anodyne maps by virtue of being a weak fibration and is an isofibration by Remark~\ref{r:iso}.
\end{rem}

\begin{rem}\label{r:fibers}
	It follows from Remarks~\ref{cart fib are fib} and~\ref{r:base-change} that if \(X \to Y\) is an inner/outer fibration then for every \(y \in Y\) the fiber \(X_y\) is an \(\infty\)-bicategory in which every triangle is thin. Forgetting the scaling, we may simply consider these fibers as \(\infty\)-categories.
\end{rem}

An important source of examples of outer (co)cartesian fibrations comes from \emph{slice fibrations}. Let us recall from~\cite{GagnaHarpazLanariEquiv} the relevant definitions.

\begin{define}\label{d:join}
	Let \((X,E_X,T_X)\) and \((Y,E_Y,T_Y)\) be two marked-scaled simplicial sets. We define their \ndef{join} \((X,E_X,T_X) * (Y,E_Y,T_Y) = (X \ast Y,T_{X\ast Y})\) to be the \emph{scaled} simplicial set whose underlying simplicial set is the ordinary join of simplicial sets \(X \ast Y\), and whose thin triangles are given by
	\[
	 T_{X\ast Y} = T_X \coprod (E_X \times Y_0) \coprod (X_0 \times E_Y) \coprod T_Y
	\]
	seen as a subset of 
	\[
	 (X \ast Y)_2 = X_2 \coprod (X_1 \times Y_0) \coprod (X_0 \times Y_1) \coprod Y_2.
	\]
\end{define}

To avoid confusion, we point out that while the input to the join bifunctor above are two marked-scaled simplicial sets, its output is only considered as a scaled simplicial set.

\begin{const}\label{con:slice}
Recall that for a marked-scaled simplicial set \(K\) we denote by \(\ovl{K}\) the underlying \emph{scaled} simplicial set (see Definition~\ref{d:underlying}).  
For a fixed marked-scaled simplicial set \(K\) we may regard the association \(X \mapsto X \ast K\)
as a functor \(\Sms \lrar \overslice{(\Ss)}{\ovl{K}}\). 
As such, it becomes a colimit preserving functor with a right adjoint
\[\overslice{(\Ss)}{\ovl{K}} \lrar \Sms\] 
by the adjoint functor theorem. 
Given a scaled simplicial set \(S\) and a map \(f\colon \ovl{K} \lrar S\), considered as an object of \(\overslice{(\Ss)}{\ovl{K}}\),
we will denote by \(S_{/f}\) the marked-scaled simplicial set obtained by applying the above mentioned right adjoint, and by \(\ovl{S}_{/f}\) the underlying scaled simplicial set of \(S_{/f}\). In particular, the marked-scaled simplicial set \(S_{/f}\) is characterized by the following mapping property
\[
 \Hom_{\Sms}\bigl(X, S_{/f}\bigr) = \Hom_{\overslice{(\Ss)}{\ovl{K}}}\bigl(X \ast K, S\bigr) ,
\]
while \(\ovl{S}_{/f}\) is characterized by the property
\[
 \Hom_{\Ss}\bigl(X, \ovl{S}_{/f}\bigr) = \Hom_{\overslice{(\Ss)}{\ovl{K}}}\bigl(X^{\flat} \ast K, S\bigr) .
\]
\end{const}

\begin{example}
If \(f\colon \emptyset \to S\) is the unique map from the empty scaled simplicial set then \(S_{/f} = S^{\sharp} = (S,S_1,T_S)\) is the marked-scaled simplicial set having the same underlying scaled simplicial set as \(S\) and with all edges marked. In this case we will also use the notation \(S_{/\emptyset}\). In particular, \(\ovl{S}_{/\emptyset} \cong S\) canonically.
\end{example}

\begin{example}
If \(K = \Del^0\) and \(f\colon \Del^0 \to S\) corresponds to a vertex \(x \in S\) then we will denote \(S_{/f}\) also by \(S_{/x}\). This can be considered as a version of the slice construction in the setting of scaled simplicial set. For example, if \(\C\) is an \(\infty\)-bicategory and \(x,y \in \C\) are two objects, then the fiber of \(\C_{/y}\) over \(x \in \C\), which is a marked-scaled simplicial set in which all triangles are thin, is categorically equivalent as a marked simplicial set to the mapping \(\infty\)-category \(\Hom_{\C}(x,y)\) of Notation~\ref{n:mapping}, see~\cite[Proposition 2.24]{GagnaHarpazLanariEquiv}. We may consider this as a ``thinner'' model for the mapping \(\infty\)-category, and will make use of it in \S\ref{s:cartesian} below.
\end{example}

\begin{warning}
The notation \(S_{/f}\) is somewhat abusive - the marked-scaled simplicial set \(S_{/f}\) depends not only on the scaled map \(f\colon \ovl{K} \to S\), but also on the given marking on \(K\). For example, suppose that \(K\) is \(\Del^1\) with some marking \(E \subseteq (\Del^1)_1\) and \(f\) corresponds to a given edge \(e\colon x \to y\) in \(S\). If \(E\) consists only of degenerate edges then the vertices of \(S_{/f}\) are given by arbitrary triangles of the form
\[ \xymatrix{
& z \ar[dl]\ar[dr] & \\
x \ar[rr]^{e} && y \\
}\] 
in \(S\), while if \(E\) contains the non-degenerate edge of \(\Del^1\) then the vertices of \(S_{/f}\) correspond only to those triangles as above which are \emph{thin}. 
\end{warning}

We now recall from~\cite{GagnaHarpazLanariEquiv} the following results concerning the above slice constructions:
 
\begin{prop}[{\cite[Corollary 2.18]{GagnaHarpazLanariEquiv}}]\label{p:slice-fibration}
Let \(K\) be a marked scaled simplicial sets, \(\C\) an \(\infty\)-bicategory and \(f\colon \ovl{K} \to \C\) a scaled map. Then the map of scaled simplicial sets
\[p\colon \ovl{\C}_{/f} \to \ovl{\C}_{/\emptyset} = \C \]
is an outer cartesian fibration such that every marked edge in \(\C_{/f}\) is \(p\)-cartesian
and every edge in \(\C\) admits a marked \(p\)-cartesian lift.
\end{prop}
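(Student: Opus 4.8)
The plan is to verify each clause in the definition of an outer cartesian fibration (Definition~\ref{d:car-fibration}) directly for \(p\colon \ovl{\C}_{/f} \to \C\), by transporting every lifting problem across the slice adjunction of Construction~\ref{con:slice} into an extension problem for \(\C\), and then solving it using that \(\C\) is an \(\infty\)-bicategory, hence fibrant in the bicategorical model structure (Theorem~\ref{t:bicategorical}). Concretely, for a monomorphism \(i\colon A \to B\) in \(\Ss\), a commutative square
\[
\begin{tikzcd}
A \ar[r]\ar[d, "i"'] & \ovl{\C}_{/f} \ar[d, "p"] \\
B \ar[r] & \C
\end{tikzcd}
\]
corresponds, under the characterizations of \(\ovl{\C}_{/f}\) and of \(\C = \ovl{\C}_{/\emptyset}\), to an extension problem for the map \(B^{\flat} \ast K \to \C\) along the \emph{join pushout product}
\[
\bigl(A^{\flat} \ast K\bigr)\coprod_{A^{\flat}} B^{\flat} \longrightarrow B^{\flat} \ast K
\]
of \(i^{\flat}\) with the inclusion \(\emptyset \hookrightarrow K\). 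Thus a lift exists whenever this map is a bicategorical trivial cofibration. The whole argument therefore reduces to computing these join pushout products for the relevant generating maps and recognizing them as (opposites of) scaled anodyne maps, which are trivial cofibrations by Remark~\ref{r:bicategorical-weak}.

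First I would treat the weak-fibration and outer-fibration conditions, reducing the right lifting property against the generating maps of Definition~\ref{d:weak} and the outer horns of Definition~\ref{d:car-fibration} to the join pushout products above. A scaled inner horn \(\Lam^n_i \subseteq \Del^n\), joined on the right with \(K\), stays inner and yields a scaled anodyne map of type (i). The decisive computation is that the right join \emph{converts} last-vertex data into inner data: joining \(\Lam^n_n \subseteq \Del^n\) (the horn governing \(f\)-cartesian edges, and, with a collapse, the outer-fibration condition) with a simplex on the right makes the missing vertex \(n\) interior, producing an inner horn in the larger simplex and hence a scaled anodyne map; meanwhile the first-vertex horns \(\Lam^n_0 \coprod_{\Del^{\{0,1\}}} \Del^0\) remain special outer horns, matching type (iii). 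In each case one must check that the thin triangles produced by the join scaling of Definition~\ref{d:join} coincide with the scalings prescribed in Definitions~\ref{d:weak} and~\ref{d:anodyne}. Detection of thin triangles is then immediate from the same convention: by the universal property of the slice, a triangle of \(\ovl{\C}_{/f}\) lifts to a thin triangle precisely when its base triangle is thin in \(\C\), so \(p\) detects thin triangles; combined with Remark~\ref{r:weak-scaled} this also upgrades the lifting properties as needed.

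Next I would produce the cartesian lifts, refined to marked ones. Given an edge \(e\colon y \to z\) of \(\C\) together with a vertex of \(\C_{/f}\) over \(z\) — that is, a cone \(g\colon \{z\}\ast K \to \C\) with \(g_{|K} = f\) — constructing a marked cartesian lift amounts, via the \(\Sms\)-adjunction, to extending \(g\) and \(e\) to a map \((\Del^1)^{\sharp}\ast K \to \C\). The inclusion of the already-given data into \((\Del^1)^{\sharp}\ast K\) is again a join pushout product, namely of \(\{1\}\hookrightarrow \Del^1\) with \(\emptyset \hookrightarrow K\), which the right join turns into a scaled inner anodyne map (for \(K=\Del^0\) it is literally \(\Lam^2_1 \subseteq \Del^2\) with its triangle thin). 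Hence the extension exists because \(\C\) is an \(\infty\)-bicategory, and the marking of the resulting edge corresponds exactly to declaring the triangles ``\(e \ast(\text{vertex of }K)\)'' thin. That, conversely, every marked edge of \(\C_{/f}\) is \(p\)-cartesian is checked against the horns of Definition~\ref{d:cartesian} by the identical reduction: markedness supplies precisely the thin triangles needed for the join pushout products of \(\Lam^n_n \subseteq \Del^n\) with \(\emptyset \hookrightarrow K\) to be scaled anodyne.

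The main obstacle is the combinatorial bookkeeping in the second paragraph: checking, uniformly in \(n\) and in \(K\), that the thin triangles created by the join scaling of the relevant horns match on the nose the scalings appearing in the generating scaled anodyne maps \(\bS\) (and their opposites). The cleanest way to organize this is to isolate a single ``join anodyne'' lemma asserting that \((-)\ast K\) carries each generating inner, outer and cartesian horn inclusion to an (opposite of an) scaled anodyne map, and then feed it into the adjunction reduction above; this is precisely the computation carried out in~\cite[\S 2]{GagnaHarpazLanariEquiv}.
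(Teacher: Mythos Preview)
Your sketch is correct, and it matches the approach of the cited reference. Note, however, that the present paper does not actually prove this statement: it is quoted from~\cite[Corollary 2.18]{GagnaHarpazLanariEquiv}, and the proof there proceeds exactly along the lines you describe. The ``join anodyne'' lemma you isolate at the end is precisely~\cite[Lemma 2.17]{GagnaHarpazLanariEquiv}, whose extension to the full list of outer cartesian anodyne maps appears in the present paper as Lemma~\ref{l:pushout-join}; Corollary~\ref{cor:slice} then deduces the analogous statement for the thick slice by the same adjunction argument you outline. So there is nothing to compare on the level of strategy: your reduction via the slice adjunction to join pushout-products, and the observation that right-joining with \(K\) converts the last-vertex horn \(\Lam^n_n \subseteq \Del^n\) into an inner horn of \(\Del^{n}\ast\Del^0\), is exactly the content of the cited proof.
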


More generally:
\begin{prop}[{\cite[Corollary 2.20]{GagnaHarpazLanariEquiv}}]\label{p:slice-fibration-2}
Let \(\iota\colon L \subseteq K\) be an inclusion of marked-scaled simplicial sets, \(q\colon X \to S\) is a weak fibration and \(f\colon \ovl{K} \to X\) is a scaled map. Then the map of scaled simplicial sets
\[p\colon \ovl{X}_{/f} \to \ovl{X}_{/f\iota} \times_{\ovl{S}_{/qf\iota}} \times \ovl{S}_{/qf}\]
is an outer fibration such that every marked edge in its domain is \(p\)-cartesian and every marked edge in its codomain admits a marked \(p\)-cartesian lift.
\end{prop}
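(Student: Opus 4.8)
The plan is to reduce every lifting and cartesian-edge assertion about \(p\) to the defining adjunction of the slice construction (Construction~\ref{con:slice}) together with a single pushout-product estimate, in the spirit of the special case recorded in Proposition~\ref{p:slice-fibration} (which is recovered from the present statement by taking \(L = \emptyset\) and \(S = \Del^0\)). Recall that \(\ovl{X}_{/f}\) is characterized by
\[ \Hom_{\Ss}(Y,\ovl{X}_{/f}) = \Hom_{\overslice{(\Ss)}{\ovl{K}}}(Y^{\flat}\ast K, X) , \]
and analogously for the three other slices \(\ovl{X}_{/f\iota}\), \(\ovl{S}_{/qf\iota}\), \(\ovl{S}_{/qf}\) forming the target of \(p\) (using the restrictions along \(\iota\) of \(f\) and \(qf\)). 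By the universal property of the fiber product, a lifting problem for \(p\) against a monomorphism \(A \hookrightarrow B\) of scaled simplicial sets unwinds, via the join--slice adjunction, into a lifting problem for \(q\colon X \to S\) against the \emph{Leibniz join}
\[ (A^{\flat}\ast K) \coprod_{A^{\flat}\ast L} (B^{\flat}\ast L) \hookrightarrow B^{\flat}\ast K , \]
where the bottom map \(B^{\flat}\ast K \to S\) is supplied by the \(\ovl{S}_{/qf}\)-component of the square, and the top map is the \(A\)-entry into \(\ovl{X}_{/f}\) glued with the \(\ovl{X}_{/f\iota}\)-component along \(A^{\flat}\ast L\). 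Thus everything comes down to understanding this Leibniz join as \(A \hookrightarrow B\) ranges over the generators we must test.

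The key step I would isolate is therefore a combinatorial lemma: for each of the generating trivial cofibrations against which a weak fibration lifts --- namely the three families of Definition~\ref{d:weak} --- the associated Leibniz join displayed above lies in the weakly saturated class generated by those same families (equivalently, is assembled from scaled anodyne maps of Definition~\ref{d:anodyne} and their opposites). By the standard pushout-product formalism it suffices to let \(L \hookrightarrow K\) run over the generating cofibrations of \(\Sms\), the essential case being a boundary inclusion \(\partial\Del^m \hookrightarrow \Del^m\) (the marking- and scaling-enlarging generators being isomorphisms on underlying simplicial sets and handled directly); the general \(L \hookrightarrow K\) then follows by cobase change and transfinite composition. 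For a boundary inclusion the Leibniz join becomes an inclusion of the form \((A \ast \Del^m)\cup(B \ast \partial\Del^m) \hookrightarrow B \ast \Del^m\), and using the identification of a join of simplices with a larger simplex one checks that joining a special outer \(0\)- or \(n\)-horn with \(\Del^m\) decomposes as an iterated composite of special outer horn inclusions of the larger simplex, with scalings matching those prescribed by the join (Definition~\ref{d:join}). This is the exact scaled analogue of the classical fact that \(\Lam^n_i \ast \Del^m\) glues up to a horn of \(\Del^{n+m+1}\), and it is where the bulk of the work sits.

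Granting the lemma, I would then deduce the statement in three strokes. That \(p\) is an outer fibration follows by feeding in, as \(A \hookrightarrow B\), the two families of special outer horn inclusions (\(\Lam^n_0\)- and \(\Lam^n_n\)-based) together with the inner-horn family: in each case \(q\) lifts against the resulting Leibniz join, and transporting the solution back through the adjunction solves the original problem for \(p\). Detection of thin triangles by \(p\) I would check separately and directly: unwinding the slice descriptions, a triangle of \(\ovl{X}_{/f}\) is thin precisely when the corresponding triangles of \(X\) under the join are thin, which, because \(q\) detects thin triangles, matches thinness of the image triangle in the target fiber product. For the cartesian-edge claim I would rerun the \(\Lam^n_n\)-based argument with the edge \(\Del^{\{n-1,n\}}\) marked: a marked edge of \(\ovl{X}_{/f}\) corresponds to marked join data, and the lemma then exhibits exactly the lifting property of Definition~\ref{d:cartesian}, so every marked edge is \(p\)-cartesian. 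Finally, existence of marked \(p\)-cartesian lifts for marked edges of the codomain is obtained by solving the Leibniz-join lifting problem attached to \(\Del^{\{n-1,n\}} \hookrightarrow \Del^n\), producing a lift whose markedness --- hence, by the previous point, cartesianness --- is built into the construction.

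The hard part will be the combinatorial lemma of the second paragraph. One must track, through the join bifunctor of Definition~\ref{d:join}, both which triangles become thin and which edges remain marked, and then verify that the Leibniz joins decompose into precisely the scaled-anodyne-or-opposite pieces that a mere weak fibration --- as opposed to a full bicategorical fibration --- is guaranteed to lift against. The bookkeeping of markings inherited from \(K\) versus \(L\) is the delicate point, since it is exactly this data that dictates which edges of the slice end up \(p\)-cartesian. Once this decomposition is in hand, all remaining steps are formal consequences of the join--slice adjunction.
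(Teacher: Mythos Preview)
The paper does not prove this proposition; it is imported wholesale from \cite[Corollary~2.20]{GagnaHarpazLanariEquiv}, so there is no ``paper's own proof'' to compare against beyond that citation. Your approach --- transpose lifting problems for \(p\) along the join--slice adjunction and show that the resulting Leibniz joins lie in the class against which the weak fibration \(q\) lifts --- is exactly the method used in the cited reference, where the combinatorial lemma you isolate is \cite[Lemma~2.17]{GagnaHarpazLanariEquiv} (the present paper invokes that lemma repeatedly and extends it in Lemma~\ref{l:pushout-join}).

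Two small imprecisions are worth flagging. First, in your second paragraph you say the lemma is about ``the generating trivial cofibrations against which a weak fibration lifts''; but what you actually need (and what you correctly feed in later) are the generators defining an \emph{outer} fibration, and the conclusion is that their Leibniz joins land in the weak-fibration class --- the quantifiers go in opposite directions on the two sides. Second, for existence of marked cartesian lifts the relevant inclusion is \(\Del^{\{1\}} \hookrightarrow (\Del^1)^{\sharp}\) (the \(n=1\) case of Definition~\ref{d:anodyne-marked}(2)), not ``\(\Del^{\{n-1,n\}} \hookrightarrow \Del^n\)''. Neither affects the strategy.
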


\begin{rem}
In contrast to the case of Proposition~\ref{p:slice-fibration}, in the situation of Proposition~\ref{p:slice-fibration-2} the map \(p\) is 
an outer fibration which is generally not cartesian: only some of the maps in its codomain admit cartesian lifts.
\end{rem}

The following result is a reformulation in the present language of \cite[Lemma 1.2.8]{GaitsgoryRozenblyumStudy}, which is stated in \loccit without proof.

\begin{prop}
	Let \(\C\) be an \(\infty\)-category and \(f\colon X \to \C_{\sharp}\) be a weak fibration of scaled simplicial sets. 
	Then the following are equivalent:
	\begin{enumerate}
	\item \(f\) is an inner cartesian fibration.
	\item \(f\) is an outer cartesian fibration.
	\item All triangles in \(X\) are thin and the map of simplicial sets underlying \(f\) is a cartesian fibration.
	\end{enumerate}
\end{prop}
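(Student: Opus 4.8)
The plan is to reduce all three conditions to assertions about the \emph{underlying} map of simplicial sets, exploiting the fact that the base $\C_{\sharp}$ carries its maximal scaling. The first observation is the linchpin: since every triangle of $\C_{\sharp}$ is thin, the requirement that $f$ \emph{detects thin triangles}---which is built into the notion of both an inner and an outer fibration---is equivalent to the assertion that \emph{every triangle of $X$ is thin}. Thus each of (1)--(3) forces (or, in the case of (3), explicitly assumes) that $X$ coincides with $\ovl{X}$ equipped with its maximal scaling, and I would record this equivalence first, as it is exactly what collapses the scaled theory onto the unscaled one.

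Next I would show that, once every triangle of $X$ is thin, each scaled lifting problem occurring in the relevant definitions is equivalent to its underlying unscaled lifting problem. When both source and target have all triangles thin, the scaling constraints imposed by the generating maps of Definition~\ref{d:weak} and by the cartesian-edge diagrams of Definition~\ref{d:cartesian} become vacuous: any map into $X$ automatically carries triangles to thin ones, and the structure map to $\C_{\sharp}$ automatically respects thinness. This yields three identifications. First, right lifting against the scaled inner horns of Definition~\ref{d:weak}(1) is equivalent to the underlying map being an inner fibration of simplicial sets. Second, right lifting against the collapsed scaled outer horns $\Lam^n_0\coprod_{\Del^{\{0,1\}}}\Del^0 \subseteq \Del^n\coprod_{\Del^{\{0,1\}}}\Del^0$ and $\Lam^n_n\coprod_{\Del^{\{n-1,n\}}}\Del^0 \subseteq \Del^n\coprod_{\Del^{\{n-1,n\}}}\Del^0$ of Definition~\ref{d:weak}(2)--(3) is equivalent to the underlying map satisfying precisely the outer-horn lifting property defining an outer fibration. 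Third, an edge is $f$-cartesian in the sense of Definition~\ref{d:cartesian} if and only if it is cartesian for the underlying map of simplicial sets, and hence $f$-cartesian lifts of edges of $\C$ exist if and only if the underlying cartesian lifts do.

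The crucial consequence is now immediate. Because $f$ is \emph{assumed} to be a weak fibration, it already lifts against \emph{all three} families of Definition~\ref{d:weak}; combined with the first two identifications above, this means that as soon as every triangle of $X$ is thin, the underlying map is \emph{simultaneously} an inner fibration and satisfies the outer lifting property. In other words, over an all-thin base a weak fibration with all triangles of $X$ thin is at once an inner fibration and an outer fibration, so the two fibration conditions in Definition~\ref{d:car-fibration} carry identical content here. The only additional datum distinguishing (1) and (2) from ``weak fibration with $X$ all thin'' is the existence of cartesian lifts for every edge of $\C$, and this existence clause is literally the same for both. By the third identification, the existence of $f$-cartesian lifts together with the underlying inner fibration property is precisely the statement that the underlying map is a cartesian fibration of simplicial sets. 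Assembling these, each of (1), (2), (3) is equivalent to the single condition: every triangle of $X$ is thin and the underlying map of $f$ is a cartesian fibration.

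The genuine obstacle is not conceptual but bookkeeping: one must carefully verify the second and third identifications above, i.e.\ that under maximal scaling the collapsed outer horns and the scaled right-horn diagram of Definition~\ref{d:cartesian} degenerate to the ordinary lifting problems governing outer fibrations and cartesian edges respectively. These are routine diagram manipulations once the thinness conditions are seen to be automatically satisfied. The hypothesis that $\C$ is an $\infty$-category enters only to ensure that $\C_{\sharp}$ is an $\infty$-bicategory, so that by Remark~\ref{r:weak-scaled} the total space $X$ is itself an $\infty$-category; this guarantees that ``cartesian fibration of simplicial sets'' in condition (3) has its usual meaning.
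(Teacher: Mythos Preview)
Your proof is correct and follows the same overall strategy as the paper: reduce to the case where every triangle of \(X\) is thin, observe that the scaled lifting problems then coincide with their unscaled counterparts, and conclude that the inner/outer distinction collapses. There is one minor but genuine difference worth noting. To show that condition~(3) implies the outer lifting property (and hence (2)), you invoke the standing hypothesis that \(f\) is already a weak fibration, which under the all-thin assumption immediately gives lifting against the collapsed outer horns. The paper instead argues this point independently of the weak-fibration hypothesis: it observes that a cartesian fibration of \(\infty\)-categories is a categorical fibration, and that the collapsed outer horn inclusions are trivial cofibrations in the Joyal model structure, so the lifting is automatic. Your route is slightly more economical given the stated hypotheses; the paper's route establishes the marginally stronger fact that condition~(3) alone (without assuming weak fibration a priori) forces \(f\) to be a weak fibration.
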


\begin{proof}
We first note that both (1) and (2) imply by definition that \(f\) detects thin triangles, and since all triangles in \(\C_{\sharp}\) is thin this is the equivalent to saying that every triangle in \(X\) is thin. Since this is also stated explicitly in (3), we may simply assume that all triangles in \(X\) are thin. In this case, \(f\) is both an inner and an outer fibration as soon as it is a weak fibration, and so (1) and (2) are both equivalent to saying \(f\) is a weak fibration and edges in \(\C\) admits a sufficient supply of \(f\)-cartesian lifts. Using again that all triangles in \(X\) and \(\C_{\sharp}\) are thin we see that this is the same as saying that the map of simplicial sets underlying \(f\) is a cartesian fibration which satisfies the right lifting property with respect to the inclusions
\[\Lam^n_0 \coprod_{\Del^{\{0,1\}}}\Del^0 \subseteq \Del^n\coprod_{\Del^{\{0,1\}}}\Del^0 \quad \text{and} \quad
\Lam^n_n \coprod_{\Del^{\{n-1,n\}}}\Del^0 \subseteq \Del^n\coprod_{\Del^{\{n-1,n\}}}\Del^0\]
for \(n \geq 2\). But this holds for any cartesian fibration of \(\infty\)-categories: indeed, any cartesian fibration is also a categorical fibration and the above inclusions are trivial cofibrations in the categorical model structure.
\end{proof}

\subsection{Local properties of inner and outer fibrations}\label{s:cartesian}

	Let \(\C\) be an \(\infty\)-bicategory and let \(x,y\) be two vertices of \(\C\).
	Recall the explicit model for the mapping \(\infty\)-category \(\Hom_\C(x, y)\) 
	from \(x\) to \(y\) discussed in Notation \ref{n:mapping}. 
	In~\cite[\S 2.3]{GagnaHarpazLanariEquiv}, we have introduced another model \(\Hom^{\triangleright}_\C(x, y)\),
	defined as the underlying marked simplicial set of \((\C_{/y})_x\).
	We have shown the existence of a canonical map (see~\cite[Construction~2.22]{GagnaHarpazLanariEquiv})
	\[
		i \colon  \Hom^{\triangleright}_\C(x, y) \to \Hom_\C(x, y),
	\]
	which is an equivalence of fibrant marked simplicial sets (see~\cite[Proposition~2.24]{GagnaHarpazLanariEquiv}).

In what follows we will use the term \emph{marked left} (resp.~\ndef{right}) \ndef{fibration} to denote a map of marked simplicial sets \(f\colon X \to Y\) which detects marked edges and which is a left (resp.~right) fibration on the level of underlying simplicial sets.

\begin{prop}\label{p:mapping-2}
	Let \(f \colon \E \lrar \B\) be an outer (resp.~inner) fibration of \(\infty\)-bicategories
	and let \(x,y\) be two vertices of \(\E\). 
	Then the map of marked simplicial sets
	\[ f_*\colon \Hom^{\triangleright}_{\E}(x,y) \lrar \Hom^{\triangleright}_{\B}(f(x),f(y)) \]
	is a marked left (resp.~right) fibration. 
	Furthermore, if \(e \colon x' \lrar y\) is a \(f\)-cartesian edge with \(f(x')=f(x)\)
	then the post-composition with \(e\) induces an equivalence between the the mapping space \(\Hom_{\E_{f(x)}}(x,x')\) 
	and the fiber of \(f_*\) over \(f(e)\), where \(\E_{f(x)}\) denotes the fiber of \(f\) over \(f(x)\). 
\end{prop}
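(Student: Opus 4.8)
The plan is to reduce both assertions to lifting properties of $f$ itself, exploiting the description $\Maptr_{\E}(x,y) = (\E_{/y})_x$ coming from the join--slice adjunction of Construction~\ref{con:slice}. Unwinding that adjunction, an $m$-simplex of $\Maptr_{\E}(x,y)$ is the same datum as a map $\Del^{m+1} \to \E$ whose restriction to $\Del^{\{0,\dots,m\}}$ is constant at $x$ and which sends the final vertex $m+1$ to $y$; moreover a $1$-simplex is marked exactly when its associated triangle is thin in $\E$. Since inner and outer fibrations both detect thin triangles, such a triangle is thin in $\E$ if and only if its image is thin in $\B$, and hence $f_*$ detects marked edges. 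It therefore remains to show that the underlying map of $f_*$ is a left (resp.~right) fibration.

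First I would take a lifting problem for $f_*$ against a horn inclusion $\Lam^m_i \subseteq \Del^m$. Applying the join adjunction together with the combinatorial identity $\Lam^{m+1}_i = (\Lam^m_i \ast \Del^0) \cup_{\Lam^m_i} \Del^m$, in which $\Del^m$ is the face opposite the appended final vertex and is attached along $\Lam^m_i$, this becomes the problem of extending a map $\Lam^{m+1}_i \to \E$ -- obtained by gluing the given cone $\Lam^m_i \ast \Del^0 \to \E$ to the constant-at-$x$ map on the base $\Del^m$ -- to $\Del^{m+1}$, compatibly with a prescribed map $\Del^{m+1} \to \B$. As the new vertex is appended last, the index $i$ is unchanged: a left-fibration horn ($0 \le i < m$) yields $0 \le i < m+1$ and a right-fibration horn ($0 < i \le m$) yields $0 < i < m+1$. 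In the inner case every resulting horn is inner, and is filled by the underlying inner fibration $f$, the filler being a map out of $\Del^{m+1}_{\flat}$ and so a valid simplex. In the outer case the horns with $0 < i < m$ are again inner, and crucially their distinguished triangle $\Del^{\{i-1,i,i+1\}}$ lies in the constant base (since $i+1 \le m$), hence is degenerate and thin, so the weak-fibration lifting of Definition~\ref{d:weak}(1) applies; while the horn with $i=0$ has $\Del^{\{0,1\}}$ constant at $x$, so the map factors through the collapse $\Lam^{m+1}_0 \coprod_{\Del^{\{0,1\}}} \Del^0$ and is filled by the defining outer-fibration lifting property. This yields outer $\Rightarrow$ left and inner $\Rightarrow$ right, as required.

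For the second assertion I would compare both sides (which are Kan complexes, the one being a mapping space in the $\infty$-category $\E_{f(x)}$ and the other a fiber of the left/right fibration $f_*$) through a correspondence. Let $Z$ be the simplicial set whose $m$-simplices are maps $\Del^{m+2} \to \E$ with $\Del^{\{0,\dots,m\}}$ constant at $x$, with $\Del^{\{0,\dots,m,m+1\}}$ landing in the fiber $\E_{f(x)}$ and with final vertex $x'$, with $\Del^{\{m+1,m+2\}} = e$ and $m+2 \mapsto y$, scaled so that $\Del^{\{0,m+1,m+2\}}$ is thin. Restriction to the face opposite $m+2$ defines $p \colon Z \to \Hom_{\E_{f(x)}}(x,x')$, and restriction to the face opposite $m+1$ defines $q \colon Z \to f_*^{-1}(f(e))$; then $q \circ p^{-1}$ models post-composition with $e$. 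The plan is to show both $p$ and $q$ are trivial Kan fibrations. For $q$ this is exactly where the cartesian hypothesis enters: a lifting problem for $q$ against $\partial\Del^m \subseteq \Del^m$ unwinds to a filling problem for the horn $\Lam^{m+2}_{m+2} \subseteq \Del^{m+2}$ whose final edge $\Del^{\{m+1,m+2\}}$ is $e$ and whose triangle $\Del^{\{0,m+1,m+2\}}$ is thin, which is solved precisely because $e$ is $f$-cartesian in the sense of Definition~\ref{d:cartesian}. That $p$ is a trivial fibration expresses that composing a fixed morphism with $e$ is a contractible choice, and is routine from the inner-horn and thinness filling available in the $\infty$-bicategory $\E$.

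The hard part will be the bookkeeping in the second step: one must check that the data assembled from the two sides of the lifting square genuinely form a map $\Lam^{m+2}_{m+2} \to \E$ matching Definition~\ref{d:cartesian} -- in particular that the distinguished thin triangle $\Del^{\{0,m+1,m+2\}}$, which witnesses the relation between the $e$-composite and the prescribed $1$-simplex over $f(e)$, is genuinely thin -- and that the accompanying map $\Del^{m+2} \to \B$, which is essentially degenerate onto the edge $f(e)$, can be produced with the correct scaling so that the cartesian filler applies. A secondary point needing care is the trivial-fibration claim for $p$, which must keep track of thinness of all intermediate faces throughout.
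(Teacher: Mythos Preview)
Your treatment of the first assertion is correct and matches the paper's argument essentially verbatim: both unwind the lifting problem for \(f_*\) against \(\Lam^m_i \subseteq \Del^m\) into a horn-filling problem \(\Lam^{m+1}_i \subseteq \Del^{m+1}\) in \(\E\) over \(\B\), observe that the relevant triangle lies in the constant base (hence is degenerate and thin) when \(i>0\), and handle \(i=0\) in the outer case via the collapse of \(\Del^{\{0,1\}}\).

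For the second assertion the two approaches diverge. The paper does not attempt any direct combinatorics: it observes that the statement only involves maps lying over \(f(e)\) and \(\Id_{f(x)}\), base-changes along \(\Del^1 \to \B\) to reduce to a cartesian fibration of \(\infty\)-categories, rectifies to a simplicial functor \(\C \to [1]\), and invokes \cite[Proposition~2.4.1.10(2)]{HTT}. Your approach via the auxiliary simplicial set \(Z\) with projections \(p,q\) is a perfectly reasonable alternative strategy, and your analysis of \(q\) is correct: the lifting problem against \(\partial\Del^m \subseteq \Del^m\) does unwind to a \(\Lam^{m+2}_{m+2}\)-filler with \(\Del^{\{m+1,m+2\}}=e\) and \(\Del^{\{0,m+1,m+2\}}\) thin, so Definition~\ref{d:cartesian} applies directly and the image in \(\B\) is the appropriate degeneracy of \(f(e)\). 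Contrary to your assessment this is not the hard part.

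The actual gap is in your ``routine'' claim for \(p\). The horn you must fill is \(\Lam^{m+2}_{m+1}\), and the weak-fibration lifting of Definition~\ref{d:weak}(1) requires \(\Del^{\{m,m+1,m+2\}}\) to be thin. Your \(Z\)-condition only imposes thinness on \(\Del^{\{0,m+1,m+2\}}\); for \(m\geq 2\) these are different triangles, and nothing in the horn data forces the former to be thin (the face \(\Del^{\{0,\dots,m\}}\) being constant does not make triangles with one vertex outside it degenerate). This is fixable: strengthen the definition of \(Z\) to require \emph{all} triangles \(\Del^{\{j,m+1,m+2\}}\) for \(0\le j\le m\) to be thin. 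One then checks that this condition is stable under the face maps of \(Z\), that the \(q\)-filler produced by cartesianness still lands in \(Z\) (the extra triangles lie in the horn and inherit thinness from the boundary data), and that the \(p\)-filler now exists because \(\Del^{\{m,m+1,m+2\}}\) is thin via the face \(d^0\). This works, but it is not the ``secondary point'' you describe; it is the crux of your argument, and the paper's reduction-to-HTT manoeuvre avoids it entirely.
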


\begin{proof}
	Assume first that \(f\) is an outer cartesian fibration. We need to show that the map
	\begin{equation}\label{e:local}
	(\E_{/y})_x \lrar (\B_{/f(y)})_{f(x)} 
	\end{equation}
	is a marked left fibration, where abusing of notation we consider both \((\E_{/y})_x\) and \((\B_{/f(y)})_{f(x)}\)
	as marked simplicial sets, thereby ignoring their scaling. 
	We first observe that since the map \(\E \lrar \B\) has the right lifting property 
	with respect to \(\Del^2 \subseteq \Del^2_{\sharp}\), it follows that \(\E_{/y} \lrar \B_{/f(y)}\) 
	has the right lifting property with respect to \(\Del^1 \subseteq (\Del^1)^{\sharp}\). 
	In particular, an arrow in \((\E_{/y})_x\) is marked if and only if its image in \((\B_{/f(y)})_{f(x)}\) is marked. 
	It will hence suffice to show that the map of simplicial sets underlying~\eqref{e:local} is a left fibration. 
	Unwinding the definitions, we see that in order to show that~\eqref{e:local} has the right lifting property 
	with respect to \(\Lam^n_i \hrar \Del^n\) for \(0 \leq i < n\) we need to prove the existence of a dotted lift in diagrams of the form
	\begin{equation}\label{e:rectangle} 
	 \begin{tikzcd}
		\Del^n \ast \varnothing \coprod_{\Lam^n_i \ast \varnothing} \Lam^n_i \ast \Del^0 \ar[r]\ar[d] & 
		\Del^0 \ast \varnothing \coprod_{\Lam^n_i \ast \varnothing} \Lam^n_i \ast \Del^0 \ar[r, "f"]\ar[d] & 
		\E \ar[d, "f"] \\
		\Del^n \ast \Del^0 \ar[r] & \Del^0 \ast \varnothing \coprod_{\Del^n \ast \varnothing}\Del^n \ast \Del^0
		\ar[r, "g"]\ar[ur, dotted] & \B \\
	 \end{tikzcd}
	\end{equation}
	where \(f\) maps \(\Del^0 \ast \varnothing\) to \(x\), \(g\) maps \(\Del^0 \ast \varnothing\) to \(f(x)\).
	Now since the left square is a pushout square it will suffice to find a lift in the external rectangle of~\eqref{e:rectangle}. 
	When \(0 < i < n\) the left vertical map is isomorphic to the inner horn inclusion 
	\(\Lam^{[n] \ast [0]}_i \subseteq \Del^{[n] \ast [0]}\) 
	and the triangle \(\Del^{\{i-1,i,i+1\}}\) is mapped to a degenerate (and hence thin) triangle of \(\B\). 
	On the other hand, when \(i=0\) the left vertical map is isomorphic to the \(0\)-horn inclusion 
	\(\Lam^{[n] \ast [0]}_0 \subseteq \Del^{[n] \ast [0]}\) and the edge \(\Del^{\{0,1\}}\) 
	is mapped to a degenerate edge of \(\E\). 
	In both cases the desired lift exists by virtue of the assumption that \(f\colon \E \lrar \B\) is an outer fibration.
	
	For the second part of the claim, we need to show that if \(e\colon x' \lrar y\)
	is \(f\)-cartesian edge with \(f(x')=f(x)\) 
	then post-composition with \(e\) induces an equivalence between the the mapping space
	\(\Hom_{\E_{f(x)}}(x,x')\) and the fiber of \(f_*\) over \(f\).
	We note that this statement is local, \ie proving the claim for a given \(e\colon x' \lrar y\) only requires us to consider
	maps in \(\E\) lying over either \(f(e)\) or \(\Id_{f(x)}\). Thus, it is enough to prove the claim for the outer 
	cartesian fibration \(\E \times_\B \Del^1 \lrar \Del^1\) 
	obtained by pulling back \(f \colon \E \to \B\)
	along the map \(\Delta^1 \to \B\) corresponding to \(f(e)\). Since every triangle in \(\Del^1\) is degenerate 
	this pullback is a cartesian fibration of \(\infty\)-categories. 
	By possibly replacing \(\E \times_\B \Del^1\) 
	with an equivalent \(\infty\)-category, we may assume that \(\E \times_\B \Del^1\) is given by the nerve 
	of a map of fibrant simplicial categories \(\C \lrar [1]\), 
	and an application of~\cite[Proposition 2.4.1.10 (2)]{HTT} then finishes the proof.
	
	Finally, the proof in the case where \(f\) is an inner cartesian fibration proceeds verbatim, except that in the first part we do not need to consider the case \(i=0\), but do need to consider the case \(i=n\). In the latter case the left vertical map in~\eqref{e:rectangle} is isomorphic to the inner horn inclusion \(\Lam^{[n]*[0]}_n \hrar \Del^{[n]*[0]}\), and so the lift exists by the assumption that \(f\) is an inner fibration.
\end{proof}

\begin{cor}\label{c:fiberwise}
	Let 
	\[
	 \begin{tikzcd}
		\E \ar[r, "r"] \ar[d, "p"'] & \E' \ar[d, "q"] \\
		\B \ar[r, "f"'] & \B'
	 \end{tikzcd}
	\]
	be a diagram of \(\infty\)-bicategories such that \(p\) and \(q\) 
	are both outer (or both inner) cartesian fibrations and \(r\) maps \(p\)-cartesian edges to \(q\)-cartesian edges. 
	Assume that \(f\) is a bicategorical equivalence.
	Then the following statements are equivalent:
	\begin{enumerate}[leftmargin=*]
		\item
		For every \(x \in \B\) the map
		\( r_x\colon  \E_x \lrar \E'_{p(x)} \)
		is an equivalence of \(\infty\)-categories. 
		\item
		The map \(r\colon \E \lrar \E'\) is an equivalence of \(\infty\)-bi\-cat\-egories.
	\end{enumerate}
\end{cor}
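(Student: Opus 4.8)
The plan is to prove both implications at once by characterizing each of conditions (1) and (2) through a recognition principle and matching the two characterizations fiber by fiber. Applying Remark~\ref{r:dwyer-kan} to \(r\), condition (2) is equivalent to the conjunction of essential surjectivity of \(r\) and full faithfulness of \(r\) (that is, \(r_*\colon \Hom_{\E}(x,y) \to \Hom_{\E'}(r(x),r(y))\) a marked categorical equivalence for all \(x,y\)); while the classical recognition of equivalences of \(\infty\)-categories makes condition (1) equivalent to essential surjectivity and full faithfulness of each \(r_x\colon \E_x \to \E'_{f(x)}\). Since \(f\) is a bicategorical equivalence, \(f_*\) is a marked categorical equivalence on each mapping \(\infty\)-category and \(f^{\minisimeq}\) is an equivalence on core groupoids. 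I will establish the \emph{object-level} (essential surjectivity) and the \emph{mapping-level} (full faithfulness) ingredients as two biconditionals linking the fiberwise and the global statements, so that both directions of the corollary follow simultaneously.

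For the object-level comparison I restrict the fibrations to cores. By Remark~\ref{r:iso} the base changes of \(p\) and \(q\) to the core \(\infty\)-categories, and then further to the core groupoids, are Kan fibrations \(\E^{\minisimeq} \to \B^{\minisimeq}\) and \((\E')^{\minisimeq} \to (\B')^{\minisimeq}\), and \(r^{\minisimeq}\) lies over the equivalence \(f^{\minisimeq}\). By the standard fiberwise recognition principle for Kan fibrations over a weakly equivalent base, \(r^{\minisimeq}\) is an equivalence if and only if each fiber map \((\E_x)^{\minisimeq} \to (\E'_{f(x)})^{\minisimeq}\), which is the core groupoid of \(r_x\), is an equivalence. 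In particular this yields essential surjectivity of \(r\) from that of the \(r_x\), and conversely, using that an equivalence on core groupoids detects objects up to equivalence.

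For the mapping-level comparison I pass to the thin model \(\Maptr\), which is equivalent to \(\Hom\) via the map \(i\), so that Proposition~\ref{p:mapping-2} applies. Fixing \(x,y \in \E\) I consider the commuting square
\[
\begin{tikzcd}
\Maptr_{\E}(x,y) \ar[r, "r_*"] \ar[d, "p_*"'] & \Maptr_{\E'}(r(x),r(y)) \ar[d, "q_*"] \\
\Maptr_{\B}(p(x),p(y)) \ar[r, "f_*"'] & \Maptr_{\B'}(f(p(x)),f(p(y)))
\end{tikzcd}
\]
whose vertical maps are marked left (resp.\ right) fibrations by the first part of Proposition~\ref{p:mapping-2}, and whose bottom map is a marked categorical equivalence. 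By the recognition principle for marked left (resp.\ right) fibrations over an equivalent base, the top map \(r_*\) is an equivalence if and only if it induces an equivalence on the fiber over each \(\bar e \colon p(x) \to p(y)\). By the second part of Proposition~\ref{p:mapping-2} that fiber is \(\Hom_{\E_{p(x)}}(x,x')\), where \(x' \to y\) is a \(p\)-cartesian lift of \(\bar e\); since \(r\) carries \(p\)-cartesian edges to \(q\)-cartesian edges, \(r(x') \to r(y)\) is a \(q\)-cartesian lift of \(f_*(\bar e)\), and the fiber map is identified with \(\Hom_{\E_{p(x)}}(x,x') \to \Hom_{\E'_{f(p(x))}}(r(x),r(x'))\) induced by \(r_{p(x)}\). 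Hence \(r_*\) is an equivalence for all \(x,y\) if and only if each \(r_{p(x)}\) induces equivalences on all such mapping \(\infty\)-categories. For \((2)\Rightarrow(1)\) I read the same square with \(x,y\) replaced by objects \(a,b\) of a single fiber \(\E_x\): taking \(\bar e = \mathrm{id}_x\) and a \(p\)-cartesian lift of \(\mathrm{id}_x\), which is an equivalence by Remark~\ref{r:cart-equiv}, the fiber over \(\mathrm{id}_x\) recovers \(\Hom_{\E_x}(a,b)\), so that an equivalence \(r_*\) forces \(r_x\) to be fully faithful.

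The main obstacle is the recognition principle for (marked) left/right fibrations over a \emph{varying} base that is only equivalent, rather than fixed: one must combine the fiberwise detection of weak equivalences in the covariant (resp.\ contravariant) model structure with the invariance of that model structure under base change along an equivalence. The second delicate point is matching the abstract fibers of the left/right fibration \(p_*\) with the concrete mapping \(\infty\)-categories \(\Hom_{\E_{p(x)}}(x,x')\) of the fibers of \(p\), compatibly along \(r\); this compatibility is precisely what the hypothesis that \(r\) preserves cartesian edges supplies, through the explicit post-composition equivalence of Proposition~\ref{p:mapping-2}.
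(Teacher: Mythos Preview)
Your proof is correct. The argument for \((1)\Rightarrow(2)\) is essentially identical to the paper's: both set up the commuting square of mapping \(\infty\)-categories from Proposition~\ref{p:mapping-2}, reduce to a fiberwise statement for the left/right fibrations \(p_*,q_*\), and identify the fibers with mapping spaces in \(\E_{p(x)}\) and \(\E'_{f(p(x))}\) via the second half of that proposition, using that \(r\) preserves cartesian edges. (The paper dispatches essential surjectivity as ``clear'' rather than via core groupoids, but your argument there is fine.)

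For \((2)\Rightarrow(1)\) you take a genuinely different route from the paper. The paper passes to core \(\infty\)-categories \(\E^{\thi},(\E')^{\thi}\), where \(p,q\) become ordinary cartesian fibrations of \(\infty\)-categories (Remark~\ref{r:iso}), factors through the pullback \(\B^{\thi}\times_{(\B')^{\thi}}(\E')^{\thi}\), and then invokes \cite[Corollary~3.3.1.4 and Proposition~3.3.1.5]{HTT} to conclude the fiber maps are categorical equivalences. Your approach instead reuses the same local machinery from the forward direction: full faithfulness of \(r_x\) comes from specializing the mapping square to \(a,b\in\E_x\) and reading off the fiber over \(\id_x\), while essential surjectivity of \(r_x\) comes from the Kan-fibration argument on core groupoids. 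This is more symmetric and avoids importing the HTT results, at the cost of a small extra step (identifying \((\E^{\minisimeq})_x\) with \((\E_x)^{\minisimeq}\) and noting that a cartesian lift of an identity is an equivalence, which you correctly attribute to Remark~\ref{r:cart-equiv}). Both approaches are valid; the paper's is a quicker appeal to existing machinery, yours is more self-contained within the paper's framework.
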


\begin{proof}
	Assume (1) holds. It is clear that \(r \colon \E \lrar \E'\) is essentially surjective.
	Let us prove that the map \(r\) is also fully-faithful.
	For any pair of vertices \(x\) and \(y\) of \(\E\), 
	we wish to show that the map \(r_\ast \colon \Hom_\E(x, y) \to \Hom_{\E'}(rx, ry)\)
	of marked \(\infty\)-categories is an equivalence.
	Proposition~\ref{p:mapping-2} tells us that we have a commutative triangle
	\[
	 \begin{tikzcd}
	 	\Hom^{\triangleright}_\E(x, y) \ar[r, "r_\ast"] \ar[d, "p_\ast"'] &
	 	\Hom^{\triangleright}_{\E'}(rx, ry) \ar[d, "q_\ast"] \\
	 	\Hom^{\triangleright}_{\B'}(px, py) \ar[r,"f_\ast"] & \Hom^{\triangleright}_{\B'}(fpx, fpy) 
	 \end{tikzcd}
	\]
	of (marked) left fibrations 
	(or right fibrations in the inner case), in which the bottom horizontal map is an equivalence of \(\infty\)-categories by Remark~\ref{r:dwyer-kan}. It hence
	suffices to prove that \(r_\ast\) is an equivalence fiber-wise, that is, 	for any \(e \in \Hom^{\triangleright}_{\B}(px, py)\),
	the induced map on the fibers \((p_\ast)^{-1}(e) \to (q_\ast)^{-1}(f(e))\) is an equivalence.
	Choose a \(p\)-cartesian lift \(e' \colon x' \to y\) of \(e\). Notice that by assumption
	the edge \(r(e') \colon r(x') \to r(y)\) of \(\E'\) is \(q\)-cartesian.
	Using again Proposition~\ref{p:mapping-2}, we get a commutative square
	\[
	 \begin{tikzcd}
	 	(p_\ast)^{-1}(e) \ar[r]\ar[d, "\simeq"'] & (q_\ast)^{-1}(f(e)) \ar[d, "\simeq"] \\
	 	\Hom_{\E_{p(x)}}(x, x') \ar[r] & \Hom_{\E_{q(rx)}}(rx, rx')
	 \end{tikzcd}
	\]
	where the two vertical arrows are equivalences. Statement (1) now tells us that
	the bottom horizontal map is also an equivalence, and so the desired result follows from 2-out-of-3. 

	Let us now assume Statement~(2).
	We then obtain a commutative diagram of \(\infty\)-categories
	\begin{equation}\label{e:diag-oo} 
	 \begin{tikzcd}
		\E^{\thi} \ar[r, "r^{\thi}", "\simeq"']\ar[d, "p"'] & (\E')^{\thi} \ar[d, "q"] \\
		\B^{\thi} \ar[r, "f"', "\simeq"] & (\B')^{\thi}
	 \end{tikzcd}
	\end{equation}
	in which the horizontal maps are categorical equivalences (see Remark~\ref{r:underlying}) and the vertical maps are cartesian fibrations (see Remark~\ref{r:iso}). Now consider the extended diagram
	\begin{equation}\label{e:diag-oo-2} 
	 \begin{tikzcd}
		\E^{\thi} \ar[r] \ar[d, "f"'] & \B \times_{(\B')^{\thi}} (\E')^{\thi} \ar[d] \ar[r] & (\E')^{\thi} \ar[d, "q"] \\
		\B^{\thi} \ar[r, equal] & \B^{\thi} \ar[r, "\simeq", "f"'] & (\B')^{\thi}
	 \end{tikzcd}
	\end{equation}
	in which the right square is a pullback square and the composition of the two top horizontal maps is the equivalence \(r^{\thi}\). By~\cite[Corollary 3.3.1.4]{HTT} the right square in~\eqref{e:diag-oo-2} is a homotopy pullback square. 
	But the external rectangle is also homotopy cartesian (since it contains a parallel pair of equivalences) and so the map \(\E^{\thi} \lrar \B \times_{(\B')^{\thi}}(\E')^{\thi}\) is a categorical equivalence.
	By~\cite[Proposition 3.3.1.5]{HTT} we now get that the induced map \(\E_x \lrar \E'_{f(x)}\) is a categorical equivalence for every vertex~\(x\) of~\(\B\).
\end{proof}

\subsection{Cartesian edges}
\label{sec:car-edges}

The notion of a (co)cartesian edge admits equivalent descriptions in various contexts. To fully exploit this it will be convenient to introduce the following two variants:

\begin{define}
	Let \(f \colon X \lrar S\) be a weak fibration of scaled simplicial sets.
	 We will say that an edge \(e\colon x \lrar y\) in \(X\) is \ndef{weakly \(f\)-cartesian}
	  if the dotted lift exists in any square of the form
	\[
	 \begin{tikzcd}
		(\Lam^n_n,T_{|\Lam^n_n}) \ar[r, "\sigma"] \ar[d] & X \ar[d, "f"] \\
		(\Del^n,T) \ar[dotted, ur] \ar[r] & S
	 \end{tikzcd}
	\]
	with \(n \geq 2\) and \(\sig_{|\Del^{n-1,n}} = e\), where \(T\) is the collection of all triangles in \(\Del^n\) which are either degenerate 
	or contain the edge \(\Del^{\{n-1,n\}}\).   
	We will say that \(e\colon x \lrar y\) is \ndef{strongly \(f\)-cartesian} if the dotted lift exists in any square of the form
	\[
	 \begin{tikzcd}
		(\Lam^n_0)_{\flat} \ar[r, "\sigma"]\ar[d] & X \ar[d, "f"] \\
		\Del^n_{\flat} \ar[dotted, ur]\ar[r] & S
	 \end{tikzcd}
	\]
	with \(n \geq 2\) and \(\sig_{|\Del^{n-1,n}} = e\). 
	Similarly, we define the notions of weak and strong cocartesian edges by using the opposite scaled simplicial sets.
\end{define}

\begin{example}\label{ex:joyal}
Let \(\B\) be an \(\infty\)-bicategory and \(f \colon \E \lrar \B\) be a weak fibration. Then every equivalence in \(\E\) is both weakly \(f\)-cartesian and weakly \(f\)-cocartesian. This follows by applying~\cite[Lemma 5.2]{GagnaHarpazLanariEquiv} to \(f\) and \(f^{\op}\).
\end{example}

\begin{prop}
	\label{p:cart_1-simp_via_homs}
	Given a weak fibration of \(\infty\)-bicategories \(f\colon\E \rightarrow \B\) 
	and an edge \(e\colon y \rightarrow z \) in \(\E\), the following statements are equivalent:
	\begin{itemize}
		\item \(e\) is weakly \(f\)-cartesian.
		\item The following square is a homotopy pullback in the marked categorical model structure:
		\begin{equation}
		\label{4}
		\begin{tikzcd}[column sep=large]
		\Maptr_{\E}(x,y)\ar[d,"f_{x,y}"{swap}]\ar[r,"e\circ -"]&\Maptr_{\E}(x,z)\ar[d,"f_{x,z}"]\\
		\Maptr_{\B}(fx,fy)\ar[r,"f(e)\circ-"]&\Maptr_{\B}(fx,fz)
		\end{tikzcd}
		\end{equation}
	\end{itemize}
\end{prop}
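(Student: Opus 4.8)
The plan is to recast both conditions as the single assertion that the canonical comparison map
\[
\theta\colon \Maptr_{\E}(x,y) \longrightarrow P := \Maptr_{\E}(x,z)\times_{\Maptr_{\B}(fx,fz)}\Maptr_{\B}(fx,fy)
\]
to the strict pullback $P$ of the right-hand map of~\eqref{4} is a trivial fibration of marked simplicial sets. First I would record that all four corners of~\eqref{4} are fibrant (they are marked \(\infty\)-categories, being of the form \(\Maptr\), which is equivalent to \(\Hom\) by~\cite[Proposition~2.24]{GagnaHarpazLanariEquiv}), and that the vertical maps \(f_{x,y}\) and \(f_{x,z}\) are categorical fibrations: indeed, each is a fibre of an outer fibration obtained from Proposition~\ref{p:slice-fibration-2} applied with \(K=\Del^0\) sitting at the vertex \(z\) (resp.~\(y\)) and \(L=\emptyset\), and on these all-thin fibres an outer fibration restricts to a categorical fibration. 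Consequently \(P\) computes the homotopy pullback, and~\eqref{4} is a homotopy pullback in the marked categorical model structure precisely when \(\theta\) is a marked categorical equivalence.

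Next I would verify that \(\theta\) is itself a categorical fibration. For this I would apply Proposition~\ref{p:slice-fibration-2} to the marked edge \(e\colon \Del^1\to\E\) (with its non-degenerate edge marked) and \(L=\Del^{\{1\}}\subseteq K=\Del^1\), producing an outer fibration \(\ovl{\E}_{/e}\to\ovl{\E}_{/z}\times_{\ovl{\B}_{/fz}}\ovl{\B}_{/fe}\); passing to the fibre over \(x\) and comparing with \(P\) identifies \(\theta\), up to a trivial fibration encoding composition with \(e\), with this map, whence \(\theta\) is a categorical fibration. Granting this, \(\theta\) is a trivial fibration if and only if it is a categorical equivalence, so that ``\(\theta\) a trivial fibration'' is equivalent to ``\eqref{4} a homotopy pullback''. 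It therefore remains to prove that \(e\) is weakly \(f\)-cartesian if and only if \(\theta\) is a trivial fibration.

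The heart of the matter is thus a translation of lifting problems. Using the join adjunction of Construction~\ref{con:slice} together with the fact that \(P\) is a fibre product, I would show that a lifting problem for \(\theta\) against a boundary inclusion \(\partial\Del^m\hookrightarrow\Del^m\) transposes to a lifting problem in \(\E\) over \(\B\) along the outer horn \((\Lam^{m+2}_{m+2},T_{|\Lam^{m+2}_{m+2}})\hookrightarrow(\Del^{m+2},T)\), in which the final edge \(\Del^{\{m+1,m+2\}}\) is sent to \(e\) and \(T\) consists of the degenerate triangles together with those containing that edge. Here the data on \(\partial\Del^m\) furnishes the boundary of the sought \(x\to y\) arrow (the face opposite the vertex \(z\)), the map to \(P\) furnishes the \(x\to z\) arrow (the face opposite \(y\)) together with its image in \(\B\), and a solution produces the missing face and the top cell. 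This is exactly the class of extension problems defining weak \(f\)-cartesianness, and an induction on skeleta reduces lifting against all \(\partial\Del^m\hookrightarrow\Del^m\) to these. The marked part of the trivial-fibration condition, namely that \(\theta\) detects marked edges, follows as in the proof of Proposition~\ref{p:mapping-2} from the right lifting property of \(f\) against \(\Del^2\subseteq\Del^2_{\sharp}\).

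I expect the main obstacle to be the bookkeeping in this last translation: matching the scaling imposed by the join construction (Definition~\ref{d:join}) with the scaling \(T\) appearing in the definition of weakly \(f\)-cartesian, and checking that the transposed problems are genuinely the horns \(\Lam^{m+2}_{m+2}\) with prescribed last edge \(e\) rather than some larger subcomplex. A secondary subtlety is the precise identification of \(\theta\) as a categorical fibration in the second paragraph, which must be handled carefully since the domain \(\Maptr_{\E}(x,y)\) differs on the nose from the fibre \((\ovl{\E}_{/e})_x\), and the two are related only through a trivial fibration recording composition with the marked edge \(e\).
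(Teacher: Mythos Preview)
Your approach is close in spirit to the paper's, and steps 1 and 2 line up well: the paper likewise models the square~\eqref{4} with \((\E_{/e^{\sharp}})_x\) in the upper-left corner (related to \(\Maptr_{\E}(x,y)\) by a trivial fibration, exactly the subtlety you flag at the end), and shows the bottom map is a fibration via Proposition~\ref{p:slice-fibration-2}, so that the homotopy-pullback condition becomes the assertion that the comparison map to the strict pullback is an equivalence.

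The gap is in step 3. When you transpose a lifting problem for \(\theta_x\) against \(\partial\Del^m\hookrightarrow\Del^m\), you do obtain a horn \((\Lam^{m+2}_{m+2},T)\to\E\) with last edge \(e\), but with the extra constraint that the vertices \(0,\dots,m\) all land on the single object \(x\) (since you are sitting in the fibre \(\Maptr_{\E}(x,y)=(\E_{/y})_x\)). Thus the fibre-level translation recovers only the \emph{restricted} class of weakly-cartesian horns whose initial segment is constant, not the arbitrary horns appearing in the definition. This is harmless for the implication ``weakly cartesian \(\Rightarrow\) \eqref{4} is a homotopy pullback'', but for the converse you would need that solving all such restricted horns (as \(x\) varies) forces solutions of arbitrary ones, and that is not what your ``induction on skeleta'' provides.

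The paper closes this gap by performing the translation \emph{globally} before passing to fibres. Lemma~\ref{l:weak-criterion} shows that \(e\) is weakly \(f\)-cartesian if and only if the map
\(\ovl{\E}_{/e^{\sharp}}\to \ovl{\E}_{/z}\times_{\ovl{\B}_{/fz}}\ovl{\B}_{/fe^{\sharp}}\)
is a trivial fibration; here the transposed horns carry no constraint on the initial vertices. This map is a map of outer cartesian fibrations over \(\E\) (preserving cartesian edges), so being a trivial fibration is equivalent to being a bicategorical equivalence, which by Corollary~\ref{c:fiberwise} is equivalent to being a \emph{fibrewise} equivalence over each \(x\). Only then does one identify the fibre map with the comparison map for~\eqref{4}. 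In short, Corollary~\ref{c:fiberwise} is the ingredient that promotes fibrewise information to the global trivial-fibration statement, and it is exactly what your argument is missing for the converse direction.
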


For the proof of Proposition~\ref{p:cart_1-simp_via_homs} we introduce the following piece of notation:
\begin{notate}\label{not:slice-over-arrow}
	Given a scaled simplicial set \(X\) and an edge \(e\colon x \to y\) in \(X\), we will denote by \(\trbis{X}{e^{\sharp}}\in \Sms\) the result of the slice Construction~\ref{con:slice} applied to the marked-scaled simplicial set \((\Del^1)^{\sharp}\) and the map \(\Del^1 \to X\) determined by \(e\).
	Explicitly, the set of \(n\)-simplices in \(\trbis{X}{e^{\sharp}}\) is given by:
	\[
	 (\trbis{X}{e^{\sharp}})_n\stackrel{\text{def}}{=} \bigl\{\alpha\colon \prescript{\flat}{}{\Delta^n} \ast \prescript{\sharp}{}{\Delta^1} \to X \ \vert \ \alpha_{\vert \Delta^{\{n+1,n+2\}}}=e \bigr\},
	\] 
	the marked edges are those which factor through 
	\(\prescript{\sharp}{}{\Delta^1} \ast \prescript{\sharp}{}{\Delta^1}\)
	and the thin triangles are those which factor through 
	\((\Delta^2)^{\flat}_{\sharp} \ast \prescript{\sharp}{}{\Delta^1}\). We will write \(\ovl{X}_{/e^{\sharp}}\) for the underlying scaled simplicial set of \(X_{/e^{\sharp}}\)
\end{notate}

\begin{rem}
	We note that for an \(\infty\)-bicategory \(\E\), it follows from Corollary~\ref{p:slice-fibration} that the projection \(\ovl{\E}_{/e^{\sharp}} \to \E\) is an outer cartesian fibration such that every marked edge in \(\E_{/e^{\sharp}}\) is cartesian. On the other hand, from the dual version of~\cite[Lemma 2.17]{GagnaHarpazLanariEquiv} (see also Lemma~\ref{l:pushout-join} below) \(\ovl{\E}_{/e^{\sharp}} \to \ovl{\E}_{/x}\) is a trivial fibration.  
\end{rem}

\begin{lemma}\label{l:weak-criterion}
	Let \(f\colon X \to S\) be a weak fibration of scaled simplicial sets. An edge \(e\colon x \to y\) in \(X\) is weakly \(f\)-cartesian if and only if the map
\begin{equation}\label{e:weakly-car}
\trbis{\ovl{X}}{e^{\sharp}}\rightarrow 
	\trbis{\ovl{X}}{y}\times_{\trbis{\ovl{S}}{fy}} \trbis{\ovl{S}}{fe^{\sharp}}
\end{equation}
	is a trivial fibration of scaled simplicial sets.
\end{lemma}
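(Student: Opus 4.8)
The plan is to test the map in~\eqref{e:weakly-car} against a set of generating cofibrations and transpose each resulting lifting problem through the join--slice adjunction of Construction~\ref{con:slice}.

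First I would observe that the map in~\eqref{e:weakly-car} is exactly the instance of the map $p$ of Proposition~\ref{p:slice-fibration-2} obtained by taking $q=f$, $K=\prescript{\sharp}{}{\Del^1}$ (the edge $e$, with its non-degenerate $1$-simplex marked) and $L=\Del^{\{1\}}\subseteq K$ the terminal vertex lying over $y$. Since the cofibrations of $\Ss$ are the monomorphisms, and these are generated as a weakly saturated class by the boundary inclusions $(\partial\Del^n)_\flat\hookrightarrow(\Del^n)_\flat$ for $n\geq 0$ together with the scaling map $(\Del^2)_\flat\hookrightarrow(\Del^2)_\sharp$, the assertion that~\eqref{e:weakly-car} is a trivial fibration is equivalent to the right lifting property against these two families.

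Next I would transpose. By the join--slice adjunction of Construction~\ref{con:slice}, together with its compatibility with restriction along $L\hookrightarrow K$, a lifting problem of a cofibration $A\hookrightarrow B$ against~\eqref{e:weakly-car} is adjoint to a lifting problem against $f$ of the pushout--join
\[
A^{\flat}\ast K\coprod_{A^{\flat}\ast L}B^{\flat}\ast L\longrightarrow B^{\flat}\ast K,
\]
in which the $K$-factor is sent to the edge $e$. It then remains to compute this pushout--join, together with its scaling as dictated by Definition~\ref{d:join}, for each generating cofibration. For $A=(\partial\Del^n)_\flat$ and $B=(\Del^n)_\flat$, writing the join coordinates so that $\Del^n$ occupies $\{0,\dots,n\}$ and $K$ occupies $\{n+1,n+2\}$, the underlying simplicial map is $\Lam^{n+2}_{n+2}\hookrightarrow\Del^{n+2}$: the only faces absent from the source are the top simplex and the face $\Del^{\{0,\dots,n+1\}}$ opposite the vertex $n+2$. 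By Definition~\ref{d:join}, since $B$ is flat and every edge of $K=\prescript{\sharp}{}{\Del^1}$ is marked, the non-degenerate thin triangles of $B^{\flat}\ast K$ are exactly the triangles $\Del^{\{i,n+1,n+2\}}$, i.e.\ those containing $\Del^{\{n+1,n+2\}}$. Setting $N=n+2$, this is precisely the test map $(\Lam^N_N,T_{|\Lam^N_N})\hookrightarrow(\Del^N,T)$ from the definition of weakly $f$-cartesian, with $T$ the degenerate triangles together with those containing $\Del^{\{N-1,N\}}$, and with $\Del^{\{N-1,N\}}$ sent to $e$. Thus the right lifting property of~\eqref{e:weakly-car} against the family $(\partial\Del^n)_\flat\hookrightarrow(\Del^n)_\flat$ (for $n\geq 0$, hence $N\geq 2$) is literally the condition that $e$ be weakly $f$-cartesian. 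For the scaling generator $(\Del^2)_\flat\hookrightarrow(\Del^2)_\sharp$, the same bookkeeping shows that $\Del^{\{0,1,2\}}$ is already thin on the source of the pushout--join (it is thin in the $B^{\flat}\ast L$ summand), so the map becomes an isomorphism of scaled simplicial sets and imposes no condition. Combining the two families yields the stated equivalence.

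The main obstacle is the scaling bookkeeping: one must verify carefully, using Definition~\ref{d:join}, that the non-degenerate thin triangles of $B^{\flat}\ast K$ are exactly the cone triangles on the marked edge of $K$ (so as to match the scaling $T$ in the weakly-cartesian definition), and that for the scaling generator the pushout--join becomes an honest isomorphism rather than a proper inclusion. Everything else is the routine translation of a slice-lifting problem into a join-lifting problem.
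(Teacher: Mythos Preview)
Your proposal is correct and follows essentially the same approach as the paper's own proof: transpose through the join--slice adjunction, identify the pushout--join of $(\partial\Del^n)_\flat\hookrightarrow(\Del^n)_\flat$ with $(\Lam^{n+2}_{n+2},T_{|\Lam^{n+2}_{n+2}})\hookrightarrow(\Del^{n+2},T)$, and observe that the scaling generator contributes no condition. The only cosmetic difference is in this last step: the paper phrases it as ``thin triangles on both sides of~\eqref{e:weakly-car} are detected'' (so the RLP against $\Del^2_\flat\hookrightarrow\Del^2_\sharp$ is automatic), while you verify directly that the transposed pushout--join is an isomorphism; these amount to the same computation.
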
 

\begin{proof}
	For any \(n \geq 0\), we have the following correspondence of lifting problems:
	\[
		\begin{tikzcd}
			\prescript{\flat}{}{\partial \Delta^n} \ast \prescript{\sharp}{}{\Delta^1} \cup \prescript{\flat}{}{\Delta^n} \ast \{1\} \ar[r] \ar[d] &
			X \ar[d] \\
			\prescript{\flat}{}{\Delta^n} \ast \prescript{\sharp}{}{\Delta^1} \ar[r] \ar[ru, dotted] & S
		\end{tikzcd}
		\quad \leftrightsquigarrow \quad
		\begin{tikzcd}
			(\partial \Delta^n)_\flat \ar[r] \ar[d] & \trbis{\ovl{X}}{e^{\sharp}} \ar[d] \\
			\Delta^n_\flat \ar[r] \ar[ru, dotted] & \trbis{\ovl{X}}{y}\times_{\trbis{\ovl{S}}{py}} \trbis{\ovl{S}}{pe^{\sharp}} \ .
		\end{tikzcd}
	\]
	It is clear from the definition that the scaled simplicial set \(\prescript{\flat}{}{\Delta^n} \ast \prescript{\sharp}{}{\Delta^1}\)
	is isomorphic to \((\Delta^{n+2}, T)\), where \(T\) is the scaling of Definition~\ref{d:weak}, for any \(n \geq 0\).
	Identifying \(\Delta^n \ast \Delta^1\) with \(\Delta^{n+2}\), one immediately checks that \(\partial\Delta^n \ast \Delta^1\)
	is the subsimplicial set given by the union of \(\Lambda^{n+2}_{n+1}\) and \(\Lambda^{n+2}_{n+2}\)
	and that \(\Delta^n \ast \{1\}\) is the face \(\Delta^{\{0, 1, \dots, n, n+2\}}\). Hence, the left vertical map in the
	left square of the previous correspondence is isomorphic to the map \((\Lambda^{n+2}_{n+2}, T_{|\Lambda^{n+2}_{n+2}}) \hookrightarrow (\Delta^{n+2}, T)\)
	appearing in Definition~\ref{d:weak}, for any \(n \geq 0\). Finally, the lifting against the map \(\Delta^2_\flat \hookrightarrow \Delta^2_\sharp\) holds without any assumption on \(e\) since thin triangles on both sides of~\eqref{e:weakly-car} are detected by \(f\). 
\end{proof}

\begin{proof}[Proof of Proposition~\ref{p:cart_1-simp_via_homs}]
By Lemma~\ref{l:weak-criterion} we have that an edge \(e\colon y \to z\) is weakly \(f\)-cartesian if and only if 
the map  
\[\trbis{\ovl{\E}}{e^{\sharp}}\rightarrow
	\trbis{\ovl{\E}}{z}\times_{\trbis{\ovl{\B}}{fz}}\trbis{\ovl{\B}}{fe^{\sharp}}\] 
is a trivial fibration.
This can be viewed as a morphism between outer cartesian fibrations over \(\E\), thanks to Proposition~\ref{p:slice-fibration}.
Therefore, according to Corollary~\ref{c:fiberwise} it is an equivalence if and only if it induces an equivalence between the respective fibers
\[
 \trbis{\ovl{\E}}{e^{\sharp}}\times_{\E} \{x\} \to
 	\bigl(\trbis{\ovl{\E}}{z}\times_{\trbis{\ovl{\B}}{fz}}\trbis{\ovl{\B}}{fe^{\sharp}}\bigr)\times_{\E}\{x\} ,
\] 
for any vertex \(x\) of \(\E\).
At the same time, the square in~\eqref{4} can be modeled by the following commutative square of marked-scaled simplicial sets, after ignoring the (maximal) scaling everywhere: 
\[
 \begin{tikzcd}
	\trbis{\E}{e^{\sharp}}\times_{\E}\{x\}\ar[d]\ar[r]&\trbis{\E}{z}\times_{\E}\{x\}\ar[d]\\
	\trbis{\B}{fe^{\sharp}}\times_{\B}\{fx\} \ar[r]&\trbis{\B}{fz}\times_{\B}\{fx\} \ .
 \end{tikzcd}
\]
We note that the underlying marked simplicial sets in the above square are all fibrant (see~\cite[Remark 2.23]{GagnaHarpazLanariEquiv}). To finish the proof it will hence suffice to show that the map of marked simplicial sets underlying the bottom horizontal map is a fibration in the marked categorical model structure. Indeed, it follows from Proposition~\ref{p:slice-fibration-2} that the map in question satisfies the right lifting property with respect to all cartesian anodyne maps (in the sense of ~\cite[Definition 3.1.1.1]{HTT}). Since its domain and target are fibrant this map is a marked categorical fibration by~\cite[Lemma 4.40]{quillen}.
\end{proof}

Clearly any cartesian edge is weakly cartesian, and any strongly cartesian edge is cartesian.
The following proposition offers a partial inverse to this statement:

\begin{prop}\label{p:weak-to-strong}
	Let \(f\colon \E \to \B\) be a weak fibration of \(\infty\)-bicategories.
	Then an edge of \(\E\) is \(f\)-cartesian if and only if it is weakly \(f\)-cartesian. 
	If \(f\) is an outer fibration then an edge in \(\E\) is \(f\)-cartesian if and only if it is strongly \(f\)-cartesian. 
	In particular, in the latter case all three classes coincide.
\end{prop}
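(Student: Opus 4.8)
Before turning to the two substantive implications, recall the elementary inclusions: the three lifting problems defining strongly $f$-cartesian, $f$-cartesian, and weakly $f$-cartesian edges all have the same underlying horn inclusion $\Lambda^n_n \hookrightarrow \Delta^n$, and differ only in the imposed scaling (flat; only $\Delta^{\{0,n-1,n\}}$ thin; all triangles through $\Delta^{\{n-1,n\}}$ thin, respectively). A solution to a \emph{less} scaled problem solves a \emph{more} scaled one provided the additional thinness constraints on the lift are met; but every triangle that is additionally required to be thin either equals $\Delta^{\{0,n-1,n\}}$ or, for $n \ge 3$, already lies in $\Lambda^n_n$, so the lift inherits its thinness from the given horn data. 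This gives at once that strongly $f$-cartesian $\Rightarrow$ $f$-cartesian $\Rightarrow$ weakly $f$-cartesian. It therefore remains to prove (i) for a weak fibration, weakly $f$-cartesian $\Rightarrow$ $f$-cartesian, and (ii) for an outer fibration, $f$-cartesian $\Rightarrow$ strongly $f$-cartesian; combined with the elementary inclusions these yield all the stated equivalences, and in the outer case the coincidence of all three classes.

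For (i), the first observation is dimensional: when $n = 2$ the only non-degenerate triangle of $\Delta^2$ containing $\Delta^{\{1,2\}}$ is $\Delta^{\{0,1,2\}} = \Delta^{\{0,n-1,n\}}$, so the weakly-cartesian and cartesian lifting squares coincide and there is nothing to do. For $n \ge 3$, all the extra thin triangles $\Delta^{\{i,n-1,n\}}$ with $1 \le i \le n-2$ lie in $\Lambda^n_n$, so the sole difference between the two problems is whether the \emph{given} data $\sigma$ and $\bar\sigma$ send these faces to thin triangles. The plan is to eliminate this hypothesis through the mapping-category criterion of Proposition~\ref{p:cart_1-simp_via_homs}: I would show that the homotopy-pullback condition on the square~\eqref{4}, which characterises weak cartesianness, also forces the $f$-cartesian lifting property. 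To this end I would reinterpret an arbitrary cartesian lifting square as a lifting problem for the slice projection appearing in Lemma~\ref{l:weak-criterion}, and reduce it fiberwise by Corollary~\ref{c:fiberwise}, checking that the possibly non-thin faces $\Delta^{\{i,n-1,n\}}$ contribute only degenerate data once one restricts to the fiber over the relevant vertex of $\E$. Equivalently, one may build the lift by a filtration of $\Delta^n$ in which the intermediate faces are adjoined by scaled inner-horn fillers — available because $f$ is a weak fibration — leaving a single step which is precisely the weakly-cartesian fill.

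For (ii), consider a strongly-cartesian (fully flat) lifting square. The only obstruction to applying the cartesian hypothesis is that the triangle $\Delta^{\{0,n-1,n\}}$ may now be non-thin in $\E$ and in $\B$. The plan is to exploit the additional lifting power of an \emph{outer} fibration, namely its right lifting property against the special outer horns $\Lambda^m_m \sqcup_{\Delta^{\{m-1,m\}}} \Delta^0 \hookrightarrow \Delta^m \sqcup_{\Delta^{\{m-1,m\}}} \Delta^0$, to reduce to the thin case. Concretely, I would replace the given base datum by a scaling-compatible one: choose a thin triangle filling the $\Delta^{\{0,n-1,n\}}$-corner of $\bar\sigma$ together with an invertible comparison $2$-cell in $\B$, solve the resulting \emph{cartesian} problem using that $e$ is $f$-cartesian and that outer fibrations detect thin triangles, and finally transport the solution back along the comparison cell. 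The last step is legitimate because an outer fibration is an isofibration (Remark~\ref{r:iso}) and induces marked left fibrations on mapping $\infty$-categories (Proposition~\ref{p:mapping-2}), hence lifts invertible $2$-cells.

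The step I expect to be the crux is the reduction in (i): because the cartesian problem imposes thinness only on $\Delta^{\{0,n-1,n\}}$ and not on the remaining faces $\Delta^{\{i,n-1,n\}}$, one cannot invoke the weakly-cartesian fill directly, and the difficulty is to organise the filtration of $\Delta^n$ (or, dually, the fiberwise comparison of slice fibrations) so that each intermediate extension is a pushout of a map a weak fibration is known to lift, while correctly tracking which triangles must be thin at each stage. The analogous bookkeeping in (ii) is lighter, since there the outer-fibration fillers and the isofibration property do most of the work once the comparison $2$-cell has been produced.
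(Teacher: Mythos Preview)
Your proposal has genuine gaps in both substantive directions, and the paper's approach is structurally different from what you outline.

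For (i), the crux you identify is real but your sketch does not resolve it. The triangles \(\Delta^{\{i,n-1,n\}}\) for \(1\le i\le n-2\) are already part of the given horn data \(\sigma\colon\Lambda^n_n\to\E\); if they are not sent to thin triangles, no filtration of \(\Delta^n\) by inner horns will change that, because a weak fibration's inner-horn fillers do not alter existing faces. The mapping-category route is equally stuck: Proposition~\ref{p:cart_1-simp_via_homs} characterises \emph{weak} cartesianness, and there is no analogous slice interpretation of the \(f\)-cartesian lifting problem, since \((\Lambda^n_n,\{\Delta^{\{0,n-1,n\}}\})\) does not decompose as a join over \(\Delta^{\{n-1,n\}}\) the way the scaling \(T\) in Lemma~\ref{l:weak-criterion} does. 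For (ii), the step ``choose a thin triangle filling the \(\Delta^{\{0,n-1,n\}}\)-corner together with an invertible comparison \(2\)-cell'' is unjustified: in an arbitrary \(\infty\)-bicategory a non-thin triangle need not be equivalent to a thin one with the same boundary, so there is nothing to transport along.

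The paper avoids both difficulties by a single device: it first proves a ``2-out-of-3'' lemma (Lemma~\ref{l:technical-2}), stating that in a thin triangle with edges \(e_0,t,e_1\), if \(e_0\) is weakly \(f\)-cartesian and \(t\) is \(f\)-cartesian (resp.\ strongly \(f\)-cartesian) then \(e_1\) is \(f\)-cartesian (resp.\ strongly \(f\)-cartesian). The proof of that lemma is where the hard work sits: one embeds the given \(\Lambda^n_n\)-lifting problem into a \(\Delta^{n+1}\) by joining on an extra vertex carrying the edge \(t\), and the degeneracy forced by \(t\) makes the troublesome triangles thin automatically. Proposition~\ref{p:weak-to-strong} is then one line: apply Lemma~\ref{l:technical-2} to the degenerate triangle with \(t=\Id_{y_0}\) and \(e_0=e_1\). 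The identity edge is \(f\)-cartesian for any weak fibration (by part (3) of Definition~\ref{d:weak}) and strongly \(f\)-cartesian for any outer fibration (by the unscaled outer-horn condition), which yields both implications at once. The missing idea in your attempt is precisely this passage to dimension \(n+1\): the extra degeneracies there manufacture thinness that cannot be obtained inside \(\Delta^n\).
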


The proof of Proposition~\ref{p:weak-to-strong} will rely on the following lemmas, 
which will also give us useful 2-out-of-3 type properties for cartesian edges. To formulate it, let \(\B\) be an \(\infty\)-bicategory and let \(f \colon \mathcal \E \lrar \B\) be a weak fibration,
that we will assume to be an outer fibration when considering statements about
strong \(f\)-cartesian edges. 
Let \(\sig\colon \Del^2 \lrar \E\) be a thin triangle, depicted as a commutative diagram
\begin{equation}\label{e:technical}
\begin{tikzcd}[column sep = small]
& y_0 \ar[dr, "t"] & \\
x\ar[ur, "e_0"] \ar[rr, "e_1"']&& y_1\\
\end{tikzcd}.
\end{equation}

\begin{lemma}\label{l:technical-1}
If \(e_1\) is \(f\)-cartesian (resp.~strongly \(f\)-cartesian) and \(t\) is weakly \(f\)-cartesian then \(e_0\) is \(f\)-cartesian (resp.~strongly \(f\)-cartesian).
\end{lemma}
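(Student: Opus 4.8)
The plan is to establish the \(f\)-cartesian assertion by a direct horn-filling argument and then indicate the formally identical modifications for the strongly \(f\)-cartesian case. Unwinding Definition~\ref{d:cartesian}, to show that \(e_0\) is \(f\)-cartesian I must solve, for every \(n \geq 2\), a lifting problem of a map \(\alpha\colon (\Lam^n_n,\{\Del^{\{0,n-1,n\}}\}) \to \E\) whose last edge \(\Del^{\{n-1,n\}}\) equals \(e_0\), against \(f\) together with a prescribed filler in \(\B\). The idea is to enlarge this to a lifting problem over the simplex \(\Del^{n+1}\) on the vertices \(\{0,\dots,n+1\}\) in which the edge \(\Del^{\{n,n+1\}}\) is \(t\), the edge \(\Del^{\{n-1,n+1\}}\) is \(e_1\), and the triangle \(\Del^{\{n-1,n,n+1\}}\) is the thin triangle \(\sig\) of~\eqref{e:technical}. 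By construction the face \(\Del^{\{0,\dots,n\}}\) of any such extension restricts to \(\alpha\) on \(\Lam^n_n\) and will furnish the sought-after lift, while the face \(\Del^{\{0,\dots,n-1,n+1\}}\) is an \(n\)-simplex whose last edge is \(e_1\).

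Then I would construct the extension in two stages. First, I build the auxiliary face \(\Del^{\{0,\dots,n-1,n+1\}}\) carrying \(e_1\): its boundary is assembled from the horn \(\alpha\) and the triangle \(\sig\) by propagating the given data across the new vertex \(n+1\) through a sequence of scaled inner-horn fillings, which are available since \(f\) is a weak fibration and hence lifts against the inner horns of Definition~\ref{d:weak}, with all the \(2\)-simplices that must be thin kept thin by repeated use of the thinness of \(\sig\). Once the appropriate \(n\)-horn of this face is in place, the hypothesis that \(e_1\) is \(f\)-cartesian supplies the face itself together with its distinguished thin triangle \(\Del^{\{0,n-1,n+1\}}\). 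At this point every face of \(\Del^{n+1}\) except \(\Del^{\{0,\dots,n\}}\) has been defined, \ie we have a map out of \(\Lam^{n+1}_{n+1}\) whose last edge is \(t\) and all of whose triangles containing \(\Del^{\{n,n+1\}}\) have been arranged to be thin; since \(t\) is weakly \(f\)-cartesian this map extends over \(\Del^{n+1}\), and restricting the result to \(\Del^{\{0,\dots,n\}}\) produces the desired lift (one checks along the way that its distinguished triangle \(\Del^{\{0,n-1,n\}}\) is thin, as required by the scaling of the original problem).

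The strongly \(f\)-cartesian case proceeds identically, replacing the cartesian horns by the flat outer horns appearing in the definition of strongly \(f\)-cartesian edge and invoking strong \(f\)-cartesianness of \(e_1\) at the crucial step; here one uses that \(f\), being an outer fibration, lifts against the requisite special outer horns so that both the propagation fillings and the final extension remain available, together again with weak \(f\)-cartesianness of \(t\). I expect the genuine difficulty to lie entirely in the first stage: the given data lives on the faces through the vertex \(n\) (the horn \(\alpha\)) and on the single triangle \(\sig\), whereas the auxiliary \(e_1\)-face lives on faces through \(n+1\), so one must organize the intermediate inner-horn fillings that transport the data across the two vertices, verifying at each stage both that only horns permitted by the weak- (resp.~outer-) fibration hypothesis are filled and that the prescribed \(2\)-simplices are thin. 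This stage is also exactly where the weakness of the hypothesis on \(t\) is matched by the problem: the final extension over \(\Del^{n+1}\) is a weakly \(f\)-cartesian lifting problem for \(t\), with its full scaling, rather than a bare cartesian one.
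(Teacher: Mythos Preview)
Your overall strategy is the paper's: enlarge the lifting problem to \(\Del^{n+1}\) with \(t\) on \(\Del^{\{n,n+1\}}\), first produce the face \(\Del^{\{0,\dots,n-1,n+1\}}\) using that \(e_1\) is \(f\)-cartesian, then fill \(\Lam^{n+1}_{n+1}\) using that \(t\) is weakly \(f\)-cartesian, and restrict. Two points, however, need attention.

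First, there is a genuine missing step concerning the triangle \(\Del^{\{0,n-1,n+1\}}\). You write that the \(e_1\)-cartesian filling ``supplies the face itself together with its distinguished thin triangle \(\Del^{\{0,n-1,n+1\}}\)''. For \(n\geq 3\) this is backwards: that triangle already lies in the horn \(\Lam^{\{0,\dots,n-1,n+1\}}_{n+1}\), so its thinness is an \emph{input} to the \(e_1\)-cartesian lifting, not an output. Nothing in the data you have assembled so far forces it to be thin. The paper closes this gap by a separate saturation argument: in the \(3\)-simplex \(\Del^{\{0,n-1,n,n+1\}}\) the three faces \(\Del^{\{0,n-1,n\}}\), \(\Del^{\{n-1,n,n+1\}}\) and \(\Del^{\{0,n,n+1\}}\) are already thin (the first by hypothesis, the other two because they contain \(\Del^{\{n,n+1\}}\)), and one then invokes \cite[Remark~3.1.4]{LurieGoodwillie} in the \(\infty\)-bicategories \(\E\) and \(\B\) to conclude that \(\Del^{\{0,n-1,n+1\}}\) is thin as well. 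Without this step your \(e_1\)-cartesian horn is not correctly scaled.

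Second, your ``propagation via scaled inner-horn fillings'' is left entirely schematic. The paper organises this step by first extending the horn \(\Lam^n_n\) together with the triangle \(\sigma\) to the subcomplex \(Z=\partial\Del^{\{0,\dots,n-1\}}\ast\Del^{\{n,n+1\}}\), observing via \cite[Lemma~2.17]{GagnaHarpazLanariEquiv} that this inclusion is scaled anodyne; the extension is then made separately in \(\E\) and (after combining with \(\tau\)) in \(\B\), using that both are \(\infty\)-bicategories rather than lifting against \(f\). Your plan of building only the horn of the \(e_1\)-face directly would still need all the \(n\)-simplices through both \(n\) and \(n+1\), and so passes through \(Z\) anyway; making this precise is where the real work lies.
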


\begin{lemma}\label{l:technical-2}
If \(e_0\) is weakly \(f\)-cartesian and \(t\) is \(f\)-cartesian (resp.~strongly \(f\)-cartesian) then \(e_1\) is \(f\)-cartesian (resp.~strongly \(f\)-cartesian).
\end{lemma}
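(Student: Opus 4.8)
The plan is to verify directly the defining lifting property of an $f$-cartesian (resp.\ strongly $f$-cartesian) edge for $e_1$, bootstrapping from the thin triangle $\sig$ which exhibits $e_1$ as the composite $t\circ e_0$. To test whether $e_1$ is $f$-cartesian one must solve, for each $n\geq 2$, lifting problems of the form $(\Lam^n_n,\{\Del^{\{0,n-1,n\}}\}_{|\Lam^n_n})\to\E$ over $(\Del^n,\{\Del^{\{0,n-1,n\}}\})\to\B$ whose terminal edge $\Del^{\{n-1,n\}}$ is $e_1$, and for the strong variant the analogous flat-horn problems from the definition of strongly $f$-cartesian (here $f$ is assumed to be an outer fibration). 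First I would insert a new vertex between the last two vertices of $\Del^n$, producing a simplex $\Del^{n+1}$ whose three relevant faces carry $e_0$, $t$ and the thin triangle $\sig$, and in which the original $\Del^n$ sits as the face omitting the inserted vertex. In this picture the sought lift becomes an extension problem over $\Del^{n+1}$ in which the edge $e_1$ has been resolved into the factorization $t\circ e_0$.

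I would then produce this extension by a sequence of elementary fillings, performed over the cells of $\Del^{n+1}$ not already determined by the given horn data and by $\sig$, ordered so that at each stage the cell to be filled is the unique missing face of a horn whose terminal edge is either $t$ or $e_0$. The fillings of the first kind are supplied by the hypothesis that $t$ is $f$-cartesian (resp.\ strongly $f$-cartesian), and those of the second kind by the hypothesis that $e_0$ is weakly $f$-cartesian, whose larger scaling is exactly what is available after the construction. The thinness of $\sig$, propagated through these stages, together with the fact that a weak fibration detects thin triangles, is what guarantees that the triangles constrained to be thin at each step really are thin, so that the relevant cartesian lifting property applies verbatim. Restricting the resulting map $\Del^{n+1}\to\E$ to the face omitting the inserted vertex yields the desired filler, and hence shows $e_1$ is $f$-cartesian; the strongly cartesian case runs identically, the flat outer horns being handled by the assumption that $f$ is an outer fibration. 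The companion statement Lemma~\ref{l:technical-1} is then obtained by the same device, reversing the roles of the two short edges.

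The main obstacle is this combinatorial bookkeeping: one must exhibit an explicit ordering of the cells of $\Del^{n+1}$ for which, at every stage, the previously constructed cells provide precisely the horn whose terminal edge is $t$ or $e_0$, while keeping track of which triangles are forced to be thin. This is the familiar anodyne cancellation argument for cartesian edges, and the delicacy lies entirely in the scaling constraints rather than in any homotopical input. As a consistency check one may note that at the level of \emph{weakly} $f$-cartesian edges the statement is transparent: by Proposition~\ref{p:cart_1-simp_via_homs} the condition that $a$ be weakly cartesian is that a square of mapping $\infty$-categories built by post-composition along $a$ be a homotopy pullback, and the square attached to $e_1=t\circ e_0$ is the horizontal pasting of the squares attached to $e_0$ and to $t$, so that the pasting lemma for homotopy pullbacks immediately gives the conclusion. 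I deliberately do not route the argument through this criterion, however, since the equivalence between weak and genuine $f$-cartesianness is precisely Proposition~\ref{p:weak-to-strong}, whose proof relies on the present lemma, and a direct lifting argument is needed to avoid circularity.
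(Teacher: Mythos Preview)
Your overall strategy matches the paper's: embed the lifting problem into $\Del^{n+1}$ by inserting a vertex between the last two of $\Del^n$ (so that in $\Del^{n+1}$ one has $e_0=\Del^{\{n-1,n\}}$, $t=\Del^{\{n,n+1\}}$, $e_1=\Del^{\{n-1,n+1\}}$ and $\sig=\Del^{\{n-1,n,n+1\}}$), fill all of $\Del^{n+1}$, then restrict to the face $\Del^I=\Del^{\{0,\dots,n-1,n+1\}}$.

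There is, however, a genuine gap in the combinatorics. Your claim that every elementary filling is a horn whose terminal edge is $t$ or $e_0$ cannot be made to work. After filling $\partial_{n-1}\Del^{n+1}=\Del^{\{0,\dots,n-2,n,n+1\}}$ (terminal edge $t$) and $\partial_{n+1}\Del^{n+1}=\Del^{\{0,\dots,n\}}$ (terminal edge $e_0$), you have exactly the inner horn $\Lam^{n+1}_n$: every $n$-face is present except $\partial_n=\Del^I$, which is precisely the face you are trying to produce. There is no way to reorder so that the last horn is outer; any $\Lam^{n+1}_k$ with $k\neq n$ would require $\partial_n$ already filled in $\E$, which is circular. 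The paper's proof resolves this with a third step: it fills $\Lam^{n+1}_n\hookrightarrow\Del^{n+1}$ using the \emph{weak fibration} property of $f$, legitimized by the thinness of $\sig=\Del^{\{n-1,n,n+1\}}$. This step uses neither cartesian hypothesis and is unavoidable.

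Two smaller points. First, you do not mention extending the $\B$-map from $\Del^I$ to all of $\Del^{n+1}$ (and the $\E$-map from $\Lam^I_{n+1}\cup\sig$ to a larger piece); the paper does this via scaled anodyne arguments against the $\infty$-bicategory $\B$ and the weak fibration $f$, and it is needed before any horn-filling in $\Del^{n+1}$ can even be posed. Second, before the $t$-cartesian step one needs the triangle $\Del^{\{0,n,n+1\}}$ to be thin; this is not given but must be deduced (the paper uses the saturation axiom via \cite[Remark 3.1.4]{LurieGoodwillie} applied to the $3$-simplex $\Del^{\{0,n-1,n,n+1\}}$). Your phrase about thinness ``propagated through these stages'' gestures at this but does not pin it down. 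Your consistency check via Proposition~\ref{p:cart_1-simp_via_homs} and the remark about circularity are both correct and well-placed.
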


\begin{proof}[Proof of Lemma~\ref{l:technical-1}]
We will prove the \(f\)-cartesian case; the proof for strongly \(f\)-cartesian edges is similar (and easier). 
We need to show that the dotted lift exist in any square of the form
\begin{equation}\label{e:lift-f0}
\begin{tikzcd}
(\Lambda^n_n,\{\Delta^{\{0,n-1,n\}}\}_{|\Lambda^n_n}) \ar[r, "\sigma"]\ar[d] & \E \ar[d, "f"] \\
(\Delta^n,\{\Delta^{\{0,n-1,n\}}\}) \ar[ur, dotted]\ar[r,"\tau"] & \B
\end{tikzcd}
\end{equation}
with \(n \geq 2\) and \(\sigma_{|\Delta^{\{n-1,n\}}} = e_0\). Let us view \(\Lambda^n_n\) and \(\Del^n\) as the subsimplicial sets \(\partial \Del^{\{0,...,n-1\}} \ast \Del^{\{n\}}\) and
\(\Del^{\{0,...,n-1\}} \ast \Del^{\{n\}}\) of \(\Del^{n+1} = \Del^{\{0,...,n-1\}} \ast \Del^{\{n,n+1\}}\), and consider in addition the subsimplicial set 
\(Z := \partial \Del^{\{0,...,n-1\}} \ast \Del^{\{n,n+1\}} \subseteq \Del^{n+1}\). 
Let \(T \subseteq T' \subseteq \Del^{n+1}_2\) be the following two sets of triangles: \(T\) consists of the triangle \(\Del^{\{0,n-1,n\}}\) as well as all the triangles which contain the edge \(\Del^{\{n,n+1\}}\), while \(T' = T \cup \{\Del^{\{0,n-1,n+1\}}\}\). The maps \(\sig\) and the triangle~\eqref{e:technical} now fit to form a map of scaled simplicial sets
\begin{equation}\label{e:rho} 
(\Lambda^n_n,T_{|\Lambda^n_n}) \coprod_{\Del^{\{n-1,n\}}_{\flat}}\Del^{\{n-1,n,n+1\}}_{\sharp} \to \E .
\end{equation}
Applying (the dual version of) \cite[Lemma 2.17]{GagnaHarpazLanariEquiv} to the inclusions \(\Del^{\{n-1\}} \subseteq \partial \Del^{\{0,...,n-1\}}_{\flat}\) and \(\Del^{\{n\}} \subseteq {}^{\sharp}\Del^{\{n,n+1\}}\) we deduce that the map
\[ (\Lambda^n_n,\{\Del^{\{0,n-1,n\}}\}_{|\Lambda^n_n}) \coprod_{\Del^{\{n-1,n\}}_{\flat}}\Del^{\{n-1,n,n+1\}}_{\sharp} \hrar (Z,T_{|Z})\]
is scaled anodyne. 
We may hence extend the map~\eqref{e:rho} to a map \(\rho\colon (Z,T_{|Z}) \to \E\). The maps \(\tau\) and \(f\rho\) now combine to form a map
\[ \eta\colon (\Del^{n},\{\Del^{\{0,n-1,n\}}\})\coprod_{(\Lam^n_n,\{\Del^{\{0,n-1,n\}}\}_{|\Lam^n_n})} (Z,T_{|Z}) \to \B .\]
Applying again the dual of~\cite[Lemma 2.17]{GagnaHarpazLanariEquiv}, this time to the inclusions \(\partial\Del^{\{0,...,n-1\}}_{\flat} \subseteq \Del^{\{0,...,n-1\}}_{\flat}\) and \(\Del^{\{n\}} \subseteq {}^{\sharp}\Del^{\{n,n+1\}}\) we deduce that the map
\[ (\Del^{n},\{\Del^{\{0,n-1,n\}}\})\coprod_{(\Lam^n_n,\{\Del^{\{0,n-1,n\}}\}_{|\Lam^n_n})} (Z,T_{|Z}) \to (\Del^{n+1},T) \]
is scaled anodyne. We may hence extend the square~\eqref{e:lift-f0} to a square
\begin{equation}\label{e:lift-f0-2}
\begin{tikzcd}
(Z,T_{|Z}) \ar[r, "\rho"]\ar[d] & \E \ar[d, "f"] \\
(\Delta^{n+1},T) \ar[r,"\tau"] & \B . \
\end{tikzcd}
\end{equation}
We now observe that the 3-simplex \(\Del^{\{0,n-1,n,n+1\}} \subseteq \Del^{n+1}\) has the property that all its faces except \(\Del^{\{0,n-1,n+1\}}\) are contained in \(T\), while the face \(\Del^{\{0,n-1,n+1\}}\) is exactly the one triangle that is in \(T'\) but not in \(T\). We also note that \(Z\) contains \(\Del^{\{0,n-1,n,n+1\}}\) unless \(n=2\), in which case \(Z\) does not contain \(\Del^{\{0,n-1,n+1\}}\) either. Since \(\E\) and \(\B\) are \(\infty\)-bicategories we may now conclude from~\cite[Remark 3.1.4]{LurieGoodwillie} that the square~\eqref{e:lift-f0-2} extends to a square
\begin{equation}\label{e:lift-f0-3}
\begin{tikzcd}
(Z,T'_{|Z}) \ar[r, "\rho'"]\ar[d] & \E \ar[d, "f"] \\
(\Delta^{n+1},T') \ar[ur, dotted]\ar[r,"\tau'"] & \B . \
\end{tikzcd}
\end{equation}
To finish the proof we now produce a lift in~\eqref{e:lift-f0-3}. For this,
notice that
\[
	\Lam^{n+1}_{n+1} = Z \coprod_{\Lam^{\{0,...,n-1,n+1\}}_{n+1}}\Del^{\{0,...,n-1,n+1\}}
\]
so that we can write \(\Del^{n+1}\) as 
\[\Delta^{n+1} = Z \coprod_{\Lam^{\{0,...,n-1,n+1\}}_{n+1}}\Del^{\{0,...,n-1,n+1\}} \coprod_{\Lam^{n+1}_{n+1}}\Del^{n+1}.\] 
We can then construct a lift in two steps, the first by using the assumption that \(\rho'_{|\Del^{\{n-1,n+1\}}} = e_1\) is \(f\)-cartesian (and that \(T'\) contains \(\Del^{\{0,n-1,n+1\}}\)), and the second by using the assumption that \(\rho'_{|\Del^{\{n,n+1\}}} = t\) is weakly \(f\)-cartesian (and that \(T'\) contains all the triangles with the edge \(\Del^{\{n,n+1\}}\)). 
\end{proof}

\begin{proof}[Proof of Lemma~\ref{l:technical-2}]
The proof is very similar to that of Lemma~\ref{l:technical-1}, but we spell out the differences for the convenience of the reader.
As in the case of Lemma~\ref{l:technical-1}, we will prove only the \(f\)-cartesian case, as the proof for strongly \(f\)-cartesian edges proceeds verbatim, with just less details to verify.

Let \(I = \{0,...,n-1,n+1\} \subseteq [n+1]\), and let us identify \(\Lambda^n_n\) and \(\Del^n\) with the subsimplicial sets 
\[\Lam^I_{n+1} = \partial \Del^{\{0,...,n-2\}} \ast \Del^{\{n-1,n+1\}} \coprod_{\partial \Del^{\{0,...,n-2\}} \ast \Del^{\{n+1\}}} \Del^{\{0,...,n-2,n+1\}}\]
and \(\Del^I = \Del^{\{0,...,n-2\}} \ast \Del^{\{n-1,n+1\}}\) of \(\Del^{n+1}\), respectively. In addition, we consider the subsimplicial sets 
\[Z := \partial \Del^{\{0,...,n-2\}} \ast \Del^{\{n-1,n,n+1\}} \quad\text{and}\quad W := \partial \Del^{\{0,...,n-2\}} \ast \Del^{\{n-1,n+1\}} \subseteq Z.\]
Let \(T \subseteq T' \subseteq \Del^{n+1}_2\) be the following two sets of triangles: \(T\) consists of the triangle \(\Del^{\{0,n-1,n+1\}}\), as well as all the triangles which contain the edge \(\Del^{\{n-1,n\}}\), while \(T' = T \cup \{\Del^{\{0,n,n+1\}}\}\). 
We need to show that the dotted lift exist in any square of the form
\begin{equation}\label{e:lift-f1}
\begin{tikzcd}
(\Lambda^I_{n+1},\{\Delta^{\{0,n-1,n+1\}}\}_{|\Lambda^I_{n+1}}) \ar[r, "\sigma"] \ar[d] & \E \ar[d, "f"] \\
(\Delta^I,\{\Delta^{\{0,n-1,n+1\}}\}) \ar[ur, dotted]\ar[r,"\tau"] & \B
\end{tikzcd}
\end{equation}
with \(n \geq 2\) and \(\sigma_{|\Delta^{\{n-1,n+1\}}} = e_1\). Similarly to the proof of Lemma~\ref{l:technical-1}, the maps \(\sig\) and the triangle~\eqref{e:technical} fit to form a map of scaled simplicial sets
\begin{equation}\label{e:rho-2} 
(\Lambda^I_{n+1},T_{|\Lambda^I_{n+1}}) \coprod_{\Del^{\{n-1,n+1\}}_{\flat}}\Del^{\{n-1,n,n+1\}}_{\sharp} \to \E .
\end{equation}
We now apply \cite[Lemma 2.17]{GagnaHarpazLanariEquiv} to the map \(\emptyset \subseteq \partial \Del^{\{0,...,n-2\}}_{\flat}\) and the composed inclusion 
\[\Del^{\{n-1,n+1\}}_{\flat} \subseteq (\Lam^{\{n-1,n,n+1\}}_{n-1},\{\Del^{\{n-1,n\}}\},\emptyset) \subseteq (\Del^{\{n-1,n,n+1\}},\{\Del^{\{n-1,n\}}\},\{\Del^{\{n-1,n,n+1\}}\})\] 
to deduce that the map
\[ (W,T_{|W}) \coprod_{\Del^{\{n-1,n+1\}}_{\flat}}\Del^{\{n-1,n,n+1\}}_{\sharp} \hrar (Z,T_{|Z})\]
is scaled anodyne.
We may hence extend the map~\eqref{e:rho-2} to a map 
\[\rho\colon (\Lambda^I_{n+1},T_{|\Lambda^I_{n+1}}) \coprod_{(W,T_{|W})}(Z,T_{|Z}) \to \E.\] 
The maps \(\tau\) and \(f\rho\) now combine to form a map
\[ \eta\colon (\Del^I,\{\Del^{\{0,n-1,n+1\}}\})\coprod_{(W,T_{|W})} (Z,T_{|Z}) \to \B .\]
Applying again~\cite[Lemma 2.17]{GagnaHarpazLanariEquiv}, now to the inclusions \(\partial\Del^{\{0,...,n-2\}}_{\flat} \subseteq \Del^{\{0,...,n-2\}}_{\flat}\) and \(\Del^{\{n-1,n+1\}}_{\flat} \subseteq (\Del^{\{n-1,n,n+1\}},\{\Del^{\{n-1,n\}}\},\{\Del^{\{n-1,n,n+1\}}\})\),
we deduce that the map
\[ (\Del^I,\{\Del^{\{0,n-1,n+1\}}\})\coprod_{(W,T_{|W})} (Z,T_{|Z}) \to (\Del^{n+1},T) \]
is scaled anodyne.
Setting
\[
	(Y, T_{|Y}) =  (\Del^I,\{\Del^{\{0,n-1,n+1\}}\})\coprod_{(W,T_{|W})} (Z,T_{|Z}),
\]
we may hence extend the square~\eqref{e:lift-f1} to a square
\begin{equation}\label{e:lift-f1-2}
\begin{tikzcd}
(Y,T_{|Y}) \ar[r, "\rho"]\ar[d] & \E \ar[d, "f"] \\
(\Delta^{n+1},T) \ar[r,"\tau"] & \B . \
\end{tikzcd}
\end{equation}
We now observe that the 3-simplex \(\Del^{\{0,n-1,n,n+1\}} \subseteq \Del^{n+1}\) has the property that all its faces except \(\Del^{\{0,n,n+1\}}\) are contained in \(T\), while the face \(\Del^{\{0,n,n+1\}}\) is exactly the one triangle that is in \(T'\) but not in \(T\). As in the proof of Lemma~\ref{l:technical-1}, 
we deduce that the square~\eqref{e:lift-f1-2} extends to a square
\begin{equation}\label{e:lift-f1-3}
\begin{tikzcd}
(Y,T'_{|Y}) \ar[r, "\rho'"]\ar[d] & \E \ar[d, "f"] \\
(\Delta^{n+1},T') \ar[ur, dotted]\ar[r,"\tau'"] & \B . \
\end{tikzcd}
\end{equation}
To finish the proof we now produce a lift in~\eqref{e:lift-f1-3}. For this, we note that
\[
	\Lam^{n+1}_{n} = Y \coprod_{\Lam^{\{0,...,n-2,n,n+1\}}_{n+1}}\Del^{\{0,...,n-2,n,n+1\}} \coprod_{\Lam^{\{0,...,n\}}_{n}}\Del^{\{0,...,n\}}
\]
so that we can write \(\Del^{n+1}\) as 
\[\Delta^{n+1} = Z \coprod_{\Lam^{\{0,...,n-2,n,n+1\}}_{n+1}}\Del^{\{0,...,n-2,n,n+1\}} \coprod_{\Lam^{\{0,...,n\}}_{n}}\Del^{\{0,...,n\}} \coprod_{\Lam^{n+1}_{n}}\Del^{n+1}.\] 
We can then construct a lift in three steps, the first by using the assumption that \(\rho'_{|\Del^{\{n,n+1\}}} = t\) is \(f\)-cartesian (and that \(T'\) contains \(\Del^{\{0,n,n+1\}}\)), the second by using the assumption that \(\rho'_{|\Del^{\{n-1,n\}}} = e_0\) is weakly \(f\)-cartesian (and that \(T'\) contains all the triangles with the edge \(\Del^{\{n-1,n\}}\)), and the third by using the fact that \(f\) is a weak fibration and \(T'\) contains the triangle \(\Del^{\{n-1,n,n+1\}}\). 
\end{proof}

\begin{proof}[Proof of Proposition~\ref{p:weak-to-strong}]
	Apply Lemma~\ref{l:technical-2} to the degenerate triangle
	\[
	 \begin{tikzcd}
		& y_0 \ar[dr, "\Id_{y_0}"] & \\
		x \ar[ur, "e_0"] \ar[rr, "e_0"'] && y_0
	 \end{tikzcd} .\qedhere
	\]
\end{proof}

Combining Proposition~\ref{p:weak-to-strong} with Example~\ref{ex:joyal} we now conclude:

\begin{cor}\label{c:joyal}
Let \(f \colon \E \lrar \B\) be a weak fibration of \(\infty\)-bicategories. Then every equivalence in \(\E\) is both \(f\)-cartesian and \(f\)-cocartesian.
\end{cor}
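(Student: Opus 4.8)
The plan is to assemble the statement from the two immediately preceding results, Example~\ref{ex:joyal} and Proposition~\ref{p:weak-to-strong}. Let \(e\) be an equivalence in \(\E\). By Example~\ref{ex:joyal}, applied to the weak fibration \(f\colon \E \lrar \B\), the edge \(e\) is both weakly \(f\)-cartesian and weakly \(f\)-cocartesian. Since \(f\) is moreover a weak fibration of \(\infty\)-bicategories, the first clause of Proposition~\ref{p:weak-to-strong} identifies the weakly \(f\)-cartesian edges with the \(f\)-cartesian edges; hence \(e\) is \(f\)-cartesian, which settles the first half.

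For the cocartesian statement I would pass to opposites. Recall from Definition~\ref{d:cartesian} that an edge is \(f\)-cocartesian (resp.~weakly \(f\)-cocartesian) precisely when its opposite is \(f^{\op}\)-cartesian (resp.~weakly \(f^{\op}\)-cartesian). The opposite map \(f^{\op}\colon \E^{\op} \lrar \B^{\op}\) is again a weak fibration of \(\infty\)-bicategories: the defining lifting properties of Definition~\ref{d:weak} are interchanged under \((-)^{\op}\) --- the inner horns are carried to inner horns, while the two families of special outer horns of types (2) and (3) are swapped --- and \(\E^{\op}\), \(\B^{\op}\) remain \(\infty\)-bicategories. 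Applying Proposition~\ref{p:weak-to-strong} to \(f^{\op}\) thus shows that the weakly \(f^{\op}\)-cartesian edge \(e^{\op}\) is \(f^{\op}\)-cartesian, which is exactly the assertion that \(e\) is \(f\)-cocartesian.

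I do not expect a genuine obstacle here, as the corollary is a formal consequence of the two results just established. The only step requiring a modicum of care is the duality bookkeeping in the cocartesian case: one must check that \((-)^{\op}\) sends a weak fibration of \(\infty\)-bicategories to a weak fibration of \(\infty\)-bicategories and that it correctly intertwines the weakly/strongly cartesian conditions with their cocartesian counterparts. This is purely formal and is built into the symmetric design of Definitions~\ref{d:weak} and~\ref{d:cartesian}, so it introduces no real difficulty.
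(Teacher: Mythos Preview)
Your proof is correct and follows exactly the approach the paper intends: the corollary is stated in the paper simply as the combination of Example~\ref{ex:joyal} and Proposition~\ref{p:weak-to-strong}, with the cocartesian half obtained by passing to opposites. Your careful verification that \(f^{\op}\) remains a weak fibration of \(\infty\)-bicategories (so that Proposition~\ref{p:weak-to-strong} applies to it) is precisely the duality bookkeeping that the paper leaves implicit.
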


\begin{cor}\label{c:2-out-of-3}
Let \(p\colon\E \to \B\) be a weak fibration of \(\infty\)-bicategories and let
\[
\begin{tikzcd}
& y \ar[dr, "t"] & \\
x \ar[ur, "e_0"] \ar[rr, "e_1"'] && z
\end{tikzcd} 
\]
be a thin triangle in \(\E\) such that \(t\) is \(p\)-cartesian (e.g., \(p\) is an equivalence, see Corollary~\ref{c:joyal}). Then \(e_0\) is \(p\)-cartesian if and only if \(e_1\) is \(p\)-cartesian.
\end{cor}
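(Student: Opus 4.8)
The plan is to deduce this immediately from the two-out-of-three statements Lemma~\ref{l:technical-1} and Lemma~\ref{l:technical-2}, once we invoke the elementary observation (recorded just before Proposition~\ref{p:weak-to-strong}) that every $p$-cartesian edge is in particular weakly $p$-cartesian. Thus the single standing hypothesis that $t$ is $p$-cartesian will be used to feed a \emph{cartesian} input into one Lemma and a \emph{weakly cartesian} input into the other.

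For the first direction, suppose that $e_1$ is $p$-cartesian. Since $t$ is $p$-cartesian by hypothesis, it is a fortiori weakly $p$-cartesian. We are then exactly in the situation of Lemma~\ref{l:technical-1}, with $e_1$ cartesian and $t$ weakly cartesian, and its conclusion is precisely that $e_0$ is $p$-cartesian.

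For the converse, suppose instead that $e_0$ is $p$-cartesian, hence in particular weakly $p$-cartesian. Combined with the standing hypothesis that $t$ is $p$-cartesian, this places us in the setting of Lemma~\ref{l:technical-2}, whose conclusion is that $e_1$ is $p$-cartesian. This establishes the equivalence.

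I expect there to be no genuine obstacle here, since all of the combinatorial content—the scaled-anodyne manipulations organized around the $3$-simplex $\Del^{\{0,n-1,n,n+1\}}$—has already been carried out in the two Lemmas. The only point requiring minor care is to match up the correct variant in each application: the hypothesis on $t$ must be read as ``cartesian'' when applying Lemma~\ref{l:technical-2} and merely as ``weakly cartesian'' when applying Lemma~\ref{l:technical-1}. The parenthetical case in which $p$ is an equivalence is then automatically included, since by Corollary~\ref{c:joyal} every edge of $\E$—in particular $t$—is $p$-cartesian in that case, so the hypothesis is vacuously satisfied.
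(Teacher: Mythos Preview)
Your proof is correct and is precisely the argument the paper intends: the corollary is stated without proof immediately after Lemmas~\ref{l:technical-1} and~\ref{l:technical-2}, and your two applications of those lemmas (downgrading the relevant cartesian edge to weakly cartesian as needed) are exactly the expected derivation. One small remark: the parenthetical in the statement almost certainly contains a typo and should read ``\(t\) is an equivalence'' rather than ``\(p\) is an equivalence''; Corollary~\ref{c:joyal} asserts that equivalences in \(\E\) are \(p\)-cartesian, not that all edges are cartesian when \(p\) is an equivalence, so your justification of the parenthetical should be adjusted accordingly.
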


\begin{rem}\label{r:car-invariant}
It follows from Corollary~\ref{c:2-out-of-3} that the property of being a cartesian edge is closed under equivalence. More precisely, suppose that \(p\colon \E \to \B\) is a weak fibration and \(e\colon x \to y,e'\colon x'\to y'\) two edges which are equivalent in the \(\infty\)-bicategory \(\Fun(\Del^1,\E)\). We claim that \(e\) is \(p\)-cartesian if and only if \(e'\) is \(p\)-cartesian. Indeed, suppose that \(e'\) is \(p\)-cartesian and let \(\eta\colon \Del^1 \times \Del^1 \to \E\) encode
\[
 \begin{tikzcd}
 	x \ar[r, "\simeq"] \ar[d, "e"'] \ar[rd] & {x'} \ar[d, "{e'}"] \\
 	y \ar[r, "\simeq"] & {y'}
 \end{tikzcd}
\]
an equivalence from \(e=\eta_{|\Del^{\{0\}} \times \Del^1}\) to \(e'=\eta_{|\Del^{\{1\}} \times \Del^1}\). Applying Corollary~\ref{c:2-out-of-3} once we get that the diagonal arrow \(x \to y'\) is \(p\)-cartesian, and applying it a second time gives that \(e\colon x \to y\) is \(p\)-cartesian. Taking the equivalence in the other direction we can similarly deduce that if \(e\) is \(p\)-cartesian then \(e'\) is \(p\)-cartesian.
\end{rem}

\subsection{Marked fibrations and anodyne maps}
\label{s:slice}

As we have alluded to in the beginning of \S\ref{s:fibrations}, the notion of inner cocartesian fibration over a base \((S,T_S)\) 
is essentially a reformulation of the notion of 
\(\mathfrak{P}_{S}\)-fibered objects studied in~\cite[\S 3.2]{LurieGoodwillie}, see Definition~\ref{d:fibered} in \S\ref{s:straightening}. In the present subsection we will make this connection precise, and will then give a similar characterization for outer (co)cartesian fibrations.

\begin{prop}\label{p:inner-fibred}
	Let \((S,T_S)\) be a scaled simplicial set and let \(f: X \to S\) an inner fibration of simplicial sets. Let \(T_X = f^{-1}(T_S)\) to be the set of all triangles in \(X\) whose image in \(S\) belongs to \(T_S\) and let \(E_X\) be the set of edges in \(X\) which are locally \(f\)-cocartesian. Then the following are equivalence:
	\begin{enumerate}[leftmargin=*]
		\item
		\(f\colon (X,T_X) \to (S,T_S)\) is an inner cocartesian fibration.
		\item
		\(f\) exhibits \((X,E_X)\) as \(\Beta_{S}\)-fibered.
	\end{enumerate}
In addition, when these two equivalent conditions hold the set \(E_X\) identifies with the set of \(f\)-cocartesian arrows in \((X,T_X)\) (in the sense of Definition~\ref{d:cartesian}).
\end{prop}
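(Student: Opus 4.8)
The plan is to exploit that both conditions fix the same scaling and the same underlying inner fibration, so that the entire content lies in matching their ``cocartesian data.'' First I would record that an inner fibration detects thin triangles precisely when \(T_X=f^{-1}(T_S)\), so that in either condition the scaling is forced to be the one in the statement and the underlying map is an inner fibration of simplicial sets. The comparison then reduces to relating, on one side, the scaled \(f\)-cocartesian edges of Definition~\ref{d:cartesian}, and on the other, the locally \(f\)-cocartesian edges together with clauses (ii),(iii) of Definition~\ref{d:fibered}. The main tool is base change (Remark~\ref{r:base-change}) together with the Proposition reformulating~\cite[Lemma 1.2.8]{GaitsgoryRozenblyumStudy}: pulling \(f\) back along an edge \(e\colon\Del^1\to S\) yields a base with the sharp scaling (all its triangles being degenerate, hence thin), while pulling back along a thin triangle \(\sig\in T_S\) yields \(\Del^2\) with all triangles thin. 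Over such all-thin bases the cited proposition identifies an inner cocartesian fibration with a genuine cocartesian fibration of simplicial sets, inside which the scaled \(f\)-cocartesian edges are exactly the simplicial cocartesian edges (the scaling constraint on \(\Del^{\{0,1,n\}}\) being vacuous), and in which locally cocartesian edges are automatically cocartesian.

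For the implication (1) \(\Rightarrow\) (2) I would verify the three clauses of Definition~\ref{d:fibered} by base change. Clause (i) is immediate. For clause (ii) I pull back along each \(e\colon\Del^1\to S\): the base change is an inner cocartesian fibration over the all-thin \(\Del^1\), hence a cocartesian fibration of simplicial sets whose cocartesian edges coincide with the scaled \(f\)-cocartesian ones and, by definition, with the locally \(f\)-cocartesian edges; thus the marked edges over \(e\) are exactly the \(e^*f\)-cocartesian ones. For clause (iii) I pull back along a thin \(\sig\in T_S\) to get a cocartesian fibration over \(\Del^2_\sharp\); since locally cocartesian edges in a genuine cocartesian fibration are cocartesian, any \(e\in E_X\) lying over \(\sig_{|\Del^{\{0,1\}}}\) is \(\sig^*f\)-cocartesian, which is exactly clause (iii).

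The implication (2) \(\Rightarrow\) (1), together with the ``In addition'' clause, rests on a single statement, which I will call the \emph{crux}: in a \(\Beta_S\)-fibered object every locally \(f\)-cocartesian edge is \(f\)-cocartesian in the scaled sense of Definition~\ref{d:cartesian}. Granting the crux, I would conclude as follows. The reverse inclusion, that every scaled \(f\)-cocartesian edge is locally cocartesian, is easy: restricting its lifting property to simplices mapping into a single edge \(\Del^1\subseteq S\) reduces the scaled outer horns to the simplicial horns \(\Lam^n_0\to\Del^n\) defining cocartesian edges over \(\Del^1\), the scaling being vacuous there. Hence \(E_X\) coincides with the set of \(f\)-cocartesian arrows, which is the ``In addition'' clause. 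Moreover, degenerate edges are marked and therefore, by the crux, \(f\)-cocartesian; since the outer scaled horns of Definition~\ref{d:weak}(2),(3) have their pinched edge \(\Del^{\{0,1\}}\) (resp.~\(\Del^{\{n-1,n\}}\)) degenerate and their distinguished triangle lying over a thin triangle of \(S\), lifting against them is exactly an instance of the \(f\)-cocartesian lifting property for this degenerate edge; combined with the scaled inner horns, which lift because \(f\) is an inner fibration and \(T_X=f^{-1}(T_S)\), this shows \(f\) is a weak fibration and hence an inner fibration in the sense of the paper. Finally, the cocartesian lifts required by Definition~\ref{d:car-fibration} are produced by clause (ii) as locally cocartesian lifts, which the crux upgrades to \(f\)-cocartesian lifts; thus \(f\) is an inner cocartesian fibration.

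It remains to prove the crux, which I expect to be the main obstacle. Given a marked, locally cocartesian edge \(e\) with \(\sig_{|\Del^{\{0,1\}}}=e\), I must fill every scaled outer horn \((\Lam^n_0,\{\Del^{\{0,1,n\}}\})\to(\Del^n,\{\Del^{\{0,1,n\}}\})\) lying over some \(\tau\colon\Del^n\to S\) whose triangle \(\tau_{|\Del^{\{0,1,n\}}}\) is thin. The difficulty is that \(e\) is only assumed cocartesian fibrewise over \(\Del^{\{0,1\}}\), whereas the horn records compatibility across the thin triangle \(\Del^{\{0,1,n\}}\); here clause (iii) is precisely what is needed to promote the fibrewise cocartesianness of \(e\) to cocartesianness along this thin triangle. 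I would proceed by induction on \(n\), filling the horn by a sequence of elementary extensions, using clause (iii) and its consequences for composites of marked edges over thin triangles to reduce the outer lifting to inner-horn fillings, which are available since \(f\) is an inner fibration. This is exactly the content encoded by Lurie's theory of \(\Beta_S\)-anodyne maps in~\cite[\S 3.2]{LurieGoodwillie}, and the cleanest route is to match the scaled-cocartesian lifting maps against that class of trivial cofibrations for the \(\Beta_S\)-fibered model structure, so that fibrancy of \((X,E_X)\) yields the required fillers directly. Carrying out this matching, i.e.\ checking that the scaled-cocartesian lifting maps are \(\Beta_S\)-anodyne relative to the marked initial edge, is the technical core of the argument.
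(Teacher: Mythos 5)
Your direction (1)\(\Rightarrow\)(2) is sound and essentially the paper's argument: clause (ii) by base change to \(\Del^1\), and clause (iii) by pulling back to \(\Del^2_{\sharp}\) and using that locally cocartesian edges of a genuine cocartesian fibration are cocartesian (the paper spells this last fact out by factoring the marked edge through a chosen \(\sig^*f\)-cocartesian lift via a fiberwise equivalence, but invoking the standard statement is fine). Your ``crux'' is also correctly located and correctly discharged: the paper proves it exactly as you propose, by citing~\cite[Proposition 3.2.16]{LurieGoodwillie} and the \(\Beta_S\)-anodyne maps of type \((C_0)\), and the ``In addition'' clause follows as you say. The problem lies in your derivation of the weak fibration property in (2)\(\Rightarrow\)(1).

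Concretely: a lifting problem against a type (3) map of Definition~\ref{d:weak}, i.e.\ the pinched right horn \(\Lam^n_n \coprod_{\Del^{\{n-1,n\}}}\Del^0 \subseteq \Del^n \coprod_{\Del^{\{n-1,n\}}}\Del^0\) with scaling \(\{\Del^{\{0,n-1,n\}}\}\), unwinds to an instance of the \(f\)-\emph{cartesian} lifting property of Definition~\ref{d:cartesian} for the degenerate edge sitting at \(\Del^{\{n-1,n\}}\) --- not the cocartesian one, as your proposal asserts. Your crux only yields that marked (in particular degenerate) edges are \(f\)-cocartesian, which handles the type (2) pinched horns \(\Lam^n_0\) but says nothing about type (3). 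You also cannot appeal to Example~\ref{ex:joyal} (equivalences are weakly cartesian), since that presupposes \(f\) is already a weak fibration over an \(\infty\)-bicategory, which is exactly what is being proven; and the type (3) pinched horns do not occur in Lurie's \(\Beta_S\)-anodyne list, whose generators are all cocartesian-flavored, so no ``matching against \(\Beta_S\)-anodynes'' can produce these fillers. This is precisely where the paper does genuine work: it shows that the flat-marked type (3) inclusion \(A^{\flat} \to B^{\flat}\) is a trivial cofibration in the \(\Beta_S\)-fibered model structure by computing its image under the left Quillen equivalence \(\Str^{\sca}_{(S,T_S)}\), reducing via~\cite[Proposition~2.8]{GagnaHarpazLanariGrayLaxFunctors} to a Gray cylinder pushout-product and using that the \emph{opposite} of the type (3) map is scaled anodyne, so the pushout-product is a bicategorical equivalence by~\cite[Remark~2.4 and Proposition~2.16]{GagnaHarpazLanariGrayLaxFunctors}. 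Without an argument of this kind your proof establishes only part of the weak fibration property, and the implication (2)\(\Rightarrow\)(1) is incomplete.
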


\begin{rem}\label{rem:weaker}
In the situation of Proposition~\ref{p:inner-fibred}, if \((X,E)\) is \(\Beta_S\)-fibered for some set of marked edges \(E\), then \(E\) is necessarily the set of locally \(f\)-cocartesian edges by Condition~\hyperref[it:locally]{(ii)} above. In particular, the condition that \((X,E_X)\) is \(\Beta_S\)-fibered could be replaced by the a-priori weaker condition that \((X,E)\) is \(\Beta_S\)-fibered for \emph{some} set of marked edges \(E\).
\end{rem}

\begin{proof}[Proof of Proposition~\ref{p:inner-fibred}]
	Suppose (1) holds. We need to verify conditions (i)-(iii) of Definition~\ref{d:fibered}. Condition (i) holds since \(f\) is an inner fibration of scaled simplicial sets. To see (ii), note that for every edge \(e\colon \Del^1 \to S\) the base change \(e^*f\colon (X,T_X) \times_{(S,T_S)} \Del^1_{\flat} \to \Del^1_{\flat}\)
	 is an inner cocartesian fibration. Since all the triangles in \(\Del^1_{\flat}\) are thin we get that the same holds for the domain of \(e^*f\) and so \(e^*f\) can be viewed as a cocartesian fibration of \(\infty\)-categories. The \(e^*f\)-cocartesian edges are then by definition the locally cocartesian edges lying over \(e\), and they are also by definition the marked edges lying over \(e\). This shows (ii).
	 
	 To prove (iii), let now \(\sig\colon \Del^2 \to S\) be a thin triangle and let \(e\colon \Del^1 \to X\) be a marked edge lying over
	 \(\ovl{e} := \sig_{|\Del^{\{0,1\}}}\), with domain \(x := e_{|\Del^{\{0\}}}\) and codomain \(y := e_{\Del^{\{1\}}}\). 
	 We wish to show that \(e\) is \(\sig^*f\)-cocartesian. For this, we note that \(\sig^*f\colon (X,T_X) \times_{(S,T_S)}\Del^2_{\sharp} \to \Del^2_{\sharp}\) 
	 is an inner cocartesian fibration, and hence a cocartesian fibration on the level of the underlying simplicial sets, since all the triangles in \(\Del^2_{\sharp}\) are thin. 
	 We may then conclude that there exists a \(\sig^*f\)-cocartesian edge \(e'\colon \Del^1 \to X\) lying over \(\ovl{e}\) 
	 such that \(e'_{|\Del^{\{0\}}} = x\). Then \(e\) and \(e'\) both determine cocartesian edges of 
	 \(X \times_S \Del^1 \to \Del^1\) with the same domain \(x\), and hence there exists a commutative diagram in \(X \times_S \Del^1\) of the form
	\[
	 \begin{tikzcd}
		& y \ar[dr, "u"] & \\
		x \ar[ur, "e"]\ar[rr, "e'"] && y
	 \end{tikzcd}
	\]
	where \(u\) is an equivalence which covers the identity \(\Id_{f(y)}\). 
	Since \(e'\) and \(u\) are \(\sig^*f\)-cocartesian it follows that \(e\) is \(\sig^*f\)-cocartesian, as desired.
	
	Let us now assume that (2) holds. We first show that \(f\colon(X,T_X) \to (S,T_S)\) is a weak fibration. 
	Since \(f\) is already assumed to be an inner fibration and we also assume that
	\(T_X = f^{-1}(T_S)\), it will suffice to show that 
	\(f\colon(X,T_X) \to (S,T_S)\) has the right lifting property with respect to \(f\colon (A,T_A) \to (B,T_B)\), 
	where \(f\) is one of the maps appearing in (2) and (3) of Definition~\ref{d:weak}. Since \(T_X = f^{-1}(T_S)\) it will suffice to check the lifting problem of \(X \to S\) against \(A \to B\) on the level of underlying simplicial sets.
Now by assumption the map 
	\(f\colon X \to S\) exhibits \((X,E_X)\) as \(\Beta_S\)-fibered. 
	In particular, the object \((X,E_X)\) of \((\s^+)_{/S}\) is fibrant with respect to the \(\Beta_S\)-fibered model structure. 
	In order to prove the lifting property against \(A \to B\) 
	it will consequently suffice to prove that the induced map 
	\(A^{\flat} \to B^{\flat}\), when considered as a map in \((\s^+)_{/S}\), 
	is a \(\Beta_S\)-fibered weak equivalence. Consider the straightening functor 
	\[
	 \Str^{\sca}_{(S,T_S)}\colon(\s^+)_{/S} \to (\s^+)^{\fC(S,T_S)}
	\]
	associated to the identity \(\id\colon \fCs(S,T_S) \to \fCs(S,T_S)\). Since \(\Str^{\sca}_{\id}\)
	is a left Quillen equivalence and all the objects of \((\s^+)_{/S}\) 
	are cofibrant it is enough to show that 
	\[\Str^{\sca}_{(S,T_S)}(A^{\flat}) \to \Str^{\sca}_{(S,T_S)}(B^{\flat})\]
	is a weak equivalence in the projective model structure on \((\s^+)^{\fC(S,T_S)}\).
Unwinding the definitions, let \(Z = \Del^1 \times B\) and
let \(T\) denote the set of those triangles \((\tau,\sig_B)\colon \Del^2 \to \Del^1 \times B\) such that \(\sig_B\) is degenerate and either \(\tau_{|\Del^{\{1,2\}}}\) is degenerate in \(\Del^1\) or \({\sig_B}_{|\Del^{\{0,1\}}}\) is degenerate in \(B\),	
	together with the triangles in \(T_B \times \Del^{\{0\}}\) and \(T_B \times \Del^{\{1\}}\). 
	Let
	\[
	 Z_0 = (\Del^1 \times A) \coprod_{\partial \Del^1 \times A} (\partial \Del^1 \times B) \subseteq Z
	\]
	and let
	\(T_0\) be the collection of \(2\)-simplices of \(Z_0\) whose image in \(Z\) belongs to \(T\). Consider the commutative rectangle
	\[
	 \begin{tikzcd}
		\partial \Del^1 \times (B,T_B)\ar[d] \ar[r] & (Z_0,T_0) \ar[r]\ar[d] & (Z,T)  \ar[d] \\
		\Del^{\{0\}} \coprod [\Del^{\{1\}} \times (S,T_S)] \ar[r] & C_A \ar[r] & C_B
	 \end{tikzcd}
	\]
	in which \(C_A\) and \(C_B\) are defined by the condition that the left square and the external rectangle are pushout squares. By the definition of \(\St^{\sca}_{\id}\) (recalled in \S\ref{s:straightening}) it will suffice to show that \(C_A \to C_B\) is a bicategorical weak equivalence. 
	By the pasting lemma the right square is a pushout square as well,
	and so it suffices to show that the top horizontal map in the right square is scaled anodyne.
	Inspecting the set of thin triangles \(T\) we observe that we have a commutative diagram
	\[
	 \begin{tikzcd}
		(Z_0,T_0) \ar[r]\ar[d, "\simeq"'] & (Z,T) \ar[d, "\simeq"] \\
		{\displaystyle\bigl(\Del^1_{\flat} \otimes (A,T_A)\bigr) \mathop{\coprod}_{\partial \Del^1_{\flat} \otimes (A,T_A)} \bigl(\partial \Del^1_{\flat} \otimes (B,T_B)\bigr)} \ar[r] & \Del^1_{\flat} \otimes (B,T_B)
	 \end{tikzcd}
	\]
	where \(\otimes\) denotes the Gray product of scaled simplicial sets, see \S\ref{s:scaled-gray}  
	and where the vertical arrows are scaled anodyne by~\cite[Proposition~2.8]{GagnaHarpazLanariGrayLaxFunctors}. 
	To finish the proof we now just need to show that the lower horizontal map in the last square is a bicategorical equivalence.
	Now in the case where \(f\colon (A,T_A) \to (B,T_B)\) is as in (2) of Definition~\ref{d:weak} 
	this follows from \cite[Proposition~2.16]{GagnaHarpazLanariGrayLaxFunctors} since \(f\) is scaled anodyne. 
	In the case where \(f\colon (A,T_A) \to (B,T_B)\) is as in (3) of Definition~\ref{d:weak} 
	we have that \(f^{\op}\colon(A^{\op},T_A) \to (B^{\op},T_B)\) is scaled anodyne and 
	hence the lower horizontal map is the opposite of a scaled anodyne map, 
	then in particular a bicategorical equivalence, by~\cite[Remark~2.4 and Proposition~2.16]{GagnaHarpazLanariGrayLaxFunctors}.
	
	We thus proved that \(f\colon(X,T_X) \to (S,T_S)\) is a weak fibration. 
	To finish the proof we need to show that every arrow \(f\colon x \to y\) in \(S\) admits
	\(f\)-cocartesian lifts starting from any object \(x'\in X\) lying over \(x\). 
	For this we invoke~\cite[Proposition 3.2.16]{LurieGoodwillie} which implies that the object 
	\((X,E_X)\) of \((\s^+)_{/S}\) satisfies the right lifting property with respect to the 
	\(\Beta_{S}\)-anodyne maps listed in~\cite[Definition 3.2.10]{LurieGoodwillie}. 
	In particular, the right lifting property with respect to the maps of type \((C_0)\) 
	of this list implies that every edge in \(E_X\) is \(f\)-cocartesian, and the right lifting property 
	with respect to the maps of type \((B_0)\) implies that every arrow \(f: x \to y\) in \(S\) admits 
	a marked lift starting from any vertex \(x'\in X\) lying over \(x\). 
	We may hence conclude that \((X,T_X) \to (S,T_S)\) is an inner cocartesian fibration, as desired. To obtain the last statement, note that the last argument shows that when (2) holds every marked edge is \(f\)-cocartesian, while every \(f\)-cocartesian edge is in particular locally cocartesian, and hence marked by the definition of \(E_X\).
\end{proof}

One of the advantages of describing inner cocartesian fibrations in terms of \(\Beta_S\)-fibered object is that the latter can be characterized using a right lifting property with respect to a suitable collection of anodyne maps, see~\cite[Proposition 3.2.16]{LurieGoodwillie}. Our next goal is to show that a similar statement holds in the case of outer (co)cartesian fibrations. A step in that direction was already taken in~\cite{GagnaHarpazLanariEquiv} using the collection of maps given in Definition 2.13 of \loccit, but that collection was only partial. In what follows we identify the complete list of outer cartesian anodyne maps and show that the lifting property against them characterizes outer cartesian fibrations. 

\begin{notate}
Given a scaled simplicial set \((S,T_S)\), we write \(\trbis{(\Sms)}{(S,T_S)}\)
to denote the category of marked scaled simplicial sets \((X,E_X,T_X)\) equipped with a map of scaled simplicial sets \((X,T_X) \to (S,T_S)\).
\end{notate}

\begin{define}\label{d:anodyne-marked}
	Let \((S, T_S)\) be a scaled simplicial set. We call \ndef{outer cartesian anodyne maps} 
	the smallest weakly saturated class of maps in \(\trbis{(\Sms)}{(S,T_S)}\) containing the following maps:
	\begin{enumerate}[leftmargin=*]
		\item\label{item:outer_inner_horn}
		The inclusion 
		 \[
		  (\Lam^n_i,\varnothing,\{\Del^{\{i-1,i,i+1\}}\}_{|\Lam^n_i }) \hrar (\Del^n,\varnothing,\{\Del^{\{i-1,i,i+1\}}\})
		 \]
		 for every \(0 < i < n\) and every map \((\Del^n,\{\Del^{\{i-1,i,i+1\}}\}) \lrar (S,T_S)\).
		\item\label{item:outer_cartesian_lifts}
		The inclusion 
		\[
		 (\Lam^n_n,\{\Del^{\{n-1,n\}}\},\varnothing) \subseteq (\Del^n,\{\Del^{\{n-1,n\}}\},\varnothing)
		\]
		for every \(n \geq 1\) and every map \(\Del^n \lrar S\)
		(when \(n=1\) this should be read as the inclusion \(\Del^{\{1\}} \subseteq (\Del^1)^{\sharp}\)).
		\item\label{item:outer_cartesian_triangles}
		The inclusion 
		\[
		 (\Lam^n_0 \coprod_{\Del^{\{0,1\}}}\Del^0,\varnothing,\varnothing) \subseteq (\Del^n \coprod_{\Del^{\{0,1\}}}\Del^0,\varnothing,\varnothing)
		\]
		for every \(n \geq 2\) and every map \((\Del^n \coprod_{\Del^{\{0,1\}}}\Del^0,\varnothing) \lrar (S,T_S)\).
		\item
		The inclusion \({}^\flat\Del^2 \subseteq (\Del^2,\emptyset,\{\Del^2\})\) for every map \(\Delta^2_\sharp \to (S,T_S)\).
		\item
		The inclusion \((Q,\emptyset,Q_2) \subseteq (Q,E,Q_2)\) for every map \(Q_{\sharp} \lrar (S,T_S)\), where 
		\[
		 Q = \Del^0 \coprod_{\Del^{\{0,2\}}} \Del^3 \coprod_{\Del^{\{1,3\}}} \Del^0
		\]
		and \(E\) contains all the degenerate edges and in addition the edges \(\Del^{\{0,1\}}\) and \(\Del^{\{0,3\}}\). 
		\item\label{item:outer_thin}
		The inclusion 
		\[
		 (\Del^2,\{\Del^{\{0,1\}},\Del^{\{1,2\}}\},\{\Del^2\}) \subseteq {}^{\sharp}\Del^2
		\]
		for every map \(\Del^2_{\sharp} \lrar (S,T_S)\). 
	\end{enumerate}
	Dually, we let the collection of \ndef{outer cocartesian anodyne maps} to be the weakly saturated class generated by the opposites of the above maps.
\end{define}

The following proposition extends~\cite[Proposition 2.14]{GagnaHarpazLanariEquiv}:

\begin{prop}\label{p:char}
	Let \(\B\) be an \(\infty\)-bicategory, 
	\((X,E_X)\) a marked simplicial set and \(f\colon (X,T_X) \lrar \B\) a map which detects thin triangles.  
The object of \(\trbis{(\Sms)}{\B}\) determined by \((X,E_X,T_X)\) and \(f\)
	has the right lifting property with respect to outer cartesian anodyne maps
	if and only if \(f\colon (X,T_X) \to \B\) is an outer cartesian fibration and \(E_X\) is the collection of \(f\)-cartesian edges of \(X\).
\end{prop}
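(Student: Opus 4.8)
The strategy is to set up a dictionary matching each of the six generating families in Definition~\ref{d:anodyne-marked} with one clause in the definition of an outer cartesian fibration (Definition~\ref{d:car-fibration}) together with the identification of the marking, and then to read both implications off of this dictionary. Throughout, the hypothesis that $f$ detects thin triangles will let us invoke Remark~\ref{r:weak-scaled}, so that $X$ is again an $\infty$-bicategory whenever $f$ is a weak fibration.

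For the \emph{if} direction, suppose $f$ is an outer cartesian fibration and $E_X$ is its collection of $f$-cartesian edges. Since the class of outer cartesian anodyne maps is weakly saturated, it suffices to check the lifting property against the six generating families. The inner-horn family~\ref{item:outer_inner_horn} and the $0$-outer family~\ref{item:outer_cartesian_triangles} are lifted because $f$ is a weak fibration and an outer fibration; the generator ${}^{\flat}\Del^2 \subseteq (\Del^2,\emptyset,\{\Del^2\})$ is accounted for by the fact that $f$ detects thin triangles. In family~\ref{item:outer_cartesian_lifts}, the case $n=1$ is the existence of cartesian lifts, while for $n\geq 2$ the map is precisely the lifting property defining \emph{strongly} $f$-cartesian edges, which by Proposition~\ref{p:weak-to-strong} coincide with the $f$-cartesian, i.e.\ marked, edges. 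The generator~\ref{item:outer_thin} is lifted since a composite of two $f$-cartesian edges along a thin triangle is $f$-cartesian by Lemma~\ref{l:technical-2}. Finally, in the generator with source $(Q,\emptyset,Q_2)$ the prescribed thin triangles force $\Del^{\{0,1\}}$ and $\Del^{\{0,3\}}$ to be invertible edges of $X$, which are $f$-cartesian by Corollary~\ref{c:joyal} and hence marked.

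Conversely, for the \emph{only if} direction, assume $(X,E_X,T_X)\to\B$ has the right lifting property against all outer cartesian anodyne maps. Lifting against~\ref{item:outer_inner_horn} and~\ref{item:outer_cartesian_triangles} shows that $f$ is a weak fibration which lifts scaled inner horns and $0$-special outer horns. The $n$-special outer horns appearing in Definition~\ref{d:weak} are obtained from family~\ref{item:outer_cartesian_lifts} by choosing the marked edge $\Del^{\{n-1,n\}}$ to be \emph{degenerate} (degenerate edges always lie in $E_X$), so that the resulting filler, still degenerate along $\Del^{\{n-1,n\}}$, descends along the collapse $\Del^n \to \Del^n\coprod_{\Del^{\{n-1,n\}}}\Del^0$, the required thinness of the target triangle following from detection. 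Together with detection this shows $f$ is an outer fibration. The case $n=1$ of~\ref{item:outer_cartesian_lifts} supplies a marked lift of every edge of $\B$ with prescribed target, and the cases $n\geq 2$ show that every marked edge is strongly $f$-cartesian, hence $f$-cartesian by Proposition~\ref{p:weak-to-strong}; thus $f$ is an outer cartesian fibration and $E_X$ is contained in the $f$-cartesian edges.

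It remains to prove the reverse inclusion, that every $f$-cartesian edge $e\colon x_0\to x_1$ is marked, and this is the step I expect to be the crux. Given such an $e$, the marked lift $e'\colon x_0'\to x_1$ of $f(e)$ produced above is $f$-cartesian by the previous paragraph, and the cartesian property of $e'$ yields an edge $u\colon x_0\to x_0'$ over $\id_{f(x_0)}$ with a thin triangle witnessing $e\simeq e'\circ u$; by Corollary~\ref{c:2-out-of-3} and Remark~\ref{r:cart-equiv} the edge $u$ is invertible. Embedding $u$ into a copy of $Q$ and lifting against the generator with source $(Q,\emptyset,Q_2)$ shows $u$ is marked, after which lifting against~\ref{item:outer_thin} along the thin triangle $x_0 \xrightarrow{u} x_0' \xrightarrow{e'} x_1$ exhibits $e$ as a composite of the marked edges $u$ and $e'$, hence as marked. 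The main obstacle lies exactly here: one must realize the invertibility data for $u$ and the composition triangle as maps of marked-scaled simplicial sets whose images in $\B$ consist of thin triangles, so that the lifting problems against the $Q$-generator and~\ref{item:outer_thin} are genuinely of the prescribed form. This is where the detection hypothesis and the fact that $X$ is an $\infty$-bicategory are essential, and where the precise shape of these two generators is used.
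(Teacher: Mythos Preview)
Your proof is correct and follows essentially the same route as the paper's: the same dictionary between the six generators and the clauses of the definition, the same use of Proposition~\ref{p:weak-to-strong} for family~(2), Corollary~\ref{c:2-out-of-3} (or its underlying lemmas) for family~(6), and Corollary~\ref{c:joyal} for the $Q$-generator, and the same factorisation argument $e \simeq e'\circ u$ with $u$ an equivalence in the ``only if'' direction. Your closing paragraph even spells out a step the paper leaves implicit, namely that an equivalence in the $\infty$-bicategory $X$ can be realised as a map out of $Q_{\sharp}$; this is indeed the one place where the hypothesis that $\B$ is an $\infty$-bicategory (hence so is $X$, via Remark~\ref{r:weak-scaled}) is genuinely used.
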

\begin{proof}
We first prove the ``only if'' direction.
	Since every degenerate edge in \(X\) belongs to \(E_X\) the right lifting property 
	with respect to outer cartesian anodyne maps of type (1),(2), (3) and (4) implies that \(f\) is an outer fibration and that every edge in \(E_X\) is \(f\)-cartesian. 
	In addition, the case \(n=1\) of maps of type (2) implies that for every arrow \(e\colon x \lrar y\) in \(\B\) 
	and for every \(\wtl{y}\in X\) such that \(f(\wtl{y}) = y\) there exists a marked (and hence \(f\)-cartesian) edge \(\wtl{e}\colon \wtl{x} \lrar \wtl{y}\) in \(X\) 
	such that \(f(\wtl{e}) = e\). We may hence conclude that \(f \colon (X,T_X) \lrar \B\) is an outer cartesian fibration
	with all marked edges being \(f\)-cartesian. Let us now show that every \(f\)-cartesian edge is marked. Let \(e\colon x \lrar y\) be a \(f\)-cartesian edge lying over an edge \(\ovl{e}\colon \ovl{x} \lrar \ovl{y}\) of \(\B\). Then there exists a marked edge \(e'\colon x' \lrar y\) in \(X\) such that \(f(e') = \ovl{e}\). By the above \(e'\) is \(f\)-cartesian, and so we may factor \(e\) through \(e'\), in the sense that we may find a thin triangle in \(X\) of the form
	\[ \xymatrix{
		& x' \ar^{e'}[dr] & \\
		x \ar^{u}[ur] \ar^{e}[rr]&& y\\
	}\]
	which lies over the degenerate triangle
	\[ \xymatrix{
		& \ovl{x} \ar^{\ovl{e}}[dr] & \\
		\ovl{x} \ar^{\Id}[ur] \ar^{\ovl{e}}[rr]&& \ovl{y}\\
	}\]
	By Corollary~\ref{c:2-out-of-3} we may conclude that \(u\) is \(f\)-cartesian, hence an equivalence by Remark~\ref{r:cart-equiv}. We now observe that the right lifting property with respect to outer cartesian anodyne maps of type (5) implies in particular that every equivalence in \(X\) is marked, and so in particular \(u\) is marked. Finally, since \(u\) and \(e'\) are marked an application of the right lifting property against maps of type (6) implies that \(e\) is marked, as desired.
	
We now prove the ``if'' direction, and so we assume that \(f\) is an outer cartesian fibration and \(E_X\) consists of the \(f\)-cartesian edges.
	Proposition~\ref{p:weak-to-strong} implies that every \(f\)-cartesian edge in \(X\) is strongly \(f\)-cartesian. 
	This together with the fact that \(f\) is an outer fibration implies that \((X,E_X,T_X)\) 
	has the right lifting property with respect to outer cartesian anodyne maps of type (1), (2), (3) and (4). 
	The right lifting property with respect to maps of type (6) follows directly from Corollary~\ref{c:2-out-of-3}. 
	In order to conclude the proof we wish to show that the map \(f\) has the right lifting property with respect to maps of type (5). 
	For this we note that \((X,T_X)\) is an \(\infty\)-bicategory in this case by Remark~\ref{r:weak-scaled}, and that any edge in \(Q\) is necessarily sent to an equivalence in \((X,T_X)\), which is therefore \(f\)-cartesian by virtue of Corollary~\ref{c:joyal}.
\end{proof}

We finish this subsection by discussing the compatibility of outer (co)cartesian anodyne maps with \(\ast\)-pushout-products. In particular, the following lemma extends~\cite[Lemma 2.17]{GagnaHarpazLanariEquiv}:

\begin{lemma}\label{l:pushout-join}
	Let \(f\colon X \lrar Y\) and \(g\colon A \lrar B\) be injective maps of marked-scaled simplicial sets. If either \(f\) is outer cartesian anodyne or \(g\) is outer cocartesian anodyne then the map of scaled simplicial sets
	\begin{equation}\label{e:pushout-join} 
	X \ast B \coprod_{X \ast A} Y \ast A \lrar Y \ast B 
	\end{equation}
	is a bicategorical trivial cofibration.
\end{lemma}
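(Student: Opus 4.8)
The plan is to prove the two clauses by reducing everything to a single case via the $\op$-duality of the join, and then to reduce that case, by two weak-saturation arguments, to a finite list of explicit combinations of generators. Throughout write $f \mathbin{\widehat{\ast}} g$ for the pushout-join map~\eqref{e:pushout-join}, computed in $\Ss$. First I would record the duality: one checks directly from Definition~\ref{d:join}, by reversing the order of vertices, that there is a natural isomorphism of scaled simplicial sets $X \ast Y \cong (Y^{\op} \ast X^{\op})^{\op}$, under which the four pieces $T_X,\ E_X\times Y_0,\ X_0\times E_Y,\ T_Y$ of the scaling are carried respectively to $T_{X^{\op}},\ (Y^{\op})_0\times E_{X^{\op}},\ E_{Y^{\op}}\times (X^{\op})_0,\ T_{Y^{\op}}$. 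It follows that $(f \mathbin{\widehat{\ast}} g)^{\op} \cong g^{\op} \mathbin{\widehat{\ast}} f^{\op}$. Since $(-)^{\op}$ preserves the bicategorical model structure, and hence its trivial cofibrations (see~\cite[Remark~2.4]{GagnaHarpazLanariEquiv}), and since $g$ is outer cocartesian anodyne precisely when $g^{\op}$ is outer cartesian anodyne (Definition~\ref{d:anodyne-marked}), the second clause of the lemma follows from the first. It therefore suffices to treat the case where $f$ is outer cartesian anodyne and $g$ is an arbitrary injective map.

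Next I would reduce to generators. By Construction~\ref{con:slice} the functors $X \mapsto X \ast B$ and $X \mapsto X \ast A$ preserve connected colimits, so for a fixed injective $g$ the class of maps $f$ for which $f \mathbin{\widehat{\ast}} g$ is a bicategorical trivial cofibration is closed under pushout, transfinite composition and retract, i.e.\ weakly saturated. Hence it is enough to take $f$ among the generating outer cartesian anodyne maps (1)--(6) of Definition~\ref{d:anodyne-marked}. Symmetrically, for each such fixed $f$ the functors $A \mapsto X \ast A$ and $A \mapsto Y \ast A$ preserve connected colimits, so the class of $g$ for which $f \mathbin{\widehat{\ast}} g$ is a trivial cofibration is weakly saturated as well; I may thus further assume that $g$ is a generating cofibration of $\Sms$, namely a boundary inclusion ${}^{\flat}\partial\Delta^m \hrar {}^{\flat}\Delta^m$ (for the admissible markings and scalings of its faces), an edge-marking inclusion, or a triangle-scaling inclusion.

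It then remains to check the generating combinations. The horn-type generators (1), (2), (3) are those already handled by~\cite[Lemma 2.17]{GagnaHarpazLanariEquiv}: joining $\Lambda^n_i$ (respectively the pointed outer horn) with $\Delta^m$ yields a horn of the same type inside $\Delta^{n+m+1}\cong \Delta^n \ast \Delta^m$, and the pushout-join with a boundary inclusion is filtered by fillings of such horns, so the conclusion is immediate from \loccit The marking/scaling generators (4), (5), (6) do not alter the underlying simplicial set, and for these $f \mathbin{\widehat{\ast}} g$ is an isomorphism on underlying simplicial sets. For type (4) it is in fact an isomorphism outright, because the single triangle $\Delta^{\{0,1,2\}}$ that becomes thin lies entirely in the $\Delta^2$-factor and is already thin in the $Y \ast A$ leg of the pushout. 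For types (5) and (6) the map $f \mathbin{\widehat{\ast}} g$ adds, for each newly marked edge $e$ of $Y$ and each vertex $b$ of $B$ not already in $A$, the triangle $e \ast b$ to the scaling; the point is that every such triangle is the unique non-thin face of a tetrahedron whose three remaining faces are forced thin by the surrounding marked edges and the thin $2$-simplex of the source. The task is then to exhibit each such scaling enlargement as a transfinite composite of pushouts of generating scaled anodyne maps in $\bS$ (and their opposites), organized by the simplices of $B$.

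The main obstacle is exactly this last verification for generators (5) and (6), where the changing datum is the \emph{marking} rather than the underlying horn: one must produce explicit anodyne filtrations realizing the ``thin-tetrahedron completion'' moves described above as bicategorical trivial cofibrations, keeping track of the whole family of new low-dimensional cells that the join creates over each vertex of $B$ and propagating their thinness coherently. This is precisely the combinatorial input absent from the partial list of~\cite[Definition 2.13]{GagnaHarpazLanariEquiv}, and supplying it is what upgrades~\cite[Lemma 2.17]{GagnaHarpazLanariEquiv} to the present statement.
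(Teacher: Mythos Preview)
Your overall strategy matches the paper's exactly: reduce to the case where \(f\) is outer cartesian anodyne via the \(\op\)-symmetry of the join, then argue on generators for both \(f\) and \(g\). The paper likewise cites \cite[Lemma~2.17]{GagnaHarpazLanariEquiv} for the first four types of \(f\) against boundary inclusions, and leaves only types~(5) and~(6) for explicit verification, so your identification of the ``main obstacle'' is correct.

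There is, however, a genuine gap in your case analysis. Your treatment of the horn-type generators (1)--(3) only addresses \(g\) a boundary inclusion; you never return to \(f\) of type (1)--(3) paired with \(g\) an edge-marking or triangle-scaling generator. The paper handles these first: when \(g\) is the triangle-scaling \({}^{\flat}\Del^2 \hrar (\Del^2,\emptyset,\{\Del^2\})\) the pushout-join is always an isomorphism, and when \(g\) is the edge-marking \({}^{\flat}\Del^1 \hrar {}^{\sharp}\Del^1\) it is an isomorphism \emph{except} when \(f\) is the \(n=1\) instance of type~(2), namely \(\Del^{\{1\}} \hrar (\Del^1)^{\sharp}\). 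In that one exceptional case the pushout-join is \((\Del^3,\{\Del^{\{1,2,3\}},\Del^{\{0,1,3\}},\Del^{\{0,1,2\}}\}) \to \Del^3_{\sharp}\), which is scaled anodyne by \cite[Remark~3.1.4]{LurieGoodwillie} but is certainly not an isomorphism. Your outline skips this entirely.

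For the residual cases (5) and (6) with \(g\) a boundary inclusion, your heuristic (``each new triangle is the unique non-thin face of a tetrahedron whose other faces are forced thin'') is the right picture for type~(6): the paper writes the map as \((\Del^{[2]\ast[n]},T) \to (\Del^{[2]\ast[n]},T')\) and exhibits it as a composite of pushouts along \((\Del^3,\text{three faces thin}) \to \Del^3_{\sharp}\). But it is not adequate for type~(5): there the source \(Q\) has all triangles thin but is a quotient of \(\Del^3\), and the new thin triangles in \(Q\ast \Del^n\) are of the form \(\Del^{\{0,1,i\}}\) and \(\Del^{\{0,3,i\}}\). The paper realizes this enlargement via pushouts along the \(\Del^4\)-saturation generator of Definition~\ref{d:anodyne}(ii), not via tetrahedra. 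So your sketched mechanism for~(5) would not go through as stated.
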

\begin{proof}
	Since the collection of trivial cofibrations is closed under taking opposites, it will suffice to verify the case where \(f\) is outer cartesian anodyne. For this, one may check the claim on generators, and so we may assume that \(g\) is either the inclusion \({}^{\flat}(\partial \Del^n) \hrar {}^{\flat}\Del^n\), the inclusion \({}^{\flat}\Del^2 \hrar (\Del^2,\emptyset,\{\Del^2\})\), or the inclusion \({}^{\flat}\Del^1 \hrar {}^{\sharp}\Del^1\), and \(f\) is one of the generating anodyne maps appearing in Definition~\ref{d:anodyne-marked}. We first note that when \(g\) is the map \({}^{\flat}\Del^2 \hrar (\Del^2,\emptyset,\{\Del^2\})\) then~\eqref{e:pushout-join} is an isomorphism. When \(g\) is the map \({}^{\flat}\Del^1 \hrar {}^{\sharp}\Del^1\) the map~\eqref{e:pushout-join} is an isomorphism except if \(f\) is \(\Del^{\{1\}} \hrar (\Del^1)^{\sharp}\), in which case the map~\eqref{e:pushout-join} takes the form 
	\[
	 (\Del^3,\{\Del^{\{1,2,3\}},\Del^{\{0,1,3\}},\Del^{\{0,1,2\}}\}) \lrar \Del^3_{\sharp} ,
	\]
	which is scaled anodyne by~\cite[Remark 3.1.4]{LurieGoodwillie}.
	
	We may hence assume that \(g\) is the inclusion \({}^{\flat}(\partial \Del^n) \hrar {}^{\flat}\Del^n\). For the first four types of generating anodyne maps this was proven in~\cite[Lemma 2.17]{GagnaHarpazLanariEquiv}. We now verify the remaining two cases:
	
	\begin{itemize}[leftmargin=*]
		\item
		When \(f\) is the inclusion \((\Del^2,\{\Del^{\{0,1\}},\Del^{\{1,2\}}\},\{\Del^2\}) \subseteq {}^{\sharp}\Del^2\) the map~\eqref{e:pushout-join} is isomorphic to the map
		\[
		 (\Del^{[2]\ast[n]},T) \lrar (\Del^{[2]\ast[n]},T') ,
		\]
		where \(T\) contains 
		all the triangles 
		of the form \(\Del^{\{0,1,i\}}\) and \(\Del^{\{1,2,i\}}\) while 
		\(T'\) contains \(T\) plus all the triangles of the form \(\Del^{\{0,2,i\}}\). 
		In this case we see that \((\Del^{[2]\ast[n]},T')\) can be obtained from 
		\((\Del^{[2]\ast[n]},T)\) by performing a sequence of pushouts along the maps
		\[
		 (\Del^3,\{\Del^{\{0,1,2\}},\Del^{\{0,1,3\}},\Del^{\{1,2,3\}}\},\Del^3_2) \lrar {}^{\sharp}\Del^3 ,
		\]
		which are scaled anodyne (see~\cite[Remark 3.1.4]{LurieGoodwillie}).
		\item
		When \(f\) is the inclusion \((Q,\emptyset,Q_2) \subseteq (Q,\{\Del^{\{0,1\}},\Del^{\{0,3\}}\},Q_2)\), 
		we set 
		\[W = \Del^{[0] \ast [n]} \coprod_{\Del^{\{0,2\}\ast [n]}}\Del^{[3]\ast[n]} \coprod_{\Del^{\{1,3\}\ast [n]}} \Del^{[0]\ast [n]}.\]
		The map~\eqref{e:pushout-join} is then  isomorphic to the map
		\(\left(W,T\right) \lrar \left(W,T'\right)\)
		where \(T\) contains 
		all the triangles which are contained in \(\Del^{\{0,1,2,3\}}\) and \(T'\) contains \(T\) and moreover
		all the triangles of the form 
		\(\Del^{\{0,1,i\}}\) and \(\Del^{\{0,3,i\}}\). In this case the map~\eqref{e:pushout-join} can be realized as a sequence of pushouts along the scaled anodyne maps
		\[ (\Del^4,\Del^4_1,T) \lrar (\Del^4,\Del^4_1,T \cup \{\Del^{\{0,3,4\}},\Del^{\{0,1,4\}}\}) \]
		as in Definition~\ref{d:anodyne}(ii).
	\end{itemize}

\end{proof}

\begin{cor}\label{cor:slice}
	Let \(\C\) be an \(\infty\)-bicategory and let \(f \colon K \lrar \C\) be a map of scaled simplicial sets. 
	Then the map of scaled simplicial sets 
	\[f \colon \ovl{\C}_{/f} \lrar \C\]
	is an outer cartesian fibration and the marked edges of \(\C_{/f}\) are exactly
	the \(f\)-cartesian edges. In particular, an edge \(e \colon \Del^1 \ast K \lrar \C\) in \(\ovl{\C}_{/f}\) 
	is \(f\)-cartesian if and only if for every vertex \(x\) of \(K\) the triangle \(e_{|\Del^1 \ast \{x\}}\) is thin. Dually, the map 
	\[f \colon \ovl{\C}_{f/} \to \C\] is an outer cocartesian fibration
	and the marked edges of \(\C_{f/}\) are precisely the \(f\)-cocartesian edges. 
\end{cor}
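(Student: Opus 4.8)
The plan is to deduce everything from the characterization of outer cartesian fibrations in Proposition~\ref{p:char}, by verifying that the marked-scaled simplicial set \(\C_{/f}\), regarded as an object of \(\trbis{(\Sms)}{\C}\) via the projection \(p\), has the right lifting property against all outer cartesian anodyne maps. Once this is in place, Proposition~\ref{p:char} yields at once that \(p\colon \ovl{\C}_{/f} \to \C\) is an outer cartesian fibration and that its marked edges are \emph{exactly} the \(p\)-cartesian ones — the latter being the only genuinely new content beyond Proposition~\ref{p:slice-fibration}, which already supplies the implication ``marked \(\Rightarrow\) cartesian'' and the existence of enough cartesian lifts. First I would record that \(p\) detects thin triangles, as required by the hypothesis of Proposition~\ref{p:char}: by Definition~\ref{d:join} a triangle of \(\ovl{\C}_{/f}\), presented as a map \(\Del^2 \ast K \to \C\), is thin precisely when its restriction to \(\Del^2 \ast \emptyset = \Del^2\) is thin, and this restriction is exactly its image under \(p\).

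The heart of the argument is the lifting property, which I would establish by transposing across the join--slice adjunction of Construction~\ref{con:slice}. Given a generating outer cartesian anodyne map \(i\colon (A,E_A,T_A) \to (B,E_B,T_B)\) equipped with a structure map to \(\C\), a lifting problem of \(i\) against \(p\) transposes to an extension problem for \(\C\) along
\[ (A,E_A,T_A) \ast K \coprod_{(A,E_A,T_A) \ast \emptyset} (B,E_B,T_B) \ast \emptyset \longrightarrow (B,E_B,T_B) \ast K, \]
where the identification uses that \(p\) is induced by \(\emptyset \hookrightarrow K\) and that \((-) \ast \emptyset\) recovers the underlying scaled simplicial set. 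This is precisely the \(\ast\)-pushout-product of \(i\) with the injection \(\emptyset \hookrightarrow K\), so Lemma~\ref{l:pushout-join} (applied with \(i\) outer cartesian anodyne and \(\emptyset \hookrightarrow K\) the injective second factor) shows it is a bicategorical trivial cofibration. Since \(\C\) is an \(\infty\)-bicategory, hence fibrant, the extension exists, and transposing back produces the desired lift; as lifting properties pass to the weakly saturated closure, this handles every outer cartesian anodyne map and Proposition~\ref{p:char} applies.

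For the explicit criterion I would use that the marking on \(\C_{/f}\) is detected by the join: an edge \(e\), presented as a map \(\Del^1 \ast K \to \C\) restricting to \(f\) on \(K\), is marked if and only if the corresponding map out of \(\prescript{\sharp}{}{\Del^1} \ast K\) is a map of scaled simplicial sets, which by Definition~\ref{d:join} amounts to requiring that each triangle \(\Del^{\{0,1\}} \ast \{x\} = \Del^1 \ast \{x\}\), for \(x\) a vertex of \(K\), be sent to a thin triangle. Combined with the identification ``marked \(=\) \(p\)-cartesian'' just obtained, this gives the stated characterization. Finally, the dual statement for \(\ovl{\C}_{f/} \to \C\) follows formally by applying the cartesian case to \(\C^{\op}\) and \(f^{\op}\) and taking opposites, using the natural isomorphism \(\C_{f/} \cong \bigl((\C^{\op})_{/f^{\op}}\bigr)^{\op}\) arising from \(X \ast Y \cong (Y^{\op} \ast X^{\op})^{\op}\), under which outer cartesian fibrations and cartesian edges become outer cocartesian fibrations and cocartesian edges. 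The one delicate point — and the main potential obstacle — is the bookkeeping in the adjunction transposition, namely checking that the marked-scaled structures on both sides of the pushout match those governed by Lemma~\ref{l:pushout-join}, so that the lemma applies verbatim to each of the six families of generators.
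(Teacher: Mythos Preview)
Your proposal is correct and follows exactly the approach the paper intends: transpose lifting problems across the join--slice adjunction of Construction~\ref{con:slice}, apply Lemma~\ref{l:pushout-join} to identify the resulting \(\ast\)-pushout-products with bicategorical trivial cofibrations, and then invoke Proposition~\ref{p:char}. The paper places this corollary immediately after Lemma~\ref{l:pushout-join} without an explicit proof precisely because the argument is this direct combination; your handling of the thin-triangle detection, the explicit marking criterion via Definition~\ref{d:join}, and the dual statement via \((-)^{\op}\) are all as intended.
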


\begin{rem}
	The previous result also appears in \cite[\href{https://kerodon.net/tag/01WT}{Tag 01WT}]{LurieKerodon}, in a weaker form that only deals with the ``outer fibration'' part.
\end{rem}

\subsection{Cartesian lifts of lax transformations}
\label{s:lift}

In this section we relate the theory of outer cartesian fibrations as developed
so far in this work with the notion of lax transformations defined via the Gray product, see~\S\ref{s:scaled-gray}.
In particular, we prove the following:

\begin{prop}[Lifting lax transformations]\label{p:lax-lift}
	Let \(f\colon \E \rightarrow \B\) be an outer fibration of \(\infty\)-bicategories and \(K \subseteq L\) an inclusion of scaled simplicial sets. Consider a lifting problem of the form
\begin{equation}\label{e:lax-lift} 
\xymatrix{
\Del^{\{1\}} \otimes L \displaystyle\mathop{\coprod}_{\Del^{\{1\}} \otimes K} \Del^1_{\flat} \otimes K  \ar[r]^-{f}\ar[d] & \E \ar[d]^{f} \\
\Del^1_{\flat} \otimes L \ar[r]^{H}\ar@{-->}^{\wtl{H}}[ur] & \B \\
}
\end{equation}
	such that \(f\) sends every edge of the form \(\Del^1 \times \{v\}\) (for \(v \in K\)) to a \(f\)-cartesian edge. 	
	Suppose that for every \(u \in L_0 \setminus K_0\) there exists a \(f\)-cartesian edge with target \(f(\Del^{\{1\}} \times \{u\})\) which lifts \(H(\Del^1 \times \{u\})\). 
Then the dotted lift \(\wtl{H}\colon \Del^1_{\flat} \otimes B \rightarrow \E\) exists. Furthermore, \(\wtl{H}\) can be chosen so that the edges \(\wtl{H}(\Del^1 \times \{u\})\) for \(u \in L_0 \setminus K_0\) are any prescribed collection of \(f\)-cartesian lifts. 
\end{prop}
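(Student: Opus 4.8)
The plan is to regard the left vertical map in~\eqref{e:lax-lift} as the Gray pushout-product of the inclusion $\Del^{\{1\}} \hookrightarrow \Del^1_{\flat}$ with $K \hookrightarrow L$, and to solve the lifting problem by induction on the cells of $L$ relative to $K$. Since, for fixed $f$, the class of monomorphisms $K \to L$ admitting the required lift is closed under pushout, transfinite composition and retract, it suffices to treat generating cofibrations, provided we first perform a preliminary step at the level of vertices. That step attaches the vertices of $L_0 \setminus K_0$ one at a time: for each such $u$ the relevant part of the cylinder is the single edge $\Del^1_{\flat} \otimes \{u\}$, and extending the lift over it amounts exactly to choosing an $f$-cartesian lift of $H(\Del^1 \times \{u\})$ with the prescribed target, which exists by hypothesis. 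This reduces us to the case $L_0 = K_0$ and simultaneously settles the final ``furthermore'' clause, since the new vertical edges are then precisely the prescribed cartesian lifts.

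With $L_0 = K_0$ I would reduce, via the skeletal filtration of $L$ relative to $K$ and the fact that $\Del^1_{\flat}\otimes(-)$ preserves colimits (being a left adjoint, by the monoidal-closed structure following Proposition~\ref{p:gray-quillen}), to the two generating cofibrations: the boundary inclusion $(\partial\Del^n)_{\flat} \hookrightarrow \Del^n_{\flat}$ and the scaling map $(\Del^2)_{\flat} \to (\Del^2)_{\sharp}$. The latter is handled using only that $f$ detects thin triangles. For the former, the key object to analyze is the prism $\Del^1_{\flat} \otimes \Del^n$, whose underlying simplicial set is triangulated by the staircase simplices $\sigma_0, \dots, \sigma_n$ with $\sigma_k = [(0,0),\dots,(0,k),(1,k),\dots,(1,n)]$. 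I would filter $\Del^1_{\flat}\otimes\Del^n$ relative to $\Del^{\{1\}}\otimes\Del^n \cup \Del^1_{\flat}\otimes\partial\Del^n$ by attaching $\sigma_0,\dots,\sigma_n$ in this order, so that attaching $\sigma_k$ is a pushout along the horn $\Lam^{n+1}_{k+1} \hookrightarrow \Del^{n+1}$. For $k < n$ this is an inner horn, and a direct check against Definition~\ref{d:gray} shows that its critical triangle $\Del^{\{k,k+1,k+2\}}$, namely $\{(0,k),(1,k),(1,k+1)\}$, is thin in the Gray product; hence these attachments are fillable because $f$, being a weak fibration that detects thin triangles, lifts against scaled anodyne maps (see Remark~\ref{r:weak-scaled}).

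The remaining, and decisive, step is the final attachment of $\sigma_n$ along the terminal outer horn $\Lam^{n+1}_{n+1}$, whose last edge $\Del^{\{n,n+1\}} = \{(0,n),(1,n)\}$ is the vertical edge over the top vertex, hence $f$-cartesian. The main obstacle is that the triangle $\Del^{\{0,n,n+1\}}$ which the defining lifting property of an $f$-cartesian edge (Definition~\ref{d:cartesian}) requires to be thin is \emph{not} thin in $\Del^1_{\flat}\otimes\Del^n$ --- indeed for $n=1$ it is exactly the non-thin triangle $\sigma_1$ of the Gray square of Example~\ref{ex:lax-square} --- and moreover it already lies in the given data over $\Del^1_{\flat} \otimes K$, so its thinness cannot simply be arranged by choosing the interior fillings. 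To overcome this I would reconcile the Gray scaling with the scaling of Definition~\ref{d:cartesian} by the same device used in Lemmas~\ref{l:technical-1} and~\ref{l:technical-2}: enlarge the configuration along scaled anodyne maps, invoke that $\E$ and $\B$ are $\infty$-bicategories to scale the offending triangle once its neighbouring faces are thin via \cite[Remark 3.1.4]{LurieGoodwillie}, and then complete the lift in stages, using that the vertical edge is cartesian and, by Proposition~\ref{p:weak-to-strong}, weakly and strongly cartesian as well. Assembling these fillings over all cells produces the desired $\wtl{H}$, with the vertical edges over $L_0 \setminus K_0$ equal to the chosen cartesian lifts.
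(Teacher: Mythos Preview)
Your reduction to generating cofibrations and your prism filtration of \(\Del^1_\flat \otimes \Del^n_\flat\) by the staircase simplices \(\sigma_0,\dots,\sigma_n\) is exactly the argument the paper gives, down to the identification of the attaching maps as inner horns \(\Lam^{n+1}_{k+1}\) for \(k<n\) with thin critical triangle, and the terminal outer horn \(\Lam^{n+1}_{n+1}\) for \(k=n\). So the overall strategy is the same.

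Where you diverge is the handling of that terminal outer horn, and here you make the step much harder than it is. The ``obstacle'' you flag---that \(\Del^{\{0,n,n+1\}}\) is not thin in the Gray product---is in fact no obstacle at all. Since \(f\) is an \emph{outer} fibration, Proposition~\ref{p:weak-to-strong} says every \(f\)-cartesian edge is \emph{strongly} \(f\)-cartesian, and by definition a strongly cartesian edge lifts against \((\Lam^{n+1}_{n+1})_{\flat} \hookrightarrow \Del^{n+1}_{\flat}\) with \emph{no thinness hypothesis whatsoever}. The paper simply invokes this and is finished in one line; there is no need to enlarge the configuration, no appeal to \cite[Remark~3.1.4]{LurieGoodwillie}, and no mimicry of the device from Lemmas~\ref{l:technical-1}--\ref{l:technical-2}. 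You cite Proposition~\ref{p:weak-to-strong} yourself at the very end, but do not seem to realize it already dissolves the issue you spent a paragraph worrying about. (Your proposed workaround is also under-specified: it is not clear which 3-simplex with three thin faces would force the face \(\{(0,0),(0,n),(1,n)\}\) to become thin in \(\E\), and in any case that is irrelevant to the lifting problem, which concerns the scaling on the source, not the target.)

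One small stylistic difference: the paper disposes of the scaling generator \(\Del^2_\flat \to \Del^2_\sharp\) at the very start by observing that, since \(f\) detects thin triangles, one may assume \(K\) and \(L\) carry only degenerate thin triangles; your treatment of this case via ``\(f\) detects thin'' amounts to the same thing.
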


\begin{proof}
	Since \(f\colon \E \to \B\) is an outer fibration it detects thin triangles, and so a dotted lift in~\eqref{e:lax-lift} with the desired properties exists if and only if it exists on the level of underlying simplicial sets. We may hence assume without loss of generality that the \(L\) and \(K\) have 
	only degenerate triangles thin. 
	Arguing simplex by simplex it will suffice to prove the claim for \(L \subseteq K\) being the inclusion \(\partial \Del^n_{\flat}\subseteq \Del^n_{\flat}\).	
	In the case \(n=0\) the claim is tautological, since we assume the existence of cartesian lifts. 
	In the case \(n \geq 1\) the map \(\partial \Del^n_{\flat} \subseteq \Del^n_{\flat}\) is bijective on vertices and so we just need to construct a lift without the additional constraints on the edges. 
Consider the filtration
\[\Delta^1_{\flat} \mgr \partial \Del^{n}_{\flat}  \coprod_{\Del^{\{1\}}
\mgr \partial \Del^{n}_{\flat}} \Del^{\{1\}} \mgr \Del^n_{\flat} = 
X_0 \subseteq X_1 \subseteq \dots \subseteq X_{n+1} = \Del^1_{\flat} \mgr \Del^n_{\flat} ,\]
where 
\(X_{i+1}\) is the union of \(X_i\) and the image of the map
\(\tau_i\colon (\Del^{n+1},T^+_i) \to \Del^1_{\flat} \mgr \Del^{n}_{\flat}\) given on vertices by the formula
\[
 \tau_i(m) = \left\{\begin{matrix} (0,m) & m \leq i \\ (1,m-1) & m > i \end{matrix}\right. \ ,
\]
and \(T^+_i\) is the collection of all triangles in \(\Del^{n+1}\) which are either degenerate or of the form \(\Del^{\{i,i+1,k\}}\) for \(k > i+1\). We then observe that for \(i=0,...,n-1\) the inclusion
\(X_i \subseteq X_{i+1}\) is a pushout along \(\tau_i\) of the scaled inner horn \((\Lam^{n+1}_{i+1},{T^+_i}_{|\Lam^{n+1}_{i+1}}) \hrar (\Del^{n+1},T^+_i)\), 
while at the last step of the filtration the inclusion \(X_n \subseteq X_{n+1}\) is a pushout along \(\tau_n\) of the outer horn \((\Lam^{n+1}_{n+1})_{\flat} \subseteq \Del^{n+1}_{\flat}\), but since \(\tau_n\) sends \(\Del^{\{n,n-1\}}\) to \(\Del^1 \times \{n\}\) its image in \(\E\) is \(f\)-cartesian by assumption, and hence strongly \(f\)-cartesian by Proposition~\ref{p:weak-to-strong}. Since \(f\) is an outer fibration it then follows that the lift \(\wtl{H}\colon \Del^1_{\flat} \otimes \Del^n_{\flat} \to \E\) exists, as desired.
\end{proof}

\begin{rem}
Passing to opposites, Proposition~\ref{p:lax-lift} yields a dual statement for the case where
the lift is taken against the map
\[ L \otimes \Del^{\{0\}} \displaystyle\mathop{\coprod}_{K \otimes \Del^{\{0\}}} K \otimes \Del^1_{\flat} \to L \otimes \Del^1_{\flat} ,\]
assuming as above that edges of the form \(\{v\} \times \Del^1\) are sent \emph{\(f\)-cocartesian} edges.
In other words, we need to change \(\Del^{\{1\}}\) to \(\Del^{\{0\}}\) but also switch the order of the Gray product. In particular, we obtain cocartesian lifts for \emph{oplax} natural transformations given a lift of their domains. On the other hand, the analogue for \emph{inner cocartesian} fibrations, which is proven in~\cite[Lemma 4.1.7]{LurieGoodwillie}, states that such fibrations admit cocartesian lifts against
\[ \Del^{\{0\}} \otimes L  \displaystyle\mathop{\coprod}_{\Del^{\{0\}} \otimes K} \Del^1_{\flat} \otimes K  \to \Del^1_{\flat} \otimes L,\]
assuming again that edges of the form \(\Del^1 \times \{v\}\) are sent \(f\)-cocartesian edges.
In particular, they admit cocartesian lifts for lax transformations given a lift of their domains. Finally, passing to opposites one obtains that \emph{inner cartesian} fibrations admit cartesian lifts against 
\[ L \otimes \Del^{\{1\}} \displaystyle\mathop{\coprod}_{K \otimes \Del^{\{1\}}} K \otimes \Del^1_{\flat} \to L \otimes \Del^1_{\flat}\]
assuming that edges of the form \(\{v\} \times \Del^1\) are sent to \(f\)-cartesian edges. The last claim can also be proven using exactly the same filtration as in the proof of Proposition~\ref{p:lax-lift}, which this time will involve a slightly different scaling (cf.\ the proof of~\cite[Lemma 4.1.7]{LurieGoodwillie}). 
\end{rem}

\section{The bicategorical Grothendieck--Lurie correspondence}
\label{sec:correspondence}

In this section we will prove one of the principal results of the present paper by showing that the four types of fibrations studied in \S\ref{s:fibrations}, over a fixed base \(\B\), encode the four variance flavors of \(\B\)-indexed \(\Catoo\)-valued diagrams, a phenomenon we call \emph{the bicategorical Grothendieck--Lurie correspondence}. Our approach is as follows. First, in \S\ref{s:enriched} we will define analogues of the four fibration types in the setting of \(\s^+\)-enriched categories, and show that these are equivalent to the ones defined in the setting of scaled simplicial sets via the Quillen equivalence
\[
\xymatrixcolsep{1pc}
\vcenter{\hbox{\xymatrix{
			**[l]\Ss \xtwocell[r]{}_{\Nsc}^{\fCs}{'\perp}& **[r] \nCat{\s^+}}}} .
\]
The advantage of \(\nCat{\s^+}\) as a model for \((\infty,2)\)-categories is that it admits point-set models for the \((\ZZ/2)^2\)-symmetry of the theory of \((\infty,2)\)-categories, a fact we will exploit in \S\ref{sec:straightening} in order to show that this symmetry switches between the four types of fibrations. This reduces the bicategorical Grothendieck--Lurie correspondence to the inner cocartesian case, a statement essentially proven in~\cite{LurieGoodwillie} and that we extract to the present context in \S\ref{s:correspondence}.

\subsection{Fibrations of enriched categories}
\label{s:enriched}

In this section we will study analogous of the notions of inner/outer (co)cartesian fibrations in the setting of \(\Catoo\)-categories, by which we mean fibrant objects in \(\nCat{\s^+}\) with respect to the Dwyer-Kan model structure. Explicitly, this means that their mapping objects are \(\infty\)-categories marked by their equivalences, which is at the origin of the above term.

\begin{define}
	Let \(f\colon\C\rightarrow \D\) be a 
	map of \(\Catoo\)-categories. An arrow \(e\colon x \to y\) 
	in \(\C\) 
	is said to be \ndef{\(f\)-cartesian} if for every \(z \in \C\) the induced square
	\[\begin{tikzcd}[column sep=large]
		\C(z,x)\ar[r,"e\circ-"] \ar[d,"f_{z,x}"{swap}]&\C(z,y)\ar[d,"f_{z,y}"]\\
		\D(fz,fx) \ar[r,"f(e)\circ-"]& \D(fz,fx)
	\end{tikzcd}\]
	is a homotopy pullback square in \(\Set_+\). 
\end{define}

Recall that we use the term \emph{marked left} (resp.~\emph{right}) \emph{fibration} to indicate a map of marked simplicial sets \(f\colon X \to Y\) which detects marked edges and which is a left (resp.~right) fibration on the level of underlying simplicial sets.
We say that a map of \(\Catoo\)-categories \(f \colon \C \to \D\)
is \ndef{locally a marked left} (resp.~\ndef{right}) \ndef{fibration} if
for any pair of objects \(x, y\) of \(\C\), the induced map
\(\C(x, y) \to \D(fx, fy)\) is a marked left (resp.~right) fibration. 

\begin{define}
	\label{d:cart_fib_marked_cat}
	Let \(f\colon \C \rightarrow\D\) be
	a map of \(\Catoo\)-categories. 
	We say that \(f\) is an \ndef{inner cartesian fibration} (resp.~\ndef{outer cartesian fibration})
	if it satisfies the following properties:
	\begin{enumerate}
	\item Given \(y \in \C\) and an arrow \(e\colon x \to f(y)\) in \(\D\), there exists 
	a \(f\)-cartesian arrow \(\wtl{e}\colon x' \to y\) such that \(f(\wtl{e}) = e\). 
	\item The map \(f\colon \C \to \D\) is locally a marked right (resp.~left) fibration.
	\end{enumerate}
	We say \(f\colon \C \rightarrow \D\) is an inner (resp.~outer) cocartesian fibration if \(f^{\op}\colon \C^{\op} \rightarrow \D^{\op}\) is an inner (resp.~outer) cartesian fibration, where the operation \((-)^{\op}\) is defined by \(\E^{\op}(x,y) = \E(y,x)\) (see Construction~\ref{cn:op-action} and the discussion preceding it in \S\ref{sec:scaled}).
\end{define}

By a \emph{fibration of \(\Catoo\)-categories} we will simply mean a fibration between fibrant objects with respect to the Dwyer-Kan model structure. 

\begin{prop}\label{p:comparison}
Let \(f\colon \C \to \D\) be a fibration of \(\Catoo\)-categories. Then \(f\) is an inner (resp.~outer) cartesian fibration in the above sense if and only if 
\[\Nsc f\colon \Nsc \C \to \Nsc \D\] 
is an inner (resp.~outer) cartesian fibration in the sense of Definition~\ref{d:car-fibration}. In addition, an arrow in \(\C\) is \(f\)-cartesian if and only if the corresponding edge in \(\Nsc(\C)\) is \(f\)-cartesian.
\end{prop}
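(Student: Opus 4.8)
The plan is to translate both conditions into statements about mapping objects and to exploit that the scaled coherent nerve is right Quillen. Since \(f\) is a fibration of \(\Catoo\)-categories and \(\Nsc\) is right Quillen for the Dwyer--Kan and bicategorical model structures, the map \(\Nsc f\colon \Nsc\C \to \Nsc\D\) is a bicategorical fibration of \(\infty\)-bicategories; in particular it is a weak fibration (Remark~\ref{r:bicategorical-weak}) and an isofibration. The essential tool is the natural chain of equivalences \(\C(x,y) \xrightarrow{\simeq} \Hom_{\Nsc\C}(x,y) \xleftarrow{\simeq} \Maptr_{\Nsc\C}(x,y)\) supplied by Remark~\ref{r:dwyer-kan} and the comparison map \(i\) of \S\ref{s:cartesian}, compatible with \(\Nsc f\) and with pre- and post-composition. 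Throughout I would silently identify \(\C(x,y)\) with \(\Maptr_{\Nsc\C}(x,y)\) via these equivalences.

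First I would dispatch the edge correspondence and the existence of cartesian lifts. Under this identification the homotopy pullback square defining an enriched \(f\)-cartesian arrow \(e\) becomes precisely the square of Proposition~\ref{p:cart_1-simp_via_homs}; as homotopy pullbacks are invariant under equivalence, \(e\) is \(f\)-cartesian in the enriched sense if and only if the corresponding edge of \(\Nsc\C\) is weakly \(\Nsc f\)-cartesian. Since \(\Nsc f\) is a weak fibration, Proposition~\ref{p:weak-to-strong} identifies weakly \(\Nsc f\)-cartesian edges with \(\Nsc f\)-cartesian edges, which proves the ``In addition'' clause. Because vertices of \(\C(x,y)\) correspond to edges of \(\Nsc\C\) compatibly with targets, this same identification matches enriched cartesian lifts of an arrow \(e\colon x \to f(y)\) with scaled cartesian lifts of the corresponding edge; hence condition~(1) of Definition~\ref{d:cart_fib_marked_cat} holds if and only if the cartesian lift condition of Definition~\ref{d:car-fibration} does.

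It then remains to match the local condition with the inner/outer fibration condition. Here I would first note that ``\(\Nsc f\) detects thin triangles'' is equivalent to ``each \(f_{x,y}\) detects equivalences'': a triangle of \(\Nsc\C\) is thin exactly when its associated \(2\)-cell is an equivalence of the relevant mapping object, so thin-triangle detection is nothing but detection of marked edges by all the \(f_{x,y}\), which is part of the definition of a marked left/right fibration. For the direction scaled \(\Rightarrow\) enriched, Proposition~\ref{p:mapping-2} gives that if \(\Nsc f\) is an inner (resp.~outer) fibration then \((\Nsc f)_\ast\) is a marked right (resp.~left) fibration on the \(\Maptr\)'s; since each \(f_{x,y}\) is already a categorical fibration of fibrant marked simplicial sets, and being a marked right/left fibration among such maps is invariant under the equivalences above (a categorical fibration is a right fibration precisely when it is a cartesian fibration all of whose edges are cartesian, and both properties transfer, cf.~Remark~\ref{r:car-invariant} and its \((\infty,1)\)-analogue), I conclude that \(f\) is locally a marked right (resp.~left) fibration.

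For the direction enriched \(\Rightarrow\) scaled I would argue through the adjunction \(\fCs \dashv \Nsc\): lifting \(\Nsc f\) against a scaled inner horn inclusion, respectively a special outer horn inclusion, is equivalent to lifting \(f\) against the image of that inclusion under \(\fCs\). These images are isomorphisms on objects and on every mapping object except the one between the initial and terminal vertices, where they are explicit inclusions of marked cubes \(\fCs(\Lambda^n_i)(0,n) \hookrightarrow \fCs(\Delta^n)(0,n)\). The hard part — and the main obstacle — is to show that for inner horns these cube inclusions are \emph{marked right anodyne}, and for the collapsed outer horns \emph{marked left anodyne}; granting this, the local marked right (resp.~left) fibration condition on \(f\) supplies the required fillers, so \(\Nsc f\) is an inner (resp.~outer) fibration, and combined with the cartesian lifts already produced this exhibits \(\Nsc f\) as an inner (resp.~outer) cartesian fibration. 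This cube computation is the \((\infty,2)\)-refinement of the classical analysis behind the fibrancy of coherent nerves, and the inner/outer versus right/left matching it yields is forced to agree with that of Proposition~\ref{p:mapping-2}.
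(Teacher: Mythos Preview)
Your proposal is correct and follows essentially the same route as the paper: identify cartesian edges via the homotopy pullback criterion (Proposition~\ref{p:cart_1-simp_via_homs}) together with Proposition~\ref{p:weak-to-strong}, use Proposition~\ref{p:mapping-2} for the scaled \(\Rightarrow\) enriched direction of the local condition, and then for the converse transpose horn lifting problems across \(\fCs \dashv \Nsc\) into cube lifting problems in the mapping objects.

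The only substantive difference is that what you call ``the hard part'' and leave as a granted obstacle is exactly what the paper carries out explicitly, and it is not hard once named correctly. The paper identifies the inclusion \(\fCs(\Lambda^n_i)(0,n)\hookrightarrow \fCs(\Delta^n)(0,n)\) with the open-box inclusion \((\sqcap^{n-1,i}_1)^\flat \hookrightarrow (\Box^{n-1})^\flat\), which is right anodyne for \(0<i<n\), settling the inner case. In the outer case (the collapsed \(0\)-horn) there are a priori \emph{two} non-isomorphic mapping-object inclusions, for \((1,n)\) and for \((0,n)\), yielding two lifting problems; the paper observes that the collapse of \(\Delta^{\{0,1\}}\) identifies these, so solving the \((0,n)\) problem, which is \((\sqcap^{n-1,1}_0)^\flat \hookrightarrow (\Box^{n-1})^\flat\) and hence left anodyne, automatically solves the other. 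Your sketch is slightly imprecise on this point (you say only the \((0,n)\) mapping object is affected), but the resolution is exactly the one you anticipate.
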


\begin{rem}
Since \(\Nsc(\C^{\op}) \cong \Nsc(\C)^{\op}\) the statement of Proposition~\ref{p:comparison} implies the same statement for inner/outer cocartesian fibrations.
\end{rem}

In the proof that follows we will use the following notation. We will denote by \(\Box^n = (\Del^1)^n\) the \(n\)-cube and by \(\partial \Box^n\) its boundary, so that the inclusion \(\partial \Box^n \subseteq \Box^n\) can be identified with the pushout-product of \(\partial \Del^1 \hrar \Del^1\) with itself \(n\) times. We also denote by \(\sqcap^{n-1,i}_1 \hrar \Box^{n-1}\) the iterated pushout-product 
\[[\partial \Del^1 \hrar \Del^1] \Box \hdots \Box [\Del^{\{\eps\}} \hrar \Del^1] \Box \hdots \Box [\partial \Del^1 \hrar \Del^1],\]
where \(\eps \in \{0,1\}\) and \([\Del^{\{\eps\}} \hrar \Del^1]\) appears in the \(i\)'th factor. 

\begin{proof}[Proof of Proposition~\ref{p:comparison}]
To begin, we recall from Remark~\ref{r:dwyer-kan} that we have canonical marked categorical equivalences 
\[ \Hom_{\Nsc \C}(x,y) \simeq \C(x,y) \quad\text{and}\quad \Hom_{\Nsc \D}(z,w) \simeq \D(z,w) .\]
By Proposition~\ref{p:mapping-2} we then see that if \(\Nsc(f)\) is an inner (resp.~outer) fibration then \(f_{x,y}\colon \C(x,y) \to \D(fx,fy)\) is weakly equivalent as an arrow to a marked right (resp.~left) fibration. Since \(f\) is a fibration in the Dwyer-Kan model structure we have that each \(f_{x,y}\) is a fibration between fibrant objects. Since the condition of being a marked right (resp.~left) fibration is given in terms of a suitable right lifting property this is equivalent to \(f_{x,y}\) itself being a marked right (resp.~left) fibration. Finally, Proposition~\ref{p:cart_1-simp_via_homs} implies that every \(\Nsc(f)\)-cartesian edge of \(\Nsc \C\) is also \(f\)-cartesian as an edge of \(\C\). Since the objects and arrows of \(\Nsc\C\) are in bijection with the objects and arrows of \(\C\), and the same goes for \(\D\), we now conclude that if \(\Nsc(f)\) is an inner (resp.~outer) cartesian fibration then \(f\) is an inner (resp.~outer) cartesian fibration of \(\Catoo\)-categories.
 
Now assume that \(f\) is an inner (resp.~outer) cartesian fibration of \(\Catoo\)-categories. Since \(f\) was assumed to be a fibration between fibrant objects it follows that \(\Nsc(f) \colon \Nsc \C \to \Nsc \D\) is a bicategorical fibration of \(\infty\)-bicategories, and in particular a weak fibration (Remark~\ref{r:bicategorical-weak}). As above, Proposition~\ref{p:cart_1-simp_via_homs} implies that every \(f\)-cartesian edge of \(\C\) is at least weakly \(f\)-cartesian as an edge of \(\Nsc\C\), and hence \(f\)-cartesian by Proposition~\ref{p:weak-to-strong}. Using again the bijection between objects and arrows of \(\Catoo\)-categories and their scaled nerves we conclude that \(\Nsc(f)\) satisfies Condition (2) of Definition~\ref{d:car-fibration}, that is, 
arrows in \(\Nsc\D\) admit \(\Nsc(f)\)-cartesian lifts.

We now show that \(f\) is an inner fibration (resp.~outer) fibration.
In the inner case, consider a lifting problem of the form
\[\begin{tikzcd}
(\Lambda^n_i)^{\flat} \ar[r] \ar[d] & \Nsc \C \ar[d,"\Nsc(f)"]\\
(\Delta^n)^{\flat} \ar[r] \ar[ur,dotted] & \Nsc \D . \
\end{tikzcd}\]
with \(0 < i < n\). By adjunction, this corresponds to a lifting problem of the form 
\begin{equation}
\label{t}
\begin{tikzcd}
 \fCs\Lambda^{n}_{i} \ar[rr,"h"] \ar[d]& &\C \ar[d,"f"]\\
 \fCs\Delta^n \ar[rr,"g"]&  &\D
\end{tikzcd}
\end{equation}
As a straightforward calculation shows, the lifting problem in~\eqref{t} corresponds, at the level of simplicial sets, to the following lifting problem:
\[\begin{tikzcd}
(\sqcap^{n-1,i}_1)^{\flat} \ar[d] \ar[r] & \C(h(0),h(n))\ar[d,"f_{h(0),h(n)}"]\\
(\Box^{n-1})^{\flat}\ar[r] & \D(g(0),g(n)) \ .
\end{tikzcd}\]
The last square then admits a lift since \(f\) is assumed to be locally a marked right fibration and \(\sqcap^{n-1,i}_1 \hrar \Box^{n-1}\) is right anodyne.

In the outer case, we have to solve any lifting problem of the form:
\[\begin{tikzcd}
(\Lambda^n_0 \coprod_{\Delta^{\{0,1\}}}\Delta^0)^{\flat} \ar[r] \ar[d] & \Nsc \C \ar[d,"\Nsc(f)"]\\
 (\Delta^n\coprod_{\Delta^{\{0,1\}}}\Delta^0)^{\flat} \ar[r] & \Nsc \D
\end{tikzcd}
\]
Arguing as above we see that this amounts to solving two lifting problems in the category of marked simplicial sets, namely:
\begin{equation}
\label{1st}
\begin{tikzcd}
(\partial \Box^{n-2})^{\flat} \ar[d] \ar[r] & \C(h(1),h(n)) \ar[d,"f_{h(1),h(n)}"]\\
(\Box^{n-2})^{\flat} \ar[r]& \D(g(1),g(n))
\end{tikzcd}
\end{equation}
and
\begin{equation}
\label{2nd}
\begin{tikzcd}
(\sqcap^{n-1,1}_0)^{\flat} \ar[r] \ar[d] & \C(h(0),h(n)) \ar[d,"f_{h(0),h(n)}"]\\
(\Box^{n-1})^{\flat} \ar[r]& \D(g(0),g(n))
\end{tikzcd}
\end{equation}
Since we collapsed the edge \(\Delta^{\{0,1\}}\), we get that pre-composition with the image of the map \(0\x{=}{\to}1\)
induces a commutative square of the form 
\[\begin{tikzcd}
\C(f(1),f(n)) \ar[r,"="] \ar[d,"f_{f(1),f(n)}"{swap}]&\C(f(0),f(n))\ar[d,"f_{f(0),f(n)}"]\\
\D(g(1),g(n)) \ar[r,"="]&\D(g(0),g(n))
\end{tikzcd}\]
Under this identification, the lifting problem \eqref{1st} corresponds to filling the missing \((i,1)\)-face of \(\Box^{n-1}\) 
in \eqref{2nd}. Therefore, solving \eqref{2nd} also produces a solution for \eqref{1st}. A solution to \eqref{2nd} 
then exists since \(\sqcap^{n-1,1}_0 \subseteq \Box^{n-1}\) is left anodyne and \(f\) is now assumed to be locally a marked left fibration.
\end{proof}

\begin{cor}\label{c:comparison}
For a map \(f \colon \E \to \B\) in \(\BiCat\) the following are equivalent:
\begin{enumerate}
\item
\(f\) can be represented by an inner/outer (co)cartesian fibration of \(\infty\)-bicategories.
\item
Under the equivalence \((\nCat{\s^+})_{\infty} \xrightarrow{\simeq} \BiCat^{\thi}\) the arrow \(f\) can be represented by an inner/outer (co)cartesian fibration of \(\Catoo\)-enriched categories.
\end{enumerate}
\end{cor}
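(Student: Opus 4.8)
The plan is to deduce Corollary~\ref{c:comparison} from Proposition~\ref{p:comparison} by transporting point-set representatives across the Quillen equivalence $\fCs \dashv \Nsc$, the only genuinely new ingredient being the equivalence-invariance of the four fibration conditions. Recall that $\Nsc$ is a right Quillen equivalence realizing the equivalence~\eqref{e:enriched-cat-model} between $(\nCat{\s^+})_{\infty}$ and $\BiCat^{\thi}$. Given a morphism $f\colon \E \to \B$ in $\BiCat^{\thi}$, I would first choose a fibration $q\colon \C \to \D$ of fibrant $\Catoo$-categories representing $f$: under~\eqref{e:enriched-cat-model} we may write $\E \simeq \Nsc \C$ and $\B \simeq \Nsc \D$, lift $f$ to an enriched functor, and factor it as an equivalence followed by a fibration in the Dwyer--Kan model structure. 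Since $\Nsc$ is right Quillen, $\Nsc q\colon \Nsc \C \to \Nsc \D$ is then a bicategorical fibration of $\infty$-bicategories representing $f$ under~\eqref{e:enriched-cat-model}.

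With these representatives in hand the implication $(2)\Rightarrow(1)$ is immediate. If $f$ is represented by an inner/outer (co)cartesian fibration $r$ of $\Catoo$-categories, then $\Nsc r$ is, by Proposition~\ref{p:comparison} (together with the identity $\Nsc(\C^{\op})\cong \Nsc(\C)^{\op}$, which reduces the cocartesian cases to the cartesian ones), an inner/outer (co)cartesian fibration of $\infty$-bicategories representing $f$; this witnesses $(1)$. No invariance is needed here, since the witness is transported directly through $\Nsc$.

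For $(1)\Rightarrow(2)$ I would argue as follows. Suppose $f$ is represented by an inner/outer (co)cartesian fibration $p\colon X \to Y$ of $\infty$-bicategories. Both $p$ and the bicategorical fibration $\Nsc q$ chosen above represent the same morphism $f$, hence are equivalent in the arrow $\infty$-category of $\BiCat$. If I knew that the class of inner/outer (co)cartesian fibrations of $\infty$-bicategories is closed under such equivalences, I could conclude that $\Nsc q$ is itself an inner/outer (co)cartesian fibration, and then Proposition~\ref{p:comparison} would give that $q$ is an inner/outer (co)cartesian fibration of $\Catoo$-categories, witnessing $(2)$. Thus the whole corollary reduces to the following \emph{invariance lemma}: if a bicategorical fibration $p'$ of $\infty$-bicategories is equivalent, in the arrow $\infty$-category, to an inner/outer (co)cartesian fibration $p$, then $p'$ is itself an inner/outer (co)cartesian fibration.

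To prove the invariance lemma I would use that both $p$ and $p'$ are fibrations between fibrant objects (Remark~\ref{cart fib are fib}) to replace the abstract equivalence of arrows by a concrete one: pulling back along an equivalence of bases and invoking closure under base change (Remark~\ref{r:base-change}), it suffices to treat the case where $p$ and $p'$ share a base $Y$ and are related by an equivalence $\phi\colon X \xrightarrow{\simeq} X'$ over $Y$. It then remains to transport the defining data of an inner/outer (co)cartesian fibration along $\phi$: the existence of (co)cartesian lifts transports because $\phi$ is essentially surjective and, by Remark~\ref{r:car-invariant} and Corollary~\ref{c:2-out-of-3}, the class of (co)cartesian edges is stable under equivalence, while thin-triangle detection and the inner/outer fibration property can be read off from equivalence-invariant fibrant-object descriptions — the $\Beta_Y$-fibered model structure of Proposition~\ref{p:inner-fibred} in the inner cocartesian case, and the lifting characterization of Proposition~\ref{p:char} in the outer case — the remaining variance flavors following by applying $(-)^{\op}$. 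The main obstacle is precisely this invariance lemma: while each individual defining property is morally homotopy-invariant, making the reductions rigorous, in particular verifying thin-triangle detection and the horn-lifting conditions for $p'$ rather than for $p$, is where the real work lies, and it is what upgrades the point-set comparison of Proposition~\ref{p:comparison} to the model-independent statement of the corollary.
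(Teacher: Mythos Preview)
The paper does not give a proof of this corollary; it is stated immediately after Proposition~\ref{p:comparison} and evidently regarded as a direct consequence. Your write-up therefore supplies exactly the missing details, and the overall strategy --- reduce to a Dwyer--Kan fibration representative and invoke Proposition~\ref{p:comparison}, with an invariance lemma bridging between different representatives --- is the natural one and is correct.

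There is, however, a small asymmetry in your treatment of the two implications. For \((2)\Rightarrow(1)\) you write that ``no invariance is needed'', applying Proposition~\ref{p:comparison} directly to the given enriched representative \(r\). But Proposition~\ref{p:comparison} is stated only for a \emph{Dwyer--Kan fibration} \(f\colon \C \to \D\), and this hypothesis is used in its proof (to know that \(\Nsc f\) is a bicategorical fibration, hence a weak fibration). The enriched definition (Definition~\ref{d:cart_fib_marked_cat}) does not require \(r\) to be a Dwyer--Kan fibration, so you cannot apply the proposition to \(r\) as written. The fix is to run the same manoeuvre you already set up: replace \(r\) by an equivalent Dwyer--Kan fibration \(q\), use the enriched analogue of your invariance lemma to see that \(q\) is again an inner/outer (co)cartesian fibration of \(\Catoo\)-categories, and then apply Proposition~\ref{p:comparison} to \(q\). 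In other words, both directions need an invariance step, just on different sides of the Quillen equivalence. Your sketch of the invariance lemma (closure under base change, Remark~\ref{r:car-invariant}, and the characterizations of Propositions~\ref{p:inner-fibred} and~\ref{p:char}) is adequate for both.
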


\subsection{The \texorpdfstring{\(\co/\op\)}{co/op}-symmetry and cartesian fibrations}\label{sec:straightening}

In the previous section we defined inner/outer (co)cartesian fibrations for \(\Catoo\)-categories, and showed that these coincide with the corresponding notion of inner/outer (co)cartesian fibrations under the scaled nerve functor \(\Nsc\colon \nCat{\s^+} \to \Ss\), which is a right Quillen equivalence. Both \(\nCat{\s^+}\) and \(\Ss\) are models for the theory of \((\infty,2)\)-categories, and Corollary~\ref{c:comparison} suggests to consider the notions of inner/outer (co)cartesian fibrations model independently:

\begin{define}\label{d:cartesian-map}
We will refer to arrows in \(\BiCat\) which satisfy the equivalent conditions of Corollary~\ref{c:comparison} as inner/outer (co)cartesian \ndef{maps}. In addition, given a map \(f\colon\E \to \B\) in \(\BiCat\) and a 1-morphism \(e\colon \Del^1 \to \E\) in \(\E\), we will say that \(e\) is \(f\)-(co)cartesian if we can represent \(f\) by a map of \(\infty\)-bicategories such that \(e\) is represented by a \(f\)-(co)cartesian edge. Equivalently, by Proposition~\ref{p:comparison} this is the same as saying that we can represent \(f\) by a map of \(\Catoo\)-enriched categories such that \(e\) is represented by a \(f\)-(co)cartesian arrow.
\end{define}

\begin{lemma}\label{l:op-fibration}
Let \(f\colon \C \to \D\) be a map of \(\Catoo\)-categories and \(e\colon [1] \to \C\) an arrow in \(\C\).
Then the following are equivalent:
\begin{enumerate}
\item
\(e\colon [1] \to \C\) is \(f\)-cartesian.
\item
\(e^{\op}\colon [1]^{\op} \cong [1] \to \C^{\op}\) is \(f^{\op}\)-cocartesian.
\item
\(e^{\co}\colon [1]^{\co}\cong[1] \to \C^{\co}\) is \(f^{\co}\)-cartesian.
\item
\(e^{\coop}\colon [1]^{\coop}\to \C^{\coop}\) is \(f^{\coop}\)-cocartesian.
\end{enumerate}
In addition, \(f\) is locally a marked right fibration if and only \(f^{\co}\) is locally a marked left fibration, while the operation \((-)^{\op}\) preserves locally marked left/right fibrations. 
\end{lemma}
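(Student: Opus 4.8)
The plan is to deduce all four equivalences from two inputs: the \emph{definition} of an \(f\)-cocartesian arrow (namely, that the opposite arrow be \(f^{\op}\)-cartesian), and the fact, recorded in Example~\ref{ex:op-on-cat}, that \((-)^{\op}\colon \s^+ \to \s^+\) is an involutive equivalence of categories preserving the marked categorical model structure. First I would spell out condition (1): it says that for every \(z \in \C\) the square
\[\begin{tikzcd}[column sep=large]
\C(z,x)\ar[r,"e\circ-"]\ar[d,"f_{z,x}"'] & \C(z,y)\ar[d,"f_{z,y}"]\\
\D(fz,fx)\ar[r,"f(e)\circ-"] & \D(fz,fy)
\end{tikzcd}\]
is a homotopy pullback of marked simplicial sets. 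Since \(g\)-cocartesian means ``\(g^{\op}\)-cartesian after applying \((-)^{\op}\)'', and since \((-)^{\op}\) is an involution on \(\Catoo\)-categories with \((e^{\op})^{\op}=e\) and \((f^{\op})^{\op}=f\), condition (2) unwinds to the statement that \(e\) is \(f\)-cartesian; thus \((1)\Leftrightarrow(2)\) holds by definition. Applying the identical reasoning to the pair \((\C^{\co},f^{\co})\) — for which \(e^{\coop}=(e^{\co})^{\op}\) is \(f^{\coop}\)-cocartesian exactly when \(e^{\co}\) is \(f^{\co}\)-cartesian — yields \((3)\Leftrightarrow(4)\).

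The real content is the equivalence \((1)\Leftrightarrow(3)\), and this is the step I expect to require genuine (if short) argument. Here I would use that \(\C^{\co}(a,b)=\C(a,b)^{\op}\) and that the composition of \(\C^{\co}\) is obtained from that of \(\C\) by applying the cartesian-monoidal functor \((-)^{\op}\); consequently the square testing \(f^{\co}\)-cartesianness of \(e^{\co}\) is precisely the image under \((-)^{\op}\colon \s^+ \to \s^+\) of the square above. Being an involutive model-categorical automorphism, \((-)^{\op}\) preserves fibrations, weak equivalences and limits in both directions, hence carries homotopy pullback squares to homotopy pullback squares and reflects them as well. Therefore the square of (1) is a homotopy pullback if and only if its \((-)^{\op}\)-image, the square of (3), is, giving \((1)\Leftrightarrow(3)\) and closing the chain.

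For the two remaining assertions I would work one mapping object at a time. Since \((f^{\co})_{x,y}=(f_{x,y})^{\op}\), it suffices to observe that a map \(g\) of marked simplicial sets is a marked right fibration if and only if \(g^{\op}\) is a marked left fibration: the opposite functor sends the right-horn inclusions \(\Lambda^n_i\hookrightarrow\Delta^n\) (\(0<i\le n\)) to the left-horn inclusions \(\Lambda^n_{n-i}\hookrightarrow\Delta^n\) (\(0\le n-i<n\)), swapping the two lifting conditions, while the detection of marked edges is manifestly stable under \((-)^{\op}\). Running this over all \(x,y\) shows that \(f\) is locally a marked right fibration iff \(f^{\co}\) is locally a marked left fibration. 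Finally, \((f^{\op})_{x,y}=f_{y,x}\) merely relabels the mapping objects without opposing them, so \((-)^{\op}\) visibly preserves the property of being locally a marked left (resp.\ right) fibration.
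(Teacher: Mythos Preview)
Your proof is correct and follows essentially the same approach as the paper: both rest on the observation that \((-)^{\op}\) on marked simplicial sets preserves and reflects homotopy pullback squares and interchanges marked left and right fibrations, from which all the claims follow directly from the definitions. The paper states this in a single sentence, while you have helpfully unpacked the argument.
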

\begin{proof}
All the claims follow directly from the definitions and the fact that the operation \((-)^{\op}\) on the level of marked simplicial sets preserves and detects homotopy cartesian squares and switches between marked left fibrations and marked right fibrations.
\end{proof}

The following two corollaries directly follow:
\begin{cor}\label{c:op-fibration}
Let \(f\colon \C \to \D\) be a map of \(\Catoo\)-categories.
Then the following are equivalent:
\begin{enumerate}
\item
\(f\colon \C \to \D\) is an inner cartesian fibration.
\item
\(f^{\op}\colon \C^{\op} \to \D^{\op}\) is an inner cocartesian fibration.
\item
\(f^{\co}\colon \C^{\co} \to \D^{\co}\) is an outer cartesian fibration.
\item
\(f^{\coop}\colon \C^{\coop} \to \D^{\coop}\) is an outer cocartesian fibration. 
\end{enumerate}
\end{cor}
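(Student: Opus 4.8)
The plan is to deduce the corollary directly from Lemma~\ref{l:op-fibration} by unwinding Definition~\ref{d:cart_fib_marked_cat} along each of the three involutions, so that no new geometric input is required. First I would dispatch the equivalence (1)~$\Leftrightarrow$~(2), which is essentially definitional: by Definition~\ref{d:cart_fib_marked_cat} an inner cocartesian fibration is a map whose opposite is an inner cartesian fibration, so $f^{\op}$ is an inner cocartesian fibration precisely when $(f^{\op})^{\op} = f$ is an inner cartesian fibration, using that $(-)^{\op}$ is an involution on $\nCat{\s^+}$.

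The real content is the equivalence (1)~$\Leftrightarrow$~(3). Recall that Definition~\ref{d:cart_fib_marked_cat} packages an inner (resp.~outer) cartesian fibration as the conjunction of two conditions: (a) the existence of cartesian lifts of arrows in the base, and (b) the property of being locally a marked right (resp.~left) fibration. I would verify that each of these is interchanged by $(-)^{\co}$. For condition (b), this is exactly the last sentence of Lemma~\ref{l:op-fibration}, which states that $f$ is locally a marked right fibration if and only if $f^{\co}$ is locally a marked left fibration. For condition (a), the key observation is that $(-)^{\co}$ fixes all $1$-morphisms, so an arrow $e\colon x \to f(y)$ in $\D$ is literally the same datum as an arrow $e^{\co}\colon x \to f^{\co}(y)$ in $\D^{\co}$, and by the equivalence (1)~$\Leftrightarrow$~(3) of Lemma~\ref{l:op-fibration} an arrow $\wtl{e}$ in $\C$ is $f$-cartesian if and only if $\wtl{e}^{\co}$ is $f^{\co}$-cartesian. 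Since $(-)^{\co}$ is a bijection on objects and arrows, producing an $f$-cartesian lift of $e$ with target $y$ is the same as producing an $f^{\co}$-cartesian lift of $e^{\co}$ with target $y$; hence condition (a) for $f$ holds if and only if it holds for $f^{\co}$. Combining (a) and (b) gives (1)~$\Leftrightarrow$~(3).

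Finally, (3)~$\Leftrightarrow$~(4) follows by the same definitional step as (1)~$\Leftrightarrow$~(2): $f^{\coop}$ is an outer cocartesian fibration precisely when $(f^{\coop})^{\op} = f^{\co}$ is an outer cartesian fibration. Chaining the three equivalences then yields that all four statements are equivalent. I expect the only delicate point to be bookkeeping: one must keep track of the fact that $(-)^{\op}$ reverses $1$-morphisms (so it turns the cartesian lifting problem, phrased in terms of arrows \emph{into} $f(y)$, into a cocartesian one phrased in terms of arrows \emph{out of} $f(y)$), whereas $(-)^{\co}$ leaves $1$-morphisms untouched and merely swaps the left/right fibration flavor. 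Matching these two behaviors correctly across the commuting square of involutions is where an error would most likely creep in, but no genuinely hard step remains once Lemma~\ref{l:op-fibration} is in hand.
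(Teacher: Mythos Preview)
Your proposal is correct and matches the paper's approach: the paper simply states that the corollary ``directly follows'' from Lemma~\ref{l:op-fibration}, and your argument is precisely the unwinding of that claim via Definition~\ref{d:cart_fib_marked_cat}.
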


\begin{cor}\label{c:op-fibration-2}
Let \(f\colon \B \to \E\) be a map in \(\BiCat\). Then the following are equivalent:
\begin{enumerate}
\item
\(f\colon \E \to \B\) is an inner cartesian map.
\item
\(f^{\op}\colon \E^{\op} \to \B^{\op}\) is an inner cocartesian map.
\item
\(f^{\co}\colon \E^{\co} \to \B^{\co}\) is an outer cartesian map.
\item
\(f^{\coop}\colon \E^{\coop} \to \B^{\coop}\) is an outer cocartesian map.
\end{enumerate}
In addition, an edge \(e\colon \Del^1 \to \E\) is \(f\)-cartesian if and only if \(e^{\op}\) is \(f^{\op}\)-cocartesian, if and only if \(e^{\co}\) is \(f^{\co}\)-cartesian, and if and only if \(e^{\coop}\) is \(f^{\coop}\)-cocartesian.
\end{cor}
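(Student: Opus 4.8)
The plan is to transport the entire statement along the Quillen equivalence $\Nsc\colon \nCat{\s^+} \to \Ss$, thereby reducing it to the purely enriched assertions of Corollary~\ref{c:op-fibration} and Lemma~\ref{l:op-fibration}, which have already been established. The crucial feature being exploited is that, in contrast to the scaled simplicial set model, the model $\nCat{\s^+}$ carries strict point-set involutions realizing $(-)^{\op}$ and $(-)^{\co}$, and it is precisely these that \emph{define} the corresponding operations on $\BiCat$ through the equivalence~\eqref{e:enriched-cat-model} (see Construction~\ref{cn:op-action} and Remark~\ref{r:twisted-op}).

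First I would fix a presentation of $f$. Since every arrow of $\BiCat$ can be represented, up to equivalence, by a fibration between fibrant objects, I may choose a fibration $\tilde{f}\colon \C \to \D$ of $\Catoo$-enriched categories representing $f\colon \E \to \B$ under the equivalence $(\nCat{\s^+})_{\infty} \xrightarrow{\simeq} \BiCat^{\thi}$. By Construction~\ref{cn:op-action} the involutions $(-)^{\op}$ and $(-)^{\co}$ act on $\nCat{\s^+}$ by equivalences of categories preserving the Dwyer-Kan model structure; hence $\tilde{f}^{\op}$, $\tilde{f}^{\co}$ and $\tilde{f}^{\coop}$ are again fibrations between fibrant objects, and they represent $f^{\op}$, $f^{\co}$ and $f^{\coop}$ respectively.

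Next I would string together the biconditionals. By Corollary~\ref{c:comparison}, which is how inner/outer (co)cartesian maps in $\BiCat$ are defined (see Definition~\ref{d:cartesian-map}), the map $f$ is an inner cartesian map if and only if $\tilde{f}$ is an inner cartesian fibration of $\Catoo$-categories, and likewise $f^{\op}$ (resp.\ $f^{\co}$, $f^{\coop}$) is an inner cocartesian (resp.\ outer cartesian, outer cocartesian) map if and only if $\tilde{f}^{\op}$ (resp.\ $\tilde{f}^{\co}$, $\tilde{f}^{\coop}$) is the corresponding fibration of $\Catoo$-categories. Since the equivalence of these four enriched conditions is exactly Corollary~\ref{c:op-fibration}, concatenating the biconditionals yields the equivalence of (1)--(4). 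For the edge-level claim I would argue identically: an edge $e$ is $f$-cartesian if and only if it is represented by a $\tilde{f}$-cartesian arrow $\tilde{e}$ of $\C$ (Definition~\ref{d:cartesian-map} together with Proposition~\ref{p:comparison}), at which point the four equivalences among $e^{\op}$, $e^{\co}$ and $e^{\coop}$ follow from the corresponding clause of Lemma~\ref{l:op-fibration}, transported back through the same dictionary.

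The only genuinely delicate point, and the one I would take care to spell out, is the compatibility of the $(\ZZ/2)^2$-symmetry of $\BiCat$ with the chosen presentation: because there is no convenient point-set model for $(-)^{\co}$ on scaled simplicial sets, the operations $(-)^{\co}$ and $(-)^{\coop}$ on $\BiCat$ are defined through $\nCat{\s^+}$, so one must verify that applying them to $f$ is literally computed by applying the corresponding strict involution to a representing enriched fibration $\tilde{f}$. This is precisely what Construction~\ref{cn:op-action} provides, and it is the hinge on which the whole reduction turns; once it is in place, the remainder is a formal concatenation of already-proven biconditionals.
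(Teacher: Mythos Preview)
Your proposal is correct and follows precisely the intended route: the paper does not spell out a proof for this corollary, simply stating that it (together with Corollary~\ref{c:op-fibration}) ``directly follows'' from Lemma~\ref{l:op-fibration}. Your expansion via Corollary~\ref{c:comparison}, Definition~\ref{d:cartesian-map}, and Construction~\ref{cn:op-action} is exactly how one unpacks this, and your emphasis on the point that the $(\ZZ/2)^2$-symmetry is \emph{defined} through $\nCat{\s^+}$ is well placed.
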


\begin{define}\label{d:car-bicat}
For an \(\infty\)-bicategory \(\B\), let us denote by \(\Car^\inn(\B)\) (resp.~\(\Car^\out(\B)\), 
	\(\coCar^\inn(\B)\), \(\coCar^\out(\B\))) the sub-bicategories of \((\BiCat)_{/\B}\) spanned by the
	inner cartesian fibrations (resp.~outer cartesian fibrations,
	inner cocartesian fibrations, outer cocartesian fibrations) over \(\B\) and the 1-morphisms which preserve (co)cartesian edges. 
\end{define}

The following corollary is the main conclusion of the present section. To formulate it, we recall from Remark~\ref{r:twisted-op} that the equivalence \((-)^{\co}\colon \BiCat^{\thi} \xrightarrow{\simeq} \BiCat^{\thi}\) extends to a bicategorical equivalence \((-)^{\co}\colon \BiCat \to \BiCat\), while the equivalence \((-)^{\op}\colon \BiCat^{\thi} \xrightarrow{\simeq} \BiCat^{\thi}\) becomes a bicategorical equivalence of the form \((-)^{\op}\colon \BiCat \to \BiCat^{\co}\).

\begin{cor}\label{c:goal}
For a fixed \(\B \in \BiCat\), the induced bicategorical equivalence \((-)^{\co}\colon (\BiCat)_{/\B} \xrightarrow{\simeq} (\BiCat)_{/\B^{\co}}\) restricts to give bicategorical equivalences 
\[ \Car^{\inn}(\B) \xrightarrow{\simeq} \Car^{\out}(\B^{\co}) \quad\text{and}\quad \coCar^{\inn}(\B) \xrightarrow{\simeq} \coCar^{\out}(\B^{\co}).\]
Similarly, the induced bicategorical equivalence \((-)^{\op}\colon (\BiCat)_{/\B} \xrightarrow{\simeq} (\BiCat)_{/\B^{\op}}^{\co}\) restricts to give bicategorical equivalences 
\[ \Car^{\inn}(\B) \xrightarrow{\simeq} \coCar^{\inn}(\B^{\op})^{\co} \quad\text{and}\quad \Car^{\out}(\B) \xrightarrow{\simeq} \coCar^{\out}(\B^{\op})^{\co}.\]
\end{cor}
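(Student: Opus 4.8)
The plan is to deduce this statement formally from Corollary~\ref{c:op-fibration-2} together with the twisted equivalences of Remark~\ref{r:twisted-op}, using two ingredients. First, an equivalence of $\infty$-bicategories induces an equivalence on slices over any object, so $(-)^{\co}$ and $(-)^{\op}$ yield the stated slice equivalences $(\BiCat)_{/\B} \xrightarrow{\simeq} (\BiCat)_{/\B^{\co}}$ and $(\BiCat)_{/\B} \xrightarrow{\simeq} (\BiCat)_{/\B^{\op}}^{\co}$; on objects these send $f\colon \E \to \B$ to $f^{\co}\colon \E^{\co} \to \B^{\co}$, resp.\ to $f^{\op}\colon \E^{\op} \to \B^{\op}$, where the latter target acquires a $\co$ precisely because $(-)^{\op}$ is contravariant on $2$-morphisms. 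Second, since $(-)^{\co}$ and $(-)^{\op}$ are coherent involutions, it is cleanest to note that the inverse of each slice equivalence is again $(-)^{\co}$, resp.\ $(-)^{\op}$; thus it suffices to check that both directions carry the spanning objects and spanning $1$-morphisms of the source sub-$\infty$-bicategory into those of the target. Once this matching is established, the two restricted functors are mutually inverse in the ambient slices and hence define the desired equivalences, with no separate essential-surjectivity argument needed.

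The matching of spanning \emph{objects} is exactly the object-level content of Corollary~\ref{c:op-fibration-2}. For the first pair, that corollary gives that $f$ is an inner cartesian (resp.\ inner cocartesian) map if and only if $f^{\co}$ is an outer cartesian (resp.\ outer cocartesian) map, so $(-)^{\co}$ carries the objects of $\Car^{\inn}(\B)$ onto those of $\Car^{\out}(\B^{\co})$ and those of $\coCar^{\inn}(\B)$ onto those of $\coCar^{\out}(\B^{\co})$. For the second pair, the same corollary gives that $f$ is inner cartesian if and only if $f^{\op}$ is inner cocartesian, while $f$ is outer cartesian if and only if $f^{\op}$ is outer cocartesian, the latter being immediate from the definition of an outer cocartesian fibration as the $(-)^{\op}$ of an outer cartesian one; hence $(-)^{\op}$ carries the objects of $\Car^{\inn}(\B)$ and $\Car^{\out}(\B)$ onto those of $\coCar^{\inn}(\B^{\op})^{\co}$ and $\coCar^{\out}(\B^{\op})^{\co}$, respectively.

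The matching of spanning \emph{$1$-morphisms} is supplied by the edge-level clause of Corollary~\ref{c:op-fibration-2}: an edge $e$ is $f$-cartesian if and only if $e^{\co}$ is $f^{\co}$-cartesian, and if and only if $e^{\op}$ is $f^{\op}$-cocartesian. Consequently $(-)^{\co}$ sends a $1$-morphism over $\B$ preserving cartesian (resp.\ cocartesian) edges to one over $\B^{\co}$ preserving cartesian (resp.\ cocartesian) edges, whereas $(-)^{\op}$ sends a cartesian-edge-preserving $1$-morphism to a cocartesian-edge-preserving one, matching exactly the variance flip between source and target. Since the four sub-$\infty$-bicategories $\Car^{\inn}(\B)$, $\Car^{\out}(\B)$, $\coCar^{\inn}(\B)$, $\coCar^{\out}(\B)$ are $2$-full on their allowed $1$-morphisms and the ambient involutions are fully faithful, this matching on objects and $1$-morphisms upgrades each restriction to a fully faithful functor on all mapping $\infty$-categories.

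I expect no serious obstacle, since all the mathematical content is already carried by Corollary~\ref{c:op-fibration-2}; the involution structure makes the restricted functors automatically mutually inverse. The only point requiring genuine care is the bookkeeping of variance — keeping straight how $(-)^{\op}$ and $(-)^{\co}$ act simultaneously on the fibration type (inner versus outer, cartesian versus cocartesian), on the direction of (co)cartesian edges, and on the orientation of $2$-cells (which is what forces the $\co$ on the targets of the $(-)^{\op}$ equivalences). A secondary, purely formal, point is to record that slicing preserves the twisted equivalences of Remark~\ref{r:twisted-op} and that an equivalence restricts to $2$-full sub-$\infty$-bicategories under the matching conditions above; both are standard but worth stating precisely.
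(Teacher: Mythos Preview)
Your proposal is correct and follows essentially the same approach as the paper, which states this corollary as a direct consequence of Corollary~\ref{c:op-fibration-2} and the twisted equivalences of Remark~\ref{r:twisted-op} without giving an explicit proof. You have simply spelled out the details the paper leaves implicit: the matching of spanning objects and of (co)cartesian-edge-preserving $1$-morphisms under $(-)^{\co}$ and $(-)^{\op}$, together with the observation that the sub-bicategories are $2$-full and the ambient maps are involutive equivalences.
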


\subsection{Straightening and unstraightening}\label{s:correspondence}

In light of Proposition~\ref{p:inner-fibred}, the \(\infty\)-bicategory \(\coCar^\inn(\B)\) of Definition~\ref{d:car-bicat} can be identified with the scaled coherent nerve of the fibrant \(\s^+\)-enriched category \([\trbis{(\s^+)}{\B}]^{\circ} \subseteq \trbis{(\s^+)}{\B}\) spanned by the fibrant(-cofibrant) objects with respect to the \(\Beta_{\B}\)-fibered model structure. In this subsection we will use the connection, together with the results of the previous subsection, in order to extract from Lurie's straightening-unstraightening theorem the bicategorical Grothendieck--Lurie correspondence for all variance flavors.

Let \(\C\) be a \(\s^+\)-enriched category and \(\phi\colon \fCs(\B) \to \C\) a Dwyer-Kan equivalence. By Lemma~\ref{l:unstraightening-simplicial}, Lurie's scaled unstraightening functor induces a Dwyer-Kan equivalence of \(\Catoo\)-categories
\[ [(\s^+)^{\C}]^{\circ} \xrightarrow{\simeq} [\trbis{(\s^+)}{\B}]^{\circ},\]
and hence an equivalence of \(\infty\)-bicategories
\begin{equation}\label{e:unstr} 
\Nsc[(\s^+)^{\C}]^{\circ} \xrightarrow{\simeq} \Nsc[\trbis{(\s^+)}{\B}]^{\circ} \simeq \coCar^\inn(\B)
\end{equation}
We now claim that the \(\infty\)-bicategory \(\Nsc[(\s^+)^{\C}]^{\circ}\) 
is naturally equivalent to the \(\infty\)-bicategory of functors \(\Nsc(\C) \to \Nsc([\s^+]^{\circ}) \simeq \Catoo\). To see this let us first construct a map
\begin{equation}\label{e:rectification} 
\Nsc([(\s^+)^{\C}]^{\circ}) \to \Fun(\B,\Nsc[\s^+]^{\circ}).
\end{equation}
Given a scaled simplicial set \(K\), maps from \(K\) to the left hand side in~\eqref{e:rectification} correspond by adjunction to enriched functors \(\fCs(K) \to [(\s^+)^{\C})]^{\circ}\), which in turn correspond to enriched functors \(\fCs(K) \times \C \to [\s^+]^{\circ}\) satisfying a certain condition. On the other hand, maps from \(K\) to the right hand side in~\eqref{e:rectification} correspond to maps \(K \times \B \to \Nsc[\s^+]^{\circ}\), and hence to enriched functors \(\fCs(K \times \B) \to [\s^+]^{\circ}\). The map~\eqref{e:rectification} is then obtained by restriction along 
\[\fCs(K \times \B) \to \fCs(K) \times \fCs(\B) \to \fCs(K) \times \C.\]

\begin{prop}\label{p:rectification}
The map~\eqref{e:rectification} is an equivalence of \(\infty\)-bicategories.
\end{prop}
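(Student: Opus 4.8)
The plan is to first reduce to the case $\C = \fCs(\B)$ with $\phi = \id$. Since $\phi\colon \fCs(\B) \to \C$ is a Dwyer--Kan equivalence, restriction along $\phi$ is a right Quillen equivalence $\phi^*\colon (\s^+)^{\C} \to (\s^+)^{\fCs(\B)}$ between the respective projective model structures (fibrations and weak equivalences being detected objectwise on both sides). Passing to fibrant--cofibrant objects and taking scaled nerves, $\phi^*$ induces an equivalence $\Nsc[(\s^+)^{\C}]^{\circ} \xrightarrow{\simeq} \Nsc[(\s^+)^{\fCs(\B)}]^{\circ}$ which, upon unwinding the restriction along $\fCs(K \times \B) \to \fCs(K) \times \fCs(\B) \to \fCs(K) \times \C$ in the construction of~\eqref{e:rectification}, is compatible with the comparison maps. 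I may therefore assume $\C = \fCs(\B)$, in which case~\eqref{e:rectification} is the canonical map induced by the natural comparison $\fCs(K \times \B) \to \fCs(K) \times \fCs(\B)$.

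I would then verify the equivalence using the criterion of Remark~\ref{r:dwyer-kan}, checking essential surjectivity and that the induced maps on mapping $\infty$-categories are equivalences. For essential surjectivity, an object of $\Fun(\B, \Catoo)$ is a map $\B \to \Catoo$, \ie an enriched functor $g\colon \fCs(\B) \to [\s^+]^{\circ}$; such a $g$ is automatically projectively fibrant (objectwise fibrant), and a projective fibrant--cofibrant replacement $F$ of $g$ is an object of $[(\s^+)^{\fCs(\B)}]^{\circ}$ objectwise equivalent to $g$. Since objectwise-equivalent strict functors represent equivalent objects of $\Fun(\B,\Catoo)$, the image of $F$ under~\eqref{e:rectification} is equivalent to $g$, giving essential surjectivity.

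The substance lies in the mapping-$\infty$-category statement. For fibrant--cofibrant $F, G$, Remark~\ref{r:dwyer-kan} identifies the source mapping $\infty$-category with the enriched hom $(\s^+)^{\fCs(\B)}(F, G)$, whose $n$-simplices are enriched natural transformations $\Del^n \times F(-) \Rightarrow G(-)$. On the target side, an $n$-simplex of $\Hom_{\Fun(\B,\Catoo)}(\ovl{F}, \ovl{G})$ unwinds, via Notation~\ref{n:mapping} and the exponential adjunction, to an enriched functor $\fCs(\Del^n \times \Del^1 \times \B) \to [\s^+]^{\circ}$ subject to boundary and thinness constraints. I would match the two descriptions by applying the comparison $\fCs(\Del^n \times \Del^1 \times \B) \to \fCs(\Del^n) \times \fCs(\Del^1) \times \fCs(\B)$ and using $\fCs(\Del^1) \cong [1]$; under this identification the constancy of the ends at $\ovl{F},\ovl{G}$ and the thinness conditions pin the data down to precisely a natural transformation of the above form.

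The crux, and the step I expect to be the main obstacle, is therefore the \emph{weak monoidality} of the rigidification functor: the assertion that $\fCs(K \times X) \to \fCs(K) \times \fCs(X)$ is a Dwyer--Kan equivalence for all scaled simplicial sets $K, X$ (the scaled analogue of the cartesian comparison for the simplicial coherent nerve). Granting this, the simplex-matching above shows that~\eqref{e:rectification} induces a weak equivalence of fibrant marked simplicial sets on mapping objects, which together with essential surjectivity yields the desired bicategorical equivalence. The remaining difficulty is bookkeeping: tracking markings and thinness through the adjunctions so that the objectwise equivalences assemble into an equivalence of the enriched-hom (end) objects, for which one can filter by the skeleta of $\B$ and reduce to the representable case.
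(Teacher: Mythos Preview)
Your outline differs from the paper's, and the main step has a real gap. Weak monoidality of $\fCs$ says that $\fCs(K \times \B) \to \fCs(K) \times \fCs(\B)$ is a Dwyer--Kan \emph{equivalence}, not an isomorphism, so restriction along it does not induce a bijection on sets of enriched functors into $[\s^+]^{\circ}$; the ``simplex-matching'' you describe therefore does not literally identify the $n$-simplices of the two mapping objects. What you actually need is that these restriction maps, as $n$ varies, assemble into a weak equivalence of marked simplicial sets---and this is not a formal consequence of weak monoidality. Your proposed fix via skeletal filtration of $\B$ does not address the issue either: the mapping objects in question are \emph{ends} over $\fCs(\B)$ (equivalently, limits over the twisted arrow category), not colimits indexed by $\B$, so induction on the skeleta of $\B$ does not reduce to representables in the way you suggest, and even for $\B = \Del^n$ the statement is already the substantive rectification result.

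The paper avoids any simplex-level analysis by arguing via universal properties. After replacing $\C$ by a \emph{fibrant} $\s^+$-enriched category (the opposite reduction from yours; both are justified by \cite[Proposition~A.3.3.8]{HTT}) and passing to small $\C$-chunks $\U \subseteq \s^+$ to handle size, it shows that both $\Nsc[\U^{\C}]^{\circ}$ and $\Fun(\B,\Nsc(\U^{\circ}))$ represent the internal mapping object $[\B,\Nsc(\U^{\circ})]$ in the homotopy category of $\Ss$: the latter by cartesian closedness of the bicategorical model structure, and the former by transporting across the Quillen equivalence $\fCs \dashv \Nsc$ the enriched statement that the evaluation $\C \times [\U^{\C}]^{\circ} \to \U^{\circ}$ exhibits an internal hom in $\nCat{\s^+}$, which is exactly \cite[Corollary~A.3.4.14]{HTT}. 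The weak-monoidality content you correctly identified as the crux is thus packaged into a single citation, with no need to track individual simplices of the mapping objects.
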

\begin{proof}
By the (enriched) Quillen equivalences of~\cite[Proposition A.3.3.8(1)]{HTT} we may as well assume that \(\C\) is fibrant. 
We now argue as in the proof of~\cite[Proposition 4.2.4.4]{HTT}. In particular, writing \(\s^+\) as a sufficiently filtered colimit of small \(\C\)-chunks \(\U\) in the sense of~\cite[Definition A.3.4.9]{HTT} 
(see also~\cite[Definition A.3.4.1]{HTT} for the notion of a chunk of an enriched model category), we may reduce to showing that for every small \(\C\)-chunk \(\U\) the map
\[\Nsc[\U^{\C}]^{\circ} \xrightarrow{\simeq} \Fun(\B,\Nsc(\U^{\circ}))
\]
is an equivalence of \(\infty\)-bicategories. Consider the composed map
\begin{equation}\label{e:composed}
\B \times \Nsc[\U^{\C}]^{\circ} \to \B \times \Fun(\B,\Nsc(\U^{\circ})) \to \Nsc(\U^{\circ}),
\end{equation}
the second one being the evaluation map.
Since \(\Nsc(\U^{\circ})\) is an \(\infty\)-bicategory and the bicategorical model structure is cartesian closed the second map
exhibits \(\Fun(\B,\Nsc(\U^{\circ}))\) as an internal mapping object in the homotopy category of \(\Ss\) (with respect to the bicategorical model structure). It will hence suffice to show that the composed map~\eqref{e:composed} exhibits \(\Nsc[\U^{\C}]^{\circ}\) as the same internal mapping object. Unwinding the definitions, this composed map identifies with the composed map
\[ \B \times \Nsc[\U^{\C}]^{\circ} \xrightarrow{\simeq} \Nsc(\C) \times \Nsc[\U^{\C}]^{\circ} \cong \Nsc(\C \times [\U^{\C}]^{\circ}) \to \Nsc(\U^{\circ}),\]
where the first map is a bicategorical equivalence since \(\C\) is now assumed fibrant and the second map is the image under \(\Nsc\) of the evaluation map 
\[ \C \times [\U^{\C}]^{\circ} \to \U^{\circ} .\]
Since \(\fCs \dashv \Nsc\) is a Quillen equivalence and this last evaluation map is between fibrant objects, it will now suffice to verify that it exhibits \([\U^{\C}]^{\circ}\) as an internal mapping object in \(\nCat{\s^+}\). Indeed, this last statement is established in~\cite[Corollary
A.3.4.14]{HTT}.
\end{proof}

Combining the map~\eqref{e:rectification} with the inverse of the equivalence~\eqref{e:unstr} we now obtain the following conclusion:

\begin{cor}[Lurie]\label{c:straightening-inner}
For an \(\infty\)-bicategory \(\B \in \BiCat\) there is a natural equivalence of \(\infty\)-bicategories
\[ \coCar^{\inn}(\B) \simeq \Fun(\B,\Catoo).\]
\end{cor}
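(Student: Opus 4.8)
The plan is to deduce the statement by composing the two equivalences that have just been set up, both of which pass through the common intermediate object $\Nsc[(\s^+)^{\C}]^{\circ}$, the scaled nerve of the $\infty$-bicategory of objectwise-fibrant $\s^+$-enriched functors $\C \to \s^+$. Since no new lifting or fibrancy property is needed, the content of the corollary is really the observation that inner cocartesian fibrations and $\Catoo$-valued functors are each independently identified with this same object.

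Concretely, I would first fix a Dwyer--Kan equivalence $\phi\colon \fCs(\B) \to \C$; for definiteness one may take $\C = \fCs(\B)$ with $\phi = \id$, though any choice works. On one side, Proposition~\ref{p:inner-fibred} identifies the fibrant objects of the $\Beta_{\B}$-fibered model structure on $\trbis{(\s^+)}{\B}$ with inner cocartesian fibrations over $\B$, and combined with Lemma~\ref{l:unstraightening-simplicial} this yields the equivalence~\eqref{e:unstr}, namely $\Nsc[(\s^+)^{\C}]^{\circ} \xrightarrow{\simeq} \coCar^{\inn}(\B)$. On the other side, Proposition~\ref{p:rectification} supplies the rectification equivalence~\eqref{e:rectification}, $\Nsc[(\s^+)^{\C}]^{\circ} \xrightarrow{\simeq} \Fun(\B,\Nsc[\s^+]^{\circ})$. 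Since every object of $\s^+$ is cofibrant, we have $[\s^+]^{\circ} = (\s^+)^{\circ}$, so by the very definition of $\Catoo$ the target is $\Fun(\B,\Catoo)$. Composing the inverse of~\eqref{e:unstr} with~\eqref{e:rectification} then produces the desired equivalence $\coCar^{\inn}(\B) \simeq \Fun(\B,\Catoo)$.

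The single point demanding attention is not the existence of the equivalence but the claimed \emph{naturality} in $\B$: one must check that the two legs of the composite assemble into a natural transformation of functors of $\B$. This is where I expect the only real bookkeeping, though not any genuine difficulty. The unstraightening leg is natural via the compatibility of scaled straightening--unstraightening with base change, as already invoked at the start of \S\ref{s:fibrations}, while the rectification map~\eqref{e:rectification} is manifestly natural in $\B$ from its construction by restriction along $\fCs(K \times \B) \to \fCs(K) \times \C$. Both legs being built from functorial constructions, verifying naturality reduces to tracking the relevant compatibility squares rather than proving anything new.
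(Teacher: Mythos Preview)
Your proposal is correct and follows exactly the paper's approach: the corollary is stated immediately after Proposition~\ref{p:rectification} with the one-line justification ``Combining the map~\eqref{e:rectification} with the inverse of the equivalence~\eqref{e:unstr} we now obtain the following conclusion,'' which is precisely the composition you describe. Your additional remarks on naturality go slightly beyond what the paper spells out, but they are in the right spirit and consistent with the paper's later Remark~\ref{r:unstr-base-change}.
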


\begin{cor} 
\label{c:S-U-for-outer-fibs}
For an \(\infty\)-bicategory \(\B \in \BiCat\) there are natural equivalences of \(\infty\)-bicategories
\[ \coCar^{\out}(\B) \simeq \Fun(\B^{\co},\Catoo),\]
\[ \Car^{\inn}(\B) \simeq \Fun(\B^{\coop},\Catoo),\]
and
\[ \Car^{\out}(\B) \simeq \Fun(\B^{\op},\Catoo).\]
\end{cor}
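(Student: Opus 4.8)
The plan is to bootstrap all three equivalences from the inner cocartesian case of Corollary~\ref{c:straightening-inner} by transporting along the \((\ZZ/2)^2\)-symmetry, using the fibration-type correspondences of Corollary~\ref{c:goal} together with the fact that \(\Catoo\) is fixed, up to canonical equivalence, by the \(\co\)-involution (Example~\ref{ex:co-on-cat}). Each equivalence will be exhibited as a composite of natural equivalences, so that naturality comes for free.

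For the outer cocartesian case I would apply Corollary~\ref{c:goal} with \(\B\) replaced by \(\B^{\co}\), which yields a natural equivalence \(\coCar^{\inn}(\B^{\co}) \xrightarrow{\simeq} \coCar^{\out}((\B^{\co})^{\co}) = \coCar^{\out}(\B)\). Precomposing with the equivalence \(\coCar^{\inn}(\B^{\co}) \simeq \Fun(\B^{\co},\Catoo)\) obtained by applying Corollary~\ref{c:straightening-inner} to \(\B^{\co}\) then gives \(\coCar^{\out}(\B) \simeq \Fun(\B^{\co},\Catoo)\), as desired.

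For the inner cartesian case, Corollary~\ref{c:goal} supplies a natural equivalence \(\Car^{\inn}(\B) \xrightarrow{\simeq} \coCar^{\inn}(\B^{\op})^{\co}\). Applying Corollary~\ref{c:straightening-inner} to \(\B^{\op}\) and then the \(\co\)-involution, which is an equivalence of \(\BiCat\) by Remark~\ref{r:twisted-op} and hence preserves equivalences, produces \(\coCar^{\inn}(\B^{\op})^{\co} \simeq \Fun(\B^{\op},\Catoo)^{\co}\). The key simplification is that the \(\co\)-involution, being a cartesian-monoidal self-equivalence of \(\BiCat\), commutes with the internal mapping bicategory, so that \(\Fun(\B^{\op},\Catoo)^{\co} \simeq \Fun((\B^{\op})^{\co}, \Catoo^{\co}) = \Fun(\B^{\coop},\Catoo^{\co})\); since \(\Catoo^{\co} \simeq \Catoo\) by Example~\ref{ex:co-on-cat}, this equals \(\Fun(\B^{\coop},\Catoo)\). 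Finally, for the outer cartesian case I would combine Corollary~\ref{c:goal}, with \(\B\) replaced by \(\B^{\co}\), which gives \(\Car^{\inn}(\B^{\co}) \xrightarrow{\simeq} \Car^{\out}(\B)\), with the inner cartesian equivalence just established applied to \(\B^{\co}\), namely \(\Car^{\inn}(\B^{\co}) \simeq \Fun((\B^{\co})^{\coop},\Catoo) = \Fun(\B^{\op},\Catoo)\), using \((\B^{\co})^{\coop} = \B^{\op}\).

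The only genuinely nontrivial point is the compatibility of the \(\co\)-involution with the functor bicategory \(\Fun(-,-)\) used in the inner cartesian step, and I expect this to be the main obstacle. One must check that the covariant equivalence \((-)^{\co}\colon \BiCat \to \BiCat\) of Remark~\ref{r:twisted-op} preserves cartesian products, which is immediate on the point-set level in \(\nCat{\s^+}\) since \((-)^{\co}\) is computed mapping-objectwise by the product-preserving operation \((-)^{\op}\) on \(\s^+\), and therefore intertwines the internal homs, giving \(\Fun(X,\C)^{\co}\simeq\Fun(X^{\co},\C^{\co})\) naturally in \(X\) and \(\C\). Once this is in place, combined with the identification \(\Catoo^{\co}\simeq\Catoo\), all three equivalences follow formally and are natural, being composites of natural equivalences.
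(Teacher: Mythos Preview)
Your proposal is correct and follows essentially the same route as the paper's proof: derive the outer cocartesian case by applying \((-)^{\co}\) and Corollary~\ref{c:straightening-inner}, derive the inner cartesian case via \((-)^{\op}\) together with the identification \(\Fun(\B^{\op},\Catoo)^{\co}\simeq\Fun(\B^{\coop},\Catoo^{\co})\) and \(\Catoo^{\co}\simeq\Catoo\), and then bootstrap the outer cartesian case from the second. One small correction: the equivalence \(\Catoo^{\co}\simeq\Catoo\) you need is supplied by Example~\ref{ex:op-on-cat} (via \((-)^{\op}\)), not Example~\ref{ex:co-on-cat}, which concerns the triviality of \((-)^{\co}\) on \(\Catoo^{\thi}\subseteq\BiCat^{\thi}\) rather than on the object \(\Catoo\in\BiCat\).
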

\begin{proof}
Combining 
Corollary~\ref{c:goal} and Corollary~\ref{c:straightening-inner} we obtain the first equivalence above as a composite of equivalences
\[ \coCar^{\out}(\B) \stackrel[\simeq]{(-)^{\co}}{\longrightarrow} \coCar^{\inn}(\B^{\co}) \simeq  \Fun(\B^{\co},\Catoo) ,\]
The same argument deduces the third desired equivalence from the second one. To obtain the second equivalence we again invoke Corollary~\ref{c:goal} and Corollary~\ref{c:straightening-inner}
to obtain equivalences of \(\infty\)-bicategories 
\[\Car^{\inn}(\B) \simeq \coCar^{\inn}(\B^{\op})^{\co} \simeq
 \Fun(\B^{\op},\Catoo)^{\co} \simeq \Fun(\B^{\coop},\Catoo^{\co}) \]
and finish by identifying \(\Catoo^{\co} \simeq \Catoo\) via the functor \((-)^{\op}\), see Example~\ref{ex:op-on-cat}. 
\end{proof}

\begin{rem}\label{r:unstr-base-change}
By~\cite[Remark 3.5.16 and Remark 3.5.17]{LurieGoodwillie} the scaled unstraightening functor intertwines base change with restriction. In particular, given a map \(f\colon \B \to \B'\) of \(\infty\)-categories, the base change and restriction functors fit in a commutative square of right quillen functors
\[ \xymatrix{
(\s^+)^{\fCs(\B')}
\ar[rr]^{\B \times_{\B'} (-)}\ar[d] && (\s^+)^{\fCs(\B)}  \ar[d] \\
\trbis{(\s^+)}{\B'}  \ar[rr]^{\fCs(f)^*} && \trbis{(\s^+)}{\B}\ ,
}\]
whose vertical arrows are the respective unstraightening functors. Applying
the operation~\(\Nsc([-]^{\circ})\) and taking into account the identification of Proposition~\ref{p:rectification} we obtain a commutative square of \(\infty\)-bicategories
\[ \xymatrix{
\Fun(\B',\Catoo) \ar[r]\ar[d]_-{\simeq} & \Fun(\B,\Catoo) \ar[d]^-{\simeq} \\
\coCar^{\inn}(\B')  \ar[r] & \coCar^{\inn}(\B) \ ,
}\]
expressing the fact that the Grothendieck--Lurie correspondence intertwines between base change on the level of fibrations and restriction on the level of diagrams. Since all the four flavors of the Grothendieck--Lurie correspondence in Corollary~\ref{c:S-U-for-outer-fibs} are deduced from the inner cocartesian one by acting with the \((\ZZ/2)^2\)-symmetry spanned by \((-)^{\op}\) and \((-)^{\co}\) (which certainly preserves the notion of base change) it follows that all four flavors enjoy the exact same base-change--restriction compatibility.
\end{rem}

\section{Lax transformations and thick slice fibrations}
\label{sec:thick-slice}

In the setting of \(\infty\)-categories, the usual slice construction, featuring prominently in the theory of \(\infty\)-categories, is given a ``thickened'' counterpart in~\cite[\S 4.2.1]{HTT}.  
This counterpart is equivalent to the usual one, but offers occasional technical advantages. In this section we consider the analogous situation in the setting of \(\infty\)-bicategories. 
An important difference is however present: while in~\cite{HTT} the thickened joint and slice constructions constitute a completely equivalent alternative, whose role is mostly technical, 
here they offer an important conceptual advantage. More precisely, while there is only one type of join \(X \ast Y\) of two marked-scaled simplicial sets given the order of factors, 
the thick join comes in \ndef{two} flavors, which we call the inner and outer thick join. This means that, given a marked-scaled simplicial set \(K\) and diagram 
\(f\colon \ovl{K} \lrar \C\) in an \(\infty\)-bicategory \(\C\), there are now not only two different slice constructions \(\C_{/f}\) and \(\C_{f/}\),  
but \ndef{four} different (thickened) slice constructions, which we will denote by \(\C^{/f}_{\inn},\C^{/f}_{\out}, \C^{f/}_{\inn}\) and \(\C^{f/}_{\out}\). 
This allows to incorporate into the slice construction the \((\ZZ/2)^2\)-symmetry of the theory of \((\infty,2)\)-categories, which we heavily relied on in \S\ref{sec:correspondence} when discussing the Grothendieck--Lurie correspondence.
In particular, the projections from the four types of slice constructions to \(\C\) constitute examples of the four types of fibrations we studied above, that is, inner/outer cartesian fibrations and inner/outer cocartesian fibrations, respectively.

In addition to the theoretical advantage of avoiding a break of symmetry, the framework of four slice constructions allows one to define and study the corresponding four types of \((\infty,2)\)-cateogrical limits, namely, the lax limit, the oplax limit, the lax colimit and the oplax colimit, as well as their marked (or \emph{partially} lax) versions, see \S\ref{subsec:universal} below.

The thickened slice construction which we will introduce and study in \S\ref{s:thick-join} is based on a marked version of the Gray product, to which we dedicate \S\ref{s:gray}. In \S\ref{s:representable} study the relation between slice fibrations over an object and representable functors via the bicategorical Grothendieck--Lurie correspondence discussed in \S\ref{sec:correspondence}. Finally, in \S\ref{s:lax-trans} we turn to the general case of slice fibrations and relate its fibers to various \(\infty\)-categories of lax transformations.

\subsection{Gray products of marked-scaled simplicial sets}
\label{s:gray}

In this section we consider a version of the Gray product in the setting of marked-scaled simplicial sets, which we will use in \S\ref{s:thick-join} to construct the thickened join and slice constructions. We note that
a definition of the Gray product in the setting of scaled simplicial sets was given by the authors in
~\cite{GagnaHarpazLanariGrayLaxFunctors}, where it was also shown to be equivalent to Verity's Gray product studied in~\cite{VerityWeakComplicialI}, under the Quillen equivalence between the bicategorical and 2-trivial complicial model structures~\cite{GagnaHarpazLanariEquiv}. The version we use here takes as input marked-scaled simplicial sets and returns a scaled simplicial set. Such a construction can be defined in several ways, though, as in the case of stratified sets one needs to choose between a definition which is associative but not entry-wise colimit preserving and a definition that is entry-wise colimit preserving but not associative (see~\cite[\S 5.1]{VerityWeakComplicialI}). To circumvent this difficulty we define a Gray product here for any tuple \(X_1,...,X_n\) of marked-scaled simplicial sets, so as to avoid the need to iterate binary Gray products.

\begin{define}\label{d:gray-marked}
For \(n \geq 2\) and marked-scaled simplicial sets \(X_1,...,X_n \in \Sms\) we define their associated Gray product
\[ X_1 \mgr \cdots \mgr X_n \in \Ss \]
to be the scaled simplicial set whose underlying simplicial set is the cartesian product of \(X_1,...,X_n\) and such that a triangle 
\(\sig = (\sig_1,...,\sig_n)\colon\Del^2_{\flat} \to X_1 \mgr \cdots \mgr X_n\) is thin if and only if
the following conditions hold:
\begin{enumerate}[leftmargin=*]
\item
Each \(\sig_i\) is thin in \(X_i\).
\item
There exists a \(j \in \{1,...,n\}\) such that \(\sig_i\) is degenerate for \(i \neq j\), \({\sig_i}_{|\Del^{\{0,1\}}}\) is marked for \(i > j\) and \({\sig_i}_{|\Del^{\{1,2\}}}\) is marked for \(i < j\).
\end{enumerate}
\end{define}

\begin{warning}
To avoid confusion, we explicitly point out that the above Gray product takes as input a sequence of marked-scaled simplicial sets, and outputs just a scaled simplicial set, to which we associate no particular marking.
\end{warning}

\begin{rem}\label{r:unmarked}
If \(X_1,..,X_n\) have only their degenerate edges marked then the Gray product of Definition~\ref{d:gray-marked} coincides with the iteration of the binary Gray product of scaled simplicial sets defined in~\cite[\S 2]{GagnaHarpazLanariGrayLaxFunctors}. 
\end{rem}

\begin{rem}\label{r:almost-associative}
For marked-scaled simplicial sets \(X_1,...,X_n,Y\) there is a natural map
\[ (X_1 \otimes \cdots \otimes X_n)^{\flat} \otimes Y \to X_1 \otimes \cdots \otimes X_n \otimes Y \]
which is an isomorphism on underlying simplicial sets, but is generally only an inclusion on thin triangles. 
This map is however an isomorphism in the particular case where \(X_1,...,X_n\) have no non-degenerate marked edges.
\end{rem}

\begin{rem}\label{r:colimit-preserving}
For fixed marked-scaled simplicial sets \(X_1,...,X_{n}\) the functors \(Y \mapsto X_1 \otimes \cdots \otimes X_n \otimes Y\) and \(Y \mapsto Y \otimes X_1 \otimes \cdots \otimes X_n\) are colimit preserving functors from \(\Sms\) to \(\Ss\). The proof proceeds exactly as the proof of~\cite[Lemma 142]{VerityComplicial}.
\end{rem}

\begin{rem}\label{r:symmetric}
	The Gray product is \emph{not} symmetric in general. 
	For marked-scaled simplicial sets \(X_1,...,X_n\) we however have the relation
	\[
	 X_1 \mgr \cdots \mgr X_n = \bigl(X_n^\op \mgr \cdots \mgr X_1^\op \bigr)^\op .
	\]
\end{rem}

\begin{example}
	While the Gray product \(\prescript{\flat}{}\Del^1 \mgr \prescript{\flat}{}\Del^1\)
	is a square
	\[
	 \begin{tikzcd}[column sep=3em, row sep=large]
	  (0,0) \ar[r] \ar[d] \ar[rd, ""{swap, name=diag}] &
	  (1,0) \ar[d] \\
	  (0,1) \ar[r] & (1,1)
	  \ar[Rightarrow, from=diag, to=2-1]
	  \ar[from=2-1, to=1-2, phantom, "\simeq"{description, pos=0.75}]
	 \end{tikzcd}
	\]
	in which exactly one of the triangles is thin, 
	the Gray products
	\[
	 \prescript{\sharp}{}\Del^1 \mgr \prescript{\flat}{}\Del^1
	 \quad,\quad 
	 \prescript{\flat}{}\Del^1 \mgr \prescript{\sharp}{}\Del^1
	 \quad\mathrm{and}\quad
	 \prescript{\sharp}{}\Del^1 \mgr \prescript{\sharp}{}\Del^1
	\]
	are all squares in which \ndef{both} triangles are thin.
\end{example}

As in the unmarked case (see~\cite[\S 2]{GagnaHarpazLanariGrayLaxFunctors}), 
the marked Gray product admits equivalent variants. 
Let \(X,Y\) be marked-scaled simplicial sets and let \(T_{\gr} \subseteq (X \times Y)_2\) denote the collection of triangles which are thin in \(X \mgr Y\), as described in Definition~\ref{d:gray-marked}. 
Let \(T_- \subseteq T_{\gr}\) denote the subset of those triangles \(\sig = (\sig_X,\sig_Y) \in T_{\gr}\)
for which either both \(\sig_X\) and \(\sig_Y\) are degenerate or at least one of \(\sig_X,\sig_Y\) degenerates to a \ndef{point}. On the other hand, let \(T_+\) be the set of those triangles 
\((\sig_X,\sig_Y)\colon \prescript{\flat}{}\Del^2 \lrar X \times Y\) for which both \(\sig_X\) and \(\sig_Y\)
are thin and such that either \({\sig_X}_{|\Del^{\{1,2\}}}\) is marked or \({\sig_Y}_{|\Del^{\{0,1\}}}\) is marked. Then we have a sequence of inclusions
\[
 T_- \subseteq T_{\gr} \subseteq T_+ .
\]

\begin{prop}\label{p:marked-variants}
	Let \((X,E_X,T_X)\) and \((Y,E_Y,T_Y)\) be two marked-scaled simplicial sets and let \(T_{\gr}\) be the collection of thin triangles in \((X,E_X,T_X) \mgr (Y,E_Y,T_Y)\). Then the maps 
\begin{equation}\label{e:marked-variants}
	 (X \times Y,T_-) \hrar (X \times Y,T_{\gr}) \hrar (X \times Y,T_+)
\end{equation}
	are bicategorical trivial cofibrations. 
\end{prop}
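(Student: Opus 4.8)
The plan is to observe first that each of the two inclusions in~\eqref{e:marked-variants} is the identity on underlying simplicial sets, hence a monomorphism and so a cofibration for the bicategorical model structure (Theorem~\ref{t:bicategorical}); it therefore remains to verify that each is a weak equivalence. After the reduction described below, this will be achieved by writing the inclusion as a finite sequence of pushouts of \emph{pure-scaling} scaled anodyne maps, \ie maps that are bijective on underlying simplicial sets and merely enlarge the scaling. Among the generators of Definition~\ref{d:anodyne} it is exactly the maps of type~(ii) that are pure-scaling, and to these one adjoins the scaling-completion maps on $\Del^3$ furnished by \cite[Remark~3.1.4]{LurieGoodwillie}; all of these are trivial cofibrations, and they are precisely the tools that enlarge a scaling without touching the underlying simplicial set.

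Before running any filtration I would reduce to the case of simplices. The three assignments $(X,Y)\mapsto(X\times Y,T_-)$, $(X,Y)\mapsto(X\times Y,T_{\gr})$ and $(X,Y)\mapsto(X\times Y,T_+)$ are colimit preserving in each variable separately: for the underlying product this is cartesian closedness, and membership in $T_-,T_{\gr},T_+$ is tested triangle-by-triangle, hence preserved by the colimits that build up $X$ or $Y$ (compare Remark~\ref{r:colimit-preserving}). The two comparison maps are natural in both variables, and since all objects of $\Ss$ are cofibrant the bicategorical model structure is left proper, so the gluing lemma applies to cells. A double cell-induction — first on $X$ for an arbitrary fixed $Y$, then on $Y$ with $X$ a simplex — therefore reduces the statement to $X=\Del^p$ and $Y=\Del^q$ carrying a prescribed marking and scaling. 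At this stage one may use Remark~\ref{r:symmetric} together with the fact that $(-)^{\op}$ is an automorphism of the bicategorical model structure to trim the bookkeeping.

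On a product of simplices $\Del^p\times\Del^q$ I would then filter the inclusion by adjoining the triangles of $T_{\gr}\setminus T_-$ (resp.\ of $T_+\setminus T_{\gr}$) in increasing order of the dimension of the smallest face of $\Del^p\times\Del^q$ containing them. For a triangle $\sigma=(\sigma_X,\sigma_Y)$ to be scaled, the defining conditions of $T_{\gr}$ and $T_+$ — degeneracy of one factor together with markedness of the prescribed edge of the other — are exactly what produces an auxiliary $3$\nbd- or $4$-simplex of $\Del^p\times\Del^q$ having $\sigma$ as a face and all of whose other faces already lie in the current scaling; the associated completion map is then one of the pure-scaling scaled anodyne maps above, and the pushout along it scales $\sigma$. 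The unmarked contributions proceed exactly as in the scaled case treated in \cite[\S 2]{GagnaHarpazLanariGrayLaxFunctors}, the only genuinely new input being that a \emph{non-degenerate} marked edge now plays the role occupied by a degenerate edge in the unmarked setting.

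The main obstacle is precisely this last, combinatorial step: selecting for each newly scaled triangle a suitable auxiliary simplex and checking that its remaining faces are already thin in the current scaling. Since a marked edge behaves like an equivalence only up to scaled anodyne, these auxiliary simplices must be assembled from degeneracies in one factor and the prescribed marked edge in the other, and the ordering must be arranged so that no triangle is ever used as a supposedly-thin face before it has itself been scaled. Verifying that the ordering by face-dimension accomplishes this, and matching each configuration to a generator of Definition~\ref{d:anodyne}(ii) or to \cite[Remark~3.1.4]{LurieGoodwillie}, is where essentially all of the work resides; by contrast, the reduction and gluing steps are formal.
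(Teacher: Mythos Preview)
Your outline is correct in principle, but it takes a considerably longer route than the paper and introduces steps that are not needed. The paper proceeds \emph{directly} for arbitrary $X,Y$, with no reduction to simplices and no filtration by face-dimension: for each triangle $\sigma=(\sigma_X,\sigma_Y)$ in the larger scaling it exhibits an explicit $3$-simplex $\rho^{i,j}_\sigma=(s^i\sigma_X,s^j\sigma_Y)$ built from degeneracies of $\sigma$ itself, one of whose $\Delta^{\{0,i,3\}}$-faces is $\sigma$ while the remaining three faces already lie in the \emph{smaller} scaling. Thus each inclusion is a (possibly transfinite) composite of pushouts along the $\Delta^3$-completion maps of \cite[Remark~3.1.4]{LurieGoodwillie}; the $\Delta^4$ generators of Definition~\ref{d:anodyne}(ii) are never invoked, and no inductive ordering of the newly scaled triangles is necessary, because the witness for each $\sigma$ uses only faces that were thin from the start. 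Your cell-induction and ordering would work, but they obscure the simple point that the entire argument is local to the $2$-simplex $\sigma$ and its degeneracies; the reduction to products of simplices buys nothing here.
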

\begin{proof}
We will show that for every triangle \(\sig \in T_+\) there is a \(3\)-simplex \(\rho\colon \Del^3 \to X \times Y\) and an \(i \in \{1,2\}\) such that \(\eta_{|\Del^{\{0,i,3\}}} = \sig\) while the three other faces of \(\rho\) lie in \(T_{\gr}\). This will imply that the second map in~\eqref{e:marked-variants} is a sequence of pushouts along maps of the form \((\Del^3,T_i) \to (\Del^3)_{\sharp}\), where \(T_i\) denotes all triangles except \(\Del^{\{0,i,3\}}\), and is hence scaled anodyne by~\cite[Remark 3.1.4]{LurieGoodwillie}. We will then apply the same argument to show that the first map in~\eqref{e:marked-variants} is scaled anodyne.

Given a triangle \(\sig = (\sig_X,\sig_Y)\colon \Del^2 \to X \times Y\), let us denote by 
\[\rho^{i,j}_\sig = (s^i\sig_X,s^j\sig_Y)\colon \Del^3 \to X \times Y\] 
the \(3\)-simplex in \(X \times Y\) whose \(X\) component is the degenerate \(3\)-simplex obtained by pre-composing \(\sig_X\) with the surjective map \([3] \to [2]\) hitting \(i \in [2]\) twice, and whose \(Y\) component is obtained by pre-composing \(\sig_Y\) with the surjective map \([3] \to [2]\) hitting \(j\) twice. We note in particular that  
\[d^2\rho^{1,2}_{\sig} = d^2\rho^{2,1}_{\sig} =\sig \quad\text{and}\quad d^1\rho^{1,0}_{\sig} = d^1\rho^{0,1}_{\sig} = \sig .\]
Now suppose that \(\sig\) belongs to \(T_+\). Then \(\sig_X\) and \(\sig_Y\) are thin, 
and either \({\sig_X}_{|\Del^{\{1,2\}}}\) is marked in \(X\) or \({\sig_Y}_{|\Del^{\{0,1\}}}\) is marked in \(Y\). If \({\sig_X}_{|\Del^{\{1,2\}}}\) is marked in \(X\) then the \(3\)-simplex \(\rho^{1,0}_{\sig}\) has the property that all its faces are in \(T_{\gr}\) except possibly its face opposite the vertex \(2\), which is \(\sig\). Similarly, if \({\sig_Y}_{|\Del^{\{0,1\}}}\) is marked in \(Y\) then \(\rho^{2,1}_{\sig}\) has the property that all its faces are in \(T_{\gr}\) except possibly its face opposite the vertex \(1\), which is again \(\sig\). We may hence conclude that the map \((X \times Y,T_{\gr}) \hrar (X \times Y,T_+)\) is scaled anodyne.

Now suppose that \(\sig\) belongs to \(T_{\gr}\). Then \(\sig_X\) and \(\sig_Y\) are thin, and either \(\sig_X\) is degenerate and \({\sig_X}_{|\Del^{\{1,2\}}}\) is marked or \(\sig_Y\) is degenerate and \({\sig_Y}_{|\Del^{\{0,1\}}}\) is marked in \(Y\). We now separate into \emph{four} cases: 
\begin{enumerate}
\item
If \(\sig_X\) degenerates along \(\Del^{\{1,2\}}\) then the \(3\)-simplex \(\rho^{1,0}_{\sig}\) has the property that all its faces are in \(T_{-}\) except possibly its face opposite \(1\), which is \(\sig\).
\item
If \(\sig_X\) degenerates along \(\Del^{\{0,1\}}\) and \({\sig_X}_{|\Del^{\{1,2\}}}\) is marked and then the \(3\)-simplex \(\rho^{1,2}_{\sig}\) has the property that all its faces are in \(T_{-}\) except possibly its face opposite \(2\), which is \(\sig\).
\item
If \(\sig_Y\) degenerates along \(\Del^{\{0,1\}}\) then the \(3\)-simplex \(\rho^{2,1}_{\sig}\) has the property that all its faces are in \(T_{-}\) except possibly its face opposite \(2\), which is \(\sig\).
\item
If \(\sig_Y\) degenerates along \(\Del^{\{1,2\}}\) and \({\sig_Y}_{|\Del^{\{0,1\}}}\) is marked and then the \(3\)-simplex \(\rho^{0,1}_{\sig}\) has the property that all its faces are in \(T_{-}\) except possibly its face opposite \(1\), which is \(\sig\).
\end{enumerate}
We may hence conclude that the map \((X \times Y,T_{-}) \hrar (X \times Y,T_{\gr})\) is scaled anodyne, and so the proof is complete.
\end{proof}

The following proposition extends~\cite[Proposition 2.16]{GagnaHarpazLanariGrayLaxFunctors}:
	
\begin{prop}\label{p:push-prod-2}
	Let \(f\colon X \lrar Y\) be a monomorphism of marked-scaled simplicial sets 
	and let \(g\colon Z \lrar W\) be a scaled anodyne map of \ndef{scaled simplicial sets}. 
	Then the pushout-products
	\[
	 f \hmgr g^{\flat} \colon \left[X \mgr W^{\flat}\right] \coprod_{X \mgr Z^{\flat}} 
	 \left[Y \mgr Z^{\flat}\right] \lrar Y \mgr W^{\flat}
	\]
	and
	\[
	 g^{\flat} \hmgr f \colon \left[W^{\flat} \mgr X\right] \coprod_{Z^{\flat} \mgr X} \left[Z^{\flat} \mgr Y\right] \lrar W^{\flat} \mgr Y
	\]
	are scaled anodyne maps. 
\end{prop}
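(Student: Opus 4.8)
The plan is to verify both pushout-products directly on generators, reducing the genuinely new content to two small families of cases and dispatching everything else to the scaled version \cite[Proposition~2.16]{GagnaHarpazLanariGrayLaxFunctors}. I will describe the argument for \(f \hmgr g^{\flat}\); the map \(g^{\flat} \hmgr f\) is handled by the same reasoning with the two tensor slots interchanged. I emphasize that one cannot deduce the second statement from the first via the symmetry of Remark~\ref{r:symmetric}, since the class of scaled anodyne maps is \emph{not} stable under \((-)^{\op}\): the opposite of a generator of type~(iii) in Definition~\ref{d:anodyne} is not scaled anodyne (compare Remark~\ref{r:bicategorical-weak}). So the two pushout-products must be treated in parallel rather than reduced to one another.

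First I would carry out the standard double reduction to generators. Since \((-)^{\flat}\colon \Ss \to \Sms\) is a left adjoint and hence cocontinuous, and since \(\mgr\) preserves colimits separately in each variable (Remark~\ref{r:colimit-preserving}), for a fixed monomorphism \(f\) the class of maps \(g\) of scaled simplicial sets for which \(f \hmgr g^{\flat}\) is scaled anodyne is weakly saturated; it therefore suffices to take \(g\) in the generating set \(\bS\) of Definition~\ref{d:anodyne}. Symmetrically, for fixed \(g\) the class of monomorphisms \(f\) with \(f \hmgr g^{\flat}\) scaled anodyne is weakly saturated, so it suffices to let \(f\) range over the generating cofibrations of \(\Sms\), namely the boundary inclusions \({}^{\flat}\partial\Del^n \hookrightarrow {}^{\flat}\Del^n\), the marking inclusion \({}^{\flat}\Del^1 \hookrightarrow {}^{\sharp}\Del^1\), and the scaling inclusion \({}^{\flat}\Del^2 \hookrightarrow (\Del^2,\emptyset,\{\Del^2\})\).

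When \(f\) is a boundary inclusion all tensor factors carry only degenerate marked edges, so by Remark~\ref{r:unmarked} the marked Gray product agrees with the Gray product of scaled simplicial sets; the pushout-product \(f \hmgr g^{\flat}\) then coincides with the corresponding pushout-product of scaled simplicial sets and is scaled anodyne by \cite[Proposition~2.16]{GagnaHarpazLanariGrayLaxFunctors}. The marking and scaling inclusions are the new content. The key observation is that both of these are isomorphisms on underlying simplicial sets, so for \emph{any} \(g\) the map \(f \hmgr g^{\flat}\) is a bijection on underlying simplicial sets: it is the identity of \(\Del^1 \times W\) (resp.\ \(\Del^2 \times W\)) and merely enlarges the set of thin triangles. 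It therefore suffices, for each \(g \in \bS\), to identify the newly-thin triangles — which by Definition~\ref{d:gray-marked} are precisely those made thin by the single newly-marked edge (resp.\ newly-thin triangle) of the \(\Del^1\)- (resp.\ \(\Del^2\)-) factor, not already thin over the image of \(g\) — and to exhibit the scaling enlargement as a transfinite composite of pushouts along scaled anodyne maps.

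The hard part will be this last combinatorial step, which is exactly the bookkeeping performed in the proof of Proposition~\ref{p:marked-variants}: each newly-thin triangle \(\sig\) should be completed to a \(3\)-simplex \(\rho\) of the form \(\rho^{i,j}_{\sig}\) whose three remaining faces are already thin, so that the enlargement is realized by pushouts along maps \((\Del^3,T_i) \to (\Del^3)_{\sharp}\), which are scaled anodyne by \cite[Remark~3.1.4]{LurieGoodwillie}; the lone case in which \(g\) is the type-(ii) saturation generator of Definition~\ref{d:anodyne} is instead realized by pushouts along that very generator on a suitable \(\Del^4\). Since the underlying simplicial set is never enlarged, all the completing simplices already exist as product simplices of \(\Del^1 \times W\) (resp.\ \(\Del^2 \times W\)), so no horn-filling is ever needed — only scaling moves. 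I expect the only delicate point to be checking, case by case over the generators in \(\bS\), that the auxiliary faces of each completing simplex already lie in the relevant intermediate scaling, which is where the precise form of the Gray thinness condition and of the horns listed in \(\bS\) must be matched up.
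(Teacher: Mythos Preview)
Your proposal is correct and follows essentially the same strategy as the paper. The paper takes two shortcuts you miss: the scaling inclusion \({}^{\flat}\Del^2 \hookrightarrow (\Del^2,\emptyset,\{\Del^2\})\) also carries only degenerate marked edges, so it too reduces to \cite[Proposition~2.16]{GagnaHarpazLanariGrayLaxFunctors} via Remark~\ref{r:unmarked} rather than being ``new content''; and for the marking inclusion \({}^{\flat}\Del^1 \hookrightarrow {}^{\sharp}\Del^1\) the paper observes that the pushout-products are already isomorphisms for all generators \(g\) except \((\Lambda^2_1)_\flat \subseteq \Del^2_\sharp\), so only that single case needs the explicit \(3\)-simplex completions you describe.
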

\begin{proof}
	To prove this statement we can assume that that \(g\) is one of the generating scaled anodyne maps appearing in Definition~\ref{d:anodyne} and that \(f\) is either the inclusion 
	\(\prescript{\flat}{}(\partial \Del^n) \hrar \prescript{\flat}{}\Del^n\) for \(n \geq 0\), 
	the inclusion \(\prescript{\flat}{}\Del^1\hrar \prescript{\sharp}{}\Del^1\),
	or the inclusion \(\prescript{\flat}{}\Del^2 \subseteq (\Delta^2, \emptyset, \{\Del^2\})\).
Now the first and last cases follow, in light of Remark~\ref{r:unmarked}, from the unmarked analogue of the present proposition, see~\cite[Proposition 2.16]{GagnaHarpazLanariGrayLaxFunctors}.	
On the other hand, if \(f\) is the inclusion \(\prescript{\flat}{}\Del^1 \hrar \prescript{\sharp}{}\Del^1\), 
then \(f \hmgr g^{\flat}\) and \(g^{\flat} \hmgr f\) are isomorphisms except when \(g\) is the inclusion 
\((\Lam^2_1)_{\flat} \subseteq \Del^2_{\sharp}\). In this last case, the map \(g^{\flat} \hmgr f\) identifies with the inclusion 	
\[ (\Del^2 \times \Del^1,T) \subseteq (\Del^2 \times \Del^1)_{\sharp}, \]
where \(T\) is the collection of all triangles except
\[
	\Del^{\{(0,0),(1,0),(2,1)\}}\quad\text{and}\quad\Del^{\{(0,0),(2,0),(2,1)\}}.
\]
To see that this is scaled anodyne, it suffices by \cite[Remark 3.1.4]{LurieGoodwillie} to note that the \(3\)-simplex \(\rho\colon \Del^3 \to \Del^2 \times \Del^1\) spanned by the vertices \((0,0),(1,0),(1,1),(2,1)\) has the property that all its faces except the one opposite \((1,1)\) are in \(T\), while the face opposite \((1,1)\) is \(\Del^{\{(0,0),(1,0),(2,1)\}}\), while the \(3\)-simplex \(\rho\colon \Del^3 \to \Del^2 \times \Del^1\) spanned by the vertices \((0,0),(1,0),(2,0),(2,1)\) has the property that all its faces except the one opposite \((1,0)\) are in \(T \cup \{\Del^{\{(0,0),(1,0),(2,1)\}}\}\), while the face opposite \((1,0)\) is \(\Del^{\{(0,0),(2,0),(2,1)\}}\).
The case of \(f \hmgr g^{\flat}\) admits a completely analogous argument.
\end{proof}

\begin{cor}\label{c:marked-gray-quillen}
For every marked-scaled simplicial set \(X\) the functors
\[ X \mgr (-)^{\flat}\colon \Ss \to \Ss\]
and
\[ (-)^{\flat} \mgr X\colon \Ss \to \Ss\]
are left Quillen functors with respect to the bicategorical model structure.
\end{cor}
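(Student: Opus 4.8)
The plan is to verify the two conditions defining a left Quillen functor—preservation of cofibrations and of trivial cofibrations—after first checking that $F := X \mgr (-)^{\flat}$, and symmetrically $(-)^{\flat}\mgr X$, is a genuine left adjoint. Cocontinuity of $F$ is immediate: the functor $(-)^{\flat}\colon \Ss \to \Sms$ is left adjoint to the underlying-scaled-set functor $\overline{(-)}$ (any scaled map $Y \to \overline{W}$ automatically sends the degenerate edges of $Y^{\flat}$ to marked edges), hence preserves colimits, while $Y \mapsto X \mgr Y$ preserves colimits by Remark~\ref{r:colimit-preserving}; their composite is therefore a colimit-preserving functor $\Ss \to \Ss$. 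Since $\Ss$ is locally presentable, the adjoint functor theorem supplies a right adjoint, so it remains to show $F$ preserves cofibrations and trivial cofibrations. The argument for $(-)^{\flat}\mgr X$ is identical.

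I would first dispose of cofibrations. A map $g\colon Z \to W$ of scaled simplicial sets is a cofibration precisely when its underlying map of simplicial sets is a monomorphism. Since the simplicial set underlying $X \mgr Z^{\flat}$ is the cartesian product $\overline{X}\times Z$, the map underlying $F(g)$ is $\overline{X}\times g$, which is a monomorphism whenever $g$ is; thus $F$ preserves cofibrations.

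The substance of the proof is preservation of trivial cofibrations, and the key input is Proposition~\ref{p:push-prod-2}. Applying it to the monomorphism $\emptyset \hookrightarrow X$ of marked-scaled simplicial sets and an arbitrary scaled anodyne map $g\colon Z \to W$, and using that $\emptyset \mgr (-) = \emptyset$ (the underlying cartesian product vanishes), the pushout-product $(\emptyset \hookrightarrow X)\hmgr g^{\flat}$ collapses to the single map $X \mgr g^{\flat} = F(g)$. Hence $F$ sends scaled anodyne maps to scaled anodyne maps, and in particular into trivial cofibrations; the dual half of Proposition~\ref{p:push-prod-2}, namely $g^{\flat}\hmgr(\emptyset\hookrightarrow X)$, takes care of $(-)^{\flat}\mgr X$ in the same way.

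To conclude I would invoke the standard reduction that a colimit-preserving, cofibration-preserving functor is left Quillen as soon as it sends a generating set of trivial cofibrations to trivial cofibrations, together with the presentation of the bicategorical model structure as a Cisinski--Olschok localizer generated by the scaled anodyne maps $\bS$ from~\cite{GagnaHarpazLanariEquiv}. The hard part is precisely this last step, and it is genuinely an obstacle: the trivial cofibrations of the bicategorical model structure strictly contain the weakly saturated closure of $\bS$, since fibrations between fibrant objects are weak fibrations that are in addition isofibrations (cf.\ Remark~\ref{cart fib are fib}), so preservation of scaled anodyne maps alone does not formally suffice. To bridge the gap I would check that $F$ is compatible with the interval (cylinder) used to present the localizer: by associativity of the Gray product (Remark~\ref{r:associative} and Definition~\ref{d:gray-marked}) the functor $F$ intertwines right Gray multiplication by the interval, so it carries a cylinder object of the source to a cylinder object of the target and thereby preserves the homotopy relation that, together with $\bS$, generates the full class of anodyne maps. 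Combining this cylinder-compatibility with the scaled-anodyne preservation established above shows that $F$ sends the entire class of anodyne (equivalently, trivial cofibration) maps into itself, completing the verification that $F$—and, by the symmetric argument, $(-)^{\flat}\mgr X$—is a left Quillen functor.
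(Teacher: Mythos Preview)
Your proof correctly handles cofibrations and scaled anodyne maps via Proposition~\ref{p:push-prod-2}, and you correctly identify the remaining difficulty: the bicategorical trivial cofibrations are not generated by~$\bS$ alone. However, your proposed resolution via ``cylinder compatibility'' does not go through.

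The generating trivial cofibrations established in~\cite{GagnaHarpazLanariEquiv} consist of the scaled anodyne maps together with the inclusion $\{0\}\hookrightarrow J_\sharp$, where $J=\cosk_0(\{0,1\})$ is the walking isomorphism. The cylinder underlying this Cisinski--Olschok presentation is the \emph{cartesian} product with~$J_\sharp$, not the Gray product. Your appeal to ``associativity of the Gray product'' is therefore off the mark on two counts: first, Remark~\ref{r:associative} concerns only the unmarked Gray product, and Remark~\ref{r:almost-associative} explicitly records that $(X_1\mgr\cdots\mgr X_n)^\flat\mgr Y\to X_1\mgr\cdots\mgr X_n\mgr Y$ is in general only an inclusion on thin triangles when the $X_i$ carry non-degenerate marked edges; second, even granting associativity, it would only tell you that $F$ intertwines \emph{Gray} tensoring by an interval, which is not the cylinder in question.

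What actually has to be checked is that the single map $\overline{X}=X\mgr\{0\}\to X\mgr J_\sharp^\flat$ is a bicategorical equivalence. The paper does this by a direct comparison: the scaled simplicial sets $X\mgr J_\sharp^\flat$ and $\overline{X}\mgr J_\sharp$ have the same underlying simplicial set, the former having possibly more thin triangles, all of which are already thin in the cartesian product $\overline{X}\times J_\sharp$. Since $J$ is a Kan complex, \cite[Corollary~2.17]{GagnaHarpazLanariGrayLaxFunctors} shows that $\overline{X}\mgr J_\sharp\to\overline{X}\times J_\sharp$ is a trivial cofibration, hence so is its pushout $X\mgr J_\sharp^\flat\to\overline{X}\times J_\sharp$; one concludes using that the bicategorical model structure is cartesian. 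This step---passing from the Gray product to the cartesian product using that $J$ is a Kan complex---is the ingredient missing from your argument.
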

\begin{proof}
It is straightforward to verify that the functors in question preserve colimits and monomorphisms, and so it is left to verify that they preserve trivial cofibrations. We prove this for the first functor, the proof for the second one proceeds in a completely analogous manner.
In light of Proposition~\ref{p:push-prod-2} and the collection of generating trivial cofibrations established in~\cite{GagnaHarpazLanariEquiv}, it will suffice to check that for every marked-scaled simplicial set \(X\) the map
\[ \ovl{X} = X \mgr \{0\} \to X \mgr J_{\sharp}^{\flat} \]
is a bicategorical equivalence, where \(J = \cosk_0(\{0,1\})\) is the nerve of the walking isomorphism and \(\ovl{X}\) is the underlying scaled simplicial set of \(X\). 
Then \(X \mgr J_{\sharp}^{\flat}\) and \(\ovl{X} \mgr J_{\sharp}\) have isomorphic underlying simplicial sets, with the former having potentially more thin triangles than the latter. However, since \(J\) is a Kan complex~\cite[Corollary 2.17]{GagnaHarpazLanariGrayLaxFunctors} tells us that the map
\begin{equation}\label{e:gray-to-cart} 
\ovl{X} \mgr J_{\sharp} \to \ovl{X} \times J_{\sharp}
\end{equation}
from the Gray product to the cartesian product is a trivial cofibration. Since the thin triangles of \(X \mgr J_{\sharp}^{\flat}\) are also thin in \(\ovl{X} \times J_{\sharp}\) it follows that the map
\[ X \mgr J_{\sharp}^{\flat} \to \ovl{X} \times J_{\sharp}\]
is a pushout of~\eqref{e:gray-to-cart}, and is hence a trivial cofibration as well. It will hence suffice to check that \(\ovl{X} \times \{0\} \to \ovl{X} \times J_{\sharp}\) is a trivial cofibration, which is a consequence of the bicategorical model structure being cartesian.
\end{proof}

\begin{notate}
For a marked-scaled simplicial set \(X\),  
we denote by
\[ Y \mapsto \RMap(X,Y) \quad\text{and}\quad Y \mapsto \LMap(X,Y) \]
the right adjoints of the left Quillen functors of Corollary~\ref{c:marked-gray-quillen}.
To avoid confusion we point out that while \(X\) is a marked-scaled simplicial set, \(Y\),\(\RMap(X,Y)\) and \(\LMap(X,Y)\) are scaled simplicial sets. Explicitly, an \(n\)-simplex of \(\RMap(X,Y)\) is given by a map of scaled simplicial sets
\[
 \prescript{\flat}{}\Del^n \mgr X \lrar Y 
\]
and a \(2\)-simplex \(\prescript{\flat}{}\Del^2 \mgr X\lrar Y\) 
is thin if it factors through \(\prescript{\sharp}{}\Delta^2 \mgr X\). 
Similarly, an \(n\)-simplex of \(\LMap(X,Y)\) is given by a map of scaled simplicial sets
\[
 X \mgr \prescript{\flat}{}\Del^n \lrar Y 
\]
and the scaling is determined as above.
\end{notate}

\begin{rem}
It follows from Remark~\ref{r:unmarked} that if \(X,Y\) are scaled simplicial sets then \(\RMap(X^{\flat},Y)\) and \(\LMap(X^{\flat},Y)\) coincide with the scaled simplicial sets \(\RMap(X,Y)\) and \(\LMap(X,Y)\) recalled in \S\ref{s:scaled-gray}, and so this notation overloading should not cause any confusion.
\end{rem}

\begin{rem}\label{r:symmetric-2}
By Remark~\ref{r:symmetric} we have natural isomorphisms
\[ \RMap(X^{\op},Y^{\op}) \cong \LMap(X,Y)^{\op} \]
and
\[ \LMap(X^{\op},Y^{\op}) \cong \RMap(X,Y)^{\op} \]
\end{rem}

Being right adjoints to left Quillen functors, 
the functors
\[ \RMap(K,-),\LMap(K,-)\colon \Ss \to \Ss \]
are right Quillen functors for any scaled simplicial set \(K\)
with respect to the bicategorical model structure. In particular, if \(\C\) is an \(\infty\)-bicategory then \(\RMap(K,\C)\) and \(\LMap(K,\C)\) are \(\infty\)-bicategories. 
The objects of the \(\infty\)-bicategory \(\RMap(K,\C)\) correspond to functors \(\ovl{K} \lrar \C\), where we recall from Definition~\ref{d:underlying} that \(\ovl{K}\) stands for the underlying \emph{scaled} simplicial set of \(K\). If all the edges in \(K\) are marked then \(\RMap(K,\C) \simeq \LMap(K,\C)\) and both coincide with the \(\infty\)-bicategory \(\Fun(K,\C)\) of functors \(\ovl{K} \to \C\). 
On the other hand, if only the degenerate edges in \(K\) are marked then, as in \S\ref{s:scaled-gray}, the morphisms in \(\RMap(K,\C)\) correspond to \ndef{lax natural transformations}. 
Dually, in the case of \(\LMap(K,\C)\) we obtain functors and oplax natural transformations between them.

\subsection{The thick join and slice constructions}\label{s:thick-join}

\begin{define}\label{d:thick-join}
	Let \(X\) and \(Y\) be two marked-scaled simplicial sets. We define the \ndef{inner thick join} \(X \diamond_{\inn} Y \in \Ss\) by the formula
	\[ X \diamond_{\inn} Y = \ovl{X} \coprod_{X \mgr \Del^{\{0\}} \mgr Y} \bigl(X \mgr {}^{\flat}\Del^1 \mgr Y \bigr)\coprod_{X \mgr \Del^{\{1\}} \mgr Y} \ovl{Y} ,\]
	and the \ndef{outer thick join} \(X \diamond_{\out} Y \in \Ss\) by the formula
	\[ X \diamond_{\out} Y = \ovl{X} \coprod_{Y \mgr \Del^{\{0\}} \mgr X} \bigl(Y \mgr {}^{\flat}\Del^1 \mgr X \bigr)\coprod_{Y \mgr \Del^{\{1\}} \mgr X} \ovl{Y} .\]
	Here, we use triple Gray products as in Definition~\ref{d:gray-marked}. In particular, the input of the thick join consists of marked-scaled simplicial sets, while its output is a scaled simplicial set.
\end{define}

For a fixed marked-scaled simplicial \(K\) with underlying scaled simplicial set \(\ovl{K}\), we may consider the assignment
\[ 
X \mapsto X \diamond_{\inn} K\quad (\mathrm{resp.}\ X \mapsto X \diamond_{\out} K)
 \]
as a functor 
\[\Set^{+,\sca}_{\Del} \lrar \Set^{\sca}_{\ovl{K}/} .\] 
As such, it is a colimit preserving functor which admits a right adjoint 
\[\Set^{\sca}_{\ovl{K}/} \lrar \Set^{+,\sca}_{\Del}\] 
by the adjoint functor theorem. Given a map \(f\colon \ovl{K} \lrar S\) of scaled simplicial set, considered as an object of \(\Set^{\sca}_{\ovl{K}/}\), we will denote by \(S^{/f}_{\inn}\) (resp.~\(S^{/f}_{\out}\)) the marked-scaled simplicial set obtained by applying the above right adjoint. In particular, the marked-scaled simplicial sets \(S^{/f}_{\inn}\) and \(S^{/f}_{\out}\) are characterized by mapping properties of the form
\[
  \Hom_{\Sms}(X,S^{/f}_{\inn}) \cong \Hom_{(\Ss)_{\ovl{K}/}}(X \diamond_{\inn} K,S)
\]
and
 \[
  \Hom_{\Sms}(X,S^{/f}_{\out}) \cong \Hom_{(\Ss)_{\ovl{K}/}}(X \diamond_{\out} K,S),
 \]
respectively. In a similar manner, we may consider the right adjoint to the functor \(\Set^{+, \sca}_{\Del} \lrar \Set^{\sca}_{\ovl{K}/}\)
given by the assignment
\[
X \mapsto K \diamond_{\inn} X\quad (\mathrm{resp.}\ X \mapsto K \diamond_{\out} X) .
\]
For any map \(f\colon \ovl{K} \lrar S\) of scaled simplicial set, considered as an object of \(\Set^{\sca}_{\ovl{K}/}\), 
we will denote by \(S^{f/}_{\inn}\) (resp.~\(S^{f/}_{\out}\)) the marked-scaled simplicial set obtained by applying this right adjoint. 
In particular, the marked-scaled simplicial sets \(S^{f/}_{\inn}\) and \(S^{f/}_{\out}\) are characterized by mapping properties of the form
\[
  \Hom_{\Sms}(X,S^{f/}_{\inn}) \cong \Hom_{(\Ss)_{\ovl{K}/}}(K \diamond_{\inn} X,S)
\]
and
 \[
  \Hom_{\Sms}(X,S^{f/}_{\out}) \cong \Hom_{(\Ss)_{\ovl{K}/}}(K \diamond_{\out} X,S),
 \]
respectively. We will then denote by \(\ovl{S}^{/f}_{\inn},\ovl{S}^{/f}_{\out}, \ovl{S}^{f/}_{\inn}\) and \(\ovl{S}^{f/}_{\out}\) the underlying scaled simplicial sets of \(S^{/f}_{\inn},S^{/f}_{\out}, S^{f/}_{\inn}\) and \(S^{f/}_{\out}\), respectively.

\begin{rem}\label{r:opposites}
	By Remark~\ref{r:symmetric} we have canonical isomorphisms 
	\[
	 (X \diamond_{\inn} Y)^{\op} \cong Y^{\op} \diamond_{\inn} X^{\op}
	 \quad\text{and}\quad
	 (X \diamond_{\out} Y)^{\op} \cong Y^{\op} \diamond_{\out} X^{\op}.
	\] 
	As a result, if \(f\colon \ovl{K} \to S\) is a map of scaled simplicial sets then we have canonical isomorphisms 
	\((S^{f/}_{\inn})^{\op} \cong (S^{\op})^{/f^{\op}}_{\inn}\) and \((S^{f/}_{\out})^{\op} \cong (S^{\op})^{/f^{\op}}_{\out}\).
\end{rem}

\begin{rem}\label{r:diamond-quillen}
It follows from Corollary~\ref{c:marked-gray-quillen} that for a fixed marked-scaled simplicial set \(K\) the functors
\[ (-)^{\flat} \diamond_{\inn} K\colon \Ss \to (\Ss)_{\ovl{K}/} \quad\quad (-)^{\flat} \diamond_{\out} K \colon \Ss \to (\Ss)_{\ovl{K}/}\]
\[ K \diamond_{\inn} (-)^{\flat}\colon \Ss \to (\Ss)_{\ovl{K}/} \quad\text{and}\quad K \diamond_{\out} (-)^{\flat}\colon \Ss \to (\Ss)_{\ovl{K}/},\]
are left Quillen functors, where \((\Ss)_{\ovl{K}/}\) is considered with the slice model structure associated to the bicategorical model structure. It then follows that their right adjoints
\[ (\Ss)_{\ovl{K}/}  \ni [f\colon \ovl{K} \to S] \mapsto \ovl{S}^{/f}_{\inn}\in \Ss  \quad\quad (\Ss)_{\ovl{K}/} \ni [f\colon \ovl{K} \to S] \mapsto \ovl{S}^{/f}_{\out}\in \Ss,\]
\[ (\Ss)_{\ovl{K}/}  \ni [f\colon \ovl{K} \to S] \mapsto \ovl{S}^{f/}_{\inn}\in \Ss  \quad\text{and}\quad (\Ss)_{\ovl{K}/} \ni [f\colon \ovl{K} \to S] \mapsto \ovl{S}^{f/}_{\out}\in \Ss \]
are right Quillen functors. 
\end{rem}

It follows from Remark~\ref{r:diamond-quillen} that if \(\C\) is an \(\infty\)-bicategory, \(K\) is a marked-scaled simplicial set and \(f\colon \ovl{K} \to \C\) is a diagram, then \(\ovl{\C}^{/f}_{\inn},\ovl{\C}^{/f}_{\out},\ovl{\C}^{f/}_{\inn}\) and \(\ovl{\C}^{f/}_{\out}\) are \(\infty\)-bicategories.
If all the edges in \(K\) are marked then the objects of these 
\(\infty\)-bicategories correspond to pseudo-natural cones on \(f\), while if only the degenerate edges are marked they correspond to lax (or op-lax) cones. In general, we may consider them as \emph{partially lax cones}, with the amount of ``pseudo-naturality'' encoded by the collection of marked edges. Regardless of the marked edges, the morphisms in these slice \(\infty\)-bicategories always correspond to lax (or op-lax) transformation of cones, with the marked edges in 
\(\C^{/f}_{\inn},\C^{/f}_{\out},\C^{f/}_{\inn}\) and \(\C^{f/}_{\out}\)	respectively indicating those lax transformation which are pseudo-natural.

\begin{example}
Let \(\C\) be an \(\infty\)-bicategory. 
When \(K = \Del^0\) and \(f\colon \Del^0 \lrar \C\) is the inclusion of the vertex \(x \in \C\)
then we will denote \(\C^{f/}_{\inn}\) and \(\C^{f/}_{\out}\) by \(\C^{x/}_{\inn}\) and \(\C^{x/}_{\out}\) respectively, and similarly for \(\C^{/x}_{\inn}\) and \(\C^{/x}_{\out}\). In this case the vertices of \(\C^{x/}_{\inn}\) and \(\C^{x/}_{\out}\) are just arrows \(x \to y\) of \(\C\) with source \(x\). An edge in \(\C^{x/}_{\inn}\) from \(x \to y\) to \(x \to z\) is a diagram
	\[
	 \begin{tikzcd}
	 	x \ar[d, equal] \ar[r] \ar[rd, ""{name=d-up}, ""{swap, name=d-down}]&
	 	y  \ar[d] \\
	 	x \ar[r] & z
	 	\ar[from=d-down, to=1-2, phantom, "\simeq"]
	 	\ar[from=d-down, to=2-1, Rightarrow]
	 \end{tikzcd} .
	\]
	On the other hand, an edge of \(\C^{x/}_{\out}\) from \(x \to y\) to \(x \to z\) is a diagram
	\[
	 \begin{tikzcd}
	 	x \ar[d, equal] \ar[r] \ar[rd, ""{name=d-up}, ""{swap, name=d-down}]&
	 	y  \ar[d] \\
	 	x \ar[r] & z
	 	\ar[from=d-up, to=1-2, Rightarrow]
	 	\ar[from=d-up, to=2-1, phantom, "\simeq"]
	 \end{tikzcd}.
	\]
 \end{example}

\begin{example}\label{example:thick_slices}
	When \(K=\prescript{\flat}{}\Delta^1\) and \(f \colon \prescript{\flat}{}\Delta^1 \to \C\) is given by an edge \(e\colon x \to y\) in \(\C\) then we will denote \(\C^{f/}_{\inn}\) and \(\C^{f/}_{\out}\) by \(\C^{e/}_{\inn}\) and \(\C^{e/}_{\out}\) respectively, and similarly for \(\C^{/e}_{\inn}\) and \(\C^{/e}_{\out}\).
The vertices of \(\C^{e/}_{\inn}\) are then given by diagrams of the form
	\[
	 \begin{tikzcd}
	 	x \ar[d, "e"'] \ar[r] \ar[rd, ""{name=d-up}, ""{swap, name=d-down}]&
	 	z  \ar[d, equal] \\
	 	y \ar[r] & z
	 	\ar[from=d-up, to=1-2, Rightarrow]
	 	\ar[from=d-up, to=2-1, phantom, "\simeq"]
	 \end{tikzcd},
	\]
	while the vertices of \(\C^{e/}_{\out}\) are diagrams of the form
	\[
	 \begin{tikzcd}
	 	x \ar[r] \ar[d, "e"'] \ar[rd, ""{name=d-up}, ""{swap, name=d-down}]&
	 	z  \ar[d, equal] \\
	 	y \ar[r] & z
	 	\ar[from=d-down, to=1-2, phantom, "\simeq"]
	 	\ar[from=d-down, to=2-1, Rightarrow]
	 \end{tikzcd} .
	\]
\end{example}

\begin{rem}\label{r:mapping-space}
	Let \(\C\) be a \(\infty\)-bicategory. 
	For any vertex \(x\) in \(\C\) the underlying marked simplicial set of \(\C^{x/}_{\inn}\) 
	is isomorphic to the marked simplicial set denoted by \(\C^{x/}\) in~\cite[Notation 4.1.5]{LurieGoodwillie}. 
	In particular, the fiber \((\C^{x/}_{\inn})_y\) of the projection \(\C^{x/}_{\inn} \lrar \C\) over \(y \in \C\) 
	is the marked simplicial set \(\Hom_{\C}(x,y)\) used in~\cite{LurieGoodwillie} as a model for the mapping \(\infty\)-category from \(x\) to \(y\), see Notation~\ref{n:mapping}. 
	The same holds for the fiber of \(\C^{/y}_{\out}\) over \(x\) (since these two fibers are naturally isomorphic). 
	By Remark~\ref{r:opposites} these fibers are also isomorphic to \(((\C^{\op})^{/x}_{\inn})^{\op}_y\) and \(((\C^{\op})^{y/}_{\out})^{\op}_x\). Similarly, we have natural isomorphisms
	\[ (\C^{x/}_{\out})_y \cong (\C^{/y}_{\inn})_x \cong ((\C^{\op})^{y/}_{\inn})^{\op}_x \cong ((\C^{\op})^{/x}_{\out})^{\op}_y \cong \Hom_{\C^{\op}}(y,x)^{\op} \] 
	and the latter is categorically equivalent (though generally not isomorphic) as a marked simplicial set to \(\Hom_{\C}(x,y)^{\op}\) by Remark~\ref{r:dwyer-kan}.
\end{rem}

\begin{rem}\label{r:st-gray}
The thick join of Definition~\ref{d:thick-join} can be used to express the cone construction appearing in the definition of the scaled straightening functor, see \S\ref{s:straightening}. More precisely, if \((S,T_S)\) is a scaled simplicial set, \(\phi\colon \fCs(S,T_S) \to \C\) a Dwyer-Kan equivalence, \((X,E_X)\) is a marked simplicial set and \(f\colon X \to S\) is a map, then 
\[ [\Str_{\phi}(X,E_X)](v) = \Cone_{\phi}(X,E_X)(\ast,v),\]
where \(\Cone_{\phi}(X,E_X)\) can be described in terms of the thick join as
\[ \Cone_{\phi}(X,E_X) = \fCs(\Del^0 \diamond_{\inn} (X,E_X,\emptyset))\coprod_{\fCs(X_{\flat})} \C .\]
\end{rem}

\begin{rem}\label{r:unst-rep}
It follows from Remark~\ref{r:st-gray} that if \(\D\) is a fibrant \(\s^+\)-enriched category and \(x \in \D\) is an object, then the underlying marked simplicial set of \(\Nsc(\D)^{x/}_{\inn}\) is naturally isomorphic to the unstraightening with respect to the counit map 
\[\psi\colon\fCs(\Nsc(\D)) \to \D\] of the functor \(\D(x,-)\colon \D \to \s^+\) corepresented by \(\D\). Indeed, for a marked simplicial set \((X,E_X)\) and a map \(p\colon X_{\flat} \to \Nsc(\D)\) with adjoint \(p^{\ad}\colon \fCs(X_{\flat}) \to \D\), we get from the above that enriched natural transformations \(\St^{\sca}_{\psi}(X,E_X) \to \D(x,-)\) are in bijection with enriched functors
\[ \fCs(\Del^0 \diamond_{\inn} (X,E_X,\emptyset)) \to \D\]
which restrict to \(p^{\ad}\) on \(\fCs(X_{\flat})\) and send the cone point to \(x\). By adjunction, these correspond to maps \(\Del^0 \diamond_{\inn} (X,E_X,\emptyset) \to \Nsc(\D)\) extending \(p\) and sending the cone point to \(x\), and hence to maps \((X,E_X,\emptyset) \to \Nsc(\D)^{x/}_{\inn}\) over \(\D\).
\end{rem}

Let us now compare the outer thick join construction to the standard join construction. 

\begin{prop}
	\label{prop:join-eq}
For marked-scaled simplicial sets \(X,Y\) let	
\begin{equation}\label{e:thin-is-thick}
r\colon X \diamond_{\out} Y \lrar X \ast Y
\end{equation} 
be the unique map which is compatible with the canonical inclusions
\(X \hrar X \diamond_{\out} Y \hookleftarrow Y\) and 
\(X \hrar X \ast Y \hookleftarrow Y\).
Then \(r\) is a bicategorical equivalence.
\end{prop}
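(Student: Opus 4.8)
The plan is to prove that $r$ is a bicategorical equivalence by a double induction on the cells of $X$ and $Y$, reducing to the case where both are marked-scaled simplices, and then treating that base case by a direct combinatorial argument.

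First I would observe that both constructions are colimit- and cofibration-preserving in each variable. For fixed $Y$, the functor $X \mapsto X \diamond_{\out} Y$ preserves colimits, landing in $(\Ss)_{\ovl{Y}/}$ (this is the adjunction defining the thick slice, and rests on Remark~\ref{r:colimit-preserving} applied to the triple Gray product $Y \mgr {}^{\flat}\Del^1 \mgr X$ together with the defining pushout of Definition~\ref{d:thick-join}), and it sends monomorphisms to monomorphisms since Gray products do so on underlying simplicial sets. The ordinary join $X \mapsto X \ast Y$ has the same properties: it is colimit preserving into $(\Ss)_{\ovl{Y}/}$ by Construction~\ref{con:slice}, and sends monomorphisms to monomorphisms (by Lemma~\ref{l:pushout-join} it is moreover compatible with outer (co)cartesian anodyne maps). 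Since $r$ is natural in $X$ and the bicategorical model structure is left proper, the gluing lemma together with a transfinite induction along the skeletal filtration of $X$ reduces the statement, for every fixed $Y$, to the case where $X$ ranges over the domains and codomains of a generating set of cofibrations of $\Sms$ — that is, over the marked-scaled simplices ${}^{\flat}\Del^n$ and their marking/scaling variants such as ${}^{\sharp}\Del^1$ and $(\Del^2,\emptyset,\{\Del^2\})$. Running the identical argument in the second variable (now with $X$ a fixed simplex) reduces us to the case where $X$ and $Y$ are both such atomic simplices; the base of each induction is $X=\emptyset$ (resp. $Y=\emptyset$), where $r$ is the identity $\ovl{Y} \to \ovl{Y}$ (resp. $\ovl{X} \to \ovl{X}$).

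It then remains to analyze $r\colon \Del^p \diamond_{\out} \Del^q \to \Del^p \ast \Del^q \cong \Del^{p+q+1}$, for the various markings and scalings on the factors. Here the source is an explicit quotient of the prism $\Del^q \times \Del^1 \times \Del^p$ obtained by collapsing $\Del^q \times \{0\} \times \Del^p$ onto $\Del^p$ and $\Del^q \times \{1\} \times \Del^p$ onto $\Del^q$, and $r$ is the map which, given the splitting of $[n]$ determined by the $\Del^1$-coordinate, records only the ``$X$-on-the-$0$-part, $Y$-on-the-$1$-part'' data and forgets the redundant factors. I would then show that $r$ is a bicategorical equivalence by exhibiting $\Del^p \diamond_{\out} \Del^q$ as obtained from a copy of $\Del^{p+q+1}$ by a finite sequence of pushouts along scaled anodyne maps, using the filling criterion of \cite[Remark 3.1.4]{LurieGoodwillie} (a simplex all of whose faces but one are already thin admits a scaled anodyne filler) to collapse the extra cells of the prism one at a time. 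The compatibility of the thin triangles with the marking conventions of Definition~\ref{d:gray-marked} and Definition~\ref{d:join} is what guarantees that the triangles are thin exactly where the filling criterion requires; the marking/scaling variants of the simplices are handled identically, the only change being the bookkeeping of which additional triangles are declared thin.

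The main obstacle is precisely this base case: producing the explicit filtration of the prism-quotient $\Del^p \diamond_{\out} \Del^q$ and checking that each elementary collapse is scaled anodyne while keeping careful track of the thin triangles dictated by Definition~\ref{d:gray-marked}. Everything else — the colimit- and cofibration-preservation, the gluing/left-properness reduction, and the passage between the two variables — is formal. I emphasize that the argument is specific to the \emph{outer} thick join: it is exactly the orientation of $Y \mgr {}^{\flat}\Del^1 \mgr X$ that makes the thin triangles of the collapse match those of the ordinary join, which is why no analogous identification is expected for $\diamond_{\inn}$.
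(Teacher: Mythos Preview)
Your reduction to the case \(X={}^{\flat}\Del^p\), \(Y={}^{\flat}\Del^q\) via colimit- and cofibration-preservation is correct and is exactly how the paper proceeds (including the observation that the marked/scaled variants of the simplices contribute only pushout squares on top of the flat case).

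The gap is in your base case. You propose to show that \(\Del^p \diamond_{\out} \Del^q\) is obtained from an embedded copy of \(\Del^{p+q+1}\) by pushouts along scaled anodyne maps, invoking \cite[Remark 3.1.4]{LurieGoodwillie} to ``collapse the extra cells of the prism one at a time''. But that remark only lets you add \emph{thin markings} to already-present triangles; it does not remove or collapse any simplices, and it says nothing about cells of dimension \(>2\). The underlying simplicial set of \(\Del^p \diamond_{\out} \Del^q\) genuinely has more non-degenerate simplices than \(\Del^{p+q+1}\) (already for \(p=q=1\) there are distinct \(2\)-simplices with the same image under \(r\)), so you would need an honest horn-filling filtration to show that the section \(s\) is a trivial cofibration, and Remark 3.1.4 alone does not provide this.

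The paper's base case is organized differently. First, it does use \cite[Remark 3.1.4]{LurieGoodwillie}, but only for what it actually does: Lemma~\ref{lem:join-eq} shows that the inclusion \((\Del^p\diamond_{\out}\Del^q,T)\subseteq(\Del^p\diamond_{\out}\Del^q,T')\) is scaled anodyne, where \(T'\) consists of all triangles whose image in \(\Del^p\ast\Del^q\) is degenerate. Second, to handle the remaining map \((\Del^p\diamond_{\out}\Del^q,T')\to(\Del^{p+q+1})_\flat\), the paper does \emph{not} attempt a filtration: it writes down an explicit section \(s\) with \(rs=\Id\), an auxiliary endomorphism \(u\), and two natural transformations \(h,k\colon(\Del^p\diamond_{\out}\Del^q,T')\times\Del^1_\flat\to(\Del^p\diamond_{\out}\Del^q,T')\) relating \(u\) to \(sr\) and to \(\Id\), each of which is constant on every edge \(\{x\}\times\Del^1\). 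The equivalence then follows from \cite[Corollary~7.8]{GagnaHarpazLanariEquiv}. So the actual mechanism is a zig-zag of simplicial homotopies, not an anodyne filtration; your sketch does not supply an alternative to this step.
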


Before we give the proof of Proposition~\ref{prop:join-eq} let us take a minute to verify that the map \(r\) is indeed well-defined. First, on the level of underlying simplicial sets, note that \(n\)-simplices of \(X \ast Y\) corresponds to a triple \((i,\sig_{i-1},\sig_{n-i})\) where \(i \in \{0,...,n+1\}\) corresponds to a choice of a partition \([n] = \{0,...,i-1\} \ast \{i,...,n\}\) (the two extreme options corresponding to partitions in which one part is empty), \(\sig_{i-1}\) is an \(\{0,...,i-1\}\)-simplex of \(X\), and \(\sig_{n-i}\) is an \(\{i,...,n\}\)-simplex of \(Y\). By convention the set of \(\{\}\)-simplices of any simplicial set is a singleton, and these extreme partitions correspond to the simplices of \(X \ast Y\) which are in the image of the inclusions \(X \hrar X \ast Y \hookleftarrow Y\). On the other hand, an \(n\)-simplex of \(X \diamond_{\out} Y\) is given by an equivalence class of triples \((\rho_Y,\tau,\rho_X)\), of \(n\)-simplices of \(Y\), \(\Del^1\), and \(X\), respectively. The \(n\)-simplex \(\tau\colon \Del^n \to \Del^1\) then determines a partition \([n] = \tau^{-1}(0) \ast \tau^{-1}(1)\), and the map \(r\) sends \((\rho_Y,\tau,\rho_X)\) to \((\min(\tau^{-1}(1)),{\rho_X}_{|\tau^{-1}(0)},{\rho_Y}_{|\tau^{-1}(1)})\). One may then verify that this is the only option that is compatible with the simplicial face and degeneracy maps, and that behaves in the prescribed manner when the partition is one of the two extreme cases. It is clear from this description that the map \(r\) is surjective on \(n\)-simplices for every \(n\).

Let us now verify that this map sends thin triangles of \(X \diamond_{\out} Y\) to thin triagles of \(X \ast Y\). Unwinding the definitions we see that a triangle given by a class of a triple of triangles \((\rho_Y,\tau,\rho_X)\) is thin in \(X \diamond_{\out} Y\) if and only if one of the following possibilities hold:
\begin{itemize}
\item
\(\tau\) sends all vertices to \(0\) and \(\rho_X\) is thin in \(X\).
\item
\(\tau\) sends all vertices to \(1\) and \(\rho_Y\) is thin in \(Y\).
\item
\(\tau\) is given on vertices by \(0,1,2 \mapsto 0,0,1\), \(\rho_Y\) is thin, \(\rho_X\) is degenerate and \({\rho_X}_{|\Del^{\{0,1\}}}\) is marked in \(X\).
\item
\(\tau\) is given on vertices by \(0,1,2 \mapsto 0,1,1\), \(\rho_X\) is thin, \(\rho_Y\) is degenerate and \({\rho_Y}_{|\Del^{\{1,2\}}}\) is marked in \(Y\).
\end{itemize}
Examining Definition~\ref{d:join} we then see that in all these cases the associated triangle \((\min(\tau^{-1}(1)),{\rho_X}_{|\tau^{-1}(0)},{\rho_Y}_{|\tau^{-1}(1)})\) is thin in \(X \ast Y\). In addition, one directly verifies that the map \(r\) is surjective on thin triangles.

The proof of Proposition~\ref{prop:join-eq} will require the next lemma. In what follows, for integers \(p,q \geq 0\) we write 
\[ \Del^p \diamond_{\out} \Del^q := \Del^p \coprod_{\Del^q \times \Del^{\{0\}} \times \Del^p} \bigl(\Del^q \times \Del^1 \times \Del^p \bigr)\coprod_{\Del^q \times \Del^{\{1\}} \times \Del^p} \Del^q \]
for the underlying simplicial set of \({}^{\flat}\Del^p \diamond_{\out} {}^{\flat}\Del^q\).
\begin{lemma}\label{lem:join-eq}
Let \(T\) denote the collection of triangles in \(\Del^p \diamond_{\out} \Del^q\) which are thin in \(\prescript{\flat}{}\Del^p \diamond_{\out} \prescript{\flat}{}\Del^q\), and \(T'\) the collection of all triangles in \(\Del^p \diamond_{\out} \Del^q\) whose image in \(\Del^p \ast \Del^q\) is degenerate. Then \(T \subseteq T'\) and the inclusion 
\begin{equation}\label{e:add-all}
(\Del^p \diamond_{\out} \Del^q,T) \subseteq (\Del^p \diamond_{\out} \Del^q,T')
\end{equation}
is scaled anodyne.
\end{lemma}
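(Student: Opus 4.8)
The plan is to first record that $T\subseteq T'$, and then realize the inclusion $(\Del^p \diamond_{\out} \Del^q, T) \subseteq (\Del^p \diamond_{\out} \Del^q, T')$ as a pushout of elementary scaled anodyne maps of the form $(\Del^3, T_k) \to \Del^3_{\sharp}$, where $T_k$ consists of all triangles except $\Del^{\{0,k,3\}}$; these are scaled anodyne by~\cite[Remark 3.1.4]{LurieGoodwillie} (this is the same input used in the proof of Proposition~\ref{p:marked-variants}). For the inclusion $T\subseteq T'$ I would invoke the computation carried out just before the statement, which shows that $r$ carries thin triangles of $\prescript{\flat}{}\Del^p \diamond_{\out} \prescript{\flat}{}\Del^q$ to thin triangles of $\prescript{\flat}{}\Del^p \ast \prescript{\flat}{}\Del^q$. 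Since all inputs are flat, Definition~\ref{d:join} shows that the scaling on $\prescript{\flat}{}\Del^p \ast \prescript{\flat}{}\Del^q = \Del^p \ast \Del^q$ is minimal, i.e.\ its thin triangles are exactly the degenerate ones; hence $r(T)$ consists of degenerate triangles, which is precisely the assertion $T\subseteq T'$.

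The heart of the matter is an explicit description of $T'\setminus T$. Writing a triangle of the middle Gray product $\Del^q \times \Del^1 \times \Del^p$ as a triple $(\rho_Y,\tau,\rho_X)$ as in the discussion preceding the lemma, and using that $r$ is a bijection on vertices, one checks that a \emph{nondegenerate} triangle of $\Del^p \diamond_{\out} \Del^q$ has degenerate image under $r$ precisely when $\tau$ is one of the two monotone maps $(0,0,1)$ or $(0,1,1)$ and the two vertices lying over the same block of the associated partition coincide. Comparing this with the thinness condition of Definition~\ref{d:gray-marked} then shows that $T'\setminus T$ consists of exactly two families: the triangles $(\rho_Y,(0,0,1),\rho_X)$ with $\rho_X|_{\Del^{\{0,1\}}}$ degenerate and $\rho_Y$ strictly increasing, together with the mirror family $(\rho_Y,(0,1,1),\rho_X)$ with $\rho_Y|_{\Del^{\{1,2\}}}$ degenerate and $\rho_X$ strictly increasing. (The strictness in each case is exactly what rules out both degeneracy, which would place the triangle outside the problem, and Gray-thinness, which would place it in $T$.)

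It then remains to mark each such $\gamma$ as thin by a scaled anodyne move. For $\gamma = (\rho_Y,(0,0,1),\rho_X)$ in the first family, with $\rho_X = (a,a,a')$ and $\rho_Y = (b_0,b_1,b_2)$, I would use the $3$-simplex $\hat\rho\colon \Del^3 \to \Del^p \diamond_{\out} \Del^q$ induced by the triple $\bigl((b_0,b_1,b_2,b_2),\,(0,0,0,1),\,(a,a,a,a')\bigr)$; a routine face computation shows that its face $\Del^{\{0,1,3\}}$ equals $\gamma$, while the faces $\Del^{\{0,1,2\}}, \Del^{\{0,2,3\}}, \Del^{\{1,2,3\}}$ are either degenerate or Gray-thin, hence already in $T$. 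For the mirror family one uses the opposite $3$-simplex induced by $\bigl((b_0,b_1,b_1,b_1),\,(0,1,1,1),\,(a_0,a_0,a_1,a_2)\bigr)$, whose face $\Del^{\{0,2,3\}}$ equals $\gamma$ and whose other three faces lie in $T$. Crucially, in both cases the witnessing faces belong to $T$ regardless of which triangles have already been marked, so no ordering of $T'\setminus T$ is needed: the inclusion is obtained as a single pushout of the coproduct, over all $\gamma \in T'\setminus T$, of the maps $(\Del^3,T_1)\to\Del^3_{\sharp}$ and $(\Del^3,T_2)\to\Del^3_{\sharp}$, and is therefore scaled anodyne. The main obstacle is purely bookkeeping: keeping the orientation conventions of the triple Gray product, of the map $r$, and of the thinness condition of Definition~\ref{d:gray-marked} mutually consistent, so that the two families and their witnessing $3$-simplices are correctly matched to the $k=1$ and $k=2$ instances of~\cite[Remark 3.1.4]{LurieGoodwillie}.
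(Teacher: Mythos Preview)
Your proposal is correct and follows essentially the same approach as the paper. Both arguments identify the same two families of triangles to be added and witness each via a $3$-simplex with three faces already in $T$, invoking \cite[Remark 3.1.4]{LurieGoodwillie}; your witnessing $3$-simplices $((b_0,b_1,b_2,b_2),(0,0,0,1),(a,a,a,a'))$ and $((b_0,b_1,b_1,b_1),(0,1,1,1),(a_0,a_0,a_1,a_2))$ are exactly the ones the paper writes as $(\rho_{\Del^q}\circ\alpha,\tau\circ\beta,\rho_{\Del^p}\circ\gamma)$, and your observation that no ordering is needed is a clean way to phrase the paper's ``sequence of pushouts''.
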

\begin{proof}
We will denote the vertices of \(\Del^p \diamond_{\out} \Del^q\) 
by triples \([v,\eps,u]\) with \(v \in [q], \eps \in [1],u \in [p]\), under the equivalence relation in which \([v,0,u] \sim [v',0,u]\) for every \(v,v' \in [q],u \in [p]\) and \([v,1,u] \sim [v,1,u']\) for every \(v \in [q],u,u'\in [p]\) .
Using the identification \(\Delta^p \ast \Delta^q \cong \Delta^{p+q+1}\) on the level of the underlying simplicial sets, the map
\(\Del^p \diamond_{\out} \Del^q \to \Delta^p\ast\Del^q\)
can be written on vertices by 
\[
	r([v,\eps,u])=
	\begin{cases}
	 u & \eps=0\\
	 v+p+1 & \eps=1
	\end{cases}
\]
We note that all the thin triangles in \(\prescript{\flat}{}\Del^p\ast \prescript{\flat}{}\Del^q\) are degenerate, while the \emph{non-degenerate} thin triangles of \(\prescript{\flat}{}\Del^p\diamond_{\out} \prescript{\flat}{}\Del^q\) are given by the classes of those triples \((\rho_{\Del^q},\tau,\rho_{\Del^p})\) such that \(\tau\) is surjective, \(\rho_{\Del^q}\) degenerates along \(\Del^{\{1,2\}}\) and \(\rho_{\Del^p}\) degenerates along \(\Del^{\{0,1\}}\). 
In particular, these all map to degenerate triangles in \(\Del^p \ast \Del^q\), and so we have \(T \subseteq T'\). On the other hand, the non-degenerate triangles in \(T'\) are given by the classes of those triples \((\rho_{\Del^q},\tau,\rho_{\Del^p})\) such that \(\tau\) is surjective, and such that either both \(\tau\) and \(\rho_{\Del^q}\) degenerate along \(\Del^{\{1,2\}}\) or both \(\tau\) and \(\rho_{\Del^p}\) degenerate along \(\Del^{\{0,1\}}\). 

We will now show that for every non-degenerate triangle \(\sig \in T'\) there is a \(3\)-simplex \(\eta\colon \Del^3 \to \Del^p \diamond_{\out} \Del^q\) and an \(i \in \{1,2\}\) such that \(\eta_{|\Del^{\{0,i,3\}}} = \sig\) while the three other faces of \(\eta\) lie in \(T\). This will imply that~\eqref{e:add-all} is a sequence of pushouts along maps of the form \((\Del^3,T_i) \to (\Del^3)_{\sharp}\), where \(T_i\) denotes all triangles except \(\Del^{\{0,i,3\}}\), and is hence scaled anodyne by~\cite[Remark 3.1.4]{LurieGoodwillie}. 
Now if we take a triangle in \(T'\) of the form \((\rho_{\Del^q},\tau,\rho_{\Del^p})\) such that both \(\tau\) and \(\rho_{\Del^q}\) degenerate along \(\Del^{\{1,2\}}\) then we let \(\eta\) be the \(3\)-simplex \((\rho_{\Del^q} \circ \alp,\tau \circ \beta,\rho_{\Del^p} \circ \gam)\), 
where \(\alp\colon \Del^3 \to \Del^2\) is given on vertices by \(0,1,2,3 \mapsto 0,1,1,2\), \(\beta\colon \Del^3 \to \Del^2\) is given on vertices by \(0,1,2,3 \mapsto 0,1,1,2\) and \(\gam\colon \Del^3 \to \Del^2\) is given on vertices by \(0,1,2,3 \mapsto 0,0,1,2\).
Notice that the restriction of \(\rho_{\Del^p} \circ \gam\) to \(\Del^{\{1, 2, 3\}}\)
is quotiented to the point in \(\Del^p \diamond_{\out} \Del^q\).
Similarly, if we take a triangle in \(T'\) of the form \((\rho_{\Del^q},\tau,\rho_{\Del^p})\) such that both \(\tau\) and \(\rho_{\Del^p}\) degenerate along \(\Del^{\{0,1\}}\) then we let \(\eta\) be the \(3\)-simplex \((\rho_{\Del^q} \circ \alp,\tau \circ \beta,\rho_{\Del^p} \circ \gam)\), where \(\alp\colon \Del^3 \to \Del^2\) is given on vertices by \(0,1,2,3 \mapsto 0,1,2,2\), \(\beta\colon \Del^3 \to \Del^2\) is given on vertices by \(0,1,2,3 \mapsto 0,1,1,2\) and \(\gam\colon \Del^3 \to \Del^2\) is given on vertices by \(0,1,2,3 \mapsto 0,1,1,2\).
Notice that the restriction of \(\rho_{\Del^q} \circ \alp\) to \(\Del^{\{0, 1, 2\}}\)
is quotiented to the point in \(\Del^p \diamond_{\out} \Del^q\).
\end{proof}

\begin{proof}[Proof of Proposition~\ref{prop:join-eq}]
Let us say that a pair \((X,Y)\) is \ndef{good} if the map~\eqref{e:thin-is-thick} is an equivalence. We then observe that for a fixed \(X\) the operations \(Y \mapsto X \diamond_{\out} Y\) and \(Y \mapsto X \ast Y\) preserve monomorphisms, pushout squares and filtered colimits. On the other hand, pushout squares with parallel legs cofibrations are always homotopy pushout squares (since all objects in \(\Ss\) are cofibrant), and filtered colimits are always homotopy colimits since bicategorical equivalences are closed under filtered colimits. 
We then conclude that for a fixed \(X\), the collection of \(Y\) for which \((X,Y)\) is good is closed under pushouts with parallel legs cofibrations and filtered colimits. It will hence suffice to prove the claim for \(Y=\prescript{\flat}{}\Del^q\), \(Y=\Del^2_{\sharp}\) and \(Y=(\Del^1)^{\sharp}\). Applying this argument for \(X\) instead of \(Y\) we may equally suppose that \(X\) is either \(\prescript{\flat}{}\Del^p, \Del^2_{\sharp}\) or \((\Del^1)^{\sharp}\). We now observe that for every marked-scaled simplicial set 
there are pushout squares of scaled simplicial sets
\[ \xymatrix{
X \diamond_{\out} \prescript{\flat}{}\Del^2 \ar[d]\ar[r] & X \diamond_{\out} (\Del^2_{\sharp})^{\flat} \ar[d] \\
X \ast \prescript{\flat}{}\Del^2 \ar[r] & X \ast (\Del^2_{\sharp})^{\flat} \\
}
\quad\quad
\xymatrix{
X \diamond_{\out} \prescript{\flat}{}\Del^1 \ar[d]\ar[r] & X \diamond_{\out} (\Del^1_{\flat})^{\sharp} \ar[d] \\
X \ast \prescript{\flat}{}\Del^1 \ar[r] & X \ast (\Del^1_{\flat})^{\sharp} \ .
}\]
Indeed, since the horizontal maps are isomorphisms on the level of underlying simplicial sets, this follows from the fact that the vertical maps are surjective on thin triangles. These squares are then also homotopy pushout squares with respect to the bicategorical model structure since they have parallel legs cofibrations and all objects cofibrant. We may consequently assume without loss of generality that
\(X=\prescript{\flat}{}\Del^p\) and \(Y=\prescript{\flat}{}\Del^q\). 
In light of Lemma~\ref{lem:join-eq} it will now suffice to show that the map \((\Del^p \diamond_{\out} \Del^q,T') \to (\Del^p \ast \Del^q)_{\flat}\) is a bicategorical equivalence.

As in the proof of Lemma~\ref{lem:join-eq} we will denote the vertices of \(\Del^p \diamond_{\out} \Del^q\) by triples \([v,\eps,u]\) with \(v \in [q], \eps \in [1],u \in [p]\), 
under the equivalence relation \([v,0,u] \sim [v',0,u]\) for every \(v,v' \in [q],u \in [p]\) and \([v,1,u] \sim [v,1,u']\) for every \(v \in [q],u,u'\in [p]\), so that 
the map
\(\Del^p \diamond_{\out} \Del^q \to \Delta^p\ast\Del^q\)
can be written on vertices by 
\[
	r([v,\eps,u])=
	\begin{cases}
	 u &,\ \eps=0,\\
	 v+p+1 &,\ \eps=1.
	\end{cases}
\]
We now define maps of scaled simplicial sets
\[
 \begin{tikzcd}
	& (\Del^q \times \Del^1 \times \Del^p,T'') \ar[dr] & \\
	(\Delta^p\ast\Del^q)_{\flat} \ar[ur, "\wtl{s}"] \ar[rr, "s"] &&  (\Del^p\diamond_{\out}\Del^q,T')
 \end{tikzcd}
\]
where \(T''\) is the preimage of \(T'\) and  
\(\wtl{s}\) is induced by the order preserving map on vertices
\[
  \wtl{s}(i)=
  \begin{cases}
	[0,0,i] &,\ i\leq p,\\
	[i-p-1,1,p] &,\ i >p.
  \end{cases}
\]
It is immediate to check that \(rs=\Id\). On the other hand, if we denote by 
\[\wtl{r}\colon (\Del^q \times \Del^1 \times \Del^p,T'') \to (\Del^p \diamond_{\out} \Del^q,T') \to (\Del^p \ast \Del^q)_{\flat}\]
so that the composite \(\wtl{s}\circ \wtl{r}\)
is given on vertices by the order preserving map
\[
  \wtl{s}\circ \wtl{r}([v,\eps,u])=
  \begin{cases}
	[0,0,u] &,\ \eps=0,\\
	[v,1,p] &,\ \eps=1.
  \end{cases}
\]
We will now exhibit a zig-zag of natural transformations from \(sr\) to the identity. 
More precisely, we construct a map of scaled simplicial sets 
\[u\colon (\Del^p \diamond_{\out} \Del^q,T') \to (\Del^p \diamond_{\out} \Del^q,T')\] 
and a pair of transformations
\[h,k\colon (\Del^p \diamond_{\out} \Del^q,T') \times \Del^1_{\flat}\to (\Del^p \diamond_{\out}\Del^q,T') \] 
such that
\[
 h_{|\{0\}}=k_{|\{0\}}=u,\quad \ h_{|\{1\}}=sr \quad\text{and}\quad k_{|\{1\}}=\Id.
\]
Furthermore, these transformations will satisfy the additional property that \(h_{\vert x\times \Del^1}\) and \(k_{\vert x\times \Del^1}\) are degenerate in \(\Del^p \diamond_{\out} \Del^q\) for every vertex \(x\) of \(\Del^p \diamond_{\out} \Del^q\). The map~\eqref{e:thin-is-thick} is then a bicategorical equivalence by~\cite[Corollary~7.8]{GagnaHarpazLanariEquiv}.

We now finish the proof by constructing \(h,k\) and \(u\) as above. For \(u\), we define it to be the map
induced on quotients by the endomorphism \(\wtl{u}\) of \((\Del^q \times \Del^1 \times \Del^p,T'')\)
given on vertices by the order preserving map 
\[\wtl{u}([v,\eps,u])=\begin{cases}
[0,0,u] &,\ \eps=0,\\
[v,1,u] &,\ \eps=1.
\end{cases}\]
This order preserving map satisfies pointwise the inequalities \(\wtl{u}([v,\eps,u]) \leq [v,\eps,u]\) and \(\wtl{u}([v,\eps,u]) \leq \wtl{s}\wtl{r}([v,\eps,u])\). Therefore there are natural transformations of simplicial sets
\[
\wtl{h},\wtl{k}\colon (\Del^q \times \Del^1 \times \Del^p) \times \Del^1 \to \Del^q \times \Del^1 \times \Del^p
\]
with \(\wtl{h}_{|\{0\}}=\wtl{k}_{|\{0\}}=u\) and \(\wtl{h}_{|\{1\}}=\wtl{s}\circ\wtl{r}\),
\(\wtl{k}_{|\{1\}}=\Id\). 
Both these homotopies have the property that when projected down to \(\Del^p \ast \Del^q\) they yield the identify transformation from \(r\) to itself. Since \(T'\) consists by definition of those triangles whose image in \(\Del^p \ast \Del^q\) is degenerate we see that \(\wtl{h}\) and \(\wtl{k}\) refine to scaled maps
\[
 \wtl{h},\wtl{k}\colon(\Del^q \times \Del^1 \times \Del^p,T'')\times \Del^1_{\flat} \to (\Del^q \times \Del^1 \times \Del^p,T'').
\]
Finally, by direct inspection they also pass to the quotient, so we get the desired maps
\[
 h,k\colon (\Del^p\diamond_{\out} \Del^q,T') \times \Del^1_{\flat} \to (\Del^p\diamond_{\out} \Del^q,T').
\]
We are left with checking that both \(h\) and \(k\) are constant along all edges 
of the form \(w\times \Del^1\) for \(w\in \Del^p\diamond_{\out} \Del^q\). We have four distinct cases to analyze:
\begin{itemize}
\item the edge \(\wtl{h}_{\vert [v,0,u]\times \Del^1}\) is the degenerate edge on \([0,0,u]\);
\item the edge \(\wtl{h}_{\vert [v,1,u]\times \Del^1}\) is \([v,1,u]\to [v,1,p]\), whose image in \(\Del^p \diamond_{\out} \Del^q\) is degenerate;
\item the edge \(k_{\vert [v,0,u]\times \Del^1}\) is the edge \([0,0,u] \to [v,0,u]\), whose image in \(\Del^p \diamond_{\out} \Del^q\) is degenerate;
\item the edge \(k_{\vert [v,1,u]\times \Del^1}\) is the degenerate edge on \([v,1,u]\).
\end{itemize}
We may finally conclude that the map \(r\) is bicategorical equivalence.
\end{proof}

As a first corollary of Proposition~\ref{prop:join-eq} we obtain the following analogue of
Lemma~\ref{lem:join-eq} for the thick outer join: 
\begin{cor}\label{cor:thick-pushout-join}
	Let \(f\colon X \lrar Y\) and \(g\colon A \lrar B\) be injective map of marked-scaled simplicial sets. If either \(f\) is outer cartesian anodyne or \(g\) is outer cocartesian anodyne then the map of scaled simplicial sets
	\begin{equation}\label{e:thick-pushout-join} 
	[X \diamond_{\out} B] \coprod_{X \diamond_{\out} A} [Y \diamond_{\out} A]\lrar Y \diamond_{\out} B 
	\end{equation}
	is a bicategorical trivial cofibration.
\end{cor}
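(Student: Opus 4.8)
The plan is to leverage Proposition~\ref{prop:join-eq}, which provides a bicategorical equivalence $r\colon X \diamond_{\out} Y \to X \ast Y$ comparing the thick outer join with the ordinary join, together with the already-established analogue of the present statement for the ordinary join, namely Lemma~\ref{l:pushout-join}. Since both $\diamond_{\out}$ and $\ast$ are functorial in each variable, and $r$ is characterized by compatibility with the canonical inclusions, the maps $r$ assemble, by naturality, into a commutative square
\[
\begin{tikzcd}
{[X \diamond_{\out} B] \coprod_{X \diamond_{\out} A} [Y \diamond_{\out} A]} \ar[r] \ar[d] & Y \diamond_{\out} B \ar[d, "r"] \\
{[X \ast B] \coprod_{X \ast A} [Y \ast A]} \ar[r] & Y \ast B
\end{tikzcd}
\]
in which the top horizontal map is the pushout-product~\eqref{e:thick-pushout-join} whose triviality we wish to establish, the bottom horizontal map is the ordinary-join pushout-product of Lemma~\ref{l:pushout-join}, and the right vertical map is the instance $r\colon Y \diamond_{\out} B \to Y \ast B$ of the comparison map.

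First I would argue that the left vertical map is a bicategorical equivalence. The three comparison maps $r$ on the corners $X \diamond_{\out} B$, $X \diamond_{\out} A$ and $Y \diamond_{\out} A$ are all bicategorical equivalences by Proposition~\ref{prop:join-eq}. Because $\diamond_{\out}$ preserves monomorphisms in each variable (its underlying functor is built from Gray products, which preserve injections) and $f,g$ are injective, the two legs $X \diamond_{\out} A \to X \diamond_{\out} B$ and $X \diamond_{\out} A \to Y \diamond_{\out} A$ of the defining pushout are cofibrations; since every object of $\Ss$ is cofibrant, this pushout is a homotopy pushout, and the same holds for the ordinary join. An equivalence of homotopy pushout diagrams induces an equivalence on pushouts, so the left vertical map is a bicategorical equivalence. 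The right vertical map is a bicategorical equivalence by Proposition~\ref{prop:join-eq}, and the bottom map is a bicategorical trivial cofibration, in particular an equivalence, by Lemma~\ref{l:pushout-join}. Applying two-out-of-three to the square then shows that the top map~\eqref{e:thick-pushout-join} is a bicategorical equivalence.

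It remains to see that~\eqref{e:thick-pushout-join} is a cofibration, that is, a monomorphism. This is the one point requiring a small verification rather than a formal diagram chase: I would invoke the general fact that, for a functor of two variables preserving colimits and monomorphisms in each variable, the pushout-product of two monomorphisms is again a monomorphism. The hypotheses hold here because $\diamond_{\out}$ is colimit-preserving in each variable (as recorded after Definition~\ref{d:thick-join}) and preserves injections in each variable, while monomorphisms in $\Ss$ are closed under pushout and composition. I expect the only genuinely non-formal input to be the homotopy-invariance step of the middle paragraph: one must confirm that the defining pushouts really are homotopy pushouts, so that the three corner equivalences propagate to an equivalence on total pushouts. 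This is exactly what the cofibrancy of all objects together with the monomorphicity of the legs guarantees; once that is in place, the result follows by a clean combination of Proposition~\ref{prop:join-eq}, Lemma~\ref{l:pushout-join}, and two-out-of-three.
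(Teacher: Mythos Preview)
Your proposal is correct and follows essentially the same approach as the paper's proof: reduce to Lemma~\ref{l:pushout-join} via the comparison map of Proposition~\ref{prop:join-eq}, using a homotopy-pushout argument and two-out-of-three. The paper's proof is a terse two-sentence version of what you wrote, simply asserting that~\eqref{e:thick-pushout-join} is ``clearly a cofibration'' and that the equivalence part ``follows from'' Lemma~\ref{l:pushout-join} together with Proposition~\ref{prop:join-eq}; your version spells out the homotopy-pushout step and the pushout-product-of-monomorphisms step that the paper leaves implicit.
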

\begin{proof}
The map~\eqref{e:thick-pushout-join} is clearly a cofibration and so it will suffice to show that it is a bicategorical equivalence. This follows from the analogous claim for the \(\ast\)-pushout-product in Lemma~\ref{l:pushout-join}, together with the comparison of Proposition~\ref{prop:join-eq}.
\end{proof}

\begin{cor}\label{c:outer-slice}
	Let \(\C\) be an \(\infty\)-bicategory and \(f\colon \ovl{K} \lrar \C\) a map of scaled simplicial sets. 
	Then we have a bicategorical equivalence
	\[
	 \begin{tikzcd}[column sep=small]
		\ovl{\C}_{/f} \ar[rr, "\simeq"]\ar[dr, "p"'] && \ovl{\C}^{/f}_{\out} \ar[dl, "q"] \\
		& \C &
	 \end{tikzcd}
	\]
	of \(\infty\)-bicategories over \(\C\). 
	In addition, \(q\) is an outer cartesian fibration which is fiberwise equivalent to \(p\). 
	Similarly, we have a bicategorical equivalence of the form
	\[
	 \begin{tikzcd}[column sep=small]
		\ovl{\C}_{f/} \ar[rr, "\simeq"]\ar[dr, "p"'] && \ovl{\C}^{f/}_{\out} \ar[dl, "q"] \\
		& \C &
	 \end{tikzcd}
	\]
	which is also a fiberwise equivalence of outer cocartesian fibrations over~\(\C\).
\end{cor}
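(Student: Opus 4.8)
The plan is to reduce everything to the comparison map $r$ of Proposition~\ref{prop:join-eq} by passing to right adjoints, and to obtain the cocartesian statement from the cartesian one by applying $(-)^{\op}$. First I build the comparison over $\C$. For fixed $K$, the map $r\colon X \diamond_{\out} K \to X \ast K$ is natural in $X$ and compatible with the canonical copies of $\ovl{K}$, so it constitutes a natural transformation between the two colimit-preserving functors $(-) \diamond_{\out} K$ and $(-) \ast K$ from $\Sms$ to $(\Ss)_{\ovl{K}/}$. Taking the mate of the induced transformation on their right adjoints produces a morphism of marked-scaled simplicial sets $\theta\colon \C_{/f} \to \C^{/f}_{\out}$. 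Since $X \diamond_{\out} \emptyset = \ovl{X} = X \ast \emptyset$ and $r$ is the identity in this case, naturality in $K$ along $\emptyset \hookrightarrow K$ shows that $\theta$ is compatible with the projections onto $\C = \ovl{\C}_{/\emptyset} = \ovl{\C}^{/\emptyset}_{\out}$, i.e. $p = q \circ \theta$; this is the desired triangle.

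Next I show the underlying map $\ovl{\C}_{/f} \to \ovl{\C}^{/f}_{\out}$ is a bicategorical equivalence. Both functors $(-)^{\flat} \diamond_{\out} K$ and $(-)^{\flat} \ast K$ from $\Ss$ to $(\Ss)_{\ovl{K}/}$ are left Quillen: the first by Remark~\ref{r:diamond-quillen}, and the second because it preserves monomorphisms and, by Proposition~\ref{prop:join-eq}, is objectwise bicategorically equivalent to the first, so it preserves trivial cofibrations by two-out-of-three. The subtle point is that $r$ is surjective rather than injective, hence \emph{not} a cofibration, so one cannot simply apply a right Quillen functor to it; instead one argues on homotopy classes exactly as in the proof of Lemma~\ref{l:unstraightening-simplicial}. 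Concretely, for every cofibrant $T \in \Ss$ the derived adjunctions give $[T,\ovl{\C}_{/f}]_{\Ss} \cong [T^{\flat} \ast K,\C]_{(\Ss)_{\ovl{K}/}}$ and $[T,\ovl{\C}^{/f}_{\out}]_{\Ss} \cong [T^{\flat} \diamond_{\out} K,\C]_{(\Ss)_{\ovl{K}/}}$, and $\theta_\ast$ corresponds to restriction along $r$. As $r\colon T^{\flat}\diamond_{\out} K \to T^{\flat} \ast K$ is a bicategorical equivalence between cofibrant objects and $\C$ is fibrant (Theorem~\ref{t:bicategorical}), this restriction is a bijection; thus $\theta$ induces a bijection on $[T,-]$ for all cofibrant $T$ and is therefore a bicategorical equivalence.

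It remains to package this as a fiberwise equivalence of outer cartesian fibrations. That $q\colon \ovl{\C}^{/f}_{\out}\to\C$ is an outer cartesian fibration with marked edges the $q$-cartesian ones follows from Proposition~\ref{p:char} exactly as in Corollary~\ref{cor:slice}: by the thick-join adjunction, lifting $\C^{/f}_{\out}$ against an outer cartesian anodyne map $g$ amounts to lifting the fibrant $\C$ against the $\diamond_{\out}$-pushout-product of $g$ with $\emptyset \hookrightarrow K$, which is a bicategorical trivial cofibration by Corollary~\ref{cor:thick-pushout-join}. Since $p$ is an outer cartesian fibration whose marked edges are its cartesian ones (Proposition~\ref{p:slice-fibration} and Corollary~\ref{cor:slice}), and $\theta$ is a morphism in $\Sms$ and hence preserves marked edges, $\theta$ sends $p$-cartesian edges to $q$-cartesian edges. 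Applying Corollary~\ref{c:fiberwise} to the square over the identity of $\C$, the fact that $\theta$ is a bicategorical equivalence now yields that it is a fiberwise equivalence.

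Finally, the cocartesian case is obtained by applying the cartesian case to $\C^{\op}$ and $f^{\op}$ and passing to opposites. By Remark~\ref{r:opposites} we have $(S^{f/}_{\out})^{\op} \cong (S^{\op})^{/f^{\op}}_{\out}$, together with the analogous opposite-compatibility of the ordinary slice, so that $(\theta_{\C^{\op},f^{\op}})^{\op}$ is the required comparison $\ovl{\C}_{f/} \to \ovl{\C}^{f/}_{\out}$ over $\C$. Since $(-)^{\op}$ interchanges outer cartesian and outer cocartesian fibrations and preserves fiberwise bicategorical equivalences, this gives the second diagram. The genuinely new input throughout is Proposition~\ref{prop:join-eq} and Corollary~\ref{cor:thick-pushout-join}; the main care is needed only in the homotopical comparison of the second paragraph, and everything else is a transcription of the corresponding facts for the non-thick slice.
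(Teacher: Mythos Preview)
Your proof is correct and follows essentially the same approach as the paper's. The paper likewise establishes that both $(-)^{\flat}\diamond_{\out}K$ and $(-)^{\flat}\ast K$ are left Quillen, invokes Proposition~\ref{prop:join-eq} as a levelwise equivalence of left Quillen functors, and deduces the equivalence on right adjoints at fibrant objects; the only cosmetic difference is that the paper cites \cite[Corollary~1.4.4(b)]{HoveyModelCategories} for that passage, whereas you unpack it via the $[T,-]$ argument. The paper also treats the $f/$ case in parallel rather than via $(-)^{\op}$, but your reduction through Remark~\ref{r:opposites} is equally valid.
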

\begin{proof}
By Remark~\ref{r:diamond-quillen} for every marked-scaled simplicial set \(K\) the functors
\[ (-)^{\flat} \diamond_{\out} K\colon \Ss \to (\Ss)_{\ovl{K}/} \quad\text{and}\quad K \diamond_{\out} (-)^{\flat}\colon \Ss \to (\Ss)_{\ovl{K}/} \]
are left Quillen functors. On the other hand, the functors
\[ (-)^{\flat} \ast K\colon \Ss \to \Ss \quad\text{and}\quad K \ast (-)^{\flat}\colon \Ss \to \Ss \]
preserves colimits and cofibrations, and hence also trivial cofibration by the comparison of \(\diamond\) and \(\ast\) of Proposition~\ref{prop:join-eq}. We may hence consider the natural transformation appearing in that proposition as a transformation between two left Quillen functors, which is then shown to be a levelwise weak equivalence. By~\cite[Corollary~1.4.4(b)]{HoveyModelCategories} we may conclude that the adjoint transformations \(\ovl{\C}_{/f} \to \ovl{\C}^{/f}_{\out}\) and \(\ovl{\C}_{f/} \to \ovl{\C}^{f/}_{\out}\) between the corresponding right adjoints are bicategorical equivalences whenever \(\C\) is an \(\infty\)-bicategory. In light of Corollary~\ref{c:fiberwise} it will now suffice to show that \(\ovl{\C}^{/f}_{\out} \to \C\) is an outer cartesian fibration and \(\ovl{\C}^{f/}_{\out} \to \C\) is an outer cocartesian fibration. Both these claims follow as in Corollary~\ref{cor:slice} from Corollary~\ref{cor:thick-pushout-join}.
\end{proof}

\begin{prop}\label{p:cocar}
	Let \(K\) be a marked-scaled simplicial set, \(\C\) an \(\infty\)-bicategory and \(f\colon \ovl{K} \lrar \C\) a map of scaled simplicial sets. 
	Then the maps \(\ovl{\C}^{/f}_{\inn} \lrar \C\) and \(\ovl{\C}^{f/}_{\inn} \lrar \C\) are inner cartesian and cocartesian fibrations respectively.
\end{prop}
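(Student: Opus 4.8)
The plan is to reduce both claims to a single one via the opposite involution, and then to establish that claim by transposing the fibration conditions across the adjunction defining the thick slice and analyzing the resulting \(\diamond_{\inn}\)-pushout-products by hand, in close parallel with the outer case treated in Corollary~\ref{c:outer-slice}. First I would reduce to showing that \(\ovl{\C}^{/f}_{\inn} \to \C\) is an inner cartesian fibration. Indeed, Remark~\ref{r:opposites} supplies a canonical isomorphism \((\ovl{\C}^{f/}_{\inn})^{\op} \cong (\ovl{\C^{\op}})^{/f^{\op}}_{\inn}\) over \(\C^{\op}\), and by Definition~\ref{d:car-fibration} a map is an inner cocartesian fibration exactly when its opposite is an inner cartesian fibration; hence the cocartesian assertion for \(\ovl{\C}^{f/}_{\inn} \to \C\) is precisely the cartesian assertion applied to the datum \((\C^{\op},f^{\op})\).

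To prove that \(\ovl{\C}^{/f}_{\inn} \to \C\) is inner cartesian I would verify the conditions of Definition~\ref{d:car-fibration} by transposition. Writing \(\C \cong \ovl{\C}^{/\emptyset}_{\inn}\) and noting that the projection to \(\C\) is induced by the inclusion \(\emptyset \hookrightarrow K\) of diagram shapes, a lifting problem for \(\ovl{\C}^{/f}_{\inn} \to \C\) against a map \(A \to B\) in the cone variable transposes, through the defining adjunction of \(\ovl{S}^{/f}_{\inn}\), into a lifting problem for the \(\infty\)-bicategory \(\C\) against the pushout-product
\[ A^{\flat} \diamond_{\inn} K \coprod_{A} B \longrightarrow B^{\flat} \diamond_{\inn} K. \]
The heart of the argument is then a lemma — the inner analogue of Corollary~\ref{cor:thick-pushout-join} — asserting that this map is a bicategorical trivial cofibration for \(A \to B\) ranging over the weak-fibration generators of Definition~\ref{d:weak} and the cartesian-lift generator \(\Del^{\{1\}} \subseteq {}^{\sharp}\Del^1\) of Definition~\ref{d:anodyne-marked}, the latter case in addition identifying the marked edges of the slice with the cartesian ones. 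Since \(\C\) is fibrant these extensions exist, so that \(\ovl{\C}^{/f}_{\inn} \to \C\) is a weak fibration; as it also detects thin triangles (immediate from the scaling on the thick join) it is an inner fibration by Remark~\ref{r:weak-scaled}, and the instance \(n=1\) of the cartesian-lift generator produces, for each edge \(e\) of \(\C\), an \(f\)-cartesian lift, obtained concretely by precomposing the legs of a cone over \(f\) with \(e\).

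I would prove the key lemma by unfolding \(A^{\flat} \diamond_{\inn} K\) as the pushout of Definition~\ref{d:thick-join} built from the triple Gray product \(A^{\flat} \otimes {}^{\flat}\Del^1 \otimes K\), and reducing to the Gray-product pushout-product Proposition~\ref{p:push-prod-2}, which guarantees that the Gray product of a monomorphism with a (flattened) scaled anodyne map is scaled anodyne. The main obstacle is precisely that, unlike the outer thick join, the inner thick join admits no comparison with the ordinary join — Proposition~\ref{prop:join-eq} is special to \(\diamond_{\out}\) — so one cannot import the outer anodyne results of Lemma~\ref{l:pushout-join}, and the whole computation must be carried out at the level of the three-fold Gray product. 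The delicate point is the bookkeeping of thin triangles in \(A^{\flat} \otimes {}^{\flat}\Del^1 \otimes K\): one must track both the middle \(\Del^1\)-coordinate and the markings inherited from \(K\) so that the witnessing \(3\)-simplices (in the style of the proofs of Proposition~\ref{p:marked-variants} and Proposition~\ref{p:push-prod-2}) have all their remaining faces thin, and so that the filtration of \(B^{\flat} \diamond_{\inn} K\) over \(A^{\flat} \diamond_{\inn} K\) proceeds by pushouts along genuinely scaled anodyne maps. Once this lemma is established, the cartesian edges are identified as claimed and the inner cocartesian half follows formally from the opposite reduction of the first paragraph.
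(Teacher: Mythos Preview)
Your approach is genuinely different from the paper's, and it contains a real gap. You claim that once \(\ovl{\C}^{/f}_{\inn}\to\C\) is a weak fibration which detects thin triangles, it is an inner fibration ``by Remark~\ref{r:weak-scaled}''. But that remark only says that such a map has the right lifting property against the \emph{scaled} anodyne generators of Definition~\ref{d:anodyne}; it does \emph{not} say the underlying map of simplicial sets is an inner fibration. Being an inner fibration requires lifting against \emph{unscaled} horns \(\Lambda^n_i\hookrightarrow\Delta^n\), and \((\Lambda^n_i)_\flat\hookrightarrow(\Delta^n)_\flat\) is not a bicategorical trivial cofibration for \(n\geq 2\) (think of \(n=2\): \(\Delta^2_\flat\) is the walking non-invertible \(2\)-cell). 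So your pushout-product strategy, which relies on the left-Quillen property of \((-)^{\flat}\diamond_{\inn}K\) and hence only handles trivial cofibrations in the cone variable, does not reach the inner-fibration condition. You also cite Definition~\ref{d:anodyne-marked} for the ``cartesian-lift generator'', but that list is for \emph{outer} cartesian fibrations; the paper never defines inner cartesian anodyne maps, and lifting against \(\Delta^{\{1\}}\subseteq{}^{\sharp}\Delta^1\) alone only produces candidate lifts --- it does not show they are cartesian.

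The paper sidesteps all of this. It first reduces (in the opposite direction from you, proving the cocartesian case \(\ovl{\C}^{f/}_{\inn}\to\C\)) to the case \(K=\Delta^0\) via a pullback square
\[
\begin{tikzcd}
\ovl{\C}^{f/}_{\inn}\ar[r]\ar[d] & \ovl{\LMap}(K,\C)^{[f]/}_{\inn}\ar[d]\\
\C\ar[r] & \LMap(K,\C)
\end{tikzcd}
\]
obtained by manipulating Gray products (Remarks~\ref{r:unmarked} and~\ref{r:almost-associative}). This reduces the problem to showing \(\ovl{\C}^{x/}_{\inn}\to\C\) is inner cocartesian for a vertex \(x\). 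For that, the paper invokes \cite[Proposition~4.1.6]{LurieGoodwillie}, which says the underlying marked simplicial set of \(\C^{x/}_{\inn}\) is \(\Beta_\C\)-fibrant, and then applies Proposition~\ref{p:inner-fibred} and Remark~\ref{rem:weaker} to translate \(\Beta_\C\)-fibrant into inner cocartesian. The only thing left to check is that thin triangles in \(\C^{x/}_{\inn}\) are detected by \(\C\), which is the content of the short Lemma~\ref{l:lem-for-cocar}. The point is that the hard work --- both the inner-fibration property and the identification of cocartesian edges --- is already packaged in Lurie's \(\Beta\)-fibered theory and in Proposition~\ref{p:inner-fibred}; the paper leverages that rather than reproving it by hand for arbitrary \(K\).
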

\begin{proof}
We prove the claim for \(\ovl{\C}^{f/}_{\inn}\). The case of \(\ovl{\C}^{/f}_{\inn}\) then follows by taking opposites, see Remark~\ref{r:opposites}.
Let \([f]\colon \Del^0 \lrar \LMap(K,\C)\) be the map corresponding to \(f\). By Remarks~\ref{r:unmarked} and~\ref{r:almost-associative} we have for a scaled simplicial set \(Z\) an isomorphism \(K \mgr (\Del^1_{\flat} \mgr Z)^{\flat} \cong K \mgr {}^{\flat} \Del^1 \mgr Z^{\flat}\), which induces an isomorphism
\[ K \mgr (\Del^0 \diamond_{\inn} Z)^{\flat} \coprod_{K \mgr Z^{\flat}} Z \cong
K \diamond_{\inn} Z^{\flat}\]
of functors \(\Ss \to (\Ss)_{\ovl{K}/}\) natural in \(Z\). Passing to right adjoints, we obtain a pullback square
	\begin{equation}
	\label{slice iso}
		\begin{tikzcd}
		\ovl{\C}^{f/}_{\inn} \ar[r]\ar[d] & \ovl{\LMap}(K,\C)^{[f]/}_{\inn} \ar[d] \\
		\C \ar[r] & \LMap(K,\C)
	 \end{tikzcd}
	\end{equation}

where the top right corner stands for the underlying scaled simplicial set of the marked-scaled simplicial set \(\LMap(K,\C)^{[f]/}_{\inn}\).
Replacing \(\C\) with \(\LMap(K,\C)\) we may hence assume without loss of generality that \(K = \Del^0\) and \(f\colon \ovl{K} \to \C\) is given by a vertex \(x \in \C\). 
	
	By Remark~\ref{r:mapping-space} and~\cite[Proposition 4.1.6]{LurieGoodwillie} the underlying marked simplicial set of \(\C^{x/}_{\inn}\) constitutes a \(\Beta_{\C}\)-fibrant object
	of \(\trbis{(\s^+)}{\C}\). 
	Applying Proposition~\ref{p:inner-fibred} and Remark~\ref{rem:weaker} it will hence suffice to prove that the thin triangles in \(\C^{x/}_{\inn}\) are exactly those whose image in \(\C\) is thin. This follows from Lemma~\ref{l:lem-for-cocar} just below.
\end{proof}

\begin{lemma}\label{l:lem-for-cocar}
Let \(T\) denote the collection of those triangles in \(\Del^1 \times \Del^2\) which are either thin in \(\Del^1_{\flat} \mgr \Del^2_{\flat}\) or are contained in \(\partial \Del^1 \times \Del^2\). Then the inclusion
\[ (\Del^1 \times \Del^2,T) \to \Del^1_{\flat} \otimes \Del^2_{\sharp} \]
is scaled anodyne.
\end{lemma}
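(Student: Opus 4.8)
The plan is to reduce the statement to a purely combinatorial comparison of the two scalings on the prism \(\Del^1\times\Del^2\), and then to add the single missing thin triangle by one scaled anodyne pushout, in the style of the proofs of Proposition~\ref{p:marked-variants} and Proposition~\ref{p:push-prod-2}. Here the right-hand side is the marked Gray product of Definition~\ref{d:gray-marked} applied to \(\Del^1_\flat\) and \(\Del^2_\sharp = (\Del^2,\deg_1(\Del^2),\Del^2_2)\), a marked-scaled simplicial set with the same (minimal) marking as \(\Del^2_\flat\), differing from it only in that all its triangles are declared thin. Since the markings of the two factors are unchanged, the only difference between the thinness conditions for \(\Del^1_\flat\mgr\Del^2_\flat\) and for \(\Del^1_\flat\otimes\Del^2_\sharp\) is that the former requires the \(\Del^2\)-component \(\sig_2\) of a triangle \(\sig=(\sig_1,\sig_2)\) to be degenerate, whereas the latter does not. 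Consequently every triangle thin in \(\Del^1_\flat\mgr\Del^2_\flat\) is thin on the right, and the extra thin triangles of the target are exactly those for which the index-\(j\) clause of Definition~\ref{d:gray-marked} holds with \(j=2\) — that is, the \(\Del^1\)-component is constant along \(\Del^{\{1,2\}}\) — while \(\sig_2\) is a non-degenerate triangle of \(\Del^2\).

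Running through the ten non-degenerate triangles of the prism, I would check that there are precisely three whose first coordinate is constant on \(\Del^{\{1,2\}}\) and whose projection to \(\Del^2\) is non-degenerate, namely \(\Del^{\{(1,0),(1,1),(1,2)\}}\), \(\Del^{\{(0,0),(0,1),(0,2)\}}\) and \(\tau := \Del^{\{(0,0),(1,1),(1,2)\}}\). The first two lie in \(\partial\Del^1\times\Del^2\) and therefore already belong to \(T\), so \(\tau\) is the unique triangle that is thin in \(\Del^1_\flat\otimes\Del^2_\sharp\) but not in \(T\). It thus remains to show that enlarging the scaling \(T\) by \(\tau\) is scaled anodyne.

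For this I would exhibit the tetrahedron \(\rho := \Del^{\{(0,0),(1,0),(1,1),(1,2)\}}\hrar \Del^1\times\Del^2\). Its face opposite the vertex \((1,0)\) is \(\rho_{|\Del^{\{0,2,3\}}} = \tau\), while its three remaining faces are \(\rho_{|\Del^{\{1,2,3\}}} = \Del^{\{(1,0),(1,1),(1,2)\}}\) (which lies in \(\partial\Del^1\times\Del^2\)), \(\rho_{|\Del^{\{0,1,3\}}} = \Del^{\{(0,0),(1,0),(1,2)\}}\) and \(\rho_{|\Del^{\{0,1,2\}}} = \Del^{\{(0,0),(1,0),(1,1)\}}\), the last two being thin in \(\Del^1_\flat\mgr\Del^2_\flat\); all three therefore belong to \(T\). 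Thus \(\rho\) defines a map of scaled simplicial sets \((\Del^3,T')\to(\Del^1\times\Del^2,T)\), where \(T'\) is the set of all triangles of \(\Del^3\) except \(\Del^{\{0,2,3\}}\), and the inclusion of the lemma is obtained as the pushout of the generating scaled anodyne map \((\Del^3,T')\hrar\Del^3_\sharp\) along \(\rho\). Since \((\Del^3,T')\hrar\Del^3_\sharp\) is scaled anodyne by \cite[Remark 3.1.4]{LurieGoodwillie} — this being the \(i=2\) instance of the map used in Proposition~\ref{p:marked-variants} — the inclusion is scaled anodyne, as required.

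The only genuinely delicate point, which I would verify with care, is the bookkeeping that the three faces of \(\rho\) other than \(\tau\) already lie in \(T\); this is what forces the particular choice of \(\rho\). The ``parallel'' tetrahedron \(\Del^{\{(0,0),(0,1),(1,1),(1,2)\}}\) also has \(\tau\) as a face, but its \(\Del^{\{0,1,2\}}\)-face \(\Del^{\{(0,0),(0,1),(1,1)\}}\) is neither thin in \(\Del^1_\flat\mgr\Del^2_\flat\) nor contained in \(\partial\Del^1\times\Del^2\), so it would not yield a valid pushout; this asymmetry reflects the directedness built into the Gray product.
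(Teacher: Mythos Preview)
Your proof is correct and follows essentially the same approach as the paper's own proof: you identify the same unique missing thin triangle \(\tau = \Del^{\{(0,0),(1,1),(1,2)\}}\) and use the same \(3\)-simplex \(\rho\) spanned by \((0,0),(1,0),(1,1),(1,2)\) to add it via a single pushout along \((\Del^3,T')\hrar\Del^3_\sharp\), invoking \cite[Remark~3.1.4]{LurieGoodwillie}. Your write-up is in fact more detailed than the paper's, spelling out the comparison of scalings and adding the observation about why the alternative tetrahedron \(\Del^{\{(0,0),(0,1),(1,1),(1,2)\}}\) would not work.
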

\begin{proof}
Direct inspection shows that the only thin \(2\)-simplex of \(\Del^1 \mgr \Del^2_{\sharp}\) which is not in \(T\) 
is the triangle \(\sig = \Del^{\{(0,0),(1,1),(1,2)\}}\). 
Let \(\rho\colon \Del^3 \lrar \Del^1 \times \Del^2\) be the \(3\)-simplex spanned by \((0,0),(1,0),(1,1),(1,2)\). Then \(\rho\) sends the triangles \(\Del^{\{0,1,2\}}\), \(\Del^{\{0,1,3\}}\) and \(\Del^{\{1,2,3\}}\) to triangles in \(T\), while \(\Del^{\{0,2,3\}}\) maps to \(\sig\). The desired map is then scaled anodyne by~\cite[Remark 3.1.4]{LurieGoodwillie}. 
\end{proof}

\subsection{Representable fibrations}
\label{s:representable}

Given an \(\infty\)-bicategory \(\C\), we may then construct a model for the Yoneda embedding of \(\C\) by picking a fibrant \(\s^+\)-category \(\D\) equipped with a bicategorical equivalence \(\eta\colon \C \xrightarrow{\simeq} \Nsc(\D)\), and considering the composed functor
\[ j_{\C}\colon \C^{\op} \xrightarrow{\simeq} \Nsc(\D^{\op}) \xrightarrow{\Nsc(j_{\D})} \Nsc[(\s^+)^{\D}]^{\circ} \xrightarrow{\simeq} \Fun(\C,\Catoo),\]
where \(j_{\D}\colon \D^{\op}\to (\s^+)^{\D}\) is the enriched Yoneda embedding of \(\C\) (which takes values in \(\Nsc[(\s^+)^{\D}]^{\circ}\) when \(\D\) is fibrant), and the last map is the bicategorical equivalence of Proposition~\ref{p:rectification}. The functor \(j_{\C}\) can then be morally described as sending \(x \in \C\) to the functor \(\Hom_{\C}(x,-)\colon \C^{\op} \to \Catoo\) corepresented by \(x\). Since \(j_{\D}\) is fully-faithful in the enriched sense we have that \(j_{\C}\) is fully-faithful in the bicategorical sense. 

By definition, the Yoneda image \(j_{\C}(x)\) of an object \(x\) is the vertex of \(\Fun(\C,\Catoo)\) determined by the enriched functor \(j_{\C}(\eta(x)) = \D(\eta(x),-)\colon \D \to \s^+\). We may also encode the latter by unstraightening it to an inner cocartesian fibration over \(\C\).
By Remark~\ref{r:unst-rep} and the compatibility of unstraightening with base change we see that the this inner cocartesian fibration is given explicitly by the fibration
\[ \C \times_{\Nsc(\D)} \ovl{\Nsc}(\D)^{x/}_{\inn}  \to \C ,\]
where \(\ovl{\Nsc}(\D)^{x/}_{\inn}\) is the underlying scaled simplicial set of \(\Nsc(\D)^{x/}_{\inn}\).
Since \(\eta\colon \C \to \Nsc(\D)\) is a bicategorical equivalence the induced map 
\[
 \begin{tikzcd}
	\ovl{\C}^{x/}_{\inn}\ar[dr]\ar[rr, "\simeq"] && \C \times_{\Nsc(\D)} \ovl{\Nsc}(\D)^{x/}_{\inn} \ar[dl] \\
	& \C &
 \end{tikzcd}
\]
is an equivalence of inner cocartesian fibrations over \(\C\) by Corollary~\ref{c:fiberwise} and Remark~\ref{r:mapping-space}, and hence we conclude that the functor corepresented by \(x\) classifies the associated inner slice fibration.
In this section we will see that the analogous statements hold for all four types of slice fibrations
\[
\begin{tikzcd}[column sep=tiny]
	\ovl{\C}^{x/}_{\inn} \ar[drrr] && \ovl{\C}^{x/}_{\out}\ar[dr] && \ovl{\C}^{/x}_{\inn} \ar[dl] && \ovl{\C}^{/x}_{\out}\ar[dlll] \\
	&&& \C &&&
\end{tikzcd}.
\]
We will do this by showing that they all admit the same type of a universal property, exhibiting them as \emph{freely generated} by \(x \in \C\). We will deduce from this that the \((\ZZ/2)^2\)-symmetry of \(\BiCat^{\thi}\) switches between these four fibrations, and consequently that they are all classified by the functors (co)represented by \(x\), with respect to the appropriate variance flavor.

To facilitate the discussion, let us work with a variable \(\var \in \{\out,\inn\}\), which we will call the \emph{variance parameter}. We will then refer to inner/outer (co)cartesian fibrations as \(\var\)-(co)cartesian fibrations.

\begin{notate}\label{n:fun-car}
Let \(p\colon X \to Y\) be a bicategorical fibration of scaled simplicial sets and \(K\) a marked-scaled simplicial set equipped with a map \(f\colon \ovl{K} \to Y\). We will denote by 
\[\Fun^{\car}_{Y}(K,X),\Fun^{\coc}_{Y}(K,X) \subseteq \Fun^{\thi}_{Y}(\ovl{K},X) = \Fun^{\thi}(\ovl{K},X) \times_{\Fun^{\thi}(\ovl{K},Y)} \{f\} \]
the full subcategory spanned by those maps \(\ovl{K} \to X\) over \(Y\) which send the marked edges of \(K\) to \(p\)-cartesian (resp.~\(p\)-cocartesian) edges of \(X\).
\end{notate}

\begin{rem}\label{r:restrict-fibration}
In the situation of Notation~\ref{n:fun-car}, the assumption that \(p \colon X \to Y\) is a bicategorical fibration implies that \(\Fun(\ovl{K},X) \to \Fun(\ovl{K},Y)\) is a bicategorical fibration, and hence that \(\Fun_Y(\ovl{K},X) := \Fun(\ovl{K},X) \times_{\Fun(\ovl{K},Y)} \{f\}\) is an \(\infty\)-bicategory. We may then identify \(\Fun^{\thi}_{Y}(\ovl{K},X)\) with the core \(\infty\)-category of \(\Fun_Y(\ovl{K},X)\).
More generally, if \(K \hrar L\) is an inclusion of marked-scaled simplicial sets then 
\[\Fun_Y(\ovl{L},X) \to \Fun_Y(\ovl{K},X)\]
is a bicategorical fibration of \(\infty\)-bicategories, and hence
\[\Fun^{\thi}_Y(\ovl{L},X) \to \Fun^{\thi}_Y(\ovl{K},X)\]
is a categorical fibration of \(\infty\)-categories. Since the condition of being a (co)cartesian edge is closed under equivalences (see Remark~\ref{r:car-invariant}) it follows that
\[\Fun^{\car}_Y(L,X) \to \Fun^{\car}_Y(K,X)
 \quad\text{and}\quad
 \Fun^{\coc}_Y(L,X) \to \Fun^{\coc}_Y(K,X)
\]
are categorical fibrations as well.
\end{rem}

\begin{define}\label{d:var-equivalence}
Let \(Y\) be a scaled simplicial set, \(h\colon K \to L\) a map of marked-scaled simplicial sets and \(f\colon \ovl{L} \to Y\) a map of scaled simplicial sets. For a variance paramter \(\var \in \{\inn,\out\}\), we will say that \(h\) is a \emph{\(\var\)-cartesian equivalence} over \(Y\) if for every \(\var\)-cartesian fibration \(p\colon X \to Y\) the restriction map
\begin{equation}\label{e:restrict-car} 
h^*\colon\Fun^{\car}_{Y}(L,X) \to \Fun^{\car}_{Y}(K,X) 
\end{equation}
is an equivalence of \(\infty\)-categories. Similarly, we define \(\var\)-cocartesian equivalence in the same manner using mapping \(\infty\)-categories into \(\var\)-cocartesian fibrations over~\(Y\).
\end{define}

\begin{example}\label{ex:equiv-is-cofinal}
In the situation of Definition~\ref{d:var-equivalence}, if the induced map \(\ovl{h}\colon \ovl{K} \to \ovl{L}\) is a bicategorical equivalence and the marked edges in \(L\) are the images of the marked edges in \(K\) then \(h\) is both a \(\var\)-cartesian and a \(\var\)-cocartesian equivalence over \(X\). Indeed, in this case the map~\eqref{e:restrict-car} is a base change of the trivial fibration
\[\Fun^{\thi}_Y(\ovl{L},X) \to \Fun^{\thi}_Y(\ovl{K},X).\]
\end{example}

\begin{rem}\label{r:cofinal-is-equivalence}
In the situation of Definition~\ref{d:var-equivalence}, suppose that \(Y\) is an \(\infty\)-bicategory and that 
\[\xymatrix{
\ovl{K} \ar[rr]^{\ovl{h}}\ar[dr] && \ovl{L} \ar[dl] \\
& Y & \\
}\]
is a map of \(\var\)-cartesian fibrations over \(Y\) such that the marked edges of \(K\) and \(L\) are exactly the cartesian edges over \(Y\). Then \(h\colon K \to L\) is a \(\var\)-cartesian equivalence over \(Y\) if and only if \(\ovl{h}\) is an equivalence in the sub-bicategory \(\Car^{\var}(Y)\) of \((\BiCat)_{/Y}\), which is the same as saying that \(\ovl{h}\) is a bicategorical equivalence.
\end{rem}

We now turn to establishing the universal properties of the slice fibrations over an object. We start with the inner cocartesian case, which follows directly from the work of~\cite{LurieGoodwillie}.
\begin{prop}\label{p:rep-inn}
Let \(\C\) be an \(\infty\)-bicategory. Then for every \(x \in \C\) the inclusion of marked-scaled simplicial sets \(\{\id_x\} \subseteq \C^{x/}_{\inn}\) is an inner cocartesian equivalence over \(\C\).
\end{prop}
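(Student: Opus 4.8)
The plan is to unwind Definition~\ref{d:var-equivalence} and then reduce the assertion to the enriched Yoneda lemma via scaled straightening. Concretely, we must show that for every inner cocartesian fibration $p\colon X \to \C$ the restriction map along $\{\id_x\}\subseteq\C^{x/}_{\inn}$ induces an equivalence of $\infty$-categories $\Fun^{\coc}_{\C}(\C^{x/}_{\inn},X) \to \Fun^{\coc}_{\C}(\{\id_x\},X)$. First I would identify the target: since $\{\id_x\}$ is a single vertex carrying no non-degenerate marked edge and lying over $x\in\C$ (the thick-slice projection sends the cone point $\id_x$ to its target $x$), and since the fiber $X_x$ is an $\infty$-category with all triangles thin by Remark~\ref{r:fibers}, one checks directly from Notation~\ref{n:fun-car} that $\Fun^{\coc}_{\C}(\{\id_x\},X) = X^{\thi}\times_{\C^{\thi}}\{x\} = X_x$, and that the restriction map is evaluation of a partially lax cone at its cone point.

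Next I would transport the source-side mapping $\infty$-category across straightening. Choose a Dwyer--Kan equivalence $\phi\colon\fCs(\C)\to\D$ with $\D$ a fibrant $\s^+$-enriched category. By Proposition~\ref{p:inner-fibred} the $\infty$-bicategory $\coCar^{\inn}(\C)$ is identified with $\Nsc[\trbis{(\s^+)}{\C}]^{\circ}$, and by the Dwyer--Kan equivalence of Lemma~\ref{l:unstraightening-simplicial} this is in turn equivalent to $\Nsc[(\s^+)^{\D}]^{\circ}$. Under these identifications $\Fun^{\coc}_{\C}(\C^{x/}_{\inn},X)$ becomes the mapping object $(\s^+)^{\D}\bigl(\Str^{\sca}_{\phi}(\C^{x/}_{\inn}),\,\Str^{\sca}_{\phi}(X)\bigr)$ computed in the projective model structure. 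As recalled in the discussion preceding Notation~\ref{n:fun-car} (which combines Remark~\ref{r:unst-rep} with the base-change compatibility of unstraightening), the inner slice fibration $\C^{x/}_{\inn}\to\C$ is precisely the one classified by the functor corepresented by $x$; that is, $\Str^{\sca}_{\phi}(\C^{x/}_{\inn})\simeq \D(\phi(x),-)$. Writing $F:=\Str^{\sca}_{\phi}(X)$, which is objectwise fibrant and satisfies $F(\phi(x))\simeq X_x$, we are reduced to computing $(\s^+)^{\D}(\D(\phi(x),-),F)$.

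The conclusion is then the enriched Yoneda lemma: there is a natural isomorphism $(\s^+)^{\D}(\D(\phi(x),-),F)\cong F(\phi(x))$, under which evaluation at the arrow $\id_{\phi(x)}$ corresponds exactly to restriction along $\{\id_x\}\subseteq\C^{x/}_{\inn}$. Since the corepresentable $\D(\phi(x),-)$ is projectively cofibrant and $F$ is objectwise fibrant, this strict mapping object already computes the derived one, so the isomorphism is an equivalence of $\infty$-categories. Combined with $F(\phi(x))\simeq X_x$ from the previous step, this shows the restriction map is an equivalence, as required.

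The main obstacle I expect lies in the identification in the second paragraph, namely matching the combinatorially defined $\Fun^{\coc}_{\C}(\C^{x/}_{\inn},X)$ — a subcategory of a core functor $\infty$-category cut out by the condition of preserving cocartesian edges — with the enriched mapping object on the straightened side. This requires knowing that the marked edges of $\C^{x/}_{\inn}$ are exactly its $p$-cocartesian edges over $\C$ (so that ``cocartesian-edge preserving'' coincides with ``morphism in $\coCar^{\inn}(\C)$''), which follows from Proposition~\ref{p:cocar} together with Proposition~\ref{p:inner-fibred}, and it requires the compatibility of the straightening--unstraightening equivalence with the $\s^+$-enrichment encoded by the lax-tensoring map~\eqref{e:lax-tensored}. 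One must finally verify that the whole chain of identifications carries the evaluation-at-cone-point map on the source side to evaluation at $\id_{\phi(x)}$ on the Yoneda side, which is where the bookkeeping is most delicate.
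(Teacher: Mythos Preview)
Your approach is correct but takes a genuinely different route from the paper. The paper's proof is a one-line citation of \cite[Proposition~4.1.8]{LurieGoodwillie}, which shows by an explicit filtration argument that the underlying marked map \(\{\id_x\}\hookrightarrow\C^{x/}_{\inn}\) is \(\Beta_{\C}\)-anodyne; since the \(\Beta\)-fibered model structure is compatible with the \(\s^+\)-action, this immediately gives the equivalence on mapping \(\infty\)-categories. Your argument instead transports the problem across the straightening--unstraightening equivalence and invokes the enriched Yoneda lemma, which is conceptually appealing and avoids Lurie's combinatorial filtration, but at the cost of the compatibility checks you flag in your final paragraph. Two minor points: your reference to ``the discussion preceding Notation~\ref{n:fun-car}'' should instead point to the opening paragraphs of \S\ref{s:representable} (just before the statement of Proposition~\ref{p:rep-inn} itself), where the identification of \(\C^{x/}_{\inn}\) with the unstraightening of the corepresentable is actually carried out; and the identification of \(\Fun^{\coc}_{\C}(\C^{x/}_{\inn},X)\) with the \(\s^+\)-enriched mapping object in \(\trbis{(\s^+)}{\C}\) also uses that inner cocartesian fibrations detect thin triangles, so that the scaled-level \(\Fun^{\thi}_{\C}\) and the marked-level \(\Map_{/\C}\) agree on the relevant full subcategories.
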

\begin{proof}
This follows directly from~\cite[Proposition 4.1.8]{LurieGoodwillie}, which says that the map of marked simplicial sets underlying \(\{\id_x\} \subseteq \C^{x/}_{\inn}\) is \(\Beta_{\C}\)-anodyne, and in particular a weak equivalence in the \(\Beta\)-fibered model structure on \((\s^+)_{/\C}\). We also note that this gives the result on the level of mapping \(\infty\)-categories, 
since the \(\Beta\)-fibered model structure is compatible with the action of \(\s^+\) on \((\s^+)_{/\C}\).
\end{proof}

Passing the opposites and using Remark~\ref{r:opposites} we obtain:
\begin{cor}\label{c:rep-inn}
Let \(\C\) be an \(\infty\)-bicategory. Then for every \(x \in \C\) the inclusion \(\{\id_x\} \subseteq \C^{/x}_{\inn}\) is an inner cartesian equivalence over \(\C\).
\end{cor}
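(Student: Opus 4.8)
The plan is to deduce this from Proposition~\ref{p:rep-inn} by a pure duality argument, applying the involution $(-)^{\op}$ and the identifications recorded in Remark~\ref{r:opposites}. First I would set up the compatibility of $(-)^{\op}$ with every ingredient of Definition~\ref{d:var-equivalence}. Since the bicategorical model structure is cartesian closed and $(-)^{\op}$ commutes with finite products, one has a natural isomorphism $\Fun(A,B)^{\op} \cong \Fun(A^{\op},B^{\op})$; because $(-)^{\op}$ also commutes with the core functor $(-)^{\thi}$ (the latter being right adjoint to $(-)_{\sharp}$, which $(-)^{\op}$ preserves) and with the formation of the fiber over a fixed base map, these combine to give, for any map $X \to \C^{\op}$ corresponding to $X^{\op} \to \C$, a natural isomorphism $\Fun^{\thi}_{\C}(\ovl{L}^{\op},X)^{\op} \cong \Fun^{\thi}_{\C^{\op}}(\ovl{L},X^{\op})$.

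Next I would promote this to the (co)cartesian-edge-preserving subcategories of Notation~\ref{n:fun-car}. By Definition~\ref{d:car-fibration} the operation $(-)^{\op}$ carries an inner cartesian fibration $p\colon X \to \C$ of $\infty$-bicategories to an inner cocartesian fibration $p^{\op}\colon X^{\op} \to \C^{\op}$, and by Definition~\ref{d:cartesian} it converts $p$-cartesian edges into $p^{\op}$-cocartesian edges (this is packaged model-independently in Corollary~\ref{c:op-fibration-2}). Since the marked edges of $L$ correspond to the marked edges of $L^{\op}$, the isomorphism above restricts to a natural isomorphism
\[
 \Fun^{\car}_{\C}(L^{\op},X)^{\op} \cong \Fun^{\coc}_{\C^{\op}}(L,X^{\op}).
\]

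With these in hand the argument is a single application of the dual statement. I would apply Proposition~\ref{p:rep-inn} to the $\infty$-bicategory $\C^{\op}$ and the object $x$, so that $\{\id_x\} \subseteq (\C^{\op})^{x/}_{\inn}$ is an inner cocartesian equivalence over $\C^{\op}$. Then for an arbitrary inner cartesian fibration $p\colon X \to \C$, feeding the inner cocartesian fibration $p^{\op}\colon X^{\op} \to \C^{\op}$ into the definition of inner cocartesian equivalence shows that
\[
 \Fun^{\coc}_{\C^{\op}}\bigl((\C^{\op})^{x/}_{\inn},X^{\op}\bigr) \longrightarrow \Fun^{\coc}_{\C^{\op}}\bigl(\{\id_x\},X^{\op}\bigr)
\]
is an equivalence of $\infty$-categories. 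Passing to opposites via the isomorphism of the previous paragraph, together with $((\C^{\op})^{x/}_{\inn})^{\op} \cong \C^{/x}_{\inn}$ from Remark~\ref{r:opposites} and $\{\id_x\}^{\op} \cong \{\id_x\}$, I would conclude that $\Fun^{\car}_{\C}(\C^{/x}_{\inn},X) \to \Fun^{\car}_{\C}(\{\id_x\},X)$ is an equivalence. As $X \to \C$ was arbitrary, this is exactly the claim that $\{\id_x\} \subseteq \C^{/x}_{\inn}$ is an inner cartesian equivalence over $\C$.

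The only point requiring genuine care is the bookkeeping of the first two paragraphs: checking that $(-)^{\op}$ is compatible with the entire package $\Fun^{\car}_{\C}(-,-)$, including the fiber over the base map and the passage to the core, and that it preserves and reflects equivalences of $\infty$-categories. The last property is the one I would want to be explicit about, and it follows from $(-)^{\op}$ being an automorphism of the marked categorical model structure, as recorded in Example~\ref{ex:op-on-cat}. Everything beyond this is formal.
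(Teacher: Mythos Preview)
Your proposal is correct and takes essentially the same approach as the paper: the paper's proof is the single line ``Passing to opposites and using Remark~\ref{r:opposites}'', and you have simply spelled out what this entails, namely that \((-)^{\op}\) interchanges inner cocartesian with inner cartesian fibrations and equivalences, and carries \((\C^{\op})^{x/}_{\inn}\) to \(\C^{/x}_{\inn}\).
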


We now establish the analogous statements for outer slice fibrations.
\begin{prop}\label{p:rep-out}
Let \(\C\) be an \(\infty\)-bicategory. Then for every \(x \in \C\) the inclusion \(\{\id_x\} \subseteq \C^{/x}_{\out}\) is an outer cartesian equivalence over \(\C\).
\end{prop}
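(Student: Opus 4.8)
The plan is to fix an outer cartesian fibration \(p\colon X \to \C\), with its marked edges taken to be the \(p\)-cartesian ones, and to verify the condition of Definition~\ref{d:var-equivalence}, namely that restriction along \(\{\id_x\} \hookrightarrow \C^{/x}_{\out}\) induces an equivalence \(\Fun^{\car}_\C(\C^{/x}_{\out},X) \to \Fun^{\car}_\C(\{\id_x\},X)\). Since \(\{\id_x\}\) is a single vertex over \(x\) with no non-degenerate edges, the target is just the fiber \(X_x\), which is an \(\infty\)-category by Remark~\ref{r:fibers}. By Remark~\ref{r:restrict-fibration} this restriction map is already a categorical fibration, so it suffices to prove it is a \emph{trivial} fibration, i.e.\ that it has the right lifting property against every boundary inclusion \(\partial \Del^n \subseteq \Del^n\). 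Unwinding the definition of \(\Fun^{\car}_\C\), such a lifting problem transposes into an extension problem against \(p\) in \(\trbis{(\Sms)}{\C}\) for the map obtained from \(\{\id_x\} \hookrightarrow \C^{/x}_{\out}\) by pushout-product with \(\partial \Del^n \subseteq \Del^n\), where the marked edges on the slice are the \(p\)-cartesian ones by Corollary~\ref{cor:slice}.

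The conceptual heart is that \(\id_x\) is \emph{cartesian-terminal} in \(\C^{/x}_{\out}\): every object, that is every arrow \(e\colon y \to x\), admits an essentially unique \(p\)-cartesian edge to \(\id_x\) lying over \(e\) itself, witnessed by the thin triangle \(y \xrightarrow{e} x \xrightarrow{\id} x\). The assertion is thus the fibered analogue of the classical fact that the inclusion of a terminal object into a slice is right cofinal. Accordingly, I would show that the inclusion \(\{\id_x\} \hookrightarrow \C^{/x}_{\out}\), and more generally each of its pushout-products with \(\partial\Del^n \subseteq \Del^n\), is \emph{outer cartesian anodyne} in the sense of Definition~\ref{d:anodyne-marked}; once this is established, Proposition~\ref{p:char} guarantees that \(p\) lifts against them and the reduction of the first paragraph closes the argument. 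The relevant generators are precisely the cartesian-lift cells of type~(2) in Definition~\ref{d:anodyne-marked}, reflecting that \(\C^{/x}_{\out}\) is freely built from \(\id_x\) by attaching cartesian lifts.

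To organize the combinatorics I would exploit the thick-join presentation: \(\C^{/x}_{\out}\) is characterized by \(\Hom_{\Sms}(Z,\C^{/x}_{\out}) \cong \Hom_{(\Ss)_{\Del^0/}}(Z \diamond_{\out} \Del^0,\C)\), and the vertex \(\id_x\) corresponds, for \(Z = \Del^0\), to the map \({}^\flat\Del^1 \cong \Del^0 \diamond_{\out} \Del^0 \to \C\) given by \(\id_x\). The \(\diamond_{\out}\)-pushout-product closure of Corollary~\ref{cor:thick-pushout-join}, together with Proposition~\ref{p:push-prod-2} and Lemma~\ref{l:pushout-join}, then lets me filter \(\{\id_x\} \hookrightarrow \C^{/x}_{\out}\) into a transfinite composite of pushouts of outer cartesian anodyne generators. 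A cleaner variant is to first replace \(\C^{/x}_{\out}\) by the ordinary slice \(\ovl{\C}_{/x}\): by Corollary~\ref{c:outer-slice} these are equivalent as outer cartesian fibrations over \(\C\) via a marking-preserving map, so by Example~\ref{ex:equiv-is-cofinal} and the two-out-of-three property for outer cartesian equivalences (inherited from two-out-of-three for equivalences of \(\infty\)-categories) it is enough to treat \(\{\id_x\} \hookrightarrow \ovl{\C}_{/x}\), where the join pushout-product Lemma~\ref{l:pushout-join} applies most transparently. Existence of the lifts may alternatively be produced directly by Proposition~\ref{p:lax-lift}, applied to the tautological lax cone on the projection, with essential uniqueness coming from the local right-fibration property of Proposition~\ref{p:mapping-2}.

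I expect the main obstacle to be the precise verification that \(\{\id_x\} \hookrightarrow \C^{/x}_{\out}\) and its pushout-products with the \(\partial\Del^n \subseteq \Del^n\) genuinely lie in the weakly saturated class of outer cartesian anodyne maps; this demands careful bookkeeping of the marking and scaling produced by the thick join and the triple Gray product, and in particular a pushout-product closure adapted to the cartesian \(\s^+\)-tensoring used in \(\Fun^{\car}_\C\), which is what upgrades mere surjectivity to an equivalence of mapping \(\infty\)-categories (exactly as the compatibility with the \(\s^+\)-action is invoked in the proof of Proposition~\ref{p:rep-inn}). A secondary point to confirm is that the vertex \(\id_x\) is carried to the corresponding cone point across the identifications of Corollary~\ref{c:outer-slice} and the thick-join adjunction, so that the two-out-of-three reduction is legitimate.
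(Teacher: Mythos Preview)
Your main line of attack---showing that \(\{\id_x\}\hookrightarrow\C^{/x}_{\out}\) and each of its cartesian pushout-products with \(\partial\Del^n\subseteq\Del^n\) is outer cartesian anodyne---has a gap at exactly the point you flag as the obstacle, and the tools you cite do not close it. Lemma~\ref{l:pushout-join} and Corollary~\ref{cor:thick-pushout-join} concern pushout-products with respect to the \emph{join} operations \(\ast\) and \(\diamond_{\out}\), whereas the \(\infty\)-categories \(\Fun^{\car}_\C(-,X)\) are defined via the \emph{cartesian} action of \(\s^+\). No result in the paper establishes that outer cartesian anodyne maps are stable under cartesian pushout-product with monomorphisms, and unlike the inner case there is no ambient \(\Beta\)-fibered model structure to supply this for free. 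So even granting that the bare inclusion is outer cartesian anodyne, you would not obtain the trivial-fibration conclusion from Proposition~\ref{p:char} alone.

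The paper's proof avoids this altogether by a retract argument. It writes down an explicit lax contraction \(r\colon{}^{\flat}\Del^1\otimes\C^{/x}_{\out}\to\ovl{\C}^{/x}_{\out}\) (built from the counit of the slice adjunction and the map \(\max\colon\Del^1\times\Del^1\to\Del^1\)) which exhibits \(\{\id_x\}\subseteq\C^{/x}_{\out}\) as a retract, over \(\C\), of a map of the shape
\[[{}^\sharp\Del^1\times\{\id_x\}]\coprod_{\Del^{\{1\}}\times\{\id_x\}}[\Del^{\{1\}}\times\C^{/x}_{\out}]\;\hookrightarrow\;({}^\flat\Del^1\otimes\C^{/x}_{\out},E).\]
The latter is a special case of Lemma~\ref{l:lax-lift}, whose proof is precisely where the cartesian-versus-Gray mismatch is handled: a pullback square (using Corollary~\ref{c:2-out-of-3}) reduces the cartesian lifting problem to a Gray-product one, which is then solved by Proposition~\ref{p:lax-lift}. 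Your passing remark that lifts ``may alternatively be produced directly by Proposition~\ref{p:lax-lift}'' is thus pointing in the right direction, but the substance lies in the retract construction and in the reduction carried out inside Lemma~\ref{l:lax-lift}, neither of which your proposal supplies.
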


We reproduce the argument of~\cite[Proposition 4.1.8]{LurieGoodwillie} in the outer cartesian setting. For this, we will require the following lemma:
\begin{lemma}\label{l:lax-lift}
Let \(\C\) be an \(\infty\)-bicategory and \(K \to L\) an inclusion of marked scaled simplicial sets. Let \(E\) be the collection of those edges \((e,e')\colon {}^{\flat}\Del^1 \to {}^{\flat}\Del^1 \otimes K\) such that \(e'\) is marked and either \(e\) or \(e'\) are degenerate, and define \(E'\) in a similar manner for \({}^{\flat}\Del^1 \otimes L\). Then for any map \({}^{\flat}\Del^1 \otimes L \to \C\) the inclusion
\[ [\Del^{\{1\}} \otimes L] \displaystyle\mathop{\coprod}_{\Del^{\{1\}} \otimes K}({}^{\flat}\Del^1 \otimes K,E)\to ({}^{\flat}\Del^1 \otimes L,E') \]
is an outer cartesian equivalence over \(\C\), where \(({}^{\flat}\Del^1\otimes K,E)\) denotes the marked-scaled simplicial set whose underlying scaled simplicial set is \({}^{\flat}\Del^1 \otimes K\) and whose set of marked edges is \(E\), and similarly for \((\Del^1_{\flat} \otimes L,E')\).
\end{lemma}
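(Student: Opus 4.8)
The plan is to unwind Definition~\ref{d:var-equivalence}: writing \(A := [\Del^{\{1\}} \otimes L]\coprod_{\Del^{\{1\}} \otimes K}({}^{\flat}\Del^1 \otimes K,E)\) for the source of \(h\) and \(B := ({}^{\flat}\Del^1 \otimes L,E')\) for its target, with structure map \(f\colon \ovl{B} = {}^{\flat}\Del^1 \otimes L \to \C\), I must show that for every outer cartesian fibration \(p\colon X \to \C\) the restriction map \(h^*\colon \Fun^{\car}_{\C}(B,X) \to \Fun^{\car}_{\C}(A,X)\) is an equivalence of \(\infty\)-categories. By Remark~\ref{r:restrict-fibration} this map is already a categorical fibration, so it suffices to prove it is a trivial fibration, i.e.\ that it has the right lifting property against the boundary inclusions \(\partial\Del^n \hookrightarrow \Del^n\). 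Using the product--hom adjunction defining \(\Fun\) together with the description of the markings \(E,E'\), such a lifting problem translates into the problem of extending a map over \(\C\) from \((\Del^n_\flat \times \ovl{A}) \cup (\partial\Del^n_\flat \times \ovl{B})\) to \(\Del^n_\flat \times \ovl{B}\), subject to the requirement that the marked edges of \(B\) be sent to \(p\)-cartesian edges (the core/thinness conditions of \(\Fun^{\thi}\) will hold automatically, since \(p\) detects thin triangles).

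First I would filter \(\ovl{B}\) over \(\ovl{A}\) by attaching, for each nondegenerate simplex of \(L\) not lying in \(K\), the corresponding cell \({}^{\flat}\Del^1 \otimes \Del^m\) relative to its boundary; crossing with \(\Del^n\) relative to \(\partial\Del^n\), each stage of the induced filtration is a lifting problem of exactly the shape appearing in Proposition~\ref{p:lax-lift}, with the spectator factor \(\Del^n\) absorbed into the ``\(L\)''-variable there (using Remark~\ref{r:almost-associative} and the fact that \(\Del^n_\flat\) carries no nondegenerate marked edges to identify the relevant Gray products). The hypotheses of Proposition~\ref{p:lax-lift} are met: the required \(f\)-cartesian lifts of the edges in the \({}^{\flat}\Del^1\)-direction exist because \(p\) is an outer cartesian fibration, while the interval edges over the already-constructed part are \(p\)-cartesian by induction together with the boundary data. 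Applying Proposition~\ref{p:lax-lift} at each stage, choosing the new interval edges to be \(p\)-cartesian, produces the desired extension \(\wtl{H}\).

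It then remains to check that \(\wtl{H}\) sends every marked edge of \(B\) to a \(p\)-cartesian edge, so that it lands in \(\Fun^{\car}_{\C}(B,X)\). The interval edges \(\Del^1 \times \{v\}\) are \(p\)-cartesian by construction, and the marked edges lying in the \(\Del^{\{1\}}\)-slice are \(p\)-cartesian by the boundary hypothesis. The only remaining marked edges are of the form \(\{0\} \times g\) for \(g\colon a \to b\) a marked edge of \(L\); here the key point is that, precisely \emph{because} \(g\) is marked, \emph{both} triangles of the square \({}^{\flat}\Del^1 \otimes g\) are thin in the marked Gray product of Definition~\ref{d:gray-marked} (cf.\ the example following Remark~\ref{r:symmetric}). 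Applying Lemma~\ref{l:technical-2} to the thin lower triangle --- whose edges are the \(p\)-cartesian interval edge \(\Del^1 \times \{a\}\), the \(p\)-cartesian \(\Del^{\{1\}}\)-slice edge \(\{1\} \times g\), and the diagonal --- shows that the diagonal is \(p\)-cartesian; applying Lemma~\ref{l:technical-1} to the thin upper triangle --- whose edges are \(\{0\} \times g\), the \(p\)-cartesian interval edge \(\Del^1 \times \{b\}\), and the now \(p\)-cartesian diagonal --- shows that \(\{0\} \times g\) is \(p\)-cartesian, as needed.

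The main obstacle is the bookkeeping of the reduction in the first two paragraphs: one must carefully match the cartesian-product structure coming from \(\Fun\) and from the \(\Del^n\)-direction with the marked Gray-product structure of \({}^{\flat}\Del^1 \otimes L\), and organize the filtration so that Proposition~\ref{p:lax-lift} applies cell by cell while keeping track of which edges have already been forced to be \(p\)-cartesian. The conceptual heart, however, is the observation in the third paragraph: it is exactly the hypothesis that \(g\) is \emph{marked} that scales both triangles of \({}^{\flat}\Del^1 \otimes g\), and this is precisely what allows the two-step \(2\)-out-of-\(3\) argument (Lemmas~\ref{l:technical-1} and~\ref{l:technical-2}, equivalently Corollary~\ref{c:2-out-of-3}) to force cartesianness on the \(\Del^{\{0\}}\)-slice --- a step that would fail for the bare unmarked Gray product, where the upper triangle is not thin.
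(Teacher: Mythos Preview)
Your approach is correct and follows essentially the same route as the paper's proof: reduce to Proposition~\ref{p:lax-lift} after absorbing the \(\Del^n\)-factor into the \(K \to L\) slot via a pushout-product, handle the scaling discrepancy between the cartesian product \(\Del^n \times (\Del^1 \otimes L)\) and the Gray product \(\Del^1 \otimes (\Del^n \times L)\) by the fact that outer fibrations detect thin triangles, and use Corollary~\ref{c:2-out-of-3} for the marked edges in the \(\Del^{\{0\}}\)-slice. The only organizational difference is that the paper performs the \(2\)-out-of-\(3\) reduction \emph{first}, via a pullback square identifying \(\Fun^{\car}\) with the \(E'\)-marking as a pullback of \(\Fun^{\car}\) with the smaller marking \(E_0'\) consisting only of interval edges; you instead carry out the lift first and verify the \(E'\)-cartesianness afterwards. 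Both orderings work.

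Two minor points. First, your cell-by-cell filtration of \(L\) over \(K\) is unnecessary: Proposition~\ref{p:lax-lift} already applies to an arbitrary inclusion \(K' \subseteq L'\), so one may take \(K' \to L'\) to be the pushout-product \((K \hookrightarrow L)\,\Box\,(\partial\Del^n \hookrightarrow \Del^n)\) in one step. Second, your appeal to Remark~\ref{r:almost-associative} is not quite the right citation: that remark concerns associativity of iterated \emph{Gray} products, whereas the issue here is comparing the \emph{cartesian} product \((\Del^1_\flat \otimes \ovl{L}) \times W\) (arising from the \(\Fun\)-adjunction) with the Gray product \(\Del^1_\flat \otimes (\ovl{L} \times W)\). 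These differ only in scaling, and---as you correctly note elsewhere---the discrepancy is harmless precisely because \(p\) detects thin triangles; this is the argument the paper spells out.
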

\begin{proof}
We need to show that for every outer cartesian fibration \(\E \to \C\) the restriction functor
\[\Fun^{\car}_{\C}(({}^{\flat}\Del^1 \otimes L,E'),\E) \to \Fun^{\car}_{\C}\Big([\Del^{\{1\}} \otimes L] \coprod_{\Del^{\{1\}} \otimes K}({}^{\flat}\Del^1 \otimes K,E),\E\Big)\]
is an equivalence of \(\infty\)-categories. For this we first note that by Corollary~\ref{c:2-out-of-3} we have a pullback square
\[\xymatrix{
\Fun^{\car}_{\C}(({}^{\flat}\Del^1 \otimes L,E'),\E) \ar[r]\ar[d] & \Fun^{\car}_{\C}\Big([\Del^{\{1\}} \otimes L] \displaystyle\mathop{\coprod}_{\Del^{\{1\}} \otimes K}({}^{\flat}\Del^1 \otimes K,E),\E\Big) \ar[d] \\
\Fun^{\car}_{\C}(({}^{\flat}\Del^1 \otimes L,E_0'),\E) \ar[r] & \Fun^{\car}_{\C}\Big([\Del^{\{1\}} \otimes L]\displaystyle\mathop{\coprod}_{\Del^{\{1\}} \otimes K}({}^{\flat}\Del^1 \otimes K,E_0),\E\Big) \\
}\]
where \(E_0\) denotes the set of edges of the form \(\Del^1 \times \{v\}\) for \(v \in K\) and \(E_0'\) the set of edges of the form \(\Del^1 \times \{u\}\) for \(u \in L\). It will hence suffice to prove that the bottom horizontal map is a trivial fibration. In particular, we may ignore the marking on \(K\) and \(L\) and work with their underlying scaled simplicial sets \(\ovl{K}\) and \(\ovl{L}\).
Now, to show that the map in question has the right lifting property with respect to any inclusion \(Z \subseteq W\) of scaled simplicial sets translates to finding a lift in a square of the form
\begin{equation}\label{e:lax-lift-families} 
\xymatrix{
[\Del^1_{\flat} \otimes \ovl{L}] \times Z \displaystyle\mathop{\coprod}_{[\Del^{\{1\}} \otimes \ovl{L} \coprod_{\Del^{\{1\}} \otimes \ovl{K}} \Del^1_{\flat} \otimes \ovl{K}] \times Z} \big[\Del^{\{1\}} \otimes \ovl{L} \coprod_{\Del^{\{1\}} \otimes \ovl{K}} \Del^1_{\flat} \otimes \ovl{K}\big] \times W \ar[r]\ar[d] & \E \ar[d]^{p} \\
[\Del^1_{\flat} \otimes \ovl{L}] \times W \ar[r] & \B \\
}
\end{equation}
which sends the edges in the set \(E_0' \times W_1\) to \(p\)-cartesian edges.

Let \(K' \to L' = (\ovl{K} \to \ovl{L}) \Box (Z \to W)\) be the pushout product of \(\ovl{K} \to \ovl{L}\) and \(Z \to W\) with respect to the cartesian product of scaled simplicial set. To carry on the proof we would like to replace the left vertical map in the above square with the Gray pushout-product of \(\Del^{\{1\}} \hrar \Del^1_{\flat}\) and \(K' \to L'\). These are identical on the level of the underlying simplicial sets (since the pushout-product of simplicial sets is associative), but have different scaling, since for general scaled simplicial sets \(A,B,C\) one has \(A \mgr (B \times C) \not{\cong} (A \mgr B) \times C\). Explicitly, a triangle \((\sig_A,\sig_B,\sig_C)\) in the cartesian product of \(A,B\) and \(C\) is thin in \(A \mgr (B \times C)\) if and only if \(\sig_A,\sig_B\) and \(\sig_C\) are all thin and in addition either \(\sig_A\) degenerates along \(\Del^{\{1,2\}}\) or both \(\sig_B\) and \(\sig_C\) degenerates along \(\Del^{\{0,1\}}\). On the other hand, \((\sig_A,\sig_B,\sig_C)\) is thin in \((A \mgr B) \times C\) if and only if \(\sig_A,\sig_B\) and \(\sig_C\) are all thin and either \(\sig_A\) degenerates along \(\Del^{\{1,2\}}\) or \(\sig_B\) degenerates along \(\Del^{\{0,1\}}\). In particular, we have a canonical inclusion of scaled simplicial sets
\[ A \mgr (B \times C) \subseteq (A \mgr B) \times C,\]
and so the square~\eqref{e:lax-lift-families} restricts to a square
\begin{equation}\label{e:lax-lift-families-2} 
\begin{tikzcd}
\Del^{\{1\}} \otimes L' \displaystyle\mathop{\coprod}_{\Del^{\{1\}} \otimes K'} \Del^1_{\flat} \otimes K'  \ar[r, "f"]\ar[d] & \E \ar[d, "p"] \\
\Del^1_{\flat} \otimes L' \ar[r, "H"] \ar[ur, dashed, "\wtl{H}"{description}] & \B
\end{tikzcd}
\end{equation}
Since the map \(\E \to \B\) is an outer fibration it detects thin triangles, and so the dotted lift exists in~\eqref{e:lax-lift-families} if and only if it exists in~\eqref{e:lax-lift-families-2}.
The existence of a lift in~\eqref{e:lax-lift-families-2} is then given by Proposition~\ref{p:lax-lift}.
\end{proof}

\begin{proof}[Proof of Proposition~\ref{p:rep-out}]
Consider the composite
\[e\colon {}^{\flat}\Del^1 \otimes {}^{\flat}\Del^1 \otimes \C^{/x}_{\out} \to {}^{\flat}\Del^1 \otimes \C^{/x}_{\out} \to \C^{/x}_{\out}\diamond_{\out}\Del^0 \to \C, \]
where the first map is induced by the map \({}^{\flat}\Del^1 \otimes {}^{\flat}\Del^1 \to {}^{\flat}\Del^1\) given on vertices by \(i,j \mapsto \max(i,j)\), the second is the quotient map, and the third is obtained from the counit of the adjunction between outer join and slice. The restriction of \(e\) to \(\Del^{\{1\}} \otimes {}^{\flat}\Del^1 \otimes \C^{/x}_{\out}\) is then constant with value \(x \in \C\) and so descends to give a map
\[ ({}^{\flat}\Del^1 \otimes \C^{/x}_{\out})^{\flat} \diamond_{\out} \Del^0 \to \C. \]
The last map then transposes to a map
\[ r\colon {}^{\flat}\Del^1 \otimes \C^{/x}_{\out} \to \ovl{\C}^{/x}_{\out} ,\]
which we read as a lax transformation from the identity on \(\ovl{\C}^{/x}_{\out}\) to the composite \(\ovl{\C}^{/x}_{\out} \to \{\id_x\} \to \ovl{\C}^{/x}_{\out}\). Furthermore, this transformation is constant when restricted to \(\{\id_x\} \subseteq \ovl{\C}^{/x}_{\out}\), and sends every edge of the form \({}^{\flat}\Del^1 \otimes \{v\}\) in \({}^{\flat}\Del^1 \otimes \C^{/x}_{\out}\) to an edge which is marked in \(\C^{/x}_{\out}\). The map \(r\) then fits into a commutative diagram of marked-scaled simplicial sets
\[ \xymatrix{
\Del^{\{0\}} \times \{\id_x\} \ar[r]\ar[d] &  \Del^{\{0\}}\times \C^{/x}_{\out} \ar[d] \\
[{}^{\sharp}\Del^1 \times \{\id_x\}] \displaystyle\mathop{\coprod}_{\Del^{\{1\}} \times \{\id_x\}}[\Del^{\{1\}} \times \C^{/x}_{\out}] \ar[d]\ar[r] & ({}^{\flat}\Del^1 \otimes \C^{/x}_{\out},E) \ar[d]^{r} \\
\{\id_x\} \ar[r] & \C^{/x}_{\out} \\
}\]
where \(({}^{\flat}\Del^1 \otimes \C^{/x}_{\out},E)\) denotes the marked-scaled simplicial set whose underlying scaled simplicial set is \({}^{\flat}\Del^1 \otimes \C^{/x}_{\out}\) and whose set of marked edges is the set \(E\) consisting of those edges \((e,e') \colon \Del^1_{\flat} \to {}^{\flat}\Del^1 \otimes \C^{/x}_{\out}\) such that \(e'\) is marked and either \(e\) or \(e'\) is degenerate. This shows that the inclusion \(\{\id_x\} \subseteq \C^{/x}_{\out}\) is a retract, over \(\C\), of the inclusion 
\begin{equation}\label{e:inclusion} 
[{}^{\sharp}\Del^1 \times \{\id_x\}] \displaystyle\mathop{\coprod}_{\Del^{\{1\}} \otimes \{\id_x\}}[\Del^{\{1\}} \times \C^{/x}_{\out}] \to ({}^{\flat}\Del^1 \otimes \C^{/x}_{\out},E) ,
\end{equation}
where to avoid confusion we emphasize that we consider the object on the right as living over \(\C\) via the map 
\[ {}^{\flat}\Del^1 \otimes \C^{/x}_{\out} \xrightarrow{r} \ovl{\C}^{/x}_{\out} \to \C.\]
It will hence suffice to show that~\eqref{e:inclusion} is an outer cartesian equivalence over \(\C\). 
Indeed, this is a particular case of Lemma~\ref{l:lax-lift}.
\end{proof}

Passing the opposites and using Remark~\ref{r:opposites} we obtain:
\begin{cor}\label{c:rep-out}
Let \(\C\) be an \(\infty\)-bicategory. Then for every \(x \in \C\) the inclusion \(\{\id_x\} \subseteq \C^{x/}_{\out}\) is an outer cocartesian equivalence over \(\C\).
\end{cor}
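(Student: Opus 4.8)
The plan is to deduce Corollary~\ref{c:rep-out} from Proposition~\ref{p:rep-out} by applying the opposite operation \((-)^{\op}\), exactly as the phrase preceding the statement indicates. First I would record the relevant instance of Remark~\ref{r:opposites}: taking \(K = \Del^0\) and \(f = x\colon \Del^0 \to \C\) (so that \(f^{\op} = x\)), the canonical isomorphism \((S^{f/}_{\out})^{\op} \cong (S^{\op})^{/f^{\op}}_{\out}\) specializes to \((\C^{x/}_{\out})^{\op} \cong (\C^{\op})^{/x}_{\out}\). Under this identification the inclusion \(\{\id_x\} \hookrightarrow \C^{x/}_{\out}\), whose cone point \(\id_x\) is fixed by \((-)^{\op}\), passes to the inclusion \(\{\id_x\} \hookrightarrow (\C^{\op})^{/x}_{\out}\), which is precisely the map shown to be an outer cartesian equivalence over \(\C^{\op}\) by Proposition~\ref{p:rep-out} applied to the \(\infty\)-bicategory \(\C^{\op}\) and the object \(x\).

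The remaining step is to translate ``outer cartesian equivalence over \(\C^{\op}\)'' into ``outer cocartesian equivalence over \(\C\)''. Here I would use two compatibilities of the opposite involution. On the one hand, by Definition~\ref{d:car-fibration} a map \(p\colon X \to \C\) is an outer cocartesian fibration if and only if \(p^{\op}\colon X^{\op} \to \C^{\op}\) is an outer cartesian fibration, so \((-)^{\op}\) sets up a bijection between outer cocartesian fibrations over \(\C\) and outer cartesian fibrations over \(\C^{\op}\), matching the two quantifications in Definition~\ref{d:var-equivalence}. On the other hand, since \((-)^{\op}\) is an involution of scaled simplicial sets compatible with products one has \(\Fun(\ovl{K},X)^{\op} \cong \Fun(\ovl{K}^{\op}, X^{\op})\), and because an edge is \(p\)-cocartesian exactly when its opposite is \(p^{\op}\)-cartesian (Corollary~\ref{c:op-fibration-2}) this restricts to an isomorphism \(\Fun^{\coc}_{\C}(L,X)^{\op} \cong \Fun^{\car}_{\C^{\op}}(L^{\op},X^{\op})\) for any marked-scaled \(L\) over \(\C\). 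Combining these, for every outer cocartesian fibration \(p\colon X \to \C\) the restriction functor \(\Fun^{\coc}_{\C}(\C^{x/}_{\out},X) \to \Fun^{\coc}_{\C}(\{\id_x\},X)\) is, after applying \((-)^{\op}\), identified with \(\Fun^{\car}_{\C^{\op}}((\C^{\op})^{/x}_{\out},X^{\op}) \to \Fun^{\car}_{\C^{\op}}(\{\id_x\},X^{\op})\), which is an equivalence by Proposition~\ref{p:rep-out}. Since a functor of \(\infty\)-categories is an equivalence if and only if its opposite is, this proves that \(\{\id_x\} \subseteq \C^{x/}_{\out}\) is an outer cocartesian equivalence over \(\C\).

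There is essentially no analytic content here — the argument is purely formal, transporting Proposition~\ref{p:rep-out} across the \(\ZZ/2\)-symmetry. The only point requiring care, and hence the main (minor) obstacle, is bookkeeping: one must check that the three opposites involved (of the slice object via Remark~\ref{r:opposites}, of the fibration via Definition~\ref{d:car-fibration}, and of the functor \(\infty\)-categories together with the cartesian/cocartesian marking condition) are mutually compatible and assemble into a single commuting square, so that the equivalence for \(\C^{\op}\) genuinely yields the equivalence for \(\C\) rather than some twisted variant. In particular, confirming that the cone point \(\id_x\) and its marking are preserved under all these identifications is the one place where an explicit verification is warranted.
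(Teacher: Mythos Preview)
Your proposal is correct and follows exactly the approach the paper indicates: deduce the statement from Proposition~\ref{p:rep-out} by passing to opposites via Remark~\ref{r:opposites}. The paper's own proof is just the one-line ``Passing the opposites and using Remark~\ref{r:opposites}'', and your write-up simply unpacks the bookkeeping behind that sentence.
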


\begin{cor}\label{c:representables}
Under the equivalences of Corollary~\ref{c:straightening-inner} and Corollary~\ref{c:S-U-for-outer-fibs} the fibrations
\[
\begin{tikzcd}[column sep=tiny]
	\ovl{\C}^{x/}_{\inn} \ar[drrr] && \ovl{\C}^{x/}_{\out}\ar[dr] && \ovl{\C}^{/x}_{\inn} \ar[dl] && \ovl{\C}^{/x}_{\out}\ar[dlll] \\
	&&& \C &&&
\end{tikzcd}
\]
correspond to the functors (op)(co)represented by \(x \in \C\), respectively. In addition, the \((\ZZ/2)^{2}\)-action on \(\BiCat\) switches between these four fibrations, so that we have equivalences
\begin{equation}\label{e:symmetry}
\begin{tikzcd}[column sep=tiny]
	\C^{x/}_{\inn} \ar[drrr] & \simeq & ((\C^{\co})^{x/}_{\out})^{\co} \ar[dr] & \simeq &
		((\C^{\op})^{/x}_{\inn})^{\op} \ar[dl] & \simeq & ((\C^{\coop})^{/x}_{\out})^{\coop} \ar[dlll] \\
	&&& \C &&&
\end{tikzcd}.
\end{equation}
\end{cor}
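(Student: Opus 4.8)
The plan is to take the inner cocartesian fibration $\ovl{\C}^{x/}_{\inn}\to\C$ as a base case — already identified with the corepresentable $j_{\C}(x)=\Hom_{\C}(x,-)$ in the discussion opening this subsection — and to transport that identification to the other three flavours by means of the $(\ZZ/2)^2$-symmetry, reading off the final classification from the very way the four Grothendieck--Lurie equivalences were built in the proof of Corollary~\ref{c:S-U-for-outer-fibs}. The first step is to reinterpret the universal properties of Proposition~\ref{p:rep-inn}, Corollary~\ref{c:rep-inn}, Proposition~\ref{p:rep-out} and Corollary~\ref{c:rep-out} as corepresentability statements. Unwinding Definition~\ref{d:var-equivalence} with $K=\{\id_x\}$ (a point, carrying no non-degenerate marked edges) and using the identification $\Fun^{\coc}_{\C}(\{\id_x\},X)\cong X_x$ of the fibre (valid since fibres are $\infty$-categories with all triangles thin, see Remark~\ref{r:fibers}), the assertion that $\{\id_x\}\subseteq \ovl{\C}^{x/}_{\inn}$ is an inner cocartesian equivalence over $\C$ says exactly that $\ovl{\C}^{x/}_{\inn}$ corepresents the fibre-at-$x$ functor $E\mapsto E_x$ on $\coCar^{\inn}(\C)$; likewise the three other slice fibrations (co)represent fibre-at-$x$ on $\Car^{\inn}(\C)$, $\coCar^{\out}(\C)$ and $\Car^{\out}(\C)$.

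Next I would establish the symmetry equivalences~\eqref{e:symmetry}. The instances involving only $(-)^{\op}$ are essentially free: Remark~\ref{r:opposites} provides point-set isomorphisms such as $(\ovl{\C}^{x/}_{\inn})^{\op}\cong (\ovl{\C}^{\op})^{/x}_{\inn}$, which realise the $\op$-action since the scaled-simplicial-set involution $(-)^{\op}$ models the $\op$-involution on $\BiCat$. The genuine content is a single $\co$-equivalence, which I would phrase as: for every $\infty$-bicategory $\D$ and vertex $x$ one has $(\ovl{\D}^{x/}_{\inn})^{\co}\simeq (\ovl{\D}^{\co})^{x/}_{\out}$ over $\D^{\co}$, proved by uniqueness of corepresenting objects. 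By Corollary~\ref{c:op-fibration-2} both sides are outer cocartesian fibrations over $\D^{\co}$; the right-hand side corepresents fibre-at-$x$ on $\coCar^{\out}(\D^{\co})$ by the reformulation of Corollary~\ref{c:rep-out}. For the left-hand side I invoke Corollary~\ref{c:goal}, which says that $(-)^{\co}\colon\BiCat\to\BiCat$ restricts to an equivalence $\coCar^{\inn}(\D)\xrightarrow{\simeq}\coCar^{\out}(\D^{\co})$; since the fibres are ordinary $\infty$-categories, on which $(-)^{\co}$ acts trivially, this equivalence intertwines the two fibre-at-$x$ functors and therefore carries the corepresenting object $\ovl{\D}^{x/}_{\inn}$ to a corepresenting object of $\coCar^{\out}(\D^{\co})$, which must then be equivalent to $(\ovl{\D}^{\co})^{x/}_{\out}$. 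Combining this $\co$-equivalence (in both its cocartesian and, applied to $\D=\C^{\op}$, cartesian incarnations) with the $\op$-isomorphisms above chains together all of~\eqref{e:symmetry}.

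The classification now follows by feeding~\eqref{e:symmetry} into the construction of Corollary~\ref{c:S-U-for-outer-fibs}. There the equivalence $\coCar^{\out}(\C)\simeq\Fun(\C^{\co},\Catoo)$ is defined as the composite $\coCar^{\out}(\C)\xrightarrow{(-)^{\co}}\coCar^{\inn}(\C^{\co})\simeq\Fun(\C^{\co},\Catoo)$, and similarly for the two cartesian flavours. Hence $\ovl{\C}^{x/}_{\out}$ is classified by the straightening of $(\ovl{\C}^{x/}_{\out})^{\co}\simeq(\ovl{\C}^{\co})^{x/}_{\inn}$ (the $\co$-equivalence applied to $\D=\C^{\co}$), which by the base case applied to $\C^{\co}$ is $\Hom_{\C^{\co}}(x,-)$ — the appropriate corepresentable. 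Running the same bookkeeping through the $\op$- and $\coop$-flavours shows that each of the four slice fibrations is sent to the image under the symmetry of $\Hom_{\C}(x,-)$, and these images are precisely the functors (co/op)represented by $x$ in the corresponding variance.

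I expect the $\co$-equivalence to be the main obstacle, precisely because $(-)^{\co}$ admits no point-set model on scaled simplicial sets and so must be handled entirely inside $\BiCat$. The two delicate points are, first, checking that the slice fibrations' universal property really amounts to corepresenting the fibre-at-$x$ functor at the level of the mapping $\infty$-categories $\Fun^{\coc}_{\C}(-,-)$ of Notation~\ref{n:fun-car} (matching non-invertible $1$-cells of the fibre with natural transformations of cocartesian functors), and second, verifying that the equivalence of Corollary~\ref{c:goal} genuinely intertwines the fibre functors — for which one uses that the fibres are ordinary $\infty$-categories fixed by $(-)^{\co}$, so that $\co$ changes neither them nor the evaluation. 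The $\op$-flavours, by contrast, are a formality via Remark~\ref{r:opposites}.
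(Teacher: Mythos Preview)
Your proposal is correct and follows essentially the same route as the paper's own proof. Both argue that the four universal properties of Proposition~\ref{p:rep-inn}, Corollary~\ref{c:rep-inn}, Proposition~\ref{p:rep-out} and Corollary~\ref{c:rep-out} are ``the same property'' in the four variance flavours, so that the $(\ZZ/2)^2$-symmetry of Corollary~\ref{c:goal} must permute the four slice fibrations, and then reduce the classification to the inner cocartesian base case already handled via Remark~\ref{r:unst-rep}. The paper's proof is terser: it simply asserts that the four results ``characterize each of these fibrations by the same type of universal mapping property'' and that this implies~\eqref{e:symmetry}, whereas you spell out the mechanism by recasting those universal properties as corepresenting the fibre-at-$x$ functor and invoking uniqueness of corepresenting objects together with Example~\ref{ex:co-on-cat} to see that $(-)^{\co}$ intertwines the fibre functors. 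Your more explicit treatment of the $\co$-step is exactly the content the paper leaves implicit.
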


\begin{proof}
The equivalences~\eqref{e:symmetry}, two of which are already visible on the level of the simplicial construction as described in Remark~\ref{r:opposites}, are implied by Propositions~\ref{p:rep-inn} and~\ref{p:rep-out} and Corollaries~\ref{c:rep-inn} and~\ref{c:rep-out}, which characterize each of these fibrations by the same type of universal mapping property. 
By these equivalences and the way that the Lurie-Grothendieck correspondence is constructed in Corollary~\ref{c:S-U-for-outer-fibs}, to prove the first claim it is enough to consider the case of \(\C^{x/}_{\inn}\). Indeed, as explained above, it follows from Remark~\ref{r:unst-rep} that \(\C^{x/}_{\inn}\) is equivalent to the unstraightening of the functor corepresented by \(x\). 
\end{proof}

\begin{example}[Universal fibrations]\label{ex:universal}
Consider the \(\infty\)-bicategory \(\Catoo\) of small \(\infty\)-categories and let \(\ast := \Del^0 \in \Catoo\) be the terminal \(\infty\)-bicategory. Combining Corollary~\ref{c:representables} and Remark~\ref{r:unst-rep} we get that the inner cocartesian fibration 
\[p^{\uni}_{\inn}\colon(\Catoo)^{\ast/}_{\inn} \to \Catoo\]
corresponds via the bicategorical Grothendieck-Lurie correspondence to the identity functor \(\Catoo \to \Catoo\). We will consequently refer to \(p^{\uni}_{\inn}\) as the \emph{universal inner cocartesian fibration}. By the compatibility of the Grothendieck-Lurie correspondence with base change (see Remark~\ref{r:unstr-base-change}) it then follows that for every map \(\chi\colon \B \to \Catoo\) the inner cocartesian fibration classified by \(\chi\) is the base change 
\[\B \times_{\Catoo}(\Catoo)^{\ast/}_{\inn} \to \B \]
of the universal inner cocartesian fibration along \(\chi\). By Corollary~\ref{c:representables} the slice fibrations
\[ (\Catoo^{\co})^{\ast/}_{\out} \to \Catoo^{\co} \quad,\quad (\Catoo^{\op})^{/\ast}_{\inn} \to \Catoo^{\op} \quad\text{and}\quad (\Catoo^{\coop})^{/\ast}_{\out} \to \Catoo^{\coop} \]
are obtained from \(p^{\uni}_{\inn}\) by applying the three non-trivial symmetries of \(\BiCat\), and are hence all classified by the identity map \(\Catoo \to \Catoo\). These play the analogous role for the classification of outer cocartesian, inner cartesian and outer cartesian fibrations, and we will similarly consider them as the universal fibrations of their flavour. We note that the functor \(\C \mapsto \C^{\op}\) realizes an equivalence \(\Catoo^{\co} \simeq \Catoo\), and we can use this equivalence in order to identify the universal outer cocartesian fibration with 
\[(\Catoo)^{\ast/}_{\out} \to \Catoo.\] 
We then conclude that 
for a map \(\chi\colon \B^{\co} \to \Catoo\), the outer cocartesian fibration classified by \(\chi\) is the base change \(\B \times_{\Catoo}\colon(\Catoo)^{\ast/}_{\out}\) taken along the composed functor \(\B \xrightarrow{\chi^{\co}} \Catoo^{\co} \xrightarrow{(-)^{\op}} \Catoo\).
\end{example}

We now consider the question of identifying when a given inner/outer (co)cartesian fibration \(\E \to \C\) is (co)representable by an object \(x \in \C\). To fix ideas, let us consider the cocartesian case, and fix a variance parameter \(\var \in \{\out,\inn\}\) as above.

\begin{define}\label{d:represents}
Let \(p\colon\E \to \C\) be a \(\var\)-cocartesian fibration of \(\infty\)-bicategories. We will say that an object \(x \in \E\) is \emph{\(p\)-universal} if the inclusion \(\{x\} \subseteq \E^{\natural}\) is a \(\var\)-cocartesian equivalence over \(\C\), where \(\E^{\natural}\) denotes the marked-scaled simplicial set whose underlying scaled simplicial set is \(\E\) and whose marked edges are the \(p\)-cocartesian ones. In this case we will also say that \(x \in \E\) exhibits \(\E\) as \emph{corepresented} by \(p(x)\).
\end{define}

\begin{rem}\label{r:universal-represents}
In the situation of Definition~\ref{d:represents},
the inclusion \(\{x\} \subseteq \E^{\natural}\) can always be extended to a map \(\C^{p(x)/}_{\var} \to \E^{\natural}\) which sends \(\id_{p(x)}\) to \(x\): indeed, the restriction
\[ \Fun^{\coc}_{\C}(\C^{p(x)/}_{\inn},\E) \to \Fun^{\coc}_{\C}(\{\id_{p(x)}\},\E)\]
is trivial fibration of \(\infty\)-categories by Proposition~\ref{p:rep-inn} and a Remark~\ref{r:restrict-fibration}. Using again Proposition~\ref{p:rep-inn} it now follows that \(x\) is \(p\)-universal if and only if the resulting map \(\C^{/p(x)}_{\var} \to \E^{\natural}\) is a \(\var\)-cocartesian equivalence over \(\C\), or equivalently, an equivalence of \(\var\)-cocartesian fibrations over \(\C\) (see Remark~\ref{r:cofinal-is-equivalence}). 
\end{rem}

\begin{rem}\label{r:universal-invariant}
In the situation of Definition~\ref{d:represents}, the collection of \(p\)-universal objects in \(\E\) is closed under equivalence. Indeed, if \(x \simeq y\) are two equivalent objects in \(\E\) then there exists a map \(\eta\colon J_{\sharp} \to \E\) such that \(\eta(0)=x\) and \(\eta(1) =y\). Since both \(\{0\} \subseteq J_{\sharp}\) and \(\{1\} \subseteq J_{\sharp}\) are bicategorical equivalences it follows from Example~\ref{ex:equiv-is-cofinal} that \(\{x\} \subseteq \E^{\natural}\) is a \(\var\)-cocartesian equivalence over \(\C\) if and only if \(J_{\sharp}^{\flat} \to \E^{\natural}\) is a \(\var\)-cocartesian equivalence over \(\C\), and the same goes for \(y\). It then follows that \(x\) is \(p\)-universal if and only if \(y\) is \(p\)-universal.
\end{rem}

\begin{rem}\label{r:preserves-detects}
If \(f\colon\E \to \E'\) is an equivalence of \(\var\)-cocartesian fibrations over \(\C\) then \(f\) preserves and detects universal objects by Example~\ref{ex:equiv-is-cofinal}. 
\end{rem}

The fully-faithfulness of the Yoneda embedding suggests that if a \(\var\)-cocartesian fibration \(\E\to \C\) is classified by a corepresentable functor, then the corepresenting object \(x\) is essentially unique. The following proposition makes this statement precise:

\begin{prop}\label{p:rep-unique}
Let \(p\colon\E \to \C\) be a \(\var\)-cocartesian fibration of \(\infty\)-bicategories. Let \(\X \subseteq \E\) be the sub-bicategory spanned by the \(p\)-universal objects and the \(p\)-cocartesian morphisms between them. Then \(\X\) is either empty or a contractible Kan complex.
\end{prop}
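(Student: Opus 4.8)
The plan is to dispose of the empty case, reduce to the representable fibration $\C^{c/}_{\var}$, and there compute everything explicitly. So suppose $\X \neq \emptyset$ and choose a $p$-universal object $x_0 \in \E$, writing $c := p(x_0)$. By Remark~\ref{r:universal-represents} the canonical map $\Phi\colon \C^{c/}_{\var} \to \E^{\natural}$ extending $\id_c \mapsto x_0$ is an equivalence of $\var$-cocartesian fibrations over $\C$, and by Remark~\ref{r:preserves-detects} it preserves and detects both $p$-universal objects and cocartesian edges. Hence $\Phi$ restricts to a bicategorical equivalence between the sub-bicategory $\X_0 \subseteq \C^{c/}_{\var}$ spanned by the universal objects and the cocartesian morphisms between them, and $\X$. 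It therefore suffices to show that $\X_0$ is a contractible Kan complex.

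First I would identify the objects of $\X_0$. Under the Grothendieck--Lurie correspondence (Corollary~\ref{c:representables}), the fibration $\C^{c/}_{\var}$ is classified by the functor corepresented by $c$, and an object lying over $c' \in \C$ is a $1$-morphism $g\colon c \to c'$. By Remark~\ref{r:universal-represents} such a $g$ is universal precisely when the induced map $\C^{c'/}_{\var} \to (\C^{c/}_{\var})^{\natural}$ is an equivalence of fibrations; under the correspondence this map is the natural transformation between corepresentable functors induced by $g$, so by the fully-faithfulness of the Yoneda embedding established in \S\ref{s:representable} it is an equivalence if and only if $g$ is an equivalence in $\C$. Thus the universal objects of $\C^{c/}_{\var}$ are exactly the equivalences out of $c$; in particular $\id_c$ is universal. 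For any such $g\colon c \xrightarrow{\simeq} c'$, the cocartesian lift of $g$ with source $\id_c$ is a cocartesian morphism $\id_c \to g$ lying over the equivalence $g$, hence an equivalence by Remark~\ref{r:cart-equiv}, and its inverse is again a morphism of $\X_0$ (equivalences are cocartesian by Corollary~\ref{c:joyal}). Consequently every object of $\X_0$ is equivalent inside $\X_0$ to $\id_c$.

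It then remains to prove that all mapping $\infty$-categories of $\X_0$ are contractible, for then the projection $\X_0 \to \ast$ is essentially surjective and fully faithful with contractible mapping objects, hence a bicategorical equivalence by Remark~\ref{r:dwyer-kan}, so that $\X_0$ is a contractible Kan complex. Since any two objects of $\X_0$ are joined by equivalences to $\id_c$, composition reduces this to showing that $\Hom_{\X_0}(\id_c, \id_c)$ is contractible. An object of this mapping category is a cocartesian morphism $\id_c \to \id_c$; by the universal property of cocartesian edges, specifying such a morphism amounts to specifying an edge $e\colon c \to c$ together with an identification of its pushforward $e_!\id_c \simeq e$ with the target $\id_c$. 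The space of such data is the based path space of $\Hom_{\C}(c,c)$ at $\id_c$, and is therefore contractible.

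The step I expect to be the main obstacle is the last one: making the informal description of $\Hom_{\X_0}(\id_c, \id_c)$ via cocartesian lifts precise at the level of the full mapping $\infty$-category, including the $2$-dimensional data and the invertibility of the relevant $2$-cells. Here I would rely on Proposition~\ref{p:mapping-2}, which controls the fibrations induced by $p$ on mapping $\infty$-categories, together with Remark~\ref{r:cart-equiv}, in order to promote the contractibility of the space of cocartesian lifts to the contractibility of the mapping $\infty$-category itself.
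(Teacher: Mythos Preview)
Your reduction to the representable fibration \(\C^{c/}_{\var}\) and your identification of the universal objects there with equivalences \(c \xrightarrow{\simeq} c'\) match the paper's argument exactly. The divergence is in the endgame, and there the proposal has a genuine gap.

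You attempt to show \(\X_0\) is a contractible Kan complex by proving its mapping \(\infty\)-categories are contractible and then invoking the Dwyer--Kan criterion (Remark~\ref{r:dwyer-kan}). Two problems arise. First, \(\X_0\) is a \emph{non-full} sub-bicategory (you are restricting the 1-morphisms to cocartesian ones), and you do not verify that it is itself an \(\infty\)-bicategory, so Remark~\ref{r:dwyer-kan} does not directly apply. Second, even granting a bicategorical equivalence \(\X_0 \simeq \ast\), this only says \(\X_0\) is \emph{equivalent} to a point; it does not by itself say that the underlying simplicial set of \(\X_0\) is literally a Kan complex, which is what the statement asserts. For that you would still need to know that every edge of \(\X_0\) is an equivalence and every triangle is thin. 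Your informal computation of \(\Hom_{\X_0}(\id_c,\id_c)\) as a based path space is suggestive but addresses only the 0-simplices of that mapping object; the 2-dimensional data (non-invertible 2-cells between cocartesian edges, which a priori could exist) is exactly what you flag as the obstacle, and it does not get resolved.

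The paper avoids all of this by working at the simplicial level rather than through mapping categories. After showing that every edge of \(\Y\) is an equivalence (as you also do, essentially), it proves directly that every \emph{triangle} of \(\Y\) is thin: a triangle of \(\Y\) corresponds to a map \(\Del^1_\flat \otimes \Del^2_\flat \to \C\) constant on \(\Del^{\{0\}}\times\Del^2\), and since all edges are sent to equivalences and certain triangles are already thin, \cite[Corollary~3.5]{GagnaHarpazLanariEquiv} forces the remaining triangle to be thin. This gives a literal identification of \(\Y\) with the slice \((\C^{\simeq})^{p(x)/}\) of the core \(\infty\)-groupoid, which is a Kan complex with an initial object and hence contractible. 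The triangle argument is the missing ingredient in your approach.
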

\begin{proof}
Suppose that \(\X\) is non-empty, so that there exists a \(p\)-universal object \(x \in \E\). By Remark~\ref{r:universal-represents} the inclusion \(\{x\} \subseteq \E\) extends to an equivalence 
\[ \xymatrix{
\ovl{\C}^{p(x)/}_{\var} \ar[rr]^-{f}_-{\simeq}\ar[dr]_{q} && \E \ar[dl]^{p} \\
&\C& \\
}\]
of \(\var\)-cocartesian fibrations over \(\C\). Let \(\Y \subseteq \ovl{\C}^{p(x)/}_{\var}\) be the full sub-bicategory spanned by the \(q\)-universal objects and the \(q\)-cocartesian edges between them.
Combining Remark~\ref{r:universal-invariant} and Remark~\ref{r:preserves-detects} we may deduce that \(f\) induces an equivalence \(\Y \xrightarrow{\simeq} \X\).  
It will hence suffice to prove that \(\Y\) is a contractible Kan complex. 

We now claim that an object \([\alp\colon p(x) \to y] \in \ovl{\C}^{p(x)/}_{\var}\) is \(q\)-universal if and only if \(\alp\) is invertible in \(\C\). To see this, extend the inclusion \(\{[\alp\colon p(x) \to y]\} \subseteq \ovl{\C}^{p(x)}_{\var}\) to a map of \(\var\)-cocartesian fibrations
\[ \xymatrix{
\ovl{\C}^{y/}_{\var} \ar[rr]^-{g}_-{\simeq}\ar[dr]_{q'} && \ovl{\C}^{p(x)/}_{\var} \ar[dl]^{q} \\
&\C& \\
}\]
so that \([\alp]\) is universal if and only if \(g\) is a bicategorical equivalence. By Corollary~\ref{c:fiberwise} this is equivalent to the induced map
\[ (\ovl{\C}^{y/}_{\var})_z \to (\ovl{\C}^{p(x)/}_{\var})_z \]
being a categorical equivalence on underlying simplicial sets for every \(z \in \C\). Now by Remark~\ref{r:mapping-space} we may identify this map with the induced map
\[ (-)\circ \alp\colon \Hom_{\C}(y,z) \to \Hom_{\C}(p(x),z) \]
when \(\var=\inn\), and with the opposite of this map (up to categorical equivalence) when \(\var=\out\). These maps are all equivalences precisely when \(\alp\) is invertible.

Now if \(\alp\colon p(x) \to y\) and \(\bet\colon p(x) \to z\) are two \(q\)-universal objects then by Corollary~\ref{cor:slice} a \(q\)-cocartesian arrow from \(\alp\) to \(\beta\) corresponds to a commutative square
	\[
	 \begin{tikzcd}
	 	p(x) \ar[d, equal] \ar[r,"\alp"] \ar[rd, ""{name=d-up}, ""{swap, name=d-down}]&
	 	y  \ar[d] \\
	 	p(x) \ar[r,"\beta"] & z
	 	\ar[from=d-down, to=1-2, phantom, "\simeq"]
	 	\ar[from=d-down, to=2-1, phantom, "\simeq"] \ ,
	 \end{tikzcd} 
	\]
and since \(\alp\) and \(\beta\) are invertible the arrow \(y \to z\) is invertible as well. In particular, every arrow in \(\Y\) is invertible. We now claim that every triangle in \(\Y\) is thin. To see this, note that a triangle in \(\Y\) corresponds to a map \(\rho\colon\Del^1_{\flat} \mgr \Del^2_{\flat} \to \C\) such that \(\rho_{|\Del^{\{0\}} \times \Del^2_{\flat}}\) is constant with image \(p(x)\), and by the above we also have that \(\rho\) sends every edge of \(\Del^1_{\flat} \mgr \Del^2_{\flat}\) to an equivalence in \(\C\) and every triangle in \(\Del^1_{\flat} \mgr \Del^2_{\flat}\) whose projection to \(\Del^2_{\flat}\) is degenerate to a thin triangle in \(\C\). 
By~\cite[Corollary 3.5]{GagnaHarpazLanariEquiv} the triangle \(\rho_{|\Del^{\{1\}}_{\flat} \otimes \Del^2_{\flat}}\) is also thin in \(\C\). 
In fact, the proof of that corollary actually shows that \(\rho\) sends every triangle in \(\Del^1_{\flat} \otimes \Del^2_{\flat}\) to a thin triangle in \(\C\). We may consequently identify \(\Y\) with the subgroupoid \((\C^{\simeq})^{p(x)/} \subseteq \C^{p(x)/}_{\var}\). We now finish the proof by noting that \((\C^{\simeq})^{p(x)/}\) is a contractible Kan complex, being an \(\infty\)-groupoid with an initial object.
\end{proof}

\subsection{Categories of lax transformations}\label{s:lax-trans}

In this subsection we study the relation between slice fibrations and certain \(\infty\)-categories of lax transformations. By the latter we mean the following: 

\begin{define}
Let \(\C\) be an \(\infty\)-bicategory and \(K\) a marked-scaled simplicial set. For two diagrams \(f,g\colon \ovl{K} \to \C\) we define \(\RNat_K(f,g)\) and \(\LNat_K(f,g)\) to be the mapping \(\infty\)-categories from \(f\) to \(g\) in the \(\infty\)-bicategories \(\RMap(K,\C)\) and \(\LMap(K,\C)\), respectively.
\end{define}

Our main goal in the present subsection is to identify the fibers of the slice fibrations over a diagram in \(\C\) in terms of suitable spaces of lax natural transformations. We also deduce in the end a useful invariance property with respect to the restriction along inner/outer (co)cartesian equivalences \(K \to L\).
We begin with the following statement, identifying \(\infty\)-categories of lax transformations to/from a diagram which is constant on an object \(x \in \C\) in terms cartesian lifts to the slice fibration over/under \(x\):
 
\begin{prop}\label{p:fiber}
	Let \(\C\) be an \(\infty\)-bicategory, \(K\) a marked-scaled simplicial set and \(f \colon \ovl{K} \to \C\) be a functor. Then for
\(x \in \C\) there are natural equivalences
\[ \RNat_K(\ovl{x},f) \simeq \Fun^{\coc}_\C(K,\C^{x/}_{\inn}) \quad,\quad \RNat_K(f,\ovl{x}) \simeq \Fun^{\car}_\C(K,\C^{/x}_{\out}),\]
\[ \LNat_K(f,\ovl{x}) \simeq \Fun^{\car}_\C(K,\C^{/x}_{\inn})^{\op}  \quad\text{and}\quad \LNat_K(\ovl{x},f) \simeq \Fun^{\coc}_\C(K,\C^{x/}_{\out})^{\op},\]
where \(\ovl{x}\) denotes the constant map \(\ovl{K} \to \C\) with value \(x\).
\end{prop}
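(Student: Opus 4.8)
The plan is to prove the first two equivalences directly and to obtain the third and fourth from them by the $\op$-symmetry, since no point-set model for $\co$ is available at this stage. For the symmetry step I would replace $(\C,K,f)$ by $(\C^{\op},K^{\op},f^{\op})$ and combine the isomorphism $\RMap(K^{\op},\C^{\op})\cong\LMap(K,\C)^{\op}$ of Remark~\ref{r:symmetric-2} with the slice identities $(\C^{\op})^{x/}_{\inn}\cong(\C^{/x}_{\inn})^{\op}$ and $(\C^{\op})^{x/}_{\out}\cong(\C^{/x}_{\out})^{\op}$ coming from Remark~\ref{r:opposites}, together with the fact that $(-)^{\op}$ exchanges cartesian and cocartesian edges (Corollary~\ref{c:op-fibration-2}) and that $\Fun(\ovl K^{\op},\E^{\op})\cong\Fun(\ovl K,\E)^{\op}$ on cores. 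Tracking that $\Hom_{\mathcal D^{\op}}(a,b)=\Hom_{\mathcal D}(b,a)$, the first equivalence read off in $\C^{\op}$ unwinds to $\LNat_K(f,\ovl x)\simeq\Fun^{\car}_\C(K,\C^{/x}_{\inn})^{\op}$, i.e.\ the third, and the second yields the fourth in the same way. This leaves exactly the two genuinely distinct inner and outer cases.

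For the first equivalence I would use the thick-slice model of mapping $\infty$-categories. By Remark~\ref{r:mapping-space}, the fibre over $f$ of the inner cocartesian fibration $\overline{\RMap(K,\C)}^{\,\ovl x/}_{\inn}\to\RMap(K,\C)$ (which is inner cocartesian by Proposition~\ref{p:cocar}) is precisely $\Hom_{\RMap(K,\C)}(\ovl x,f)=\RNat_K(\ovl x,f)$. Using that $\RMap(K,-)$ is right adjoint to $(-)^{\flat}\mgr K$, together with the defining adjunction of the inner thick slice and the isomorphism of thick joins from the proof of Proposition~\ref{p:cocar}, one rewrites the simplices of this fibre as maps out of an iterated thick join. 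These are then compared, via the evident apex-collapsing map, with the simplices of $\Fun^{\coc}_\C(K,\C^{x/}_{\inn})$, which by the slice adjunction are maps out of $\Del^0\diamond_{\inn}(\Del^\bullet\times\ovl K)^{\flat}$ sending the cone point to $x$. This produces a natural comparison functor $\Fun^{\coc}_\C(K,\C^{x/}_{\inn})\to\RNat_K(\ovl x,f)$ over the fixed diagram $f$, whose naturality in $f$ (and in $x$) is built in.

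I expect the main obstacle to be showing that this comparison is an equivalence. The difference between the two sides is conceptual: $\RNat_K(\ovl x,f)$ carries an entire $K$-shaped family of copies of the apex $x$, while $\Fun^{\coc}_\C(K,\C^{x/}_{\inn})$ records a single apex. Identifying the two is exactly the representability of the slice fibration, namely Proposition~\ref{p:rep-inn}, which asserts that $\{\id_x\}\hookrightarrow\C^{x/}_{\inn}$ is an inner cocartesian equivalence over $\C$; in other words, collapsing the family of apexes to one is invisible to cocartesian-edge-preserving maps. I would feed this into the comparison together with the lax-lifting results Proposition~\ref{p:lax-lift} and Lemma~\ref{l:lax-lift}, which govern precisely the extension problems that arise when one fills in the collapsed cone. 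A convenient consistency check is the base case $K=\Del^0$, $f=y$, where both sides degenerate to $\Hom_\C(x,y)$ by Remark~\ref{r:mapping-space}.

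For the second equivalence the same scheme applies with the inner slice replaced by the outer one: by Remark~\ref{r:mapping-space}, $\RNat_K(f,\ovl x)$ is the fibre over $f$ of the outer cartesian fibration $\overline{\RMap(K,\C)}^{\,/\ovl x}_{\out}\to\RMap(K,\C)$, and I would compare it with $\Fun^{\car}_\C(K,\C^{/x}_{\out})$. Here the outer thick join can be traded for the ordinary join via Proposition~\ref{prop:join-eq} and Corollary~\ref{c:outer-slice}, so that one may work with the honest slice $\C_{/x}$ and the model $\Maptr_\C(-,x)=(\C_{/x})_{-}$, with the apex-collapsing step now governed by the outer representability Proposition~\ref{p:rep-out}. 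In both the inner and outer cases, the fiberwise equivalence criterion Corollary~\ref{c:fiberwise} provides a clean way to reduce the claim that the comparison of fibrations over $\RMap(K,\C)$ is an equivalence to a check on a single fibre, where the base case above settles it.
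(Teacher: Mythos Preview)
Your reduction of the last two equivalences to the first two via \((-)^{\op}\) is exactly what the paper does (using Remarks~\ref{r:symmetric-2} and~\ref{r:mapping-space}). The divergence is in how you handle the first pair.

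The paper's argument is considerably more direct than what you sketch, and uses different inputs. For the first equivalence, the paper models \(\RNat_K(\ovl{x},f)\) not as the fibre over \(f\) of \(\RMap(K,\C)^{\ovl{x}/}_{\inn}\), but as the fibre over \(\ovl{x}\) of the \emph{outer} slice \(\RMap(K,\C)^{/f}_{\out}\). This choice is strategic: taking the fibre at \(\ovl{x}\) rather than at \(f\) means the ``family of apexes'' you worry about is already collapsed by the fibre condition. Unwinding via Remark~\ref{r:almost-associative}, both sides then become sub-functors of \(Z \mapsto \Fun({}^{\flat}\Del^1 \mgr Z^{\flat} \mgr K,\C)\) and \(Z \mapsto \Fun({}^{\flat}\Del^1 \mgr (Z^{\flat}\times K),\C)\) respectively, cut out by identical boundary conditions at \(\Del^{\{0\}}\) and \(\Del^{\{1\}}\). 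These are compared by an explicit zig-zag through \({}^{\flat}\Del^1 \mgr (Z^{\flat}\mgr K, E)\), where the two legs are handled by Proposition~\ref{p:marked-variants} and Lemma~\ref{l:lem-for-cocar}. This yields an actual \emph{isomorphism} of scaled simplicial sets, with no representability results needed. The second equivalence follows by the same scheme with the roles of \(\Del^{\{0\}}\) and \(\Del^{\{1\}}\) swapped.

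Your route via Proposition~\ref{p:rep-inn} is not obviously wrong, but the step where you invoke it is underspecified. That proposition says \(\{\id_x\}\hookrightarrow\C^{x/}_{\inn}\) is an inner cocartesian equivalence over \(\C\), i.e.\ it controls maps \emph{out of} \(\C^{x/}_{\inn}\) into cocartesian fibrations, whereas you need to control maps of \(K\) \emph{into} \(\C^{x/}_{\inn}\). Bridging this would require turning the ``apex-collapsing'' comparison into a statement about cocartesian equivalences over some base to which Proposition~\ref{p:rep-inn} applies, and you have not said what that base is. The appeal to Corollary~\ref{c:fiberwise} at the end is similarly unclear: you have not specified a map of fibrations over \(\RMap(K,\C)\) whose fibre map is the comparison in question. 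The paper simply avoids all of this by the direct point-set argument.
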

\begin{proof}
Let us prove the first pair of equivalences. The second pair can then be deduced by replacing \(f\colon K \to \C\) with \(f^{\op}\colon K^{\op} \to \C^{\op}\) using Remarks~\ref{r:mapping-space} and~\ref{r:symmetric-2}. To obtain the first equivalence it will suffice by Remark~\ref{r:mapping-space} to produce an isomorphism
\begin{equation}\label{e:nat-fun}
\bigl(\RMap(K,\C)^{/f}_{\out}\bigr)_{\ovl{x}} \cong \Fun^{\coc}_\C(K,\C^{x/}_{\inn}) .
\end{equation}
Indeed, using Remark~\ref{r:almost-associative} we see that the scaled simplicial set on the left hand side represents the sub-functor of 
\[Z \mapsto \Fun({}^{\flat}\Del^1 \mgr Z^{\flat}\mgr K,\C)\] spanned by those maps \({}^{\flat}\Del^1 \mgr Z^{\flat} \mgr K \to \C\) whose restriction to \(\Del^{\{1\}} \otimes Z^{\flat} \otimes K\) is given by \(Z^{\flat} \otimes K \to K \xrightarrow{f} \C\) and whose restriction to \(\Del^{\{0\}} \otimes Z^{\flat} \otimes K\) is given by \(Z^{\flat} \otimes K \to \Del^0 \xrightarrow{x} \C\). On the other hand, the scaled simplicial set on the right hand side of~\eqref{e:nat-fun} represents the sub-functor of \(Z \mapsto \Fun(\prescript{\flat}{}\Del^1 \mgr (Z^{\flat} \times K),\C)\) defined by the same conditions on the values at \(\Del^{\{0\}}\) and \(\Del^{\{1\}}\). We now observe that we have natural inclusions of scaled simplicial sets (which are isomorphisms on underlying simplicial sets)
\begin{equation}\label{e:nat-fun-2} 
{}^{\flat}\Del^1 \mgr Z^{\flat} \mgr K \leftarrow {}^{\flat}\Del^1 \mgr (Z^{\flat} \mgr K,E) \to \prescript{\flat}{}\Del^1 \mgr (Z^{\flat} \times K)\end{equation}
where \(E\) is the set of edges which are marked in the cartesian product \(Z^{\flat} \times K\). We hence obtain a zig-zag of maps relating the two sides of~\eqref{e:nat-fun}. But this zig-zag is in fact a zig-zag of isomorphisms since the left map in~\eqref{e:nat-fun-2} is a bicategorical trivial cofibrations by Proposition~\ref{p:marked-variants} and the right map becomes scaled anodyne after collapsing \(\partial \Del^1 \times Z \times \ovl{K}\) to \(\Del^{\{0\}} \coprod \Del^{\{1\}} \times \ovl{K}\) by Lemma~\ref{l:lem-for-cocar}.

For the second equivalence, we invoke again Remark~\ref{r:mapping-space} and produce instead an isomorphism 
\[\bigl(\RMap(K,\C)^{f/}_{\inn}\bigr)_{\ovl{x}} \cong \Fun^{\coc}_\C(K,\C^{/x}_{\out}).\]
The argument in this case then proceeds exactly as above by noting that both sides represent again a common sub-functor of \(Z \mapsto \Fun({}^{\flat}\Del^1 \mgr Z^{\flat} \mgr K,\C)\) and \(Z \mapsto \Fun({}^{\flat}\Del^1 \mgr (Z^{\flat} \times K),\C)\) defined in a similar manner, where this time the conditions at \(\Del^{\{0\}}\) and \(\Del^{\{1\}}\) are switched.
\end{proof}

Applying Proposition~\ref{p:fiber} in the case of \(\C=\Catoo,x=\ast\) and using Example~\ref{ex:universal} we obtain: 

\begin{cor}\label{c:fiber}
	Let \(K\) be a marked-scaled simplicial set and \(\chi \colon \ovl{K} \to \Catoo\) be a functor. Let \(\E \to \ovl{K}\) be the inner cocartesian fibration classified by \(\chi\) and \(\E' \to \ovl{K}\) the outer cocartesian fibration classified by 
\[\ovl{K}^{\co} \xrightarrow{\chi^{\co}} \Catoo^{\co} \xrightarrow{\op} \Catoo.\] 
Then there are natural equivalences
\[ \RNat_K(\ast,\chi) \simeq \Fun^{\coc}_{\Catoo}(K,(\Catoo)^{\ast/}_{\inn})\simeq \Fun^{\coc}_{\ovl{K}}(K,\E) ,\]
and
\[ \LNat_K(\ast,\chi) \simeq \Fun^{\coc}_{\Catoo}(K,(\Catoo)^{\ast/}_{\out})^{\op} \simeq \Fun^{\coc}_{\ovl{K}}(K,\E')^{\op}.\]
\end{cor}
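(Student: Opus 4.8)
The plan is to obtain both chains of equivalences by specializing Proposition~\ref{p:fiber} to $\C = \Catoo$, $x = \ast := \Del^0$ and $f = \chi$, and then to rewrite the resulting functor $\infty$-categories as sections of $\E$ and $\E'$ using the base-change description of the universal fibrations from Example~\ref{ex:universal}. Concretely, the first equivalence of Proposition~\ref{p:fiber} reads $\RNat_K(\ovl{\ast},\chi) \simeq \Fun^{\coc}_{\Catoo}(K,(\Catoo)^{\ast/}_{\inn})$, where $\ovl{\ast}$ is the constant diagram on the terminal $\infty$-category (for which the corollary writes simply $\ast$), while its fourth equivalence reads $\LNat_K(\ovl{\ast},\chi) \simeq \Fun^{\coc}_{\Catoo}(K,(\Catoo)^{\ast/}_{\out})^{\op}$. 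These are precisely the leftmost equivalences of the two displays, so nothing beyond this specialization is required there.

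For the rightmost equivalences I would argue as follows in the inner case. By Notation~\ref{n:fun-car} the $\infty$-category $\Fun^{\coc}_{\Catoo}(K,(\Catoo)^{\ast/}_{\inn})$ is the full subcategory of maps $\ovl{K} \to (\Catoo)^{\ast/}_{\inn}$ lying over $\Catoo$ via $\chi$ which carry the marked edges of $K$ to $p^{\uni}_{\inn}$-cocartesian edges. By the universal property of the pullback, such maps are the same as sections over $\ovl{K}$ of the base change $\ovl{K} \times_{\Catoo} (\Catoo)^{\ast/}_{\inn} \to \ovl{K}$ of the universal inner cocartesian fibration along $\chi$; moreover, since base change preserves and detects cocartesian edges (Remark~\ref{r:base-change} together with the local character of cocartesianness), the cocartesian condition is matched on the two sides. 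This yields an isomorphism $\Fun^{\coc}_{\Catoo}(K,(\Catoo)^{\ast/}_{\inn}) \cong \Fun^{\coc}_{\ovl{K}}(K, \ovl{K}\times_{\Catoo}(\Catoo)^{\ast/}_{\inn})$. Finally, by Example~\ref{ex:universal} the fibration $\ovl{K}\times_{\Catoo}(\Catoo)^{\ast/}_{\inn} \to \ovl{K}$ is exactly the inner cocartesian fibration classified by $\chi$, hence equivalent over $\ovl{K}$ to $\E$; post-composing with this equivalence of cocartesian fibrations, which preserves and detects cocartesian edges, induces the sought equivalence $\Fun^{\coc}_{\ovl{K}}(K, \ovl{K}\times_{\Catoo}(\Catoo)^{\ast/}_{\inn}) \simeq \Fun^{\coc}_{\ovl{K}}(K,\E)$.

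The outer case follows the same template with $(\Catoo)^{\ast/}_{\out}$ in place of $(\Catoo)^{\ast/}_{\inn}$, and I expect the variance bookkeeping here to be the only genuinely delicate point. The structure map making $\Fun^{\coc}_{\Catoo}(K,(\Catoo)^{\ast/}_{\out})$ live over $\Catoo$ is again $\chi$, so the section identification produces the base change $\ovl{K}\times_{\Catoo}(\Catoo)^{\ast/}_{\out}$ taken along $\chi$. What must be checked is that this agrees with $\E'$, which by definition is the outer cocartesian fibration classified by $\ovl{K}^{\co}\xrightarrow{\chi^{\co}}\Catoo^{\co}\xrightarrow{\op}\Catoo$. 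Running the description in Example~\ref{ex:universal}, the outer cocartesian fibration classified by a functor $\psi\colon \ovl{K}^{\co}\to\Catoo$ is the base change of $(\Catoo)^{\ast/}_{\out}$ along $\ovl{K}\xrightarrow{\psi^{\co}}\Catoo^{\co}\xrightarrow{\op}\Catoo$; taking $\psi = \op\circ\chi^{\co}$ and using the involutivity of $(-)^{\op}$ together with the identification $\Catoo^{\co}\simeq\Catoo$ of Example~\ref{ex:op-on-cat} collapses this classifying map back to $\chi$, so that indeed $\E' \simeq \ovl{K}\times_{\Catoo}(\Catoo)^{\ast/}_{\out}$ over $\ovl{K}$. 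Transporting the inner argument through this identification and taking opposites then gives $\LNat_K(\ast,\chi) \simeq \Fun^{\coc}_{\ovl{K}}(K,\E')^{\op}$, completing the proof. The hard part is thus entirely the $\co/\op$-matching of classifying functors in the outer case; once that is settled, both chains are formal consequences of Proposition~\ref{p:fiber} and the pullback description of sections.
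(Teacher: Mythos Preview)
Your proof is correct and follows the same approach as the paper: specialize Proposition~\ref{p:fiber} to $\C=\Catoo$, $x=\ast$ for the first equivalences, and then invoke Example~\ref{ex:universal} (together with the tautological pullback identification of sections) for the second ones. The paper's own proof is a single sentence stating exactly these two inputs, so you have simply unpacked the variance bookkeeping in the outer case that the paper leaves implicit.
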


Combining Proposition~\ref{p:fiber} and Corollary~\ref{c:fiber} we then conclude:
\begin{cor}\label{c:C-to-cat}
	Let \(\C\) be an \(\infty\)-bicategory, \(K\) a marked-scaled simplicial set and \(f \colon \ovl{K} \to \C\) be a functor. Then for
\(x \in \C\) there are natural equivalences
\[
 \begin{split}
 	\RNat_K(\ovl{x},f) &\simeq \RNat_K(\ast,\Hom_{\C}(x,f(-))), \\
	\LNat_K(\ovl{x},f) &\simeq \LNat_K(\ast,\Hom_{\C}(x,f(-)))
 \end{split}
\]
and
\[
 \begin{split}
	\LNat_K(f,\ovl{x}) &\simeq \RNat_{K^{\op}}(\ast,\Hom_{\C}(f(-),x)), \\
	\RNat_K(f,\ovl{x}) &\simeq \LNat_{K^{\op}}(\ast,\Hom_{\C}(f(-),x)).
 \end{split}
\]
\end{cor}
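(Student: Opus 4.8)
The plan is to combine Proposition~\ref{p:fiber}, which identifies each of the four mapping \(\infty\)-categories with a category of (co)cartesian lifts of a slice fibration over \(\C\), with Corollary~\ref{c:fiber}, which performs the analogous identification in the universal case \(\C=\Catoo\), \(x=\ast\). The bridge between the two is that the relevant slice fibrations are (co)representable in the sense of Corollary~\ref{c:representables}, so that after pulling back along \(f\) they become the (un)straightening of the appropriate \(\Hom\)-functor. Thus for each equivalence I would start from the identification furnished by Proposition~\ref{p:fiber}, rewrite the ambient slice fibration over \(\C\) as a pullback of a (co)representable slice fibration over an object, and then recognize the resulting object via Corollary~\ref{c:fiber}.

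For the first equivalence I would begin from \(\RNat_K(\ovl{x},f)\simeq\Fun^{\coc}_\C(K,\C^{x/}_{\inn})\). A lift \(\ovl{K}\to\C^{x/}_{\inn}\) of \(f\) carrying marked edges to cocartesian edges is the same datum as a section \(\ovl{K}\to f^\ast\C^{x/}_{\inn}\) with the corresponding property, and since base change preserves and reflects cocartesian edges this yields a natural isomorphism \(\Fun^{\coc}_\C(K,\C^{x/}_{\inn})\cong\Fun^{\coc}_{\ovl{K}}(K,f^\ast\C^{x/}_{\inn})\). By Corollary~\ref{c:representables} the fibration \(\C^{x/}_{\inn}\to\C\) is classified by \(\Hom_\C(x,-)\), so by the base-change--restriction compatibility of Remark~\ref{r:unstr-base-change} the pullback \(f^\ast\C^{x/}_{\inn}\to\ovl{K}\) is the inner cocartesian fibration classified by \(\Hom_\C(x,f(-))\). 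Feeding this into Corollary~\ref{c:fiber} with \(\chi=\Hom_\C(x,f(-))\) identifies it with \(\RNat_K(\ast,\Hom_\C(x,f(-)))\). The equivalence \(\LNat_K(\ovl{x},f)\simeq\LNat_K(\ast,\Hom_\C(x,f(-)))\) runs identically, starting from \(\LNat_K(\ovl{x},f)\simeq\Fun^{\coc}_\C(K,\C^{x/}_{\out})^{\op}\); here Corollary~\ref{c:representables} gives that \(\C^{x/}_{\out}\) is classified by \(\Hom_\C(x,-)^{\op}\), which matches the twist \((-)^{\op}\circ\chi^{\co}\) defining the fibration \(\E'\) in Corollary~\ref{c:fiber}, and the outer \(\op\) on the mapping category matches the \(\op\) in the \(\LNat\) description there.

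For the remaining two equivalences, concerning transformations out of \(f\), Proposition~\ref{p:fiber} produces categories of \emph{cartesian} lifts, namely \(\Fun^{\car}_\C(K,\C^{/x}_{\inn})^{\op}\) and \(\Fun^{\car}_\C(K,\C^{/x}_{\out})\). To bring these into the cocartesian world covered by Corollary~\ref{c:fiber} I would pass to opposites: using \(\Fun^{\thi}(X,Y)^{\op}\cong\Fun^{\thi}(X^{\op},Y^{\op})\) (cartesian closedness together with \((Z\times X)^{\op}\cong Z^{\op}\times X^{\op}\)), the slice--opposite identities \((\C^{/x}_{\inn})^{\op}\cong(\C^{\op})^{x/}_{\inn}\) and \((\C^{/x}_{\out})^{\op}\cong(\C^{\op})^{x/}_{\out}\) of Remark~\ref{r:opposites}, and the fact that \((-)^{\op}\) exchanges cartesian and cocartesian edges (Corollary~\ref{c:op-fibration-2}), these become \(\Fun^{\coc}_{\C^{\op}}(K^{\op},(\C^{\op})^{x/}_{\inn})\) and \(\Fun^{\coc}_{\C^{\op}}(K^{\op},(\C^{\op})^{x/}_{\out})^{\op}\) respectively. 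I would then repeat the argument of the previous paragraph over \(\C^{\op}\), pulling back along \(f^{\op}\colon\ovl{K}^{\op}\to\C^{\op}\) and using \(\Hom_{\C^{\op}}(x,f(-))\simeq\Hom_\C(f(-),x)\) (from \(\C^{\op}(x,y)=\C(y,x)\) and Remark~\ref{r:dwyer-kan}), so that Corollary~\ref{c:fiber} applied over \(\ovl{K}^{\op}\) with indexing \(K^{\op}\) yields \(\RNat_{K^{\op}}(\ast,\Hom_\C(f(-),x))\) and \(\LNat_{K^{\op}}(\ast,\Hom_\C(f(-),x))\).

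The essential content is the first pair of cases; the genuine difficulty is purely one of bookkeeping the \((\ZZ/2)^2\)-symmetry in the last two. The hard part will be checking that all the \(\op/\co\) operations line up: that the classifying functor of each slice fibration read off from Corollary~\ref{c:representables} is precisely the twist of \(\Hom\) expected by Corollary~\ref{c:fiber}; that base change along \(f^{\op}\) corresponds to restriction along \(f^{\coop}\) for the outer flavors (guaranteed by the uniform base-change--restriction statement at the end of Remark~\ref{r:unstr-base-change}); and that the transposing opposites introduced by Proposition~\ref{p:fiber} and by the \(\Fun^{\thi}\)-opposite identity either cancel (in the inner cartesian case) or combine with the \(\op\) in the outer case of Corollary~\ref{c:fiber}, rather than accumulate. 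Once these are tracked consistently, all four equivalences follow formally, with naturality inherited from the naturality already present in Propositions~\ref{p:fiber} and Corollaries~\ref{c:representables} and~\ref{c:fiber}.
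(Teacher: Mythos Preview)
Your proposal is correct and matches the paper's approach. For the first pair the paper simply says to combine Proposition~\ref{p:fiber} with Corollary~\ref{c:fiber} applied to \(\chi=\Hom_{\C}(x,f(-))\); your base-change step and invocation of Corollary~\ref{c:representables} with Remark~\ref{r:unstr-base-change} are exactly how that combination is effected. For the second pair the paper takes a slightly shorter route: rather than unpacking into \(\Fun^{\car}\)-categories and then taking opposites at that level, it applies Remark~\ref{r:symmetric-2} directly to obtain \(\LNat_K(f,\ovl{x})\simeq\RNat_{K^{\op}}(\ovl{x},f^{\op})\) and \(\RNat_K(f,\ovl{x})\simeq\LNat_{K^{\op}}(\ovl{x},f^{\op})\), and then invokes the already-established first pair for \(f^{\op}\colon K^{\op}\to\C^{\op}\) as a black box. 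Your version re-runs the first-pair argument after passing to opposites at the \(\Fun\)-level, which is equivalent but carries a bit more bookkeeping.
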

To avoid confusion, we note that in Corollary~\ref{c:C-to-cat} the transformations on the left side of each equivalence concern \(\C\)-valued diagrams indexed by \(K\), whereas the transformations on the right of each equivalence concern \(\Catoo\)-valued diagrams, indexed by either \(K\) or \(K^{\op}\). The notations \(\Hom_{\C}(x,f(-)))\) and \(\Hom_{\C}(f(-),x))\) refer to the post composition of \(f\colon \ovl{K} \to \C\) with the functors represented and corepresented by \(x\).

\begin{proof}[Proof of Corollary~\ref{c:C-to-cat}]
The second pair of equivalences can be deduced from the first by replacing \(f\colon K \to \C\) with \(f^{\op}\colon K^{\op} \to \C^{\op}\), using the equivalences
\[
 \LNat_{K}(f,\ovl{x}) \simeq \RNat_{K^{\op}}(f^{\op},\ovl{x})
 \quad\text{and}\quad
 \RNat_{K}(f,\ovl{x}) \simeq \LNat_{K^{\op}}(f^{\op},\ovl{x}),
\]
see Remark~\ref{r:symmetric-2}.
The first pair of equivalences follows by combining Proposition~\ref{p:fiber} with Corollary~\ref{c:fiber} (applied to \(\chi := \Hom_{\C}(x,f(-))\)).
\end{proof}

We now turn to discussing the slice fibrations
\[
	\begin{tikzcd}[column sep=tiny]
	\ovl{\C}^{f/}_{\inn} \ar[drrr] && \ovl{\C}^{f/}_{\out}\ar[dr] && \ovl{\C}^{/f}_{\inn} \ar[dl] && \ovl{\C}^{/f}_{\out}\ar[dlll] \\
	&&& \C &&&
	\end{tikzcd}
\]
and the functors that classify them under the bicategorical Grothendieck--Lurie correspondence.

\begin{prop}\label{p:slice-represent}
Let \(K\) be a marked-scaled simplicial set and \(f\colon \ovl{K} \to \C\) a diagram.
\begin{itemize}
\item
The inner cocartesian fibration \(\ovl{\C}^{f/}_{\inn} \to \C\)
is classified by the functor 
\[x \mapsto \LNat_K(f,\ovl{x}) \simeq \RNat_{K^{\op}}(\ast,\Hom_{\C}(f(-),x)).\]
\item
The outer cocartesian fibration \(\ovl{\C}^{f/}_{\out} \to \C\) is classified by the \(\co\)-functor 
\[x \mapsto \RNat_K(f,\ovl{x})^{\op} \simeq \LNat_{K^{\op}}(\ast,\Hom_{\C}(f(-),x))^{\op}.\]
\item
The inner cartesian fibration \(\ovl{\C}^{/f}_{\inn} \to \C\) is classified by the \(\co\)-presheaf 
\[x \mapsto \RNat_K(\ovl{x},f)^{\op} \simeq \RNat_K(\ast,\Hom_{\C}(x,f(-)))^{\op}.\]
\item
The outer cartesian fibration \(\ovl{\C}^{/f}_{\out} \to \C\) is classified by the presheaf \[x \mapsto \LNat_K(\ovl{x},f) \simeq \LNat_K(\ast,\Hom_{\C}(x,f(-))).\]
\end{itemize}
Here, in all cases \(\ovl{x}\) denotes the constant diagram \(\ovl{K} \to \C\) with value \(x\).
\end{prop}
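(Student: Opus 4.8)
The plan is to reduce all four statements to the inner cocartesian slice fibration over an \emph{object}, whose classification is supplied by Corollary~\ref{c:representables}, and to transport this along the base-change--restriction compatibility of the bicategorical Grothendieck--Lurie correspondence (Remark~\ref{r:unstr-base-change}). I would prove the first bullet directly and obtain the remaining three by the $(\ZZ/2)^2$-symmetry.

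First I would treat $\ovl{\C}^{f/}_{\inn} \to \C$. Recall from the pullback square~\eqref{slice iso} in the proof of Proposition~\ref{p:cocar} that $\ovl{\C}^{f/}_{\inn}$ is the base change of the inner cocartesian slice fibration $\ovl{\LMap}(K,\C)^{[f]/}_{\inn} \to \LMap(K,\C)$ along the constant-diagram inclusion $\C \to \LMap(K,\C)$, $x \mapsto \ovl{x}$. Since $\LMap(K,\C)$ is an $\infty$-bicategory, Corollary~\ref{c:representables} shows that $\ovl{\LMap}(K,\C)^{[f]/}_{\inn} \to \LMap(K,\C)$ is classified by the functor corepresented by $[f]$, namely $g \mapsto \Hom_{\LMap(K,\C)}([f],g) = \LNat_K(f,g)$. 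Applying Remark~\ref{r:unstr-base-change} — which says the correspondence intertwines base change of fibrations with restriction of classifying functors — the base change along $x \mapsto \ovl{x}$ is therefore classified by $x \mapsto \LNat_K(f,\ovl{x})$, as claimed. The second displayed equivalence $\LNat_K(f,\ovl{x}) \simeq \RNat_{K^{\op}}(\ast,\Hom_{\C}(f(-),x))$ is precisely the content of Corollary~\ref{c:C-to-cat}.

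For the inner cartesian statement I would invoke the point-set opposite. By Remark~\ref{r:opposites} we have $\ovl{\C}^{/f}_{\inn} \cong \big((\C^{\op})^{f^{\op}/}_{\inn}\big)^{\op}$, and $(\C^{\op})^{f^{\op}/}_{\inn} \to \C^{\op}$ is the inner cocartesian slice fibration classified by the first bullet (applied to $f^{\op}\colon \ovl{K}^{\op} \to \C^{\op}$) as $x \mapsto \LNat_{K^{\op}}(f^{\op},\ovl{x})$. Tracking the classifying functor through the equivalence of Corollary~\ref{c:goal} relating $\Car^{\inn}(\C)$ with $\coCar^{\inn}(\C^{\op})^{\co}$ (together with the identification $\Catoo^{\co}\simeq\Catoo$ via $(-)^{\op}$ used in Corollary~\ref{c:S-U-for-outer-fibs}), the value at $x$ gets replaced by its opposite; combined with the duality $\LMap(K^{\op},\C^{\op}) \cong \RMap(K,\C)^{\op}$ of Remark~\ref{r:symmetric-2}, which gives $\LNat_{K^{\op}}(f^{\op},\ovl{x}) \cong \RNat_K(\ovl{x},f)$, this yields the $\co$-presheaf $x \mapsto \RNat_K(\ovl{x},f)^{\op}$. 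The outer cocartesian and outer cartesian statements are of the same nature, now requiring the $\co$-symmetry, which by Corollary~\ref{c:goal} and Corollary~\ref{c:representables} exchanges the inner slice fibrations with the outer ones. Since scaled simplicial sets carry no point-set model for $(-)^{\co}$, here I would either argue directly in $\BiCat$, or — more explicitly — establish the outer analogues of~\eqref{slice iso}: pullback squares exhibiting $\ovl{\C}^{f/}_{\out}$ and $\ovl{\C}^{/f}_{\out}$ as base changes along the constant-diagram inclusion of the outer cocartesian, resp.~outer cartesian, slice fibration over $[f]$ in $\RMap(K,\C)$, resp.~$\LMap(K,\C)$. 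These are proven by the same manipulation of triple Gray products as in Proposition~\ref{p:cocar}, using Remark~\ref{r:symmetric} to pass between the two orders, and representability (Corollary~\ref{c:representables}) then gives the $\co$-corepresented, resp.~represented, functors $\RNat_K(f,\ovl{x})^{\op}$ and $\LNat_K(\ovl{x},f)$.

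The main obstacle I anticipate is bookkeeping the variance decorations: one must verify that each of the four object-slice fibrations sits over the correct mapping $\infty$-bicategory ($\RMap$ versus $\LMap$) and that the $(\ZZ/2)^2$-symmetry converts the corepresented functor of the first bullet into exactly the (op)(co)represented functors, with the opposites appearing in the statement, and not others. Establishing the outer pullback squares — the precise analogue for $\diamond_{\out}$ of the Gray-product identity underlying~\eqref{slice iso} — is the only genuinely new computation, and care is needed because the marked Gray product is non-associative, so that only the triple-product form of Definition~\ref{d:gray-marked} makes the identification clean.
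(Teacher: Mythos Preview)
Your proof of the first bullet is exactly the paper's: the pullback square from Proposition~\ref{p:cocar}, Corollary~\ref{c:representables} for the corepresented functor, base-change compatibility (Remark~\ref{r:unstr-base-change}), and Corollary~\ref{c:C-to-cat} for the second equivalence.

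For the remaining three bullets the paper takes the more uniform route that you list as your second option: it simply asserts that the analogous pullback square exists in all four cases (over \(\LMap(K,\C)\) or \(\RMap(K,\C)\) as appropriate), and then invokes Corollary~\ref{c:representables} together with the base-change compatibility of the Grothendieck--Lurie correspondence, which by Remark~\ref{r:unstr-base-change} holds in all four variance flavors. Your alternative route via the \((\ZZ/2)^2\)-symmetry---reducing the inner cartesian case through \((-)^{\op}\) and the outer cases through \((-)^{\co}\) in \(\BiCat\)---is also correct, and the bookkeeping you flag (matching \(\RMap\) vs.\ \(\LMap\), tracking where the \((-)^{\op}\) lands) is genuine but routine given Remark~\ref{r:symmetric-2} and Corollary~\ref{c:goal}. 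The paper's direct approach avoids this bookkeeping entirely at the cost of leaving the three additional pullback squares as an implicit exercise; your symmetry approach makes the variance-swapping explicit but is longer. Either way the argument is sound.
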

\begin{proof}
As in the proof of Proposition~\ref{p:cocar} we have a pullback square
	\[
	 \begin{tikzcd}
		\ovl{\C}^{f/}_{\inn} \ar[r]\ar[d] & \ovl{\LMap}(K, \C)^{[f]/}_{\inn} \ar[d] \\
		\C \ar[r] & \LMap(K, \C)
	 \end{tikzcd}
	\]
where \(\ovl{\LMap}(K,\C)^{[f]/}_{\inn}\) denotes the underlying scaled simplicial set of the marked-scaled simplicial set \(\LMap(K,\C)^{[f]/}_{\inn}\). The desired result then follows from Corollary~\ref{c:representables} and the compatibility of the straightening-unstraightening equivalence with base change, see~\cite{LurieGoodwillie}. The equivalence between lax transformations of \(\C\)-valued and \(\Catoo\)-valued diagrams is then given by Corollary~\ref{c:C-to-cat}.

The proof of the other three variances is the same: the analogous pullback square exists in all four cases, and the compatibility with base change of the Lurie-Grothendieck correspondence in the inner cocartesian cases implies all other variances, see Remark~\ref{r:unstr-base-change}.
\end{proof}

\begin{cor}[Invariance of slice fibrations]\label{c:invariance}
Let \(h\colon K \to L\) be a map of marked-scaled simplicial sets. Let \(\C\) be an \(\infty\)-bicategory and \(f\colon \ovl{L} \to \C\) be a scaled map. For a variance parameter \(\var \in \{\inn,\out\}\), if \(h\) is a \(\var\)-cocartesian (resp.~\(\var\)-cartesian) equivalence over \(\C\) then the projections
\[ \ovl{\C}^{/f}_{\var} \to \ovl{\C}^{/fh}_{\var} \quad(\text{resp.}\quad \ovl{\C}^{f/}_{\inn} \to \ovl{\C}^{fh/}_{\inn}) \]
are equivalences of \(\infty\)-bicategories, respectively.
\end{cor}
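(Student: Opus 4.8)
The plan is to reduce the statement to a fiberwise comparison via Corollary~\ref{c:fiberwise}, to identify each fiber with a mapping \(\infty\)-category into the corresponding slice fibration over an \emph{object}, and then to apply the hypothesis on \(h\) directly to these (co)representable fibrations. I will spell out the two cartesian slices \(\ovl{\C}^{/f}_{\var}\) under the assumption that \(h\) is a \(\var\)-cocartesian equivalence over \(\C\); the two cocartesian slices \(\ovl{\C}^{f/}_{\var}\) are treated in exactly the same way with the roles of cartesian and cocartesian interchanged.

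First I would note that, by Corollary~\ref{cor:slice}, Corollary~\ref{c:outer-slice} and Proposition~\ref{p:cocar}, both \(\ovl{\C}^{/f}_{\var} \to \C\) and \(\ovl{\C}^{/fh}_{\var} \to \C\) are \(\var\)-cartesian fibrations, and the projection \(\pi\colon \ovl{\C}^{/f}_{\var} \to \ovl{\C}^{/fh}_{\var}\) is the map over \(\C\) induced by restriction of cones along \(h\). Using the explicit description of the \(\var\)-cartesian edges of a slice fibration as those whose restriction over each vertex of the indexing object is thin (Corollary~\ref{cor:slice} together with the fiberwise equivalence of Corollary~\ref{c:outer-slice}), one checks that \(\pi\) preserves \(\var\)-cartesian edges, since each vertex of \(K\) is sent by \(h\) to a vertex of \(L\). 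Thus \(\pi\) is a morphism of \(\var\)-cartesian fibrations over \(\C\) lying over \(\id_{\C}\), and Corollary~\ref{c:fiberwise} reduces the claim to showing that \(\pi\) is a fiberwise equivalence.

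To identify the fibers I would use that, by Proposition~\ref{p:slice-represent}, the fibration \(\ovl{\C}^{/f}_{\out} \to \C\) (resp.~\(\ovl{\C}^{/f}_{\inn} \to \C\)) is classified by \(x \mapsto \LNat_L(\ovl{x},f)\) (resp.~\(x \mapsto \RNat_L(\ovl{x},f)^{\op}\)); since the Grothendieck--Lurie correspondence intertwines base change with restriction (Remark~\ref{r:unstr-base-change}), the fiber over \(x\) is the value of this classifying functor at \(x\). Applying Proposition~\ref{p:fiber} then yields
\[ (\ovl{\C}^{/f}_{\out})_x \simeq \Fun^{\coc}_{\C}(L,\C^{x/}_{\out})^{\op} \quad\text{and}\quad (\ovl{\C}^{/f}_{\inn})_x \simeq \Fun^{\coc}_{\C}(L,\C^{x/}_{\inn})^{\op}, \]
under which \(\pi_x\) becomes the opposite of restriction along \(h\). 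Now \(\C^{x/}_{\var} \to \C\) is a \(\var\)-cocartesian fibration (Proposition~\ref{p:cocar} and Corollary~\ref{c:outer-slice}), so the assumption that \(h\) is a \(\var\)-cocartesian equivalence over \(\C\) gives, by Definition~\ref{d:var-equivalence}, that \(\Fun^{\coc}_{\C}(L,\C^{x/}_{\var}) \to \Fun^{\coc}_{\C}(K,\C^{x/}_{\var})\) is an equivalence of \(\infty\)-categories for every \(x\); hence \(\pi_x\) is an equivalence and we conclude. For the cocartesian slices one argues identically, using Corollary~\ref{c:fiberwise} applied to opposites and the equivalences \(\RNat_L(f,\ovl{x}) \simeq \Fun^{\car}_{\C}(L,\C^{/x}_{\out})\) and \(\LNat_L(f,\ovl{x}) \simeq \Fun^{\car}_{\C}(L,\C^{/x}_{\inn})^{\op}\) of Proposition~\ref{p:fiber} to identify the fibers with \(\Fun^{\car}_{\C}(L,\C^{/x}_{\var})^{\op}\), to which the hypothesis that \(h\) is a \(\var\)-cartesian equivalence over \(\C\) applies via the representable cartesian fibrations \(\C^{/x}_{\var} \to \C\).

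The main obstacle is justifying the clause ``under which \(\pi_x\) becomes restriction along \(h\)'': one must verify that the composite identification — the classifying functor of Proposition~\ref{p:slice-represent}, the evaluation via base change of Remark~\ref{r:unstr-base-change}, and the equivalence of Proposition~\ref{p:fiber} — is natural in the indexing marked-scaled simplicial set, so that restriction of cones along \(h\) matches the restriction map \(\Fun^{\coc}_{\C}(L,-) \to \Fun^{\coc}_{\C}(K,-)\) featuring in Definition~\ref{d:var-equivalence}. This is a routine but somewhat tedious check, carried out by inspecting the mapping properties defining the thick slice constructions and observing that every equivalence in play is induced by a map of representing objects compatible with the functor \((-) \diamond_{\var} h\). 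Everything else is a direct application of results already established in the excerpt.
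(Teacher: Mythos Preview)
Your proposal is correct and follows essentially the same route as the paper: reduce via Corollary~\ref{c:fiberwise} to a fiberwise check, identify the fibers using Proposition~\ref{p:slice-represent} together with Proposition~\ref{p:fiber} as \(\Fun^{\coc}_{\C}(-,\C^{x/}_{\var})^{\op}\), and conclude from the hypothesis on \(h\). The paper is slightly terser---it passes to opposites to handle the cocartesian-slice case rather than treating it separately, and it does not pause on preservation of cartesian edges or on the naturality point you flag---but the argument is the same.
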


\begin{rem}\label{r:equivalence}
In the situation of Corollary~\ref{c:invariance}, if the induced map \(\ovl{h}\colon \ovl{K} \to \ovl{L}\) is a bicategorical equivalence and the marked edges in \(L\) are the images of the marked edges in \(K\) then \(h\) is in particular an inner/outer (co)cartesian equivalence over \(\C\) (see Example~\ref{ex:equiv-is-cofinal}). We then deduce that in this case the four restriction maps
\[ \ovl{\C}^{/f}_{\inn} \to \ovl{\C}^{/fh}_{\inn} \quad,\quad \ovl{\C}^{/f}_{\out} \to \ovl{\C}^{/fh}_{\out}, \]
\[ \ovl{\C}^{f/}_{\inn} \to \ovl{\C}^{fh/}_{\inn} \quad\text{and}\quad \ovl{\C}^{f/}_{\out} \to \ovl{\C}^{fh/}_{\out} \]
are all equivalence of \(\infty\)-bicategories.
\end{rem}

\begin{proof}[Proof of Corollary~\ref{c:invariance}]
Replacing \(h \colon K \to L\) and \(f\colon \ovl{L} \to \C\) by \(h^{\op}\colon K^{\op} \to L^{\op}\) and \(f^{\op}\colon \ovl{L}^{\op} \to \C^{\op}\) switches between cartesian and cocartesian fibrations, and so it will suffice to prove the case where \(h\) is a \(\var\)-cocartesian equivalence. 

Applying Corollary~\ref{c:fiberwise} it will suffice to show that for every \(x \in \C\) the map
\[ (\ovl{\C}^{/f}_{\var})_x \to (\ovl{\C}^{/f\iota}_{\var})_x \]
is an equivalence of \(\infty\)-categories. Using Proposition~\ref{p:slice-represent} and Proposition~\ref{p:fiber} we may identify this map with the map
\[ \Fun^{\coc}_{\C}(L,\ovl{\C}^{x/}_{\var})^{\op} \to \Fun^{\coc}_{\C}(K,\ovl{\C}^{x/}_{\var})^{\op} .\]
This, in turn, is an equivalence by the assumption that \(K \to L\) is a \(\var\) cocartesian equivalence over \(\C\).
\end{proof}

\section{Limits and colimits in \pdfoo-bicategories}\label{sec:limits}

In this section we define and study a notion of 2-(co)limit suitable for \(\infty\)-bicategories. These (when exist) are associated to a diagram \(f\colon \ovl{K} \to \C\) in an \(\infty\)-bicategory \(\C\), indexed by the underlying scaled simplicial set of a marked-scaled simplicial set \(K\). They come in four flavors, depending on a variance parameter \(\var \in \{\inn,\out\}\), and on whether we take limits or colimits. We give the basic definitions and extract some of their properties in \S\ref{subsec:universal}. In particular, we show that 2-(co)limits can be characterized by the functor they (co)represent (see Corollary~\ref{c:limit-represents}). In \S\ref{s:cofinal} we introduce the notions of inner/outer cofinal and coinitial maps, and show that restriction along them does not affect inner/outer colimits and limits, respectively. We also give an equivalent characterization of limit cones in terms of cofinality, and deduce a unicity result for these cones (Corollary~\ref{c:unicity}). In \S\ref{s:weighted} we study the notion of weighted limits and colimits, which can be expressed using the same language introduced in \S\ref{subsec:universal}. We give a characterization of these (co)limits in terms of the functors they (co)represent, and also show that any 2-(co)limit can be expressed as a weighted (co)limit with respect to a suitable weight. Finally, in \S\ref{s:model-categories} we show that 2-(co)limits in \(\infty\)-bicategories which come from model categories \(\M\) tensored over marked simplicial sets exist and can be computed in terms of weighted homotopy colimits, under mild assumptions on \(\M\).

\subsection{Universal cones}\label{subsec:universal}

\begin{notate}\label{d:cone}
	Let \(K\) be a marked-scaled simplicial set. 
	We will denote by \(K^{\triangleleft}_{\inn}\) the marked-scaled simplicial set whose underlying 
	scaled simplicial set is given by \(\ovl{K}^{\triangleleft}_{\inn} := \{\ast\} \diamond_{\inn} K\), and whose marked edges are the union of the marked edges of \(K\)
	as well as every edge that contains \(\ast\). 
	Similarly, we will denote by \(K^{\triangleleft}_{\out}\) the marked-scaled simplicial set 
	whose underlying scaled simplicial set is \(\{\ast\} \diamond_{\out} K\) and whose marked edges are defined in the analogous way.
\end{notate}

For \(K\) a marked-scaled simplicial set and \(\C\) and \(\infty\)-bicategory,
we shall call a diagram \(\ovl{K}^{\triangleleft}_{\inn} \lrar \C\)
an \ndef{inner cone diagram} and a diagram \(\ovl{K}^{\triangleleft}_{\out} \lrar \C\)
an \ndef{outer cone diagram}.

\begin{define}\label{d:limit}
	Let \(\C\) be an \(\infty\)-bicategory, \(K\) a marked-scaled simplicial set and
	\(g\colon \ovl{K}^{\triangleleft}_{\inn} \lrar \C\) be an inner cone diagram. 
	We will say that \(g\) is an \ndef{inner limit cone} on \(f := g_{|\ovl{K}}\) if the projection
	\[ \ovl{\C}^{/g}_{\inn} \lrar \ovl{\C}^{/f}_{\inn} \]
	is an equivalence of \(\infty\)-bicategories.
	Similarly, we will say that an outer cone diagram \(g\colon K^{\triangleleft}_{\out} \lrar \C\) is an \ndef{outer limit cone} on \(f\) 
	if the projection
	\[ \ovl{\C}^{/g}_{\out} \lrar \ovl{\C}^{/f}_{\out} \]
	is an equivalence of \(\infty\)-bicategories. 
	The definition of inner and outer \ndef{colimit cones} is defined in a similar way using the right cones \(K^{\triangleright}_{\inn}\) and \(K^{\triangleright}_{\out}\).
\end{define}

To facilitate the following discussion, let us fix a variance parameter \(\var \in \{\out,\inn\}\). 
We will refer to \(\var\)-(co)limits as in definition~\ref{d:limit} as \emph{2-(co)limits}. We note that while the diagram \(f\colon \ovl{K} \to \C\) does not take into account the marking on \(K\), the slice \(\infty\)-bicategory appearing in Definition~\ref{d:limit} (and hence the associated notion of \(2\)-(co)limit) critically depends on it. In particular, when all edges in \(K\) are marked the associated notion of \(2\)-(co)limit should be considered as a form of \emph{pseudo-(co)limits}, while if only the degenerate edges are marked it should be considered rather as a \emph{(op)lax (co)limit}, see also Remark~\ref{r:pseudo-lax} below.

\begin{example}\label{ex:final}
Suppose that \(K=\emptyset\). Then \(\ovl{K}^{\triangleleft}_{\var} = \Del^0\) and the data of a \(\var\)-cone in \(\C\) is simply the data of an object \(x \in \C\). By definition, this objects determines a \(\var\)-limit cone over \(\emptyset\) if and only if the map \(\ovl{\C}^{/x}_{\var} \to \C\) is an equivalence of \(\infty\)-bicategories. Since this map is a \(\var\)-cartesian fibration Corollary~\ref{c:fiberwise} tells us that it is an equivalence if and only if its fibers are categorically equivalent to \(\Del^0\). By Remark~\ref{r:mapping-space} this is the same as saying that the mapping \(\infty\)-categories \(\Hom_{\C}(y,x)\) are categorically equivalent to \(\Del^0\) for every \(y \in x\). In this case we say that \(x\) is a \defn{final object} of \(\C\). We note that in this case it does not matter if the variance parameter is \(\inn\) or \(\out\). Dually, inner and outer colimits of the empty diagram are given by objects \(x \in \C\) such that \(\Hom_{\C}(x,y)\) is categorically equivalent to \(\Del^0\) for every \(y\). We will then say that such an \(x\) is an \defn{initial object} of \(\C\).
\end{example}

\begin{rem}\label{r:op-limits}
Let \(\C\) be an \(\infty\)-bicategory, \(K\) a marked-scaled simplicial set and \(g\colon \ovl{K}^{\triangleleft}_{\var} \to \C\) a cone diagram for some variance parameter \(\var \in \{\inn,\out\}\). Then \(g\) is a \(\var\)-limit cone if and only if \[g^{\op}\colon (\ovl{K}^{\triangleleft}_{\var})^{\op} \cong (\ovl{K}^{\op})^{\triangleright}_{\var} \to \C^{\op} \]
is a \(\var\)-colimit cone.
This follows directly from the definition in light of the behavior of slice fibrations under opposites described in Remark~\ref{r:opposites}.
\end{rem}

Let now \(\C\) be an \(\infty\)-bicategory, \(K\) a marked-scaled simplicial set, 
and	\(g\colon \ovl{K}^{\triangleleft}_{\var} \lrar \C\) a \(\var\)-cone in \(\C\) extending \(f = g_{|K}\). 
Consider the diagram of \(\infty\)-bicategories 
\begin{equation}\label{e:zig-zag} 
\begin{tikzcd}
& \ovl{\C}^{/g}_{\var} \ar[dr, end anchor = north west]\ar[dl, end anchor = north east] & \\
\ovl{\C}^{/g(\ast)}_{\var} && \ovl{\C}^{/f}_{\var} \ .
\end{tikzcd}
\end{equation}
By definition we have that the map \(g\) is a \(\var\)-limit cone if and only if the right diagonal map is an equivalence of \(\infty\)-bicategories. 
We now claim that the left diagonal map is always an equivalence:

\begin{lemma}\label{l:cone}
Let \(\C\) be an \(\infty\)-bicategory and \(K\) a marked-scaled simplicial set equipped with a map \(g\colon \ovl{K} \to \C\).
Then the inclusion \(\{\ast\} \subseteq K^{\triangleleft}_{\var}\) is a \(\var\)-cocartesian equivalence over \(\C\).
\end{lemma}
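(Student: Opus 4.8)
The plan is to prove this by exhibiting the inclusion $\{\ast\}\hookrightarrow K^{\triangleleft}_{\var}$ as a retract, in $(\Sms)_{/\C}$, of a Gray-cylinder inclusion of exactly the shape handled by the lax-transformation lifting results, mirroring the retraction argument used in the proof of Proposition~\ref{p:rep-out}. Here the structure map to $\C$ is the one supplied by the cone $g$, with $x := g(\ast)$ the image of the cone point. Since $\Fun^{\coc}_{\C}(-,X)$ is functorial and a retract of an equivalence of $\infty$-categories is again an equivalence, it suffices to show that the relevant cylinder inclusion is a $\var$-cocartesian equivalence over $\C$. I would also record at the outset that, by Remark~\ref{r:opposites}, $(K^{\triangleleft}_{\var})^{\op}\cong(K^{\op})^{\triangleright}_{\var}$ and that passing to opposites converts $\var$-cocartesian equivalences over $\C$ into $\var$-cartesian equivalences over $\C^{\op}$; this lets me match each variance with its available lifting property without introducing new ones.

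First I would make the shape of the cone explicit. Unwinding Definition~\ref{d:thick-join} with $\{\ast\}=\Del^0$ as the unit of the Gray product (Remark~\ref{r:associative}), one sees that $\ovl{K}^{\triangleleft}_{\inn}$ is the quotient of the cylinder ${}^{\flat}\Del^1\mgr K$ obtained by collapsing $\Del^{\{0\}}\times\ovl{K}$ to the cone point $\ast$, while $\ovl{K}^{\triangleleft}_{\out}$ is the analogous quotient of $K\mgr{}^{\flat}\Del^1$ collapsing $\ovl{K}\times\Del^{\{0\}}$; in both cases the marked edges are precisely the images of the cone edges (the cylinder edges ${}^{\flat}\Del^1\times\{v\}$) together with the marked edges of $K$. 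The order-preserving map $\min\colon{}^{\flat}\Del^1\mgr{}^{\flat}\Del^1\to{}^{\flat}\Del^1$ (which is a valid map of scaled simplicial sets, as every non-degenerate triangle of the source is sent to a degenerate one) then Gray-multiplies with $\id_K$ and, using that Gray products preserve the relevant colimits (Remark~\ref{r:colimit-preserving}), descends to a contraction $c\colon{}^{\flat}\Del^1\mgr K^{\triangleleft}_{\inn}\to K^{\triangleleft}_{\inn}$ (and symmetrically on the outer side) which is the identity at one end, constant at $\ast$ at the other, and constant along $\{\ast\}$. Composing $c$ with $g$ produces a lax transformation from the identity of $K^{\triangleleft}_{\var}$ to its retraction onto $\{\ast\}$, all of whose structure edges are cone edges and hence marked.

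This contraction realizes $\{\ast\}\hookrightarrow K^{\triangleleft}_{\var}$ as a retract over $\C$ of the cylinder inclusion
\[
\bigl[\Del^{\{1\}}\otimes K^{\triangleleft}_{\var}\bigr]\coprod_{\Del^{\{1\}}\otimes\{\ast\}}\bigl({}^{\flat}\Del^1\otimes\{\ast\},E\bigr)\longrightarrow\bigl({}^{\flat}\Del^1\otimes K^{\triangleleft}_{\var},E'\bigr)
\]
of Lemma~\ref{l:lax-lift} (with the order of the Gray factors and the choice of end adapted to the variance, and $E,E'$ the markings used there), exactly as in the inclusion appearing in the proof of Proposition~\ref{p:rep-out}. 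It therefore remains to check that this cylinder inclusion is a $\var$-cocartesian equivalence over $\C$. For $\var=\out$ this is the cocartesian counterpart of Lemma~\ref{l:lax-lift}, proved by the identical argument but invoking the cocartesian lifting property of the Remark following Proposition~\ref{p:lax-lift} in place of Proposition~\ref{p:lax-lift} itself; for $\var=\inn$ it is the entirely analogous statement for inner cocartesian fibrations, which rests on the inner lifting property recalled in that same Remark (\ie \cite[Lemma 4.1.7]{LurieGoodwillie}). The hard part will be the bookkeeping around the collapse of $\Del^{\{0\}}\times\ovl{K}$ onto the single vertex $\ast$: one must, as in the proof of Lemma~\ref{l:lax-lift}, keep track of the discrepancy between $A\mgr(B\times C)$ and $(A\mgr B)\times C$ at the level of thin triangles, use the scaled-anodyne manipulation of Lemma~\ref{l:lem-for-cocar} to absorb the collapsed end, and verify that the contraction data assemble into a genuine map of marked-scaled simplicial sets over $\C$ whose cone edges are marked and thus sent to $\var$-cocartesian edges. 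Once the lifting problem is put in this form the cocartesian lift itself is immediate.
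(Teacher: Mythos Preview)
Your approach is correct in substance but takes a longer route than the paper. The paper's proof is a two-line observation: by the very definition of the thick join, the inclusion \(\{\ast\}\subseteq K^{\triangleleft}_{\out}\) is the pushout of the basic cylinder inclusion \(K\otimes\Del^{\{0\}}\hookrightarrow K\mgr{}^{\flat}\Del^1\) along the collapse \(K\otimes\Del^{\{0\}}\to\Del^0\), and that cylinder inclusion is an outer cocartesian equivalence over \(\C\) by the dual of Lemma~\ref{l:lax-lift} (applied with empty \(K\)); similarly for \(\var=\inn\), using \(\Del^{\{0\}}\mgr K\hookrightarrow{}^{\flat}\Del^1\mgr K\) and~\cite[Lemma~4.1.7]{LurieGoodwillie}. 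Since \(\Fun^{\coc}_{\C}(-,\E)\) sends such a pushout to a pullback of \(\infty\)-categories and the restriction along the top edge is a trivial fibration (this is what the proof of Lemma~\ref{l:lax-lift} actually shows), the bottom edge is one as well. Your retraction argument, modelled on Proposition~\ref{p:rep-out}, instead builds a contraction of \(K^{\triangleleft}_{\var}\) onto \(\ast\) and exhibits the inclusion as a retract of a cylinder over the \emph{whole} cone \(K^{\triangleleft}_{\var}\). This works, but note that your \(\min\)-contraction has \(c|_{\Del^{\{0\}}}=\mathrm{const}_\ast\) and \(c|_{\Del^{\{1\}}}=\id\), so the cylinder domain in your retract must contain the \(\Del^{\{0\}}\)-end (where \(c\) is constant), not the \(\Del^{\{1\}}\)-end as in your display; with that correction the retract matches precisely the cocartesian versions of the lifting lemma you cite. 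The paper's pushout argument sidesteps the contraction, the retract diagram, and the ``hard part'' bookkeeping you flag around the collapsed end and the \(A\mgr(B\times C)\) versus \((A\mgr B)\times C\) discrepancy: none of that is needed once you recognise the cone as already being the pushout of the relevant cylinder.
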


\begin{proof}
When \(\var=\out\) this map is a pushout of the map \(K \otimes \Del^{\{0\}} \subseteq K \mgr {}^{\flat}\Del^1\), which is an outer cocartesian equivalence over \(\C\) by (the dual of) Lemma~\ref{l:lax-lift}. When
\(\var=\inn\) it is a pushout of the map \(\Del^{\{0\}} \otimes K \subseteq {}^{\flat}\Del^1 \otimes K\) which is an inner cocartesian equivalence over \(\C\) by~\cite[Lemma 4.1.7]{LurieGoodwillie}. 
\end{proof}

\begin{cor}\label{c:cone}
Let \(\C\) be an \(\infty\)-bicategory and \(K\) a marked-scaled simplicial set. Let \(\var \in \{\inn,\out\}\) be a variance parameter. Then for any diagram \(g\colon \ovl{K}^{\triangleleft}_{\var} \to \C\) the projection \(\ovl{\C}^{/g}_{\var} \to \ovl{\C}^{/g(\ast)}_{\var}\) is a trivial fibration.
\end{cor}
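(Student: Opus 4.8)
The plan is to identify the projection \(p\colon \ovl{\C}^{/g}_{\var} \to \ovl{\C}^{/g(\ast)}_{\var}\) with the restriction map associated to the inclusion \(h\colon \{\ast\} \hookrightarrow K^{\triangleleft}_{\var}\) of the cone point, under which \(g(\ast) = g \circ h\). Both slices are \(\infty\)-bicategories, since they are obtained by applying the right Quillen functors of Remark~\ref{r:diamond-quillen} to the fibrant object \(\C\). Granting the identification, I would split the claim into two halves: that \(p\) is a bicategorical equivalence, and that \(p\) is a bicategorical fibration. Since a map between fibrant objects of \(\Ss\) is a trivial fibration precisely when it is simultaneously a fibration and a weak equivalence, establishing both halves finishes the proof.

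The equivalence half is immediate: Lemma~\ref{l:cone} asserts that \(h\colon \{\ast\} \subseteq K^{\triangleleft}_{\var}\) is a \(\var\)-cocartesian equivalence over \(\C\), so the hypotheses of the invariance result Corollary~\ref{c:invariance} are met verbatim with \(L = K^{\triangleleft}_{\var}\), \(K = \{\ast\}\) and \(f = g\), yielding that \(p\) is an equivalence of \(\infty\)-bicategories. For the fibration half, I would check that \(p\) has the right lifting property against a generating set of bicategorical trivial cofibrations \(j\colon A \to B\). By the defining adjunction of the thick slice (the mapping property characterizing \(\ovl{\C}^{/-}_{\var}\)), such a lifting problem transposes to extending a map into \(\C\) along the \(\diamond_{\var}\)-pushout-product
\[ P := \bigl[A^{\flat} \diamond_{\var} K^{\triangleleft}_{\var}\bigr] \coprod_{A^{\flat} \diamond_{\var} \{\ast\}} \bigl[B^{\flat} \diamond_{\var} \{\ast\}\bigr] \longrightarrow B^{\flat} \diamond_{\var} K^{\triangleleft}_{\var} =: Q, \]
and since \(\C\) is an \(\infty\)-bicategory it suffices to prove \(P \to Q\) is a bicategorical trivial cofibration. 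Here the one-variable Quillen property of Remark~\ref{r:diamond-quillen} does all the work: applying the left Quillen functors \((-)^{\flat}\diamond_{\var}\{\ast\}\) and \((-)^{\flat}\diamond_{\var}K^{\triangleleft}_{\var}\) to the trivial cofibration \(j\) shows that both legs \(A^{\flat}\diamond_{\var}\{\ast\} \to B^{\flat}\diamond_{\var}\{\ast\}\) and \(A^{\flat}\diamond_{\var}K^{\triangleleft}_{\var} \to B^{\flat}\diamond_{\var}K^{\triangleleft}_{\var}\) are trivial cofibrations. The map \(A^{\flat}\diamond_{\var}K^{\triangleleft}_{\var} \to P\) is then a pushout of the first and hence a trivial cofibration, while the composite \(A^{\flat}\diamond_{\var}K^{\triangleleft}_{\var} \to P \to Q\) is the second; two-out-of-three forces the evident monomorphism \(P \to Q\) to be a weak equivalence, hence a trivial cofibration. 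This produces the required lift and exhibits \(p\) as a bicategorical fibration.

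The main obstacle — and the reason one cannot simply test \(p\) against all monomorphisms in one stroke — is that the cone inclusion \(\{\ast\} \hookrightarrow K^{\triangleleft}_{\var}\) is \emph{not} a weak equivalence in \(\Sms\); it is only a \(\var\)-cocartesian equivalence \emph{over \(\C\)}, a genuinely \(\C\)-dependent notion. Consequently the two-out-of-three argument closes only when the map in the \(X\)-slot is itself a trivial cofibration, which is exactly why the deep input (Lemma~\ref{l:cone}, routed through Corollary~\ref{c:invariance}) must be used for the equivalence half, while the fibration half is pure Quillen bookkeeping. The one technical point requiring care is the transposition: the two functors \((-)^{\flat}\diamond_{\var}\{\ast\}\) and \((-)^{\flat}\diamond_{\var}K^{\triangleleft}_{\var}\) land in different slice categories, so one should form \(P\) and run the two-out-of-three argument in \(\Ss\) itself, using that the forgetful functors from these slices preserve and reflect cofibrations and weak equivalences.
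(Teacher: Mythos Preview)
Your proof is correct and follows the same approach as the paper: the paper's one-line argument cites exactly Lemma~\ref{l:cone} and Corollary~\ref{c:invariance}, which establish that \(p\) is a bicategorical equivalence. You go further by supplying the (routine) verification that \(p\) is also a fibration, which the paper leaves implicit; your pushout-product/two-out-of-three argument using the left Quillen functors of Remark~\ref{r:diamond-quillen} is a clean way to make this explicit.
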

\begin{proof}
This follows follows from Lemma~\ref{l:cone} and Corollary~\ref{c:invariance}. 
\end{proof}

Combining Corollary~\ref{c:cone} with Corollary~\ref{c:representables} we find that \(\ovl{\C}^{/g}_{\var} \to \C\) is a model for the \(\var\)-cartesian fibration represented by \(g(\ast)\). In particular, the cone \(g\) determines a map from the \(\var\)-functor represented by \(g(\ast)\) and the \(\var\)-functor associated to the \(\var\)-cocartesian fibration \(\ovl{\C}^{/f}_{\var} \to \C\), which we have identified in Proposition~\ref{p:slice-represent} in terms of (partially) lax natural transformations. By \(\var\)-functor here we mean a functor \(\C^{\eps} \to \Catoo\),
where the variance \(\eps = \emptyset, \co\) of \(\C\) is determined by  the variable \(\var\).
The condition that \(g\) is a \(\var\)-limit cone is exactly the condition that this map is an equivalence. Summarizing this discussion (and using Remark~\ref{r:op-limits} to obtain its dual counterpart of colimits) we may conclude the following:

\begin{cor}\label{c:limit-represents} 
Let \(K\) be a marked-scaled simplicial set and \(\C\)
an \(\infty\)-bicategory.
\begin{enumerate}
\item
An inner cone \(g\colon K^{\triangleleft}_{\inn} \to \C\) extending \(f = g_{|K}\) is an inner limit cone if and only if it determines a natural equivalence of \(\infty\)-categories
\[\Hom_{\C}(x,g(\ast)) \simeq \RNat_K(\ast,\Hom_{\C}(x,f(-))).\]
\item
An outer cone \(g\colon K^{\triangleleft}_{\out} \to \C\) extending \(f = g_{|K}\) is an outer limit cone if and only if it determines a natural equivalence of \(\infty\)-categories
\[\Hom_{\C}(x,g(\ast)) \simeq \LNat_K(\ast,\Hom_{\C}(x,f(-))).\]
\item
An inner cone \(g\colon K^{\triangleright}_{\inn} \to \C\) extending \(f = g_{|K}\) is an inner colimit cone if and only if it determines a natural equivalence of \(\infty\)-categories 
\[\Hom_{\C}(g(\ast),x) \simeq \RNat_{K^{\op}}(\ast,\Hom_{\C}(f(-),x)).\]
\item
An outer cone \(g\colon K^{\triangleright}_{\out} \to \C\) extending \(f = g_{|K}\) is an outer colimit cone if and only if it determines a natural equivalence of \(\infty\)-categories
\[\Hom_{\C}(g(\ast),x) \simeq \LNat_{K^{\op}}(\ast,\Hom_{\C}(f(-),x)).\]
\end{enumerate}
\end{cor}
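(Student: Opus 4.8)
The plan is to assemble the statement directly from the discussion preceding it, treating all four flavors in parallel by reading off the appropriate line of Proposition~\ref{p:slice-represent}. I will carry out the inner limit case~(1) in full; the other three are formally identical once one inserts the correct fibration flavor and the matching side of Proposition~\ref{p:slice-represent}. The heart of the argument is that, by Definition~\ref{d:limit}, the cone $g$ is an inner limit cone precisely when the right leg $\ovl{\C}^{/g}_{\inn}\to\ovl{\C}^{/f}_{\inn}$ of the zig-zag~\eqref{e:zig-zag} is an equivalence of $\infty$-bicategories. Both sides are inner cartesian fibrations over $\C$, and the projection respects cartesian edges (these being exactly the fiberwise-thin edges by Corollary~\ref{cor:slice}, a condition preserved by restriction along $\ovl{K}\hookrightarrow\ovl{K}^{\triangleleft}_{\inn}$), so this leg is an arrow in $\Car^{\inn}(\C)$. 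Since the correspondence $\Car^{\inn}(\C)\simeq\Fun(\C^{\coop},\Catoo)$ of Corollary~\ref{c:S-U-for-outer-fibs} is a bicategorical equivalence, the leg is an equivalence if and only if the induced natural transformation of classifying functors is one, which by Remark~\ref{r:cofinal-is-equivalence} happens if and only if it is an objectwise equivalence.

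Next I would identify the two classifying functors. Applying Corollary~\ref{c:cone} to replace $\ovl{\C}^{/g}_{\inn}$ by $\ovl{\C}^{/g(\ast)}_{\inn}$ up to trivial fibration, the source is the inner cartesian fibration over an object; by Proposition~\ref{p:slice-represent} with $K=\Del^0$ it is classified by the $\co$-presheaf $x\mapsto\Hom_{\C}(x,g(\ast))^{\op}$. The target $\ovl{\C}^{/f}_{\inn}$ is classified, again by Proposition~\ref{p:slice-represent}, by $x\mapsto\RNat_K(\ovl{x},f)^{\op}\simeq\RNat_K(\ast,\Hom_{\C}(x,f(-)))^{\op}$, where the second identification uses Corollary~\ref{c:C-to-cat}. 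Objectwise equivalence of these two $\co$-presheaves thus reads $\Hom_{\C}(x,g(\ast))^{\op}\simeq\RNat_K(\ast,\Hom_{\C}(x,f(-)))^{\op}$, and cancelling the $(-)^{\op}$ appearing on both sides yields exactly statement~(1).

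Statement~(2) is proved verbatim using instead the outer cartesian flavor: here $\Car^{\out}(\C)\simeq\Fun(\C^{\op},\Catoo)$ involves honest presheaves, so no $(-)^{\op}$ appears, and Proposition~\ref{p:slice-represent} gives the target functor $x\mapsto\LNat_K(\ast,\Hom_{\C}(x,f(-)))$ against the representable $x\mapsto\Hom_{\C}(x,g(\ast))$. For the colimit statements~(3) and~(4) I would run the same three-step argument for the right-cone slice fibrations $\ovl{\C}^{g/}_{\var}\to\ovl{\C}^{f/}_{\var}$, now inner/outer \emph{cocartesian}, using the right-cone analogue of Corollary~\ref{c:cone} (obtained either by the same filtration or by opposites via Remark~\ref{r:opposites}) and the cocartesian lines of Proposition~\ref{p:slice-represent}; the resulting objectwise equivalences are precisely~(3) and~(4). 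Alternatively, (3) and (4) follow from (1) and (2) by applying them to $\C^{\op}$, $K^{\op}$, $f^{\op}$ and invoking Remark~\ref{r:op-limits} together with the identifications $\Hom_{\C^{\op}}(x,y)\simeq\Hom_{\C}(y,x)^{\op}$ of Remark~\ref{r:mapping-space} and the $\RNat/\LNat$ exchange of Remark~\ref{r:symmetric-2}.

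The main obstacle is not any single computation but the compatibility underlying the first paragraph: one must be sure that the comparison map of fibrations induced by $g$ corresponds, under Grothendieck--Lurie, to the natural transformation whose components are the maps displayed in the statement, rather than to some other transformation between the same endpoints. This is essentially the naturality of the straightening--unstraightening equivalence combined with Corollary~\ref{c:cone} (so that the source is genuinely the representable fibration compatibly with the transformation into $\ovl{\C}^{/f}_{\var}$), and it is what the paragraph preceding the corollary asserts; spelling it out precisely, along with the careful bookkeeping of the $\op$ and $\co$ decorations so that they cancel to give the clean statements, is the only delicate part of the proof.
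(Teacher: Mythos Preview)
Your proposal is correct and follows essentially the same route as the paper: the corollary is presented there as a summary of the discussion preceding it, which uses Corollary~\ref{c:cone} to identify the left leg of~\eqref{e:zig-zag} with a representable fibration, Proposition~\ref{p:slice-represent} to identify the classifying functor of the right-hand slice, and then reads the limit condition as the statement that the induced transformation is an equivalence (with the colimit cases obtained via Remark~\ref{r:op-limits}). Two small citation slips: your appeal to Corollary~\ref{cor:slice} for the cartesian edges concerns the \emph{thin} slice rather than the thick one (for the thick slice the cartesian edges are the marked ones, which are visibly preserved under restriction), and the passage from ``equivalence of fibrations'' to ``objectwise equivalence of classifying functors'' is more directly Corollary~\ref{c:fiberwise} than Remark~\ref{r:cofinal-is-equivalence}; neither affects the argument.
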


\begin{rem}\label{r:pseudo-lax}
To avoid confusion, let us recall that the notions of \(\gr\)- and \(\opgr\)- natural transformations appearing in Corollary~\ref{c:limit-represents}  
strongly depend on the marking on \(K\). In particular, when all edges in \(K\) are marked this coincides with the usual notion of a natural transformation (that is, maps in the functor bicategories \(\Fun(K,\Catoo)\) and \(\Fun(K^{\op},\Catoo)\)). On the other hand, if only the degenerate edges are marked these correspond instead to lax (or oplax) transformations. In general, we have that \(2\)-(co)limits represent \(\infty\)-categories of ``partially lax'' natural transformation, where the precise level of pseudo-naturality depends on the marking on \(K\).
\end{rem}

The following corollary is essentially an equivalent reformulation of the statement of Corollary~\ref{c:limit-represents} 
which is more convenient for certain applications:

\begin{cor}\label{c:locally}
Let \(\C\) be an \(\infty\)-bicategory, \(K\) a marked-scaled simplicial set and \(f\colon \ovl{K} \to \C\) a diagram. Then a \(\var\)-cone \(g\colon \ovl{K}^{\triangleleft}_{\var}\) is a \(\var\)-limit cone if and only if for every \(x\in \C\) the restriction map
\[ \Fun^{\coc}_{\C}(K^{\triangleleft}_{\var},\ovl{\C}^{x/}_{\var}) \to \Fun^{\coc}_{\C}(K,\ovl{\C}^{x/}_{\var}) \]
is an equivalence of \(\infty\)-categories. Dually, a \(\var\)-cone \(g\colon \ovl{K}^{\triangleright}_{\var}\) is a \(\var\)-colimit cone if and only if for every \(x\in \C\) the restriction map
\[ \Fun^{\car}_{\C}(K^{\triangleright}_{\var},\ovl{\C}^{/x}_{\var}) \to \Fun^{\car}_{\C}(K,\ovl{\C}^{/x}_{\var}) \] 
is an equivalence of \(\infty\)-categories.
\end{cor}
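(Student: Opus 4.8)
The plan is to reduce the statement to the fiberwise criterion of Corollary~\ref{c:fiberwise} by reinterpreting both sides of the displayed restriction map as the opposites of the fibers of the relevant slice fibrations; in this sense the proof is essentially an unwinding of Corollary~\ref{c:limit-represents} together with Proposition~\ref{p:fiber} and Proposition~\ref{p:slice-represent}. I will treat the limit case explicitly and deduce the colimit case by passing to opposites. Fix a variance parameter $\var \in \{\inn,\out\}$ and recall that $\ovl{\C}^{x/}_{\var} \to \C$ is a $\var$-cocartesian fibration (Proposition~\ref{p:cocar} resp.~Corollary~\ref{c:outer-slice}), so that $\Fun^{\coc}_{\C}(-,\ovl{\C}^{x/}_{\var})$ is defined on marked-scaled simplicial sets over $\C$ and the inclusion $K \hookrightarrow K^{\triangleleft}_{\var}$, made into a map over $\C$ by the cone $g$, induces the displayed restriction map.

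First I would apply Proposition~\ref{p:fiber} twice: once with the index $K$ and diagram $f$, and once with the index $K^{\triangleleft}_{\var}$ and diagram $g$. This yields natural identifications $\Fun^{\coc}_{\C}(K,\ovl{\C}^{x/}_{\inn}) \simeq \RNat_K(\ovl{x},f)$ and $\Fun^{\coc}_{\C}(K^{\triangleleft}_{\inn},\ovl{\C}^{x/}_{\inn}) \simeq \RNat_{K^{\triangleleft}_{\inn}}(\ovl{x},g)$ in the inner case, and the analogous identifications with $\LNat_K(\ovl{x},f)^{\op}$ and $\LNat_{K^{\triangleleft}_{\out}}(\ovl{x},g)^{\op}$ in the outer case. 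Next I would invoke Proposition~\ref{p:slice-represent}, which computes the classifying functors of $\ovl{\C}^{/f}_{\var} \to \C$ and $\ovl{\C}^{/g}_{\var} \to \C$: in the inner case the fibers over $x$ are $\RNat_K(\ovl{x},f)^{\op}$ and $\RNat_{K^{\triangleleft}_{\inn}}(\ovl{x},g)^{\op}$, while in the outer case they are $\LNat_K(\ovl{x},f)$ and $\LNat_{K^{\triangleleft}_{\out}}(\ovl{x},g)$. Combining the two steps gives, in both variances, natural equivalences identifying $\Fun^{\coc}_{\C}(K,\ovl{\C}^{x/}_{\var})$ with the opposite of the fiber $(\ovl{\C}^{/f}_{\var})_x$ and $\Fun^{\coc}_{\C}(K^{\triangleleft}_{\var},\ovl{\C}^{x/}_{\var})$ with the opposite of the fiber $(\ovl{\C}^{/g}_{\var})_x$. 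Since these identifications are natural in the index, the displayed restriction map is identified with the opposite of the fiber map $(\ovl{\C}^{/g}_{\var})_x \to (\ovl{\C}^{/f}_{\var})_x$ induced by the projection $\ovl{\C}^{/g}_{\var} \to \ovl{\C}^{/f}_{\var}$.

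It then remains to put the pieces together. The projection $\ovl{\C}^{/g}_{\var} \to \ovl{\C}^{/f}_{\var}$ is a map of $\var$-cartesian fibrations over $\C$ covering $\id_{\C}$, and it preserves cartesian edges, since by Corollary~\ref{cor:slice} these are exactly the marked edges and restriction along $K \hookrightarrow K^{\triangleleft}_{\var}$ preserves the relevant thin triangles. Hence Corollary~\ref{c:fiberwise} applies and shows that this projection is an equivalence of $\infty$-bicategories if and only if the fiber maps $(\ovl{\C}^{/g}_{\var})_x \to (\ovl{\C}^{/f}_{\var})_x$ are equivalences for all $x \in \C$. By the previous paragraph the latter is precisely the condition that the displayed restriction maps are equivalences for all $x$, while by Definition~\ref{d:limit} the former is exactly the assertion that $g$ is a $\var$-limit cone. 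This establishes the limit statement; applying it to $g^{\op}$ and translating through Remark~\ref{r:op-limits}, Remark~\ref{r:opposites} and Remark~\ref{r:symmetric-2}, which exchange $\Fun^{\car}_{\C}(-,\ovl{\C}^{/x}_{\var})$ with $\Fun^{\coc}_{\C^{\op}}(-,\ovl{(\C^{\op})}^{x/}_{\var})$, yields the colimit statement.

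The main obstacle I anticipate is not any single hard estimate but rather the bookkeeping of the naturality in the index $K$ of the equivalences furnished by Proposition~\ref{p:fiber} and Proposition~\ref{p:slice-represent}: this naturality is precisely what guarantees that the restriction map induced by $K \hookrightarrow K^{\triangleleft}_{\var}$ matches the fiberwise projection, as opposed to merely that the two sides are abstractly equivalent. Once that compatibility is secured, the argument is a formal assembly of results already proved.
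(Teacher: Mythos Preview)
Your proposal is correct and follows essentially the same approach as the paper: both arguments reduce the definition of a \(\var\)-limit cone to a fiberwise statement via Corollary~\ref{c:fiberwise}, then identify the fibers with (op)lax transformation categories using Proposition~\ref{p:slice-represent}, and finally translate these to the displayed \(\Fun^{\coc}\) restriction maps via Proposition~\ref{p:fiber}. The paper runs these steps in the reverse order to yours, but the content is the same; one minor point is that your citation of Corollary~\ref{cor:slice} for the identification of cartesian edges with marked edges pertains to the thin slice, whereas for the thick slices \(\ovl{\C}^{/f}_{\var}\) the relevant statements are in Corollary~\ref{c:outer-slice} and Proposition~\ref{p:cocar}.
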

\begin{proof}
We prove the first claim. The dual version is obtained in a similar manner. Applying Corollary~\ref{c:invariance} we deduce that \(g\) is a \(\var\)-limit cone if and only if for every \(x \in \C\) the induced map
\[ (\ovl{\C}^{/g}_{\var})_x \to (\ovl{\C}^{/f}_{\var})_x \]
is an equivalence of \(\infty\)-categories on the underlying simplicial sets. By Proposition~\ref{p:slice-represent} this is equivalent to saying that for every \(x \in \C\) the induced map
\[ \Nat^{\eps}_{K^{\triangleleft}}(\ovl{x},g) \to \Nat^{\eps}_{K}(x,f) \]
is an equivalence of \(\infty\)-categories, where \(\eps=\gr\) if \(\var=\inn\) and \(\eps=\opgr\) if \(\var =\out\). This statement then translates to the desired results by using the identification of Proposition~\ref{p:fiber}.
\end{proof}

We now proceed to discuss the extent to which the notion of 2-(co)limit proposed above is homotopically sound, that is, depends only on the equivalence type of the marked \(\infty\)-bicategory associated to \(K\). For this, we first note that in contrast to the case of Example~\ref{ex:final} above, and to the situation in the realm of \(\infty\)-categories, in general \(\var\)-limit cones are \emph{not} final objects in \(\C^{/f}_{\var}\). A potentially confusing side-effect of this failure is that \(\var\)-limit cones are not internally characterized inside \(\ovl{\C}^{/f}_{\var}\) when the latter is considered as an abstract \(\infty\)-bicategory, but depend also on the fibration \(\ovl{\C}^{/f}_{\var} \to \C\) (or alternatively, on the associated collection of cartesian edges). We will describe this situation using the language of cofinality in \S\ref{s:cofinal}, see Proposition~\ref{p:limit-is-cofinal} below. For the moment, let us just establish the following bi-product of this phenomenon, which will be enough in order to show that the notion of 2-(co)limit proposed above is homotopically sound in the above sense:

\begin{prop}\label{p:pre-cofinal}
Let \(h\colon K \to L\) be a map of marked-scaled simplicial sets and \(f\colon \ovl{L}\to \C\) a diagram. If the map
\[ \ovl{\C}^{f/}_{\var} \to \ovl{\C}^{fh/}_{\var} \]
is a bicategorical equivalence then it also preserves and detects \(\var\)-colimit cones. 
Dually, if the map
\[ \ovl{\C}^{/f}_{\var} \to \ovl{\C}^{/fh}_{\var} \]
is a bicategorical equivalence then it preserves and detects \(\var\)-limit cones.
\end{prop}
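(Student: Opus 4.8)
The plan is to deduce both statements from a single two-out-of-three argument; I will treat the colimit case in detail, the limit case being entirely dual. Write $h^{\triangleright}\colon K^{\triangleright}_{\var} \to L^{\triangleright}_{\var}$ for the map on right cones induced by $h$, and note that it fixes the cone point $\ast$. Let $\delta\colon \ovl{\C}^{f/}_{\var} \to \ovl{\C}^{fh/}_{\var}$ denote the map appearing in the hypothesis; on objects it sends a co-cone $g\colon \ovl{L}^{\triangleright}_{\var} \to \C$ under $f$ to its restriction $gh^{\triangleright}$ under $fh$. Thus the assertion that $\delta$ \emph{preserves and detects} $\var$-colimit cones means precisely that $g$ is a $\var$-colimit cone if and only if $gh^{\triangleright}$ is one, and by Definition~\ref{d:limit} this amounts to showing that the projection $\alpha\colon \ovl{\C}^{g/}_{\var} \to \ovl{\C}^{f/}_{\var}$ is an equivalence if and only if the projection $\beta\colon \ovl{\C}^{gh^{\triangleright}/}_{\var} \to \ovl{\C}^{fh/}_{\var}$ is.

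First I would assemble the commutative square of slice fibrations
\[
\begin{tikzcd}
\ovl{\C}^{g/}_{\var} \ar[r, "\alpha"]\ar[d, "\gamma"'] & \ovl{\C}^{f/}_{\var} \ar[d, "\delta"] \\
\ovl{\C}^{gh^{\triangleright}/}_{\var} \ar[r, "\beta"'] & \ovl{\C}^{fh/}_{\var}
\end{tikzcd}
\]
obtained by applying the (contravariant) slice construction to the commuting square of index maps whose rows are the cone inclusions $\ovl{K} \hookrightarrow \ovl{K}^{\triangleright}_{\var}$, $\ovl{L} \hookrightarrow \ovl{L}^{\triangleright}_{\var}$ and whose columns are $\ovl{h}$ and $h^{\triangleright}$. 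Here $\gamma$ is the restriction along $h^{\triangleright}$, and $\delta$ is an equivalence by hypothesis. By two-out-of-three it therefore suffices to prove that $\gamma$ is an equivalence, for then $\alpha$ is an equivalence exactly when $\beta$ is.

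The key observation, which makes the whole argument work with no extra hypothesis on $h$, is that $\gamma$ is \emph{always} an equivalence. Indeed, since $h^{\triangleright}$ fixes the cone point we have $(gh^{\triangleright})(\ast) = g(\ast)$, so the triangle
\[
\begin{tikzcd}[column sep=small]
\ovl{\C}^{g/}_{\var} \ar[rr, "\gamma"]\ar[dr] & & \ovl{\C}^{gh^{\triangleright}/}_{\var} \ar[dl] \\
& \ovl{\C}^{g(\ast)/}_{\var} &
\end{tikzcd}
\]
commutes, where the two downward maps are the projections onto the slice under the common cone point. By the dual of Corollary~\ref{c:cone} (obtained by passing to opposites via Remark~\ref{r:opposites}) both downward maps are trivial fibrations, hence equivalences, and so $\gamma$ is an equivalence by two-out-of-three. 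This completes the colimit case.

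The limit statement follows by the same argument applied to the over-slices $\ovl{\C}^{/(-)}_{\var}$ and left cones, now invoking Corollary~\ref{c:cone} directly; alternatively it is deduced from the colimit case by replacing $(\C, f, h)$ with $(\C^{\op}, f^{\op}, h^{\op})$ and using Remark~\ref{r:op-limits} together with Remark~\ref{r:opposites}. I do not expect any genuine obstacle here: the content is entirely formal once the square is in place, and the one point requiring care is bookkeeping, namely checking that the square of slice fibrations commutes and that $\gamma$ is exactly the cone-point restriction fitting into the auxiliary triangle. The real input is the cone-point invariance of the slice fibration recorded in Corollary~\ref{c:cone}, which is what forces $\gamma$ to be an equivalence irrespective of whether $h$ itself is a $\var$-(co)cartesian equivalence.
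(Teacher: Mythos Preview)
Your proof is correct and follows essentially the same approach as the paper. The paper assembles a single six-object diagram (with the slice over the cone point as an extra column) rather than your square plus auxiliary triangle, but the content is identical: both arguments use Corollary~\ref{c:cone} to show that restriction along $h^{\triangleright}$ (or $h^{\triangleleft}$) induces an equivalence on the slice over/under the full cone via the common cone point, and then conclude by two-out-of-three.
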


\begin{proof}
We prove the case of coinitiality and limits. The dual case is proven in a similar manner. Let \(\wtl{h}\colon L^{\triangleleft}_{\var} \to K^{\triangleleft}_{\var}\) be the induced map on \(\var\)-cones. For \(g\colon \ovl{L}^{\triangleleft}_{\var} \to \C\) extending \(f\) consider the diagram
\[ \xymatrix{
\ovl{\C}_{/g(\ast)} \ar[d]_{\cong} & \ovl{\C}_{/g}\ar[l]_-{\simeq}\ar[r]\ar[d] & \ovl{\C}_{/f} \ar[d] \\
\ovl{\C}_{/g\wtl{h}(\ast)} & \ovl{\C}_{/g\wtl{h}}\ar[l]_-{\simeq}\ar[r]& \ovl{\C}_{/fh} \ ,
}\]
where the left facing horizontal arrows are trivial fibrations by Corollary~\ref{c:cone} and the left vertical arrow is an isomorphism since \(\wtl{h}\colon L^{\triangleleft}_{\var} \to K^{\triangleleft}_{\var}\) sends the cone point to the cone point. It then follows that the middle vertical map is an bicategorical equivalence.
Now by definition \(g\) is a \(\var\)-limit cone if and only if the top right horizontal map is a bicategorical equivalence and that \(g\wtl{h}\) is a \(\var\)-limit cone if and only if the bottom right horizontal map is a bicategorical equivalence. These two conditions thus become equivalent whenever the rightmost vertical map is a bicategorical equivalence.
\end{proof}

\begin{example}\label{ex:homotopically-sound}
Suppose that \(h\colon K \to L\) is a map of marked-scaled simplicial sets whose underlying scaled map \(\ovl{K} \to \ovl{L}\) is a bicategorical equivalence and such that the marked edges in \(L\) are exactly the images of the marked edges in \(K\). Combining Corollary~\ref{c:invariance} and Remark~\ref{r:equivalence} we then get that for every \(\infty\)-bicategory \(\C\) and every diagram \(f\colon \ovl{L} \to \C\) the restriction functor
\[ \ovl{\C}^{/f}_{\var} \to \ovl{\C}^{/fh}_{\var} \]
is a bicategorical equivalence and hence preserves and detects \(\var\)-limit cones by Proposition~\ref{p:pre-cofinal}.
\end{example}

\begin{rem}\label{r:indexed-by-oo-bicats}
In light of Example~\ref{ex:homotopically-sound}, there is no real restriction of generality in considering limits and colimits just for diagrams indexed by marked-scaled simplicial sets whose underlying scaled simplicial sets are \(\infty\)-bicategories. Indeed, if \(K\) is a marked-scaled simplicial set and \(f\colon \ovl{K} \to \C\) is a diagram valued in an \(\infty\)-bicategory \(\C\), then we can factor \(f\) as \(\ovl{K} \to \ovl{\I} \to \C\) where \(\ovl{\I}\) is an \(\infty\)-bicategory and the map \(\ovl{K} \to \ovl{\I}\) is bicategorical trivial cofibration. Setting \(\I\) to be the marked-scaled simplicial set whose underlying scaled simplicial set is \(\ovl{\I}\) and whose marked edges are the images of the marked edges in \(K\), 
Example~\ref{ex:homotopically-sound} tells us that we may as well replace \(K\) with \(\I\) when considering 2-(co)limits of \(f\).
\end{rem}

We finish this subsection by discussing an analogue of Remark~\ref{r:op-limits} where \((-)^{\op}\) is replaced with \((-)^{\co}\). We note that latter does not admit a convenient model with scaled simplicial sets, and so we will need to formulate the claim using the \(\infty\)-bicategory \(\BiCat\). For this, we note that by Remark~\ref{r:diamond-quillen} the functors \((-)^{\flat} \diamond_{\inn} \Del^0\),\((-)^{\flat} \diamond_{\out} \Del^0\), \(\Del^0 \diamond_{\inn} (-)^{\flat}\) and \(\Del^0 \diamond_{\out} (-)^{\flat}\) are left Quillen functors and hence induce functors on the level of \(\infty\)-categories
\[ \BiCat^{\thi} \to (\BiCat^{\thi})_{\Del^0/} ,\]
which we will denote by the same name.

\begin{lemma}\label{l:coposites}
For an \(\infty\)-bicategory \(\I\) there are natural equivalences
\[(\I \diamond_{\inn} \Del^0)^{\co} \simeq \I^{\co} \diamond_{\out} \Del^0\]
and
\[(\Del^0\diamond_{\inn} \I)^{\co} \simeq \Del^0\diamond_{\out} \I^{\co} ,\]
where both sides are considered as functors \(\BiCat^{\thi} \to (\BiCat^{\thi})_{\Del^0/}\) in the input \(\I\).
\end{lemma}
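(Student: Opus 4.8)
The plan is to prove the second equivalence $(\Del^0\diamond_{\inn}\I)^{\co}\simeq\Del^0\diamond_{\out}\I^{\co}$ first, and then to deduce the first one from it by applying $(-)^{\op}$, using the point-set identities of Remark~\ref{r:opposites} together with the fact that $(-)^{\op}$ and $(-)^{\co}$ commute on $\BiCat^{\thi}$ (Construction~\ref{cn:op-action}). Concretely, since $(X\diamond_{\inn}Y)^{\op}\cong Y^{\op}\diamond_{\inn}X^{\op}$ and $(X\diamond_{\out}Y)^{\op}\cong Y^{\op}\diamond_{\out}X^{\op}$, one has $\I\diamond_{\inn}\Del^0\cong(\Del^0\diamond_{\inn}\I^{\op})^{\op}$, whence $(\I\diamond_{\inn}\Del^0)^{\co}\cong\bigl((\Del^0\diamond_{\inn}\I^{\op})^{\co}\bigr)^{\op}$; applying the second equivalence to $\I^{\op}$ and then $(-)^{\op}$ once more (via Remark~\ref{r:opposites}) produces $\I^{\co}\diamond_{\out}\Del^0$, as required. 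As $(-)^{\op}$ is an equivalence of $\BiCat^{\thi}$, this reduction is natural in $\I$.

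To prove the second equivalence I would argue by the Yoneda lemma in the $\infty$-category $(\BiCat^{\thi})_{\Del^0/}$: it suffices to produce, naturally in a pointed $\infty$-bicategory $(\C,c)$, an equivalence of mapping spaces between $\Map_{(\BiCat^{\thi})_{\Del^0/}}\bigl((\Del^0\diamond_{\inn}\I)^{\co},(\C,c)\bigr)$ and $\Map_{(\BiCat^{\thi})_{\Del^0/}}\bigl(\Del^0\diamond_{\out}\I^{\co},(\C,c)\bigr)$. By Remark~\ref{r:diamond-quillen} the functors $\Del^0\diamond_{\inn}(-)^{\flat}$ and $\Del^0\diamond_{\out}(-)^{\flat}$ are left Quillen with right adjoints the (underlying scaled) coslice constructions $(\C,c)\mapsto\C^{c/}_{\inn}$ and $(\C,c)\mapsto\C^{c/}_{\out}$; passing to the derived adjunctions yields natural equivalences $\Map_{(\BiCat^{\thi})_{\Del^0/}}(\Del^0\diamond_{\inn}\I,(\C,c))\simeq\Map_{\BiCat^{\thi}}(\I,\C^{c/}_{\inn})$, and similarly for the outer variant. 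Since $(-)^{\co}$ is a self-inverse equivalence of $\BiCat^{\thi}$ that fixes $\Del^0$ and acts as the identity on objects, it induces a self-equivalence of $(\BiCat^{\thi})_{\Del^0/}$ sending $(\C,c)$ to $(\C^{\co},c)$, so the left-hand mapping space is equivalent to $\Map_{(\BiCat^{\thi})_{\Del^0/}}(\Del^0\diamond_{\inn}\I,(\C^{\co},c))\simeq\Map_{\BiCat^{\thi}}(\I,(\C^{\co})^{c/}_{\inn})$. Now the established four-fold symmetry of slice fibrations (Corollary~\ref{c:representables}, equivalence~\eqref{e:symmetry}, applied with $\C$ replaced by $\C^{\co}$) supplies a natural equivalence $(\C^{\co})^{c/}_{\inn}\simeq(\C^{c/}_{\out})^{\co}$; applying $(-)^{\co}$ once more turns the mapping space into $\Map_{\BiCat^{\thi}}(\I^{\co},\C^{c/}_{\out})$, which by the outer cone–slice adjunction equals $\Map_{(\BiCat^{\thi})_{\Del^0/}}(\Del^0\diamond_{\out}\I^{\co},(\C,c))$. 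This is precisely the right-hand side, and every step is natural in $(\C,c)$ and in $\I$, so Yoneda gives the desired equivalence of functors.

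I expect the main obstacle to be the careful bookkeeping needed to upgrade the point-set Quillen adjunctions of Remark~\ref{r:diamond-quillen} to the derived cone–slice adjunctions at the level of mapping spaces in the under-category $(\BiCat^{\thi})_{\Del^0/}$, while keeping exact track of the base point (the cone apex) and of the precise variance of $\C$ entering each coslice. Once these adjunctions are in hand the argument is purely formal: the only substantial input is the already-proven symmetry~\eqref{e:symmetry}, which carries the genuine content of the statement, namely that $(-)^{\co}$ reverses the direction of the structural $2$-cells of a cone and thereby interchanges the inner and outer flavors (compare the descriptions of edges in $\C^{x/}_{\inn}$ and $\C^{x/}_{\out}$ in the examples following Definition~\ref{d:thick-join}).
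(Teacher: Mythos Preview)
Your proposal is correct and takes essentially the same approach as the paper. The paper's one-line proof simply says ``this follows from the identification of the right adjoints of these functors given in Corollary~\ref{c:representables}'', and your Yoneda argument is precisely what this amounts to: you unwind the derived cone--slice adjunctions on each side and then invoke the equivalence \(\bigl(\C^{\co}\bigr)^{c/}_{\inn}\simeq\bigl(\C^{c/}_{\out}\bigr)^{\co}\) from~\eqref{e:symmetry}, which identifies the right adjoints and hence the left adjoints. Your explicit reduction of the first equivalence to the second via \((-)^{\op}\) and Remark~\ref{r:opposites} is a reasonable way to organize the argument, though the paper just treats both cases symmetrically in one stroke.
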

\begin{proof}
This follows from the identification of the right adjoints of these functors given in Corollary~\ref{c:representables}.
\end{proof}

\begin{cor}\label{c:co-limits}
Let \(\C\) be an \(\infty\)-bicategory, \(\I\) a marked-scaled simplicial set whose underlying scaled simplicial set \(\ovl{\I}\) is an \(\infty\)-bicategory and \(g\colon \ovl{\I}^{\triangleleft}_{\inn} \to \C\) an inner cone. Then \(g\) is an inner limit cone if and only if 
\[g^{\co}\colon (\ovl{\I}^{\triangleleft}_{\inn})^{\co} \simeq (\ovl{\I}^{\co})^{\triangleleft}_{\out} \to \C^{\co} \]
is an outer limit cone, where the identification is done using Lemma~\ref{l:coposites}. A similar statement holds for colimits.
\end{cor}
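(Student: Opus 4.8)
The plan is to deduce this entirely from the fact that $(-)^{\co}\colon\BiCat\to\BiCat$ is an equivalence of $\infty$-bicategories (Remark~\ref{r:twisted-op}), so that for a fixed base it induces an equivalence $(\BiCat)_{/\C}\xrightarrow{\simeq}(\BiCat)_{/\C^{\co}}$ which preserves and detects bicategorical equivalences and, by Corollary~\ref{c:op-fibration-2}, carries inner cartesian fibrations over $\C$ to outer cartesian fibrations over $\C^{\co}$. Unwinding Definition~\ref{d:limit}, a cone $g\colon\ovl{\I}^{\triangleleft}_{\inn}\to\C$ extending $f:=g_{|\ovl{\I}}$ is an inner limit cone exactly when the projection $\ovl{\C}^{/g}_{\inn}\to\ovl{\C}^{/f}_{\inn}$ is a bicategorical equivalence, and $g^{\co}$ is an outer limit cone exactly when $(\C^{\co})^{/g^{\co}}_{\out}\to(\C^{\co})^{/f^{\co}}_{\out}$ is one. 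Since $(-)^{\co}$ detects equivalences, it suffices to identify the $\co$-image of the first projection with the second. By Remark~\ref{r:indexed-by-oo-bicats} we may assume $\ovl{\I}$ (and hence $\ovl{\I}^{\triangleleft}_{\inn}$) is an $\infty$-bicategory, so all the slice objects in sight are $\infty$-bicategories and define objects of $\BiCat$.

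The heart of the argument is a natural equivalence of outer cartesian fibrations over $\C^{\co}$
\[
(\ovl{\C}^{/h}_{\inn})^{\co}\;\simeq\;(\C^{\co})^{/h^{\co}}_{\out},
\]
natural in the diagram $h\colon\ovl{L}\to\C$, where $h^{\co}\colon\ovl{L}^{\co}\to\C^{\co}$ is the $\co$-image of $h$. When $h$ is an object this is precisely the instance of~\eqref{e:symmetry} recorded in Corollary~\ref{c:representables} (the same fact that underlies Lemma~\ref{l:coposites}): applying $\co$ to $\ovl{\C}^{/x}_{\inn}$ and using the universal property of Corollary~\ref{c:rep-inn} together with Proposition~\ref{p:rep-out} identifies it with $(\C^{\co})^{/x}_{\out}$. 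For general $h$ I would obtain it from the classification of slice fibrations in Proposition~\ref{p:slice-represent}, combined with the fact that the $\co$-symmetry intertwines the inner cartesian flavor of the Grothendieck--Lurie correspondence over $\C$ with the outer cartesian flavor over $\C^{\co}$ and is compatible with base change (this is how Corollary~\ref{c:S-U-for-outer-fibs} is built, see Remark~\ref{r:unstr-base-change}). Both $(\ovl{\C}^{/h}_{\inn})^{\co}$ and $(\C^{\co})^{/h^{\co}}_{\out}$ are then outer cartesian fibrations over $\C^{\co}$ classified by functors $\C^{\coop}\to\Catoo$, and one checks these classifying functors agree, whence the two fibrations are equivalent.

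Granting this natural equivalence, the corollary is pure formalism. Applying it to $h=g$ and to $h=f$, and invoking naturality with respect to the cone inclusion $\ovl{\I}\hookrightarrow\ovl{\I}^{\triangleleft}_{\inn}$ (together with Lemma~\ref{l:coposites}, which identifies $(\ovl{\I}^{\triangleleft}_{\inn})^{\co}\simeq(\ovl{\I}^{\co})^{\triangleleft}_{\out}$ so that $g^{\co}$ really is a cone on $f^{\co}$), produces a commuting square over $\C^{\co}$ whose horizontal maps are equivalences and whose vertical maps are $(\ovl{\C}^{/g}_{\inn}\to\ovl{\C}^{/f}_{\inn})^{\co}$ and $(\C^{\co})^{/g^{\co}}_{\out}\to(\C^{\co})^{/f^{\co}}_{\out}$. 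The left vertical is an equivalence iff $g$ is an inner limit cone, the right vertical iff $g^{\co}$ is an outer limit cone, and by the square they are equivalent; the colimit statement follows by the dual argument with right cones (or by first passing through Remark~\ref{r:op-limits}). The main obstacle is precisely the general‑diagram identification in the second paragraph: because the model of scaled simplicial sets admits no point‑set involution realizing $(-)^{\co}$, one cannot compute the $\co$ of the thick‑join slice construction directly, and must instead route through the $\infty$‑categorical classification, taking care that the resulting identification is natural in $h$ so that it can be applied simultaneously to $f$ and to its cone $g$.
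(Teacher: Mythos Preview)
Your outline is correct, and the two routes are closely related, but the paper's proof is more economical precisely because it avoids the general-diagram identification \((\ovl{\C}^{/h}_{\inn})^{\co}\simeq(\C^{\co})^{/h^{\co}}_{\out}\) that you flag as the main obstacle. Instead of working with the projection \(\ovl{\C}^{/g}_{\inn}\to\ovl{\C}^{/f}_{\inn}\) directly, the paper invokes the local criterion of Corollary~\ref{c:locally}: \(g\) is an inner limit cone iff for every \(x\in\C\) the restriction \(\Fun^{\coc}_{\C}(\I^{\triangleleft}_{\inn},\ovl{\C}^{x/}_{\inn})\to\Fun^{\coc}_{\C}(\I,\ovl{\C}^{x/}_{\inn})\) is an equivalence. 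Under the \((-)^{\co}\)-symmetry this translates, using only Lemma~\ref{l:coposites} for the cone and the object-level slice identification \((\C^{x/}_{\inn})^{\co}\simeq(\C^{\co})^{x/}_{\out}\) from Corollary~\ref{c:representables}, into the corresponding outer-limit criterion for \(g^{\co}\).

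What each buys: your approach, if carried out, would yield the stronger statement that the slice-over-a-diagram construction is compatible with \((-)^{\co}\), which is interesting in its own right; but establishing it naturally in \(h\) forces you either to extend Lemma~\ref{l:coposites} to arbitrary thick joins or to unwind Proposition~\ref{p:slice-represent} through the fiber identifications of Proposition~\ref{p:fiber}, and the latter route is effectively the content of Corollary~\ref{c:locally} anyway. The paper's approach sidesteps this by passing to the local characterization first, so that only the already-established object-level compatibilities are needed.
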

\begin{proof}
Taking into account the compatibility of cones under taking \((-)^{\co}\) described in Lemma~\ref{l:coposites} and the compatibility of slice fibrations with \((-)^{\co}\) appearing in the last part of Corollary~\ref{c:representables}, the desired claim follows directly from the equivalent criterion for limit cones of Corollary~\ref{c:locally}.
\end{proof}

\subsection{Weighted (co)limits}\label{s:weighted}

In this section we will consider a particular case of \(2\)-(co)limits which corresponds to the classical notion of \emph{weighted (co)limits}. We will then show that this particular case is in some sense completely general: every type of \(2\)-colimit can be described as a weighted colimit with respect to a suitable weight.

\begin{define}
Let \(\C\) be an \(\infty\)-bicategory \(\C\) and \(\I\) be an \(\infty\)-bicategory equipped with a map \(f\colon \I \to \C\). Let \(\var \in \{\inn,\out\}\) be a variance parameter. Let \(\I^{\var}=\I\) if \(\var=\inn\) and \(\I^{\var}=\I^{\co}\) if \(\var=\out\). We define weighted \(\var\)-(co)limits as follows:
\begin{enumerate}
\item
For a weight \(w\colon \I^{\var} \to \Catoo\) classified by a \(\var\)-cocartesian fibration \(p\colon \tilde{\I} \to \I\) we define the \ndef{\(w\)-weighted \(\var\)-limit} of \(f\) to be the \(\var\)-limit of \(f\circ p \colon \tilde \I \to \C\). 
\item
For a weight \(w\colon (\I^{\var})^{\coop} \to \Catoo\) classified by a \(\var\)-cartesian fibration \(p\colon \tilde{\I} \to \I\) we define the \ndef{\(w\)-weighted \(\var\)-colimit} of \(f\) to be the \(\var\)-colimit of \(f\circ p \colon \tilde \I \to \C\).
\end{enumerate}
In all the above cases the \(2\)-(co)limit is taken with respect to the marking \(\tilde{\I}^{\natural}\) consisting of the \emph{\(p\)-(co)-cartesian edges}.
\end{define}

\begin{rem}\label{r:all-variances}
It follows from Corollary~\ref{c:co-limits} that the notion of an \(\I\)-indexed inner limit in \(\C\) with respect to a weight \(w\colon \I \to \Catoo\) is the same as the notion of an \(\I^{\co}\)-indexed outer limit in \(\C^{\co}\) with respect to the weight \(w\colon \I = (\I^{\co})^{\co} \to \Catoo\). Similarly, by Remark~\ref{r:op-limits} these are equivalent to the notions of 
\(\I^{\op}\)-indexed inner colimits in \(\C^{\op}\) and \(\I^{\coop}\)-indexed outer colimits in \(\C^{\coop}\) with respect to the weight \((-)^{\op}\circ w\colon \I^{\co} \to \Catoo\).
\end{rem}

\begin{prop}\label{p:weighted-represented}
Let \(\C\) be an \(\infty\)-bicategory and \(f \colon \I \to \C\) and \(w \colon \I \to \Catoo\) two functors of \(\infty\)-bicategories. 
Then the limit \(\ell \in \C\) of \(f\) weighted by \(w\)
is characterized by 
a natural equivalence of \(\infty\)-categories
\[
	\Hom_{\C}(x,\ell)\simeq \Nat_{\I}\bigl(w,\Hom_{\C}(x,f(-))\bigr)
\]
for \(x \in \C\), where the right hand side denotes the mapping category in the \(\infty\)-bicategory \(\Fun(\I,\C)\) between \(w\) and the restriction along \(f\) of the functor represented by \(x\).
\end{prop}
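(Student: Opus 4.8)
**The plan is to reduce the weighted-limit statement to the unweighted characterization already established in Corollary~\ref{c:limit-represents}.** By definition, the weight $w\colon \I \to \Catoo$ is classified by an inner cocartesian fibration $p\colon \tilde{\I} \to \I$, and the $w$-weighted inner limit of $f$ is defined to be the ordinary inner limit of the composite $f \circ p \colon \tilde{\I} \to \C$, taken with respect to the marking $\tilde{\I}^{\natural}$ consisting of the $p$-cocartesian edges. So an object $\ell \in \C$ represents this weighted limit precisely when the inner limit cone on $f \circ p$ it determines satisfies the criterion of Corollary~\ref{c:limit-represents}(1), namely that for every $x \in \C$ there is a natural equivalence
\[
\Hom_{\C}(x,\ell) \simeq \RNat_{\tilde{\I}^{\natural}}\bigl(\ast, \Hom_{\C}(x, f(p(-)))\bigr).
\]
The entire content of the proposition is therefore the identification
\[
\RNat_{\tilde{\I}^{\natural}}\bigl(\ast, \Hom_{\C}(x, f(p(-)))\bigr) \simeq \Nat_{\I}\bigl(w, \Hom_{\C}(x, f(-))\bigr),
\]
for the $\Catoo$-valued diagram $\chi := \Hom_{\C}(x, f(-))\colon \I \to \Catoo$.

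First I would set $\chi := \Hom_{\C}(x, f(-))$ and observe that both sides are functors of $x$, so it suffices to produce the equivalence naturally in the $\Catoo$-valued diagram $\chi$. The left-hand side concerns lax transformations out of a constant diagram on $\tilde{\I}^{\natural}$, which by Corollary~\ref{c:fiber} can be rewritten as $\Fun^{\coc}_{\tilde{\I}}(\tilde{\I}^{\natural}, \E)$ where $\E \to \tilde{\I}$ is the inner cocartesian fibration classifying $\chi \circ p$. The right-hand side is a genuine (pseudo-)natural transformation $w \Rightarrow \chi$ in $\Fun(\I, \Catoo)$. The key bridge is the bicategorical Grothendieck--Lurie correspondence (Corollary~\ref{c:straightening-inner}): morphisms in $\Fun(\I, \Catoo)$ from $w$ to $\chi$ correspond to morphisms in $\coCar^{\inn}(\I)$ from the fibration $p\colon \tilde{\I}\to\I$ classifying $w$ to the fibration classifying $\chi$, that is, to cocartesian-edge-preserving functors $\tilde{\I} \to (\text{total space of }\chi)$ over $\I$.

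The central step, then, is to recognize that sections over $\tilde{\I}$ of the pulled-back fibration classifying $\chi\circ p$, restricted to those sending $p$-cocartesian edges to cocartesian edges, are the same as maps \emph{over $\I$} from $p\colon \tilde{\I}\to\I$ to the fibration classifying $\chi$ that preserve cocartesian edges. This is precisely the adjunction between base change and the mapping $\infty$-bicategory in $\coCar^{\inn}(\I)$: by Remark~\ref{r:unstr-base-change} the correspondence intertwines base change along $p$ with restriction along $p$, so cocartesian sections of the base-changed fibration coincide with the relevant mapping objects. I would make this precise by combining the equivalence $\Fun^{\coc}_{\tilde{\I}}(\tilde{\I}^{\natural},\E) \simeq \Fun^{\coc}_{\I}(\tilde{\I}^{\natural}, \E')$ (where $\E'\to\I$ classifies $\chi$) that expresses this base-change adjunction with the identification of $\Fun^{\coc}_{\I}$ as the mapping $\infty$-category in $\coCar^{\inn}(\I)$, which under Corollary~\ref{c:straightening-inner} is exactly $\Nat_{\I}(w,\chi)$.

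**The main obstacle** is bookkeeping the marking conventions and making the base-change step rigorous rather than merely morally clear: one must check that sections of the base-changed cocartesian fibration $p^*\E' \simeq \E \to \tilde{\I}$ which send $p$-cocartesian edges to $(p^*q)$-cocartesian edges are in natural bijection with cocartesian-edge-preserving functors $\tilde{\I} \to \E'$ over $\I$. This is an instance of the universal property of base change among cocartesian fibrations, but verifying it respects the full $\infty$-categorical (not merely set-level) structure --- i.e.\ that it is an equivalence of $\infty$-categories and natural in $\chi$ --- requires appealing carefully to the enriched/fibrewise nature of the straightening-unstraightening equivalence (Lemma~\ref{l:unstraightening-simplicial}) and to Corollary~\ref{c:fiberwise}. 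Once these identifications are assembled, the naturality in $x$ follows formally, and the characterization is established. The colimit and outer-variance analogues then follow by the $(\ZZ/2)^2$-symmetry recorded in Remark~\ref{r:all-variances} and Corollary~\ref{c:co-limits}.
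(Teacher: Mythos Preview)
Your approach is correct and arrives at the same conclusion, but it takes a somewhat different route from the paper. The paper first introduces an auxiliary \(\infty\)-bicategory \(\B\), a fibrant replacement of the weighted cone \(\Del^0 \diamond^p_{\inn}\I\), and proves an intermediate result (Proposition~\ref{p:weighted-limit}) characterizing weighted limit cones via the restriction map
\[
\Fun^{\coc}_{\B}(\B^{\ast/}_{\inn},\C^{x/}_{\inn}\times_{\C}\B)\longrightarrow \Fun^{\coc}_{\I}(\tilde{\I}^{\natural},\C^{x/}_{\inn}\times_{\C}\I).
\]
Straightening both sides over \(\B\) and \(\I\) respectively then yields the \(\Nat\)-formulation directly, with the left-hand side collapsing to \(\Hom_{\C}(x,g(\ast))\) by the enriched Yoneda lemma for \(\B\). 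Your approach bypasses \(\B\) entirely: you invoke Corollary~\ref{c:limit-represents} for the composite \(fp\), then identify \(\RNat_{\tilde{\I}^{\natural}}(\ast,\chi p)\) with \(\Nat_{\I}(w,\chi)\) via Corollary~\ref{c:fiber}, a base-change step, and straightening over \(\I\). The paper's detour through \(\B\) pays off later (the same \(\B\) reappears in \S\ref{s:model-categories} when comparing with model-categorical weighted limits), but your route is a valid and more economical proof of this single statement.

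One comment: you overestimate the difficulty of your ``main obstacle''. The identification \(\Fun^{\coc}_{\tilde{\I}}(\tilde{\I}^{\natural},p^*\E') \cong \Fun^{\coc}_{\I}(\tilde{\I}^{\natural},\E')\) is not merely an equivalence requiring delicate enriched machinery; it is a strict isomorphism of \(\infty\)-categories. The universal property of the pullback gives an isomorphism \(\Fun_{\tilde{\I}}(\tilde{\I},p^*\E') \cong \Fun_{\I}(\tilde{\I},\E')\) of scaled simplicial sets, and an edge \((e_1,e_2)\) of \(\tilde{\I}\times_{\I}\E'\) is cocartesian over \(\tilde{\I}\) if and only if \(e_2\) is cocartesian over \(\I\), so the cocartesian-edge conditions match on the nose. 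No appeal to Lemma~\ref{l:unstraightening-simplicial} or Corollary~\ref{c:fiberwise} is needed at this step; those only enter when you straighten \(\Fun^{\coc}_{\I}(\tilde{\I}^{\natural},\E')\) to \(\Nat_{\I}(w,\chi)\).
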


\begin{rem}
Taking into account the behavior of weighted (co)limits under change of variance we may dualize the statement of Proposition~\ref{p:weighted-represented} and obtain that the outer limit \(\ell\) of a diagram \(f\colon \I \to \C\) weighted by \(w\colon \I^{\co} \to \Cat\) is characterized by a natural equivalence
\[
	\Hom_{\C}(x,\ell)\simeq \Nat_{\I}\bigl(w(-)^{\op},\Hom_{\C}(x,f(-))\bigr)
\]
Similarly, the outer colimit \(c\) of \(f\) weighted by \(w\colon \I^{\op} \to \Cat\) and the inner colimit \(c'\) of \(f\) weighted by \(w' \colon \I^{\coop}\to \Cat\) are characterized by equivalences
\[
	\Hom_{\C}(c,x)\simeq \Nat_{\I^{\op}}\bigl(w(-),\Hom_{\C}(f(-),x)\bigr)
\]
and
\[
	\Hom_{\C}(c,x)\simeq \Nat_{\I^{\op}}\bigl(w'(-)^{\op},\Hom_{\C}(f(-),x)\bigr),	
\]
respectively.
\end{rem}

\begin{rem}\label{r:compare-weighted}
When the indexing \(\infty\)-category \(\I\) is an \(\infty\)-category and the \(\infty\)-bicategory \(\C\) is obtained from a presentable \(\infty\)-category tensored over \(\Catoo\), weighted (co)limits were defined by Gepner--Haugseng--Nikolaus~\cite{GepnerHaugsengNikolausLax} in terms of suitable (co)limits over the twisted arrow category of \(\I\). Their definition was later generalized in~\cite{garcia-cofinality,garcia-stern-enhanced} to diagrams taking values in arbitrary \(\infty\)-bicategories (still indexed by \(\infty\)-categories), by using a universal mapping property of the form appearing in Proposition~\ref{p:weighted-represented}. In particular, Proposition~\ref{p:weighted-represented} shows that the present definition of weighted (co)limits indeed generalizes to one of~\cite{garcia-cofinality} (and hence the one of~\cite{GepnerHaugsengNikolausLax}) to arbitrary indexing \(\infty\)-bicategories.
\end{rem}

We will prove Proposition~\ref{p:weighted-represented} below. Before that, let us establish some terminology that will be convenient for handling weighted (co)limits.
Following Rovelli~\cite{RovelliWeighted}, 
we generalize the definition of the thick join to allow for weights to be considered. In what follows, we fix a variance parameter \(\var \in \{\inn,\out\}\).

\begin{define}
Given a \(\var\)-cocartesian fibration \(p\colon\tilde{\I} \to \I\) 
we define the 
\ndef{\(p\)-weighted \(\var\)-cone of \(\I\)}, 
to be the object at the bottom right corner in the pushout diagrams displayed below:
\[
 \begin{tikzcd}
	\tilde{\I} \ar[d,"p"{swap}] \ar[r] & \Del^0 \diamond_{\var} \tilde{\I}^{\natural} \ar[d]\\
	\I\ar[r] & \Del^0\diamond^p_{\var} \I \ .
 \end{tikzcd}
\]
where \(\tilde{\I}^{\natural}\) is the marked-scaled simplicial set whose underlying simplicial set is \(\I\) and whose marked edges are the \(p\)-cocartesian ones. Similarly, for a \(\var\)-cartesian fibration \(p\colon\tilde{\I} \to \I\) we define \(\I \diamond^p_{\var} \Del^0\) using a similar pushout square.
\end{define}

Now suppose that \(\C\) is an \(\infty\)-bicategory, \(f\colon \I \to \C\) is a diagram and \(p\colon \tilde{\I} \to \I\) is a \(\var\)-cocartesian fibration, considered as a weight. By definition, a candidate for the corresponding weighted \(\var\)-limit of \(f\) is given by a cone of the form \(g\colon \Del^0 \diamond_{\var} \tilde{\I}^{\natural} \to \C\) such that \(g_{|\tilde{\I}} = fp\). In particular, such a \(g\) always factors through the weighted cone \(\Del^0 \diamond_{\var}^p \I\). 

\begin{define}
We will say that a map \(g\colon \Del^0 \diamond_{\var}^p\I \to \C\) is a \emph{weighted \(\var\)-limit cone} if its restriction to \(\Del^0 \diamond_{\var} \tilde{\I}^{\natural}\) is a \(\var\)-limit cone with respect to the marking of \(\tilde{\I}^{\natural}\). Similarly, we say that a map \(g\colon \I \diamond_{\var}^p\Del^0 \to \C\) is a \emph{weighted \(\var\)-colimit cone} if its restriction to \(\tilde{\I}^{\natural} \diamond_{\var} \Del^0\) is a \(\var\)-colimit cone with respect to the marking of \(\tilde{\I}^{\natural}\).
\end{define}

We now observe that the weighted inner cone \(\Del^0 \diamond_{\inn}^p \I\) also appears in the formula for the scaled straightening functor recalled in \S\ref{s:straightening}. 
Specifically, the straightening 
\[\St_{\I}(\tilde{\I})\colon \fCs(\I) \to \s^+\] 
is defined as the restriction to \(\fCs(\I)\) of the functor \(\fCs(\Del^0 \diamond_{\inn}^p \I) \to \s^+\) corepresented by the cone point \(\ast\). Let \(\B\) be an \(\infty\)-bicategory equipped with a bicategorical trivial cofibration \(\Del^0 \diamond_{\inn}^p \I \hrar \B\). Since representable functors unstraighten to slice fibrations (see Remark~\ref{r:unst-rep}) and using the compatibility of unstraightening with base change we conclude that the derived unstraightening of the straightening of \(\tilde{\I} \to \I\) can be identified, up to weak equivalence, with the base change \(\B^{\ast/}_{\inn} \times_{\B} \I \to \I\) of the slice fibration \(\B^{\ast/}_{\inn} \to \B\). In particular, there is a natural equivalence
\[
 \begin{tikzcd}
	\tilde{\I} \ar[rr, "\simeq"] \ar[dr] && \B^{\ast/}_{\inn} \times_{\B} \I \ar[dl] \\
	& \I & 
 \end{tikzcd}.
\]
We may then recover the functor \(w\colon \I \to \Catoo\) classifying \(\tilde{\I} \to \I\) as the composite
\[ \I \to \B \xrightarrow{\Hom_{\B}(\ast,-)} \Catoo \]
where the second map is the functor corepresented by \(\ast\).
Any weighted inner cone \(\Del^0 \diamond^{p}_{\inn} \I \to \C\) taking values in an \(\infty\)-bicategory \(\C\) must extend to a map \(\B \to \C\) in an essentially unique fashion. 

\begin{define}\label{d:weighed-B}
We will say that a map \(g\colon \B \to \C\) \emph{exhibits \(g(\ast)\) as the \(w\)-weighted limit of \(f\)} if its restriction to \(\Del^0 \diamond_{\inn} \tilde{\I}^{\natural}\) is an inner limit cone.
\end{define}

\begin{prop}\label{p:weighted-limit}
Keeping the above notations, a map \(g\colon \B \to \C\) extending \(f\colon \I \to \C\) exhibits \(g(\ast)\) as the \(w\)-weighted limit of \(f\) if and only if for every \(x \in \C\) the restriction map
\[\Fun^{\coc}_{\B}(\B^{\ast/}_{\inn},\ovl{\C}^{x/}_{\inn} \times_{\C} \B) 
 \to \Fun^{\coc}_{\I}(\tilde{\I}^{\natural}, \ovl{\C}^{x/}_{\inn} \times_{\C} \I)\]
is an equivalence of \(\infty\)-categories.
\end{prop}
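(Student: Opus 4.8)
The plan is to reduce this statement directly to the criterion for limit cones established in Corollary~\ref{c:locally}, by carefully tracking what the weighted limit condition means and translating the relevant mapping $\infty$-categories. First I would recall that, by Definition~\ref{d:weighed-B}, the map $g\colon \B \to \C$ exhibits $g(\ast)$ as the $w$-weighted limit of $f$ precisely when its restriction $\Del^0 \diamond_{\inn} \tilde{\I}^{\natural} \to \C$ is an inner limit cone on $fp \colon \tilde{\I} \to \C$. Applying Corollary~\ref{c:locally} to the diagram $fp$ indexed by the marked-scaled simplicial set $\tilde{\I}^{\natural}$, this is equivalent to asking that for every $x \in \C$ the restriction map
\[
\Fun^{\coc}_{\C}((\tilde{\I}^{\natural})^{\triangleleft}_{\inn},\ovl{\C}^{x/}_{\inn}) \to \Fun^{\coc}_{\C}(\tilde{\I}^{\natural},\ovl{\C}^{x/}_{\inn})
\]
is an equivalence of $\infty$-categories. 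Here $(\tilde{\I}^{\natural})^{\triangleleft}_{\inn} = \{\ast\} \diamond_{\inn} \tilde{\I}^{\natural}$ is exactly the inner cone whose pushout with $\I$ defines the weighted cone $\Del^0 \diamond_{\inn}^p \I$.

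The next step is to rewrite both sides of this equivalence in the fibered form appearing in the proposition, using the fact that $\ovl{\C}^{x/}_{\inn} \times_{\C} (-)$ converts functors over $\C$ into functors over the base. On the right, a map $\tilde{\I}^{\natural} \to \ovl{\C}^{x/}_{\inn}$ over $\C$ sending $p$-cocartesian edges to inner cocartesian edges is the same datum as a section $\tilde{\I}^{\natural} \to \ovl{\C}^{x/}_{\inn}\times_\C \I$ over $\I$ with the analogous property, giving the identification of the right-hand term with $\Fun^{\coc}_{\I}(\tilde{\I}^{\natural}, \ovl{\C}^{x/}_{\inn} \times_{\C} \I)$. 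For the left-hand side I would use the identification of $\tilde{\I}$ with $\B^{\ast/}_{\inn} \times_\B \I$ over $\I$, together with the fact that the cone $(\tilde{\I}^{\natural})^{\triangleleft}_{\inn}$ is built from the weighted cone and hence that maps out of it over $\C$ factor through $\B$; concretely, a $p$-cocartesian-edge-preserving map $(\tilde{\I}^{\natural})^{\triangleleft}_{\inn} \to \ovl{\C}^{x/}_{\inn}$ over $\C$ corresponds, via the universal property of the pushout defining $\Del^0 \diamond_{\inn}^p \I$ and the essentially unique extension to $\B$, to a section of $\ovl{\C}^{x/}_{\inn}\times_\C \B \to \B$ over $\B$ carrying the canonical inner cocartesian edges of $\B^{\ast/}_{\inn}$ to inner cocartesian edges, which is precisely an object of $\Fun^{\coc}_{\B}(\B^{\ast/}_{\inn},\ovl{\C}^{x/}_{\inn} \times_{\C} \B)$.

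To make this last translation fully rigorous, I would invoke the invariance results of \S\ref{s:thick-join}: the comparison of $\tilde{\I}$ with $\B^{\ast/}_{\inn} \times_\B \I$ is an equivalence of inner cocartesian fibrations (as explained in the paragraph preceding Definition~\ref{d:weighed-B}, using Remark~\ref{r:unst-rep} and the compatibility of unstraightening with base change), and similarly the inclusion $\Del^0 \diamond_{\inn}^p \I \hrar \B$ is a bicategorical trivial cofibration. Combined with the fact that $\ovl{\C}^{x/}_{\inn} \to \C$ is an inner cocartesian fibration (Proposition~\ref{p:cocar}), so that $\Fun^{\coc}$ into it is invariant under the relevant equivalences by Remark~\ref{r:restrict-fibration} and the closure of cocartesian edges under equivalence (Remark~\ref{r:car-invariant}), this yields the claimed identifications of the two mapping $\infty$-categories with the terms in the statement, matching up the restriction maps.

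The main obstacle I anticipate is the bookkeeping in the left-hand identification, namely verifying that the canonical $p$-cocartesian edges of $\tilde{\I}^{\natural}$ correspond, under the equivalence $\tilde{\I} \simeq \B^{\ast/}_{\inn}\times_\B \I$ and the extension along $\Del^0 \diamond_{\inn}^p \I \hrar \B$, exactly to the inner cocartesian edges of $\B^{\ast/}_{\inn}$ that are imposed in the definition of $\Fun^{\coc}_{\B}(\B^{\ast/}_{\inn}, -)$, and that the cone point is sent to $\ast$ compatibly. Once this matching of marked edges and of cone points is confirmed, the equivalence of the two restriction maps is formal, and the proposition follows. I expect no genuinely new input is required beyond Corollary~\ref{c:locally} and the slice-fibration machinery already in place; the work is entirely in transporting the limit-cone criterion across the weighted cone construction.
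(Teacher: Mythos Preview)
Your strategy matches the paper's: apply Corollary~\ref{c:locally} to the cone on $\tilde{\I}^{\natural}$, factor through $g\colon \B \to \C$ to rewrite both sides in fibered form, and then identify each side with the corresponding term in the statement. The right-hand identification is fine. The left-hand one, however, is where the real content lies, and your description of it is not quite right.

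You describe an object of $\Fun^{\coc}_{\B}(\B^{\ast/}_{\inn}, \ovl{\C}^{x/}_{\inn} \times_{\C} \B)$ as ``a section of $\ovl{\C}^{x/}_{\inn}\times_\C \B \to \B$ over $\B$'', but the domain here is $\B^{\ast/}_{\inn}$, not $\B$: such an object is a map $\B^{\ast/}_{\inn} \to \ovl{\C}^{x/}_{\inn} \times_{\C} \B$ over $\B$ preserving cocartesian edges. The pushout/extension-to-$\B$ argument you sketch produces maps out of (something equivalent to) $\B$, which does not land in the right place. So what you call ``bookkeeping'' actually requires a comparison between $(\tilde{\I}^{\natural})^{\triangleleft}_{\inn}$ and $\B^{\ast/}_{\inn}$ as marked objects over $\B$, and none of the results you cite (Remark~\ref{r:restrict-fibration}, Remark~\ref{r:car-invariant}) supplies that.

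The paper handles this by constructing an explicit map $(\tilde{\I}^{\natural})^{\triangleleft}_{\inn} \to \B^{\ast/}_{\inn}$ over $\B$, sending the cone point to $\id_\ast$: one extends $\tilde{\I}^{\natural} \to \B^{\ast/}_{\inn}$ to the cone using the lax contraction of $\B^{\ast/}_{\inn}$ onto $\id_\ast$ (the same contraction that drives the proof of Proposition~\ref{p:rep-inn}). This map is then an inner cocartesian equivalence over $\B$ by a 2-out-of-3 argument, combining Lemma~\ref{l:cone} (the inclusion $\{\ast\} \subseteq (\tilde{\I}^{\natural})^{\triangleleft}_{\inn}$ is an inner cocartesian equivalence over $\B$) with Proposition~\ref{p:rep-inn} (the inclusion $\{\id_\ast\} \subseteq \B^{\ast/}_{\inn}$ is an inner cocartesian equivalence over $\B$). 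That 2-out-of-3 step is the missing ingredient in your proposal.
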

\begin{proof}
Consider the composite \(g'\colon(\tilde{\I}^{\natural})^{\triangleleft}_{\inn} \to \B \to \C\). By Corollary~\ref{c:locally} we have that \(g'\) is an inner colimit cone if and only if the map
\[ \Fun^{\coc}_{\C}((\tilde{\I}^{\natural})^{\triangleleft}\ovl{\C}^{x/}_{\inn})
	\to \Fun^{\coc}_{\C}(\tilde{\I}^{\natural},\ovl{\C}^{x/}_{\inn})\]
is an equivalence of \(\infty\)-categories for every \(x \in \C\). Since \(g'\) factors by construction through \(g\colon \B \to \C\) we may identify this map with the map
\begin{equation}\label{e:weighted-limit}
 \Fun^{\coc}_{\B}((\tilde{\I}^{\natural})^{\triangleleft},\ovl{\C}^{x/}_{\inn} \times_{\C} \B)
 \to \Fun^{\coc}_{\B}(\tilde{\I}^{\natural},\ovl{\C}^{x/}_{\inn} \times_{\C} \B).
\end{equation}
Now as in the proof of~\cite[Proposition 4.1.8]{LurieGoodwillie} (whose outer counterpart was spelled out in the proof of Proposition~\ref{p:rep-inn}), the \(\infty\)-bicategory \(\B^{\ast/}_{\inn}\) admits a lax contraction \((\B^{\ast/}_{\inn})^{\triangleleft}_{\inn} \to \B^{\ast/}_{\inn}\) to the the point \(\id_\ast\). We may thus extend the map 
\[\tilde{\I}^{\natural} \xrightarrow{\simeq} \B^{\ast/}_{\inn} \times_{\B} \I \to \B^{\ast/}_{\inn} \]
to a map
\[(\tilde{\I}^{\natural})^{\triangleleft}_{\inn} \to \B^{\ast/}_{\inn}\]
sending the cone point to \(\id_\ast\). By Lemma~\ref{l:cone},  
Proposition~\ref{p:rep-inn} and 2-out-of-3 property the last map is an inner cocartesian equivalence over \(\B\). We may thus identify the map~\eqref{e:weighted-limit} with the map
\[\Fun^{\coc}_{\B}(\B^{\ast/}_{\inn},\C^{x/}_{\inn} \times_{\C} \B) \to \Fun^{\coc}_{\I}(\tilde{\I}^{\natural},\C^{x/}_{\inn} \times_{\C} \I),\]
and so the desired statement follow.
\end{proof}

\begin{proof}[Proof of Proposition~\ref{p:weighted-represented}]
The map in Proposition~\ref{p:weighted-limit} fits in a zig-zag diagram
\[\begin{tikzcd}[column sep=0em]
& \Fun^{\coc}_{\B}(\B^{\ast/}_{\inn},\C^{x/}_{\inn} \times_{\C} \B) \ar[dr, end anchor = north west]\ar[dl, end anchor = north east, "\simeq"'] & \\
\C^{x/}_{\inn}\times_{\C} \{g(\ast)\} &&\Fun^{\coc}_{\I}(\tilde{\I}^{\natural},\C^{x/}_{\inn} \times_{\C} \I)
\end{tikzcd}\]
where the left diagonal map is an equivalence by Proposition~\ref{p:rep-inn}.
Applying the straightening-unstraightening equivalence we obtain a zig-zag
\[\begin{tikzcd}[column sep=-1em]
& \Nat_{\B}(\Hom_{\B}(\ast,-),\Hom_{\C}(g(-),x)) \ar[dr, end anchor = north west]\ar[dl, end anchor = north east, "\simeq"'] & \\
\Hom_{\C}(x,g(\ast)) && \Nat_{\I}(w,\Hom_{\C}(f(-),x))
\end{tikzcd},\]
where the right diagonal map is induced by the identification \(w \simeq \Hom_{\B}(\ast,-)_{|\I}\). 
Since all these maps are natural in \(x\) we may now conclude that \(g\) exhibits \(g(\ast)\) as the \(w\)-weighted limit of \(f\colon \I \to \C\) if and only if it exhibits \(g(\ast)\) as representing the functor \(x \mapsto \Nat_{\I}(w,\Hom_{\C}(f(-),x))\), as desired.
\end{proof}

Our next goal is to show that any type of \(2\)-(co)limit can be replaced with an equivalent weighted (co)limit. We will argue this first for inner \(2\)-limits, but will explain afterwards how one can obtain the statement for all variances using the \((\ZZ/2)^2\)-symmetry on \(\BiCat\).

Let \(\I\) be an \(\infty\)-bicategory and \(E\) a collection of edges in \(\I\). Denote by \(\I^+\) the marked-scaled simplicial set having \(\I\) as underlying scaled simplicial set and \(E\) as a set of marked edges. Suppose that \(f\colon \I \to \C\) is a diagram in an \(\infty\)-bicategory \(\C\), to which we can associate the corresponding inner \(2\)-limit with respect to the marking \(E\). Let us now consider the underlying marked-simplicial set of \(\I^+\) as an object in the \(\Beta_{\I}\)-fibered model structure on \((\s^+)_{/\I}\). Taking a fibrant replacement with respect to this model structure, we may find an inner cocartesian fibration \(p\colon\tilde{\I} \to \I\) and a map of marked-scaled simplicial sets \(\iota\colon \I^+ \to \tilde{\I}^{\natural}\) whose underlying map of marked simplicial sets is a \(\Beta_{\I}\)-fibered trivial cofibration. Let \(w\colon \I \to \Catoo\) be the functor classifying \(p\). We then have the following:

\begin{prop}\label{p:inner-is-weighted}
A map \(g\colon \Del^0 \diamond_{\inn}^p \I \to \C\) exhibits \(g(\ast)\) as the \(w\)-weighted inner limit of \(f\) if and only if its restriction to \(\Del^0 \diamond_{\inn} \I^+\) is an inner limit cone.
\end{prop}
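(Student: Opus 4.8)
The plan is to reduce the statement to the single assertion that the comparison map $\iota\colon \I^+ \to \tilde{\I}^{\natural}$ is an \emph{inner cocartesian equivalence over $\C$} in the sense of Definition~\ref{d:var-equivalence}, and then to feed this into the invariance machinery already in place. First I would unwind the two conditions. Writing $\hat{\iota}\colon \Del^0 \diamond_{\inn} \I^+ \to \Del^0 \diamond_{\inn} \tilde{\I}^{\natural}$ for the map induced by $\iota$ (which carries the cone point to the cone point), the hypothesis that $g$ exhibits $g(\ast)$ as the $w$-weighted inner limit of $f$ says precisely that $g_1 := g_{|\Del^0 \diamond_{\inn} \tilde{\I}^{\natural}}$ is an inner limit cone on $fp$, while the conclusion says that $g_2 := g_{|\Del^0 \diamond_{\inn} \I^+} = g_1 \circ \hat{\iota}$ is an inner limit cone on $f$. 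Since $\iota$ is a map over $\I$ we have $fp \circ \iota = f$, so $g_2$ is indeed the cone-restriction of $g_1$ along $\iota$.

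The second step chains together the invariance statements. Suppose we know that $\iota$ is an inner cocartesian equivalence over $\C$. Applying Corollary~\ref{c:invariance} (with $h = \iota$ and the diagram $fp\colon \tilde{\I} \to \C$) shows that the restriction functor $\ovl{\C}^{/fp}_{\inn} \to \ovl{\C}^{/f}_{\inn}$ is a bicategorical equivalence. Proposition~\ref{p:pre-cofinal} then applies to this situation and tells us that the cone-restriction along $\hat{\iota}$ both preserves and detects inner limit cones; concretely, $g_1$ is an inner limit cone if and only if $g_2$ is. This is exactly the desired equivalence, so the entire proof reduces to the claim about $\iota$.

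It remains to prove that $\iota\colon \I^+ \to \tilde{\I}^{\natural}$ is an inner cocartesian equivalence over $\C$, and here I would first reduce to the corresponding statement over $\I$. Given any inner cocartesian fibration $X \to \C$, the base change $\I \times_{\C} X \to \I$ is again an inner cocartesian fibration (Remark~\ref{r:base-change}), and since the structure maps of both $\I^+$ and $\tilde{\I}^{\natural}$ to $\C$ factor through $\I$, a map over $\C$ into $X$ is the same datum as a map over $\I$ into $\I \times_{\C} X$. Moreover an edge of $\I \times_{\C} X$ is cocartesian over $\I$ exactly when its image in $X$ is cocartesian over $\C$, so this identification matches the cocartesian-edge-preservation conditions and yields natural isomorphisms $\Fun^{\coc}_{\C}(-,X) \cong \Fun^{\coc}_{\I}(-,\I \times_{\C} X)$ on $\I^+$ and on $\tilde{\I}^{\natural}$. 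Hence it suffices to show that $\iota$ is an inner cocartesian equivalence over $\I$.

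Finally, over $\I$ I would invoke the $\Beta_{\I}$-fibered model structure. By construction $\iota$ is, on underlying marked simplicial sets, a $\Beta_{\I}$-fibered trivial cofibration between objects of $\trbis{(\s^+)}{\I}$ (all of which are cofibrant), while any inner cocartesian fibration $Y \to \I$ is $\Beta_{\I}$-fibered, hence fibrant, by Proposition~\ref{p:inner-fibred}. Since this model structure is tensored over $\s^+$, the enriched mapping object $\underline{\Map}_{\I}(-,Y)$ is a right Quillen functor, so it carries the trivial cofibration $\iota$ to a trivial fibration $\underline{\Map}_{\I}(\tilde{\I}^{\natural},Y) \to \underline{\Map}_{\I}(\I^+,Y)$ of fibrant marked simplicial sets, in particular a marked categorical equivalence. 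Unwinding the tensoring, and using that the marked edges of $Y$ are precisely its cocartesian edges (Proposition~\ref{p:inner-fibred}), one identifies the underlying $\infty$-category of $\underline{\Map}_{\I}(A,Y)$ with $\Fun^{\coc}_{\I}(A,Y)$ for $A \in \{\I^+,\tilde{\I}^{\natural}\}$ (Notation~\ref{n:fun-car}), so the previous equivalence becomes $\Fun^{\coc}_{\I}(\tilde{\I}^{\natural},Y) \xrightarrow{\simeq} \Fun^{\coc}_{\I}(\I^+,Y)$, which is exactly what it means for $\iota$ to be an inner cocartesian equivalence over $\I$. The main obstacle is this last identification: matching $\Fun^{\coc}$ with the $\s^+$-enriched mapping object of the fibered model structure requires careful bookkeeping of the markings and of the tensoring on $\trbis{(\s^+)}{\I}$, and one must also verify the base-change compatibility of cocartesian edges used in the reduction above.
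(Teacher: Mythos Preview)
Your proposal is correct and follows essentially the same route as the paper: reduce everything to the claim that \(\iota\colon \I^+ \to \tilde{\I}^{\natural}\) is an inner cocartesian equivalence over \(\C\), then feed this into Corollary~\ref{c:invariance} and Proposition~\ref{p:pre-cofinal}. The paper's proof simply asserts that \(\iota\) is such an equivalence and cites those two results; you supply the missing justification by reducing along the base change \(\I \to \C\) (as in the proof of Lemma~\ref{l:cofinal-equiv}(1)) and then invoking the \(\s^+\)-tensoring of the \(\Beta_{\I}\)-fibered model structure, which is a legitimate way to fill this in.
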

\begin{proof}
This follows directly from Proposition~\ref{p:pre-cofinal} and Corollary~\ref{c:invariance}
since the map \(\iota\colon \I^+ \to \tilde{\I}^{\natural}\) is an inner cocartesian equivalence over \(\C\).
\end{proof}

We note that while Proposition~\ref{p:inner-is-weighted} is phrased for inner limits, the same idea applies to all types of 2-(co)limits. This might not be clear at first sight, since we have used the \(\Beta_{\I}\)-fibered model structure to construct the map \(\iota\colon \I^+ \to \wtl{\I}^{\natural}\), and this model structure has been elaborated only in the inner cocartesian case. Nonetheless, the only property of \(\iota\) that we actually needed is that it is an inner cocartesian equivalence whose target is an inner cocartesian fibration. The \(\Beta_{\I}\)-fibered model structure was used to show the existence of such a map, for any given marking on \(\I\). However, given that it exists in the inner cocartesian setting implies that it exists in all four variances. Indeed, it follows from Corollary~\ref{c:op-fibration-2} that for a variance parameter \(\var \in \{\inn,\out\}\), the functor \((-)^{\op}\) sends \(\var\)-cocartesian equivalences/fibrations over \(\I\) to \(\var\)-cartesian equivalences/fibrations over \(\I^\op\), and vice versa. Similarly, the functor \((-)^{\co}\) sends inner (co)cartesian equivalences/fibrations over \(\I\) to outer (co)cartesian equivalences/fibrations over \(\I^{\co}\). The existence of a map of the form \(\iota\) in the inner cocartesian context implies its existence for the other variances as well. We may summarize the resulting statement for all four variances as follows:

\begin{cor}\label{c:marked-is-weighted}
Let \(\var \in \{\inn,\out\}\) be a variance parameter. Let \(\I\) be an \(\infty\)-bicategory and \(\I^+\) a marked-scaled simplicial set whose underlying scaled simplicial set is \(\I\). Then there exists a \(\var\)-(co)cartesian fibration \(p\colon\wtl{\I} \to \I\) and a \(\var\)-(co)cartesian equivalence \(\iota\colon\I^+ \to \wtl{\I}^{\natural}\) over \(\I\). Let \(w\) denote the weight associated to \(p\). Then a map \(g\colon \Del^0 \diamond_{\var}^p \I \to \C\) exhibits \(g(\ast)\) as the \(w\)-weighted \(\var\)-limit of \(g_{|\I}\) if and only if its restriction to \(\Del^0 \diamond_{\var} \I^+\) is a \(\var\)-limit cone. Dually, a map \(g\colon \I \diamond^p_{\var} \Del^0 \to \C\) exhibits \(g(\ast)\) as the \(w\)-weighted \(\var\)-colimit of \(g_{|\I}\) if and only if its restriction to \(\I^+ \diamond_{\var} \Del^0\) is a \(\var\)-colimit cone.
\end{cor}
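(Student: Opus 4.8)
The plan is to establish the inner limit case directly and then transport the remaining three variances via the $(\ZZ/2)^2$-symmetry, exactly as the preceding discussion in the excerpt indicates. The inner limit case is essentially Proposition~\ref{p:inner-is-weighted}: given $\I^+$, we use the $\Beta_{\I}$-fibered model structure on $(\s^+)_{/\I}$ to take a fibrant replacement of the underlying marked simplicial set of $\I^+$, producing an inner cocartesian fibration $p\colon \wtl{\I} \to \I$ together with a map $\iota\colon \I^+ \to \wtl{\I}^{\natural}$ whose underlying map of marked simplicial sets is a $\Beta_{\I}$-fibered trivial cofibration, hence an inner cocartesian equivalence over $\I$ by Proposition~\ref{p:inner-fibred} and Remark~\ref{rem:weaker}. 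With $w$ the weight classifying $p$, Proposition~\ref{p:inner-is-weighted} gives precisely the desired characterization in the inner limit case.

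Next I would handle the existence of the pair $(p,\iota)$ for the other three variances, which is where the real work lies. The key observation, already flagged in the text, is that the only property of $\iota$ actually used in Proposition~\ref{p:inner-is-weighted} is that it is an inner cocartesian equivalence over $\I$ whose target is an inner cocartesian fibration; the $\Beta_{\I}$-fibered model structure merely supplies such a map for any marking on $\I$. To obtain the analogue for $\var = \out$ and for the cartesian/cocartesian dichotomy, I would apply the involutions $(-)^{\op}$ and $(-)^{\co}$ to the inner cocartesian datum. Concretely, by Corollary~\ref{c:op-fibration-2}, $(-)^{\op}$ carries inner cocartesian equivalences/fibrations over $\I$ to inner cartesian ones over $\I^{\op}$, while $(-)^{\co}$ carries inner (co)cartesian equivalences/fibrations over $\I$ to outer (co)cartesian ones over $\I^{\co}$. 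Starting from $\I^+$ in the desired variance, one forms the appropriate opposite, applies the inner cocartesian construction there, and then applies the inverse involution to recover a $\var$-(co)cartesian fibration $p\colon \wtl{\I} \to \I$ and a $\var$-(co)cartesian equivalence $\iota\colon \I^+ \to \wtl{\I}^{\natural}$ over $\I$, as claimed.

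Having secured $(p,\iota)$ in all four variances, the characterization statement itself follows formally. For each $\var$ and each choice of limit versus colimit, the argument of Proposition~\ref{p:inner-is-weighted} applies verbatim: the restriction map along $\Del^0 \diamond_{\var}\I^+ \hookrightarrow \Del^0 \diamond_{\var} \wtl{\I}^{\natural}$ (respectively $\I^+ \diamond_{\var}\Del^0 \hookrightarrow \wtl{\I}^{\natural}\diamond_{\var}\Del^0$) is a bicategorical equivalence of slice fibrations by Corollary~\ref{c:invariance}, since $\iota$ is a $\var$-(co)cartesian equivalence over $\C$; and Proposition~\ref{p:pre-cofinal} then shows that this equivalence preserves and detects $\var$-(co)limit cones. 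Thus $g$ restricted to $\Del^0 \diamond_{\var}\I^+$ is a $\var$-limit cone if and only if its restriction to $\Del^0 \diamond_{\var}\wtl{\I}^{\natural}$ is, which by definition is the assertion that $g$ exhibits $g(\ast)$ as the $w$-weighted $\var$-limit of $g_{|\I}$. The colimit statements are obtained identically, or alternatively deduced from the limit statements by passing to $(-)^{\op}$ via Remark~\ref{r:op-limits}.

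The main obstacle I anticipate is purely bookkeeping rather than conceptual: one must verify that applying $(-)^{\op}$ and $(-)^{\co}$ to the fibrant replacement $\iota$ interacts correctly with the weighted cone constructions $\Del^0 \diamond^{p}_{\var}\I$ and $\I \diamond^{p}_{\var}\Del^0$, so that the weight $w$ associated to the transported fibration is the correct one in each variance and the isomorphisms of weighted cones from Remark~\ref{r:opposites} and Lemma~\ref{l:coposites} line up with the symmetry. Since the weighted cone is defined by a pushout along the thick join and the involutions are compatible with the thick join by Remark~\ref{r:opposites} and Lemma~\ref{l:coposites}, this compatibility is automatic, but it is the one place where care is needed to ensure the variance of the weight $w$ matches the convention in the definition of weighted $\var$-(co)limits.
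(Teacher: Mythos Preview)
Your proposal is correct and follows essentially the same approach as the paper: the paper's proof of this corollary is just the discussion paragraph preceding it, which establishes the inner limit case via Proposition~\ref{p:inner-is-weighted} and the \(\Beta_{\I}\)-fibered model structure, and then transports the existence of \((p,\iota)\) to the other three variances using the \((-)^{\op}\) and \((-)^{\co}\) involutions via Corollary~\ref{c:op-fibration-2}, exactly as you outline. Your write-up is in fact more detailed than the paper's, spelling out explicitly the invocation of Corollary~\ref{c:invariance} and Proposition~\ref{p:pre-cofinal} in each variance (the paper just says ``the same idea applies''), and your closing caution about bookkeeping is reasonable though not something the paper addresses separately.
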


\begin{rem}\label{r:compare-limit}
Combining Corollary~\ref{c:marked-is-weighted} and Remark~\ref{r:compare-weighted} it follows that for diagrams indexed by a marked \emph{\(\infty\)-category} \(\I\), the notion of 2-(co)limit agrees with the one considered by García in~\cite{garcia-cofinality}.
\end{rem}

\begin{cor}
An \(\infty\)-bicategory \(\C\) admits all small \(2\)-(co)limits if and only if it admits all small weighted limits.
\end{cor}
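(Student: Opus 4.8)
The statement asserts an equivalence between admitting all small $2$-(co)limits and admitting all small \emph{weighted} (co)limits. The plan is to prove both implications by reducing everything to the previously established comparison between arbitrary $2$-(co)limits and weighted ones, namely Corollary~\ref{c:marked-is-weighted}.

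\textbf{The easy direction.} First I would observe that a weighted $\var$-(co)limit is by its very definition a special case of a $\var$-(co)limit: given a weight realized by a $\var$-(co)cartesian fibration $p\colon \wtl{\I} \to \I$, the $w$-weighted $\var$-limit of $f\colon \I \to \C$ is precisely the $\var$-limit of $f \circ p\colon \wtl{\I} \to \C$, taken with respect to the marking $\wtl{\I}^{\natural}$ by $p$-(co)cartesian edges. Hence if $\C$ admits all small $\var$-(co)limits, it in particular admits all small weighted $\var$-(co)limits, since each of the latter is an instance of the former indexed by the (small, when $\I$ and the fibers of $p$ are small) marked-scaled simplicial set $\wtl{\I}^{\natural}$.

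\textbf{The substantive direction.} Conversely, suppose $\C$ admits all small weighted $\var$-(co)limits. Given an arbitrary small diagram $f\colon \ovl{K} \to \C$ indexed by a marked-scaled simplicial set $K$, I would first invoke Remark~\ref{r:indexed-by-oo-bicats} (via Example~\ref{ex:homotopically-sound}) to replace $K$ by a marked-scaled simplicial set $\I^{+}$ whose underlying scaled simplicial set $\I$ is an $\infty$-bicategory, without changing the $2$-(co)limit. Then Corollary~\ref{c:marked-is-weighted} supplies a $\var$-(co)cartesian fibration $p\colon \wtl{\I} \to \I$ together with a $\var$-(co)cartesian equivalence $\iota\colon \I^{+} \to \wtl{\I}^{\natural}$ over $\I$, and identifies $\var$-(co)limit cones on $f$ with $w$-weighted $\var$-(co)limit cones, where $w$ is the weight classifying $p$. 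Concretely, a map $g$ exhibits $g(\ast)$ as the $w$-weighted $\var$-limit of $f$ precisely when its restriction to $\Del^0 \diamond_{\var} \I^{+}$ is a $\var$-limit cone (and dually for colimits). Thus the existence of the weighted $\var$-(co)limit of $f\circ p$ with weight $w$ yields, through this identification, a $\var$-(co)limit cone on $f$ itself.

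\textbf{Main obstacle and bookkeeping.} The genuinely content-bearing input is Corollary~\ref{c:marked-is-weighted}, so the proof is essentially a matter of unwinding definitions and quoting it; the step requiring the most care is the smallness bookkeeping. I would need to check that when $K$ (equivalently $\I^{+}$) is small, the fibration $p\colon \wtl{\I}\to\I$ produced by fibrant replacement in the $\Beta_{\I}$-fibered model structure (and its $(-)^{\op}$/$(-)^{\co}$-transported variants) has small total space, so that the resulting weighted (co)limit is indeed among those $\C$ is assumed to admit. This is where one must confirm that the fibrant replacement does not enlarge cardinality beyond the small range—a standard consequence of the combinatoriality of the relevant model structures. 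Finally, since the argument is uniform in the variance parameter $\var\in\{\inn,\out\}$ and in the choice of limits versus colimits, I would remark that it covers all four flavors simultaneously, so that admitting all small $2$-(co)limits is equivalent to admitting all small weighted (co)limits.
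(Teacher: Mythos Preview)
Your proposal is correct and matches the paper's (implicit) approach: the corollary is stated without proof, as it follows immediately from Corollary~\ref{c:marked-is-weighted} together with the observation that weighted (co)limits are by definition particular instances of $2$-(co)limits. Your added care about smallness of the fibrant replacement and the reduction to $\infty$-bicategorical indexing via Remark~\ref{r:indexed-by-oo-bicats} simply makes explicit what the paper leaves to the reader.
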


\subsection{Comparison with model categorical weighted limits}
\label{s:model-categories}

In this section we consider the case where the \(\infty\)-bicategory \(\C\) comes from a model category \(\M\) \emph{tensored} over the marked-categorical model structure on \(\s^+\), 
that is, \(\M\) admits a closed action of \(\s^+\) via a left Quillen bifunctor
\[ \s^+ \times \M \to \M \quad\quad (K,X) \mapsto K \otimes X ,\]
whose adjoints in each variable give a cotensor operation \((K,X) \mapsto X^K \in \M\) and an enrichment \((X,Y) \mapsto \M(X,Y) \in \s^+\). In particular, we may consider \(\M\) as a \(\s^+\)-enriched category.
The full subcategory \(\M^\circ \subseteq \M\) 
spanned by the fibrant-cofibrant objects is then fibrant as 
an \(\s^+\)-enriched category,
and we may consider its scaled nerve \(\M_\infty := \Nsc(\M^{\circ})\), which is an \(\infty\)-bicategory.

Our goal in this section is to show that \(\M_{\infty}\) admits small inner and outer (co)limits and that, furthermore, 
these inner and outer (co)limits can be expressed as weighted homotopy (co)limit of a suitable \(\s^+\)-enriched diagram in \(\M\).
To formulate our statement we will need a few preliminaries. Recall that the category \(\s^+\) is cartesian closed, and so in particular for every two marked simplicial sets \(X,Y\) 
we have an internal mapping object \(\Map(X,Y) \in \s^+\) determined by a universal property of the form 
\[
  \Hom_{\s^+}(K, \Map(X,Y)) \cong \Hom_{\s^+}(K \times X,Y).
\]
If \(\J\) is a small \(\s^+\)-enriched category then the category \((\s^+)^{\J}\) of \(\s^+\)-enriched functors
\(\J \lrar \s^+\) inherits a tensor structure over \(\s^+\) given by \((K \times \F)(i) = K \times \F(i)\),
with a compatible enrichment \(\Nat_\J(\F,\G) \in \s^+\) determined by the universal property 
\[\Hom_{\s^+}(K, \Nat_\J(\F,\G)) \cong \Hom_{(\s^+)^{\J}}(K \times \F,\G).\]
We may consider \(\Nat_\J(\F,\G)\) as the marked simplicial set of natural transformations from \(\F\) to \(\G\).
It also admits an explicit description as the equalizer of the following pair of parallel maps
\[
\begin{tikzcd}
\displaystyle\mathop{\prod}_{i \in \J}\Map\bigl(\F(i),\G(i)\bigr) \ar[r,shift left=.75ex]
\ar[r,shift right=.75ex,swap] & \displaystyle\mathop{\prod}_{i,j\in \J}\Map\bigl(\Hom_\J(i,j)\times\F(i),\G(j)\bigr)
\end{tikzcd}.
\]

Let us now recall the definition of weighted limits and colimits in the setting of \(\s^+\)-enriched categories.
\begin{define}\label{d:weighted}
	Let \(\C\) be a \(\s^+\)-enriched category, \(\F\colon \J \lrar \C\) be an enriched functor and \(W\colon \J \lrar \s^+\) an enriched functor. 
	For an object \(X \in \C\) let us denote by \(\F^{X}\colon \J \lrar \s^+\) the functor given by \(\F^{X}(i) = \C(X,\F(i))\).
	 Given an object \(Z \in \C\) we will say that a natural transformation 
	 \(\tau\colon W \Rightarrow \F^Z\) exhibits \(Z\) as the \ndef{\(W\)-weighted limit} of \(\F\)
	 if for every object \(Y \in \C\) the composed map
	\begin{equation}\label{e:composed-2}
	 \begin{tikzcd}
	  \Hom_\C(Y,Z) \ar[r] & \Nat_\J(\F^Z,\F^Y) \ar[r, "{\tau^*}"] & \Nat_\J(W,\F^Y)
	 \end{tikzcd}
	\end{equation}
	is an isomorphism of marked simplicial sets.
	In this case we will also say that \(\tau\) exhibits \(Z\) as the \ndef{\(W\)-weighted colimit} of \(\F^{\op}\colon \J^{\op} \lrar \C^{\op}\).
\end{define}

\begin{rem}\label{r:cone}
	In the setting of Definition~\ref{d:weighted}, let \(\J^{\triangleleft}_W\) denote the \(\s^+\)-enriched category whose objects are \(\{\ast\} \cup \Ob(\J)\), 
	and such that \(\J^{\triangleleft}_W(i,j) = \J(i,j)\), \(\J^{\triangleleft}_W(\ast,i) = W(i)\), \(\J^{\triangleleft}_W(i,\ast) = \emptyset\) and \(\J^{\triangleleft}_W(\ast,\ast) = \Del^0\),
	and where the composition is given by the functorial dependence of \(W(i)\) on \(i\). 
	Then the data of a natural transformation of the form \(\tau\colon W \Rightarrow \F^X\) is equivalent to 
	the data of a functor \(\G\colon\J^{\triangleleft}_W \lrar \C\) which sends \(\ast\) to \(X\). 
	Furthermore, in this case we may identify the map~\eqref{e:composed-2} with the map
	\[
	 \C(Y,X) \cong \Nat_{\J^{\triangleleft}_W}(\J^{\triangleleft}_W(\ast,-),\G^Y) \to \Nat_\J(W,\F^Y)
	\]
	where the isomorphism is given by the Yoneda lemma and the map is obtained by restriction along \(\J \hrar \J^{\triangleleft}_W\). 
\end{rem}

Let us now fix a \(\s^+\)-enriched category \(\J\) such that the projective model structure on \(\M^{\J}\) exists.
In this case, we have a left Quillen bifunctor
\[
 (\s^+)^{\J} \times \M \lrar \M^{\J} \quad\quad (\G,X)(i) \mapsto \G(i)\otimes X 
\]
where \((\s^+)^{\J}\) is endowed as well with the projective model structure (this one always exists since \(\s^+\) is combinatorial).  
Given enriched functors \(\G\colon \J \lrar \s^+\) and \(\F\colon\J \lrar \M\), let us denote by \(\F^\G \in \M\) 
the image of \(\F\) under the right adjoint of
\[\G \otimes (-)\colon \M \lrar \M^\J.\]
Similarly, given an object \(X \in \M\) and a functor \(\F\colon \J \lrar \M\) let us denote by \(\F^X \in (\s^+)^{\J}\) 
the image of \(\F\) under the right adjoint of 
\[(-) \otimes X\colon (\s^+)^{\J} \lrar \M^{\J}.\]
We note that \(\F^X\) is given by \(\F^X(i) = \Hom_\M(X,\F(i))\) and so this notation is consistent with the notation of Definition~\ref{d:weighted}. 

For an object \(W\) of \((\s^+)^{\J}\) and an object \(\F\) of \(\M^{\J}\),
we have a canonical natural transformation \(W \to \F^{\F^W}\) given by the image
of the identity morphism via the natural isomorphisms
\[
	\Hom_\M\bigl(\F^W, \F^W\bigr) \cong \Hom_{\M^{\J}}\bigl(W \otimes \F^W, \F\bigr) \cong \Nat_\J\left(W, \F^{\F^W}\right).
\]
Replacing the first argument of \(\Hom_\M\bigl(\F^W, \F^W\bigr)\) with any object \(Y\) of \(\M\), we get the isomorphism
\[
	\Hom_\M\bigl(Y, \F^W\bigr) \cong \Nat_\J\bigl(W, \F^Y\bigr).
\]
Unwinding the definitions, this shows the following standard result:

\begin{lemma}\label{l:weighted}
	Let \(\F\colon \J \lrar \M\) and \(W\colon \J \lrar \s^+\) be enriched functors. Then the natural transformation \(W \Rightarrow \F^{\F^W}\) exhibits \(\F^W \in \M\) as the \(W\)-weight limit of \(\F\). 
\end{lemma}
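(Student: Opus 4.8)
The plan is to show that the claimed natural transformation \(W \Rightarrow \F^{\F^W}\) satisfies the defining universal property of Definition~\ref{d:weighted}, namely that for every object \(Y \in \M\) the composed map~\eqref{e:composed-2} is an isomorphism of marked simplicial sets. The key observation is that the construction of \(\F^W\) as the value of the right adjoint to \(W \otimes (-)\colon \M \to \M^{\J}\), together with the construction of \(\F^Y\) as the value of the right adjoint to \((-) \otimes Y\colon (\s^+)^{\J} \to \M^{\J}\), produces precisely the adjunction isomorphisms that already appear in the paragraph preceding the statement. So the proof is essentially a matter of tracing through these adjunctions and verifying that the canonical transformation produced from the identity is the one witnessing the universal property.

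First I would recall the two adjunction isomorphisms set up just before the lemma. The tensoring \(W \otimes (-)\colon \M \to \M^{\J}\) has right adjoint \(\F \mapsto \F^W\), giving
\[
 \Hom_{\M}(Y, \F^W) \cong \Hom_{\M^{\J}}(W \otimes Y, \F).
\]
Here I am using \(W \otimes Y\) to denote the functor \(i \mapsto W(i) \otimes Y\). Second, the tensoring \((-) \otimes Y\colon (\s^+)^{\J} \to \M^{\J}\) has right adjoint \(\F \mapsto \F^Y\), where \(\F^Y(i) = \Hom_{\M}(Y,\F(i))\), giving
\[
 \Hom_{\M^{\J}}(W \otimes Y, \F) \cong \Nat_{\J}(W, \F^Y).
\]
Composing these two isomorphisms yields a natural isomorphism \(\Hom_{\M}(Y,\F^W) \cong \Nat_{\J}(W, \F^Y)\), exactly as displayed in the excerpt. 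The transformation \(\tau\colon W \Rightarrow \F^{\F^W}\) is defined as the image of \(\Id_{\F^W}\) under the instance of this isomorphism with \(Y = \F^W\), so by the standard naturality (Yoneda) argument the composite isomorphism with general \(Y\) is precisely post-composition with \(\tau\) followed by the restriction map, i.e.\ it agrees with the map~\eqref{e:composed-2}.

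The remaining step is to check that these isomorphisms, which are a priori isomorphisms of \emph{sets} of morphisms, upgrade to isomorphisms of \emph{marked simplicial sets}, as required by Definition~\ref{d:weighted}. This is where one uses that all the adjunctions in sight are \(\s^+\)-enriched: the tensoring \(\s^+ \times \M \to \M\) is a closed action, hence the induced tensoring \((\s^+)^{\J} \times \M \to \M^{\J}\) is closed as well, and the enriched adjunction isomorphisms hold at the level of mapping objects in \(\s^+\). Concretely, one applies the set-level isomorphism with \(Y\) replaced by \(K \times Y\) (equivalently, probes against an arbitrary \(K \in \s^+\)) and invokes that \(\Hom_{\M}(K \times Y, \F^W) \cong \Hom_{\s^+}(K, \M(Y,\F^W))\) together with the analogous enriched identity for \(\Nat_{\J}(W, \F^Y)\); naturality in \(K\) then promotes the bijection to an isomorphism of marked simplicial sets by Yoneda. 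I do not expect any serious obstacle here: the content is entirely formal, being a composition of two enriched adjunctions, and the main care required is simply bookkeeping to ensure that the enrichment (and in particular the marking) is respected at each stage rather than just the underlying bijection of hom-sets. This is why the result is flagged as ``standard''.
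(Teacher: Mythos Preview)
Your proposal is correct and follows essentially the same approach as the paper: the paper's proof is in fact the paragraph immediately preceding the lemma, which composes the two adjunction isomorphisms, defines \(\tau\) as the image of \(\Id_{\F^W}\), and then replaces \(\F^W\) by a general \(Y\). Your write-up is slightly more careful in making explicit the Yoneda step identifying the composite with~\eqref{e:composed-2} and in checking that the bijections upgrade to isomorphisms of marked simplicial sets, but the underlying argument is identical.
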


When \(W\) is projectively cofibrant the assignment \(\F \mapsto \F^W\) is a right Quillen functor. 
In this case we will refer to \((\F^{\fib})^W\) (where \((-)^{\fib}\) denotes a projective fibrant replacement)
as the \ndef{\(W\)-weighted homotopy limit} of \(\F\). More generally, it will be useful to adopt the following more flexible terminology:

\begin{define}\label{d:weighted-homotopy-limit}
We will say that a natural transformation \(\tau\colon  W \Rightarrow \F^{Z}\) exhibits \(Z\) as the \emph{\(W\)-weighted homotopy limit} of \(\F\) if for every object \(Y \in \M\) the composed map
\[ \M(Y,Z) \to \Nat_\J(W,\F^Y) \to \Nat_{\J}(W,(\F^{\fib})^{Y})\]
is a weak equivalence in \(\s^+\) (that is, a marked categorical equivalence).
\end{define}

\begin{rem}\label{r:F-fibrant}
In the situation of Definition~\ref{d:weighted-homotopy-limit}, it does not matter which fibrant replacement \(\F^{\fib}\) is used. In particular, if \(\F\) is already projectively fibrant then we can take \(\F^{\fib} = \F\), in which case a natural transformation \(\tau\colon W \Rightarrow \F^{Z}\) exhibits \(Z\) as the \(W\)-weighted homotopy limit of \(\F\) if and only if the map \(\M(Y,Z) \to \Nat_{\J}(W,\F^{Y})\) is a marked categorical equivalence.
\end{rem}

We now consider the following setup. Let \(\J\) be a fibrant \(\s^+\)-enriched category such that the projective model structure on \(\M^{\J}\) exists, and \(\I\) an \(\infty\)-bicategory equipped with a bicategorical equivalence 
\[\phi\colon \fCs(\I) \xrightarrow{\simeq} \J.\] 
Let \(W \colon \J \to \s^+\) be a enriched functor which is fibrant and cofibrant with respect to the projective model structure, \(p\colon\tilde{\I} \to \I\) an inner cocartesian fibration and 
\[\psi\colon \St^{\sca}_{\phi}(\tilde{\I}^{\natural}) \to W\] 
a weak equivalence in \((\s^+)^{\J}\). Finally, fix a diagram \(\F\colon \J \to \M^{\circ}\), and let \(f\colon \I \to \M_{\infty}\) be the adjoint of \(\F\phi\colon \fCs(\I)\to \M^{\circ}\). Let \(w\colon \I \to \Catoo\) be the functor classifying \(p\), which in light of the equivalence \(\psi\) we can identify with the restriction to \(\I\) of the functor 
\[\Nsc(W)\colon \Nsc(\J) \to \Nsc(\M^{\circ}) = \M_{\infty}\] 
induced by \(W\). In this situation we would like to obtain a comparison between the \(W\)-weighted limit of \(\F\) and the \(w\)-weighted inner limit of \(f\).

Let \(\J^{\triangleleft}_{W}\) be as in Remark~\ref{r:cone}. Then the weak equivalences \(\phi\colon \fCs(\I) \to \J\) and \(\psi\colon \St^{\sca}_{\phi}(\tilde{\I}^{\natural}) \to W\) determine a commutative square 
\[ \xymatrix{
&\fCs(\I) \ar[r]^{\phi}_{\simeq}\ar[d] & \J \ar[d] \\
\fCs(\Del^0 \diamond^p_{\inn} \I)\ar@{=}[r] & \fCs([\Del^0 \diamond_{\inn} \tilde{\I}^{\natural}] \coprod_{\tilde{\I}}\I) \ar[r]^-{\simeq} & \J^{\triangleleft}_{W} \\
}\]
with horizontal legs weak equivalences. Now since \(W\) is assumed fibrant \(\I^{\triangleleft}_{W}\) is a fibrant \(\s^+\)-enriched category. Let \(\B := \Nsc(\J^{\triangleleft}_W)\) be its scaled nerve, so that \(\B\) is an \(\infty\)-bicategory and the lower horizontal map in the above square gives a bicategorical equivalence
\[\Del^0 \diamond^p_{\inn}  \I = [\Del^0 \diamond_{\inn} \tilde{\I}^{\natural}] \coprod_{\tilde{\I}}\I \xrightarrow{\simeq} \B .\]
Our comparison statement then takes the following form:

\begin{prop}\label{p:weighted-is-weighted}
Keeping the assumptions and notations above, let \(\F\colon \J \to \M^{\circ}\) be a levelwise fibrant functor, \(Z \in \M^{\circ}\) an object and \(\tau\colon W \Rightarrow \F^Z\) a natural transformation, corresponding to an enriched functor \(\G\colon \J^{\triangleleft}_W \to \M^{\circ}\). Then \(\tau\) exhibits \(Z\) as the \(W\)-weighted homotopy limit of \(\F\) if and only if the adjoint map \(g\colon \B \to \M_{\infty}\) exhibits \(Z\) as the \(w\)-weighted inner limit (in the sense of Definition~\ref{d:weighed-B}) of the composite 
\[f\colon \I \longrightarrow \Nsc(\J) \xrightarrow{\Nsc(\F)} \M_{\infty}.\]
\end{prop}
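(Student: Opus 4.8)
The plan is to reduce this statement to the characterization of $w$-weighted inner limits established in Proposition~\ref{p:weighted-limit}, using the dictionary between the $\s^+$-enriched side and the $\infty$-bicategorical side supplied by Lurie's straightening--unstraightening equivalence and by Proposition~\ref{p:rectification}. The two conditions to be compared are, on one side, that the map
\[
\M(Y,Z) \to \Nat_{\J}(W,\F^Y)
\]
is a marked categorical equivalence for all $Y \in \M^{\circ}$ (using Remark~\ref{r:F-fibrant}, since $\F$ is levelwise fibrant), and on the other side, that for every $x \in \M_\infty$ the restriction map of Proposition~\ref{p:weighted-limit},
\[
\Fun^{\coc}_{\B}(\B^{\ast/}_{\inn},\ovl{(\M_\infty)}^{x/}_{\inn} \times_{\M_\infty} \B)
\to \Fun^{\coc}_{\I}(\tilde{\I}^{\natural}, \ovl{(\M_\infty)}^{x/}_{\inn} \times_{\M_\infty} \I),
\]
is an equivalence of $\infty$-categories. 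First I would observe that both sides are natural in the test object, so it suffices to match them after choosing $Y = x$ ranging over objects of $\M^{\circ}$, which suffices since every object of $\M_\infty$ is of this form.

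The key geometric input is the identification, already set up just before the statement, of $\B = \Nsc(\J^{\triangleleft}_W)$ together with the bicategorical equivalence $\Del^0 \diamond^p_{\inn} \I \xrightarrow{\simeq} \B$. The enriched functor $\G\colon \J^{\triangleleft}_W \to \M^{\circ}$ corresponds under the scaled nerve adjunction to the map $g\colon \B \to \M_\infty$, so that the condition ``$g$ exhibits $Z$ as the $w$-weighted inner limit'' is precisely Definition~\ref{d:weighed-B} applied to $g$. I would then invoke Proposition~\ref{p:weighted-limit} to replace the geometric condition on $g$ by the equivalence of the two $\Fun^{\coc}$ mapping categories displayed above. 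The next step is to compute each side enrichedly. Using Remark~\ref{r:unst-rep} and Remark~\ref{r:mapping-space}, the slice fibration $\ovl{(\M_\infty)}^{x/}_{\inn} \to \M_\infty$ is the unstraightening of the corepresentable functor $\M^{\circ}(x,-)$, and its base change along $\I \to \M_\infty$ (resp.\ along $\B \to \M_\infty$) is the unstraightening of $\F^x = \M^{\circ}(x,\F(-))$ pulled back to $\fCs(\I)$ (resp.\ of the corresponding functor on $\J^{\triangleleft}_W$). By Lemma~\ref{l:unstraightening-simplicial} (the enriched unstraightening is a Dwyer--Kan equivalence compatible with the $\s^+$-action), the $\infty$-category $\Fun^{\coc}_{\I}(\tilde{\I}^{\natural}, \ovl{(\M_\infty)}^{x/}_{\inn} \times_{\M_\infty} \I)$ is equivalent to the mapping space $\Nat_\J(W, \F^x)$, while $\Fun^{\coc}_{\B}(\B^{\ast/}_{\inn}, \ovl{(\M_\infty)}^{x/}_{\inn} \times_{\M_\infty} \B)$ is equivalent, via Proposition~\ref{p:rep-inn} and Yoneda in the enriched category $\J^{\triangleleft}_W$, to $\M^{\circ}(x,Z)$. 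Under these identifications the restriction map becomes exactly the composite~\eqref{e:composed-2} from Remark~\ref{r:cone} appearing in the definition of the weighted homotopy limit, and the proof concludes by comparing the two equivalence conditions directly.

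The main obstacle I anticipate is bookkeeping the passage between the point-set level (where $\Nat_\J(W,\F^x)$ is an honest marked simplicial set computed as an equalizer, and the weighted-homotopy-limit condition is a statement about marked categorical equivalences of such sets) and the $\infty$-categorical level (where the same object appears as an $\infty$-category of cocartesian sections). Concretely, I need to verify that the straightening of the inner cocartesian fibration $\tilde{\I}^{\natural} \to \I$ agrees, up to the specified weak equivalence $\psi$, with $W$, and that the derived $\infty$-categorical mapping space $\Fun^{\coc}_{\I}(\tilde{\I}^{\natural},-)$ indeed models $\Nat_\J(W,-)$ on fibrant objects; this is where the fibrancy hypotheses on $W$ and $\F$, together with Remark~\ref{r:F-fibrant}, must be used to guarantee that no extra fibrant replacement is silently needed on either side. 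A secondary subtlety is ensuring that the equivalence $\Del^0 \diamond^p_{\inn}\I \xrightarrow{\simeq} \B$ is compatible with the cone-point structure, so that ``$g(\ast) = Z$'' matches the object representing the weighted limit; this follows because the enriched equivalence sends cone point to cone point, as noted in the square preceding the statement, but it needs to be tracked carefully through Proposition~\ref{p:pre-cofinal} to confirm that limit cones are preserved and detected under this equivalence.
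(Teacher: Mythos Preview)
Your proposal is correct and follows essentially the same route as the paper's proof: both reduce the weighted homotopy limit condition (via Remark~\ref{r:F-fibrant} and Remark~\ref{r:cone}) to the map \(\M(Y,Z)\cong\Nat_{\J^{\triangleleft}_W}(\J^{\triangleleft}_W(\ast,-),\M(Y,\G(-)))\to\Nat_\J(W,\F^Y)\) being a marked categorical equivalence, and then identify this map, via the Dwyer--Kan equivalence furnished by unstraightening (Lemma~\ref{l:unstraightening-simplicial}, Remark~\ref{r:unst-rep}), with the restriction map of Proposition~\ref{p:weighted-limit}. Your invocation of Proposition~\ref{p:pre-cofinal} at the end is unnecessary, since the paper (and your main argument) works directly with \(\B=\Nsc(\J^{\triangleleft}_W)\) via the counit \(\fCs(\B)\to\J^{\triangleleft}_W\) rather than passing through the equivalence \(\Del^0\diamond^p_{\inn}\I\to\B\).
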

\begin{proof}
Since \(\F\) is levelwise fibrant Remark~\ref{r:F-fibrant} and Remark~\ref{r:cone} tell us that \(\tau\) exhibits \(Z\) as the \(W\)-weighted homotopy limit of \(\F\) if and only if the map
\begin{equation}\label{e:M-J} 
\M(Y,Z) \cong \Nat_{\J^{\triangleleft}_W}(\J^{\triangleleft}_W(\ast,-),\M(Y,\G(-))) \to \Nat_\J(W,\M(Y,\F(-))) 
\end{equation}
is a marked categorical equivalence for every \(Y \in \M^{\circ}\). We note that since \(Y,Z\) and \(W\) are fibrant-cofibrant the marked simplicial sets appearing in~\eqref{e:M-J} are fibrant. Unstraightening along \(\phi\colon \fCs(\I) \to \J\) and the counit map \(\fCs(\B) \to \I^{\triangleleft}_{W}\) we may identify the map of simplicial sets underlying~\eqref{e:M-J}, up to categorical equivalence, with the map
\[ \Fun^{\coc}_{\B}(\B^{\ast/}_{\inn},\M_{\infty}^{Y/} \times_{\M_{\infty}}\B) \to \Fun^{\coc}_{\I}(\tilde{\I}^{\natural},\M_{\infty}^{Y/} \times_{\M_{\infty}}\I) .\]
This last map is a categorical equivalence if and only if \(g\colon \B \to \M_{\infty}\) exhibits \(Z\) as the \(w\)-weighted inner limit of \(f\) by Proposition~\ref{p:weighted-limit}.
\end{proof}

Using Proposition~\ref{p:inner-is-weighted} we would now like to deduce a result for general \(2\)-limits. Let \(\I,\J\) and \(\phi\colon \fCs(\I) \xrightarrow{\simeq} \J\) be as above, and suppose we are given a marking on \(\I\), that is, a marked-scaled simplicial set \(\I^+\) whose underlying scaled simplicial set is \(\I\). As above, we then let \(W\colon \J \to \s^+\) be a fibrant-cofibrant functor equipped with a weak equivalence
\[\psi\colon \St^{\sca}_{\phi}(\I^+) \xrightarrow{\simeq} W.\]
Let \(\F\colon \J \to \M^{\circ}\) be an enriched diagram and \(f\colon \I \to \Nsc(\J) \to \M_{\infty}\) the resulting \(\infty\)-bicategorical diagram. 
We would like to describe inner 2-limits of \(f\) with respect to the marking \(\I^+\) in terms of the \(W\)-weighted homotopy limit of \(\F\). In particular, given an object \(Z \in \M^{\circ}\) and a natural transformation \(\tau\colon W \Rightarrow \F^{Z}\), we may consider the associated functor \(\G\colon \I^{\triangleleft}_W \to \M^{\circ}\). On the other hand, the map \(\psi\) above encodes a weak equivalence of \(\s^+\)-enriched categories
\[\psi^{\triangleleft}\colon\fCs(\Del^0 \diamond_{\inn} \I^+) \xrightarrow{\simeq} \I^{\triangleleft}_W,\]
by which \(\G\) determines an inner cone
\[ \xymatrix{
\Del^0 \diamond_{\inn} \I^+ \ar[r]\ar@/_1pc/[rr]_{g} & \Nsc(\I^{\triangleleft}_{W}) \ar[r]^-{\Nsc(\G)} & \M_{\infty} \\
}\]

\begin{prop}\label{p:rectification-limit}
Keeping the assumptions and notations above,
the natural transformation \(\tau\) exhibits \(Z\) as the \(W\)-weighted homotopy limit of \(\F\) if and only if \(g\) is an inner limit cone.
\end{prop}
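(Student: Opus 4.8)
The plan is to deduce the statement by chaining Proposition~\ref{p:weighted-is-weighted} with Proposition~\ref{p:inner-is-weighted}, so that the substantive homotopical work is already done; the only genuinely new ingredient is a comparison of the two ways in which $W$ is made to play the role of a weight. Proposition~\ref{p:weighted-is-weighted} is phrased for a weight presented by an honest inner cocartesian fibration $p\colon \tilde{\I}\to \I$ together with a weak equivalence $\St^{\sca}_{\phi}(\tilde{\I}^{\natural})\to W$, whereas here we are only given $\psi\colon \St^{\sca}_{\phi}(\I^{+})\to W$, with $\I^{+}$ not assumed fibered. The first task is therefore to upgrade the given data to the fibered one.

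To do this I would invoke, as in the paragraph preceding Proposition~\ref{p:inner-is-weighted}, a fibrant replacement of the marked simplicial set underlying $\I^{+}$ in the $\Beta_{\I}$-fibered model structure on $\trbis{(\s^+)}{\I}$: an inner cocartesian fibration $p\colon \tilde{\I}\to \I$ together with a $\Beta_{\I}$-fibered trivial cofibration $\iota\colon \I^{+}\to \tilde{\I}^{\natural}$. Since $\St^{\sca}_{\phi}$ is the left adjoint of a Quillen equivalence for this model structure, $\St^{\sca}_{\phi}(\iota)$ is a trivial cofibration in the projective model structure on $(\s^+)^{\J}$; as $W$ is projectively fibrant, the equivalence $\psi$ extends along $\St^{\sca}_{\phi}(\iota)$ to a map $\ovl{\psi}\colon \St^{\sca}_{\phi}(\tilde{\I}^{\natural})\to W$ with $\ovl{\psi}\circ \St^{\sca}_{\phi}(\iota)=\psi$, and $\ovl{\psi}$ is a weak equivalence by two-out-of-three. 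This $\ovl{\psi}$ is exactly the input demanded by Proposition~\ref{p:weighted-is-weighted}, and through it the weight $w\colon \I\to \Catoo$ classifying $p$ is identified with the restriction to $\I$ of $\Nsc(W)$.

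With this in hand I would run the two comparisons in order. Writing $\ovl{g}:=\Nsc(\G)\colon \B\to \M_{\infty}$ for the map adjoint to $\G\colon \J^{\triangleleft}_{W}\to \M^{\circ}$, Proposition~\ref{p:weighted-is-weighted} (applied with $\ovl{\psi}$, and using that $\F$ is levelwise fibrant) shows that $\tau$ exhibits $Z$ as the $W$-weighted homotopy limit of $\F$ if and only if $\ovl{g}$ exhibits $Z$ as the $w$-weighted inner limit of $f$ in the sense of Definition~\ref{d:weighed-B}. Proposition~\ref{p:inner-is-weighted}, applied to the very same $p$, $\iota$ and $w$, then shows that this holds if and only if the restriction of $\ovl{g}$ along $\Del^{0}\diamond_{\inn}\I^{+}\hookrightarrow \Del^{0}\diamond^{p}_{\inn}\I\xrightarrow{\simeq}\B$ (the first map induced by $\iota$) is an inner limit cone.

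It then remains to identify this restriction with the cone $g$ of the statement, which is the one bookkeeping point of the proof. By construction $\ovl{g}=\Nsc(\G)$, and the relation $\psi=\ovl{\psi}\circ \St^{\sca}_{\phi}(\iota)$ together with the naturality of the weighted-cone/straightening construction yields a commuting triangle relating the map $\psi^{\triangleleft}\colon \fCs(\Del^{0}\diamond_{\inn}\I^{+})\to \J^{\triangleleft}_{W}$ to the lower horizontal equivalence $\fCs(\Del^{0}\diamond^{p}_{\inn}\I)\xrightarrow{\simeq}\J^{\triangleleft}_{W}$ featuring in Proposition~\ref{p:weighted-is-weighted}; passing to scaled nerves and postcomposing with $\Nsc(\G)$ identifies the above restriction of $\ovl{g}$ with $g$. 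Chaining the displayed equivalences then gives the result. I expect the main obstacle to be precisely this coherence check — verifying that the cone produced via $\psi^{\triangleleft}$ and the cone obtained by restricting $\ovl{g}$ through $\iota$ and the equivalence $\Del^{0}\diamond^{p}_{\inn}\I\simeq \B$ genuinely agree over $\M_{\infty}$ — rather than any new homotopical input, everything else being a direct application of the two cited propositions.
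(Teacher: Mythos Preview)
Your proposal is correct and follows essentially the same approach as the paper: both take a \(\Beta_{\I}\)-fibered fibrant replacement \(\iota\colon \I^{+}\to \tilde{\I}^{\natural}\), extend/factor \(\psi\) through \(\St^{\sca}_{\phi}(\iota)\) to obtain a weak equivalence \(\St^{\sca}_{\phi}(\tilde{\I}^{\natural})\to W\), and then chain Proposition~\ref{p:weighted-is-weighted} with Proposition~\ref{p:inner-is-weighted}. The paper is somewhat terser about the final cone identification, which you spell out more carefully, but the argument is the same.
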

\begin{proof}
Proceeding similarly to \S\ref{s:weighted} we may find an inner cocartesian fibration \(p\colon \tilde{\I} \to \I\) equipped with a map \(\iota\colon \I^+ \to \I^{\natural}\) whose underlying map of marked simplicial sets is a \(\Beta_{\I}\)-fibered trivial cofibration. The \(\St^{\sca}_{\phi}(\I^+) \to \St^{\sca}_{\phi}(\I^{\natural})\) is then a trivial cofibration in \((\s^+)^{\J}\), and hence the map \(\psi\) can be factored as
\[\St^{\sca}_{\phi}(\I^+) \xrightarrow{\simeq} \St^{\sca}_{\phi}(\I^{\natural})  \xrightarrow{\psi'} W.\]
This factorization then translates to a factorization of \(\psi^{\triangleleft}\) as
\[ \fCs(\Del^0 \diamond_{\inn} \I^+) \xrightarrow{\simeq} \fCs(\Del^0 \diamond^p_{\inn} \I) \xrightarrow{\simeq} \I^{\triangleleft}_{W}. \]
Setting \(\B := \Nsc(\I^{\triangleleft}_W)\) we then obtain a sequence of maps
\[ \Del^0 \diamond_{\inn} \I^+ \xrightarrow{\simeq} \Del^0 \diamond^p_{\inn} \I \xrightarrow{\simeq} \B \to \M_{\infty}.\]
By Proposition~\ref{p:weighted-is-weighted} we then have that \(\tau\) exhibits \(Z\) as the \(W\)-weighted limit of \(\F\) if and only if the map \(\Del^0 \diamond_{\inn}^p \I \to \M_{\infty}\) is a weighted inner limit cone. On the other hand, by Proposition~\ref{p:inner-is-weighted}, the latter statement is equivalent to \(g\colon \Del^0 \diamond_{\inn} \I^+ \to \M_{\infty}\) being an inner limit cone, and so the proof is complete.
\end{proof}

\begin{rem}\label{r:outer-limits}
If \(\M\) is a model category tensored over \(\s^+\), then we may consider the \(\s^+\)-tensored model category \(\M^{\co}\), whose underlying model category is the same as \(\M\), and whose tensor structure 
\[ \s^+ \times \M^{\co} \to \M^{\co} \]
is given by \((K,M) \mapsto K^{\op} \times M\). Then the fibrant \(\s^+\)-enriched category \((\M^{\co})^{\circ}\) is obtained from \(\M^{\circ}\) by applying \((-)^{\op}\) on all mapping objects, and so
\[ (\M^{\co})_{\infty} \simeq (\M_{\infty})^{\co} .\]
In addition, for every \(\J\)-enriched category we have a natural isomorphism of \(\s^+\)-enriched model categories 
\[(\M^{\co})^{\J^{\co}} \simeq (\M^{\J})^{\co} \]
and so the projective model structure on \((\M^{\co})^{\J^{\co}}\) exists if and only if the projective model structure on \(\M^{\J}\)-exists.
When \(\J\) is fibrant we may in this case apply Proposition~\ref{p:rectification-limit} to \(\M^{\co}\) and \(\J^{\co}\) (and take \(\I \simeq \Nsc(\J^{\co})\)). Interpreting the resulting statement in terms of \(\M_{\infty}\) and using Corollary~\ref{c:co-limits} we get that \emph{outer} limits in \(\M_{\infty}\) can be computed as weighted limits in \(\M^{\co}\).
\end{rem}

\begin{rem}\label{r:inner-colimits}
If \(\M\) is a model category tensored over \(\s^+\) then the model category \(\M^\op\) (equipped with the opposite model structure in the which the fibrations are what used to be cofibrations, and vice versa), is also canonically tensored over \(\s^+\), whose tensor operation 
\[ \s^+ \times \M^{\op} \to \M^{\op} \]
is now given by the cotensor operation \((K,X) \mapsto X^K\) we had before. Similarly, the cotensor operation on \(\M^{\op}\) is given by the tensor operation on \(\M\), and the enrichment in the two cases coincides to the extent that \(\M^{\op}(X,Y) \cong \M(Y,X)\). In particular, \(\M^{\op}\) is also the opposite of \(\M\) as an \(\s^+\)-enriched category.

In the situation of Proposition~\ref{p:rectification-limit}, 
if the projective model structure on \((\M^{\op})^{\J}\) exists (equivalently, if the \emph{injective} model structure on \(\M^{\J}\) exists), then we may apply that proposition to \(\M^{\op}\). Interpreting the resulting statement in terms of \(\M_{\infty}\) and using Remark~\ref{r:op-limits} we get that inner \emph{colimits} in \(\M_{\infty}\) can be computed as \emph{weighted homotopy colimits} in \(\M\).
\end{rem}

\begin{rem}\label{r:outer-colimits}
Combining Remarks~\ref{r:inner-colimits} and~\ref{r:outer-limits} we may similarly identify \emph{outer colimits} in terms of weighted homotopy colimits in \(\M^{\co}\), assuming the relevant injective model structure exists.
\end{rem}

\begin{cor}\label{c:complete}
Let \(\M\) be an \(\s^+\)-tensored model category such that the projective (resp.~injective) model structure exists on \(\M^{\J}\) for any small \(\s^+\)-enriched category \(\J\) (e.g, \(\M\) is a combinatorial model category). Then
the \(\infty\)-bicategory \(\M_\infty\) admits inner and outer limits (resp.~colimits) indexed by arbitrary small marked-scaled simplicial sets, and these are computed by taking weighted homotopy limits (resp.~colimits) in \(\M\).
\end{cor}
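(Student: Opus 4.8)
The plan is to deduce Corollary~\ref{c:complete} directly from the comparison results established just above it, namely Proposition~\ref{p:rectification-limit} together with Remarks~\ref{r:outer-limits}, \ref{r:inner-colimits} and~\ref{r:outer-colimits}. The statement asserts existence of all four flavors of small 2-(co)limits in \(\M_\infty\) under the hypothesis that the relevant projective (resp.~injective) functor model structures exist; by the earlier remarks, all four cases reduce to a single existence problem for weighted homotopy limits in \(\M\) or \(\M^{\co}\), so the crux is to show that such weighted homotopy limits always exist and represent the corresponding 2-limit cone.

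First I would treat the representative case of inner limits. Given a small marked-scaled simplicial set with underlying \(\infty\)-bicategory \(\I\), by Remark~\ref{r:indexed-by-oo-bicats} there is no loss in assuming \(\ovl{\I}\) is an \(\infty\)-bicategory, and a diagram \(f\colon \I \to \M_\infty\) rectifies (using \(\fCs \dashv \Nsc\) and fibrancy of \(\M^{\circ}\)) to an enriched functor \(\F\colon \J \to \M^{\circ}\) for a fibrant \(\s^+\)-enriched category \(\J\) with \(\phi\colon \fCs(\ovl{\I}) \xrightarrow{\simeq} \J\). The given marking \(\I^+\) produces, via scaled straightening and projective-cofibrant replacement, a fibrant-cofibrant weight \(W\colon \J \to \s^+\) together with a weak equivalence \(\St^{\sca}_\phi(\I^+) \xrightarrow{\simeq} W\), exactly as set up in the paragraph preceding Proposition~\ref{p:rectification-limit}. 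The key point is that since \(W\) is projectively cofibrant, the functor \(\F \mapsto \F^{W}\) is right Quillen, so the \(W\)-weighted homotopy limit \(Z := \F^{W}\) exists in \(\M\) (here using the hypothesis that the projective model structure on \(\M^{\J}\) exists, which makes \(\F^{W}\) well-defined), and by Lemma~\ref{l:weighted} the canonical transformation \(\tau\colon W \Rightarrow \F^{Z}\) exhibits \(Z\) as the \(W\)-weighted limit. Replacing \(Z\) by a fibrant-cofibrant model and \(\F\) by its levelwise fibrant replacement if necessary, Remark~\ref{r:F-fibrant} guarantees that \(\tau\) exhibits \(Z\) as the \(W\)-weighted \emph{homotopy} limit. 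Proposition~\ref{p:rectification-limit} then translates this into the statement that the associated cone \(g\colon \Del^0 \diamond_{\inn} \I^+ \to \M_\infty\) is an inner limit cone, establishing both existence and the computation as a weighted homotopy limit.

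The remaining three flavors are obtained formally by the variance-switching remarks: outer limits in \(\M_\infty\) are computed as weighted homotopy limits in \(\M^{\co}\) (Remark~\ref{r:outer-limits}), using that the projective model structure on \((\M^{\co})^{\J^{\co}} \cong (\M^{\J})^{\co}\) exists iff it exists on \(\M^{\J}\); inner colimits in \(\M_\infty\) are computed as weighted homotopy colimits in \(\M\) via \(\M^{\op}\) (Remark~\ref{r:inner-colimits}), using the hypothesis that the injective model structure on \(\M^{\J}\) exists (equivalently the projective one on \((\M^{\op})^{\J}\)); and outer colimits follow by combining the two (Remark~\ref{r:outer-colimits}). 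In each case one repeats the argument of the previous paragraph applied to the appropriate \(\s^+\)-tensored model category, invoking Corollary~\ref{c:co-limits} and Remark~\ref{r:op-limits} to reinterpret the output back in terms of \(\M_\infty\). The combinatorial case is the standard instance where both projective and injective structures on \(\M^{\J}\) are automatic.

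The main obstacle I anticipate is purely bookkeeping rather than conceptual: one must carefully match the fibrancy and cofibrancy hypotheses required by Proposition~\ref{p:rectification-limit} (which assumes \(W\) fibrant-cofibrant, \(\J\) fibrant, and \(\F\) levelwise fibrant) against what the existence of the projective/injective model structures actually supplies, and verify that the projective-cofibrant replacement of the straightened weight can be performed without disturbing the weak equivalence class. In the colimit cases one must additionally be vigilant that passing to \(\M^{\op}\) swaps projective and injective and reverses fibrant/cofibrant, so the hypothesis genuinely needed there is the existence of the \emph{injective} structure on \(\M^{\J}\); confirming that the variance dictionary of Remarks~\ref{r:op-limits}, \ref{r:inner-colimits} and~\ref{r:outer-colimits} lines up correctly with the enriched opposite constructions is the one place where a sign error could creep in.
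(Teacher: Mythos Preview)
Your proposal is correct and follows essentially the same route as the paper's own proof: reduce to the inner-limit case via Remarks~\ref{r:outer-limits}, \ref{r:inner-colimits} and~\ref{r:outer-colimits}, rectify the diagram through a fibrant \(\J\), straighten the marking to obtain a fibrant-cofibrant weight \(W\), take the \(W\)-weighted (homotopy) limit, and invoke Proposition~\ref{p:rectification-limit}. The only simplification the paper makes that you can adopt is that \(\F\) is automatically levelwise fibrant (it lands in \(\M^{\circ}\)) and \(\St^{\sca}_{\phi}(\I^+)\) is already cofibrant, so only a fibrant replacement of the weight is needed and the strict \(W\)-weighted limit already computes the homotopy one---no further replacements are necessary.
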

\begin{proof}
By Remark~\ref{r:inner-colimits}, \ref{r:outer-limits} and~\ref{r:outer-colimits} it will suffice to prove the case of inner limits.
Let \(f\colon \I \to \M_{\infty}\) be a diagram. We may then find a fibrant \(\s^+\)-enriched category \(\J\) and a trivial cofibration \(\phi\colon \fCs(\I) \to \J\). Since \(\M^{\circ}\) is fibrant as a \(\s^+\)-enriched category the transposed enriched functor \(\fCs(\I) \to \M^{\circ}\) then factors through an enriched functor \(\F\colon \J \to \M^{\circ}\). We now choose a trivial cofibration \(\St^{\sca}_{\phi}(\I^+) \hrar W\) with \(W\) a projectively fibrant (and cofibrant, since \(\St^{\sca}_{\phi}(\I^+)\) is cofibrant) enriched functor \(\J \to \s^+\). Since \(\F\) is levelwise fibrant by construction, its strict \(W\)-weighted limit is also a weighted homotopy limit. Any \(\tau\colon Z \to \F^{W}\) exhibiting such a \(W\)-weighted homotopy limit gives rise to an inner limit cone on \(f\) by Proposition~\ref{p:rectification-limit}, and so the desired result follows.
\end{proof}

\begin{example}[2-limits of \(\infty\)-categories]\label{ex:2-limits-cat}
A fundamental example of an \(\s^+\)-tensored model category is \(\s^+\) itself, which is in particular combinatorial and hence admits all projective and injective model structures. We may then conclude from Corollary~\ref{c:complete} that \(\Catoo \simeq (\s^+)_\infty\) admits all small \(2\)-(co)limits, and that those can be computed as weighted homotopy limits and colimits in \(\s^+\). On the other hand, given that we know that 2-limits exists, an explicit description of them can be deduced from their universal property of Corollary~\ref{c:limit-represents}. Indeed, taking \(x = \Del^0\) in that corollary we deduce that if \(K\) is a marked-scaled simplicial set and \(f\colon \ovl{K} \to \Catoo\) a diagram then
\[ \mathrm{lim}^{\inn}_K f \simeq \RNat_K(\ast,f) \quad\text{and}\quad \mathrm{lim}^{\out}_K f \simeq \LNat_K(\ast,f),\]
where for \(\var \in \{\inn,\out\}\) we use the notation \(\mathrm{lim}^{\var}_K f\) to indicate the image of the cone point under any \(\var\)-limit cone extending \(f\). Alternatively, using the description of Corollary~\ref{c:fiber}, we may write this identification as
\[ \mathrm{lim}^{\inn}_K f \simeq \Fun^{\coc}_{\ovl{K}}(K,\E^{\inn}_f) \quad\text{and}\quad \mathrm{lim}^{\out}_K f \simeq \Fun^{\coc}_{\ovl{K}}(K,\E^{\out}_{f})^{\op},\]
where \(\E^{\inn}_{f} \to \ovl{K}\) is the inner cocartesian fibration classified by \(f\) and \(\E^{\out}_{f} \to \ovl{K}\) is the outer cocartesian fibration classified by \(f\colon \ovl{K}^{\co} \to \Catoo^{\co} \xrightarrow{(-)^{\op}} \Catoo\).
\end{example}

\begin{rem}
When \(\I\) is a marked \(\infty\)-category the description of 2-limits in \(\Catoo\) given in~\ref{ex:2-limits-cat} was established in~\cite{berman-lax}, and before that when \(\I\) has the minimal marking in~\cite{GepnerHaugsengNikolausLax}.
\end{rem}

\subsection{Cofinality}\label{s:cofinal}

Throughout this section \(\var \in \{\out,\inn\}\) is a fixed variance parameter .

\begin{define}
Let \(h\colon K \to L\) be a map of marked-scaled simplicial sets. We will say that \(h\) is \ndef{\(\var\)-cofinal} if it is a \(\var\)-cartesian equivalence over \(\ovl{L}\). We will say that \(h\) is \ndef{\(\var\)-coinitial} if it is a \(\var\)-cocartesian equivalence over \(\ovl{L}\).
\end{define}

\begin{lemma}\label{l:cofinal-equiv}
Let \(h\colon K \to L\) be a map of marked scaled simplicial sets, \(Y\) a scaled simplicial set and \(g\colon \ovl{L} \to Y\) a map. Then the following statements hold:
\begin{enumerate}
\item
If \(h\) is \(\var\)-cofinal then \(h\) is a \(\var\)-cartesian equivalence over \(Y\).
\item
If \(h\) is a \(\var\)-cartesian equivalence over \(Y\) and \(g\colon \ovl{L} \to Y\) is a \(\var\)-cartesian fibration such that the marked arrows in \(L\) are \(g\)-cartesian, then \(h\) is \(\var\)-cofinal.
\end{enumerate}
The same holds if we replace \(\var\)-cofinal by \(\var\)-coinitial and \(\var\)-cartesian equivalence/fibration by \(\var\)-cocartesian equivalence/fibration.
\end{lemma}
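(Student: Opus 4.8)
The plan is to treat the two statements in Lemma~\ref{l:cofinal-equiv} as two independent implications and to derive the \(\var\)-coinitial version formally from the \(\var\)-cofinal one by passing to opposites, exactly as in the passage from cartesian to cocartesian constructions used throughout \S\ref{s:fibrations}. So the substantive work is to prove (1) and (2) for the cofinal/cartesian case. For (1), the key observation is that any map \(p\colon X \to Y\) can be pulled back against \(g\colon \ovl{L} \to Y\) to produce a map over \(\ovl{L}\), and that the relevant mapping \(\infty\)-categories of cartesian-edge-preserving functors are insensitive to whether one records the target as living over \(Y\) or over \(\ovl{L}\).

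\textbf{Proof of (1).} Suppose \(h\) is \(\var\)-cofinal, i.e.\ a \(\var\)-cartesian equivalence over \(\ovl{L}\). Let \(p\colon X \to Y\) be an arbitrary \(\var\)-cartesian fibration; I must show that \(h^*\colon \Fun^{\car}_{Y}(L,X) \to \Fun^{\car}_{Y}(K,X)\) is an equivalence of \(\infty\)-categories. First I would form the base change \(X_L := \ovl{L} \times_Y X \to \ovl{L}\), which is again a \(\var\)-cartesian fibration by Remark~\ref{r:base-change}, and whose \(p_L\)-cartesian edges are precisely those edges of \(X_L\) whose image in \(X\) is \(p\)-cartesian. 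The point is then that maps \(\ovl{K} \to X\) over \(Y\) (via \(gh\colon \ovl{K}\to Y\)) correspond bijectively and naturally to maps \(\ovl{K} \to X_L\) over \(\ovl{L}\) (via \(h\colon \ovl{K}\to \ovl{L}\)), and similarly for \(\ovl{L}\); moreover this correspondence identifies cartesian-edge-preserving functors on the two sides, since the \(p_L\)-cartesian edges are detected through \(X\). Thus \(\Fun^{\car}_{Y}(K,X) \cong \Fun^{\car}_{\ovl{L}}(K,X_L)\) and \(\Fun^{\car}_{Y}(L,X) \cong \Fun^{\car}_{\ovl{L}}(L,X_L)\), compatibly with \(h^*\). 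The right-hand map is an equivalence because \(h\) is a \(\var\)-cartesian equivalence over \(\ovl{L}\) applied to the \(\var\)-cartesian fibration \(X_L \to \ovl{L}\), so \(h^*\) is an equivalence as desired.

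\textbf{Proof of (2).} Conversely, assume \(h\) is a \(\var\)-cartesian equivalence over \(Y\) and that \(g\colon \ovl{L}\to Y\) is itself a \(\var\)-cartesian fibration whose marked edges in \(L\) are the \(g\)-cartesian ones. To show \(h\) is \(\var\)-cofinal I must verify that for every \(\var\)-cartesian fibration \(q\colon Z \to \ovl{L}\) the restriction \(h^*\colon \Fun^{\car}_{\ovl{L}}(L,Z) \to \Fun^{\car}_{\ovl{L}}(K,Z)\) is an equivalence. The natural move is to compose with \(g\): since \(g\) is a \(\var\)-cartesian fibration, the composite \(g\circ q\colon Z \to Y\) is again a \(\var\)-cartesian fibration (here one uses that a composite of \(\var\)-cartesian fibrations is a \(\var\)-cartesian fibration, and that an edge of \(Z\) is \((g\circ q)\)-cartesian precisely when it is \(q\)-cartesian over a \(g\)-cartesian edge), and the hypothesis on \(g\)'s marking guarantees that the marked edges of \(L\) map to \(g\)-cartesian edges so that functors over \(\ovl{L}\) preserving \(q\)-cartesian edges coincide with functors over \(Y\) preserving \((g\circ q)\)-cartesian edges whose composite to \(\ovl{L}\) is the structure map. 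Applying the hypothesis that \(h\) is a \(\var\)-cartesian equivalence over \(Y\) to the fibration \(g\circ q\colon Z \to Y\), together with the identification of the two types of cartesian-edge-preserving functor categories, yields that \(h^*\) is an equivalence.

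\textbf{The opposite case and the main obstacle.} Finally, the statements for \(\var\)-coinitial and \(\var\)-cocartesian equivalences follow by applying (1) and (2) to \(h^{\op}\colon K^{\op}\to L^{\op}\), \(g^{\op}\colon \ovl{L}^{\op}\to Y^{\op}\), using that \((-)^{\op}\) interchanges cartesian and cocartesian fibrations and cofinal with coinitial, exactly as recorded in Lemma~\ref{l:op-fibration} and Corollary~\ref{c:op-fibration}. I expect the main obstacle to lie in part (2): the bookkeeping needed to identify \(\Fun^{\car}_{\ovl{L}}(-,Z)\) with \(\Fun^{\car}_{Y}(-,Z)\) requires care because the condition of preserving cartesian edges is imposed relative to different base fibrations, and one must check precisely that the hypothesis ``the marked arrows in \(L\) are \(g\)-cartesian'' is exactly what makes an edge \((g\circ q)\)-cartesian if and only if it is \(q\)-cartesian over a marked (hence \(g\)-cartesian) edge of \(L\). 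Verifying that composites and this cartesian-edge characterization behave correctly — most cleanly via the characterization of cartesian edges through mapping \(\infty\)-categories in Proposition~\ref{p:cart_1-simp_via_homs} and the stability results of \S\ref{sec:car-edges} — is where the real content sits, whereas part (1) and the opposite-passage are essentially formal.
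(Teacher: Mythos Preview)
Your approach to part (1) is exactly the paper's: pull back along \(g\) and observe that cartesian-edge-preserving functors into \(X\) over \(Y\) are the same as those into \(X_L\) over \(\ovl{L}\).

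For part (2) your strategy is also the paper's, but your write-up contains a genuine gap. You correctly identify \(\Fun^{\car}_{\ovl{L}}(K,Z)\) with the fiber of \(\Fun^{\car}_{Y}(K,Z) \to \Fun^{\car}_{Y}(K,\ovl{L})\) over \(h\) (and similarly for \(L\) over \(\id_{\ovl{L}}\)), using the cartesian-edge cancellation you describe. But then you write ``applying the hypothesis \(\ldots\) yields that \(h^*\) is an equivalence,'' and this does not follow from what you have. Applying the hypothesis to \(g\circ q\colon Z \to Y\) gives only that the \emph{total} restriction map \(\Fun^{\car}_{Y}(L,Z) \to \Fun^{\car}_{Y}(K,Z)\) is an equivalence; to conclude that the map on \emph{fibers} is an equivalence you must also (a) apply the hypothesis to the fibration \(g\colon \ovl{L} \to Y\) itself, so that the base map \(\Fun^{\car}_{Y}(L,\ovl{L}) \to \Fun^{\car}_{Y}(K,\ovl{L})\) is an equivalence, and (b) observe that the post-composition maps \(\Fun^{\car}_{Y}(-,Z) \to \Fun^{\car}_{Y}(-,\ovl{L})\) are categorical fibrations, so that strict fibers compute homotopy fibers. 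This is precisely the three-row diagram the paper writes down. Without step (a) and (b) your argument is incomplete: an equivalence of total spaces over an equivalence of bases only yields an equivalence of (homotopy) fibers, and you have asserted neither.

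Your self-diagnosis that ``the main obstacle lies in part (2)'' is accurate, but the obstacle is not the cartesian-edge bookkeeping you flag (which is fine); it is the missing fiber-sequence argument.
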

\begin{proof}
The first claim is clear since any \(\var\)-cartesian fibration \(X \to Y\) restricts to a \(\var\)-cartesian fibration \(X'= X \times_Y \ovl{L} \to \ovl{L}\), such that functors to \(X'\) over \(\ovl{L}\) are in bijection with functors into \(X\) over \(Y\). To see the second claim let \(X \to \ovl{L}\) be a \(\var\)-cartesian fibration. Since \(\ovl{L} \to Y\) is now assumed a \(\var\)-cartesian fibration the composed map \(X \to \ovl{L} \to Y\) is also a \(\var\)-cartesian fibration. We may then consider the commutative diagram
\[ \xymatrix{
\Fun^{\car}_{\ovl{L}}(L,X)\ar[r]\ar[d] & \Fun^{\car}_{\ovl{L}}(K,X) \ar[d] \\
\Fun^{\car}_{Y}(L,X)\ar[r]\ar[d] & \Fun^{\car}_{Y}(K,X)\ar[d] \\
\Fun^{\car}_Y(L,\ovl{L}) \ar[r] & \Fun^{\car}_Y(K,\ovl{L})
}\] 
in which the top row can be identified with the induced map from the fiber of the bottom left vertical map over \(\id\colon \ovl{L} \to \ovl{L}\) to the fiber of the bottom right vertical map over \(\ovl{h}\colon \ovl{K} \to \ovl{L}\) (where we note that \(\id\) and \(\ovl{h}\) indeed send marked edges to cartesian edges by assumption). In addition, both these vertical arrows are categorical fibrations, and so their fibers are also homotopy fibers. But under the assumption that \(h\) is a \(\var\)-cartesian equivalence and \(\ovl{L} \to Y\) is \(\var\)-cartesian fibration 
we have that the bottom and middle horizontal arrows are equivalence of \(\infty\)-categories, and hence the top horizontal arrow is an equivalence as well, so that \(h\) is \(\var\)-cofinal.
\end{proof}

Applying Lemma~\ref{l:cofinal-equiv}(1) with \(Y=\ast\) we deduce
\begin{cor}\label{c:quillen-A}
If \(f\colon K \to L\) is a \(\var\)-cofinal map then it induces a marked categorical equivalence upon forgetting the scaling.
\end{cor}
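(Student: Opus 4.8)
The plan is to specialize the notion of $\var$-cofinality to the terminal base $\ast := \Del^0$ and then match the resulting condition against the standard characterization of marked categorical equivalences by mapping objects into fibrant targets.

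First I would invoke Lemma~\ref{l:cofinal-equiv}(1) with $Y = \ast$ (the unique map $\ovl{L} \to \ast$ serving as $g$): a $\var$-cofinal map $f$ is then a $\var$-cartesian equivalence over $\ast$, so that for every $\var$-cartesian fibration $p\colon X \to \ast$ the restriction functor $\Fun^{\car}_\ast(L,X) \to \Fun^{\car}_\ast(K,X)$ is an equivalence of $\infty$-categories. The next step is to pin down these fibrations: by Remark~\ref{r:fibers} the total space of a $\var$-cartesian fibration over $\ast$ is an $\infty$-bicategory in which every triangle is thin, hence of the form $\C_\sharp$ for an $\infty$-category $\C$, and conversely the characterization reformulating \cite[Lemma~1.2.8]{GaitsgoryRozenblyumStudy} (established just before \S\ref{s:cartesian}, applied with base $\Del^0$) shows that $\C_\sharp \to \ast$ is both an inner and an outer cartesian fibration for every $\infty$-category $\C$. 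Since every edge of $\C_\sharp$ lies over the degenerate (hence invertible) edge of $\ast$, Corollary~\ref{c:joyal} and Remark~\ref{r:cart-equiv} together identify the $p$-cartesian edges of $\C_\sharp$ with the equivalences of $\C$. Thus $\Fun^{\car}_\ast(K,\C_\sharp)$ (Notation~\ref{n:fun-car}) is the full subcategory of $\Fun^{\thi}(\ovl{K},\C_\sharp)$ spanned by functors sending the marked edges of $K$ to equivalences.

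I would then identify this with a mapping object in $\s^+$. Because all triangles of $\C_\sharp$ are thin, the scaling of $K$ imposes no constraint whatsoever on maps $\ovl{K} \to \C_\sharp$, so only the underlying simplicial set and the marking of $K$ enter — this is exactly the passage "forgetting the scaling". Moreover $\Fun^{\thi}(\ovl{K},\C_\sharp)$ is, on the nose, the internal hom of simplicial sets from $\ovl{K}$ to $\C$ (the core coincides with the whole mapping object when all triangles are thin). Writing $\C^\natural$ for $\C$ marked by its equivalences and regarding $K \in \s^+$ with its scaling forgotten, a direct unwinding of the cartesian closed structure of $\s^+$ shows that the underlying $\infty$-category of the internal mapping object $\Map(K,\C^\natural)$ agrees with $\Fun^{\car}_\ast(K,\C_\sharp)$: an $n$-simplex of either is a map $\Del^n \times \ovl{K} \to \C$ whose restriction to each $\{i\} \times \ovl{K}$ carries the marked edges of $K$ to equivalences, since the marked edges of $(\Del^n)^\flat \times K$ are the pairs (degenerate edge, marked edge).

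Finally, as $\C$ ranges over all $\infty$-categories the objects $\C^\natural$ range over all fibrant objects of $\s^+$ (Theorem~\ref{thm:marked-categorical}), and a map between fibrant marked simplicial sets is a marked categorical equivalence exactly when it is an equivalence of underlying $\infty$-categories (the forgetful functor to the Joyal model structure being a right Quillen equivalence). The previous two steps therefore show that $f^*\colon \Map(L,\C^\natural) \to \Map(K,\C^\natural)$ is a marked categorical equivalence for every fibrant $\C^\natural$; since $\s^+$ is a cartesian model category in which every object is cofibrant, this is precisely the assertion that $f$ is a marked categorical equivalence. The one genuinely delicate point is the on-the-nose identification of $\Fun^{\car}_\ast(K,\C_\sharp)$ with $\Map(K,\C^\natural)$ — matching the marking on the product $(\Del^n)^\flat \times K$ with the cartesian-edge condition — while the identification of fibrations over a point and the final model-categorical reduction are routine.
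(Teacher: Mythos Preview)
Your proposal is correct and follows exactly the same route as the paper: apply Lemma~\ref{l:cofinal-equiv}(1) with \(Y=\ast\). The paper's proof is the single sentence ``Applying Lemma~\ref{l:cofinal-equiv}(1) with \(Y=\ast\) we deduce'', leaving implicit precisely the unpacking you carry out---that \(\var\)-cartesian fibrations over \(\ast\) are the \(\C_\sharp\) for \(\infty\)-categories \(\C\), that their cartesian edges are the equivalences, and that the resulting mapping condition is the standard characterization of marked categorical equivalences via internal homs into fibrant objects.
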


\begin{thm}\label{t:cofinal-characterization}
 	Let \(h \colon K \to L\) be a map of marked-scaled simplicial sets. Then the following are equivalent:
 	\begin{enumerate}
 		\item \(h\) is \(\var\)-coinitial.
 		\item For every diagram \(f\colon \ovl{L} \to \C\) with \(\C\) and \(\infty\)-bicategory, the map \(\ovl{\C}^{/f}_{\var} \to \ovl{\C}^{/fh}_{\var}\) is a bicategorical equivalence.
 		\item For every diagram \(f\colon \ovl{L} \to \C\) with \(\C\) and \(\infty\)-bicategory, the map \(\ovl{\C}^{/f}_{\var} \to \ovl{\C}^{/fh}_{\var}\) preserves and detects \(\var\)-limit cones.
 		\item For every \(\infty\)-bicategory \(\C\) and diagram \(\ovl{f}\colon L^{\triangleleft}_{\var} \to \C\) which is a limit of \(f\), the induced map \[K^{\triangleleft}_{\var} \overset{h^{\triangleleft}}{\to}L^{\triangleleft}_{\var}\overset{\ovl{f}}{\to}{\C}\]
 		is a \(\var\)-limit of \(fh\).
 	\end{enumerate}
 	
Similarly, the dual statement obtained by replacing \(\var\)-cofinal by \(\var\)-coinitial and \(\var\)-limits by \(\var\)-colimits holds as well. 
\end{thm}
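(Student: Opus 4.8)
The plan is to prove the cycle of implications $(1) \Rightarrow (2) \Rightarrow (3) \Rightarrow (4) \Rightarrow (1)$, after which the dual statement concerning $\var$-cofinal maps and $\var$-colimits follows by applying the result to $h^{\op}\colon K^{\op} \to L^{\op}$ and $f^{\op}$, using that $(-)^{\op}$ exchanges $\var$-cocartesian and $\var$-cartesian equivalences over a base (Corollary~\ref{c:op-fibration-2}) and exchanges $\var$-limit and $\var$-colimit cones (Remark~\ref{r:op-limits}).

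The three implications $(1) \Rightarrow (2) \Rightarrow (3) \Rightarrow (4)$ are essentially formal. For $(1) \Rightarrow (2)$, if $h$ is $\var$-coinitial then the dual of Lemma~\ref{l:cofinal-equiv}(1) shows it is a $\var$-cocartesian equivalence over any scaled simplicial set receiving $\ovl{L}$, in particular over $\C$ for any diagram $f\colon \ovl{L} \to \C$, and Corollary~\ref{c:invariance} then yields that $\ovl{\C}^{/f}_{\var} \to \ovl{\C}^{/fh}_{\var}$ is a bicategorical equivalence. The implication $(2) \Rightarrow (3)$ is precisely the dual half of Proposition~\ref{p:pre-cofinal}, which upgrades a bicategorical equivalence of slice $\infty$-bicategories to a map that preserves and detects $\var$-limit cones. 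Finally, $(3) \Rightarrow (4)$ is the ``preserve'' half of $(3)$: a $\var$-limit cone $\ovl{f}\colon L^{\triangleleft}_{\var} \to \C$ is a vertex of $\ovl{\C}^{/f}_{\var}$ which is a limit cone, and its image under $\ovl{\C}^{/f}_{\var} \to \ovl{\C}^{/fh}_{\var}$ is exactly the restricted cone $\ovl{f} \circ h^{\triangleleft}$, hence again a $\var$-limit cone.

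The crux is $(4) \Rightarrow (1)$. By the definition of coinitiality (Definition~\ref{d:var-equivalence} with base $\ovl{L}$) we must show that for every $\var$-cocartesian fibration $X \to \ovl{L}$ the restriction $\Fun^{\coc}_{\ovl{L}}(L,X) \to \Fun^{\coc}_{\ovl{L}}(K,X)$ is an equivalence of $\infty$-categories. As this property is invariant under equivalence of fibrations over $\ovl{L}$ (Example~\ref{ex:equiv-is-cofinal}), and as every $\var$-cocartesian fibration over $\ovl{L}$ is, by the bicategorical Grothendieck--Lurie correspondence (Corollaries~\ref{c:straightening-inner} and~\ref{c:S-U-for-outer-fibs}), equivalent to the fibration $\E^{\var}_{f}$ classified by some diagram $f\colon \ovl{L} \to \Catoo$, it suffices to treat $X = \E^{\var}_f$. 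Now I would invoke $(4)$ with $\C = \Catoo$, where $\var$-limits always exist by Example~\ref{ex:2-limits-cat} (equivalently Corollary~\ref{c:complete}). Choosing a $\var$-limit cone $\ovl{f}\colon L^{\triangleleft}_{\var} \to \Catoo$ on $f$, condition $(4)$ forces $\ovl{f} \circ h^{\triangleleft}$ to be a $\var$-limit cone on $f\ovl{h}$ with the same apex; comparing the two universal properties via Corollary~\ref{c:limit-represents} shows that the natural comparison map $\mathrm{lim}^{\var}_L f \to \mathrm{lim}^{\var}_K(f\ovl{h})$ induced by restriction along $h$ is an equivalence. The identifications of Example~\ref{ex:2-limits-cat} and Corollary~\ref{c:fiber} then identify this comparison map, up to a harmless $(-)^{\op}$ in the outer case, with the restriction map $\Fun^{\coc}_{\ovl{L}}(L,\E^{\var}_f) \to \Fun^{\coc}_{\ovl{L}}(K,\E^{\var}_f)$, which is therefore an equivalence, establishing coinitiality.

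The main obstacle is exactly this last step, namely bridging the two a priori different formulations at play: coinitiality is phrased through restriction of \emph{sections} of cocartesian fibrations over $\ovl{L}$, whereas condition $(4)$ is phrased through restriction of \emph{limit cones} in an arbitrary target $\C$. The bridge is the Grothendieck--Lurie correspondence together with the identification of $\var$-limits in $\Catoo$ as $\infty$-categories of sections (Example~\ref{ex:2-limits-cat}). The one point demanding care is the variance bookkeeping in the outer case, where the classifying diagram of $\E^{\out}_f$ is the twist of $f$ and the limit is identified with the opposite of the section category; since passing to opposites and applying $(-)^{\op}$ preserve and reflect equivalences, these operations do not affect the conclusion.
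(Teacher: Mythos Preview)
Your proof is correct and follows essentially the same route as the paper: the cycle $(1)\Rightarrow(2)\Rightarrow(3)\Rightarrow(4)$ via Lemma~\ref{l:cofinal-equiv}, Corollary~\ref{c:invariance}, and Proposition~\ref{p:pre-cofinal}, and the crucial $(4)\Rightarrow(1)$ by testing against $\Catoo$ using that it admits all $2$-limits and that these are computed as $\infty$-categories of sections of the classified fibration.

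The only difference is in the packaging of $(4)\Rightarrow(1)$. You phrase it via Corollary~\ref{c:limit-represents} and the explicit formula of Example~\ref{ex:2-limits-cat}, arguing that the two limit cones share an apex and hence the comparison of section categories is an equivalence. The paper instead writes out an explicit commutative rectangle of $\Fun^{\coc}_{\Catoo}(-,(\Catoo)^{\ast/}_{\var})$'s and invokes Corollary~\ref{c:locally} (the ``sections of $\C^{x/}_{\var}$'' criterion for limit cones, applied with $x=\ast$) together with Lemma~\ref{l:cone} to see directly that the restriction map on sections is an equivalence, then pulls back the universal fibration to identify it with $X$. Both routes unwind to the same computation; the paper's diagram makes the naturality of the comparison (that the two identifications of the apex with section categories are intertwined by restriction along $h$) transparent, which is the one point in your argument that is asserted rather than displayed.
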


\begin{rem}
When \(K\) and \(L\) are marked \emph{\(\infty\)-categories} a notion of cofinality was considered in~\cite{garcia-cofinality}, where a version of Corollary~\ref{c:quillen-A} was also proven. The various equivalent characterizations of Theorem~\ref{t:cofinal-characterization} (together with the comparison of Remark~\ref{r:compare-limit}) then imply that the definition of cofinality and Corollary~\ref{c:quillen-A} above indeed generalize their counterparts in~\cite{garcia-cofinality}.
\end{rem}

\begin{proof}[Proof of Theorem~\ref{t:cofinal-characterization}]
We prove the coinitial case. The dual statement follows 
by passing to opposites and using Remarks~\ref{r:op-limits} and~\ref{r:opposites}.

To begin, we note that \((1)\implies (2)\) by
Lemma \ref{l:cofinal-equiv} and Corollary~\ref{c:invariance},
\((2) \implies (3)\) by Proposition~\ref{p:pre-cofinal} and \((3) \implies (4)\) directly from the definitions.
Now suppose that (4) holds. To prove (1) we need to show that it is a \(\var\)-cocartesian equivalence over \(\ovl{L}\), that is, that for every \(\var\)-cocartesian fibration \(X \to \ovl{L}\) the induced map \(\Fun^{\car}_{\ovl{L}}(L,X) \to \Fun^{\car}_{\ovl{L}}(K,X)\) is an equivalence of \(\infty\)-categories. Let us hence fix a \(\var\)-cocartesian fibration \(p\colon X \to \ovl{L}\). Suppose first that \(\var = \inn\) and let \(\vphi\colon \ovl{L} \to \Catoo\) be the map classifying \(p\) via the \(\infty\)-bicategorical Grothendieck-Lurie correspondence. By Example~\ref{ex:2-limits-cat} the \(\infty\)-bicategory \(\Catoo\) admits all 2-limits, and so the diagram \(\vphi\) extends to an inner limit cone \(\psi\colon L^{\triangleleft}_{\inn} \to \Catoo\). By assumption, the restriction of \(\psi\) to \(K^{\triangleleft}_{\inn}\) is an inner limit cone as well. Let \(Y \to \ovl{L}^{\triangleleft}_{\inn}\) be the inner cocartesian fibration classified by \(\psi\) and consider the commutative diagram of \(\infty\)-categories
\[ \xymatrix{
\Fun^{\car}_{\Catoo}(\ast,(\Catoo)^{\ast/}_{\inn}) \ar@{=}[d] & \Fun^{\car}_{\Catoo}(L^{\triangleleft}_{\inn},(\Catoo)^{\ast/}_{\inn})\ar[l]_-{\simeq}\ar[r]^{\simeq}\ar[d] & \Fun^{\car}_{\Catoo}(L,(\Catoo)^{\ast/}_{\inn}) \ar[d] \\
\Fun^{\car}_{\Catoo}(\ast,(\Catoo)^{\ast/}_{\inn}) & \Fun^{\car}_{\Catoo}(K^{\triangleleft}_{\inn},(\Catoo)^{\ast/}_{\inn})\ar[l]_-{\simeq}\ar[r]^{\simeq}& \Fun^{\car}_{\Catoo}(K,(\Catoo)^{\ast/}_{\inn}) \ ,
}\]
where the left pointing horizontal arrows are equivalences by Lemma~\ref{l:cone} and the right pointing horizontal arrows are equivalences by Corollary~\ref{c:locally} (applied with \(\C=\Catoo\) and \(x=\ast \in \Catoo\)). We may thus conclude that the right most vertical map is an equivalence, and so the restriction map
\[ \Fun^{\car}_{\ovl{L}}(L,L \times_{\Catoo} (\Catoo)^{\ast/}_{\inn})
 \to \Fun^{\car}_{\ovl{L}}(K,\ovl{L} \times_{\Catoo} (\Catoo)^{\ast/}_{\inn}) \]
is an equivalence. 
But the inner cocartesian fibration 
\(\ovl{L} \times_{\Catoo} (\Catoo)^{\ast/}_{\inn} \to \ovl{L}\) is just the base change of the universal inner cocartesian fibration along \(\vphi\), and is hence equivalent to \(p\) itself, see Example~\ref{ex:universal}. We therefore get that the induced map 
\[\Fun^{\car}_{\ovl{L}}(L,X) \to \Fun^{\car}_{\ovl{L}}(K,X)\] 
is an equivalence of \(\infty\)-categories, as desired. The claim in the case of \(\var=\out\) proceeds in a similar manner by taking instead the universal \emph{outer} cocartesian fibration, see again Example~\ref{ex:universal}.
\end{proof}

Finally, the notion of cofinality can also be used to obtain an equivalent characterization of 2-(co)limit cones:

\begin{prop}\label{p:limit-is-cofinal}
Let \(\C\) be an \(\infty\)-bicategory, \(K\) a marked-scaled simplicial set and \(f\colon \ovl{K} \to \C\) a diagram. Then for a \(\var\)-cone \(g\colon \ovl{K}^{\triangleleft}_{\var} \to \C\) extending \(f\) the following are equivalent:
\begin{enumerate}
\item
\(g\) is a \(\var\)-limit cone.
\item
\(g\) is a universal object of \(\C^{/f}_{\var}\) with respect to the projection \(\ovl{\C}^{/f}_{\var} \to \C\).
\item
The inclusion \(\{g\} \subseteq \C^{/f}_{\var}\) is \(\var\)-cofinal.
\end{enumerate}
\end{prop}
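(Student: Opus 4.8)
The plan is to prove the equivalence of the three conditions by relating each of them, through the representability results of the previous sections, to the universal property of the slice fibration $\ovl{\C}^{/g(\ast)}_{\var} \to \C$. The key observation is Corollary~\ref{c:cone}, which tells us that the projection $\ovl{\C}^{/g}_{\var} \to \ovl{\C}^{/g(\ast)}_{\var}$ is a trivial fibration, so that $\ovl{\C}^{/g}_{\var} \to \C$ is a model for the $\var$-cartesian fibration represented by $g(\ast)$. This lets us translate statements about the cone $g$ into statements about objects and cofinality inside the slice fibration $\ovl{\C}^{/f}_{\var} \to \C$.

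First I would prove $(1) \Leftrightarrow (2)$. Consider the factorization of the projection $\ovl{\C}^{/g}_{\var} \to \ovl{\C}^{/f}_{\var}$ through $\ovl{\C}^{/g(\ast)}_{\var}$, as in the zig-zag~\eqref{e:zig-zag} from \S\ref{subsec:universal}. By definition $g$ is a $\var$-limit cone if and only if the map $\ovl{\C}^{/g}_{\var} \to \ovl{\C}^{/f}_{\var}$ is a bicategorical equivalence. On the other hand, by Remark~\ref{r:universal-represents}, the object $g$ is $p$-universal for $p\colon \ovl{\C}^{/f}_{\var} \to \C$ if and only if the canonical map $\ovl{\C}^{g(\ast)/}_{\var}$ \dots{} — more precisely, the variance-appropriate slice — into $(\ovl{\C}^{/f}_{\var})^{\natural}$ is an equivalence of $\var$-cartesian fibrations over $\C$. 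Since Corollary~\ref{c:cone} identifies $\ovl{\C}^{/g}_{\var} \to \C$ with the slice fibration over $g(\ast)$, and since the map induced by $g$ is exactly the comparison map $\ovl{\C}^{/g(\ast)}_{\var} \to \ovl{\C}^{/f}_{\var}$ up to the trivial fibration of Corollary~\ref{c:cone}, both conditions assert the equivalence of the same map. The main bookkeeping here is to check that the universal object $g$, viewed as a vertex of $\ovl{\C}^{/f}_{\var}$ lying over $g(\ast) \in \C$, indeed induces the comparison map whose equivalence is the definition of being a limit cone; this uses the naturality of the slice construction and Remark~\ref{r:cofinal-is-equivalence}.

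For $(2) \Leftrightarrow (3)$, I would apply Lemma~\ref{l:cofinal-equiv} together with Definition~\ref{d:represents}. The inclusion $\{g\} \subseteq \C^{/f}_{\var}$ being $\var$-cofinal means, by definition, that it is a $\var$-cartesian equivalence over $\ovl{\C}^{/f}_{\var}$. Since $\ovl{\C}^{/f}_{\var} \to \C$ is itself a $\var$-cartesian fibration whose marked edges are exactly the cartesian ones, Lemma~\ref{l:cofinal-equiv}(2) lets me convert this into the assertion that $\{g\} \subseteq \C^{/f}_{\var}$ is a $\var$-cartesian equivalence over $\C$ — but the latter, with the appropriate dualization between cartesian and cocartesian encoded in $\E^{\natural}$, is precisely the statement that $g$ is a $p$-universal object in the sense of Definition~\ref{d:represents} (here using Remark~\ref{r:cofinal-is-equivalence} to identify cartesian equivalences of fibrations with honest equivalences). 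The content of this step is therefore the careful matching of the two fibrations over which the equivalence is tested, which is exactly what Lemma~\ref{l:cofinal-equiv} is designed to supply.

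The step I expect to be the main obstacle is $(1) \Leftrightarrow (2)$, not because the idea is deep but because it requires pinning down exactly which comparison map the universal object $g$ induces and reconciling it with the defining map of a limit cone. The subtlety, flagged in the discussion preceding Proposition~\ref{p:limit-is-cofinal}, is that $\var$-limit cones are \emph{not} final objects of $\ovl{\C}^{/f}_{\var}$ but rather $p$-universal objects for the fibration structure, so one must consistently work with the marked-scaled object $(\ovl{\C}^{/f}_{\var})^{\natural}$ and its cartesian edges rather than with the abstract slice $\infty$-bicategory. Once this is set up correctly, Corollary~\ref{c:cone}, Remark~\ref{r:universal-represents}, and the characterization of universal objects close the loop, and the remaining implications follow formally from Lemma~\ref{l:cofinal-equiv}.
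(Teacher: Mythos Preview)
Your proposal is correct and follows essentially the same route as the paper's proof. For $(2)\Leftrightarrow(3)$ you invoke Lemma~\ref{l:cofinal-equiv} exactly as the paper does; for $(1)\Leftrightarrow(2)$ both you and the paper reduce to showing that the projection $\ovl{\C}^{/g}_{\var}\to\ovl{\C}^{/f}_{\var}$ is a $\var$-cartesian equivalence over $\C$ if and only if it is a bicategorical equivalence (via Remark~\ref{r:cofinal-is-equivalence}), using Corollary~\ref{c:cone} and the representability of the slice over $g(\ast)$.

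The only cosmetic difference is that the paper makes the ``bookkeeping'' you anticipate completely explicit: it writes down a concrete vertex $\id_g\in\C^{/g}_{\var}$ (via the Gray-product maps $(i,j)\mapsto\max(i,j)$ or $\min(i,j)$) whose image in $\C^{/f}_{\var}$ is $g$ and whose image in $\C^{/g(\ast)}_{\var}$ is $\id_{g(\ast)}$, and then applies Corollary~\ref{c:rep-inn}/Proposition~\ref{p:rep-out} directly rather than through Remark~\ref{r:universal-represents}. This is exactly the verification you flag as the main obstacle; your appeal to Remark~\ref{r:universal-represents} packages the same ingredients, but if you were to write the proof out you would still need to check that the extension of $\{g\}\subseteq(\C^{/f}_{\var})^{\natural}$ to a map out of $\C^{/g(\ast)}_{\var}$ agrees (up to the trivial fibration of Corollary~\ref{c:cone}) with the natural projection $\ovl{\C}^{/g}_{\var}\to\ovl{\C}^{/f}_{\var}$---and the cleanest way to do that is precisely the paper's explicit $\id_g$.
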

\begin{proof}
We first note that \((2) \Leftrightarrow (3)\) by Lemma~\ref{l:cofinal-equiv}. We now show that \((1) \Leftrightarrow (2)\).
Let \(\id_g \in \ovl{\C}^{/g}_{\var}\) be the vertex determined, in the \(\var=\inn\) case, by the composed map
\[ \prescript{\flat}{}\Del^1 \otimes K^{\triangleleft}_{\inn} \to \ovl{K}^{\triangleleft}_{\inn} \to \C \]
where the first map is induced by the map \({}^{\flat}\Del^1 \otimes {}^{\flat}\Del^1 \to {}^{\flat}\Del^1\) given on vertices by \((i,j) \mapsto \max(i,j)\) and the second determined by \(g\), and in the \(\var=\out\) case by the composed map
\[ K^{\triangleleft}_{\out} \mgr \prescript{\flat}{}\Del^1 \to \ovl{K}^{\triangleleft}_{\out} \to \C \]
where the first map is induced by the map \({}^{\flat}\Del^1 \otimes {}^{\flat}\Del^1 \to {}^{\flat}\Del^1\) given on vertices by \((i,j) \mapsto \min(i,j)\) and the second determined by \(g\).
Combining Corollary~\ref{c:rep-inn} and Proposition~\ref{p:rep-out} with Corollary~\ref{c:cone} we may conclude that the inclusion \(\{\id_g\} \subseteq \C^{/g}_{\var}\) is a \(\var\)-cartesian equivalence over \(\C\). It then follows that \(g\in \C^{/f}_{\var}\) is universal over \(\C\) if and only if \(\C^{/g}_{\var} \to \C^{/f}_{\var}\) is a \(\var\)-cartesian equivalence over \(\C\). But the latter is a map of \(\var\)-cartesian fibrations over \(\C\) (with marked arrows being the cartesian ones), and is hence a \(\var\)-cartesian equivalence over \(\C\) if and only if it is a bicategorical equivalence on underlying scaled simplicial sets. This shows that (1) and (2) are equivalent. 
\end{proof}

\begin{cor}\label{c:unicity}
Let \(\C\) be an \(\infty\)-bicategory, \(K\) a marked-scaled simplicial set and \(f\colon \ovl{K} \to \C\) a diagram. Then the sub-bicategory of \(\C^{/f}_{\var}\) spanned by \(\var\)-limit cones and cartesian arrows between them is a contractible Kan complex.
\end{cor}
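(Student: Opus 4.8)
The plan is to recognize the sub-bicategory appearing in the statement as the sub-bicategory of universal objects of the slice fibration $p\colon \ovl{\C}^{/f}_{\var} \to \C$, and then to invoke the unicity already established in Proposition~\ref{p:rep-unique}. Concretely, I would first recall that $p$ is a $\var$-cartesian fibration of $\infty$-bicategories: for $\var=\inn$ this is Proposition~\ref{p:cocar}, and for $\var=\out$ it is Corollary~\ref{c:outer-slice} (which also supplies the fiberwise comparison with the thin slice of Corollary~\ref{cor:slice}). By Proposition~\ref{p:limit-is-cofinal}, a $\var$-cone $g$ extending $f$ is a $\var$-limit cone if and only if $g$ is a $p$-universal object of $\ovl{\C}^{/f}_{\var}$. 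Hence the sub-bicategory $\X \subseteq \C^{/f}_{\var}$ in the statement, spanned by the $\var$-limit cones and the ($p$-)cartesian arrows between them, is precisely the sub-bicategory spanned by the $p$-universal objects and the $p$-cartesian morphisms between them.

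It then remains to apply Proposition~\ref{p:rep-unique}. Since that proposition is phrased for $\var$-cocartesian fibrations, I would dualize it through the operation $(-)^{\op}$. By Corollary~\ref{c:op-fibration-2} the map $p^{\op}\colon (\ovl{\C}^{/f}_{\var})^{\op} \to \C^{\op}$ is a $\var$-cocartesian fibration; moreover the $p$-cartesian edges correspond to the $p^{\op}$-cocartesian edges, and (since $(-)^{\op}$ preserves $\var$-cartesian/cocartesian equivalences over the base, as recorded in the last part of Lemma~\ref{l:op-fibration}) the $p$-universal objects correspond exactly to the $p^{\op}$-universal objects in the sense of Definition~\ref{d:represents}. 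Applying Proposition~\ref{p:rep-unique} to $p^{\op}$ shows that the sub-bicategory spanned by $p^{\op}$-universal objects and $p^{\op}$-cocartesian morphisms, namely $\X^{\op}$, is either empty or a contractible Kan complex. As $(-)^{\op}$ sends contractible Kan complexes to contractible Kan complexes (and the empty set to the empty set), we conclude the same for $\X$ itself; when $\var$-limit cones of $f$ exist, $\X$ is non-empty and hence contractible, which is the asserted unicity.

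There is essentially no genuine obstacle here, as the whole content has already been isolated in Proposition~\ref{p:limit-is-cofinal} and Proposition~\ref{p:rep-unique}; the corollary is a formal consequence. The only point requiring attention is the cartesian–cocartesian bookkeeping: one must check that the (implicitly cartesian) notion of universal object used in clause~(2) of Proposition~\ref{p:limit-is-cofinal}, applied to the $\var$-cartesian fibration $p$, matches under $(-)^{\op}$ the cocartesian notion of Definition~\ref{d:represents} to which Proposition~\ref{p:rep-unique} refers. This is exactly what Lemma~\ref{l:op-fibration} and Corollary~\ref{c:op-fibration-2} guarantee, so the dualization is safe.
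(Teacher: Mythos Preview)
Your proposal is correct and follows the same approach as the paper, which simply says to combine Proposition~\ref{p:limit-is-cofinal} and Proposition~\ref{p:rep-unique}. You have merely spelled out the $(-)^{\op}$-dualization needed to pass from the $\var$-cartesian slice fibration to the $\var$-cocartesian setting in which Proposition~\ref{p:rep-unique} is stated, and noted the caveat that the conclusion should be read as ``empty or contractible''; both of these are implicit in the paper's one-line proof.
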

\begin{proof}
Combine Proposition~\ref{p:limit-is-cofinal} and Proposition~\ref{p:rep-unique}.
\end{proof}

\begin{rem}
We consider Proposition~\ref{p:limit-is-cofinal} as an analogue for 2-(co)limit of the fact that in the \(\infty\)-categorical setting, limit cones are final objects in the \(\infty\)-category of cones, and in particular the singleton inclusion they determine is cofinal. We note however two importance differences between that case and the present one:
\begin{enumerate}
\item
In the \(\infty\)-bicategorical setting, 2-limit cones are not necessarily final objects in \(\C^{/f}_{\var}\) (in particular, being final is not implied by the inclusion \(\{g\} \subseteq \C^{/f}_{\var}\) being \(\var\)-cofinal).
\item
The \(\infty\)-bicategorical notion of cofinality is defined for marked-scaled simplicial sets, and hence depends on a choice of marked edges. In particular, while in the \(\infty\)-categorical case the collection of limit cones can be recovered just from the information of the \(\infty\)-category of cones (as the collection of final objects), here one has to take into account the \(\infty\)-bicategory \(\C^{/f}_{\var}\) of cones together with its collection of marked edges.
\end{enumerate}
\end{rem}

\bibliographystyle{amsplain}
\bibliography{biblio}

\providecommand{\bysame}{\leavevmode\hbox to3em{\hrulefill}\thinspace}
\providecommand{\MR}{\relax\ifhmode\unskip\space\fi MR }
\providecommand{\MRhref}[2]{%
  \href{http://www.ams.org/mathscinet-getitem?mr=#1}{#2}
}
\providecommand{\href}[2]{#2}
\begin{thebibliography}{10}

\bibitem{garcia-cofinality}
Fernando Abell{\'a}n~Garc{\'\i}a, \emph{Marked cofinality for
  $\infty$-functors}, \href{http://arxiv.org/abs/2006.12416}{arXiv:2006.12416},
  2020, preprint.

\bibitem{garcia-stern-enhanced}
Fernando Abell{\'a}n~Garc{\'\i}a and Walker~H. Stern, \emph{Enhanced twisted
  arrow categories}, \href{http://arxiv.org/abs/2009.11969}{arXiv:2009.11969},
  2020, preprint.

\bibitem{AraQCatvsRezk}
Dimitri Ara, \emph{Higher quasi-categories vs higher {R}ezk spaces}, J.
  K-Theory \textbf{14} (2014), no.~3, 701--749.

\bibitem{BarwickSchommerPriesUnicity}
Clark Barwick and Christopher Schommer-Pries, \emph{On the unicity of the
  homotopy theory of higher categories},
  \href{http://arxiv.org/abs/1112.0400}{arxiv:1112.0400}, 2011, preprint.

\bibitem{BergnerRezkInftynI}
Julia~E. Bergner and Charles Rezk, \emph{Comparison of models for
  {$(\infty,n)$}-categories, {I}}, Geom. Topol. \textbf{17} (2013), no.~4,
  2163--2202.

\bibitem{BergnerRezkInftynII}
\bysame, \emph{Comparison of models for {$(\infty, n)$}-categories, {II}},
  J.~Topol. \textbf{13} (2020), no.~4, 1554--1581.

\bibitem{berman-lax}
John~D. Berman, \emph{On lax limits in infinity categories},
  \href{http://arxiv.org/abs/2006.10851}{arXiv:2006.10851}, 2020, preprint.

\bibitem{GagnaHarpazLanariGrayLaxFunctors}
Andrea Gagna, Yonatan Harpaz, and Edoardo Lanari, \emph{{G}ray tensor products
  and lax functors of {$(\infty, 2)$}-categories},
  \href{http://arxiv.org/abs/2006.14495}{2006.14495}, 2020, preprint.

\bibitem{GagnaHarpazLanariEquiv}
\bysame, \emph{On the equivalence of all models for {$(\infty,
  2)$}-categories}, \href{http://arxiv.org/abs/1911.01905}{arxiv:1911.01905},
  2020, preprint.

\bibitem{GaitsgoryRozenblyumStudy}
Dennis Gaitsgory and Nick Rozenblyum, \emph{A study in derived algebraic
  geometry {I} \& {II}}, Mathematical Surveys and Monographs, vol. 221,
  American Mathematical Soc., 2017.

\bibitem{GeHaEnriched}
David Gepner and Rune Haugseng, \emph{Enriched {$\infty$}-categories via
  non-symmetric {$\infty$}-operads}, Advances in mathematics \textbf{279}
  (2015), 575--716.

\bibitem{GepnerHaugsengNikolausLax}
David Gepner, Rune Haugseng, and Thomas Nikolaus, \emph{Lax colimits and free
  fibrations in {$\infty$}-categories}, Doc. Math. \textbf{22} (2017),
  1225--1266.

\bibitem{quillen}
Yonatan Harpaz, Joost Nuiten, and Matan Prasma, \emph{Quillen cohomology of
  {$(\infty, 2)$}-categories}, Higher Structures \textbf{3} (2019), no.~1,
  17--66.

\bibitem{HaugsengCoends}
Rune Haugseng, \emph{On (co)ends in $\infty$-categories},
  \href{http://arxiv.org/abs/2008.03758}{arXiv:2008.03758}, 2020, preprint.

\bibitem{HoveyModelCategories}
Mark Hovey, \emph{Model categories}, Mathematical {S}urveys and {M}onographs,
  vol.~63, American {M}athematical {S}ociety, 1999.

\bibitem{LurieGoodwillie}
Jacob Lurie, \emph{{\((\infty, 2)\)}-categories and the {G}oodwillie {C}alculus
  {I}}, \href{http://arxiv.org/abs/0905.0462}{arxiv:0905.0462}, 2009, preprint.

\bibitem{HTT}
\bysame, \emph{Higher {T}opos {T}heory}, Annals of Mathematics Studies, vol.
  170, Princeton University Press, Princeton, NJ, 2009.

\bibitem{LurieKerodon}
\bysame, \emph{Kerodon}, \url{https://kerodon.net}, 2018.

\bibitem{OzornovaRovelliNComplicial}
Victoriya Ozornova and Martina Rovelli, \emph{{Model structures for \((
  \infty,n)\)-categories on (pre)stratified simplicial sets and prestratified
  simplicial spaces}}, {Algebr.~Geom.~Topol.} \textbf{20} (2020), no.~3,
  1543--1600.

\bibitem{RovelliWeighted}
Martina Rovelli, \emph{Weighted limits in an {$(\infty,1)$}-category},
  \href{http://arxiv.org/abs/1902.00805}{arXiv:1902.00805}, 2019, preprint.

\bibitem{VerityComplicial}
Dominic Verity, \emph{Complicial sets characterising the simplicial nerves of
  strict {$\omega$}-categories}, Mem.~Amer.~Math.~Soc. \textbf{193} (2008),
  no.~905, xvi+184.

\bibitem{VerityWeakComplicialI}
\bysame, \emph{Weak complicial sets. {I}. {B}asic homotopy theory}, Adv.~Math.
  \textbf{219} (2008), no.~4, 1081--1149.

\end{thebibliography}
\end{document}